\renewcommand{\l}{\left[}
\renewcommand{\r}{\right] }
\newtheorem{teo}{Theorem}[section]
\newtheorem{coro}[teo]{Corollary}
\newtheorem{defin}[teo]{Definition}
\newtheorem{ex}{Example}
\newtheorem{lema}[teo]{Lemma}
\newtheorem{prop}[teo]{Proposition}
\newtheorem{rem}[teo]{Remark}
\newtheorem{nota}[teo]{Notation}
\newtheorem{slema}[teo]{Sub-lemma}
\newtheorem{hyp}[teo]{Hypothesis}
\newcommand{\N}{\mathbb{N}}
\newcommand{\Z}{\mathbb{Z}}
\newcommand{\R}{\mathbb{R}}
\def\Im{{\text{Im}}} 
\DeclareMathOperator{\Id}{Id}
\DeclareMathOperator{\diag}{diag}
\newcommand{\Ch}{\mathbf{C(k)}}
\newcommand{\dg}{\mathbf{dg-cat}}
\newcommand{\sS}{\mathbf{sSet}}
\newcommand{\Set}{\mathbf{Set}}
\newcommand\Mod{\mathbf{Mod}}
\newcommand\Top{\mathbf{Top}}
\newcommand{\sC}{\mathbf{sCat}}
\newcommand{\Cat}{\mathbf{Cat}}
\newcommand{\Fact}{\mathbf{Fact}}
\newcommand{\Free}{c\mathcal{L}}
\newcommand{\FreeS}{c\mathcal{L}_\S}
\newcommand{\Dnsd}{\Delta_k(n,s,d)}
\newcommand{\D}{\Delta_k}
\newcommand{\0}{\emptyset}
\newcommand{\dgS}{\mathbf{dg-Segal}}
\newcommand{\dgSc}{\mathbf{dg-Segal_c}}
\renewcommand{\k}{\underline{k}}
\renewcommand{\S}{\mathbb{S}}
\DeclareMathOperator{\Ob}{Obj}
\DeclareMathOperator{\Mor}{Morph}
\DeclareMathOperator{\Fun}{Fun}
\DeclareMathOperator{\Map}{Map}
\DeclareMathOperator{\Ho}{Ho}
\newcommand{\op}{^{op}}
\DeclareMathOperator{\Hom}{Hom}
\DeclareMathOperator{\Adj}{Adj}
\renewcommand{\O}{\mathcal{O}}
\DeclareMathOperator{\map}{map}
\DeclareMathOperator{\coeq}{coeq}
\DeclareMathOperator{\colim}{colim}
\renewcommand{\L}{\mathbb{L}}
\DeclareMathOperator{\Int}{Int}
\DeclareMathOperator{\Aut}{Aut}
\DeclareMathOperator{\hocolim}{hocolim}
\DeclareMathOperator{\holim}{holim}
\DeclareMathOperator{\Alg}{Alg}
\renewcommand{\Re}{\operatorname{Re}}
\DeclareMathOperator{\Sing}{Sing}
\DeclareMathOperator{\sk}{sk}
\DeclareMathOperator{\cosk}{cosk}
\begin{document}

\title{A new model for dg-categories}
\author{Elena Dimitriadis Bermejo}
\date{16th December 2022}
\maketitle
\chapter*{}

\tableofcontents

\begin{otherlanguage}{french}

\chapter{Introduction en fran\c{c}ais}

\epigraph{\textit{"There are some things that it is better to begin than to refuse, even though the end may be dark."}}{---J.R.R. Tolkien, \textit{The Lord of the Rings}}

L'objectif de cette thèse est de définir et étudier un nouveau modèle pour la théorie homotopique des dg-catégories, qui se comportera mieux que la structure de modèles originale sur les dg-catégories, et qui sera aussi plus proche des modèles établis dans le cadre des $\infty$-catégories. Avec cela en tête, avant de commencer ce sera utile de faire un petit tour de ce qu'on sait sur les modèles existants dans les $\infty$-catégories et les dg-catégories.
\\
\\On a fait de notre mieux pour expliquer les notations à leur apparition, mais la notation des outils principaux sera recueillie dans la Section \ref{intr-fr-notations} pour être plus facilement accessible. 

\section{Modèles des $\infty$-catégories}

Commençons par les catégories supérieures. Intuitivement, on peut dire qu'une $\infty$-catégorie (ou, pour être exact, une $(\infty,1)$-catégorie) est une catégorie enrichie sur les $\infty$-groupoïdes : une catégorie avec un ensemble d'objets et, pour tout couple d'objets, un $\infty$-groupoïde entre eux. En d'autres termes, au lieu d'avoir simplement des morphismes entre les objets, on demande à avoir des $2$-morphismes entre les morphismes, des $3$-morphismes entre les $2$-morphismes, etc., tout en demandant que tous les $n$-morphismes avec $n\geq 2$ soient inversibles. Cette construction, appelée aussi "une théorie homotopique" dans la littérature, apparaît dans plusieurs situations différentes.
\\
\\Un exemple classique où les $\infty$-catégories apparaissent naturellement est celui qui arrive quand on s'intéresse à des structures qui sont classifiées à une notion plus faible que celle des isomorphismes près (les espaces topologiques à équivalence d'homotopie près, les complexes de chaînes à équivalence faible près, entre autres). On voudrait localiser ces catégories pour que cette classe de morphismes soit inversible. Ce problème a déjà été résolu par Quillen dans \cite{Quillen} avec l'apparition des catégories de modèles ; mais même si elles sont très utiles, le problème avec celles-là est que les catégories de modèles ont une vaste quantité de structure qu'on doit ajouter afin de les faire fonctionner. En plus, leur catégorie homotopique (c'est-à-dire, la localisation) ne se rappelle pas de l'information ajoutée. Pour résoudre ces soucis, Dwyer et Kan ont introduit dans \cite{DK-infini-cat} une localisation simpliciale, qui est une catégorie enrichie sur les objets simpliciaux. À partir de là, un bon nombre de chercheur.es ont travaillé pour trouver un bon modèle pour une catégorie de cette forme. Il y a quatre modèles qui ont été particulièrement importants.
\\
\\La première option est clairement celle des catégories simpliciales en elles-mêmes. Dwyer et Kan ont développé la théorie des localisations simpliciales dans ce contexte, en tant que catégories enrichies sur les espaces simpliciaux. Comme on en parlera en détail dans la Section \ref{Simplicial categories}, on ne passera pas trop de temps dessus maintenant. On ajoutera uniquement que Bergner a prouvé dans \cite{Berg-simplicial} que la catégorie des catégories simpliciales a une structure de modèles.
\\
\\Un deuxième modèle qui a été très fructueux, est celui des quasi-catégories. Définies pour la première fois par Boardman et Vogt dans \cite{Boardman-Vogt-quasicat}, cette approche définit les quasi-catégories (aussi appellées "complexes de Kan faibles") comme des ensembles simpliciaux $X$ tels que pour toute corne interne $\Lambda^k[n]\to X$ il existe un remplissage $\Delta[n]\to X$, pour tous les $n\in\N$ et $0<k<n$. En pratique, on demande que tout couple de $n$-morphismes ait un troisième morphisme, la "composition", et un $(n+1)$-morphisme reliant les deux. Cette théorie a été étendue plus tard par Joyal dans des articles comme \cite{JoyalQuasi-cat}, et par Lurie dans \cite{HTT} et \cite{LurieHA}. Joyal et Tierney ont aussi défini une structure de modèles sur les quasi-catégories dans \cite{JTquasi-Segal}.
\\
\\Notre troisième modèle est celui des catégories de Segal. Celles-ci sont une généralisation naturelle des catégories simpliciales, puisqu'on peut les voir comme des catégories simpliciales avec une composition définie uniquement à homotopie près. Elles ont été définies pour la première fois par Dwyer, Kan et Smith dans \cite{Dwyer-Kan-Smith}, et elles sont définies comme des espaces simpliciaux $X:\Delta\op\to \sS$ tels que $X_0$ est un ensemble simplicial discret et que pour tout $k\geq 1$ le morphisme de Segal
$$\phi_k:X_k\to \overbrace{X_1\times_{X_0}\ldots\times_{X_0}X_1}^{\text{$k$ fois}} $$
est une équivalence faible. La catégorie des catégories de Segal a une structure de modèles, construite pour les $n$-catégories de Segal par Hirschowitz et Simpson dans \cite{Simpson} et dans une autre version par Bergner dans \cite{Bergner-equivalences}.
\\
\\Et finalement, le quatrième modèle, et le plus intéressant pour nous, est celui des espaces de Segal. Définis pour la première fois par Rezk dans \cite{ComSegalSpacesREZK}, un espace de Segal complet est aussi un espace simplicial où le morphisme de Segal $\phi_k$ est une équivalence faible pour tout $k\geq 1$, mais au lieu de demander que $X_0$ soit discret, on demande que le morphisme 
$$X_0\to X_{hoequiv} $$
soit une équivalence faible, où $X_{hoequiv}$ est l'espace des équivalences d'homotopie. Dans le même papier, Rezk construit une structure de modèles dans laquelle les espaces de Segal complets sont les objets fibrants.
\\
\\Cela dit, on a construit quatre modèles, en disant qu'ils étaient tous des modèles de la même chose. Mais cela n'est pas évident en regardant les définitions. Oui, elles ont des choses en commun, mais elles sont aussi assez différentes les unes des autres. C'est la raison pour laquelle on a introduit les structures de modèles : en effet, Bergner a démontré dans \cite{Bergner-equivalences} qu'il existe une équivalence de Quillen (c'est-à-dire, une équivalence de catégories de modèles) entre les espaces de Segal complets et les catégories de Segal, et une autre entre les catégories de Segal et les catégories simpliciales. De leur côté, Joyal et Tierney ont démontré dans \cite{JTquasi-Segal} que les quasi-catégories et les espaces de Segal complets sont aussi Quillen équivalents. Attention : même si on sait que toutes ces catégories sont Quillen équivalentes, les équivalences de Quillen vont dans des directions opposées, ce qui veut dire qu'elles ne peuvent pas être composées pour en faire un seule équivalence de Quillen.
\\
\\Il y a eu quelques efforts pour proposer une définition axiomatique des $\infty$-catégories au fil des années, comme par exemple la "théorie des $\infty$-catégories" de Toën dans \cite{TOEN-infini} et les "$\infty$-topos" de Riehl et Verity dans \cite{Verity-Riehl}, mais on n'en parlera pas plus en détail ici.

\section{Modèles des dg-catégories}

Maintenant parlons des dg-catégories. Comme les catégories simpliciales, les dg-catégories sont définies comme des catégories enrichies sur quelque chose : dans ce cas, des complexes de cochaînes. Comme on en parlera en détail dans la Section \ref{Dg-categories}, on ne se perdra pas dans les détails ici : on dira uniquement que Tabuada a prouvé dans \cite{TAB_Fr} qu'il y a une structure de modèles sur $\dg$.
\\
\\Si les $\infty$-catégories ont une longue histoire, les dg-catégories sont encore plus vieilles : en effet, on les retrouve déjà dans des papiers des années 60, comme l'article de Kelly \cite{Kelly}. En vue de cela, on ne donnera pas une histoire complète de leur évolution ; ce n'est pas non plus l'objet de cette introduction. Il suffira de dire que même si les catégories et les $\infty$-catégories sont suffisantes pour travailler en Topologie Algébrique, la Géométrie Algébrique a souvent besoin de travailler avec une notion de linéarité qui ne marche pas toujours bien avec celles-là. En conséquence, les dg-catégories, qui ont déjà un concept de linéarité, se sont retrouvées comme un objet essentiel dans ce domaine. Un bon exemple de leur usage est celui du Programme de Langlands Géométrique, qui a été réécrit par Arinkin et Rozenblyum dans \cite{Geometric-Langlands} en termes de dg-catégories. Pour plus de détails sur cela, on pointe le lecteur vers le livre \textit{A study in Derived Algebraic Geometry}, par Gaitsgory et Rozenblyum (voir \cite{Gaitsgory-Rozenblyum-Vol-I} et \cite{Gaitsgory-Rozenblyum-Vol-II}).
\\
\\Mais même si les dg-catégories sont très utiles, elles ne sont pas parfaites. Par exemple, la catégorie des dg-catégories a une structure de modèles, et elle a aussi une structure monoïdale ; mais elles ne sont pas compatibles. Ce problème, parmi d'autres, a poussé des chercheur.es à chercher d'autres modèles pour les dg-catégories, dans le style du travail fait dans les $\infty$-catégories. On a déjà parlé de notre première comparaison : les dg-catégories sont les catégories simpliciales. Il est intéressant de remarquer, d'ailleurs, qu'une des raisons pour lesquelles la communauté a commencé à chercher d'autres modèles d'$\infty$-catégories est que les catégories simpliciales ont aussi une structure de modèles et une structure monoïdale (dans ce cas, le produit direct) qui ne sont pas compatibles.
\\
\\Commençons par les quasi-catégories. On a un résultat dû à Cohn dans \cite{COHN} qui nous dit qu'il y a une équivalence entre l'$\infty$-catégorie sous-jacente à la catégorie de modèles des dg-catégories localisée par les équivalences de Morita, et l'$\infty$-catégorie des quasi-catégories stables $k$-linéaires idempotent-complètes. Malheureusement pour nous, les quasi-catégories stables $k$-linéaires ne sont pas faciles à utiliser : par exemple, si on a un adjoint entre deux quasi-catégories linéaires, il est très difficile de prouver qu'il y a un adjoint linéaire. Aussi, si $x$ est un objet dans une quasi-catégorie $k$-linéaire, prouver que $\operatorname{End}(x)$ peut être représenté par une dg-algèbre est très compliqué dans le monde des quasi-catégories $k$-linéaires, mais découle presque de la définition dans les dg-catégories.
\\
\\Plus récemment, il y a eu une autre approche à ce sujet : Mertens a proposé dans \cite{Arne-thesis} une définition d'une quasi-catégorie enrichie sur une catégorie monoïdale cocomplète $C$ (voir la  Définition 2.2.26 dans \cite{Arne} pour plus de détails) et a construit un foncteur qui relève le dg-nerf classique $\dg\to \sS$ dans un dg-nerf linéaire
$$\dg\to S_\otimes\Mod(k) $$
où $S_\otimes\Mod(k)$ est la catégorie des modules templiciaux ("tensor simpliciaux" ; voir la Définition 2.4 dans \cite{Arne} pour plus de détails). Dans un exposé à l'Institut Poincaré en Septembre 2022 (voir \cite{conf}), Lowen a mentionné que Mertens et elle-même sont en train de travailler sur une équivalence de Quillen pour relier leurs quasi-catégories enrichies sur $\Mod(k)$ avec les dg-catégories, mais que c'est encore un travail en cours.
\\
\\Du côté des catégories de Segal, Bacard a défini dans \cite{Bacard-enriched-Segal-cat} une notion de catégorie de Segal enrichie, et en conséquence une notion de dg-catégorie de Segal, en disant que, une fois un ensemble $\O$ fixé, une dg-catégorie de Segal est un morphisme $W$-colaxe de la forme
$$F:P_{\overline{\O}}\to \Ch, $$
où $\overline{\O}$ est un groupoïde appellé "the coarse category associated to $\O$" et $P_{\overline{\O}}$ est la 2-path-category de $\overline{\O}$ (voir Définitions 4.1 et 2.7 dans \cite{Bacard-enriched-Segal-cat} pour plus de détails). Dans ce cas, aussi, il paraît que la comparaison entre ces objets et les dg-catégories est un travail en cours : en effet, dans l'introduction de ce papier Bacard dit que le premier objectif une fois les catégories de Segal enrichies bien définies est de développer la théorie homotopique des dg-catégories de Segal et de les comparer avec les dg-catégories classiques.
\\
\\On a alors une définition de dg-quasi-catégorie, et une définition de dg-catégorie de Segal, mais si on suit le schéma de la section précédente, il nous manque un modèle : on n'a pas encore parlé d'espaces de dg-Segal complets. Et effectivement, c'est à cet endroit que nos résultats vont se trouver. En suivant Rezk, on va construire une version "linéaire" des espaces de Segal complets et on va (à une certaine hypothèse près) prouver qu'il y a une chaîne d'adjonctions de Quillen qui en fait une équivalence de catégories homotopiques.
\begin{center}
\begin{tabular}{|c|c|}
\hline
catégories simpliciales & dg-catégories \\
\hline
quasi-catégories &  quasi-catégories $k$-linéaires \\
                 &  dg-quasi-catégories\\
\hline
catégories de Segal & dg-catégories de Segal\\
\hline
espaces de Segal complets & ??\\
\hline
\end{tabular}
\end{center}

\section{Espaces de Segal vs espaces de dg-Segal}

Dans l'article de Rezk, \cite{ComSegalSpacesREZK}, on a plusieurs étapes importants pour construire son modèle des $\infty$-catégories. Premièrement, il définit les espaces de Segal comme vu avant, comme un espace simplicial (c'est-à-dire un foncteur de $\Delta\op$ dans $\sS$) tel que certains morphismes sont des équivalences faibles. Après, il démontre qu'il existe une localisation de Bousfield de la structure projective sur les espaces simpliciaux telle que les espaces de Segal sont les objets fibrants dans celle-là. Mais cette structure n'est pas suffisante pour donner une équivalence avec les $\infty$-catégories : il y a une certaine classe de morphismes qui devraient être des équivalences mais ne le sont pas encore. Avec cela en tête, il applique une autre localisation de Bousfield à cette structure de modèles, et il trouve une structure de modèles dont les objets fibrants sont les espaces de Segal complets. Finalement, il définit une classe de morphismes dans la catégorie des espaces simpliciaux, appelés des équivalences de Dwyer-Kan, et il prouve que les équivalences faibles pour la structure de catégorie de modèles pour les espaces de Segal complets sont exactement les équivalences de Dwyer-Kan entre des espaces de Segal.
\\
\\Dans notre cas, on suivra un schéma similaire ; mais le chemin n'est pas si simple. En effet, la structure linéaire des dg-catégories va nous compliquer la tâche à plusieurs reprises. Premièrement, on choisira une catégorie de foncteurs (dans notre cas, la catégorie des foncteurs simpliciaux entre des catégories libres de type fini et les espaces simpliciaux) et on définit les espaces de dg-Segal comme des objets dans cette catégorie tels que certains morphismes soient des équivalences faibles. Déjà, la condition du morphisme de Segal ne sera pas suffisante : on aura besoin d'ajouter une condition en disant qu'on peut ajouter des termes aux complexes de modules dans les morphismes.
\\
\\Une fois que cela est fait, ce n'est pas compliqué de montrer qu'il y a une localisation de Bousfield telle  que les objets locaux sont exactement les espaces de dg-Segal qui sont fibrants pour la structure de modèles projective. Malheureusement, comme dans le cas des espaces de Segal, le foncteur nerf (qu'on appellera ici $\Sing$) ne forme pas une équivalence de Quillen, nous forçant à faire une deuxième localisation de Bousfield. On n'aura pas besoin de chercher trop loin : on définira tout simplement un foncteur "d'oubli" entre les espaces de dg-Segal et les espaces de Segal classiques et on définira les espaces de dg-Segal complets comme les espaces de dg-Segal qui deviennent complets dans l'image du foncteur oubli.
\\
\\Mais ici on se heurte encore à la linéarité. En effet, pour prouver que le nerf est une équivalence de Quillen dans cette structure de modèles, on aimerait faire comme Rezk et définir une classe de morphismes, qu'on appelle des DK-équivalences, de façon à que les équivalences faibles dans la structure de modèles des espaces de Segal complets soient exactement les DK-équivalences entre des espaces de dg-Segal. Mais pour faire cela, Rezk utilise le produit direct dans les espaces simpliciaux pour calculer une certaine exponentielle. On n'a pas ce privilège : comme on a mentionné plus tôt, la structure de modèles et la structure monoïdale ne sont pas compatibles dans $\dg$. En conséquence, pour prouver notre résultat on devra d'abord définir une structure de modèles dans nos espaces de Segal complets. À cause d'un manque de temps, on n'a pas pu le faire dans ce manuscrit, et on admettra le fait que les équivalences dans la structure des espaces de dg-Segal complets sont les DK-équivalences entre des espaces de dg-Segal comme hypothèse.
\\
\\En supposant que cette hypothèse est vraie, par contre, on peut montrer qu'il existe une équivalence entre la catégorie homotopique des dg-catégories et la catégorie homotopique des espaces de dg-Segal complets pour la structure de modèles des espaces de dg-Segal complets.

\section{Pourquoi ça nous intéresse ?}

Évidemment, si on va investir tout ce temps et tout cet effort pour construire des espaces de dg-Segal complets, on doit justifier nos choix. Pourquoi on veut ces objets ?
\\
\\La première raison a été déjà mentionnée plusieurs fois : la catégorie des dg-catégories telle qu'on la connaît a une structure de modèles et une structure monoïdale qui ne sont pas compatibles. En effet, l'objet $\D(1,0,1)$ (c'est-à-dire la dg-catégorie avec deux objets et $k$ le complexe de morphismes entre les deux) est cofibrant dans la structure de modèles des dg-catégories, mais il est facile de voir que $\D(1,0,1)\otimes\D(1,0,1)$ ne l'est pas. Cela dit, la construction même des espaces de dg-Segal complets nous oblige à définir une structure monoïdale sur ceux-là qui sera compatible avec la structure de modèles, résolvant ce souci. Ce serait, en soi, un pas important pour démontrer des versions linéaires des théorèmes classiques de théorie des catégories, comme par exemple le théorème de Barr-Beck, qui non seulement ne peut pas être déduit de sa version non-linéaire, mais ne peut même pas être défini exactement dans le cas linéaire en l'état actuel.
\\
\\En plus de cela, la construction des dg-catégories comme des espaces de dg-Segal nous donne une catégorie de foncteurs, qui est, citant Dugger dans \cite{Dugger}, "une espèce de présentation par générateurs et relations". Comme c'est le cas dans ce type de structures, une telle présentation rendrait la construction des morphismes partant des espaces de dg-Segal complets beaucoup plus facile : on aura juste à les définir sur les générateurs et s'assurer que les "relations" sont envoyées sur des équivalences faibles. En particulier, cela nous donnerait une façon plus simple de calculer les automorphismes de $\dg$.
\\
\\Cela n'est pas un nouveau développement : à la fois Toën dans \cite{TOEN-infini} et Barwick et Schommer-Pries dans \cite{infinity-n-cat} ont utilisé des telles "présentations par générateurs et relations" pour définir des axiomatisations des $(\infty,1)$-catégories et des $(\infty,n)$-catégories, respectivement ; et ensuite ils les utilisent pour montrer que le groupe des automorphismes dans les $(\infty, n)$-catégories est isomorphe au groupe discret $(\Z/2\Z)^n$. Ce n'est pas une coïncidence que le papier de Toën finisse par le résultat que toute théorie des $\infty$-catégories est équivalente à la catégorie des espaces de Segal complets, et pas un autre modèle des $\infty$-catégories.

\section{Organisation de ce texte}

Cette thèse est divisée en trois parties différentes, chacune marquée par un chapitre différent. 
\\
\\Dans le premier chapitre, "Background notions", on va rappelle au lecteur.e (ou lui présenter) des notions qu'on utilisera par la suite : catégories de modèles et adjonctions de Quillen, spécialement celles des complexes de cochaînes et des diagrammes ; catégories de modèles propres à gauche et à droite ; mapping spaces ; localisations de Bousfield à gauche ; objets simpliciaux, catégories simpliciales ; et dg-catégories. On va faire particulièrement attention à la structure de modèles des dg-catégories. Ce chapitre inclut aussi une section expliquant la construction de la "catégorie de modèles universelle" de Dugger.
\\
\\Dans le deuxième chapitre, "dg-Segal spaces", on expose le gros de nos résultats. On peut les diviser en trois parties, au niveau du contenu.
\\
\\Dans une première partie contenant la Section 1, on construit une chaîne d'adjonctions de Quillen entre la catégorie des dg-catégories et $\Fun^\S(\Free_\S\op,\sS)$, la catégorie des foncteurs simpliciaux entre les dg-catégories libres de type fini et les ensembles simpliciaux. On donnera une construction explicite de l'adjonction
$$\Re:\Fun^\S(\Free_\S\op,\sS)\rightleftharpoons \dg : \Sing.$$
\\
\\Dans une seconde partie, qui contient les Sections 2 et 3, on construira les espaces de dg-Segal. Suivant l'exemple de \cite{ComSegalSpacesREZK}, on essayera de déterminer l'image du foncteur $\Sing$ en donnant une description de ces foncteurs selon s'ils font de certains morphismes des équivalences faibles. En particulier, on dit qu'un foncteur $F$ est un \textbf{espace de dg-Segal} s'il vérifie que
\begin{enumerate}
	\item Pour tous $L, K\in\Free_\S$, $F(L\coprod K)\to F(L)\times F(K)$ est une équivalence faible.
	\item L'image de l'objet initial est un point, ou autrement dit, $F(\emptyset)\simeq *$.
	\item Soit $G$ un graphe dans $Gr(\Ch)$. Pour tout $\alpha\in Z^n(G(x,y))$, l'image de la dg-catégorie libre issue de $G(<\alpha>)$ est un pullback homotopique dans $\sS$ de la forme suivante : 
	\begin{center}
		\begin{tikzcd}
			F(L(G(<\alpha>)))\ar[r]\ar[d]& F(L(G))\ar[d]\\
			F(\D^c(1,n,1))\ar[r]& F(\D(1,n,1)).
		\end{tikzcd}
	\end{center}	
\end{enumerate}

Après, on construit une adjonction de Quillen entre $\Fun^\S(\Free_\S\op,\sS)$ et les espaces simpliciaux, et on montre qu'elle envoie les espaces de dg-Segal sur des espaces de Segal classiques. On utilise cette adjonction pour définir les espaces de dg-Segal complets en disant qu'il s'agit des espaces de dg-Segal qui sont envoyés par l'adjonction sur les espaces de Segal complets. Cela nous donne la définition suivante : $F$ est un \textbf{espace de dg-Segal complet} s'il est un espace de dg-Segal et 
$$F(k)\to F_{hoequiv} $$
est une équivalence faible. On prouve que l'image de  $\Sing$ est dans les espaces de dg-Segal complets. On construit ensuite deux structures de modèles pour $\Fun^\S(\Free_\S\op,\sS)$ de façon à ce que les espaces de dg-Segal et les espaces de dg-Segal complets, respectivement, soient leurs objets fibrants. Et finalement dans cette section, on définit des morphismes appelés des DK-équivalences et on émet l'hypothèse que les équivalences faibles entre espaces de dg-Segal sont exactement les équivalences faibles dans la structure de modèles des espaces de dg-Segal complets. 

\begin{hyp}[Hypothèse \ref{ch. 2: Re-zk}] Soit $f:F\to G$ un morphisme entre deux espaces de dg-Segal. Alors, $f$ est une DK-équivalence si et seulement si $f$ est une équivalence faible dans la structure de modèles des espaces de dg-Segal complets.
\end{hyp}

Finalement, la troisième partie de ce chapitre, qui inclut les Sections 4, 5 et 6, démontre (à l'hypothèse près) que $(\Re, \Sing)$ est bien une équivalence de Quillen dans les espaces de dg-Segal complets. Pour cela, on construit dans la Section 4 un certain type d'hyperrecouvrement, d'abord dans une catégorie de modèles générale et après dans les dg-catégories, $T_*\to T$, et on montre que la colimite homotopique d'un tel hyperrecouvrement est faiblement équivalente à l'objet d'origine, $\hocolim T_i\simeq T$.  Après, dans la Section 5 on utilise ces hyperrecouvrements pour montrer que l'adjonction est pleinement fidèle à DK-équivalence près.

\begin{teo}[Théorème \ref{ch. 2: fully faithfulness}] En supposant que l'Hypothèse \ref{ch. 2: Re-zk} est vraie, pour tout $T\in\dg$ on a $\Re_k(\Sing_k)(T)\simeq T$ et le foncteur $\Sing$ est pleinement fidèle.
\end{teo}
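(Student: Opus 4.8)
The plan is to deduce the statement from the general principle governing Quillen adjunctions: the right derived functor of such an adjunction is fully faithful on homotopy categories exactly when the derived counit is a natural weak equivalence. Since $(\Re,\Sing)$ is a Quillen adjunction between the complete dg-Segal space structure on $\Fun^\S(\Free_\S\op,\sS)$ and $\dg$, it therefore suffices to prove that for every fibrant $T\in\dg$ the derived counit $\Re_k(\Sing_k(T))\to T$ is a weak equivalence; fully faithfulness of $\Sing$ then follows formally, so the entire difficulty is concentrated in the reconstruction $\Re_k(\Sing_k(T))\simeq T$. Since $T$ is fibrant, $\Sing(T)$ already computes $\Sing_k(T)$, and it is a complete dg-Segal space.

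The main idea is to present $\Sing(T)$ as a homotopy colimit of representable functors and to transport this presentation across $\Re$. I would use Dugger's construction of the universal model category (recalled earlier in the text): every object $F\in\Fun^\S(\Free_\S\op,\sS)$ is canonically the homotopy colimit of the representables $r(L)=\Map_{\Free_\S}(-,L)$ indexed by the category of elements $(\Free_\S\downarrow F)$. I would first record the two facts that make $\Re$ compatible with this presentation: on the one hand $\Re$ sends a representable $r(L)$ to the free dg-category $L$, so that $\Re_k(r(L))\simeq L$; on the other hand $\Re_k$, being a left derived functor, commutes with homotopy colimits. Granting these, one obtains $\Re_k(\Sing_k(T))\simeq\hocolim_{L\to T}L$, the homotopy colimit of the finitely generated free dg-categories mapping into $T$.

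The heart of the proof is then to identify this homotopy colimit with $T$ itself, and this is exactly where the hypercovering technology of Section 4 enters. Using a hypercovering $T_*\to T$ by (coproducts of) finitely generated free dg-categories, for which Section 4 already establishes $\hocolim_i T_i\simeq T$, I would compare the canonical Dugger presentation of $\Sing(T)$ with the image under $\Sing$ of this hypercovering. The key lemma to prove is that $\Sing$ carries $T_*\to T$ to a homotopy colimit diagram of dg-Segal spaces, i.e.\ that $\Sing(T)\simeq\hocolim_i\Sing(T_i)$, together with the identification $\Sing(T_i)\simeq r(T_i)$ on the free pieces, which holds because $\Free_\S$ sits inside $\dg$ as a full simplicial subcategory so that the two mapping spaces agree. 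Applying $\Re_k$ and the two computations above then yields $\Re_k(\Sing_k(T))\simeq\hocolim_i\Re_k(r(T_i))\simeq\hocolim_i T_i\simeq T$, and one verifies that this chain of equivalences realizes the counit.

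I expect the genuine obstacle to be the comparison $\Sing(T)\simeq\hocolim_i\Sing(T_i)$. Because the monoidal and model structures on $\dg$ are incompatible, one cannot argue as in Rezk's simplicial setting, where the direct product lets one compute the relevant exponentials directly; the only route I see is to show that the comparison map is a DK-equivalence and then invoke Hypothesis \ref{ch. 2: Re-zk} to upgrade this to a genuine weak equivalence in the complete dg-Segal space structure. Establishing that this map is a DK-equivalence — simultaneously controlling its effect on the space of objects and on each mapping complex through the Segal and completeness conditions of Section 3 — is the technical crux on which the whole argument rests, and it is precisely for this final upgrade that the hypothesis is assumed.
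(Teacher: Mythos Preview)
Your proposal is essentially correct and follows the same route as the paper: build a $\Free_\S$-hypercover $T_*\to T$, show that $\hocolim_i\Sing(T_i)\to\Sing(T)$ is a DK-equivalence (hence, under the Hypothesis, a weak equivalence in the complete dg-Segal structure), apply $\Re_k$, use that $\Re$ commutes with homotopy colimits and that $\Re\Sing(T_i)\simeq T_i$ on the free pieces, and finish with $\hocolim_i T_i\simeq T$ from Section~4. This is exactly the paper's argument, and you have correctly located the single place where the Hypothesis is invoked.

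One remark: the detour through Dugger's canonical presentation---writing $\Sing(T)$ as a homotopy colimit over the category of elements and deducing $\Re_k(\Sing_k(T))\simeq\hocolim_{L\to T}L$---is not used in the paper and is not needed in your argument either, since you immediately replace it by the hypercover presentation. (Your description of the indexing category as ``maps $L\to T$'' is also too coarse; Dugger's $QF$ is the diagonal of a bisimplicial object, not a colimit over a plain slice.) The paper works directly with the hypercover diagram and a two-out-of-three argument, which is cleaner. The identification $\Sing(T_i)\simeq r(T_i)$ you invoke is precisely the content of the paper's Proposition on full faithfulness of $\Sing$ restricted to $\Free$, so that step is already available.
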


Dans la dernière section, Section 6, on montre que (en supposant que l'hypothèse est vraie) l'adjonction est essentiellement surjective sur les espaces de dg-Segal. Pour cela, on construit un type spécial d'hyperrecouvrements de foncteurs dans $\Fun^\S(\Free_\S\op,\sS)$ de la forme $\Sing(T_*)\to F$ et on prouve que l'image de la colimite homotopique de $T_*$ est DK-équivalente à $F$.

\begin{teo}[Théorème \ref{ch. 2: essential surjectivity}] Soit $F$ un espace de dg-Segal. Alors il existe une dg-catégorie $T$ telle qu'il existe une DK-équivalence $\Sing(T)\to F$. En conséquence, si l'Hypothèse \ref{ch. 2: Re-zk} est vraie, l'adjonction $\Re:\dgS\rightleftharpoons \dg:\Sing$ est une équivalence de Quillen.
\end{teo}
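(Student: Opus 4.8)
The plan is to adapt the Dugger--Rezk strategy to the linear setting. Given a dg-Segal space $F$, I would first produce a simplicial object $T_\bullet$ in $\dg$ whose terms $T_n$ are (finitely generated) free dg-categories, together with an augmentation $\Sing(T_\bullet)\to F$ that is a hypercovering in the sense of Section 4; I would then set $T:=\hocolim_n T_n$ and prove $\Sing(T)\simeq F$. The construction of $T_\bullet$ uses that $\Fun^\S(\FreeS\op,\sS)$ is a universal model category in Dugger's sense, so that $F$ is canonically a homotopy colimit of representables, the representables being exactly the functors $\Sing(L)$ for $L\in\FreeS$. Concretely, $T_0$ is a coproduct of one-object free dg-categories indexed by the objects recorded by $F$, and the higher $T_n$ encode the morphisms, their compositions and the higher coherences, arranged so that the augmentation satisfies the matching-object conditions that define a hypercovering.

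The argument then splits in two. First, the hypercovering result of Section 4, transported from $\dg$ to the functor category $\Fun^\S(\FreeS\op,\sS)$, yields a weak equivalence
$$\hocolim_n \Sing(T_n)\ \xrightarrow{\simeq}\ F,$$
encoding the descent of the dg-Segal space $F$ along the hypercovering. Second, I would show that the canonical comparison map
$$\hocolim_n \Sing(T_n)\ \longrightarrow\ \Sing\bigl(\hocolim_n T_n\bigr)=\Sing(T)$$
is a DK-equivalence. Granting both, we obtain $\Sing(T)\simeq F$, which is the sought DK-equivalence and establishes essential surjectivity.

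For the final assertion I would combine this with the fully faithfulness theorem (Théorème \ref{ch. 2: fully faithfulness}): the derived counit $\Re(\Sing(T))\to T$ is a weak equivalence for every $T\in\dg$, so the derived functor induced by $\Sing$ on homotopy categories is fully faithful. Essential surjectivity shows every dg-Segal space is, up to DK-equivalence, of the form $\Sing(T)$; under Hypothesis \ref{ch. 2: Re-zk} these DK-equivalences coincide with the weak equivalences of the complete dg-Segal model structure, so the image is correctly identified and $\Sing$ induces an essentially surjective functor $\Ho(\dg)\to\Ho(\dgS)$. A fully faithful and essentially surjective functor is an equivalence, hence $(\Re,\Sing)$ is a Quillen equivalence.

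The main obstacle is the comparison map $\hocolim_n \Sing(T_n)\to\Sing(\hocolim_n T_n)$. Since $\Sing$ is a right adjoint it does not commute with homotopy colimits in general, so I would reduce the claim, using the Segal condition (condition~1) to replace $\Sing$ on a free dg-category by a product of its values on the morphism-generating categories $\D(1,n,1)$, and then invoke the linear pullback condition (condition~3) to control the remaining homotopy colimit on each such generator. It is precisely this condition~3 --- adjoining a cocycle $\alpha\in Z^n$ to a morphism complex --- that has no analogue in Rezk's non-linear setting, and handling it for the linear structure of $\dg$, where the monoidal and model structures are incompatible, is the step I expect to require the most care.
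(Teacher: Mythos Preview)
Your overall architecture matches the paper's: build a simplicial object $T_\bullet$ in $\dg$ with an augmentation $\Sing(T_\bullet)\to F$ that is a hypercover, set $T=\hocolim T_n$, and then deduce that $\Sing(T)\to F$ is a DK-equivalence from the two legs
\[
\hocolim_n\Sing(T_n)\longrightarrow F\qquad\text{and}\qquad \hocolim_n\Sing(T_n)\longrightarrow\Sing(T).
\]
However, two of your key technical choices diverge from the paper in ways that matter.

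\textbf{Construction of the hypercover.} You propose to use Dugger's universal resolution, with $T_0$ a coproduct of one-object free dg-categories. The paper does something different and more targeted: it fixes the set of objects $\O=\pi_0(F(k))$ once and for all and works inside $\dgS_\O$. The level $T_0$ is the free dg-category on the graph with vertex set $\O$ and edge-complexes $G(x,y)=F(x,y)$, the representing complexes of the dg-mapping spaces $F_{(x,y)}$. This is what guarantees that $\Sing(T_0)_{(x,y)}\to F_{(x,y)}$ is a \emph{split} epimorphism, and more generally that at every stage the map on morphism complexes $T_n(x,y)\to (T_*)^{\partial\Delta^n}(x,y)$ is split. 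Your Dugger-style resolution does not obviously produce this splitness, and without it the subsequent arguments do not go through.

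\textbf{The comparison map.} You plan to attack $\hocolim_n\Sing(T_n)\to\Sing(T)$ directly via the dg-Segal conditions 1 and 3. The paper takes a different, shorter route: because the hypercover was built with fixed objects and split morphism-complex maps, one checks that $T_*\to T$ is itself a $\FreeS$-hypercover in $\dg_\O$ (split on each $T_*(x,y)$), and then simply invokes Lemma~\ref{ch.2 hocolim DK}, which was already proved in the fully-faithfulness section. In other words, the same lemma handles both the counit computation and the comparison map here; no new argument involving condition~3 is needed. Your proposed reduction via condition~3 is not carried out in the paper and it is not clear it would succeed without the splitness control.

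Finally, a small point: the first leg $\hocolim_n\Sing(T_n)\to F$ is only shown to be a DK-equivalence, not a levelwise weak equivalence; one then needs that $\hocolim_n\Sing(T_n)$ is itself a dg-Segal space (Lemma~\ref{ch.2 hocolim dg-Segal}) so that the restricted two-out-of-three for DK-equivalences (Remark~\ref{ch. 2: pseudo 2oo3}) applies to conclude that $\Sing(T)\to F$ is a DK-equivalence.
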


Dans le troisième chapitre, "Future Work", on explore plusieurs voies dans lesquelles on pourrait étendre le travail de cette thèse. Dans une première section, on parlera de l'hypothèse du chapitre précédent, et on expliquera quelles seraient les méthodes qu'on pourrait utiliser pour le prouver. Dans une deuxième section, on parlera de la catégorie linéaire des simplexes, $\D$. Cela est construit pour être une sorte de version linéaire de la catégorie des simplexes $\Delta$, et on pense que cela nous donnera une équivalence de Quillen entre $\Fun^\S(\Free_\S\op,\sS)$ et $\Fun^\S(\D\op,\sS)$ qui nous permettrait de définir les espaces de dg-Segal complets comme des foncteurs simpliciaux entre $\D$ et $\sS$. Finalement, on fait quelques commentaires rapides sur d'autres applications possibles de nos résultats.

\section{Notations}\label{intr-fr-notations}

Même si toutes les notations ici seront expliquées dans le chapitre suivant, on les mentionne ici pour un accès plus simple. 

\begin{itemize}
	\item On note une adjonction entre deux catégories par $F:M\rightleftharpoons N:G$, avec l'adjoint à gauche étant toujours la flèche en haut.
	\item On note la catégorie des ensembles simpliciaux par $\sS$, la catégorie des complexes de cochaînes par $\Ch$ et la catégorie des dg-catégories par $\dg$.
	\item On note par $k$ un anneau commutatif. On note le produit tensoriel sur $k$ dans $\Ch$ par $-\otimes -$ et le shift dans un complexe de cochaînes $A$ par $A[-]$.
	\item On note par $k[s]$ le complexe de cochaînes concentré en degré $s$, où il vaut $k$, et par $k^c[s]$ le complexe concentré en degrés $s$ et $s-1$, où il vaut $k$.
	\item On note par $\Fun(A,B)$ la catégorie des foncteurs entre des catégories $A$ et $B$, et par $\Fun^\S(A,B)$ la catégorie des foncteurs simpliciaux entre deux catégories simpliciales $A$ et $B$.
	\item On note par $Gr(A)$ la catégorie des graphes enrichis sur une catégorie $A$ et $Gr(\Ch)^{tf}$ la sous-catégorie pleine des graphes enrichis sur les complexes de type fini.
	\item On note par $\mathcal{L}$ la catégorie des dg-catégories libres, et par $\Free$ la sous-catégorie pleine des dg-catégories libres de type fini cofibrantes.
	
\end{itemize}

\end{otherlanguage}
 
\chapter{Introduction}

\epigraph{\textit{"There are some things that it is better to begin than to refuse, even though the end may be dark."}}{---J.R.R. Tolkien, \textit{The Lord of the Rings}}

The subject of this thesis is to define and study a new model for the homotopy theory of dg-categories, which is better behaved than the original model structure on dg-categories and also closer to the well established models of $\infty$-categories. As such, before we start, it will be useful to do a quick round of what we know about the existing models in both $\infty$-categories and dg-categories.
\\
\\We have done our best to explain the notations as they appear, but  all the main notion's notations are gathered in Section \ref{intr-en-notation} for easy access.

\section{The models of $\infty$-categories}

Let us start with higher categories. On an intuitive level, we can say that an $\infty$-category (or, to be precise, an ($\infty,1$)-category) is a category enriched over $\infty$-groupoids: a category with a set of objects, and for every two objects, an $\infty$-groupoid between them. In other words, instead of just having morphisms between objects, we ask to have $2$-morphisms between the morphisms, and $3$-morphisms between the $2$-morphisms, etc, while asking all $n$-morphisms with $n\geq 2$ to be invertible. Such a construction, also called in the literature "a homotopy theory", arises in multiple different situations. 
\\
\\A classic example in which an $\infty$-category arises from common computations is what happens when we take an interest on structures that are classified up to a notion that is less strong than the one of isomorphism (topological spaces up to homotopy equivalence, chain complexes up to weak equivalence, among others). We would want to localize the category in order to have that class of morphisms be invertible. This problem was successfully tackled by Quillen in \cite{Quillen} with the introduction of model categories; but as useful as they are, the problem with those is that model categories have a whole array of structure we need to add to make it work. Also, their homotopy category (the localization in question) fails to remember the higher order information that we had constructed. In order to solve those problems, Dwyer and Kan introduced in \cite{DK-infini-cat} a simplicial localization, which is a category enriched over simplicial sets. From there, several researchers have worked over the years in order to find a good model for such a category. Four approaches to $\infty$-categories, in particular, have been the most influential.
\\
\\The first option, of course, is simplicial categories themselves. Dwyer and Kan develop the theory of simplicial localizations in this setting, as categories enriched on simplicial sets. As this will be expanded upon in Section \ref{Simplicial categories}, we will not spend much time on it here: we will just add that Bergner proved that the category of simplicial categories has a model structure in \cite{Berg-simplicial}.
\\
\\A second model, and a very successful one at that, would be quasi-categories. First defined by Boardman and Vogt in \cite{Boardman-Vogt-quasicat}, this approach defines quasi-categories (also called "weak Kan complexes") to be simplicial sets $X$ such that every inner horn $\Lambda^k[n]\to X$, there exists a filler $\Delta[n]\to X$, for all $n\in\N$ and all $0<k<n$, effectively asking that for every two $n$-morphisms there be a third "composition" $n$-morphism and a $(n+1)$-morphism linking the two. This theory was developed further by Joyal in papers like \cite{JoyalQuasi-cat} and, of course, Lurie in \cite{HTT} and \cite{LurieHA}. Joyal and Tierney also proved that there is a model structure on it in \cite{JTquasi-Segal}.
\\
\\Our third model will be Segal categories. These are a natural generalization of simplicial categories, as they can be seen as simplicial categories with its composition written only up to homotopy. They are first defined by Dwyer, Kan and Smith in \cite{Dwyer-Kan-Smith}, and they are defined to be simplicial spaces $X:\Delta\op\to \sS$ such that $X_0$ is a discrete simplicial set and for all $k\geq 1$ the Segal map 
$$\phi_k:X_k\to \overbrace{X_1\times_{X_0}\ldots\times_{X_0}X_1}^{\text{$k$ times}} $$
is a weak equivalence. The category of Segal categories has a model structure, constructed for a general Segal $n$-category by Hirschowitz and Simpson in \cite{Simpson} (text in French) and in an alternative version by Bergner in \cite{Bergner-equivalences}.
\\
\\Lastly, the fourth and (for us) most interesting model for $\infty$-categories is that of complete Segal spaces. Defined for the first time by Rezk in \cite{ComSegalSpacesREZK}, a complete Segal space is also a simplicial space such that the Segal map $\phi_k$ is a weak equivalence for all $k\geq 1$, but instead of asking for the $X_0$ to be discrete, we ask instead that the morphism 
$$X_0\to X_{hoequiv} $$
is a weak equivalence, where $X_{hoequiv}$ is the space of homotopy equivalences. In that same paper, Rezk constructs a model structure in which the complete Segal spaces form the fibrant objects. 
\\
\\Now, we have constructed four different models, claiming that they are all models of the same thing, but that is not immediately obvious by looking at the definitions. They have things in common, yes, but they are also pretty different from each other. But that is why we have introduced the model structures on all of them: in fact, Bergner proves in \cite{Bergner-equivalences} that there is a Quillen equivalence (i.e., an equivalence in the theory of model categories) between complete Segal spaces and Segal categories, and another between Segal categories and simplicial categories. On their side, Joyal and Tierney prove in \cite{JTquasi-Segal} that quasi-categories and complete Segal spaces are also Quillen equivalent. Attention, however: even though we know that all those categories are Quillen equivalent, the Quillen adjunctions go in opposite directions, which means that they can't be composed into a single Quillen equivalence.
\\
\\There have been a few attempts to give an axiomatic definition of $\infty$-categories over the years, like Toën's  "theory of $\infty$-categories" in \cite{TOEN-infini} (text in French) and Riehl and Verity's "$\infty$-topos" in \cite{Verity-Riehl}, but we will not be talking about those in more detail in here.

\section{The models of dg-categories}

Let us now talk about dg-categories. Like simplicial categories, dg-categories are defined as being categories enriched over something: in this case, cochain complexes. As we will be discussing them in detail in Section \ref{Dg-categories}, we won't go into detail here: we're just going to comment that Tabuada proved in \cite{TAB_Fr} (text in French; see \cite{TAB} for a definition in English) that there is a model structure over $\dg$.
\\
\\Now, if $\infty$-categories already have a long and proud history, dg-categories are even older: indeed, we already find them being used on papers from the 1960s, like Kelly's 1965 paper \cite{Kelly}. In sight of that, we will restrain from procuring a full history of their evolution here: it is also not the point of this introduction. Suffice to say that although categories and $\infty$-categories are in most cases enough to work with in Algebraic Topology, Algebraic Geometry often has to contend with a notion of linearity that doesn't necessarily gel well with those concepts. In consequence, dg-categories, having an embedded notion of linearity already, have established themselves over the years as a essential tool in the field, up to the present day. A good example of that use is the Geometric Langlands Program, which was rewritten by Arinkin and Gaitsgory in \cite{Geometric-Langlands} in terms of dg-categories. For more details on that, we direct the reader towards the book \textit{A study in Derived Algebraic Geometry}, by Gaitsgory and Rozenblyum, (see \cite{Gaitsgory-Rozenblyum-Vol-I} and \cite{Gaitsgory-Rozenblyum-Vol-II}).
\\
\\But as useful as dg-categories are, they aren't perfect. Indeed, for example, the category of dg-categories has a monoidal structure, and it has a model structure; but those two aren't compatible. That, among other things, has pushed researchers in the last few years to try and find different models of dg-categories, following the footsteps of $\infty$-categories. We have already talked about our first analogy: dg-categories would be the simplicial categories. It is interesting to notice, too, that one of the reasons that pushed people to search for alternative models for $\infty$-categories outside of simplicial categories is that, like in our case, the simplicial model structure on simplicial categories also isn't compatible with its monoidal structure (the direct product, in that context).
\\
\\Let us start with quasi-categories. We have one result by Cohn in \cite{COHN} saying that there is an equivalence between the underlying $\infty$-category of the model category of dg-categories localized at the Morita equivalences, and the $\infty$-category of small idempotent-complete $k$-linear stable quasi-categories. Unluckily for us, $k$-linear stable quasi-categories aren't really very user-friendly: for example, if we have an adjoint between two linear quasi-categories, it is very complicated to prove that there exists a linear adjoint. Also, if $x$ is an object in a $k$-linear quasi-category, proving that $\operatorname{End}(x)$ can be represented by a dg-algebra is very complicated in the world of $k$-linear quasi-categories, but it is an almost direct result of the definition on dg-categories. 
\\
\\More recently, there has been another approach to the subject: Mertens has offered in \cite{Arne-thesis} a definition of a $C$-enriched quasi-category for $C$ a cocomplete monoidal category (see definition 2.2.26 in \cite{Arne-thesis} for more details), and has constructed a functor that lifts the classical dg-nerve functor $\dg\to \sS$ into a \textit{linear} dg-nerve
$$\dg\to S_\otimes\Mod(k) $$
where $S_\otimes\Mod(k)$ is the category of templicial (meaning 'tensor simplicial'; their "$\Mod(k)$-enriched" simplicial sets) modules (see definition 2.4 in \cite{Arne} for more details). In a talk at the Institut Poincaré in September 2022 (see \cite{conf}) , Lowen mentioned that Mertens and herself are working on a Quillen equivalence to link their $\Mod(k)$-enriched quasi-categories to dg-categories, but that it is still a work in process.
\\
\\On the Segal category side of things, Bacard has defined in \cite{Bacard-enriched-Segal-cat} a notion of enriched Segal category, and consequently a notion of Segal dg-category, by saying that, fixing a set $\O$, a Segal dg-category is a $W$-colax morphism of the form
$$F:P_{\overline{\O}}\to \Ch, $$
where $\overline{\O}$ is a groupoid called the coarse category associated to $\O$ and $P_{\overline{\O}}$ is the 2-path-category of $\overline{\O}$ (see definitions 4.1 and 2.7 in \cite{Bacard-enriched-Segal-cat} for more details). In this case, too, it seems like the comparison of these objects to dg-categories is a work in progress: indeed, in the introduction of this paper Bacard says the first task now that the enriched Segal categories are defined will be to develop the homotopy theory of dg-Segal categories and compare it to our original categories.
\\
\\So we have a definition of a dg-quasi-category, and a definition of a dg-Segal category, but if we are following the same pattern as in the last section, there is one left: we have no definition of a complete dg-Segal \textit{space}. And that is where our results come in. Following the footsteps of Rezk, we will construct a "linearized version" of the complete Segal spaces and will (up to a certain unproved hypothesis) see that there is a chain of Quillen adjunctions that makes it into an equivalence of the homotopy categories.
\begin{center}
\begin{tabular}{|c|c|}
\hline
simplicial categories & dg-categories \\
\hline
quasi-categories & $k$-linear quasi-categories \\
                 &  dg-quasi-categories\\
\hline
Segal categories & Segal dg-categories\\
\hline
complete Segal spaces & ??\\
\hline
\end{tabular}
\end{center}

\section{dg-Segal spaces vs Segal spaces}

In Rezk's paper \cite{ComSegalSpacesREZK}, we have several important steps in order to construct his model for $\infty$-categories: first of all, of course, he defines Segal spaces as above, as a simplicial space (i.e., a functor from $\Delta\op$ to $\sS$) such that a certain amount of morphisms are weak equivalences. Then, he proves that there is a Bousfield localization from the projective structure on simplicial spaces such that Segal spaces are exactly its local objects. But that structure is not enough to prove an equivalence with $\infty$-categories; there is a certain class of morphisms that will have to be inverted still. With that in mind, he applies another Bousfield localization to his model structure, getting a model structure whose local objects will be defined as being the complete Segal spaces. Finally, he defines a class of morphisms in the category of simplicial spaces, called Dwyer-Kan equivalences, and he proves that the weak equivalences for the complete Segal model structure are exactly the Dwyer-Kan equivalences between Segal spaces. 
\\
\\In our case, we will follow a similar pattern: but it is not a straight and clear road.  Indeed, the linear structure that we now have on our categories will come up again and again to complicate things. Firstly, we will choose a certain category of functors (in our case, the slightly more complicated category of functors from free dg-categories of finite type to simplicial sets) and we define dg-Segal spaces to be the an object in that category such that a certain amount of morphisms are weak equivalences. Already, the condition on the Segal morphism won't be enough: we will need to add a condition saying that we can add a term to the complex of modules in the morphisms. 
\\
\\Once that is done, it's not complicated to prove that there is a Bousfield localization such that its local objects are exactly the dg-Segal spaces that are fibrant for the projective structure. Then, as in the case of Segal spaces, the nerve functor (that we will here call $\Sing$) will not be a Quillen equivalence for this model structure, forcing us to do a second Bousfield localization. We won't have to look very far for this: we will simply define a "forgetful" functor between dg-Segal spaces and classical Segal spaces and define complete dg-Segal spaces to be the dg-Segal spaces that become complete by the forgetful functor.
\\
\\But here we stumble upon our dear linearity again. Indeed, in order to prove that our nerve functor is a Quillen equivalence here, we would like to do as Rezk does and we define a class of morphisms that we will call DK-equivalences, such that the weak equivalences in the complete dg-Segal model structure are the DK-equivalences between dg-Segal spaces. But in order to do so, Rezk uses the direct product on simplicial spaces in order to compute a certain exponential. We don't have that privilege: as mentioned before, the monoidal structure and the model structure in $\dg$ are not compatible. In consequence, in order to prove our main result, we will first have to define a monoidal model structure on our complete dg-Segal spaces. Due to a lack of time, this hasn't been possible to achieve in this manuscript, and we leave the fact that DK-equivalences between dg-Segal spaces are the weak equivalences for the complete dg-Segal model category as a hypothesis. 
\\
\\Assuming that hypothesis to be true, though, we can effectively prove that there exists an equivalence between the homotopy category of dg-categories and the homotopy category of complete dg-Segal spaces for the complete dg-Segal model structure.

\section{Why do we care?}

Of course, if we are going to spend all this time and effort constructing these complete dg-Segal spaces, we'll need to justify our choices. Why do we want to construct these objects?
\\
\\The first reason has already been mentioned: the category of dg-categories as we know them has a monoidal structure, and also a model structure, but those two aren't compatible. Indeed, the object $\D(1,0,1)$ (i.e., the dg-category with two objects and $k$ as the complex of morphisms between the two) is cofibrant in the model category of dg-categories, but it is easy to prove that $\D(1,0,1)\otimes\D(1,0,1)$ is not. Now, the construction of the complete dg-Segal model structure itself will force us to define a monoidal structure on them that will be compatible with the model structure, solving that issue. That would probably be a big step in order to prove linear versions of classic category theorems, like the Barr-Beck theorem, which at this point in time not only cannot be deduced from its classical version, but can't even be defined properly in the linear case.
\\
\\Also, the construction of dg-categories as dg-Segal spaces will leave us with a category of functors, which are, to quote Dugger in \cite{Dugger}, "a kind of presentation by generators and relations". As it is usually the case in those types of structures, having a presentation like that makes constructing functors from the category of complete dg-Segal spaces much easier: we will just have to define it on the "generators" and make sure they take the "relations" to weak equivalences. In particular, that would give us a nicer and easier way to calculate the automorphisms in $\dg$.
\\
\\This is not a new way to go around it, either: both Toën in \cite{TOEN-infini} and Barwick and Schommer-Pries in \cite{infinity-n-cat} have utilized these "presentations by generators and relations" in order to define axiomatizations of $(\infty, 1)$-categories and $(\infty, n)$-categories respectively; and they then go on to use those to prove that the group of automorphisms of $(\infty, n)$-categories is isomorphic to the discrete group $(\Z/2\Z)^n$. It is not a coincidence that Toën's paper ends by saying that every theory of $\infty$-categories is equivalent to the category of complete Segal spaces, and not some other model.


\section{Organisation of the main text}

This thesis is divided in three different sections, each marked by a different chapter.
\\
\\In the first chapter, 'Background Notions', we will remind the reader (or introduce them) to the basic notions we will be working with: model categories and Quillen adjunctions, especially those of cochain complexes and of diagrams; left and right proper model categories and homotopy colimits; mapping spaces; left Bousfield localizations; simplicial objects, simplicial categories, and dg-categories. We will pay special attention to the model structure of dg-categories. This chapter also includes a section explaining the construction of Dugger's "universal model category" for a category $C$.
\\
\\In the second chapter, 'dg-Segal Spaces', we deal with the bulk of the results. We could divide it into three parts, content-wise.
\\
\\In the first part, which is exclusively section 1, we will construct a chain of Quillen adjunctions between the category of dg-categories and $\Fun^\S(\Free_\S\op,\sS)$, the category of simplicial functors between free dg-categories of finite type and simplicial sets. We will give an explicit construction of that functor,
$$\Re:\Fun^\S(\Free_\S\op,\sS)\rightleftharpoons \dg : \Sing.$$
\\
\\In the second part, which encompasses sections 2 and 3, we will discuss and construct dg-Segal spaces and complete dg-Segal spaces. Following the example of \cite{ComSegalSpacesREZK}, we will try to determine the image of the functor $\Sing$ by giving a description of its functors in terms of whether they make certain morphisms into weak equivalences. In particular, we say that a functor $F$ is \textbf{a dg-Segal space} if it satisfies

\begin{enumerate}
	\item For all $L, K\in\Free_\S$, $F(L\coprod K)\to F(L)\times F(K)$ is a weak equivalence. 
	\item The image of the initial object is a point, i.e. $F(\emptyset)\simeq *$.
	\item Let $G$ be a graph in $Gr(\Ch)$. For all $\alpha\in Z^n(G(x,y))$, the image of the free dg-category issued from $G(<\alpha>)$ is a homotopy pullback in $\sS$ of the following form:
	\begin{center}
		\begin{tikzcd}
			F(L(G(<\alpha>)))\ar[r]\ar[d]& F(L(G))\ar[d]\\
			F(\D^c(1,n,1))\ar[r]& F(\D(1,n,1)).
		\end{tikzcd}
	\end{center}	
\end{enumerate}

Then, we construct a Quillen adjunction between $\Fun^\S(\Free_\S\op,\sS)$ and simplicial spaces, and prove that it sends dg-Segal spaces to classical Segal spaces. We then use that adjunction to define complete dg-Segal spaces by saying that they are the dg-Segal spaces whose image through the linearisation functor is a complete Segal space. That gives us the following definition: $F$ is \textbf{a complete dg-Segal space} if 
$$F(k)\to F_{hoequiv} $$
is a weak equivalence. We prove that the image of the Quillen adjunction $\Sing$ is included in the complete Segal spaces. We construct two model structures for $\Fun^\S(\Free_\S\op,\sS)$  such that dg-Segal spaces and complete dg-Segal spaces, respectively, are their fibrant objects. And lastly in this section, we define a type of morphism called DK-equivalences and we hypothesize that the weak equivalences for the complete dg-Segal model structure are exactly the DK-equivalences.

\begin{hyp}[Hypothesis \ref{ch. 2: Re-zk}] Let $f:F\to G$ be a morphism between two functors satisfying the dg-Segal conditions. Then, $f$ is a DK-equivalence if and only if it is a weak equivalence in the complete dg-Segal model structure.  
\end{hyp}

Lastly, the third part of this chapter, which encompasses sections 4, 5 and 6, deals with proving that (assuming the hypothesis to be true) the $(\Re, \Sing)$ is actually a Quillen equivalence on the complete dg-Segal spaces. Firstly, in section 4 we will define and construct a certain type of hypercovers, first in a general model category and then on dg-categories, $T_*\to T$, and prove that the homotopy colimit of such a hypercover is weak equivalent to the original object, $\hocolim T_i\simeq T$. Then, in section 5 we use those hypercovers to prove that the adjunction $(\Re,\Sing)$ is fully faithful, up to DK-equivalences.

\begin{teo}[Theorem \ref{ch. 2: fully faithfulness}] Assuming Hypothesis \ref{ch. 2: Re-zk} to be true, for all $T\in\dg$ we have $\Re_k(\Sing_k)(T)\simeq T$, and the functor $\Sing_k$ is fully faithful.
\end{teo}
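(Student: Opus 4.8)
The plan is to prove the two assertions together, since they are two faces of the same fact. For a Quillen adjunction $\Re \dashv \Sing$ between the complete dg-Segal model structure and $\dg$, the derived right adjoint is fully faithful on homotopy categories precisely when the derived counit $\Re_k\Sing_k(T)\to T$ is a weak equivalence for every $T$; this is the homotopical version of the elementary fact that a right adjoint is fully faithful if and only if its counit is a natural isomorphism. So it suffices to produce the equivalence $\Re_k\Sing_k(T)\simeq T$, after which full faithfulness of $\Sing_k$ follows formally.

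To produce that equivalence, I would first write $\Sing_k(T)$ as a homotopy colimit of representables inside $\Fun^\S(\Free_\S\op,\sS)$. By the canonical-presentation property of Dugger's universal model category, every object of this functor category is the homotopy colimit of the representable functors mapping into it; applied to $\Sing_k(T)$, the indexing diagram is the family of free finite-type dg-categories $L\to T$. Rather than work with this entire comma category, I would use the hypercover $T_*\to T$ constructed in Section~4, whose terms lie in $\Free_\S$, to replace it by a tractable simplicial diagram, giving $\Sing_k(T)\simeq \hocolim_n r(T_n)$, where $r(T_n)$ denotes the representable on $T_n$. Since $\Re$ is left Quillen, $\Re_k$ commutes with homotopy colimits, and by construction of the realization its value on a representable is the represented object, $\Re_k(r(L))\simeq L$. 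Hence
$$\Re_k\Sing_k(T)\simeq \hocolim_n \Re_k(r(T_n))\simeq \hocolim_n T_n,$$
and the homotopy-colimit computation of Section~4, $\hocolim_n T_n\simeq T$, closes the argument.

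The hard part will be the middle step: matching the intrinsic presentation of $\Sing_k(T)$ by representables with the hypercover $T_*\to T$, and checking that $\Sing$ carries this hypercover to a genuine homotopy-colimit resolution. Here the linearity obstruction bites — because the monoidal and model structures on $\dg$ are incompatible, the mapping spaces $\Map(L,T)$ cannot be manipulated via exponentials as in Rezk's argument, so controlling the interchange of $\Sing$ with homotopy colimits and verifying $\Re_k(r(L))\simeq L$ in the localized structure requires care. This is also exactly where Hypothesis~\ref{ch. 2: Re-zk} enters: the equivalences above are most naturally established as DK-equivalences at the level of mapping spaces, and the hypothesis is what upgrades them to weak equivalences in the complete dg-Segal model structure, so that the derived-functor identities are valid in $\Ho(\dg)$ for the relevant localized model structures.
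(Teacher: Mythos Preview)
Your proposal is correct and follows essentially the same route as the paper: resolve $T$ by a $\Free_\S$-hypercover $T_*\to T$, use that $\Re$ commutes with homotopy colimits and that $\Re\circ\Sing$ is the identity on free dg-categories (Proposition~\ref{ch. 2: fully faithful for free}), and close with $\hocolim T_i\simeq T$ (Proposition~\ref{ch. 2: free colimitant}); you also locate the role of Hypothesis~\ref{ch. 2: Re-zk} exactly where the paper uses it, namely to upgrade the DK-equivalence $\hocolim\Sing(T_i)\to\Sing(T)$ to a genuine weak equivalence in the complete dg-Segal structure. The only minor difference is framing: your detour through Dugger's canonical presentation is not needed, since the paper establishes $\hocolim\Sing(T_i)\simeq\Sing(T)$ directly via Lemmas~\ref{ch.2 hocolim dg-Segal} and~\ref{ch.2 hocolim DK} (first checking the colimit is dg-Segal, then that the comparison map is a DK-equivalence), which is precisely the ``hard middle step'' you flag.
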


And lastly, in section 6 we prove that, assuming the hypothesis to be true, the adjunction is essentially surjective on the dg-Segal spaces. For that, we construct a special type of hypercovers of functors in $\Fun^\S(\Free_\S\op,\sS)$ of the form $\Sing(T_*)\to F$ and we prove that the image of the homotopy colimit of $T_*$ is weak equivalent to $F$.

\begin{teo}[Theorem \ref{ch. 2: essential surjectivity}] Let $F$ be a functor that satisfies the dg-Segal conditions. Then there exists a dg-category $T$ such that the morphism $\Sing(T)\to F$ is a DK-equivalence. So if the Hypothesis is true, the adjunction $\Re:\dgS\rightleftharpoons \dg:\Sing$ is a Quillen equivalence.
\end{teo}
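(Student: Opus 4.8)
The plan is to prove the essential-surjectivity statement first and then assemble it with the fully-faithfulness Theorem~\ref{ch. 2: fully faithfulness} to obtain the Quillen equivalence. Recall that $\Fun^\S(\Free_\S\op,\sS)$ is a Bousfield localisation of Dugger's universal model category on $\Free_\S$, so that every object is canonically a homotopy colimit of representables. I would begin by making this explicit for a dg-Segal space $F$: there is an augmented simplicial object $\Sing(T_*)\to F$ in $\Fun^\S(\Free_\S\op,\sS)$ whose terms are images under $\Sing$ of free dg-categories, obtained from the canonical resolution of $F$ by representables after identifying each representable $\map(-,L)$ with $\Sing(L)$ for $L\in\Free_\S$. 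This is the ``special type of hypercover'' announced in the introduction, and it is the linear analogue of the resolution Rezk uses in \cite{ComSegalSpacesREZK}. I then set $T:=\hocolim T_*$, computed in $\dg$; since $\Re$ is a left adjoint it commutes with homotopy colimits and, by Theorem~\ref{ch. 2: fully faithfulness}, satisfies $\Re\Sing(L)\simeq L$ on free dg-categories, so $T\simeq\Re(F)$.

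The next step is descent on each side. On the functor side, because the augmentation is a hypercover, the canonical map $\hocolim\Sing(T_*)\to F$ is a weak equivalence in the complete dg-Segal model structure; this is the convergence of the resolution in the localised universal model category, and it is where the dg-Segal conditions on $F$ (the coproduct condition~1 and the pullback condition~3) are used to guarantee that the local objects see the resolution correctly. On the dg side, the hypercover $T_*\to T$ is controlled by the result of Section~4, which gives $\hocolim T_*\simeq T$; this pins down $T$ and lets me rewrite $\Sing(T)=\Sing(\hocolim T_*)$.

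The crux is to commute $\Sing$ past this homotopy colimit, i.e.\ to establish that the comparison map $\hocolim\Sing(T_*)\to\Sing(\hocolim T_*)$ is a weak equivalence. Since $\Sing$ is a right adjoint it does not preserve colimits in general, so this cannot be formal, and it is exactly here that the dg-Segal conditions earn their keep. The hypercover $T_*$ is assembled from free dg-categories by the operations (coproducts of generating objects, and pushouts adjoining a single generating morphism $\D^c(1,n,1)\to\D(1,n,1)$) that appear verbatim in conditions~1 and~3 of the definition of a dg-Segal space; these conditions say precisely that $\Sing$, and every object in its essential image, turns those homotopy colimits in $\dg$ into the corresponding homotopy (co)limits in $\sS$. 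Propagating this through the simplicial degrees and assembling yields the commutation, and hence, via the zigzag $\Sing(T)\xleftarrow{\simeq}\hocolim\Sing(T_*)\xrightarrow{\simeq}F$, the required DK-equivalence $\Sing(T)\to F$. I expect this commutation to be the main obstacle, both because it is the step that genuinely requires the linear Segal conditions rather than formal adjunction nonsense, and because bookkeeping for homotopy colimits of hypercovers inside the diagram category $\Fun^\S(\Free_\S\op,\sS)$ is technically delicate.

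Finally I would assemble the Quillen equivalence. Theorem~\ref{ch. 2: fully faithfulness} gives $\Re\Sing(T)\simeq T$ for all $T\in\dg$, so $\Ho(\Re)\circ\Ho(\Sing)\cong\Id$ and $\Ho(\Sing)$ is fully faithful; the essential surjectivity just proved shows that every fibrant $F$ is weakly equivalent, via the DK-equivalence $\Sing(T)\to F$ which is a genuine weak equivalence by Hypothesis~\ref{ch. 2: Re-zk}, to an object in the image of $\Sing$, so $\Ho(\Sing)$ is essentially surjective. An adjunction whose derived functors are inverse equivalences of homotopy categories is a Quillen equivalence, whence $\Re:\dgS\rightleftharpoons\dg:\Sing$ is one, as claimed.
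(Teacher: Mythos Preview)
Your overall architecture (build an augmented simplicial object $\Sing(T_*)\to F$, set $T=\hocolim T_*$, and prove both legs of the zigzag $\Sing(T)\leftarrow\hocolim\Sing(T_*)\to F$ are DK-equivalences) matches the paper. But two of your load-bearing steps are not correct as written.

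First, the hypercover is \emph{not} Dugger's canonical resolution by representables. That resolution has, in each degree, a huge coproduct indexed by all chains $rX_n\to\cdots\to rX_0\to F$; the object sets explode, the terms are not free dg-categories of finite type, and the resulting $T_*$ carries no control on morphism complexes. The paper instead fixes once and for all $\mathcal O=\pi_0(F(k))$ and builds the hypercover inside $\dgS_{\mathcal O}$ using \emph{dg-Segal epimorphisms}: maps that are the identity on objects and split epimorphisms on the representing complexes $F(x,y)$. The base step uses that $F_{(x,y)}$ is representable to manufacture a free $T_0$ with $T_0(x,y)\twoheadrightarrow F(x,y)$ split; the inductive step is the usual coskeleton argument. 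This ``fixed objects plus split on complexes'' structure is exactly what powers the later arguments, and Dugger's resolution does not provide it.

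Second, your commutation $\hocolim\Sing(T_*)\to\Sing(\hocolim T_*)$ is not a consequence of the dg-Segal conditions on $F$: that map does not involve $F$ at all, and the pushouts along $\D(1,n,1)\to\D^c(1,n,1)$ appearing in condition~3 are not the colimits being taken here. In the paper this step is Lemma~\ref{ch.2 hocolim DK}, and it works because the construction forces $T_*(x,y)\to T(x,y)$ to be a \emph{split} hypercover in $\Ch$ for each pair $(x,y)$; hence $T_*\to T$ is a $\Free_\S$-hypercover, and Section~\ref{hypercovers} already showed that $\hocolim\Sing(T_i)\to\Sing(T)$ is a DK-equivalence for such hypercovers. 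Likewise, the other leg $\hocolim F_i\to F$ is a DK-equivalence because the same split property on complexes makes $\Map(E,T_*(x,y))\to\Map(E,F(x,y))$ a hypercover in $\sS$ for every $E$, so Proposition~\ref{Ch.2: hypercovers sSet} applies; the dg-Segal conditions enter only to check that $\hocolim F_i$ is again a dg-Segal space (so that ``DK-equivalence'' makes sense and two-out-of-three holds). Your final paragraph assembling the Quillen equivalence from full faithfulness plus essential surjectivity plus Hypothesis~\ref{ch. 2: Re-zk} is fine.
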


In the third and last chapter, 'Future Work', we explore several paths in which the work in this thesis can be expanded on. In a first section, we will tackle the Hypothesis from the last chapter, explaining what would be the main methods in which we would prove it. Then, in the second section, we will talk about the linear simplex category $\D$. This is intended to be some kind of linearized version of the simplex category $\Delta$, and we expect it to give us some kind of Quillen equivalence between $\Fun^\S(\Free_\S\op,\sS)$ and $\Fun^\S(\D\op,\sS)$ that would allow us to define complete dg-Segal spaces directly as simplicial functors from $\D$ to $\sS$. Lastly, we make some quick comments about other possible applications of our results.

\section{Notation}\label{intr-en-notation}

Although all the following notations will be mentioned and explained in the next chapter, we add them here for easier access.

\begin{itemize}
	\item We denote an adjunction between two categories by $F:M\rightleftharpoons N:G$, with the left adjoint always being the arrow on top.
	\item We denote the category of simplicial sets by $\sS$, the category of cochain complexes by $\Ch$ and the category of dg-categories as $\dg$. 
	\item We fix $k$ to be a commutative ring. We denote the tensor product on $k$ in $\Ch$ as $-\otimes -$, and the shift on a cochain complex $A$ as $A[-]$.
	\item We denote by $k[s]$ the cochain complex concentrated in degree $s$, where it is $k$, and $k^c[s]$ to be the complex concentrated in degrees $s$ and $s-1$, where it is $k$.
	\item We denote by $\Fun(A,B)$ the category of functors between categories $A$ and $B$, and $\Fun^\S(A,B)$ the category of simplicial functors between simplicial categories $A$ and $B$.
	\item We denote by $Gr(A)$ the category of graphs enriched over a category $A$, and $Gr(\Ch)^{tf}$ the full subcategory of graphs enriched over complexes of finite type. 
	\item We denote by $\mathcal{L}$ the category of free dg-categories, and $\Free$ the full subcategory of cofibrant free dg-categories of finite type.
\end{itemize}

\chapter{Background notions}

\epigraph{\textit{"Let's start at the very beginning, \\a very good place to start"}}{---The Sound of Music, \textit{Do-Re-Mi}}

\section{Model categories}

A central tool in this work is the concept of model categories. In that vein, let us recall some results and definitions about them. As these results are all well known, unless stated otherwise the results in this section will be taken from \cite{Hovey}.

\subsection{Definitions}

The results from this section come from \cite[Sections 1.1 and 1.3]{Hovey}.

\begin{defin}Let $M$ be a category, and $f,g$ morphisms in $M$. We say that $f$ is a\textbf{ retract} of $g$ if there exists a commutative diagram of the form 
\begin{center}
	\begin{tikzcd}
		A\ar[r]\ar[d,"f"]\ar[rr,"Id_A", bend left=30]&B\ar[r]\ar[d,"g"]&A\ar[d,"f"]\\
		C\ar[r]\ar[rr, "Id_C", bend right=30]        &D\ar[r]          &C
	\end{tikzcd}
\end{center}
\end{defin}

\begin{defin}Let $M$ be a category. A \textbf{functorial factorization} is an ordered pair $(\alpha, \beta):\Mor(M)^2\to \Mor(M)^2$ of functors such that for all morphisms $f$ in $C$, we have a factorization $f=\beta(f)\circ \alpha(f)$. 
\end{defin}

\begin{defin}Let $M$ be a category and $i$ and $p$ morphisms in $M$. We say that $i$ has the \textbf{left lifting property} with respect to $p$, and that $p$ has the \textbf{right lifting property} with respect to $i$, if for every commutative square
\begin{center}
	\begin{tikzcd}
		A\ar[r]\ar[d,"i"]                  &B\ar[d,"p"]\\
		C\ar[r]\ar[ur, dotted, "\exists h"]&D         
	\end{tikzcd}
\end{center}
there exists $h$, called "a lift", which makes both triangles also commutative.
\end{defin}

\begin{defin}Let $M$ be a category. We call a \textbf{model structure} on $M$ the data consisting of three sets of morphisms in $M$, called \textbf{weak equivalences}, \textbf{fibrations} and \textbf{cofibrations}, and two functorial factorizations, $(\alpha, \beta)$ and $(\gamma, \delta)$ satisfying the following properties. Let $f$ and $g$ be morphisms in $M$. 
\begin{itemize}
	\item The 2-out-of-3 property: If we can define the composite of $f$ and $g$, and two out of $f$, $g$ and $f\circ g$ are weak equivalences, the third one is too.
	\item Closure by retracts: If $f$ is a retract of $g$ and $g$ is a weak equivalence, a fibration or a cofibration, then $f$ is too.
	\item The lifting property: We define \textbf{trivial fibrations (trivial cofibrations)} to be the morphisms that are at the same time weak equivalences and fibrations (cofibrations). Trivial cofibrations have the left lifting property with respect to fibrations, and cofibrations have the left lifting property with respect to trivial fibrations. In other words, if $i$ is a cofibration and $p$ is a fibration, and at least one of them is a weak equivalence, a lift of 
\begin{center}
	\begin{tikzcd}
		A\ar[r]\ar[d,"i"]&B\ar[d,"p"]\\
		C\ar[r]          &D         
	\end{tikzcd}
\end{center}
exists.
	\item The factorization property: the morphism $\alpha(f)$ is a cofibration, $\beta(f)$ is a trivial fibration, $\gamma(f)$ is a trivial cofibration and $\delta(f)$ is a fibration. In other words, when factorizing a morphism using the functorial factorizations, the first term is always a fibration, the second is always a cofibration, and at least one of them is a weak equivalence.
\end{itemize}
\end{defin}

\begin{defin} We call a \textbf{model category} a category which is both complete and cocomplete together with a model structure on it.
\end{defin}

\begin{rem}This definition is self-dual, which means that if $M$ is a model category, $M\op$ is also a model category, where the fibrations of $M$ are the cofibrations of $M\op$ and vice-versa. We leave to the reader to find the functorial factorizations.
\end{rem}

As we assume a model category to be complete and cocomplete, it has an initial and a final object, the limit and colimit of the empty diagram. It then gives us the following definitions:

\begin{defin}Let $M$ be a model category. We say that an object $x\in\Ob(M)$ is \textbf{fibrant} if the map from $x$ to the final object is a fibration. We say that $x$ is \textbf{cofibrant} if the map from the initial object to $x$ is a cofibration. 
\end{defin}

Using the functorial factorizations from the model structure, we see that for any object $x\in M$ in a model category, there exists a factorization of  $\emptyset\to x$ of the form $\emptyset\to Qx\to x$, where the object $Qx$ is cofibrant and $Qx\to x$ is a trivial fibration. Similarly, there exists a factorization of $x\to \ast$ of the form $x\to Rx\to \ast$ where the object $Rx$ is fibrant and $x\to Rx$ is a trivial cofibration.

\begin{defin}We call $Qx$ the \textbf{cofibrant replacement }of $x$, and $Rx$ the \textbf{fibrant replacement} of $x$. We have functors $Q,R:M\to M$, and we call them the \textbf{cofibrant (fibrant) replacement functors.}
\end{defin}

In reality, we don't need to explicitly define all the components in the model structure. In fact, we have a result that tells us that the axioms are overdetermined.

\begin{prop} Let $M$ be a model category. Then a map in $M$ is a cofibration (a trivial  cofibration) if and only if it has the left lifting property with respect to all trivial fibrations (fibrations). By duality, a map is a fibration if and only if it has the right lifting property with respect to all trivial cofibrations (cofibrations). 
\end{prop}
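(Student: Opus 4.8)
The plan is to establish each equivalence by the standard \emph{retract argument}, whose only inputs are the factorization and lifting axioms together with closure under retracts. By the self-duality of the axioms noted in the remark above, it suffices to prove the two statements concerning the left lifting property; the statements about fibrations then follow by applying the result in $M\op$.

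Consider first the equivalence for cofibrations. The forward implication is immediate, being exactly the content of the lifting axiom: a cofibration has the left lifting property with respect to every trivial fibration. For the converse, suppose $f:A\to B$ has the left lifting property against all trivial fibrations. I would apply the functorial factorization $(\alpha,\beta)$ to write $f=\beta(f)\circ\alpha(f)$, where $\alpha(f):A\to C$ is a cofibration and $\beta(f):C\to B$ is a trivial fibration. Since $\beta(f)$ is a trivial fibration, the hypothesis on $f$ yields a lift $h:B\to C$ in the square whose top edge is $\alpha(f)$, left edge $f$, right edge $\beta(f)$ and bottom edge $Id_B$; this gives the two identities $h\circ f=\alpha(f)$ and $\beta(f)\circ h=Id_B$.

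These identities are precisely what is needed to exhibit $f$ as a retract of the cofibration $\alpha(f)$, via the diagram
\begin{center}
\begin{tikzcd}
A\ar[r,"Id_A"]\ar[d,"f"]\ar[rr,"Id_A",bend left=30]&A\ar[r,"Id_A"]\ar[d,"\alpha(f)"]&A\ar[d,"f"]\\
B\ar[r,"h"]\ar[rr,"Id_B",bend right=30]&C\ar[r,"\beta(f)"]&B
\end{tikzcd}
\end{center}
all of whose squares commute by the two identities above together with $\beta(f)\circ\alpha(f)=f$. Since cofibrations are closed under retracts, $f$ is a cofibration. The trivial-cofibration case runs identically, using instead the factorization $(\gamma,\delta)$ of $f$ into a trivial cofibration followed by a fibration, and the fact that the intersection of the weak equivalences and the cofibrations is closed under retracts because each class separately is.

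There is no genuine obstacle here — this is the classical retract argument — but the one point that demands care is the bookkeeping of the retract diagram: one must check that the specific lift produced by the lifting property makes all four squares commute, and a misplaced identity there is the only way the argument can go wrong.
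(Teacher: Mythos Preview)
Your proof is correct; it is the standard retract argument. The paper does not supply its own proof of this proposition --- it is recorded as a background fact from \cite[Sections 1.1 and 1.3]{Hovey} --- and the argument you give is precisely the one found there.
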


As a result of this, we will almost never define both the fibrations and the cofibrations, because just having one of them and the weak equivalences completely determines the other. 
\\
\\Following the tradition for defining things in category theory, now that we have our objects we need to define the morphisms between those objects. What exactly do we consider to be a morphism of model categories? We call those Quillen functors, or, equivalently, Quillen adjunctions.

\begin{defin}Let $M$ and $N$ be two model categories. 
\begin{itemize}
	\item We say that a functor $F:M\to N$ is a \textbf{left Quillen functor} if $F$ is a left adjoint and preserves cofibrations and trivial cofibrations.
	\item We say that a functor $U: N\to M$ is a \textbf{right Quillen functor} if $U$ is a right adjoint and preserves fibrations and trivial fibrations. 
	\item Let $(F,U,\phi)$ be an adjunction between $M$ and $N$. We say that $(F,U,\phi)$ is a \textbf{Quillen adjunction} or a \textbf{Quillen functor} if $F$ is a left Quillen functor. 
\end{itemize}
\end{defin}

\begin{rem}It is important to remember that neither left nor right Quillen adjunctions preserve all weak equivalences, but only the ones that are also cofibrations (left) or fibrations (right). That means that Quillen equivalences, as is, do not preserve the whole structure of a model category. There is one context in which it does work, though: we know that every left Quillen functor preserves weak equivalences between cofibrant objects, and every right Quillen functor preserves weak equivalences between fibrant objects. 
\end{rem}

\begin{rem}It might sound weird, and lacking symmetry, that we only ask for $F$ to be a left Quillen functor, and not for $U$ to be a right Quillen functor too. In fact, it is easy to prove that if we have a Quillen functor $(F,U,\phi)$, then the functor $U$ is automatically a right Quillen functor, so we don't need to.
\end{rem}

\begin{rem}\label{ch1: composition}We can of course compose left Quillen functors to form a new left Quillen functor, and the same goes for right Quillen functors. On the other hand, it isn't surprising to find that composing a left Quillen functor with a right Quillen functor doesn't necessarily give us anything useful. In particular, that means that if we have three model categories $M, N, L$ and two Quillen adjunctions $(F,U,\phi):M\to N$ and $(F',U',\psi):N\to L$ we have a Quillen adjunction between $M$ and $L$; but if we have two Quillen adjunctions $(F,U,\phi):M\to N$ and $(F',U',\psi):L\to N$ there is no reason there should be a Quillen adjunction between $M$ and $L$.
\end{rem}

\begin{rem}In practice we will seldom write a Quillen functor by giving the whole adjunction: we will usually just give one of the functors. This abuse is not of great importance, as we can prove that if $(F,U,\phi)$ is a Quillen adjunction and we have another adjunction $(F,U',\psi)$, then $(F,U',\psi)$ is also a Quillen adjunction.
\end{rem}

\begin{nota}Whenever we write the two functors that form the adjunction, we will denote them by $F:M\rightleftharpoons N: Q$, where the left adjunction is always the arrow on top.
\end{nota}

There is also a criterion that will be very useful for us later, as a way to prove something is a Quillen adjunction.
 
\begin{prop}\cite[Proposition 7.15]{JTquasi-Segal}\label{ch. 1: criteron Quillen adjunction} Let $M$ and $N$ be two model categories, and let $F:M\rightleftharpoons N:G$ be an adjunction between them. Then the adjunction is a Quillen adjunction if and only if $F$ preserves cofibrations and $G$ preserves fibrant objects.
\end{prop}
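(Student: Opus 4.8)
The plan is to prove the two implications separately, treating the forward direction as a formality and concentrating the real work on the converse. For the forward direction, assume $(F,G)$ is a Quillen adjunction. Then $F$ preserves cofibrations by the very definition of a left Quillen functor, so half of the statement is immediate. For the other half, recall that $G$ is automatically a right Quillen functor, hence preserves fibrations; since $G$ is a right adjoint it preserves all limits and in particular the terminal object, so for any fibrant $Z$ the map $GZ\to G(\ast)=\ast$ is a fibration and $GZ$ is fibrant. Thus $G$ preserves fibrant objects.

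For the converse, assume $F$ preserves cofibrations and $G$ preserves fibrant objects. Since $F$ already preserves cofibrations, it remains only to show that $F$ preserves trivial cofibrations. I would first record a free consequence: $G$ preserves trivial fibrations. Indeed, if $p$ is a trivial fibration in $N$, then by the overdetermination of the axioms it suffices that $Gp$ have the right lifting property against every cofibration $i$ of $M$; transposing across the adjunction, this is the lifting of $Fi$ against $p$, and $Fi$ is a cofibration while $p$ is a trivial fibration, so the lift exists. (Equivalently, the whole task transposes to: $F$ preserves trivial cofibrations if and only if $G$ preserves all fibrations.)

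The heart of the matter is to show that $F$ carries a trivial cofibration $j\colon A\to B$ to a weak equivalence. I would first reduce to the case where $A$ and $B$ are cofibrant, replacing $j$ by a trivial cofibration between cofibrant objects via cofibrant replacement and the lifting axioms; then $FA$ and $FB$ are cofibrant (as $F$ preserves the initial object and cofibrations) and $Fj$ is a cofibration between them, so it suffices to prove it is a weak equivalence. I would detect this by mapping into fibrant test objects: a map between cofibrant objects is a weak equivalence precisely when it induces a weak equivalence on the homotopy function complexes $\map_N(-,Z)$ for every fibrant $Z$ in $N$. The strict adjunction $\Hom_N(F-,Z)=\Hom_M(-,GZ)$ is meant to upgrade to an identification $\map_N(FX,Z)\simeq\map_M(X,GZ)$, and this is exactly where the hypothesis enters: because $G$ preserves fibrant objects, $GZ$ is fibrant, and since $j$ is a weak equivalence the induced $\map_M(B,GZ)\to\map_M(A,GZ)$ is a weak equivalence of simplicial sets. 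Transporting back, $\map_N(FB,Z)\to\map_N(FA,Z)$ is a weak equivalence for every fibrant $Z$, whence $Fj$ is a weak equivalence and $F$ is left Quillen.

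I expect the main obstacle to be making that identification of homotopy function complexes precise. Concretely one computes $\map_N(FX,Z)$ from a cosimplicial resolution $X^\bullet$ of $X$ in $M$ by $\Hom_N(FX^\bullet,Z)=\Hom_M(X^\bullet,GZ)$; since $F$ is a left adjoint it preserves colimits and cofibrations and therefore sends Reedy cofibrant cosimplicial objects to Reedy cofibrant ones, which is what guarantees $FX^\bullet$ is a legitimate frame for the derived mapping space on the $N$ side. Verifying that this frame computes the correct homotopy type, and organising the argument functorially in $j$ so that the comparison is a genuine map of simplicial sets, is the delicate bookkeeping that Proposition 7.15 of \cite{JTquasi-Segal} packages; everything else is a direct transposition across the adjunction together with the two lifting axioms.
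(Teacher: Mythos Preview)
The paper gives no proof of this proposition; it is cited from \cite{JTquasi-Segal} without argument, so there is no in-paper proof to compare against.

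Your forward direction is correct. The converse has a real gap. You claim $F(X^\bullet)$ is a cosimplicial frame on $FX$ because $F$ preserves colimits and cofibrations and hence Reedy cofibrancy. But being a frame also requires each $F(X^n)\to FX$ to be a weak equivalence in $N$; since $X^n\to X$ is a weak equivalence between cofibrant objects, this would need $F$ to preserve such maps---which by Ken Brown's lemma is equivalent to $F$ preserving trivial cofibrations between cofibrant objects, exactly the statement you are trying to prove. The circularity lands precisely where you defer to ``delicate bookkeeping''. Your preliminary reduction to cofibrant $A,B$ has the same defect: relating $Fj$ to $Fj'$ for a replacement $j'$ between cofibrant objects requires $F$ to preserve the trivial fibrations arising from cofibrant replacement, and nothing in the hypotheses gives that.

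In fact the usual statement of Joyal--Tierney's Proposition~7.15 assumes that $G$ preserves \emph{fibrations between fibrant objects}, not merely fibrant objects; with that hypothesis one can transport path objects along $G$ (since $G$ then sends a path-object fibration $P(Z)\to Z\times Z$ to a fibration) and the homotopy relations on $\Hom_N(FX,Z)\cong\Hom_M(X,GZ)$ genuinely match, closing the argument. The weaker phrasing recorded in the paper happens to be adequate for its single application later (checking that a Quillen adjunction survives a left Bousfield localisation, where fibrations between local fibrant objects coincide with the original fibrations), but as a general criterion your argument does not establish it, and it is not clear that it holds without the stronger hypothesis.
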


Lastly in this part, we will define what we call an equivalence in model categories. 

\begin{defin}Let $M$ and $N$ be two model categories and $(F,U,\phi)$ a Quillen adjunction. We say that $(F,U,\phi)$ is a \textbf{Quillen equivalence} if for all cofibrant objects $X$ in $M$ and all fibrant objects $Y$ in $N$, a map $f:FX\to Y$ is a weak equivalence in $N$ if and only if $\phi(f):X\to UY $ is a weak equivalence in $M$.
\end{defin}

\subsection{The homotopy category}

Historically, the main reason for the introduction of model categories was that while we can always localize a category $C$ with respect to a set of morphisms $W$, the resulting object of the Gabriel-Zisman localization is quite difficult to work with, and the construction forces us to consider higher universes. Luckily, we can prove that, if we can build a model category with the set $W$ as its weak equivalences, the localized category is equivalent to a much better behaved and smaller category. The results from this section come from \cite[Section 1.2]{Hovey} unless stated otherwise.

\begin{defin}(Gabriel-Zisman localization) Let $C$ be a category and $W$ a set of morphisms in $C$. We call the \textbf{localization of $C$ with respect to $W$} a "category" (barring size issues) $C\l W^{-1}\r$ coupled with a functor $l:C\to C\l W^{-1}\r$ that sends all morphisms in $W$ to isomorphisms in $C\l W^{-1}\r$, with the following universal property. Let $D$ be a category coupled with a functor $f:C\to D$ such that $f$ sends all morphisms in $W$ to isomorphisms in $D$; then there is a unique functor $f':C\l W^{-1}\r\to D$ such that $f'\circ l=f$.   
\end{defin}

\begin{rem}We have taken this definition from \cite[Section 2.1]{Toen-dg}. A curious reader is encouraged to go read \cite[Def. 1.2.1]{Hovey} for an explicit construction of the aforementioned localization.
\end{rem}

\begin{nota}We will sometimes denote the morphisms between objects in the homotopy category of $M$ by $\l X,Y \r_{M}$, or just $\l X,Y\r$ if the base model category is clear.
\end{nota}

\begin{nota}If we take $C$ to be a model category and $W$ its set of weak equivalences, then we will denote $C\l W^{-1}\r$ by $\Ho(C)$ and call it \textbf{the homotopy category of $C$}.
\end{nota}

And now that we have defined the homotopy category of $M$ a model category, it is time to start building that equivalent version that we promised was so much easier to use. For that, we will start by setting a notation.

\begin{nota}let $M$ be a model category. Then,
\begin{itemize}
	\item We denote the full subcategory of cofibrant objects by $M_c$.
	\item We denote the full subcategory of fibrant objects by $M_f$.
	\item We denote the full subcategory of objects who are both fibrant and cofibrant by $M_{cf}$.
\end{itemize}
\end{nota}

\begin{prop}\label{ch 1:equiv}Let $M$ be a model category. Then the inclusion functors induce equivalences of categories $\Ho (M_{cf})\to \Ho(M_c)\to \Ho (M)$ and $\Ho(M_{cf})\to \Ho(M_f)\to \Ho(M)$.
\end{prop}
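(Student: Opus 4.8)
The plan is to construct, for each of the four inclusions, an explicit quasi-inverse on homotopy categories built from the cofibrant and fibrant replacement functors $Q$ and $R$. The first thing I would establish is that both $Q$ and $R$ preserve all weak equivalences. For $Q$ this follows from the naturality of the map $q_x:Qx\to x$: given a weak equivalence $f:x\to y$, the square
\begin{center}
\begin{tikzcd}
Qx \ar[r,"q_x"]\ar[d,"Qf"] & x \ar[d,"f"]\\
Qy \ar[r,"q_y"] & y
\end{tikzcd}
\end{center}
has $q_x,q_y$ weak equivalences (they are trivial fibrations) and $f$ a weak equivalence, so the $2$-out-of-$3$ property forces $Qf$ to be a weak equivalence. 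The dual argument, using the natural trivial cofibration $x\to Rx$, shows $R$ preserves weak equivalences.

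Next I would record the object-preservation properties that make the two inner equivalences work. If $x$ is cofibrant then the composite $\emptyset\to x\to Rx$ exhibits $Rx$ as a composite of cofibrations (the second being a trivial cofibration), so $Rx$ is cofibrant as well as fibrant; hence $R$ restricts to a functor $M_c\to M_{cf}$. Dually, if $x$ is fibrant then $Qx\to x\to \ast$ shows $Qx$ is fibrant as well as cofibrant, so $Q$ restricts to $M_f\to M_{cf}$. On all of $M$, of course, $Q$ lands in $M_c$ and $R$ lands in $M_f$.

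With these facts in hand, each replacement functor sends weak equivalences to weak equivalences, so after composing with the localization functor it factors through the relevant homotopy category by the universal property of the Gabriel--Zisman localization. This produces induced functors $\Ho(Q):\Ho(M)\to\Ho(M_c)$, $\Ho(R):\Ho(M)\to\Ho(M_f)$, $\Ho(R):\Ho(M_c)\to\Ho(M_{cf})$ and $\Ho(Q):\Ho(M_f)\to\Ho(M_{cf})$. To see these are quasi-inverse to the inclusions I would use that $q_x:Qx\to x$ and $x\to Rx$ are weak equivalences and hence become isomorphisms in every homotopy category in which they live; their naturality then supplies natural isomorphisms $\Ho(i)\circ\Ho(Q)\cong\Id$ and $\Ho(Q)\circ\Ho(i)\cong\Id$ on $\Ho(M)$ and $\Ho(M_c)$ respectively, and symmetrically for $R$ on the $M_f$ and $M_{cf}$ sides. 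This establishes each single inclusion as an equivalence, and the composites $\Ho(M_{cf})\to\Ho(M_c)\to\Ho(M)$ and $\Ho(M_{cf})\to\Ho(M_f)\to\Ho(M)$ are then equivalences because a composite of equivalences is one.

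The part requiring the most care is not the construction but the bookkeeping around the universal property. One must check that the weak equivalences of $M_c$, $M_f$ and $M_{cf}$ are exactly the weak equivalences of $M$ lying between objects of the respective subcategory, so that $\Ho(M_c)$, $\Ho(M_f)$ and $\Ho(M_{cf})$ really are the localizations named, and the factorizations through them are legitimate. One must also verify that the naturality squares above genuinely exist, which is where the functoriality of $Q$ and $R$ — guaranteed by the functorial factorizations built into the model structure — is indispensable: without it the replacements would be defined only on objects, the transformations $q$ and $x\to Rx$ would not be natural, and the induced functors on homotopy categories would not be available.
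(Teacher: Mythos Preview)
Your argument is correct and is the standard one: build quasi-inverses to the inclusions from the functorial replacements $Q$ and $R$, check via $2$-out-of-$3$ that they preserve weak equivalences, and use the natural transformations $Qx\to x$ and $x\to Rx$ (which become isomorphisms after localizing) to exhibit the required natural isomorphisms. The observation that $R$ carries $M_c$ into $M_{cf}$ (and dually for $Q$) is exactly what makes the inner equivalences go through.

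There is nothing to compare against: the paper does not supply its own proof of this proposition. It appears in the background chapter, where the results are taken from Hovey's book, and this one is simply stated without argument. Your write-up is essentially the proof one finds in that reference.
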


So if we find a way to define $\Ho(M_{cf})$ that is way easier to understand (and also a category in this universe), we will be able to use it as an equivalent category to $\Ho(M)$.

\begin{defin}Let $M$ be a model category.
\begin{itemize}
	\item Let $B$ be an object in $M$. We call $B'$ a \textbf{cylinder object for $B$} if we have a factorization of the fold map $B\coprod B\to B$ into a cofibration $i_0+i_1: B\coprod B\to B'$ and a weak equivalence $s:B'\to B$.
	\item Let $X$ be an object in $M$. We call $X'$ a \textbf{path object for $X$} if we have a factorization of the diagonal map $X\to X\times X$ into a weak equivalence $X\to X'$ and fibration $(p_0,p_1):X'\to X\times X$.
\end{itemize}
\end{defin}

\begin{rem}It should be evident to see that such a factorization always exists: indeed, we just need to apply the functorial factorizations we defined earlier. In that case, the morphism $B'\to B$ is also a trivial fibration and $X\to X'$ is a trivial cofibration.
\end{rem}

\begin{defin}Let $M$ be a model category, $f,g:B\to X$ two morphisms in $M$. 
\begin{itemize}
	\item We call a \textbf{left homotopy from $f$ to $g$} a map $H:B'\to X$ where $B'$ is a cylinder object for $B$, such that $Hi_0=f$ and $Hi_1=g$. If such a left homotopy exists, then we say that $f$ and $g$ are \textbf{left homotopic} and we denote it by $f\sim^l g$.
	\item We call a \textbf{right homotopy from $f$ to $g$} a map $H:B\to X'$ where $X'$ is a path object for $X$, such that $p_0H=f$ and $p_1H=g$. If such a right homotopy exists, then we say that $f$ and $g$ are \textbf{right homotopic} and we denote it by $f\sim^r g$.
\end{itemize}
\end{defin}

\begin{rem}The reader will probably recognize the homotopies used in Algebraic Topology to define homotopy groups, and it is no coincidence: indeed, with the construction of the model category of topological spaces that we will provide later, the definitions of right and left homotopy both coincide with each other and with the classical definition.
\end{rem}

\begin{defin}Let $M$ be a model category, $f,g:B\to X$ two morphisms in $M$. We say that $f$ and $g$ are \textbf{homotopic} if they are both left and right homotopic, and we denote it by $f\sim g$.
\end{defin}

\begin{defin}Let $M$ be a model category, $f:B\to X$ a morphism in $M$. We say that $f$ is a \textbf{homotopy equivalence} if there exists another morphism $g:X\to B$ such that $f\circ g\sim \Id_X$ and $g\circ f\sim \Id_B$. If such a homotopy equivalence exists, we say that $B$ and $X$ are \textbf{homotopy equivalent}. 
\end{defin}

At this stage, people familiar with Algebraic Topology probably know where this is going: we are going to try and make a quotient category by taking the equivalence relation given by the homotopy equivalences. And we wish we could do that directly, but it doesn't work. Indeed, in general we have that left homotopies are compatible with composition, but only on the left; right homotopies are compatible with composition, but only on the right. In the same vein, these relations make equivalence relations only if $B$ is cofibrant (for the left homotopy) or $X$ is fibrant (for the right homotopy). We will have, then, to add a few conditions to our objects so that the relations have the correct properties.

\begin{prop}Let $M$ be a model category, $B$ a cofibrant object of $M$ and $X$ a fibrant object of $M$. Then the definitions of left and right homotopy coincide on $\Hom(B,X)$ and are equivalence relations there. In particular, the homotopy relation is an equivalence relation on $M_{cf}$ and it's compatible with composition. Hence the quotient category $M_{cf}/\sim$ exists.
\end{prop}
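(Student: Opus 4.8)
The plan is to isolate one structural fact and then leverage it twice, once for cylinders and once (dually) for path objects. That fact is the following: if $B$ is cofibrant and $B'$ is any cylinder object for $B$, then both inclusions $i_0, i_1 : B \to B'$ are trivial cofibrations; dually, if $X$ is fibrant and $X'$ is a path object, then both projections $p_0, p_1 : X' \to X$ are trivial fibrations. I would prove the cofibrant case first: since $B$ is cofibrant, each summand inclusion $B \to B \coprod B$ is a cofibration (a pushout of $\emptyset \to B$), so composing with the cofibration $B \coprod B \to B'$ shows that $i_0$ and $i_1$ are cofibrations; they are weak equivalences by the 2-out-of-3 property, since $s i_0 = s i_1 = \Id_B$ and $s : B' \to B$ is a weak equivalence. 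The path-object statement is then the formal dual.

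With this engine in hand, I would check that $\sim^l$ is an equivalence relation on $\Hom(B,X)$ whenever $B$ is cofibrant. Reflexivity (take $H = fs$) and symmetry (swap the roles of $i_0$ and $i_1$, which exhibits another cylinder object) need no hypothesis. Transitivity is the only place cofibrancy enters: given $f \sim^l g$ through $B'$ and $g \sim^l h$ through $B''$, I would glue the two cylinders along the copy of $B$ corresponding to $i_1$ of $B'$ and $i_0$ of $B''$, forming $B' \coprod_B B''$. The engine lemma guarantees that the relevant gluing maps are trivial cofibrations, so the pushout is again a cylinder object for $B$ (the composite $B \coprod B \to B' \coprod_B B''$ is a cofibration and the induced map to $B$ is a weak equivalence by 2-out-of-3), and the two homotopies glue to a homotopy from $f$ to $h$. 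The assertion that $\sim^r$ is an equivalence relation on $\Hom(B,X)$ whenever $X$ is fibrant is the dual argument.

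The crux is the coincidence of the two relations, and this is where I expect the real work to lie. I would show that if $B$ is cofibrant and $f \sim^l g$ via a cylinder $B'$, then $f \sim^r g$. Choose a path object $X'$ for $X$, with weak equivalence $q : X \to X'$ and fibration $(p_0,p_1) : X' \to X\times X$. Since $i_0 : B \to B'$ is a trivial cofibration and $(p_0,p_1)$ is a fibration, the square with top edge $q f : B \to X'$ and bottom edge $(f s, H) : B' \to X \times X$ commutes (both composites into $X\times X$ equal $(f,f)$, using $p_0 q = p_1 q = \Id_X$, $s i_0 = \Id_B$ and $H i_0 = f$), so a lift $K' : B' \to X'$ exists. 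Then $K = K' i_1$ is the desired right homotopy, since $p_0 K = f s i_1 = f$ and $p_1 K = H i_1 = g$. The dual statement (if $X$ is fibrant and $f \sim^r g$ then $f \sim^l g$) follows formally, and together these give $\sim^l\, =\, \sim^r$ on $\Hom(B,X)$ when $B$ is cofibrant and $X$ is fibrant; being equal to an equivalence relation, the common relation $\sim$ is itself an equivalence relation.

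Finally, for compatibility with composition on $M_{cf}$ I would only need the two one-sided compatibilities already flagged before the statement: post-composition preserves $\sim^l$ and pre-composition preserves $\sim^r$, both of which are immediate (compose the homotopy with the extra map). Given $f \sim g : B \to X$ and $f' \sim g' : X \to Y$ with all objects in $M_{cf}$, I would write $f' f \sim g' f \sim g' g$, where the first relation comes from $f' \sim^r g'$ precomposed with $f$, the second from $f \sim^l g$ postcomposed with $g'$, and each is upgraded to $\sim$ by the coincidence just proved; transitivity then yields $f' f \sim g' g$. Hence $\sim$ descends to a well-defined composition on homotopy classes, with identities the classes of the identity maps, so the quotient category $M_{cf}/\sim$ exists. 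The main obstacle throughout is setting up the lifting square in the coincidence lemma with the correct maps; everything else is bookkeeping with the engine lemma and 2-out-of-3.
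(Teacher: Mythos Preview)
The paper does not actually prove this proposition: it is a background result quoted from \cite[Section 1.2]{Hovey} without argument. Your proof is correct and is precisely the standard argument one finds there (and in Quillen, Dwyer--Spali\'nski, etc.): the ``engine lemma'' that $i_0,i_1$ are trivial cofibrations when $B$ is cofibrant, the lifting square producing a right homotopy from a left homotopy, and the two one-sided compatibilities with composition are exactly the ingredients Hovey uses. The only place where your write-up is slightly compressed is the transitivity step, where you assert that $B\coprod B \to B'\coprod_B B''$ is a cofibration; this is true, but verifying it cleanly takes a small pushout computation (factor through $B\coprod B'' \to B'\coprod_B B''$, which is a pushout of the cylinder cofibration $B\coprod B\to B'$, and precompose with $\Id_B\coprod i_1$). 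Nothing is wrong, and nothing differs from the reference the paper cites.
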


This is the category we would like to prove is equivalent to $\Ho(M)$. For now, we know that the functor $M\to M_{cf}/\sim$ sends homotopy equivalences to isomorphisms. Does it also send weak equivalences to isomorphisms?

\begin{prop}Let $M$ be a model category. Then a map in $M_{cf}$ is an homotopy equivalence if and only if it is a weak equivalence.
\end{prop}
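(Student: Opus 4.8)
The plan is to prove both implications by reducing to the behaviour of trivial (co)fibrations and of fibrations, exploiting throughout that on $M_{cf}$ left and right homotopy agree and are equivalence relations compatible with composition (the previous proposition), so that homotopy equivalences are precisely the isomorphisms of the quotient category $M_{cf}/\!\sim$. In particular, homotopy equivalences automatically satisfy the two-out-of-three property and are closed under composition, which I will use freely.

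For the implication weak equivalence $\Rightarrow$ homotopy equivalence, I would first settle the two extreme cases. If $p:C\to X$ is a trivial fibration with $C,X$ cofibrant, then since $X$ is cofibrant the map $\emptyset\to X$ is a cofibration and lifts against $p$ to give a section $s$ with $ps=\Id_X$; a single lift of the cofibration $C\amalg C\to\mathrm{Cyl}(C)$ (one end of a cylinder object) against the trivial fibration $p$ then produces a left homotopy $sp\sim\Id_C$, so $p$ is a homotopy equivalence. Dually, a trivial cofibration $j:B\to C$ between fibrant objects is a homotopy equivalence: lifting $\Id_B$ against the fibration $B\to\ast$ yields a retraction $r$ with $rj=\Id_B$, and a lift into a path object of $C$ gives $jr\sim\Id_C$. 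For a general weak equivalence $f:B\to X$ in $M_{cf}$, I factor $f=p\circ i$ with $i:B\to Z$ a trivial cofibration and $p:Z\to X$ a fibration; two-out-of-three makes $p$ a trivial fibration, while $Z$ is cofibrant (as $B$ is and $i$ is a cofibration) and fibrant (as $p$ and $X\to\ast$ are fibrations), so both $i$ and $p$ are homotopy equivalences by the extreme cases and hence so is $f$.

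For the converse, which is the heart of the matter, I reduce as follows. Given a homotopy equivalence $f$, factor it as $f=p\circ i$ with $i$ a trivial cofibration and $p$ a fibration. Then $i$ is a weak equivalence, hence a homotopy equivalence by the first implication, so two-out-of-three for homotopy equivalences forces $p$ to be a homotopy equivalence. Since $i$ is already a weak equivalence and weak equivalences compose, it suffices to establish the single \emph{Key Lemma}: a fibration $p:C\to X$ in $M_{cf}$ that is a homotopy equivalence is a trivial fibration. I would first upgrade a homotopy inverse $g$ to an honest section: lifting $g$ against $p$ along the trivial cofibration $X\to\mathrm{Cyl}(X)$ given by one end of a cylinder realizing $pg\sim\Id_X$ yields $s$ with $ps=\Id_X$ and $s\sim g$, so that $sp\sim\Id_C$ still holds. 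The remaining step exhibits $p$, in the arrow category, as a retract of a cylinder projection (a trivial fibration), the retraction data being built from the homotopy $sp\sim\Id_C$, and concludes by closure of trivial fibrations under retracts.

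I expect this last step — showing that a homotopy-equivalence fibration is trivial — to be the main obstacle. The section-building lift is routine, but assembling the retract diagram (equivalently, directly verifying the right lifting property of $p$ against an arbitrary cofibration by gluing the homotopy $sp\sim\Id_C$ onto a tentative lift and correcting it) is where the lifts must be arranged with care; everything else in the argument is bookkeeping with the factorization axiom and two-out-of-three, in both the weak-equivalence and the homotopy-equivalence worlds.
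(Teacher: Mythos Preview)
The paper does not give its own proof of this proposition; it is one of the background facts quoted from \cite[Section 1.2]{Hovey}, so there is no in-paper argument to compare against. Your outline is correct and is precisely the classical proof (Quillen; see Hovey, Prop.~1.2.8, or Dwyer--Spali\'nski): one direction by factoring and handling trivial (co)fibrations separately, the converse by reducing via two-out-of-three for homotopy equivalences to your Key Lemma that a fibration in $M_{cf}$ which is a homotopy equivalence is a trivial fibration. You have also correctly identified the only genuinely delicate point. Both mechanisms you name for it---exhibiting $p$ as a retract of a trivial fibration in the arrow category, or directly verifying the RLP by correcting the tentative lift $sb$ using the homotopy $sp\sim\Id_C$---are standard and work; the one refinement to watch for when you write out the details is that the homotopy $H$ must be chosen to lie \emph{over} $X$ (i.e.\ $pH$ constant), which costs one further application of the lifting axiom before the retract diagram or the corrected lift assembles.
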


\begin{coro}Let $M$ be a model category. Then the category $M_{cf}/\sim$ coupled with the functor $L:~M_{cf}\to M_{cf}/\sim$ satisfies the following universal property. Let $D$ be a category with a functor $f:M_{cf}\to D$ such that $f$ sends all weak equivalences in $M_{cf}$ to isomorphisms in $D$; then there is a unique functor $f':M_{cf}/\sim\to D$ such that $f'\circ L=f$.   
\end{coro}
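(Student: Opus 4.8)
The plan is to recognize that the statement is exactly the assertion that $L : M_{cf} \to M_{cf}/\!\sim$ is the localization of $M_{cf}$ at its weak equivalences, so that the claimed universal property reduces to the universal property of a quotient by a congruence relation. Two things must be handled. First, $L$ really does invert weak equivalences: by the preceding proposition a morphism of $M_{cf}$ is a weak equivalence if and only if it is a homotopy equivalence, and we have already noted that $L$ carries homotopy equivalences to isomorphisms; hence $L$, and any $f$ of the stated form, inverts weak equivalences. Second, and this is the real content, I would show that any functor $f : M_{cf} \to D$ inverting weak equivalences is \emph{constant on homotopy classes}, i.e. $g \sim g'$ implies $f(g) = f(g')$.

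Granting that claim, the factorization is formal. On objects $L$ is the identity, so $f'$ is forced to agree with $f$ on objects; every morphism of $M_{cf}/\!\sim$ is $L$ of some morphism of $M_{cf}$, so $f'$ is forced to be $f'([g]) = f(g)$. This assignment is well defined precisely because $f$ is constant on homotopy classes, it is visibly functorial, and it is the unique functor with $f' \circ L = f$. Thus existence and uniqueness of $f'$ both follow once the claim is in hand.

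To prove the claim, take $g, g' : B \to X$ in $M_{cf}$ with $g \sim g'$. Since $B$ is cofibrant and $X$ is fibrant, they are left homotopic, say via a cylinder object $B'$ and a homotopy $H : B' \to X$ with $H i_0 = g$ and $H i_1 = g'$. The naive computation sets $f(g) = f(H)\,f(i_0)$, observes that the structure map $s : B' \to B$ is a weak equivalence with $s i_0 = s i_1 = \Id_B$ so that $f(i_0) = f(i_1)$ after cancelling the isomorphism $f(s)$, and concludes $f(g) = f(H)\,f(i_0) = f(H)\,f(i_1) = f(g')$. The obstruction, and the main difficulty of the whole argument, is that $f$ is only defined on $M_{cf}$, whereas the cylinder object $B'$ is cofibrant but need \emph{not} be fibrant; so $H$, $i_0$, $i_1$, $s$ need not lie in $M_{cf}$ and cannot be fed into $f$ as written.

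I would remove this obstruction by replacing $B'$ with a fibrant--cofibrant cylinder object. Factor $B' \to \ast$ as a trivial cofibration $j : B' \to B''$ followed by a fibration, so $B''$ is fibrant and, being a cofibration under the cofibrant $B'$, also cofibrant. Using that $B$ is fibrant, lifting $s$ against the trivial cofibration $j$ yields $\tilde s : B'' \to B$ with $\tilde s j = s$, and $2$-out-of-$3$ makes $\tilde s$ a weak equivalence; likewise, using that $X$ is fibrant, lifting $H$ against $j$ yields $\tilde H : B'' \to X$ with $\tilde H j = H$. One checks that $B''$, with the composite inclusions $B \coprod B \to B' \to B''$ and the map $\tilde s$, is again a cylinder object for $B$, that $\tilde H$ is a left homotopy from $g$ to $g'$ through it, and that the inclusions $\tilde\imath_0, \tilde\imath_1 : B \to B''$ are weak equivalences (again by $2$-out-of-$3$ against $\tilde s$, since $\tilde s \tilde\imath_k = \Id_B$). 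Now every morphism in sight lies in $M_{cf}$, so the naive computation goes through verbatim: $f(\tilde s)$, $f(\tilde\imath_0)$, $f(\tilde\imath_1)$ are defined, $f(\tilde\imath_0) = f(\tilde\imath_1)$, and $f(g) = f(\tilde H)f(\tilde\imath_0) = f(\tilde H)f(\tilde\imath_1) = f(g')$. The crux is exactly this fibrant-replacement-plus-lifting maneuver that bridges the fibrancy gap in the cylinder object; everything else is the formal universal property of a quotient category.
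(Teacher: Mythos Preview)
Your proof is correct. The paper states this corollary without proof, as it is background material cited from Hovey's \emph{Model Categories}; there is no ``paper's own proof'' to compare against here.

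Your argument is essentially the standard one (a functor inverting weak equivalences is constant on homotopy classes because the cylinder's structure map $s$ becomes invertible, forcing $f(i_0)=f(i_1)$), but you have correctly identified and resolved a subtlety that is often glossed over: when the domain of $f$ is only $M_{cf}$ rather than all of $M$, the cylinder object $B'$ need not be fibrant, so the homotopy $H$ and the structure maps $i_0,i_1,s$ may not lie in the domain of $f$. Your fibrant-replacement-and-lifting maneuver (pushing $B'$ along a trivial cofibration to a fibrant $B''$ and lifting $s$ and $H$ through it using the fibrancy of $B$ and $X$) is exactly the right fix, and the verification that $B''$ is again a cylinder object is clean. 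This is precisely the care the statement deserves when restricted to $M_{cf}$.
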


We have proven, now, that the category $M_{cf}/\sim$ is a localization of $M_{cf}$ with respect to the weak equivalences. We need, though, to see that $M_{cf}/\sim$ is actually equivalent to the localization of $M$. But that is a consequence of Proposition \ref{ch 1:equiv}: we have proven that $\Ho(M_{cf})$ is equivalent to $\Ho(M)$. We summarize the entirety of this in a single theorem.

\begin{teo}Let $M$ be a model category, $\Ho(M)$ the localization of $M$ with respect to the weak equivalences and $l:M\to \Ho(M)$ the localization functor. We remind that $R$ is the fibrant replacement functor and $Q$ is the cofibrant replacement functor.
\begin{itemize}
	\item The inclusion functor $i:M_{cf}\to M$ induces an equivalence of categories
	$$(M_{cf}/\sim)\cong \Ho(M_{cf})\to \Ho(M). $$
	\item There are natural isomorphisms
	$$(\Hom_M(QRX,QRY)/\sim)\cong \Hom_{\Ho M}(l(X),l(Y))\cong (\Hom_M(RQX,RQY)/\sim). $$
	In particular, $\Ho(M)$ is a category without changing the universe.
	\item The localization functor $l:M\to \Ho(M)$ sends right and left homotopy maps to isomorphisms.
	\item If $f:A\to B$ is a map in $M$ such that $l(f)$ is an isomorphism in $\Ho (M)$, then $f$ is a weak equivalence.
\end{itemize}
\end{teo}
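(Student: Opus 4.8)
The plan is to treat this theorem as an assembly of the propositions already established in this subsection rather than as a fresh computation, the organizing principle being that in $\Ho(M)$ every object is canonically isomorphic to one lying in $M_{cf}$, while on $M_{cf}$ the localization has already been identified with the explicit quotient $M_{cf}/\sim$. For the first bullet I would simply compose two facts: the corollary above states that $M_{cf}/\sim$, together with $L:M_{cf}\to M_{cf}/\sim$, satisfies the universal property characterizing the localization of $M_{cf}$ at its weak equivalences, giving the canonical isomorphism $(M_{cf}/\sim)\cong\Ho(M_{cf})$; and Proposition \ref{ch 1:equiv} supplies the equivalence $\Ho(M_{cf})\to\Ho(M)$ induced by the inclusion $i:M_{cf}\to M$. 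Composing yields the stated equivalence.

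For the second bullet I would use the natural weak equivalences produced by the replacement functors. Factoring $\emptyset\to x$ and $x\to *$ as in the definitions gives, for each $X$, a zigzag $X\to RX\leftarrow QRX$ of weak equivalences with $QRX\in M_{cf}$, and symmetrically $X\leftarrow QX\to RQX$ with $RQX\in M_{cf}$. Since weak equivalences are inverted by $l$, we get $l(X)\cong l(QRX)\cong l(RQX)$. Hence $\Hom_{\Ho M}(l(X),l(Y))\cong\Hom_{\Ho M}(l(QRX),l(QRY))$, and the equivalence of the first bullet identifies this with $\Hom_{M_{cf}/\sim}(QRX,QRY)=\Hom_M(QRX,QRY)/\sim$, and identically with $\Hom_M(RQX,RQY)/\sim$ via $RQ$. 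Because these sets are quotients of honest hom-sets of $M$, they are small, so $\Ho(M)$ genuinely lives in the same universe.

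For the third bullet I would argue directly from the structure maps: in a cylinder object $B'$ the map $s:B'\to B$ is a weak equivalence, so $l(s)$ is invertible, and from $s\circ i_0=s\circ i_1=\Id_B$ we obtain $l(i_0)=l(i_1)=l(s)^{-1}$, so the cylinder inclusions are sent to (mutually equal) isomorphisms; dually the projections $p_0,p_1$ of a path object are sent to isomorphisms. In particular homotopic maps acquire equal images under $l$, which justifies the manipulations of the previous paragraph. For the fourth bullet I would transport $f:A\to B$ into $M_{cf}$: functoriality of $Q$ and $R$ produces a ladder of commuting squares relating $f$ to $QRf$ through the natural weak equivalences $q$ and $r$ above, so applying $l$ and using that those become isomorphisms shows that $l(f)$ invertible forces $l(QRf)$ invertible. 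As $QRf$ is a morphism of $M_{cf}$ and $\Ho(M_{cf})\to\Ho(M)$ is fully faithful, $QRf$ becomes an isomorphism in $M_{cf}/\sim$, i.e. a homotopy equivalence, hence a weak equivalence by the proposition identifying homotopy equivalences with weak equivalences on $M_{cf}$; two applications of the 2-out-of-3 property along the ladder then return that $f$ itself is a weak equivalence.

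The main point requiring care is the reflection-of-isomorphisms step in the last bullet: one must use full faithfulness of $\Ho(M_{cf})\to\Ho(M)$ to descend an isomorphism of $\Ho(M)$ between objects of $M_{cf}$ to an isomorphism of $M_{cf}/\sim$, and recognize that such an isomorphism is exactly a homotopy equivalence in the sense already defined. Everything else is routine bookkeeping with the natural weak equivalences and repeated use of 2-out-of-3.
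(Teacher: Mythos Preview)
The paper does not supply its own proof of this theorem: it is stated as a summary of the background propositions already quoted from \cite[Section 1.2]{Hovey}, with the preceding sentence ``We summarize the entirety of this in a single theorem.'' Your assembly of those propositions is correct and is essentially the argument in Hovey; in particular your treatment of the fourth bullet (transport to $M_{cf}$ via $QR$, use the identification of homotopy equivalences with weak equivalences there, then 2-out-of-3 back) is the standard one.

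One small remark on the third bullet: the phrasing ``sends right and left homotopy maps to isomorphisms'' is slightly awkward in the paper, but the intended content is that $l$ identifies left or right homotopic maps, which is exactly what you extract from $l(i_0)=l(i_1)=l(s)^{-1}$ and its dual. Your derivation is fine; just be aware that the literal reading of the statement (the homotopy $H$ itself becoming an isomorphism) is not what is meant.
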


Lastly, now that we have a useful definition of the homotopy category of a model category $M$, we can ask the question of how we go from functors between model categories to functors between the induced homotopy categories. 

\begin{defin}Let $M$ and $N$ be two model categories, $F:M\to N$ a left Quillen functor and $U:N\to M$ a right Quillen functor.
\begin{itemize}
	\item We define the \textbf{total left derived functor $\L F:\Ho(M)\to \Ho(N)$} to be the composition $$\L F~=\Ho(F)~\circ ~\Ho(Q):\Ho(M)\to \Ho(M_{c})\to \Ho(N).$$
	\item We define the \textbf{total right derived functor $\R U:\Ho(N)\to \Ho(M)$} to be the composition $$\R U~=\Ho(U)~\circ~\Ho(R):\Ho(N)\to\Ho(N_f)\to\Ho(M)$$
\end{itemize}
\end{defin}

\begin{nota}As functors are almost never left and right Quillen functors for the same model structure, we usually don't specify whether the total derived functor is left or right. We call the act of going from a Quillen functor to a total derived functor to "derive" a functor.
\end{nota}

\begin{rem}As we can see, we don't really need $F$ and $U$ to be Quillen functors for the definition to work: any functor that preserves weak equivalences between cofibrant objects can be derived to a total left derived functor, and any functor that preserves weak equivalences between fibrant objects can be derived to a total right derived functor.
\end{rem}

On the other hand, we defined alongside the Quillen adjunctions another type of functor: the Quillen equivalences. We would love if two model categories that are Quillen equivalent would be, conceptually, "the same" model category, in the same way that two categories that are equivalent are conceptually "the same". Sadly, they are not, not exactly. It all boils down, again, to the homotopy categories.

\begin{prop}Let $M$ and $N$ be two model categories, and $F:M\rightleftharpoons N:U$ a Quillen adjunction. Then the following are equivalent:
\begin{enumerate}
	\item $(F,U,\phi)$ is a Quillen equivalence.
	\item For all $X$ cofibrant, the composite $X\to UFX\to URFX$ is a weak equivalence, and for all fibrant $Y$ the composite $FQUY\to FUY\to Y$ is also a weak equivalence.
	\item The adjunction $(\L F,\R U, R\phi)$ is an adjoint equivalence of categories.
\end{enumerate}
\end{prop}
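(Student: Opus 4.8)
The plan is to prove the cycle of implications $(1)\Rightarrow(2)\Rightarrow(3)\Rightarrow(1)$. The main tools are the triangle identities of the adjunction: for a map $f\colon FX\to Y$ the adjunct factors as $\phi(f)=Uf\circ\eta_X\colon X\to UFX\to UY$, where $\eta$ is the unit, and dually $\phi^{-1}(g)=\varepsilon_Y\circ Fg$ with $\varepsilon$ the counit. I will combine these with the Remark that a right Quillen functor preserves weak equivalences between fibrant objects while a left Quillen functor preserves weak equivalences between cofibrant objects, with the $2$-out-of-$3$ property, and with the fact recorded in the summarizing theorem that a map of $M$ whose image in $\Ho(M)$ is an isomorphism is already a weak equivalence.

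For $(1)\Rightarrow(2)$, fix a cofibrant $X$ and let $j\colon FX\to RFX$ be the fibrant replacement, a weak equivalence into a fibrant object. Since $X$ is cofibrant, $RFX$ is fibrant and $j$ is a weak equivalence, the Quillen equivalence hypothesis applied to $j$ gives that its adjunct $\phi(j)=Uj\circ\eta_X$ is a weak equivalence; but this is exactly the composite $X\to UFX\to URFX$ of $(2)$. Dually, for a fibrant $Y$ let $q\colon QUY\to UY$ be the cofibrant replacement. Reading $q$ as $\phi(f)$ for $f=\phi^{-1}(q)=\varepsilon_Y\circ Fq\colon FQUY\to Y$, and using that $q$ being a weak equivalence forces $f$ to be one, I obtain that $f$ is a weak equivalence; and $f$ is precisely the composite $FQUY\to FUY\to Y$ of $(2)$.

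For $(2)\Rightarrow(3)$, I would use that an adjunction is an adjoint equivalence precisely when its unit and counit are natural isomorphisms, so it suffices to identify the unit and counit of $(\L F,\R U)$ with the composites of $(2)$. Since $\Ho(M_c)\to\Ho(M)$ is an equivalence I may test the derived unit on a cofibrant $X$, where $\L F X\cong FX$ and $\R U\L F X\cong URFX$, and the derived unit is represented by the composite $X\to UFX\to URFX$; by $(2)$ this is a weak equivalence, hence an isomorphism in $\Ho(M)$. The same argument on a fibrant $Y$ identifies the derived counit with $FQUY\to FUY\to Y$, an isomorphism in $\Ho(N)$ by $(2)$. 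Thus both unit and counit are natural isomorphisms and $(\L F,\R U,R\phi)$ is an adjoint equivalence.

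For $(3)\Rightarrow(1)$, fix $X$ cofibrant, $Y$ fibrant and $f\colon FX\to Y$. Lifting $f$ along the trivial cofibration $j\colon FX\to RFX$ gives $\tilde f\colon RFX\to Y$ with $\tilde f\circ j=f$, so that $\phi(f)=U\tilde f\circ c_X$, where $c_X\colon X\to URFX$ is the derived unit, which is an isomorphism in $\Ho(M)$ by $(3)$ and hence a weak equivalence. By $2$-out-of-$3$, $f$ is a weak equivalence if and only if $\tilde f$ is, and $\phi(f)$ is a weak equivalence if and only if $U\tilde f$ is. One implication is immediate: as $\tilde f$ is a map between the fibrant objects $RFX$ and $Y$, the right Quillen functor $U$ takes it to a weak equivalence. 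The converse — deducing that $\tilde f$ is a weak equivalence from $U\tilde f$ being one — is the delicate point and is exactly where $(3)$ is used: $U\tilde f$ represents $\R U(\tilde f)$, so $U\tilde f$ a weak equivalence makes $\R U(\tilde f)$ an isomorphism, and because $\R U$ is an equivalence of categories it reflects isomorphisms, forcing $\tilde f$ to be an isomorphism in $\Ho(N)$ and hence a weak equivalence. The main obstacle is therefore this reflection step, together with the bookkeeping needed to identify the derived unit and counit with the explicit composites of $(2)$ and to keep careful track of when an isomorphism in the homotopy category may legitimately be promoted back to an honest weak equivalence.
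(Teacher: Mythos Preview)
The paper does not give its own proof of this proposition: it appears in the background chapter, where the author explicitly states that ``the results in this section will be taken from \cite{Hovey}'' and simply records the statement without argument. So there is no in-paper proof to compare against.

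That said, your argument is essentially the standard one (it matches Hovey, Proposition~1.3.13). The cycle $(1)\Rightarrow(2)\Rightarrow(3)\Rightarrow(1)$ is correct, the identifications of the derived unit and counit with the composites of item~(2) are right, and your use of the reflection of isomorphisms by the equivalence $\R U$ in the step $(3)\Rightarrow(1)$ is exactly what is needed. One small point worth making explicit: in $(3)\Rightarrow(1)$ you lift $f$ along the trivial cofibration $j\colon FX\to RFX$; this is legitimate because the functorial fibrant replacement in the paper's conventions factors $FX\to *$ as a trivial cofibration followed by a fibration, so $j$ really is a trivial cofibration and the lift against the fibrant $Y$ exists. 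With that clarified, the proof is complete.
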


So two model categories are Quillen equivalent if and only if their homotopy categories are equivalent.

\section{Constructions on model categories}

We now have the main definitions on model categories. But how and for what can we use them? In this section we will construct a few structures that will help us afterwards, either to get model structures for our favourite categories or to work with those favourite model categories.

\subsection{Cofibrantly generated model categories}

Proving that a category admits a model structure is quite hard. To compensate for it, we have a concept that will help us construct model categories in a "easier" way, that of cofibrantly generated model categories.. But before, we will need a few definitions. The results from this part come from \cite[Ch. 2, Ch.3]{Hovey}, unless stated otherwise.

\begin{defin}Let $C$ be a category with all small colimits, and $\lambda$ an ordinal. A \textbf{$\lambda$-sequence} is a colimit-preserving functor $X:\lambda\to C$. 
\end{defin}

As a $\lambda$-sequence preserves colimits, we have a isomorphism $\colim_{\beta<\gamma}X_\beta\to X_\gamma$ for all limit ordinals $\gamma<\lambda$. 

\begin{defin}Let $C$ be a category with all small colimits, $\lambda$ an ordinal and $X:\lambda\to C$ a $\lambda$-sequence. Then we call the map $X_0\to  \colim_{\beta<\lambda}X_\beta$ the \textbf{transfinite composition} of the $\lambda$-sequence.
\end{defin}

\begin{defin}Let $\kappa$ be a cardinal. We say that an ordinal $\alpha$ is \textbf{$\kappa$-filtered} if it is a limit ordinal and, if $A\subset \alpha$ and $|A|<\kappa$, then $\sup A<\alpha$. 
\end{defin}

\begin{defin}Let $C$ be a category with all small colimits, $I$ a collection of morphisms of $C$, $A$ an object of $C$ and $\kappa$ a cardinal. We say that $A$ is \textbf{$\kappa$-small relative to $I$} if, for all $\kappa$-filtered ordinals $\alpha$ and all $\alpha$-sequences $X$ such that every $X_\beta\to X_{\beta+1}$ is in $I$ for $\beta+1<\alpha$, the map 
$$\colim_{\beta<\alpha}\Hom_C(A,X_{\beta})\to \Hom_C(A,\colim_{\beta<\alpha}X_\beta) $$
is an isomorphism. We say that $A$ is \textbf{small relative to $I$ }if it is $\kappa$-small relative to $I$ for some cardinal $\kappa$. 
\end{defin}

\begin{defin}Let $C$ be a category and $I$ a set of maps in $C$ containing all small colimits. Then a \textbf{relative $I$-cell complex} is a transfinite composition of pushouts of elements in $I$. We denote the collection of relative $I$-cell complexes by $I$-cell.
\end{defin}

\begin{defin}Let $C$ be a category and $I$ be a set of maps in $C$. 
\begin{itemize}
	\item A map is \textbf{$I$-injective} if it has the right lifting property with respect to every map in $I$. We will denote the set of $I$-injective maps by $I-inj$.
	\item A map is \textbf{$I$-projective} if it has the left lifting property with respect to every map in $I$. We will denote the set of $I$-projective maps by $I-proj$.
	\item A map is a \textbf{$I$-cofibration} if it has the left lifting property with respect to every map in $I-inj$, i.e. if it is in $(I-inj)-proj$. We will denote the set of $I$-cofibrations by $I-cof$. 
	\item A map is a \textbf{$I$-fibration} if it has the right lifting property with respect ot every map in $I-proj$, i.e. if it is in $(I-proj)-inj$. We will denote the set of $I$-fibrations by $I-fib$.
\end{itemize}
\end{defin}

So we finally have the definitions necessary to give that tool we wanted to use to construct model categories.

\begin{defin}Let $M$ be a model category. We say that $M$ is a \textbf{cofibrantly generated model category} if we have two sets of maps, $I$ and $J$, such that 
\begin{itemize}
	\item The domains of the maps in $I$ are small relative to $I$-cell.
	\item The domains of the maps in $J$ are small relative to $J$-cell.
	\item The fibrations are the maps that have the right lifting property with respect to every map in $J$, i.e. the set of fibrations is $J-inj$.
	\item The trivial fibrations are the maps that have the right lifting property with respect to every map in $I$, i.e. the set of trivial cofibrations is $I-inj$.
\end{itemize}
We call $I$ the set of \textbf{generating cofibrations}, and $J$ the set of \textbf{generating trivial cofibrations}. 
\end{defin}

As we can see, cofibrantly generated model categories are, in particular, model categories. In fact, most model categories we work with in "real life" are of this type. As it is, we can give a way of proving something is cofibrantly generated model category without going through the main definition. It isn't easy to prove these conditions either by any measure, but they are less complicated than the other option.

\begin{teo}Let $C$ be a category with all small limits and colimits. Let $W$, $I$ and $J$ be sets of maps in $C$. Then there is a cofibrantly generated model structure on $C$ which has $W$ as its weak equivalences, $I$ as its set of generating cofibrations and $J$ as its set of generating trivial cofibrations, if and only if the following conditions are satisfied:
\begin{itemize}
	\item The set $W$ has the 2-out-of-3 property.
	\item The set $W$ is closed under retracts.
	\item The domains of the maps in $I$ are small relative to $I$-cell.
	\item The domains of the maps in $J$ are small relative to $J$-cell.
	\item We have the inclusion $J-cof\subseteq (W\cap I-cof)$.
	\item We have the inclusion $I-inj\subseteq (W\cap J-inj)$. 
	\item At least one of these inclusions is an equality.
\end{itemize}
\end{teo}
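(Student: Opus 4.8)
The plan is to prove necessity directly from the characterisation of (co)fibrations by lifting properties, and to prove sufficiency by installing the three classes $W$, $I-cof$ and $J-inj$ as the weak equivalences, cofibrations and fibrations and then checking the model axioms one at a time. The only two genuinely non-formal ingredients will be Quillen's small object argument, which manufactures the functorial factorisations out of the smallness hypotheses, and the retract argument, which lets us pin down the trivial cofibrations and trivial fibrations.

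For necessity, suppose a cofibrantly generated model structure with these data already exists. The $2$-out-of-$3$ property and closure of $W$ under retracts are part of the model axioms, and smallness of the domains of $I$ and $J$ is built into the definition of a cofibrantly generated model category, so the first four conditions are automatic. By definition the fibrations are $J-inj$ and the trivial fibrations are $I-inj$; the overdetermination proposition then forces the cofibrations to be $I-cof$ and the trivial cofibrations to be $J-cof$. Since in any model category the trivial cofibrations are exactly the cofibrations lying in $W$, we get $J-cof = W\cap I-cof$, and dually $I-inj = W\cap J-inj$. Thus both of the last inclusions hold, and in fact both are equalities, so certainly at least one is.

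For sufficiency, set the weak equivalences to be $W$, the cofibrations to be $I-cof$ and the fibrations to be $J-inj$. First I would apply the small object argument to $I$ and to $J$: the hypotheses that the domains of $I$ and $J$ are small relative to $I-cell$ and $J-cell$ are exactly what the argument consumes, so every map factors functorially as a map in $I-cell$ followed by one in $I-inj$, and as a map in $J-cell$ followed by one in $J-inj$. Because $I-cell\subseteq I-cof$ and $J-cell\subseteq J-cof$, these are the two candidate factorisations. The $2$-out-of-$3$ axiom for $W$ is given by hypothesis; closure under retracts holds for $W$ by hypothesis and for $I-cof$, $J-inj$, $I-inj$, $J-cof$ automatically, since any class defined by a lifting property is stable under retracts.

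The heart of the proof is to identify the trivial (co)fibrations, which then yields both the lifting and factorisation axioms. I claim $I-inj = W\cap J-inj$ and $J-cof = W\cap I-cof$; the last hypothesis asserts that one of these inclusions is already an equality, and the plan is to upgrade the other by the retract argument. Suppose, say, that $I-inj = W\cap J-inj$. To prove $W\cap I-cof\subseteq J-cof$, take $f\in W\cap I-cof$ and factor it as $f=p\circ j$ with $j\in J-cell\subseteq J-cof$ and $p\in J-inj$; the inclusion $J-cof\subseteq W\cap I-cof$ gives $j\in W$, so $2$-out-of-$3$ forces $p\in W$, whence $p\in W\cap J-inj=I-inj$. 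Since $f\in I-cof$ it lifts against $p$, so the retract argument exhibits $f$ as a retract of $j$, and retract-stability of $J-cof$ gives $f\in J-cof$. The symmetric argument, factoring instead through $I-cell$ and using the inclusion $I-inj\subseteq W\cap J-inj$, handles the case in which the fifth condition is the assumed equality. Once both equalities are in place, the two factorisations above become exactly a cofibration followed by a trivial fibration and a trivial cofibration followed by a fibration, and the lifting axioms reduce to $I-cof$ lifting against $I-inj$ and $J-cof$ lifting against $J-inj$, which hold by definition. I expect this identification to be the main obstacle: the small object argument is routine once the smallness hypotheses are granted, but the bookkeeping of the two cases in the retract argument — checking that the assumed equality in the fifth or sixth condition is the one fed into $2$-out-of-$3$ — is where the hypotheses must be handled with care.
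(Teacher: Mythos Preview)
The paper does not give its own proof of this theorem: it is stated in the background chapter as a result taken from Hovey's book (the section begins with ``The results from this part come from \cite[Ch.~2, Ch.~3]{Hovey}''), so there is no in-paper argument to compare against. Your proposal is correct and is precisely the standard proof one finds in Hovey (Theorem~2.1.19): necessity is read off from the lifting characterisation of (co)fibrations, and sufficiency combines the small object argument for the two factorisations with the retract argument to upgrade whichever of the two inclusions is not already assumed to be an equality.
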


There are a few examples of model categories that we will be using a lot during this thesis: let us give the description of their model structures now. 
\\
\\Let us start with topological spaces. We will write $S^{n-1}$ the unit sphere of dimension $n$ and $D^n$ the unit disk of dimension $n$. 

\begin{defin}Let $\Top$ be the category of topological spaces and let $f:X\to Y$ be a morphism in $\Top$. 
\begin{itemize}
	\item We say that $f$ is a weak equivalence in $\Top$ if for all $x\in X$ and all $n\in \N$ the induced morphism of groups 
	$$\pi_n(f):\pi_n(X,x)\to \pi_n(Y,f(x)) $$
	is an isomorphism of groups. 
	\item We define $I$ to be the set of boundary inclusions $f:S^{n-1}\to D^n$ for all $n\in\N$.
	\item We define $J$ to be the set of all inclusions $f:D^n\to D^n\times \l0,1\r$ such that $f(x)=(x,0)$ for all $x\in X$ and $n\in \N$. 
\end{itemize}
\end{defin}

\begin{teo}There is a cofibrantly generated model structure in $\Top$ with the weak equivalences as stated above, $I$ the generating cofibrations and $J$ the generating trivial cofibrations. Moreover, every object in $\Top$ is fibrant.
\end{teo}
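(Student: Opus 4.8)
The plan is to verify the seven hypotheses of the recognition theorem stated immediately above, taking $W$ to be the class of weak homotopy equivalences and $I$, $J$ as defined. First I would dispatch the two conditions on $W$. Since $W$ is exactly the class of maps sent to isomorphisms by every functor $\pi_n(-,x)$, the $2$-out-of-$3$ property and closure under retracts follow at once from functoriality of the $\pi_n$ together with the corresponding (elementary) facts for isomorphisms of groups. Next, the two smallness conditions: the domains of $I$ are the spheres $S^{n-1}$ and the domains of $J$ are the disks $D^n$, all of which are compact. The standard compactness argument shows that a map out of a compact space into the colimit of a $\kappa$-filtered sequence of closed inclusions factors through some stage; since relative $I-cell$ and $J-cell$ complexes are built from such closed (relative CW-type) inclusions, the spheres and disks are small relative to $I-cell$ and $J-cell$ respectively.

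The heart of the proof is the identification of the lifting classes. I would first show that $J-inj$ is exactly the class of Serre fibrations: because $D^n\cong I^n$, the right lifting property against the inclusions $D^n\to D^n\times[0,1]$ is literally the homotopy lifting property against disks, which is the definition of a Serre fibration. This already yields the final assertion of the theorem: for any space $X$ the map $X\to *$ has the right lifting property against every map of $J$ (lift any homotopy into a point by the constant homotopy), so every object of $\Top$ is fibrant. I would then identify $I-inj$ with the trivial Serre fibrations, i.e.\ the Serre fibrations lying in $W$. One half is formal: each generating map $D^n\to D^n\times[0,1]$ is a relative CW, hence relative $I-cell$, inclusion, so $J\subseteq I-cof$ and therefore $I-inj\subseteq J-inj$; thus every map in $I-inj$ is a Serre fibration.

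The substantive point, and the main obstacle, is the lemma that a Serre fibration $p$ is a weak equivalence if and only if it has the right lifting property against all boundary inclusions $S^{n-1}\to D^n$. One direction uses the long exact sequence of homotopy groups of a fibration to see that the lifting property forces the fibers to be weakly contractible, whence $p\in W$; the converse is an obstruction/compression argument, lifting a map $D^n\to Y$ relative to $S^{n-1}$ through a trivial fibration, which is where the genuine topology enters. Granting this lemma, both the inclusion $I-inj\subseteq W\cap J-inj$ and the equality $I-inj=W\cap J-inj$ (one of the two equalities demanded by the recognition theorem) follow immediately.

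It then remains only to check $J-cof\subseteq W\cap I-cof$. The inclusion $J-cof\subseteq I-cof$ is formal from $J\subseteq I-cof$ and the stability of $I-cof$ under retracts. For membership in $W$, I would note that each generating trivial cofibration $D^n\to D^n\times[0,1]$ is the inclusion of a deformation retract, hence a weak equivalence; a direct check shows that pushouts and transfinite compositions preserve the deformation-retract-inclusion property, so every relative $J-cell$ complex is a weak equivalence, and $J-cof$, being retracts of these, lies in $W$ as well. With all seven hypotheses of the recognition theorem verified, it produces the claimed cofibrantly generated model structure on $\Top$, and the fibrancy of all objects has already been observed.
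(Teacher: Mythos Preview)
The paper does not actually supply a proof of this theorem: it is stated as a background result, with the surrounding section explicitly taken from \cite{Hovey}. So there is no ``paper's own proof'' to compare against beyond the reference to Hovey's book.

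That said, your proposal is precisely the standard argument (essentially Hovey's proof of his Theorem~2.4.19), and it is correct in outline. A couple of places deserve a word of caution if you were to write this out in full. First, for the smallness step, you should be explicit that relative $I$-cell and $J$-cell complexes are closed $T_1$ inclusions, since compactness alone gives smallness only relative to such maps; this is the content of Hovey's Lemma~2.4.5--2.4.6. Second, your route to $J\subseteq I\text{-cof}$ via ``each $D^n\to D^n\times[0,1]$ is a relative CW inclusion'' is correct but requires exhibiting an explicit relative CW structure (or, more slickly, using the homeomorphism of pairs $(D^n\times I,\,D^n\times\{0\})\cong(D^n\times I,\,D^n\times\{0\}\cup S^{n-1}\times I)$); alternatively, once you have the key lemma identifying $I\text{-inj}$ with trivial Serre fibrations, the inclusion $I\text{-inj}\subseteq J\text{-inj}$ gives $J\text{-cof}\subseteq I\text{-cof}$ formally, which is the cleaner route. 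Finally, the claim that pushouts and transfinite compositions preserve the ``inclusion of a strong deformation retract'' property is true but is itself a small lemma; Hovey instead argues that $J\text{-cell}\subseteq W$ via the key lemma and the factorization, which avoids this check. None of these are gaps, just places where the sketch hides nontrivial work.
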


Now that we have topological spaces, we can get the next model category, which will be important for what follows: $\sS$, the category of simplicial sets. For that, though, we are going to need a couple of results first. 

\begin{defin}
    Let $n\in\N$ be natural number. We define a simplicial set $\Delta_n:\Delta\op\to \Set$ to be
    $$ \Delta_n([k])=\Hom_\Delta([k],[n])$$
    for all $[k]\in\Delta\op$.
\end{defin}

\begin{prop}\label{ch.1: adjunctions sSet}
    Let $C$ be a category with small colimits. Then the category of functors from $\Delta$ to $C$, $\Fun(\Delta, C)$, is equivalent to the category of adjunctions $\sS\rightleftharpoons C$. 
\end{prop}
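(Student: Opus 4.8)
The plan is to recognize this statement as the universal property of the presheaf category $\sS=\Fun(\Delta\op,\Set)$ as the free cocompletion of $\Delta$: a left adjoint out of $\sS$ is determined by, and can be freely built from, its restriction to the representable simplicial sets. Accordingly, I would set up two functors between $\Fun(\Delta,C)$ and the category $\Adj$ of adjunctions $\sS\rightleftharpoons C$ (whose morphisms are natural transformations of the left adjoints) and show they are mutually inverse. In one direction, send an adjunction $(L,U,\phi)$ to the composite $L\circ y:\Delta\to C$, where $y:\Delta\to\sS$ is the Yoneda embedding $[n]\mapsto\Delta_n$; on a morphism of adjunctions this is whiskering by $y$. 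In the other direction, given $F:\Delta\to C$ I would build an explicit adjunction: define the right adjoint by $U_F(c)_n=\Hom_C(F([n]),c)$, and the left adjoint $L_F$ as the left Kan extension of $F$ along $y$, which exists because $C$ has all small colimits; concretely $L_F(X)=\int^{[n]\in\Delta}X_n\cdot F([n])$, the coend built from the copower $S\cdot c=\coprod_{s\in S}c$.

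First I would verify that $(L_F,U_F)$ is genuinely an adjunction, that is $\Hom_C(L_F(X),c)\cong\Hom_\sS(X,U_F(c))$ naturally in $X$ and $c$. This follows by unwinding the coend: a map out of $L_F(X)$ amounts to a compatible family of maps $F([n])\to c$ indexed by the simplices $X_n$, which by the Yoneda lemma is exactly a simplicial map $X\to U_F(c)$. This simultaneously shows that $U_F$ is a right adjoint, so the pair defines an object of $\Adj$.

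The crux is showing the two constructions are mutually inverse. For the round trip $\Fun(\Delta,C)\to\Adj\to\Fun(\Delta,C)$, I must show $L_F\circ y\cong F$. Evaluating, $L_F(\Delta_n)$ is the colimit of $F$ over the slice category $\Delta\downarrow[n]$ (using $\Hom_\sS(\Delta_m,\Delta_n)\cong\Hom_\Delta([m],[n])$ from Yoneda), and this slice has the terminal object $([n],\Id_{[n]})$, so the colimit is $F([n])$, naturally in $[n]$. For the round trip $\Adj\to\Fun(\Delta,C)\to\Adj$, given $(L,U,\phi)$ with $F=L\circ y$, I need $L_F\cong L$ (the right adjoints then agree up to isomorphism by uniqueness of adjoints). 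Here I would invoke the \emph{density theorem}: every simplicial set is canonically the colimit of its representables, $X\cong\colim_{(\Delta_n\to X)}\Delta_n$, taken over the category of simplices of $X$. Since $L$ is a left adjoint it preserves this colimit, giving $L(X)\cong\colim L(\Delta_n)=\colim F([n])\cong L_F(X)$, naturally in $X$.

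I expect the main obstacle to be the bookkeeping of naturality and the precise notion of morphism in $\Adj$: one must check that whiskering by $y$ and the Kan-extension construction are functorial, and that the isomorphisms $L_F\circ y\cong F$ and $L\cong L_F$ are natural as families, so that one genuinely obtains an equivalence of categories rather than merely a bijection on objects. The conceptual content — density of representables together with cocontinuity of left adjoints — is standard; the care lies entirely in handling the morphisms correctly.
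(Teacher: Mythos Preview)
Your argument is correct and is precisely the standard nerve--realization / free-cocompletion proof of this well-known fact. The paper itself states this proposition without proof (it is quoted as background from \cite{Hovey}), so there is nothing to compare against; your sketch, including the use of density of representables and cocontinuity of left adjoints, is exactly what one would write if a proof were required.
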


In particular, this result means that if we want to construct an adjunction between simplicial sets and another category $C$, it suffices to construct a cosimplicial object in $C$. We are going to use that in order to construct an adjunction between simplicial spaces and topological spaces.

\begin{defin}
    There exists an adjunction between topological spaces and simplicial sets. We will denote by $\Re: \sS\rightleftharpoons \Top: \Sing$. We call $\Re$ the \textbf{geometric realization} and $\Sing$ the \textbf{singular functor}. 
\end{defin}

\begin{proof}[Sketch of construction]
    Fix $n\in\N$ a natural number. We define $\Re(\Delta_n)$ to be the convex hull of $(e_0,\ldots,e_n)\in\R^{n}$, where $e_0=(0,\ldots,0)$ and for all $1\leq i \leq n$ $e_i$ is the vector with i-th coordinate $1$ and all others $0$. In other words, 
    $$\Re(\Delta_n)=\{(x_1,\ldots,x_n)\in\R^n/\ \forall 1\leq i\leq n,\ t_i\geq 0,\ \sum t_i\leq 1\}.$$
    We then have a comsimplicial topological space $\Re(\Delta_*)$. By Proposition \ref{ch.1: adjunctions sSet}, we have an adjunction $\Re: \sS\rightleftharpoons \Top: \Sing$ and we have finished our construction.
\\\end{proof}

\begin{defin}Let $\sS$ be the category of simplicial sets. 
\begin{itemize}
	\item We say that a morphism $f:X\to Y$ of simplicial sets is a weak equivalence if and only if its geometric realization $\Re(f)$ is a weak equivalence for the model structure we have defined on $\Top$. 
	\item We define $I$ to be the inclusions $\partial \Delta_n\to \Delta_n$ for all $n\in\N$. 
	\item We define $J$ to be the set of horn inclusions $\Lambda^r_n\to \Delta_n$ for all $n\in\N$ and all $0\leq r\leq n$.
\end{itemize}
\end{defin} 

\begin{teo}There is a cofibrantly generated model structure in $\sS$ with the weak equivalences as stated above, $I$ the generating cofibrations and $J$ the generating trivial cofibrations. Moreover, every object in $\sS$ is cofibrant.
\end{teo}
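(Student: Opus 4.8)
The plan is to apply the recognition theorem for cofibrantly generated model structures stated above to the category $\sS$, the class $W$ of maps whose geometric realization is a weak equivalence in $\Top$, and the sets $I$ and $J$ of boundary and horn inclusions. Since $\sS$ is a presheaf category it has all small limits and colimits, so it remains to verify the seven conditions. The first two are immediate: because $\Re$ is a functor and hence preserves composites, and because the weak equivalences of $\Top$ have the 2-out-of-3 property and are closed under retracts, the class $W$ inherits both properties directly from $\Top$.

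For the smallness conditions I would first observe that $I\text{-cof}$ is exactly the class of monomorphisms of simplicial sets: every monomorphism is a transfinite composite of pushouts of boundary inclusions, obtained by attaching the nondegenerate simplices skeleton by skeleton, and conversely every map in $I\text{-cof}$ is a monomorphism. In particular $\emptyset\to X$ is a monomorphism for every $X$, so every object is cofibrant, which is the last assertion of the theorem. The domains $\partial\Delta_n$ and $\Lambda^r_n$ are finite simplicial sets, hence $\omega$-small relative to any class of monomorphisms; since $I\text{-cell}$ and $J\text{-cell}$ consist of monomorphisms this gives conditions 3 and 4. Moreover each horn inclusion is itself a monomorphism, so $J\subseteq I\text{-cof}$ and therefore $I\text{-inj}\subseteq J\text{-inj}$.

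Next I would treat the two inclusions. For condition 6, a map in $I\text{-inj}$ is a trivial fibration: having the right lifting property against all boundary inclusions, it admits a section together with a simplicial fiberwise homotopy, so its realization is a homotopy equivalence and hence lies in $W$; combined with $I\text{-inj}\subseteq J\text{-inj}$ this yields $I\text{-inj}\subseteq W\cap J\text{-inj}$. For condition 5, a map in $J\text{-cof}$ is a monomorphism and so lies in $I\text{-cof}$; it is also a weak equivalence, because $\Re$ sends each horn inclusion $\Lambda^r_n\to\Delta_n$ to a trivial cofibration of $\Top$ (a deformation retract inclusion), and trivial cofibrations of $\Top$ are closed under pushout and transfinite composition, so $\Re(J\text{-cof})$ lands in the trivial cofibrations, in particular in $W$. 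Hence $J\text{-cof}\subseteq W\cap I\text{-cof}$.

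The heart of the argument, and the step I expect to be the main obstacle, is the equality in the last hypothesis, which I would establish as $I\text{-inj}=W\cap J\text{-inj}$, that is, the trivial fibrations coincide with the Kan fibrations that are weak equivalences. The inclusion $\subseteq$ is condition 6; for $\supseteq$ one must show that a Kan fibration $p$ lying in $W$ has the right lifting property against every boundary inclusion, and this is precisely where the purely combinatorial notion of fibration, defined by horn filling, has to be reconciled with the topologically defined weak equivalences. The essential geometric input is Quillen's theorem that the geometric realization of a Kan fibration is a Serre fibration. I would combine this with the theory of minimal fibrations, using that every Kan fibration is fiberwise homotopy equivalent to a minimal one and that a minimal fibration whose realization is a weak equivalence has contractible fibers, hence is a trivial fibration; transporting the conclusion back along the fiberwise equivalence then gives the lifting property for $p$ itself. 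Once $I\text{-inj}=W\cap J\text{-inj}$ is in place the seventh hypothesis holds as an equality rather than a mere inclusion, all seven conditions are verified, and we obtain the cofibrantly generated model structure, the cofibrancy of every object having already been noted above.
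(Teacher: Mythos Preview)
Your proof sketch is correct and follows the standard approach found in the literature (essentially Hovey, Chapter~3, or Goerss--Jardine). The paper itself does not give a proof of this theorem: it is stated in the background chapter as a well-known result taken from \cite{Hovey}, with no argument supplied. So there is nothing to compare against; your outline is precisely the kind of proof one finds in the cited reference, including the identification of $I\text{-cof}$ with monomorphisms, the smallness of finite simplicial sets, and the use of Quillen's theorem on realizations of Kan fibrations together with minimal fibration theory to establish the equality $I\text{-inj}=W\cap J\text{-inj}$.
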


Finally, we are going to give the model structure of the category $\Ch$ of cochain complexes over a commutative ring $k$. We define $k\l n\r$ to be the cochain complex concentrated in degree $n$, where it takes value $k$, and we also define $k^c\l n\r$ to be the complex concentrated in degrees $n$ and $n-1$, where it takes value $k$.

\begin{defin}Let $\Ch$ be the category of cochain complexes over $k$. 
\begin{itemize}
	\item We say that $f:X\to Y$ is a weak equivalence in $\Ch$ if for all $x\in\N$ the induced morphism of cohomology groups
	$$H^n(f):H^n(X)\to H^n(Y) $$
	is an isomorphism of groups. 
	\item We define $I$ to be the inclusions $k\l n-1\r\to k^c\l n\r$ for all $n\in\N$.
	\item We define $J$ to be the inclusions $0\to k^c\l n\r$ for all $n\in\N$, where $0$ is the complex which is zero everywhere.
\end{itemize} 
\end{defin}

\begin{teo} There is a cofibrantly generated model structure in $\Ch$ with the weak equivalences as stated above, $I$ the generating cofibrations and $J$ the generating trivial cofibrations. Moreover, every object in $\Ch$ is fibrant.
\end{teo}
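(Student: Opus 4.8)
The plan is to verify the hypotheses of the recognition theorem for cofibrantly generated model structures stated above, with $W$ the class of quasi-isomorphisms, $I=\{k[n-1]\to k^c[n]\}_{n\in\N}$ and $J=\{0\to k^c[n]\}_{n\in\N}$. Concretely, this will identify the structure we are after as the projective one, whose fibrations should turn out to be the degreewise epimorphisms and whose trivial fibrations should turn out to be the surjective quasi-isomorphisms. Two of the seven conditions are immediate: since $H^n(-)$ is a functor, the $2$-out-of-$3$ property and closure under retracts for $W$ follow directly from the corresponding properties of isomorphisms of $k$-modules. The smallness conditions are also easy here, because filtered colimits in $\Ch$ are computed degreewise and are exact; in particular $\Hom_{\Ch}(k[n-1],-)$ is the cocycle functor $Z^{n-1}(-)=\Ker(d)$, which commutes with filtered colimits, so the domains $k[n-1]$ (and, trivially, the zero complex) are small relative to all maps.

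The heart of the argument is two lifting-property computations. First I would identify $J-inj$: a chain map $k^c[n]\to Y$ is freely determined by the image of a single generator, so $\Hom_{\Ch}(k^c[n],Y)$ is naturally one graded piece of $Y$, and a map $p:X\to Y$ has the right lifting property against $0\to k^c[n]$ for every $n$ if and only if $p$ is surjective in each degree. Thus $J-inj$ is exactly the class of degreewise epimorphisms. Since $X\to 0$ is always a degreewise epimorphism, this already yields the last assertion of the theorem, namely that every object of $\Ch$ is fibrant. Second, and this is the technical core, I would show that $I-inj$ is exactly the class of surjective quasi-isomorphisms: a map $k[n-1]\to X$ is a cocycle, a lift against $k[n-1]\to k^c[n]$ amounts to exhibiting that cocycle as a coboundary, and a direct chase of cocycles and coboundaries shows that having the right lifting property against all of $I$ forces $p$ to be surjective and to induce isomorphisms on every $H^n$.

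With these characterizations the remaining conditions fall into place. Since $I-inj\subseteq J-inj$ (a surjective quasi-isomorphism is in particular a surjection), contravariance of the lifting relation gives $J-cof\subseteq I-cof$. To see $J-cof\subseteq W$ I would first check that $J-cell\subseteq W$: every pushout of $0\to k^c[n]$ is of the form $A\to A\oplus k^c[n]$, which is a quasi-isomorphism because $k^c[n]$ is acyclic, and transfinite compositions of such injective quasi-isomorphisms remain quasi-isomorphisms since cohomology commutes with filtered colimits. As $J-cof$ consists of retracts of relative $J$-cell complexes (by the small object argument, available thanks to the smallness verified above) and $W$ is closed under retracts, we obtain $J-cof\subseteq W$, establishing the inclusion $J-cof\subseteq W\cap I-cof$. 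Finally, $I-inj=W\cap J-inj$ holds as an equality, since both sides equal the class of surjective quasi-isomorphisms, which furnishes the required equality among the seven conditions. The main obstacle is the second lifting computation, $I-inj=\{\text{surjective quasi-isomorphisms}\}$: unlike the other steps it is not formal, and it is precisely where the explicit shape of the complexes $k[n-1]$ and $k^c[n]$, together with the fact that the differential of $k^c[n]$ trivializes the generating cocycle, must be used in a careful diagram chase.
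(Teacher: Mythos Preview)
Your proposal is correct and is precisely the standard verification of the recognition theorem for this model structure; the paper itself does not supply a proof of this theorem, treating it as a background result quoted from \cite{Hovey}. In other words, there is nothing to compare against in the paper, and your argument is exactly the one Hovey gives.
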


A particularly interesting model category is that of diagrams (or presheafs) over a model category. As it will be instrumental to the main results of this thesis, we're going to give it some attention. From here on, the results from this section are taken from \cite[Ch. 11]{HLocal}.

\begin{defin}Let $C$ and $M$ be two categories. We define the \textbf{category of $C$-diagrams in $M$}, and we will denote it by $\Fun(C,M)$, the category with the following data:
\begin{itemize}
	\item A set of objects consisting on the functors from $C$ to $M$, i.e. the functors $F:C\to M$. 
	\item For every two objects $F, Q:C\to M\in \Ob(\Fun(C,M))$, a set $\Hom(F,Q)$ of the natural transformations between $F$ and $Q$. 
\end{itemize} 
\end{defin}

\begin{nota}In the case where we work with the dual of $C$, we will call $\Fun(C\op, M)$ the \textbf{category of $C$-presheafs on $M$}. In particular, if we have $M=\sS$, we'll say the \textbf{category of simplicial presheafs}.
\end{nota}

Is it necessary to prove each time that there exists a model structure for every category of diagrams? Thankfully, no. We can prove that if the category $M$ has a cofibrantly generated model structure, then the category of diagrams also has a model structure (in fact, it has two!)

\begin{defin}Let $C$ be a category and $M$ be a cocomplete category. Let $I$ be a set of morphisms in $M$. We will denote by $F_I^C$ the set of maps in $\Fun(C,M)$ of the form 
$$ \coprod_{C(\alpha, \cdot)}A=F_A^\alpha\to F_B^\alpha=\coprod_{C(\alpha, \cdot)}B,$$
where $A\to B$ is an element of $I$ and $\alpha\in \Ob(C)$.
\end{defin}

\begin{teo}\label{ch. 1:projective}Let $C$ be a small category and $M$ a cofibrantly generated model category with $I$ the set of generating cofibrations and $J$ the set of generating trivial cofibrations. Then the category of $C$-diagrams in $M$ is a cofibrantly generated model category with $F_I^C$ as its generating cofibrations and $F_J^C$ as its generating trivial cofibrations. In this model category, we have that 
\begin{itemize}
	\item A morphism $f:F\to Q$ is a weak equivalence in $\Fun (C,M)$ if it is objectwise a weak equivalence in $M$, i.e. if for all $\alpha\in C$ the morphism $f(\alpha)=F(\alpha)\to Q(\alpha)$ is a weak equivalence in $M$.
	\item A morphism $f:F\to Q$ is a fibration in $\Fun(C,M)$ if it is objectwise a fibration in $M$, i.e. if for all $\alpha\in C$ the morphism $f(\alpha)=F(\alpha)\to Q(\alpha)$ is a fibration in $M$.
\end{itemize}
We call this model structure the \textbf{projective model structure on $\Fun(C,M)$}.
\end{teo}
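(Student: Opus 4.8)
The plan is to verify the hypotheses of the recognition theorem for cofibrantly generated model categories stated above, applied to $\Fun(C,M)$ with $W$ the objectwise weak equivalences, $I'=F_I^C$, and $J'=F_J^C$. The backbone of the whole argument is the free--forgetful adjunction at each object: for $\alpha\in\Ob(C)$ the evaluation functor $\mathrm{Ev}_\alpha:\Fun(C,M)\to M$, $F\mapsto F(\alpha)$, admits a left adjoint $F^\alpha$ sending $A\in M$ to the diagram $F_A^\alpha=\coprod_{C(\alpha,\cdot)}A$, so that $\Hom_{\Fun(C,M)}(F_A^\alpha,G)\cong\Hom_M(A,G(\alpha))$. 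Since $M$ is complete and cocomplete, so is $\Fun(C,M)$, with all limits and colimits computed objectwise; this is exactly what makes the evaluation functors preserve colimits and lets us reduce every condition to a statement in $M$.

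First I would dispatch the cheap axioms. The class $W$ has the two-out-of-three property and is closed under retracts because both hold objectwise in $M$, while composition, retracts, limits and colimits in $\Fun(C,M)$ are all computed objectwise. Next come the two smallness conditions: the domains of $F_I^C$ are exactly the diagrams $F_A^\alpha$ for $A$ a domain of a map of $I$. Because $F^\alpha$ is a left adjoint and relative $F_I^C$-cell complexes are built from pushouts and transfinite compositions, evaluating at any $\beta$ turns such a complex into an $M$-cell complex; smallness of $A$ relative to $I$-cell then transfers through the adjunction to smallness of $F_A^\alpha$ relative to $F_I^C$-cell, and the same argument settles $F_J^C$.

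The heart of the matter is identifying the lifting classes. Using the adjunction, a map $p:X\to Y$ lies in $F_I^C\text{-inj}$ if and only if each $p(\alpha):X(\alpha)\to Y(\alpha)$ has the right lifting property against every map of $I$, i.e.\ is a trivial fibration in $M$; likewise $F_J^C\text{-inj}$ is the class of objectwise fibrations. By the definition of the projective weak equivalences and fibrations this already gives $F_I^C\text{-inj}=W\cap(F_J^C\text{-inj})$, which is the equality the theorem demands. It remains to show $F_J^C\text{-cof}\subseteq W\cap(F_I^C\text{-cof})$. The inclusion into $F_I^C\text{-cof}$ is formal, since $F_I^C\text{-inj}\subseteq F_J^C\text{-inj}$ forces the left-lifting classes to reverse. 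For the inclusion into $W$ I would prove the key lemma that every relative $F_J^C$-cell complex is an objectwise trivial cofibration: evaluating $F^\alpha_j$ at $\beta$ yields $\coprod_{C(\alpha,\beta)}j$, a coproduct of trivial cofibrations and hence a trivial cofibration in $M$, and since evaluation preserves the pushouts and transfinite compositions building the cell complex, the whole construction is objectwise a transfinite composition of pushouts of trivial cofibrations, hence an objectwise weak equivalence. As $F_J^C\text{-cof}$ consists of retracts of such cell complexes and $W$ is retract-closed, this closes the inclusion.

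I expect the genuine obstacle to be precisely this last lemma, controlling $F_J^C$-cell objectwise, because it is the only point where one must argue that a class of maps \emph{are} weak equivalences rather than merely record a lifting property; everything else is a mechanical transfer across the adjunction $F^\alpha\dashv\mathrm{Ev}_\alpha$. Once the lemma is secured, the recognition theorem assembles the model structure, and the stated descriptions of the objectwise weak equivalences and objectwise fibrations are read off directly from the identifications of $F_I^C\text{-inj}$ and $F_J^C\text{-inj}$.
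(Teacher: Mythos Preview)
Your proof is correct and follows the standard recognition-theorem approach (free--evaluation adjunction, transfer of smallness, identification of lifting classes objectwise, and the key lemma that $F_J^C$-cell complexes are objectwise trivial cofibrations). The paper itself does not supply a proof for this theorem: it is stated as a background result taken from \cite[Ch.~11]{HLocal}, and your argument is precisely the one found in that reference and in \cite{Hovey}.
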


We have a model structure, and it's pretty easy: we just need to look at the morphisms objectwise. But what about the cofibrations? Are they objectwise too? Yes, but in this case we don't have an equivalence.

\begin{prop}Let $C$ be a small category and $M$ a cofibrantly generated model category. Then a cofibration in the projective model structure on $\Fun(C,M)$ is also objectwise a cofibration in $M$.
\end{prop}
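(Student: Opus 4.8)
The plan is to exploit the cofibrant generation of the projective model structure furnished by Theorem \ref{ch. 1:projective}. Recall that in any cofibrantly generated model category the cofibrations are exactly the retracts of relative cell complexes built from the generating cofibrations (this is the standard output of the small object argument). Here the generating cofibrations are the maps in $F_I^C$, so every projective cofibration $f:F\to Q$ is a retract of a transfinite composition of pushouts of maps of the form $F_A^\alpha\to F_B^\alpha$, with $(A\to B)\in I$ and $\alpha\in\Ob(C)$. Fixing an object $\beta\in\Ob(C)$, I want to show that the evaluation functor $\mathrm{ev}_\beta:\Fun(C,M)\to M$, $\mathrm{ev}_\beta(F)=F(\beta)$, sends $f$ to a cofibration in $M$.

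The key computation is the value of a generating cofibration at $\beta$. Since colimits in $\Fun(C,M)$ are computed objectwise, we have
$$\mathrm{ev}_\beta(F_A^\alpha)=\Big(\coprod_{C(\alpha,\cdot)}A\Big)(\beta)=\coprod_{C(\alpha,\beta)}A,$$
and the map $\mathrm{ev}_\beta(F_A^\alpha\to F_B^\alpha)$ is the coproduct $\coprod_{C(\alpha,\beta)}(A\to B)$ of copies of the cofibration $A\to B$ indexed by the set $C(\alpha,\beta)$. As cofibrations are closed under coproducts in any model category, this is a cofibration in $M$.

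Next I would assemble the closure properties. The functor $\mathrm{ev}_\beta$ preserves all colimits, since colimits in $\Fun(C,M)$ are computed objectwise; hence it carries pushouts to pushouts and transfinite compositions to transfinite compositions, and being a functor it also preserves retracts. Because the cofibrations of $M$ are themselves closed under pushout, transfinite composition and retract, it follows that $\mathrm{ev}_\beta$ sends any relative $F_I^C$-cell complex to a transfinite composition of pushouts of cofibrations, i.e. to a cofibration in $M$. A projective cofibration $f$ is a retract of such a map, so $\mathrm{ev}_\beta(f)$ is a retract of a cofibration, hence a cofibration. As $\beta$ was arbitrary, $f$ is objectwise a cofibration.

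The only genuine content lies in the first two steps: identifying $\mathrm{ev}_\beta$ of a generating cofibration as a coproduct of the original map $A\to B$, and thus as a cofibration. Everything afterward is a formal consequence of the stability of cofibrations under the cellular operations together with the fact that evaluation preserves those operations, so I do not expect any real obstacle. The one point to state with care is the characterization of cofibrations as retracts of relative cell complexes, which is precisely what cofibrant generation provides.
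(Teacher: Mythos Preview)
Your proof is correct and is the standard argument for this result. The paper itself does not supply a proof of this proposition; it is stated as a background fact drawn from \cite[Ch.~11]{HLocal}, so there is nothing to compare against beyond noting that your approach matches the usual one in the literature.
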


\begin{rem}It is important to remember that in the projective model structure all cofibrations are objectwise cofibrations, but the other implication isn't necessarily true: just because a morphism is objectwise a cofibration doesn't mean it is a cofibration in $\Fun(C,M)$. There is another model structure for the category of diagrams, called the \textbf{injective model structure}, which is defined as having objectwise weak equivalences as weak equivalences and objectwise cofibrations as cofibrations, but in that case not all objectwise fibrations are fibrations. The injective model structure is somewhat less commonly used, as the lack of symmetry in the definition of cofibrantly generated model categories means that we need more conditions for it to exist. It does exist in the most common exemples, though.
\end{rem}

\begin{prop}\label{ch.1: pull adjunction}Let $C,M, N$ be three model categories and let $f:C\to M$ be a functor between $C$ and $M$. Then we have a Quillen adjunction 
$$f_!:\Fun(C, N)\rightleftharpoons \Fun (M,N): f^* $$
is a Quillen adjunction, where $f^*:\Fun (M,N)\to \Fun (C,N)$ is given by precomposing the morphisms $M\to N$ by $f:C\to M$.
\end{prop}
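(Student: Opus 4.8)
The plan is to verify the hypotheses of the criterion in Proposition \ref{ch. 1: criteron Quillen adjunction}: since both $\Fun(C,N)$ and $\Fun(M,N)$ carry the projective model structure of Theorem \ref{ch. 1:projective}, it suffices to check that $f_!$ preserves cofibrations and that $f^*$ preserves fibrant objects. Before that I would record that the adjunction is genuinely well defined. The functor $f^*$ is restriction along $f$, so $f^*(G)=G\circ f$ for $G\colon M\to N$, and its left adjoint $f_!$ is the left Kan extension along $f$, which exists because $N$ is cocomplete and $C$ is small. Here the indexing categories $C$ and $M$ need only be small and $N$ cofibrantly generated for all of this to make sense, which is what Theorem \ref{ch. 1:projective} requires.

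The easy half is $f^*$. In the projective model structure, fibrations and weak equivalences are detected objectwise, and $f^*$ merely reindexes: $(f^*G)(c)=G(f(c))$. Hence if $G$ is fibrant, meaning $G(m)$ is fibrant in $N$ for every $m\in M$, then $(f^*G)(c)=G(f(c))$ is fibrant in $N$ for every $c\in C$, so $f^*G$ is fibrant. The same reindexing shows that $f^*$ carries objectwise (trivial) fibrations to objectwise (trivial) fibrations, which is the underlying reason the criterion will apply.

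The substance is showing that $f_!$ preserves cofibrations, and the key computation is the behaviour of $f_!$ on the free diagrams $F_A^\alpha=\coprod_{C(\alpha,\cdot)}A$ that generate the cofibrations. Writing $i_\alpha\colon *\to C$ for the inclusion of the object $\alpha$, one has $F_A^\alpha=(i_\alpha)_!A$; since left Kan extensions compose, $f_!(F_A^\alpha)=f_!(i_\alpha)_!A=(f\circ i_\alpha)_!A=(i_{f(\alpha)})_!A=F_A^{f(\alpha)}$. Consequently $f_!$ sends each generating cofibration $F_A^\alpha\to F_B^\alpha$ of $F_I^C$, with $A\to B$ in $I$, to the generating cofibration $F_A^{f(\alpha)}\to F_B^{f(\alpha)}$ of $F_I^M$. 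Because $f_!$ is a left adjoint it preserves all colimits, hence carries pushouts, transfinite compositions and retracts of maps in $F_I^C$ to the corresponding constructions on maps in $F_I^M$. Since every cofibration in a cofibrantly generated model category is a retract of a relative cell complex on the generating cofibrations, $f_!$ preserves cofibrations, and both hypotheses of Proposition \ref{ch. 1: criteron Quillen adjunction} are met.

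The main obstacle is precisely this last step: controlling $f_!$ well enough to see that it preserves cofibrations, a property invisible objectwise that must be extracted from the generating set. An alternative route avoiding the Kan-extension identity is to observe that $f^*$ preserves trivial fibrations objectwise and then argue by transposition: a lifting problem of $f_!(i)$ against a trivial fibration $p$ corresponds under the adjunction to a lifting problem of $i$ against $f^*(p)$, so if $i$ is a cofibration then $f_!(i)$ has the left lifting property against every trivial fibration and is therefore a cofibration.
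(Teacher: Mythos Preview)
The paper does not prove this proposition: it is stated in the background chapter as a known fact about projective model structures on diagram categories, with no accompanying proof. So there is nothing to compare against directly.

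That said, your argument is correct and is exactly the standard one. A small remark: the hypotheses in the statement are slightly off (the indexing categories $C$ and $M$ need only be small, and $N$ cofibrantly generated), and you rightly flag this. Your alternative route at the end is in fact the cleanest proof: since fibrations and trivial fibrations in the projective structure are objectwise, $f^*$ manifestly preserves both, and then $f_!$ is left Quillen by adjunction. This makes the explicit computation of $f_!$ on the generators $F_A^\alpha$ unnecessary, though it is a nice sanity check.
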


\subsection{Proper model categories and homotopy colimits}

Two important tools in category theory are pullbacks and pushouts; we would want to keep using them when we're dealing with model categories. The problem is that, in general, the pushout of a weak equivalence is not a weak equivalence. There is a certain class of model categories where weak equivalences can actually be pushed and pulled without losing their characteristics, though: proper categories. Also, we are working up to homotopy: it would be interesting to define limits and colimits only up to homotopy, and to see the links between those and classical limits and colimits. The results of this section come from \cite[Chapter 13]{HLocal} unless stated otherwise.

\begin{defin}Let $M$ be a model category, and let $\mathfrak{A}$ be a commutative square of the form
\begin{center}
	\begin{tikzcd}
		A\ar[r, "g"]\ar[d, "f"]&B\ar[d, "h"]\\
		C\ar[r, "k"]           &D	
	\end{tikzcd}
\end{center} 
\begin{itemize}
	\item We say that $M$ is \textbf{left proper} if every pushout of a weak equivalence along a cofibration is a weak equivalence. In other words, if $\mathfrak{A}$ is a pushout, $g$ is a cofibration and $f$ is a weak equivalence, then $h$ is also a weak equivalence.
	\item We say that $M$ is \textbf{right proper} if every pullback of a weak equivalence along a fibration is a weak equivalence. In other words, if $\mathfrak{A}$ is a pullback, $k$ is a fibration and $h$ is a weak equivalence, then $f$ is also a weak equivalence.
	\item We say that $M$ is \textbf{proper} if it is both left and right proper.
\end{itemize}
\end{defin}

\begin{rem}As the reader can see from the definition, even if a model category is proper, we still don't have that weak equivalences are preserved by pushouts and pullback along all morphisms: just cofibrations and fibrations. This is enough for our purposes, though.
\end{rem}

And how do we know that a model category is left or right proper? Well, in the cases where we know that all objects are fibrant/cofibrant, it is actually quite simple. 

\begin{prop}Let $M$ be a model category. 
\begin{itemize}
	\item Every pushout of a weak equivalence between cofibrant objects along a cofibration is a weak equivalence.
	\item Every pullback of a weak equivalence between fibrant objects along a fibration is a weak equivalence.
\end{itemize}
\end{prop}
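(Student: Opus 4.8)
The plan is to prove only the first statement and deduce the second from the self-duality of the axioms recorded in the earlier remark: a pullback in $M$ is a pushout in $M\op$, a fibration in $M$ is a cofibration in $M\op$, and a fibrant object in $M$ is a cofibrant object in $M\op$, so the second assertion is exactly the first applied to $M\op$. So I would fix a pushout square with vertices $A,B,C,D$ in which $g\colon A\to B$ is a cofibration, $f\colon A\to C$ is a weak equivalence, and $A,C$ are cofibrant. First I would note that in fact every object here is cofibrant: $\emptyset\to A\to B$ is a composite of cofibrations, and $C\to D$ is a pushout of the cofibration $g$, so $B$ and $D$ are cofibrant as well. The goal is to show that the pushout leg $h\colon B\to D$ is a weak equivalence.

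The central reduction uses the factorization axiom. I would write $f$ as $A\xrightarrow{i}C'\xrightarrow{p}C$ with $i$ a cofibration and $p$ a trivial fibration; since $f$ and $p$ are weak equivalences, the $2$-out-of-$3$ property makes $i$ a trivial cofibration, and $C'$ is cofibrant. I would then compute the pushout in two stages: push $i$ out along $g$ to obtain an object $B'$, and then push $p$ out along the resulting map $C'\to B'$ to obtain $D$. Because cofibrations and trivial cofibrations are each stable under pushout --- a one-line diagram chase from their characterization by lifting properties established earlier --- the leg $B\to B'$ is a trivial cofibration, hence a weak equivalence, and $C'\to B'$ is a cofibration out of a cofibrant object. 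Since $h$ factors as $B\to B'\to D$, by $2$-out-of-$3$ it suffices to show that $B'\to D$, the pushout of the trivial fibration $p$ along the cofibration $C'\to B'$, is a weak equivalence.

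This trivial-fibration case is the crux, and it is exactly where cofibrancy is indispensable. First I would upgrade $p$ to a homotopy equivalence: cofibrancy of $C$ lets me lift $\Id_C$ against $p$ to obtain a section $s\colon C\to C'$ with $ps=\Id_C$, and cofibrancy of $C'$ lets me lift the pair $(sp,\Id_{C'})$ against $p$ over a cylinder object for $C'$, producing a left homotopy $sp\sim\Id_{C'}$; hence $p$ is a homotopy equivalence between cofibrant objects. The hard part is then a gluing statement: the pushout of a homotopy equivalence along a cofibration between cofibrant objects is again a homotopy equivalence. I would establish it by transporting $s$ and the homotopy $sp\sim\Id_{C'}$ along the cofibration $C'\to B'$, extending a cylinder object from $C'$ to $B'$ by means of the left lifting property of cofibrations against trivial fibrations. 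The genuine obstacle is arranging these cylinders and homotopies coherently across the pushout square, since $B'$ need not be fibrant; once that bookkeeping is done, the leg $B'\to D$ acquires a homotopy inverse. Finally, a homotopy equivalence is always a weak equivalence --- via a fibrant replacement together with the coincidence of homotopy equivalences and weak equivalences on $M_{cf}$, plus $2$-out-of-$3$ --- so $B'\to D$, and therefore $h$, is a weak equivalence, completing the proof.
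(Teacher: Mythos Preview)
The paper does not supply its own proof of this proposition; it is stated as background and attributed to \cite[Chapter 13]{HLocal}. Your outline is essentially the standard argument found there (originally due to Reedy): reduce by duality to the pushout case, factor the weak equivalence as a trivial cofibration followed by a trivial fibration, handle the trivial-cofibration pushout by closure under pushouts, and then treat the pushout of a trivial fibration along a cofibration between cofibrant objects by explicit homotopy manipulations.

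Your identification of the crux is accurate, and your reduction steps are clean. The one place where your sketch thins out is the gluing step: constructing a homotopy inverse to $B'\to D$ from the section $s$ and the homotopy $sp\sim\Id_{C'}$ is not just bookkeeping---the naive attempt to define $D\to B'$ from $\Id_{B'}$ and $C\xrightarrow{s}C'\to B'$ fails because these only agree on $C'$ up to homotopy, not on the nose. The standard fix (as in Hirschhorn's Proposition~13.1.2) is to push out the cylinder on $C'$ along the cofibration $C'\to B'$ to obtain a cylinder on $B'$, then use the lifting property of the cofibration against the trivial fibration to extend both the section and the homotopy compatibly. You gesture at this, so the plan is sound; just be aware that the actual write-up of that step is a few diagrams long and is where all the content lives. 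Your final remark that a homotopy equivalence between cofibrant objects is a weak equivalence is correct and follows from the results already recorded in the paper's Section on the homotopy category.
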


\begin{coro}Let $M$ be a model category. 
\begin{itemize}
	\item If every object in $M$ is cofibrant, the model category $M$ is left proper.
	\item If every object in $M$ is fibrant, the model category $M$ is right proper.
	\item If every object in $M$ is both cofibrant and fibrant, the model category $M$ is proper.
\end{itemize}
\end{coro}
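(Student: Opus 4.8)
The plan is to deduce all three statements directly from the preceding Proposition, since the hypotheses on $M$ make the qualifiers ``between cofibrant objects'' and ``between fibrant objects'' automatic. In each case the work is simply to observe that under the stated global hypothesis, the \emph{restricted} conclusion of the Proposition becomes the \emph{unrestricted} condition appearing in the definition of (left/right) proper.

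First I would handle the left proper case. Suppose every object of $M$ is cofibrant, and take a pushout square as in the definition of proper, with $g:A\to B$ a cofibration and $f:A\to C$ a weak equivalence. Since $A$ and $C$ are cofibrant by hypothesis, $f$ is in particular a weak equivalence between cofibrant objects, so the first bullet of the preceding Proposition applies and shows that the pushout $h:B\to D$ is again a weak equivalence. As this holds for every such square, $M$ satisfies the definition of left proper.

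The right proper case is entirely dual: if every object is fibrant, then in any pullback square with $k:C\to D$ a fibration and $h:B\to D$ a weak equivalence, the map $h$ is automatically a weak equivalence between fibrant objects, so the second bullet of the preceding Proposition gives that the pullback $f:A\to C$ is a weak equivalence, which is precisely the definition of right proper. Finally, if every object is both cofibrant and fibrant, the first two arguments apply simultaneously, so $M$ is both left and right proper, hence proper by definition.

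There is no genuine obstacle in this corollary; the only point to verify is that the global (co)fibrancy hypotheses literally strip away the restriction in the Proposition, turning its bulleted statements into the defining conditions of left and right properness. I expect the entire argument to be a few lines, consisting of this observation repeated in the two dual cases and then combined.
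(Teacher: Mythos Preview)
Your proposal is correct and is precisely the intended argument: the paper states this as a corollary without proof, since it is an immediate consequence of the preceding Proposition once one observes that the global (co)fibrancy hypothesis makes the restriction ``between cofibrant/fibrant objects'' vacuous.
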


This is enough to give us that the categories $\Top$ and $\Ch$ are right proper, and that $\sS$ is left proper. But we actually have more than that.

\begin{prop}The model categories $\Top$, $\Ch$ and $\sS$ are proper.
\end{prop}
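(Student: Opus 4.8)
The plan is to handle the three categories uniformly by first extracting the \emph{easy half} of properness for each from the preceding Corollary, and then proving the two complementary halves by hand. Concretely, I would begin by invoking the Corollary: since every object of $\sS$ is cofibrant, $\sS$ is left proper; and since every object of $\Top$ and of $\Ch$ is fibrant, both $\Top$ and $\Ch$ are right proper. It therefore remains to establish that $\Ch$ and $\Top$ are left proper and that $\sS$ is right proper.

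For $\Ch$ I would exploit the abelian structure of the category of complexes. The generating cofibrations $k[n-1]\to k^c[n]$ are monomorphisms, and since every cofibration is a retract of a transfinite composition of pushouts of these, and monomorphisms in $\Ch$ are stable under pushout, filtered colimit and retract, every cofibration in $\Ch$ is a monomorphism. In an abelian category a pushout along a monomorphism again yields a monomorphism and induces an isomorphism on cokernels; so a pushout square with $g$ a cofibration and $f$ a weak equivalence produces a map of short exact sequences of complexes $0\to A\to B\to B/A\to 0$ and $0\to C\to D\to D/C\to 0$ in which $A\to C$ is a quasi-isomorphism and $B/A\to D/C$ is an isomorphism. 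Comparing the two long exact cohomology sequences and applying the five lemma then forces $h:B\to D$ to be a quasi-isomorphism, which is exactly left properness.

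For the remaining right properness of $\sS$ I would transfer the property from $\Top$ across the realization adjunction $\Re:\sS\rightleftharpoons\Top:\Sing$. Taking $\Top$ to be compactly generated, $\Re$ preserves finite limits, so it carries a pullback square in $\sS$ to a pullback square in $\Top$; moreover $\Re$ sends Kan fibrations to Serre fibrations and, by the very definition of weak equivalences in $\sS$, sends weak equivalences to weak equivalences. Hence a pullback in $\sS$ of a weak equivalence $h$ along a fibration $k$ realizes to a pullback in $\Top$ of a weak equivalence along a Serre fibration, whose left-hand leg is a weak equivalence by the right properness of $\Top$ already obtained; since that leg is $\Re(f)$, the map $f$ is a weak equivalence in $\sS$.

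The genuinely substantial step, and the one I expect to be the main obstacle, is left properness of $\Top$. This cannot be transferred along the adjunction, since the right adjoint $\Sing$ does not preserve pushouts. Here I would rely on the gluing lemma for topological cofibrations: Serre cofibrations are closed inclusions with enough of the homotopy extension property that pushouts along them compute homotopy pushouts, and homotopy pushouts preserve objectwise weak equivalences. One first checks that the class of cofibrations along which pushout preserves weak equivalences is closed under coproduct, transfinite composition and retract, reducing the problem to a single cell attachment, i.e. a pushout along a boundary inclusion $S^{n-1}\to D^n$, where the gluing lemma applies directly. This reduction and the underlying gluing lemma are where the real work lies, and I would lean on the classical treatment in \cite{HLocal} to carry them out in full.
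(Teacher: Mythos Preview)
Your proposal is correct and considerably more detailed than what the paper actually does: the paper states this proposition as a background fact without proof, attributing the results of the section to \cite[Chapter 13]{HLocal}. Your outline---deducing the easy halves from the preceding Corollary, handling $\Ch$ via the five lemma on the long exact sequences of cokernels, transferring right properness of $\sS$ from $\Top$ through the finite-limit-preserving realization, and isolating left properness of $\Top$ as the substantial step to be extracted from the gluing lemma---is a sound and standard route, and indeed matches how Hirschhorn organizes the material you would be citing. One minor remark: for the $\Ch$ argument you use only that cofibrations are monomorphisms, which is true in the projective structure described here (they are in fact degreewise split monomorphisms with cofibrant cokernel), so your short exact sequences are available as claimed.
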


But that's not all: we can also find conditions for the category of presheaves over a category.

\begin{prop}Let $M$ be a cofibrantly generated model category and $C$ be a small category. Then if $M$ is left or right proper, the functor category $\Fun(C,M)$ is also left or right proper, respectively.
\end{prop}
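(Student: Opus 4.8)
The plan is to reduce properness of $\Fun(C,M)$ (with its projective model structure) to properness of $M$, exploiting two facts: that limits and colimits in a functor category are computed objectwise, and that in the projective structure the three classes of maps are controlled objectwise in the appropriate directions. The key preliminary observation I would record is that both limits and colimits in $\Fun(C,M)$ are computed pointwise, so that a pushout square in $\Fun(C,M)$ evaluates, for each $\alpha\in\Ob(C)$, to a pushout square in $M$, and likewise a pullback square evaluates to a pullback square in $M$ at each object.

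For left properness, I would take a pushout square in $\Fun(C,M)$ in which $g\colon A\to B$ is a cofibration and $f\colon A\to C$ is a weak equivalence, yielding the map $h\colon B\to D$. By Theorem \ref{ch. 1:projective}, $f$ being a weak equivalence in $\Fun(C,M)$ means each $f(\alpha)$ is a weak equivalence in $M$; and by the proposition recorded just after Theorem \ref{ch. 1:projective}, a projective cofibration is objectwise a cofibration, so each $g(\alpha)$ is a cofibration in $M$. Evaluating the pushout at $\alpha$ produces a pushout in $M$ of the weak equivalence $f(\alpha)$ along the cofibration $g(\alpha)$; left properness of $M$ then forces $h(\alpha)$ to be a weak equivalence. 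Since this holds for every $\alpha$, the map $h$ is an objectwise weak equivalence, hence a weak equivalence in $\Fun(C,M)$.

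For right properness I would argue dually: take a pullback square with $k$ a fibration and $h$ a weak equivalence, yielding $f$. Here Theorem \ref{ch. 1:projective} gives directly that fibrations in the projective structure are objectwise fibrations, so each $k(\alpha)$ is a fibration in $M$ and each $h(\alpha)$ is a weak equivalence. Evaluating the pullback at $\alpha$ gives a pullback in $M$, and right properness of $M$ makes $f(\alpha)$ a weak equivalence for every $\alpha$, whence $f$ is an objectwise weak equivalence.

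The only delicate point — and thus the main (though mild) obstacle — is the asymmetry in the objectwise description of the projective classes: weak equivalences and fibrations are \emph{detected} objectwise, but for cofibrations we have only the one-directional fact that a projective cofibration is objectwise a cofibration, with no converse. Fortunately, each half of the statement uses exactly the implication that is available: the left-proper case needs only ``cofibration $\Rightarrow$ objectwise cofibration,'' while the right-proper case needs only that fibrations are objectwise. So no appeal to the (false) converse is required, and the argument goes through in both directions.
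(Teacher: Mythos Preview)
Your argument is correct and is the standard one. The paper does not actually supply a proof of this proposition: it appears in the background chapter, where results are quoted from \cite[Chapter 13]{HLocal} without proof. So there is no ``paper's own proof'' to compare against; your write-up is precisely the argument one finds in Hirschhorn, and your identification of the only subtle point --- that projective cofibrations are merely objectwise cofibrations, not conversely, but that this is the only direction needed --- is exactly right.
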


Let us define now the homotopy pullback. We won't define the homotopy pushout, because we won't need it, but the construction is strictly dual. We remind the reader that if $M$ has a model structure, it includes not only fibrations, cofibrations and weak equivalences, but also two functorial factorizations. In particular, a functorial factorization $(\gamma, \delta)$ such that if $f$ is a morphism in $M$, then $\gamma(f)$ is a trivial cofibration and $\delta(f)$ is a fibration.

\begin{defin}Let $M$ be a right proper model category, and a diagram $X\xrightarrow{f}Z\xleftarrow{g}Y$. We define the \textbf{homotopy pullback} of the diagram as the pullback of the associated diagram $X'\xrightarrow{\delta(f)}Z\xleftarrow{\delta(g)}Y'$, and we denote it by $X\times^h_Z Y$.
\end{defin}

\begin{rem}We have defined the homotopy pushout only in the context of right proper model categories, as all categories we will be talking about are proper; but the definition exists even when the category $M$ is not right proper. In that case, we would need to find fibrant replacements for all the objects involved before we dis the construction from the definition.
\end{rem}

At first glance, there is no reason why this definition would be invariant by weak equivalences; but it is.

\begin{prop}Let $M$ be a right proper model category and a diagram
\begin{center}
	\begin{tikzcd}
		A\ar[r, "g"]\ar[d, "w_A"]&B\ar[d, "w_B"]&C\ar[l, "f"]\ar[d, "w_C"]\\
		A'\ar[r, "g'"]           &B'            &C'\ar[l, "f'"]
	\end{tikzcd}
\end{center} 
where $w_A, w_B, w_C$ are weak equivalences. Then the induced map between homotopy pullbacks 
$$A\times^h_B C\to A'\times^h_{B'} C' $$
is a weak equivalence.
\end{prop}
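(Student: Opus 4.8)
The plan is to unwind the definition of the homotopy pullback and reduce the entire statement to repeated applications of right-properness. Recall that $A\times^h_B C$ is by definition the ordinary pullback of the factored cospan $\tilde A\xrightarrow{\delta(g)}B\xleftarrow{\delta(f)}\tilde C$, where $A\to\tilde A$ and $C\to\tilde C$ are the trivial cofibrations and $\tilde A\to B$, $\tilde C\to B$ the fibrations coming from the functorial factorization $(\gamma,\delta)$; the primed homotopy pullback is built the same way. Applying the functor $(\gamma,\delta)$ to the two commutative squares of the hypothesis produces induced maps $\tilde A\to\tilde A'$ and $\tilde C\to\tilde C'$ lying over $w_B$, and the comparison of homotopy pullbacks is exactly the map these induce on ordinary pullbacks. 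First I would check that $\tilde A\to\tilde A'$ and $\tilde C\to\tilde C'$ are weak equivalences: the composite $A\to\tilde A\to\tilde A'$ equals $A\xrightarrow{w_A}A'\to\tilde A'$, which is a composite of weak equivalences, while $A\to\tilde A$ is a trivial cofibration, so the 2-out-of-3 property forces $\tilde A\to\tilde A'$ to be a weak equivalence (and symmetrically for $\tilde C$). This reduces the statement to a gluing lemma: a levelwise weak equivalence between two cospans whose legs are all fibrations induces a weak equivalence on pullbacks.

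Next I would prove the gluing lemma by reducing to a common base, the point being that $w_B:B\to B'$ is only a weak equivalence and not a fibration, so it cannot be pulled back along directly. Pulling the fibrations $\tilde A'\to B'$ and $\tilde C'\to B'$ back along $w_B$ gives fibrations $D:=\tilde A'\times_{B'}B\to B$ and $E:=\tilde C'\times_{B'}B\to B$, and by right-properness the projections $D\to\tilde A'$ and $E\to\tilde C'$ are weak equivalences, being pullbacks of $w_B$ along fibrations. The universal property yields maps $\tilde A\to D$ and $\tilde C\to E$ over $B$, which are weak equivalences by 2-out-of-3 once more. Finally, since $A'\times^h_{B'}C'=\tilde A'\times_{B'}\tilde C'\to B'$ is a composite of pullbacks of fibrations and hence itself a fibration, pulling it back along $w_B$ and invoking right-properness shows that the canonical map $D\times_B E\to A'\times^h_{B'}C'$ is a weak equivalence.

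It then remains to compare the two cospans over the single base $B$, namely $\tilde A\to B\leftarrow\tilde C$ and $D\to B\leftarrow E$, whose legs are fibrations and whose vertical maps $\tilde A\to D$, $\tilde C\to E$ are the weak equivalences just produced. I would handle this one factor at a time. By the pasting law for pullbacks, $\tilde A\times_B\tilde C\to D\times_B\tilde C$ is the pullback of the weak equivalence $\tilde A\to D$ along the fibration $D\times_B\tilde C\to D$ (itself a pullback of $\tilde C\to B$), hence a weak equivalence by right-properness; symmetrically $D\times_B\tilde C\to D\times_B E$ is the pullback of $\tilde C\to E$ along the fibration $D\times_B E\to E$, hence a weak equivalence. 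Composing these with the map $D\times_B E\to A'\times^h_{B'}C'$ of the previous paragraph gives the desired comparison $A\times^h_B C\to A'\times^h_{B'}C'$ as a composite of weak equivalences.

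The main obstacle is not any single deep idea but the bookkeeping: at each stage one must verify, via the pasting law, that the map along which a weak equivalence is being pulled back is genuinely a fibration, since this is the sole hypothesis under which right-properness licenses the conclusion. Keeping careful track of which legs survive as fibrations after each successive pullback — and isolating the base change $w_B$, which must be pulled back rather than used directly — is exactly what makes the argument go through, and it is the only place where right-properness is used.
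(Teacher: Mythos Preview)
Your proof is correct. The paper does not supply its own argument for this proposition: it is stated in the background chapter with the blanket attribution ``the results of this section come from \cite[Chapter 13]{HLocal}'' and no proof is given. What you have written is essentially the standard gluing-lemma argument one finds in Hirschhorn, carried out carefully: reduce to fibrant cospans via the functorial factorization, absorb the base change $w_B$ by pulling back to a common base $B$, and then swap one leg at a time using right properness and the pasting law for pullbacks. The bookkeeping you flag---verifying at each step that the relevant map is a fibration before invoking right properness---is exactly the content of the proof, and you have tracked it correctly throughout.
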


So we have a description of the homotopy pullback: but calculating the functorial factorizations of morphisms can be hard. Luckily, we have a case where we can skip that step.

\begin{prop}Let $M$ be a right proper model category and a diagram $X\xrightarrow{f}Z\xleftarrow{g}Y$. If either $f$ or $g$ is a fibration, then the homotopy pullback $X\times^h_Z Y$ is naturally weakly equivalent to the pullback $X\times_Z Y$.
\end{prop}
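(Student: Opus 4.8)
The plan is to reduce everything to the invariance of \emph{ordinary} pullbacks under weak equivalences guaranteed by right properness, interpolating the two pullbacks through a ``mixed'' pullback in which only one of the two maps has been replaced by a fibration. Assume without loss of generality that $f$ is a fibration (the case where $g$ is a fibration is symmetric). Recall that, by definition, $X\times^h_Z Y$ is the ordinary pullback $X'\times_Z Y'$, where $\gamma(f):X\to X'$ and $\gamma(g):Y\to Y'$ are the trivial cofibrations and $\delta(f):X'\to Z$, $\delta(g):Y'\to Z$ the fibrations coming from the functorial factorization. I would introduce the mixed pullback $X\times_Z Y'$ of $X\xrightarrow{f}Z\xleftarrow{\delta(g)}Y'$ and prove separately that $X\times_Z Y\to X\times_Z Y'$ and $X\times_Z Y'\to X'\times_Z Y'$ are weak equivalences; then the $2$-out-of-$3$ property finishes the argument.

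For the comparison $X\times_Z Y\to X\times_Z Y'$, this is where the hypothesis that $f$ is a fibration enters. Indeed, the projection $X\times_Z Y'\to Y'$ is the pullback of $f$ along $\delta(g)$, so it is a fibration precisely because $f$ is one (fibrations being stable under pullback, which follows from the lifting-property characterization of fibrations stated earlier). One checks that $X\times_Z Y$ is exactly the pullback of this fibration along $\gamma(g):Y\to Y'$; since $\gamma(g)$ is a trivial cofibration, in particular a weak equivalence, right properness yields that $X\times_Z Y\to X\times_Z Y'$ is a weak equivalence.

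For the comparison $X\times_Z Y'\to X'\times_Z Y'$, the key observation is that the projection $X'\times_Z Y'\to X'$ is the pullback of the fibration $\delta(g)$ along $\delta(f)$, hence itself a fibration, and that $X\times_Z Y'$ is precisely its pullback along $\gamma(f):X\to X'$. As $\gamma(f)$ is a weak equivalence, right properness again shows $X\times_Z Y'\to X'\times_Z Y'$ is a weak equivalence; note this step uses no hypothesis on $f$. Composing the two gives a weak equivalence $X\times_Z Y\to X\times^h_Z Y$, and its naturality follows from the functoriality of the factorizations and of the pullback. The only real obstacle is bookkeeping: at each stage one must identify which projection out of a pullback is a fibration and orient the right-properness square so that $\gamma(f)$ or $\gamma(g)$ is the weak equivalence being pulled back along a fibration; the hypothesis is used exactly once, to make $X\times_Z Y'\to Y'$ a fibration in the first comparison.
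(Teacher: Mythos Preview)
Your proof is correct and is essentially the standard argument (the one in Hirschhorn, Chapter~13, from which the paper cites this result). The paper itself does not give a proof of this proposition---it is stated as background taken from \cite{HLocal}---so there is nothing to compare against beyond noting that your two-step interpolation via the mixed pullback $X\times_Z Y'$, together with the pasting law and right properness applied to each of $\gamma(f)$ and $\gamma(g)$, is exactly the usual route.
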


\begin{prop}Let $M$ be a right proper model category, a morphism $h:Y\to Z$ and two morphisms $f,g:X\to Z$ that are left or right homotopic. Then the homotopy pullback of $X\xrightarrow{f}Z\leftarrow Y$ and the one of $X\xrightarrow{g}Z\leftarrow Y$ are weak equivalent.
\end{prop}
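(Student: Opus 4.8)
The plan is to reduce the statement to the invariance of the homotopy pullback under objectwise weak equivalences of cospans, established in the invariance Proposition above. The key observation is that a homotopy between $f$ and $g$ can be encoded as a single auxiliary cospan that maps, by objectwise weak equivalences, onto both the $f$-cospan and the $g$-cospan; transitivity then finishes the argument. I will treat the case where $f$ and $g$ are right homotopic, the left homotopic case being entirely dual (exchanging the roles of source and target).

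Assume $f\sim^r g$, so there is a path object $Z'$ for $Z$ with a weak equivalence $r:Z\to Z'$ and a fibration $(p_0,p_1):Z'\to Z\times Z$ whose composite with $r$ is the diagonal, together with a right homotopy $H:X\to Z'$ satisfying $p_0H=f$ and $p_1H=g$. Since $p_0r=p_1r=\Id_Z$ and $r$ is a weak equivalence, the $2$-out-of-$3$ property shows that both projections $p_0$ and $p_1$ are weak equivalences. I then form the auxiliary cospan $X\xrightarrow{H}Z'\xleftarrow{rh}Y$ and consider its homotopy pullback $X\times^h_{Z'}Y$.

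Next I construct two maps of cospans onto the original diagrams. Taking $(\Id_X,p_0,\Id_Y)$ as vertical arrows gives a morphism from $X\xrightarrow{H}Z'\xleftarrow{rh}Y$ to $X\xrightarrow{f}Z\xleftarrow{h}Y$: the left square commutes because $p_0H=f$, and the right square commutes because $p_0rh=h$. Likewise $(\Id_X,p_1,\Id_Y)$ gives a morphism onto $X\xrightarrow{g}Z\xleftarrow{h}Y$, using $p_1H=g$ and $p_1rh=h$. In both maps all three vertical arrows are weak equivalences, so the invariance Proposition yields weak equivalences $X\times^h_{Z'}Y\to X\times^h_Z Y$ for the pullback along $f$ and $X\times^h_{Z'}Y\to X\times^h_Z Y$ for the pullback along $g$. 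Passing through the common object $X\times^h_{Z'}Y$ exhibits the two homotopy pullbacks as weakly equivalent.

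For the left homotopic case one dualizes the construction: a cylinder object $X'$ for $X$ provides inclusions $i_0,i_1:X\to X'$, which are weak equivalences by $2$-out-of-$3$ (since $si_0=si_1=\Id_X$ for the structural weak equivalence $s:X'\to X$), together with $H:X'\to Z$ satisfying $Hi_0=f$ and $Hi_1=g$. The cospan $X'\xrightarrow{H}Z\xleftarrow{h}Y$ then receives the weak-equivalence maps of cospans $(i_0,\Id_Z,\Id_Y)$ and $(i_1,\Id_Z,\Id_Y)$ from the $f$- and $g$-cospans respectively, and the same argument concludes. I do not expect a serious obstacle here: essentially all the content is already packaged in the invariance Proposition, and the only points requiring care are verifying that the squares of the cospan maps commute and that the projections (respectively inclusions) are weak equivalences, both of which are immediate. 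The one genuine piece of bookkeeping is handling the two homotopy conventions separately, which the path-object and cylinder-object constructions resolve symmetrically.
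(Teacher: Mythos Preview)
Your proof is correct and is essentially the standard argument (as in Hirschhorn, from which the paper cites this result). The paper itself does not give a proof of this proposition: it appears in the background chapter among results recalled from \cite[Chapter~13]{HLocal} without proof, so there is nothing to compare against beyond noting that your argument matches the usual one.
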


And lastly, we will define the homotopy fiber of a morphism over a point. We recall that if we have a morphism $f$ and a point $z:* \to Z$, the fiber of $f$ over $z$ is a pullback of $f$ along $z$. We will now do the same thing with homotopy pullbacks.

\begin{defin}Let $M$ be a model category, $f:X\to Z$ a morphism in $M$ and $z:*\to Z$ a point in $M$. We call the \textbf{homotopy fiber of $f$ over $z$} a fibrant object in $M$ which is weakly equivalent to the homotopy pullback of $f$ along $z$. 
\end{defin}

\begin{rem}We haven't defined the homotopy fiber directly as the homotopy pullback of $f$ along a point because such an object isn't necessarily fibrant; but as in $\Top$ and $\Ch$ all objects are fibrant, in both those categories we could compute the homotopy fiber just as the homotopy pullback.
\end{rem}

And we end with a couple of corollaries that are trivial after the results we listed about homotopy pullbacks.

\begin{coro}\label{ch. 1:fiber}Let $M$ be a right proper model category. If $f:X\to Z$ is a fibration and $z:*\to Z$ is a point in $M$, then there exists a natural weak equivalence between the fiber of $f$ over $z$ and the homotopy fiber of $f$ over $z$.
\end{coro}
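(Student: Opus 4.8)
The plan is to deduce this corollary directly from the two preceding propositions on homotopy pullbacks, since both the fiber and the homotopy fiber of $f$ over $z$ are constructed from the same cospan $X\xrightarrow{f}Z\xleftarrow{z}\ast$.

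First I would unwind the definitions. The fiber of $f$ over $z$ is by definition the ordinary pullback of $X\xrightarrow{f}Z\xleftarrow{z}\ast$, namely $X\times_Z\ast$, whereas the homotopy fiber is a fibrant object weakly equivalent to the homotopy pullback $X\times^h_Z\ast$ of the same cospan. Hence it suffices to build a natural weak equivalence between $X\times_Z\ast$ and $X\times^h_Z\ast$, and then chain it with the weak equivalence supplied by the definition of the homotopy fiber.

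For the first equivalence I would invoke the proposition asserting that, in a right proper model category, if one leg of a cospan $X\xrightarrow{f}Z\xleftarrow{g}Y$ is a fibration then $X\times^h_Z Y$ is naturally weakly equivalent to $X\times_Z Y$. The hypothesis of the corollary is precisely that $f$ is a fibration, so applying this with $Y=\ast$ and $g=z$ yields a natural weak equivalence $X\times^h_Z\ast\simeq X\times_Z\ast$, that is, between the homotopy pullback and the fiber.

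Finally I would compose the two equivalences: the homotopy fiber is weakly equivalent to $X\times^h_Z\ast$ by its very definition, and $X\times^h_Z\ast$ is weakly equivalent to $X\times_Z\ast$ by the previous step, so the homotopy fiber is weakly equivalent to the fiber. Since each of the two equivalences is natural in the data, so is their composite. I do not anticipate any genuine obstacle here; the only thing to watch is that every weak equivalence invoked is natural, which is exactly why the statement is a formal corollary of the homotopy-pullback machinery rather than something requiring an independent argument.
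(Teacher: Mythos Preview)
Your proposal is correct and matches the paper's approach: the corollary is stated without proof as an immediate consequence of the preceding proposition that in a right proper model category the homotopy pullback of a cospan with one fibrant leg is naturally weakly equivalent to the ordinary pullback. Your unwinding of the definitions and composition of the two natural weak equivalences is exactly the trivial verification the paper leaves to the reader.
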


\begin{coro}Let $M$ be a right proper model category and $f:X\to Z$ a morphism in $M$. If we have two points $z:*\to Z$ and $z':*\to Z$ that are either left or right homotopic, then the homotopy fiber of $f$ over $z$ is weakly equivalent to the homotopy fiber of $f$ over $z'$. 
\end{coro}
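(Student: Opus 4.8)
The plan is to reduce this statement directly to the previous Proposition comparing homotopy pullbacks of left- or right-homotopic maps, using only the definition of the homotopy fiber as a fibrant object weakly equivalent to a homotopy pullback.

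First I would unwind the definitions. By the definition of the homotopy fiber, the homotopy fiber of $f$ over $z$ is weakly equivalent to the homotopy pullback of the diagram $X\xrightarrow{f}Z\xleftarrow{z}*$, and likewise the homotopy fiber of $f$ over $z'$ is weakly equivalent to the homotopy pullback of $X\xrightarrow{f}Z\xleftarrow{z'}*$. Hence, by the $2$-out-of-$3$ property of weak equivalences, it suffices to show that these two homotopy pullbacks are themselves weakly equivalent.

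Next I would put the data in the right shape to apply the Proposition on homotopy pullbacks of homotopic maps. That Proposition fixes one leg $h:Y\to Z$ and varies the other leg among left- or right-homotopic maps into $Z$; since the construction of the homotopy pullback is manifestly symmetric in its two legs (it is the ordinary pullback of the factored diagram, in which both legs are replaced by fibrations before taking the limit), I may identify the homotopy pullback of $X\xrightarrow{f}Z\xleftarrow{z}*$ with that of $*\xrightarrow{z}Z\xleftarrow{f}X$. With this reordering the fixed leg is $f:X\to Z$ and the varying leg is the point, which ranges over the two maps $z,z':*\to Z$ that are by hypothesis left or right homotopic. Applying the Proposition then yields a weak equivalence between the two homotopy pullbacks, and combined with the previous paragraph this gives the asserted weak equivalence of homotopy fibers.

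I do not expect a genuine obstacle here, as the result is an immediate corollary of the Proposition on homotopy pullbacks of homotopic maps; the only points requiring a word of care are the symmetry of the homotopy pullback in its two inputs (needed so that the varying point can be placed in the role played by $f,g$ in that Proposition) and the fact that the homotopy fiber is only specified up to weak equivalence, so that the conclusion is correctly read as a weak equivalence rather than an isomorphism of objects.
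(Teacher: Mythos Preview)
Your proposal is correct and matches the paper's intent: the statement is listed as a corollary with no proof, immediately following the Proposition on homotopy pullbacks of left- or right-homotopic maps, so the intended argument is exactly the reduction you describe. Your observation about the symmetry of the homotopy pullback (needed to match the roles of the fixed and varying legs with those in the Proposition) is the only nontrivial point, and you handle it correctly.
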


\begin{nota}In particular, this last result means that the homotopy fiber doesn't depend on the choice of the point over which we take it, as long as those points are homotopic. By abuse of notation, we will call them "the homotopy fiber of $f$", without specifying the point.
\end{nota}

Finally, we have talked about homotopy pullbacks; there is another, related concept we will need to adress. That is the concept of homotopy colimits. The pullbacks we have defined before are actually a particular case of homotopy \textit{limits}, the dual concept. From here on, all results from this section come from \cite[Appendix A.2.8]{HTT}.

\begin{defin}Let $M$ be a model category and let $f:C\to C'$ be a functor between small categories. Let $f_!:\Fun(C,M)\rightleftharpoons\Fun(C',M)$ be the Quillen adjunction defined in Proposition \ref{ch.1: pull adjunction}. We call its total left derived functor $\L f_!$ \textbf{the homotopy left Kan extension of $f$}.
 \end{defin}
 
 \begin{defin}Let $M$ be a model category and $f:C\to *$ a functor from a small category $C$ to the terminal object in the category of small categories. Then we call the homotopy left Kan extension of $f$ \textbf{the homotopy colimit functor},
 $$\L f_!:\Ho(\Fun(C,M))\to \Ho(\Fun(*,M)). $$
 \end{defin}
 
\begin{defin}We remind the reader that taking an element in $\Fun(*,M)$ is the same as taking an object in $M$; so by this process, we take a functor $\phi:C\to M$ and we get an object in $M$ by applying the homotopy colimit functor. We will call $\L f_!(\phi)$ \textbf{the homotopy colimit of $\phi$}.
\end{defin}

\subsection{(Co)simplicial framings and mapping spaces}

In this thesis we will be working with simplicial spaces, and that means we need a way of getting (co)simplicial objects out of the objects in our category. How do we do that? Using (co)simplicial framings. Once again, the results in this section come from \cite[Chapter 5]{Hovey} and \cite[Chapter 16]{HLocal} unless stated otherwise.

\begin{defin} Let $\Delta$ be the simplex category, i.e. the category of finite ordinals and weakly monotone functions. We define $\Delta^+$ to be the subcategory of injective order-preserving maps; and $\Delta^-$ the subcategory of surjective order-preserving maps. Let $\l n\r$ be an object in $\Delta$. 
\begin{itemize}
	\item We call the \textbf{latching category of $\Delta$ in $\l n\r$} the full subcategory of $(\Delta^+\downarrow n)$ containing all objects except the identity map of $n$, i.e. the category containing all arrows in $\Delta^+$ with codomain $\l n\r$ except for the identity. We denote it by $\partial(\Delta^+\downarrow n)$.
	\item We call the \textbf{matching category of $\Delta$ in $\l n\r$} the full subcategory of $(n\downarrow \Delta^-)$ containing all objects except the identity map on $n$, i.e. the category containing all arrows in $\Delta^-$ with domain $\l n\r$ except for the identity. We denote it by $\partial(n\downarrow \Delta^-)$.
\end{itemize}
\end{defin}

\begin{nota}We will denote objects in $\Delta$ and $\Delta\op$ alternatively by $\l n\r$ and $\Delta^n$.
\end{nota}

\begin{defin} Let $M$ be a model category. We define a \textbf{cosimplicial object in $M$} to be a diagram of the form $F^*:\Delta\to M$, and we denote its terms $\Delta^n\to M$ by $F^n$. We denote the category of cosimplicial objects in $M$ by $cM$.
\end{defin}

\begin{defin} Let $M$ be a model category. We define a \textbf{simplicial object in $M$} to be a diagram of the form $F_*:\Delta\op\to M$, and we denote its terms $\Delta^n\to M$ by $F_n$. We denote the category of simplicial objects in $M$ by $sM$.
\end{defin}

\begin{defin}\label{ch.1: latching and matching} Let $M$ be a model category, $\l n\r$ an object in $\Delta$ and $X:\Delta\to M$ a cosimplicial object in $M$. By an abuse of notation we will also call $X$ the induced diagram $X:\partial(\Delta^+\downarrow n) \to M$ defined on objects by $X(\l m\r\to \l n\r)=X(m)$, and the induced diagram $X:\partial(n\downarrow \Delta^-)\to M$ defined on objects by $X(\l n\r\to \l m\r)=X(m)$. 
\begin{itemize}
	\item We define the \textbf{latching object of $X$ at $n$} by $L_nX=\colim_{\partial(\Delta^+\downarrow n)}X$, and the \textbf{latching map of $X$ at $n$} to be the natural map $L_nX\to X_n$.
	\item We define the \textbf{matching object of $X$ at $n$} by $M_nX=\lim_{\partial(n\downarrow \Delta^-)}X$, and the \textbf{matching map of $X$ at $n$} to be the natural map $X_n\to M_nX$.
\end{itemize} 
\end{defin}

\begin{rem}We have decided here to give the definitions in the most basic way to avoid unnecessary clutter, as they will be the only ones we'll be using. But matching and latching objects can be easily defined for any $C$ Reedy category, not just for $\Delta$. In fact, these last definitions are adapted from \cite[Section 15.2]{HLocal}, where they are stated for any $C$ Reedy category.  In particular, they can be defined for $X$ a simplicial object in $M$ by dualizing. In that case, we'd take $(\Delta^-)\op$ to be our $\Delta^+$ and $(\Delta^+)\op$ to be our $\Delta^-$, and everything else would work in the exact same way.
\end{rem}

\begin{defin}Let $M$ be a model category and $X$ an object in $M$. 
\begin{itemize}
	\item We define a \textbf{cosimplicial frame on $X$}, denoted by $C^*(X)$, to be a cosimplicial object in $M$, i.e. a functor $C^*(X):~\Delta\to ~M$, such that $C^0(X)$ is isomorphic to $X$, and that for all $n\in \N$, $C^n(X)$ is weak equivalent to $X$ in the model structure of $\Delta$-diagrams in $M$ and the latching map $L_ nC^*(X)\to C^n(X)$ is a cofibration in $M$.   
	\item We define a \textbf{simplicial frame on $X$}, denoted by $C_*(X)$, to be a simplicial object in $M$, i.e. a functor $C_*(X):~\Delta\op\to~M$, such that $C_0(X)$ is isomorphic to $X$, and that for all $n\in \N$, $C_n(X)$ is weak equivalent to $X$ in the model structure of $\Delta$-diagrams in $M$ and the matching map $C_n(X)\to M_nC_*(X)$ is a fibration in $M$.
\end{itemize}
\end{defin}

As it is common in all areas of mathematics, we have given a definition of objects, but we haven't worked out whether or not those objects really exist. Luckily for us, they do.

\begin{teo}Let $M$ be a model category. There exists a functorial simplicial frame $C_*(-)$ and a functorial cosimplicial frame $C^*(-)$.
\end{teo}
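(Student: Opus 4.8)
The plan is to exhibit both kinds of frame as Reedy bifibrant objects and to construct them by hand, one degree at a time, using the two functorial factorizations that come with the model structure on $M$. Since $\Delta$ is a Reedy category, with $\Delta^+$ raising degree and $\Delta^-$ lowering it, the categories $cM=\Fun(\Delta,M)$ and $sM=\Fun(\Delta\op,M)$ carry Reedy model structures whose weak equivalences are the levelwise ones, and in which a cosimplicial object is cofibrant exactly when all its latching maps $L_nC^*\to C^n$ are cofibrations, while a simplicial object is fibrant exactly when all its matching maps $C_n\to M_nC_*$ are fibrations. The two conditions in the definition of a frame are then precisely a Reedy (co)fibrancy condition together with the requirement that each level be weakly equivalent to $X$, with the prescribed value $X$ in degree $0$; so I would aim to produce such an object functorially in $X$.

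I would build the cosimplicial frame by induction on the degree. Set $C^0(X)=X$. Assuming $C^*(X)$ has been defined through degrees $<n$, the latching object $L_nC^*(X)=\colim_{\partial(\Delta^+\downarrow n)}C^*(X)$, the matching object $M_nC^*(X)$, and the canonical map between them are all determined. I would factor this canonical map functorially, using the factorization $(\alpha,\beta)$ of $M$, as a cofibration followed by a trivial fibration
$$L_nC^*(X)\longrightarrow C^n(X)\xrightarrow{\ \sim\ }M_nC^*(X),$$
and take $C^n(X)$ to be the middle object. The latching map is then a cofibration, as the definition demands, and the coface and codegeneracy maps are recovered from the universal properties of $L_n$ and $M_n$, so that $C^*(X)$ is genuinely a cosimplicial object through degree $n$. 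The simplicial frame is built by the dual induction: set $C_0(X)=X$ and at each stage factor the canonical map $L_nC_*(X)\to M_nC_*(X)$, via $(\gamma,\delta)$, as a trivial cofibration followed by a fibration, so that the matching maps $C_n(X)\to M_nC_*(X)$ become fibrations. Functoriality is then automatic: the latching and matching objects and the canonical map between them are natural in the diagram, the chosen factorizations are functorial, and the anchor $C^0(-)=\Id$ (resp. $C_0(-)=\Id$) is evidently natural, so the inductive recipe assembles into functors $C^*(-)\colon M\to cM$ and $C_*(-)\colon M\to sM$.

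The step I expect to be the main obstacle is checking at each stage that $C^n(X)$ really is weakly equivalent to $X$. Since $C^n(X)\xrightarrow{\ \sim\ }M_nC^*(X)$ is a trivial fibration, this reduces to proving $M_nC^*(X)\simeq X$, which is not a formal consequence of the lower levels being weakly equivalent to $X$, as limits do not preserve weak equivalences in general. It does hold here, however, because the matching maps produced at the earlier stages are trivial fibrations, so that each $M_nC^*(X)$ is built from the lower levels as a limit of a diagram of trivial fibrations, and trivial fibrations are stable under pullback. Equivalently, the diagram constructed so far is Reedy fibrant, and matching objects of Reedy fibrant diagrams are homotopy invariant; dually, latching objects of the Reedy cofibrant simplicial objects in the simplicial case are homotopy invariant, which yields $C_n(X)\simeq L_nC_*(X)\simeq X$ there. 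This homotopy invariance of the relevant (co)limits is the genuine technical heart of the argument; once it is in hand, a short application of the $2$-out-of-$3$ property closes both inductions.
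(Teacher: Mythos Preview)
The paper states this theorem as a background result from \cite[Chapter~5]{Hovey} and \cite[Chapter~16]{HLocal} and gives no proof of its own; your Reedy-inductive construction is precisely the standard argument found in those references, and it is correct.

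One small tightening is worth making. The sentence ``each $M_nC^*(X)$ is built from the lower levels as a limit of a diagram of trivial fibrations'' is not quite accurate as written, since the maps appearing in the matching diagram are codegeneracies of $C^*$, not the matching maps themselves. The clean way to close the induction is your second formulation, made precise as follows: the levelwise map $C^*(X)\to cX$ to the constant cosimplicial object (induced by the unique $[m]\to[0]$) is a Reedy \emph{trivial fibration}. Indeed, in degree $0$ the relative matching map is the identity of $X$, and in degree $m\geq 1$, since $M_m(cX)=X$ (the matching category has terminal object $[m]\to[0]$), the relative matching map reduces to your trivial fibration $C^m\to M_mC^*$. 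The standard Reedy lemma (e.g.\ \cite[15.3.9]{HLocal}) then gives that $M_nC^*(X)\to M_n(cX)=X$ is a trivial fibration, so $C^n(X)\simeq X$ as required. Dually, $cX\to C_*(X)$ is a Reedy trivial cofibration, which handles the simplicial frame without needing $X$ to be fibrant.
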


So now for every object in a model category $M$ we have a (co)simplicial object. We will now use these objects to get actual simplicial sets. For that, we will find an adjunction associated to every (co)simplicial object, in \cite[Section 3.1]{Hovey}. 

\begin{prop}\label{ch. 1:tensor} Let $M$ be a category with all small colimits (in particular, a model category). Then the category of cosimplicial objects in $M$, $\Fun(\Delta, M)$, is equivalent to the category of adjunctions $\Adj(\sS, M)$. In particular, if $A^*$ is a simplicial object in $M$ then the right adjunction $\Map(A^*, -):M\to \sS$ is defined to have $n$-simplices $\Hom(A^n, -)$ for all $n\in\N$. We denote the adjunction associated to a cosimplicial object $A^*$ by $A^*\otimes -:\sS\rightleftharpoons M: Map(A^*,-)$.
\end{prop}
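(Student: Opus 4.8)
The plan is to turn the abstract equivalence of Proposition \ref{ch.1: adjunctions sSet} into the explicit formulas stated here: for each cosimplicial object $A^*$ I would build both adjoints by hand, verify the adjunction bijection directly, and then exhibit mutually inverse assignments to recover the equivalence of categories. The conceptual input is that $\sS=\Fun(\Delta\op,\Set)$ is the free cocompletion of $\Delta$. Concretely, the Yoneda embedding $[n]\mapsto\Delta_n$ is dense, so every simplicial set $X$ is canonically the colimit of its diagram of simplices: writing $(\Delta\downarrow X)$ for the category of simplices of $X$ (objects are maps $\Delta_n\to X$, equivalently elements $x\in X_n$), we have $X\cong\colim_{(\Delta\downarrow X)}\Delta_{(-)}$.

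Given a cosimplicial object $A^*:\Delta\to M$, I would define $A^*\otimes-:\sS\to M$ as the left Kan extension of $A^*$ along the Yoneda embedding, which since $M$ is cocomplete is computed pointwise as $A^*\otimes X=\colim_{(\Delta\downarrow X)}A^{(-)}$. By construction this satisfies $A^*\otimes\Delta_n\cong A^n$ and preserves all colimits. For the right adjoint I would set $\Map(A^*,Y)_n=\Hom_M(A^n,Y)$, with the simplicial operators induced contravariantly from the coface and codegeneracy maps of $A^*$, exactly as in the statement.

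The key step is the natural isomorphism $\Hom_M(A^*\otimes X,Y)\cong\Hom_\sS(X,\Map(A^*,Y))$. I would prove it by the computation
$$\Hom_M(A^*\otimes X,Y)=\Hom_M\Big(\colim_{(\Delta\downarrow X)}A^{(-)},Y\Big)\cong\lim_{(\Delta\downarrow X)\op}\Hom_M(A^{(-)},Y),$$
and then identifying the right-hand side. Since $\Hom_M(A^n,Y)=\Map(A^*,Y)_n$ and $X\cong\colim_{(\Delta\downarrow X)}\Delta_{(-)}$, the Yoneda lemma shows that this limit is precisely the set of natural transformations $X\to\Map(A^*,Y)$, i.e. $\Hom_\sS(X,\Map(A^*,Y))$. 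This simultaneously shows that $A^*\otimes-$ is a left adjoint with the stated right adjoint.

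Finally, for the equivalence $\Fun(\Delta,M)\simeq\Adj(\sS,M)$ I would define the inverse assignment sending an adjunction $F\dashv G$ on $\sS\rightleftharpoons M$ to the cosimplicial object $n\mapsto F(\Delta_n)$, and check the two assignments are mutually inverse up to natural isomorphism: from $A^*$ one recovers $A^*\otimes\Delta_{(-)}\cong A^*$, while for a given cocontinuous $F$, since $F$ preserves colimits and every simplicial set is a colimit of representables, the left Kan extension of $n\mapsto F(\Delta_n)$ is naturally isomorphic to $F$; uniqueness of adjoints means restricting to left adjoints loses nothing. I expect the main obstacle to be bookkeeping rather than ideas: checking naturality of the adjunction bijection in both variables, and confirming that the simplicial structure imposed on $\Map(A^*,Y)$ is exactly the one making the Yoneda identification of the displayed limit go through, so that the colimit description of $A^*\otimes X$ and the degreewise description of $\Map(A^*,-)$ match on the nose.
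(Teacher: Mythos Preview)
Your argument is correct and is exactly the standard one: realize $A^*\otimes-$ as the left Kan extension of $A^*$ along Yoneda, use density of the representables to compute it as a colimit over the category of simplices, and read off the adjunction and the explicit description of $\Map(A^*,-)$. The paper does not supply its own proof of this proposition; it is quoted as a background result from \cite[Section 3.1]{Hovey}, so there is nothing to compare against beyond noting that your write-up matches the usual treatment found there.
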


\begin{rem} As usual, we have a simplicial analog of this result. In this case, if we have $A_*$ a simplicial object in $M$, then the right adjunction $\Map(-,A_*):M\to \sS\op$ is defined to have $n$-simplices $\Hom(-,A_n)$ for all $n\in\N$, and we denote the adjunction associated to a simplicial object $A_*$ by $(\Hom(-,A_*), \Map(-,A_*),\psi)$.
\end{rem}

\begin{coro}Let $M$ be a model category, and $A\in M$ an object of $M$. Then the functorial cosimplicial frame on $A$, $C^*(A)$, induces adjoint functors $(C^*(A)\otimes -, \Map(C^*(A),\phi)$. Dually, the functorial simplicial frame on $A$, $C_*(A),$ induces adjoint functors $\Hom(-,C_*(A)), \Map(-,C_*(A)), \psi)$.
\end{coro}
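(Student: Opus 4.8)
The plan is to observe that this corollary is essentially an immediate application of Proposition \ref{ch. 1:tensor} to the frames whose existence was just guaranteed. First I would recall that, by the existence theorem immediately preceding, $M$ admits a functorial cosimplicial frame $C^*(-)$; in particular, for the fixed object $A$, the frame $C^*(A)$ is by definition a cosimplicial object in $M$, i.e.\ a functor $C^*(A):\Delta\to M$, and hence an object of $\Fun(\Delta,M)$. The extra conditions in the definition of a frame (that $C^0(A)\cong A$, that each $C^n(A)$ is weakly equivalent to $A$, and that the latching maps are cofibrations) play no role at this stage; all that matters for producing the adjunction is that $C^*(A)$ is an honest cosimplicial object.

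Next I would apply Proposition \ref{ch. 1:tensor} directly. Since $M$ is a model category it is in particular cocomplete, so the equivalence $\Fun(\Delta,M)\simeq\Adj(\sS,M)$ of that proposition applies verbatim. Feeding the cosimplicial object $C^*(A)$ into this equivalence yields the desired adjunction $C^*(A)\otimes-:\sS\rightleftharpoons M:\Map(C^*(A),-)$, where the left adjoint is the cocontinuous extension determined by $\Delta_n\mapsto C^n(A)$ and the right adjoint is defined on $n$-simplices by $\Hom(C^n(A),-)$. This establishes the first claim.

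For the dual statement I would use the simplicial analogue recorded in the Remark following Proposition \ref{ch. 1:tensor}. The simplicial frame $C_*(A)$ is by definition a simplicial object in $M$, i.e.\ a functor $\Delta\op\to M$; the analogous correspondence associates to it the adjunction $(\Hom(-,C_*(A)),\Map(-,C_*(A)),\psi)$, the right adjoint having $n$-simplices $\Hom(-,C_n(A))$. As the two halves are strictly dual, no separate argument is needed.

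There is essentially no hard step here: the only thing to verify is the near-tautological observation that a (co)simplicial frame is a special case of a (co)simplicial object, so that the general correspondence of Proposition \ref{ch. 1:tensor} applies unchanged. The one point worth flagging is \emph{functoriality}: because the frame was chosen functorially in $A$, the induced adjunctions are natural in $A$, which will be what matters when these mapping-space constructions are used later, even though the corollary as stated fixes a single object $A$.
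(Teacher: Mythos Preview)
Your proposal is correct and matches the paper's implicit reasoning: the corollary is stated without proof precisely because it is an immediate specialization of Proposition~\ref{ch. 1:tensor} (and its simplicial analogue in the following Remark) to the particular (co)simplicial objects furnished by the framing theorem. Your observation that the frame conditions play no role at this stage, and your remark on functoriality in $A$, are accurate and in keeping with how the paper uses these adjunctions later.
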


\begin{prop}\label{ch. 1:cof-to-fib}Let $M$ be a model category. Let $A,B,X,Y$ be objects in $M$.
\begin{itemize}
	\item Let $p:X\to Y$ be a fibration in $M$. Then the map $p_*:\Map(C^*(A),X)\to \Map(C^*(A),Y)$ is a fibration of simplicial sets, which is a trivial fibration if $p$ is a trivial fibration.
	\item Let $i:A\to B$ be a cofibration in $M$. Then the map $i^*:\Map(B,C_*(X))\to \Map(A,C_*(Y))$ is a fibration of simplicial sets, which is a trivial fibration if $i$ is a trivial cofibration. 
\end{itemize}
\end{prop}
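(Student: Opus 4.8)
The plan is to reduce both statements to lifting properties and to transpose them across the adjunction $C^*(A)\otimes-:\sS\rightleftharpoons M:\Map(C^*(A),-)$ of Proposition~\ref{ch. 1:tensor}. Recall from the model structure on $\sS$ that a map of simplicial sets is a fibration exactly when it has the right lifting property against every horn inclusion $\Lambda^r_n\to\Delta_n$, and is a trivial fibration exactly when it has the right lifting property against every boundary inclusion $\partial\Delta_n\to\Delta_n$. So to prove the first bullet it suffices to solve, for a given fibration $p:X\to Y$, every lifting problem of $p_*$ against such inclusions, and by adjunction each such problem is equivalent to one for $p$ against the tensored inclusion.

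First I would treat the first bullet. A lifting problem of $p_*$ against a horn inclusion transposes to
\begin{center}
\begin{tikzcd}
C^*(A)\otimes\Lambda^r_n \ar[r]\ar[d] & X\ar[d,"p"]\\
C^*(A)\otimes\Delta_n \ar[r]\ar[ur,dotted] & Y
\end{tikzcd}
\end{center}
and since $p$ is a fibration the dotted lift exists as soon as $C^*(A)\otimes(\Lambda^r_n\to\Delta_n)$ is a trivial cofibration in $M$; then $p_*$ is a fibration. For the trivial-fibration clause I would argue identically with a boundary inclusion $\partial\Delta_n\to\Delta_n$ in place of the horn. Here the input is essentially free: by the standard coend computations one identifies $C^*(A)\otimes\Delta_n$ with $C^n(A)$ and $C^*(A)\otimes\partial\Delta_n$ with the latching object $L_nC^*(A)$, so the tensored boundary inclusion is precisely the latching map $L_nC^*(A)\to C^n(A)$, which is a cofibration by the definition of a cosimplicial frame. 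As a trivial fibration $p$ lifts against all cofibrations, $p_*$ acquires the right lifting property against all boundary inclusions, i.e. is a trivial fibration.

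It therefore remains to show $C^*(A)\otimes(\Lambda^r_n\to\Delta_n)$ is a trivial cofibration, and this is the main obstacle. I would split it in two. That $C^*(A)\otimes-$ sends cofibrations to cofibrations follows from the boundary case above: being a left adjoint it preserves colimits, and every cofibration of simplicial sets is a retract of a transfinite composite of pushouts of boundary inclusions, so the latching cofibrations propagate, by a skeletal induction, to all cofibrations; in particular each $C^*(A)\otimes(\Lambda^r_n\to\Delta_n)$ is a cofibration. The remaining point—that it is moreover a weak equivalence—is exactly where the homotopical constancy of the frame, $C^n(A)\simeq A$ for all $n$, enters: together with Reedy cofibrancy of $C^*(A)$ it forces $C^*(A)\otimes-$ to carry weak equivalences of simplicial sets to weak equivalences in $M$, via a skeletal induction and the gluing lemma; this is the technical heart of the theory of framings (cf. \cite{Hovey}), and I expect it to be the delicate step of the argument. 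One may instead package everything as the assertion that $(C^*(A)\otimes-,\Map(C^*(A),-))$ is a Quillen adjunction and invoke Proposition~\ref{ch. 1: criteron Quillen adjunction}, but its second hypothesis—fibrancy of $\Map(C^*(A),Z)$ for fibrant $Z$—is itself equivalent to the trivial-cofibration claim above, so this reformulation does not sidestep the essential work.

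Finally, the second bullet is strictly dual. Working with the simplicial frame $C_*(X)$ and the contravariant adjunction $(\Hom(-,C_*(X)),\Map(-,C_*(X)))$ of the Remark following Proposition~\ref{ch. 1:tensor}, a lifting problem for $i^*$ against a horn (resp. boundary) inclusion transposes to a lifting problem for the cofibration $i$ against the matching maps $C_n(X)\to M_nC_*(X)$, which are fibrations by the definition of a simplicial frame; the boundary case is then immediate and the horn case rests, as before, on the homotopical constancy $C_n(X)\simeq X$, with the roles of cofibrations and fibrations interchanged throughout.
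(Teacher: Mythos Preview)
The paper does not supply its own proof of this proposition: it appears in the background chapter, where the section opens by saying the results come from \cite[Chapter 5]{Hovey} and \cite[Chapter 16]{HLocal}, and the statement is simply quoted without argument. So there is nothing to compare against directly.

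That said, your outline is the standard one from \cite{Hovey}: transpose lifting problems across the adjunction $C^*(A)\otimes-\dashv\Map(C^*(A),-)$, identify $C^*(A)\otimes\partial\Delta_n\to C^*(A)\otimes\Delta_n$ with the latching map $L_nC^*(A)\to C^n(A)$ (a cofibration by definition of a cosimplicial frame), and then handle horns by propagating cofibrancy plus the homotopical constancy $C^n(A)\simeq A$. You correctly flag that the horn step is where the real work lies and defer to the reference for the Reedy/skeletal induction; that is exactly how Hovey structures it, so your sketch is faithful to the intended source. One small remark: with the paper's definition of a cosimplicial frame the latching condition is required for \emph{all} $n\in\N$, including $n=0$, which forces $C^0(A)\cong A$ to be cofibrant; this is what makes the $n=0$ boundary case ($\emptyset\to A$) go through in your trivial-fibration clause, and you may want to say so explicitly rather than leave it implicit.
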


Are those adjunctions Quillen adjunctions? And what is the link between those two right adjoints called $\Map$? Sadly, the adjunctions are not Quillen adjunctions in general, but they do preserve enough structure to be derived; and the total derived functors for $\Map(C^*(A), -)$ and $\Map(-,C_*(Y))$ coincide. If we want to have those conditions for the original adjunctions, though, we will need to impose some more conditions.

\begin{prop}\label{ch.1:Map Quillen} Let $M$ be a model category. 
\begin{itemize}
	\item Let $X$ be a cofibrant object in $M$. Then the functor $C^*(X)\otimes -$ preserves cofibrations and trivial cofibrations, and its right adjoint $\Map(C^*(X),-)$ preserves fibrations and trivial fibrations. In particular, if $X$ is a cofibrant object in $M$, then $C^*(X)\otimes -:\sS\rightleftharpoons M: \Map(C^*(X),-)$ is a Quillen adjunction.
	\item Let $Y$ be a fibrant object in $M$. Then the functor $\Hom(-, C_*(Y))$ preserves cofibrations and trivial cofibrations, and its right adjoint $\Map(-, C_*(Y))$ preserves fibrations and trivial fibrations. In particular, if $Y$ is a fibrant object in $M$, then $(\Hom(-,C_*(Y)), \Map(-,C_*(Y)), \psi)$ is a Quillen adjunction.
\end{itemize}
\end{prop}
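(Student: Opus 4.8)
The plan is to reduce the whole statement to Proposition \ref{ch. 1:cof-to-fib} together with the elementary duality that, for an adjoint pair, the left adjoint preserves cofibrations if and only if the right adjoint preserves trivial fibrations, and the left adjoint preserves trivial cofibrations if and only if the right adjoint preserves fibrations. I will carry out the first bullet in detail; the second is obtained by passing to the opposite model category.

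First I would note that the assertion about the right adjoint is already available: applying the first bullet of Proposition \ref{ch. 1:cof-to-fib} with $A=X$ shows that $\Map(C^*(X),-)$ sends a fibration $p:U\to V$ of $M$ to a fibration $p_*:\Map(C^*(X),U)\to\Map(C^*(X),V)$ of simplicial sets, which is moreover trivial whenever $p$ is. This is precisely the statement that $\Map(C^*(X),-)$ preserves fibrations and trivial fibrations. This is also the only place where the cofibrancy of $X$ genuinely enters: it is what upgrades ``preserves fibrations'' to ``preserves trivial fibrations'', the latter being the base case $C^*(X)\otimes(\emptyset\to\Delta^0)=(\emptyset\to X)$ that fails for a non-cofibrant source.

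Next I would transport this across the adjunction $C^*(X)\otimes-\rightleftharpoons\Map(C^*(X),-)$ furnished by Proposition \ref{ch. 1:tensor}. Let $i:S\to T$ be a cofibration of simplicial sets; to see that $C^*(X)\otimes i$ is a cofibration of $M$ it suffices to produce a lift against every trivial fibration $p$ of $M$. By adjunction, a lift in the square with vertical maps $C^*(X)\otimes i$ and $p$ exists precisely when a lift exists in the adjoint square with vertical maps $i$ and $\Map(C^*(X),p)$. Since $\Map(C^*(X),p)$ is a trivial fibration of simplicial sets by the previous step and $i$ is a cofibration, such a lift exists by the lifting axiom in $\sS$; hence $C^*(X)\otimes i$ is a cofibration. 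Running the identical argument with the pairing ``trivial cofibration versus fibration'' shows that $C^*(X)\otimes-$ sends trivial cofibrations to trivial cofibrations. Thus $C^*(X)\otimes-$ is a left Quillen functor, and the adjunction is Quillen by definition.

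For the second bullet I would simply apply the first bullet to the opposite model category $M\op$: there $Y$ is cofibrant, the simplicial frame $C_*(Y)$ is a cosimplicial frame, and the pair $\bigl(\Hom(-,C_*(Y)),\Map(-,C_*(Y))\bigr)$ is exactly the pair $C_*(Y)\otimes-\rightleftharpoons\Map(C_*(Y),-)$ of the first bullet read in $M\op$, the second bullet of Proposition \ref{ch. 1:cof-to-fib} being its first bullet read in $M\op$. Because the simplicial-set side is untouched by this dualization, Part 1 applies verbatim and gives the claim once fibrations of $M\op$ are translated back into cofibrations of $M$ and conversely. The only point requiring care throughout is keeping the duality pairing correct---cofibrations must be matched with trivial fibrations and trivial cofibrations with fibrations, never the reverse---while the genuinely substantive input, the simplicial pushout--product behaviour of a frame, is exactly Proposition \ref{ch. 1:cof-to-fib} and is assumed, so no further obstacle remains.
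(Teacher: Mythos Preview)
The paper does not supply its own proof of this proposition; it is quoted as background from \cite{Hovey} and \cite{HLocal}. Your argument is the standard one and is correct: Proposition~\ref{ch. 1:cof-to-fib} gives directly that $\Map(C^*(X),-)$ preserves fibrations and trivial fibrations, and the adjunction-lifting transfer you describe then yields that $C^*(X)\otimes-$ preserves cofibrations and trivial cofibrations. The dualization for the second bullet is also fine.

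One small remark on your parenthetical about where cofibrancy enters. As stated in this paper, Proposition~\ref{ch. 1:cof-to-fib} carries no hypothesis on $A$, so strictly speaking you cannot point to that proposition as the place where cofibrancy of $X$ is consumed. Your intuition is nonetheless right: in the paper's definition of a cosimplicial frame the latching map $L_0C^*(X)=\emptyset\to C^0(X)\cong X$ is required to be a cofibration, so the frame is only a frame in this sense when $X$ is cofibrant, and indeed the obstruction for non-cofibrant $X$ is exactly the map $C^*(X)\otimes(\emptyset\to\Delta^0)=(\emptyset\to X)$ you identify. But as written your sentence reads as if Proposition~\ref{ch. 1:cof-to-fib} itself needed the hypothesis, which it does not in the form the paper gives. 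This is cosmetic; the proof stands.
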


\begin{prop}Let $M$ be a model category, $X$ a cofibrant object and $Y$ a fibrant object. Then there are weak equivalences 
$$\Map(C^*(X),Y)\to \diag \Map(C^*(X),C_*(Y))\leftarrow \Map(X,C_*(Y)). $$
\end{prop}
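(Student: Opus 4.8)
The plan is to realise all three simplicial sets as diagonals of bisimplicial sets and then reduce to the standard bisimplicial set lemma, namely that a map of bisimplicial sets which is a weak equivalence in one simplicial direction at every level induces a weak equivalence on diagonals (see, e.g., \cite{HLocal}). Write $A^\ast=C^*(X)$ and $B_\ast=C_*(Y)$. Since $X$ is cofibrant and the latching maps of a cosimplicial frame are cofibrations, a Reedy induction shows every $A^p$ is cofibrant; dually every $B_q$ is fibrant. Form the bisimplicial set $Z_{p,q}=\Hom(A^p,B_q)$, whose diagonal has $n$-simplices $\Hom(A^n,B_n)$, so that $\diag Z=\diag\Map(C^*(X),C_*(Y))$. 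The outer terms are recovered as the $q=0$ row $\Map(A^\ast,Y)=Z_{\bullet,0}$ and the $p=0$ column $\Map(X,B_\ast)=Z_{0,\bullet}$, and I will exhibit each of them as the diagonal of a bisimplicial set that is constant in one direction.

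For the left-hand map I first record that the frame $B_\ast$ is homotopically constant: each structure map $Y=B_0\to B_q$ (coming from the surjection $[q]\to[0]$) is a weak equivalence between fibrant objects, and these assemble into an augmentation $\underline{Y}\to B_\ast$ of simplicial objects. Applying $\Hom(A^\ast,-)$ yields a map of bisimplicial sets $\overline{Z}\to Z$, where $\overline{Z}_{p,q}=\Hom(A^p,Y)$ is constant in $q$ and $\diag\overline{Z}=\Map(A^\ast,Y)$. Fixing $q$ and varying $p$, the component $\Map(A^\ast,Y)\to\Map(A^\ast,B_q)$ is induced by the weak equivalence $Y\to B_q$ between fibrant objects; since $X$ is cofibrant, $\Map(C^*(X),-)$ is a right Quillen functor by Proposition \ref{ch.1:Map Quillen} and therefore preserves weak equivalences between fibrant objects, so this component is a weak equivalence. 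The bisimplicial set lemma then gives that $\Map(C^*(X),Y)=\diag\overline{Z}\to\diag Z$ is a weak equivalence.

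The right-hand map is handled by the dual argument. The cosimplicial frame $A^\ast$ is homotopically constant, with each collapse $A^p\to A^0=X$ (from $[p]\to[0]$) a weak equivalence between cofibrant objects; these assemble into an augmentation $A^\ast\to\underline{X}$. Applying $\Hom(-,B_\ast)$ produces a map of bisimplicial sets $\widehat{Z}\to Z$ with $\widehat{Z}_{p,q}=\Hom(X,B_q)$ constant in $p$ and $\diag\widehat{Z}=\Map(X,B_\ast)$. Fixing $p$ and varying $q$, the component $\Map(X,B_\ast)\to\Map(A^p,B_\ast)$ is induced by $A^p\to X$. By Proposition \ref{ch. 1:cof-to-fib} the contravariant functor $\Map(-,B_\ast)$ sends trivial cofibrations to trivial fibrations of simplicial sets, so by Ken Brown's lemma it preserves weak equivalences between cofibrant objects; hence this component is a weak equivalence. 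The bisimplicial set lemma again yields that $\Map(X,C_*(Y))=\diag\widehat{Z}\to\diag Z$ is a weak equivalence.

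The genuinely load-bearing inputs are the bisimplicial set lemma and the Quillen functoriality of the two mapping-space constructions; once these are granted the argument is formal. I expect the care to be needed in the bookkeeping around them: verifying that the terms of the frames are respectively cofibrant and fibrant and that the frames are homotopically constant (both Reedy-type facts), checking that the collapse and augmentation maps genuinely assemble into maps of bisimplicial sets $\overline{Z}\to Z$ and $\widehat{Z}\to Z$, and confirming that the two induced maps on diagonals are exactly the natural comparison maps of the statement rather than merely abstractly weakly equivalent to them.
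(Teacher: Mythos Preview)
The paper does not supply its own proof of this proposition: it is quoted as background from \cite[Chapter 5]{Hovey} (specifically Hovey's Proposition 5.4.7). Your argument is essentially the standard one found there, using the diagonal/realization lemma for bisimplicial sets together with the homotopical constancy of the two frames and the Quillen-type behaviour of $\Map(C^*(X),-)$ and $\Map(-,C_*(Y))$ recorded in Propositions \ref{ch. 1:cof-to-fib} and \ref{ch.1:Map Quillen}. The bookkeeping points you flag (naturality of the comparison maps $cY\to B_\ast$ and $A^\ast\to cX$, cofibrancy/fibrancy of the terms of the frames) are routine and your treatment of them is correct; in particular, the terminality of $[0]$ in $\Delta$ makes the two augmentations into genuine maps of (co)simplicial objects, so $\overline{Z}\to Z$ and $\widehat{Z}\to Z$ are honest maps of bisimplicial sets. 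Nothing is missing.
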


\begin{rem}We want to attract attention to the fact that this means that $\Map(C^* (X),Y)$ and $\Map(X,C_*(Y))$ will be isomorphic on the homotopy category, but there isn't a direct weak equivalence between them: we need to go through $\diag\Map(C^*(X),C_*(Y))$.
\end{rem}

\begin{rem}On the other hand, we have defined this Quillen adjunction and these weak equivalences only in the case where $X$ is cofibrant and $Y$ is fibrant. That isn't a big issue: we recall that we have the fibrant and cofibrant replacements. So we could alternatively had defined those weak equivalences for all $X, Y$ in $M$ by
$$\Map(C^*(X),R(Y))\to \diag \Map(C^*(X),C_*(Y))\leftarrow \Map(Q(X),C_*(Y)). $$
\end{rem}

\begin{nota}By abuse of notation, we will denote $\Map(X,Y)$ both simplicial sets $\Map(C^*(X),RY)$ and $\Map(QX,C_*(Y))$.
\end{nota}

\begin{defin}Let $M$ be a model category, and $X,Y$ two objects in $M$. Then we call \textbf{the mapping space between $X$ and $Y$} and we denote by $\Map(X,Y)$ the simplicial sets $\Map(C^*(X),R(Y))$ and $\Map(QX,C_*(Y))$. 
\end{defin}

And, to finish off this part, a theorem summarizing the results in the homotopy category.

\begin{teo}Let $M$ be a model category. Then the total left derived functors of $-\otimes -:M\times \sS\to M$ and $\Hom(-,-):\sS\times M\op\to M\op$ exist. We denote them by $-\otimes^\L-$ and $\R Hom(-,-)$ respectively. The total right derived functors of $\Map(-,-)$ exist and are naturally isomorphic. We denote them by $\R\Map(-,-)$. There are natural isomorphisms in $\Ho(M)$
$$\l X\otimes^\L K,Y\r\cong \l K,\R\Map(X,C^*(X),Y)\r\cong \l K, \R\Map(X,C_*(Y)) \r\cong \l X,\R\Hom(K,Y)\r. $$
so there is an adjunction on two variables $\Ho (M)\times \Ho(\sS)\to \Ho(M)$. There is also a natural isomorphism $X\otimes^\L \Delta\l 0\r\cong X$. 
\end{teo}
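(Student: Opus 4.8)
The plan is to upgrade the pointwise Quillen adjunctions of Proposition \ref{ch.1:Map Quillen} to statements on the homotopy categories, using throughout that every object of $\sS$ is cofibrant, so that only the $M$-variable ever needs a (co)fibrant replacement.

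First I would check that the derived functors exist. By the remark following the definition of the total derived functors, any functor sending weak equivalences between cofibrant objects to weak equivalences can be left derived, and dually on the fibrant side. Fixing a cofibrant $X$, Proposition \ref{ch.1:Map Quillen} gives that $C^*(X)\otimes-$ is left Quillen; combined with the fact that a functorial cosimplicial frame carries a weak equivalence between cofibrant objects to a levelwise weak equivalence, this shows the bifunctor $-\otimes-$ preserves weak equivalences on $M_c\times\sS$, so $-\otimes^\L-$ exists and is computed by $(X,K)\mapsto C^*(QX)\otimes K$. The dual argument produces $\R\Hom(-,-)$ and $\R\Map(-,-)$, computed with fibrant and with cofibrant–fibrant replacements respectively. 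That the two candidate right derived functors of $\Map$ agree is exactly the zigzag $\Map(C^*(X),Y)\to\diag\Map(C^*(X),C_*(Y))\leftarrow\Map(X,C_*(Y))$ of the preceding proposition, whose arrows are weak equivalences once $X$ is cofibrant and $Y$ fibrant; hence $\R\Map(C^*(X),-)$ and $\R\Map(-,C_*(Y))$ define the same functor $\R\Map(-,-)$, which is the middle isomorphism of the statement.

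Next I would produce the two outer isomorphisms by deriving the two Quillen adjunctions of Proposition \ref{ch.1:Map Quillen}. For $X$ cofibrant the Quillen pair $C^*(X)\otimes-\dashv\Map(C^*(X),-)$ descends to an adjunction between $\Ho(\sS)$ and $\Ho(M)$, whose adjunction isomorphism reads $\l X\otimes^\L K,Y\r\cong\l K,\R\Map(X,Y)\r$. Symmetrically, for $Y$ fibrant the Quillen pair $\Hom(-,C_*(Y))\dashv\Map(-,C_*(Y))$ descends to give $\l X,\R\Hom(K,Y)\r\cong\l K,\R\Map(X,Y)\r$. Concatenating these two with the coincidence of the $\Map$-models yields the displayed chain, and reading it as a bijection natural in $K$ and in $Y$ exhibits the adjunction of two variables $\Ho(M)\times\Ho(\sS)\to\Ho(M)$.

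The delicate point, and the one I expect to be the main obstacle, is naturality in all three variables simultaneously. The adjunctions of Proposition \ref{ch.1:Map Quillen} are stated with one variable frozen, so I must invoke the existence of \emph{functorial} (co)simplicial frames and then check that the assembled natural transformations really are isomorphisms in $\Ho(M)$ jointly in $X$, $K$ and $Y$; concretely, that the comparison maps relating the frames of $X$ and of $X'$ along a weak equivalence are compatible with the units and counits, so that no choice of frame is seen in the homotopy category. Granting this, the normalization is immediate: since $\Delta\l 0\r=\Delta_0$ is representable and the tensor restricts to $C^*(X)$ on representables, we get $C^*(QX)\otimes\Delta\l 0\r\cong C^0(QX)\cong QX$, and passing to $\Ho(M)$ gives $X\otimes^\L\Delta\l 0\r\cong X$.
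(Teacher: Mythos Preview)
The paper does not give its own proof of this theorem: it is stated as a background result, taken from \cite[Chapter 5]{Hovey}, and left unproven in the text. Your sketch is therefore not being compared against an in-paper argument but against the standard reference.

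That said, your outline is essentially the argument one finds in Hovey. You correctly identify the three ingredients: existence of the derived bifunctors via preservation of weak equivalences on the (co)fibrant subcategories, the zigzag through $\diag\Map(C^*(X),C_*(Y))$ to identify the two models of $\R\Map$, and derivation of the two one-variable Quillen adjunctions from Proposition \ref{ch.1:Map Quillen} to obtain the outer isomorphisms. You are also right that joint naturality in all three variables is the genuinely delicate point and requires the functoriality of the framings; Hovey handles this by packaging the framings into a single functor $M\to M^{\Delta}$ (resp.\ $M\to M^{\Delta\op}$) and checking compatibility of the adjunction units and counits there. Your normalization argument via $C^0(QX)\cong QX$ is fine.
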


Before we pass to the next section, let us add one last result from \cite{HLocal}, linking mapping spaces and homotopy colimits.

\begin{teo}\cite[Th. 19.4.4]{HLocal}\label{ch. 1: Map commutes with hocolim} Let $M$ be a model category and $C$ a small category. 
\begin{enumerate}
	\item Let $X_i$ be an objectwise cofibrant $C$-diagram in $M$ and $Y$ a fibrant object in $M$. Then the mapping space functor $\Map(-,Y)$ sends homotopy colimits to homotopy limits, i.e. $\Map(\hocolim(X_i),Y)\simeq \holim (\Map(X_i,Y))$.
	\item Let $X$ be a cofibrant object in $M$ and $Y_i$ be  an objectwise fibrant $C$-diagram in $M$. Then the mapping space functor $\Map(X,-)$ sends homotopy limits to homotopy colimits, i.e. $\Map(X,\holim Y_i)\simeq \holim (Map(X,Y_i))$.
\end{enumerate}
\end{teo}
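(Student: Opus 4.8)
The plan is to treat the two assertions separately, since they have genuinely different flavours: part (2), covariance in the fibrant variable, is formal, whereas part (1), contravariance in the cofibrant variable, is where the real work lies. Throughout I would lean on the derived two-variable adjunction established above, namely the natural isomorphism $\l X\otimes^\L K, Y\r \cong \l K, \R\Map(X,Y)\r$ in $\Ho(\sS)$, together with the homotopy invariance of the mapping space guaranteed by Proposition \ref{ch.1:Map Quillen} and the framing results preceding it.

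For part (2), fix a cofibrant $X$. By Proposition \ref{ch.1:Map Quillen} the adjunction $C^*(X)\otimes - : \sS \rightleftharpoons M : \Map(C^*(X),-)$ is a Quillen adjunction, so $\Map(C^*(X),-)$ is a right Quillen functor whose total right derived functor is $\R\Map(X,-)$. Applying this functor objectwise gives a right Quillen functor $\Fun(C,M) \to \Fun(C,\sS)$ which strictly commutes with $\lim$ (a right adjoint preserves limits) and carries objectwise fibrant diagrams to objectwise fibrant diagrams. Passing to total right derived functors, and using that $\holim$ is by definition $\R\lim$, I would conclude $\Map(X,\holim Y_i) \simeq \holim \Map(X,Y_i)$; the objectwise fibrancy of the $Y_i$ is exactly what makes both sides compute honest homotopy limits. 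This is the standard fact that right Quillen functors preserve homotopy limits.

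For part (1), fix a fibrant $Y$. Here I would realize the homotopy colimit as a bar construction: with the $X_i$ objectwise cofibrant, $\hocolim_i X_i$ is weakly equivalent to the coend $\int^{[n]} B_n \otimes^\L \Delta[n]$ of the simplicial object $B_\bullet$ of $M$ whose $n$-th term is the coproduct $B_n = \coprod_{c_0\to\cdots\to c_n} X_{c_0}$ over the $n$-chains of $C$, the faces and degeneracies being built from the structure maps of the diagram $X$ and the tensoring of the derived two-variable adjunction. Applying $\Map(-,Y)$ converts this coend into the end $\int_{[n]}\Map(B_n\otimes^\L \Delta[n], Y)$; using $\Map(A\otimes^\L K,Y)\simeq \map_{\sS}(K,\Map(A,Y))$ and that $\Map$ turns coproducts into products, this is the totalization of the cosimplicial space $[n]\mapsto \prod_{c_0\to\cdots\to c_n}\Map(X_{c_0},Y)$, which is precisely the cobar description of $\holim_i\Map(X_i,Y)$, yielding the claimed equivalence.

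The main obstacle, and the step I would spend the most care on, is the bookkeeping that identifies the strict (co)ends appearing in the bar and cobar constructions with the homotopy (co)limits. Concretely, I would need to check that objectwise cofibrancy of the $X_i$ makes $B_\bullet$ Reedy cofibrant, so that its realization genuinely computes $\hocolim$, and that $\Map(-,Y)$ sends a Reedy cofibrant simplicial object to a Reedy fibrant cosimplicial object, so that the strict totalization of $\Map(B_\bullet,Y)$ really is the homotopy limit; fibrancy of $Y$ enters here to ensure each $\Map(X_i,Y)$ is a Kan complex. All of this rests on the homotopy invariance of the mapping space from the framing section, which is what lets me replace any diagram by a (co)fibrant one without changing the derived answer.
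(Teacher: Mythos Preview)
The paper does not actually prove this statement: it is quoted verbatim from Hirschhorn, \cite[Th.~19.4.4]{HLocal}, with no argument given. So there is no in-paper proof to compare your proposal against.

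That said, your outline is a reasonable reconstruction of the standard proof. Part~(2) is exactly as you say: once $X$ is cofibrant, $\Map(C^*(X),-)$ is right Quillen by Proposition~\ref{ch.1:Map Quillen}, and right Quillen functors preserve homotopy limits. For part~(1), your bar/cobar strategy is essentially what Hirschhorn does in Chapter~19, though he packages it differently: he works directly with the Bousfield--Kan formula for $\hocolim$ (a coend of the diagram against a cosimplicial framing of the nerve of the overcategories) and then checks that applying $\Map(-,Y)$ to this coend yields the dual end computing $\holim$ of the mapping-space diagram. The Reedy (co)fibrancy verifications you flag as the ``main obstacle'' are precisely the content of Hirschhorn's Chapters~18--19, and you are right that this is where the work hides; your sketch is honest about that but does not carry it out. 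If you wanted a self-contained write-up, you would need to either import those Reedy results or, more economically, argue as in part~(2) by noting that for fibrant $Y$ the functor $\Map(-,C_*(Y))$ is the right adjoint in the Quillen pair $(\Hom(-,C_*(Y)),\Map(-,C_*(Y)))$ of Proposition~\ref{ch.1:Map Quillen}, hence sends homotopy colimits in $M$ to homotopy limits in $\sS$ for the same formal reason.
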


\subsection{Bousfield localizations}

In this section we will introduce a very important tool in model categories, Bousfield localizations. All results in it come from \cite[Ch. 3]{HLocal} unless stated otherwise.
\\
\\There are a couple of building bricks we will need in order to construct a Bousfield localization. Let us get those out of the way first.

\begin{rem}There isn't just one Bousfield localization: there are two, left and right. In this text we will talk exclusively about left Bousfield localizations, as they are the only ones we will be using afterwards, and as such sometimes we will omit the word "left". The construction of a right Bousfield localization is dual, even if the proof of its existence is not.
\end{rem}

\begin{defin}Let $M$ be a model category and $C$ a class of morphisms in $M$. Let $X$ be an object in $M$. We say that $X$ is \textbf{a $C$-local object} if  for every morphism $f:A\to B$ in $C$ the induced map $\Map(f,X):\Map(A,X)\to \Map(B,X)$ is a weak equivalence.
\end{defin}

\begin{rem}A reader familiar with the conventions set in \cite{HLocal} will probably have noticed that this is not exactly the definition there: indeed, Hirschhorn adds the condition of being fibrant. We have preferred this alternative version for clearness' sake. It isn't a stretch, either: this is the definition given in the context of $\infty$-categories, after all.
\end{rem}

\begin{defin}Let $M$ be a model category and $C$ a class of morphisms in $M$. Let $f:A\to B$ be a morphism in $M$. We say that $f$ is \textbf{a $C$-local equivalence} if for every $C$-local object $X$ the induced map $\Map(f,X):\Map(A,X)\to \Map(B,X)$ is a weak equivalence.
\end{defin}

Now, when we compute our Bousfield localizations, our goal is to have the $C$-local equivalences be our new weak equivalences and our $C$-local objects be our new fibrant objects. For that, we will make sure that they are well defined for that purpose.

\begin{prop}Let $M$ be a model category and $C$ be a class of morphisms in $M$. Let $X$ and $Y$ be two fibrant objects which are weakly equivalent. Then $X$ is $C$-local if and only if $Y$ is too.
\end{prop}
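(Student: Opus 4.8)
The plan is to reduce everything to one homotopical fact: a weak equivalence between fibrant objects induces a weak equivalence on mapping spaces out of any fixed source. Once that is in place, the proposition follows from a naturality square and the $2$-out-of-$3$ property in $\sS$.

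First I would establish the key lemma. Since $X$ and $Y$ are weakly equivalent fibrant objects, I may assume (after splitting the given zig-zag into its individual arrows, and replacing each intermediate object by a fibrant replacement) that there is an honest weak equivalence $w:X\to Y$ with $X,Y$ fibrant. Fix an arbitrary object $A$ of $M$. Recalling that $\Map(A,X)=\Map(C^*(QA),RX)$, Proposition \ref{ch.1:Map Quillen} tells us that $\Map(C^*(QA),-)$ is a right Quillen functor, hence preserves weak equivalences between fibrant objects. As $X\to RX$, $Y\to RY$, and $w$ are all weak equivalences between fibrant objects, the induced map $w_*:\Map(A,X)\to\Map(A,Y)$ is a weak equivalence of simplicial sets, for every $A$.

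Next I would exploit naturality. For a morphism $f:A\to B$ in $C$, the mapping-space construction is functorial in the source, so $w$ yields a commutative square
\begin{center}
\begin{tikzcd}
\Map(A,X)\ar[r,"\Map(f{,}X)"]\ar[d,"w_*"'] & \Map(B,X)\ar[d,"w_*"]\\
\Map(A,Y)\ar[r,"\Map(f{,}Y)"] & \Map(B,Y).
\end{tikzcd}
\end{center}
By the key lemma both vertical maps are weak equivalences in $\sS$. Hence, by the $2$-out-of-$3$ property, the top map $\Map(f,X)$ is a weak equivalence if and only if the bottom map $\Map(f,Y)$ is. As this holds for every $f\in C$, the object $X$ is $C$-local exactly when $Y$ is, which is the asserted biconditional.

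The hard part will be bookkeeping in the key lemma rather than any deep idea: one must check that the fibrant replacements built into the definition of $\Map$ are handled so that $w_*$ is genuinely well defined up to weak equivalence, and that the square above truly commutes for the chosen functorial (co)simplicial frames. If "weakly equivalent" is read as a general zig-zag, the only extra step is the preliminary reduction above; this is routine, since the statement "$\Map(f,X)$ is a weak equivalence iff $\Map(f,Y)$ is" is transitive, so it suffices to treat one weak equivalence of the zig-zag (after fibrant replacement) at a time.
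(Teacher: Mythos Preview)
Your proof is correct. The paper does not give its own proof of this proposition: it is stated as background material taken from \cite[Ch.~3]{HLocal}. Your argument is the standard one and uses exactly the tools the paper has set up, namely that $\Map(C^*(QA),-)$ is a right Quillen functor (Proposition~\ref{ch.1:Map Quillen}) and hence preserves weak equivalences between fibrant objects, followed by the $2$-out-of-$3$ property applied to the naturality square. The bookkeeping you flag (handling a zig-zag by fibrant-replacing the intermediate objects, and checking the square commutes for functorial frames) is indeed routine, and your reduction is the right one. One cosmetic point: both you and the paper write $\Map(f,X):\Map(A,X)\to\Map(B,X)$, whereas contravariance in the first variable would suggest the opposite direction; this has no effect on the argument.
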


\begin{prop}Let $M$ be a model category and $C$ be a class of morphisms in $M$. Then the class of $C$-local equivalences satisfies the two-out-of-three property and is closed under retracts.
\end{prop}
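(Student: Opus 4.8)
The plan is to reduce both assertions to the analogous, already-known facts about weak equivalences in $\sS$, using the mapping space construction as a bridge. The crucial observation is that, by definition, a morphism $f:A\to B$ is a $C$-local equivalence precisely when for every $C$-local object $X$ the induced map $\Map(f,X)$ is a weak equivalence of simplicial sets; equivalently, when $\Map(f,X)$ becomes an isomorphism in $\Ho(\sS)$. Since the properties we must verify (two-out-of-three and closure under retracts) are detected one $C$-local object $X$ at a time, it suffices to fix such an $X$ and transport the structure through the contravariant functor $\Map(-,X)$.

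For the two-out-of-three property, I would take composable morphisms $f:A\to B$ and $g:B\to D$ with two of $f$, $g$, $g\circ f$ being $C$-local equivalences, and fix an arbitrary $C$-local object $X$. Applying $\Map(-,X)$ contravariantly sends $g\circ f$ to $\Map(f,X)\circ\Map(g,X)$, so among the three maps $\Map(f,X)$, $\Map(g,X)$, $\Map(g\circ f,X)$ at least two are weak equivalences in $\sS$. Because $\sS$ is a model category, its weak equivalences satisfy two-out-of-three, so the third map is a weak equivalence as well. As $X$ was arbitrary among $C$-local objects, the remaining morphism is a $C$-local equivalence.

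For closure under retracts, I would start from a retract diagram exhibiting $f$ as a retract of a $C$-local equivalence $g$, and again fix a $C$-local object $X$. Applying the functor $\Map(-,X)$ to this diagram produces a retract diagram in $\sS$ exhibiting $\Map(f,X)$ as a retract of $\Map(g,X)$. Since $g$ is a $C$-local equivalence, $\Map(g,X)$ is a weak equivalence, and weak equivalences in the model category $\sS$ are closed under retracts; hence $\Map(f,X)$ is a weak equivalence. Letting $X$ range over all $C$-local objects shows that $f$ is a $C$-local equivalence.

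The one point requiring care — and the main obstacle — is the functoriality of the mapping space. The construction $\Map(X,Y)$ is only defined up to weak equivalence (via (co)simplicial frames together with fibrant and cofibrant replacement), so the identities $\Map(g\circ f,X)=\Map(f,X)\circ\Map(g,X)$ and ``a retract maps to a retract'' should be read in the homotopy category rather than on the nose. I would handle this by working with the total derived functor $\R\Map(-,X):\Ho(M)\op\to\Ho(\sS)$, which is a genuine functor; there the two needed identities hold strictly, and ``weak equivalence in $\sS$'' is replaced by ``isomorphism in $\Ho(\sS)$''. Since isomorphisms in any category trivially satisfy two-out-of-three and are closed under retracts, both conclusions follow at once. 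This framing also sidesteps having to choose compatible frames for $A$, $B$, and $D$ simultaneously.
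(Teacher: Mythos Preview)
Your argument is correct and is essentially the standard proof. The paper does not give its own proof of this proposition; it is stated as a background result taken from \cite[Ch.~3]{HLocal}, so there is no in-paper argument to compare against. For what it is worth, Hirschhorn's proof proceeds exactly as you do: apply the homotopy function complex $\Map(-,X)$ for each $C$-local $X$ and invoke the two-out-of-three property and retract-closure of weak equivalences in $\sS$. Your care about functoriality, resolved by passing to the derived functor $\R\Map(-,X):\Ho(M)\op\to\Ho(\sS)$, is the right way to make the argument precise.
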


We now have all we need in order to define our localization.

\begin{defin}Let $M$ be a model category and $C$ be a class of maps in $M$. The left Bousfield localization of $M$ with respect to $C$ is a model category structure $L_CM$ on the category $M$ such that 
\begin{enumerate}
	\item the class of weak equivalences of $L_CM$ is the class of $C$-local equivalences.
	\item the class of cofibrations of $L_CM$ is the same as the class of cofibrations in $M$.
\end{enumerate}
\end{defin}

Now, this is a definition: as usual, nothing tells us that such a model structure exists. We just know that if it does exist, we call that a left Bousfield localization. And it does not exist in every context; but luckily for us, it does exist in every context we need it to.

\begin{teo}\label{ch. 1: existence Bousfield}\cite[Th. 4.1.1.]{HLocal} Let $M$ be a left proper cellular model category and let $C$ be a class of morphisms in $M$. Then the left Bousfield localization with respect to $C$ exists and the localization $L_CM$ is a left proper cellular model category. On top of this, if $M$ is a simplicial model category (see next section), then the localization is also a simplicial model category with the inherited structure.
\end{teo}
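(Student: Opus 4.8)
The plan is to exhibit $L_CM$ as a cofibrantly generated model category and then read off the remaining properties. By the recognition criterion for cofibrantly generated model categories stated earlier, it suffices to produce two sets of maps $I$ and $J_C$ together with a class $W$ of weak equivalences satisfying the seven listed conditions. I would take $W$ to be the class of $C$-local equivalences --- which by the two propositions immediately preceding already enjoys the two-out-of-three property and closure under retracts --- and I would keep $I$ equal to the generating cofibrations of $M$, so that the cofibrations, the class $I\text{-cof}$, and the trivial fibrations $I\text{-inj}$ are all unchanged. All the content is then concentrated in the construction of $J_C$ and in verifying the two inclusions $J_C\text{-cof}\subseteq W\cap I\text{-cof}$ and $I\text{-inj}\subseteq W\cap J_C\text{-inj}$, one of which must be an equality.

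To build $J_C$, I would first replace each map of $C$ (which we may and do take to be a set, as in \cite{HLocal}) by a cofibration between cofibrant objects, using the cofibrant replacement $Q$ and the functorial factorizations; this changes neither the $C$-local objects nor the $C$-local equivalences. For each such $f\colon A\to B$ and each $n$ I would then form, from a fixed cosimplicial frame, the relative tensor (pushout-product) map
\[
A\otimes\Delta^n\;\coprod_{A\otimes\partial\Delta^n}\;B\otimes\partial\Delta^n\;\longrightarrow\;B\otimes\Delta^n,
\]
and I would set $J_C=J\cup\Lambda$, where $\Lambda$ is the set of these horns on the maps of $C$ and $J$ is the original set of generating trivial cofibrations. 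The design goal is the local recognition lemma: an object $X$ has the right lifting property with respect to $J_C$ if and only if $X$ is fibrant in $M$ and $C$-local. The RLP against $J$ yields fibrancy, while the RLP against $\Lambda$ translates, through the tensor--$\Map$ adjunction of Proposition \ref{ch. 1:tensor}, into the statement that $\Map(f,X)$ is a trivial fibration of simplicial sets for every $f\in C$, which is exactly $C$-locality.

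The genuinely hard step is to show that $J_C$ generates the right class: that a cofibration which is also a $C$-local equivalence is a retract of a $J_C$-cell complex, and conversely that every $J_C$-cell complex is a $C$-local equivalence. This is where both hypotheses on $M$ are indispensable. Left properness is used to control local equivalences under the pushouts that appear in a transfinite composition, so that $J_C$-cell complexes land in $W$. Cellularity --- compactness of the domains and codomains of $I$, smallness of the domains of $J$, and the fact that cofibrations are effective monomorphisms --- is what makes $J_C$ an honest set with small domains and, crucially, underlies the Bousfield--Smith cardinality argument: there is a cardinal $\kappa$ such that every $C$-local equivalence is a filtered colimit of $C$-local equivalences between subobjects of size at most $\kappa$, which lets one detect $C$-local triviality of a cofibration against a set of test maps rather than a proper class. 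Granting these two containments, the recognition criterion applies and produces the model structure $L_CM$, whose fibrations are $J_C\text{-inj}$.

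Finally, left properness of $L_CM$ follows because it shares its cofibrations with $M$ and its weak equivalences contain those of $M$, so a pushout of a $C$-local equivalence along a cofibration is again one; cellularity is inherited from $M$ since $I$ is unchanged and $J_C$ has small domains. For the simplicial statement, the underlying category, the cofibrations, and the simplicial tensoring are untouched, so the pushout-product axiom holds exactly as in $M$; the only new point is compatibility of the simplicial mapping space with the enlarged class of weak equivalences, and this is immediate from the definition of $C$-local equivalence in terms of mapping spaces. I expect the cardinality argument of the third paragraph to be by far the main obstacle, with everything else amounting to bookkeeping layered on top of the recognition criterion.
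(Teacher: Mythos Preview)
The paper does not prove this theorem at all: it is quoted verbatim from Hirschhorn \cite[Th.~4.1.1]{HLocal} as part of the background chapter, with no accompanying argument. So there is no ``paper's own proof'' to compare against.

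That said, your sketch is a faithful outline of Hirschhorn's actual proof. You have correctly identified the architecture: keep $I$, enlarge $J$ to $J_C$ by adjoining the horns $\Lambda(C)$ on a set of cofibrant representatives of $C$, and verify the recognition criterion. You are right that the substance lies in the Bousfield--Smith cardinality argument and that cellularity is what makes it go through (bounding subcomplexes, compactness of cell domains). One small caution: your last paragraph's argument for left properness of $L_CM$ is too quick --- the fact that the new weak equivalences contain the old ones is not by itself enough; one really does need to check that pushouts of $C$-local equivalences along cofibrations remain $C$-local equivalences, and this uses left properness of $M$ together with properties of homotopy mapping spaces. The paper in fact records this separately as Proposition~\ref{ch.1: proper Bousfield}. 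But as a roadmap your proposal is sound and matches Hirschhorn's strategy.
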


Now, there are terms in this definition that we haven't talked about. What is a cellular model category? It isn't important for us. We only need to know if the categories we work on are as such.

\begin{prop}\label{ch. 1: sSet good Bousfield}\cite[Prop. 4.1.4]{HLocal} The category of simplicial sets $\sS$ is a left proper cellular model category.
\end{prop}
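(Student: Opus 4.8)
The plan is to handle the two adjectives in the statement separately. Left properness is essentially already in hand, so the real content is cellularity, whose definition is not spelled out in the text above and must be imported from \cite{HLocal}.

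For left properness I would simply recall that every object of $\sS$ is cofibrant, which is part of the statement of the model structure on $\sS$ given above. By the Corollary that a model category all of whose objects are cofibrant is left proper, $\sS$ is left proper. (In fact the earlier Proposition already asserts that $\Top$, $\Ch$ and $\sS$ are all proper, so this half needs nothing new.)

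For cellularity, recall that a cofibrantly generated model category is cellular when it admits generating sets $I$ and $J$ for which: (a) the domains and codomains of the maps in $I$ are compact; (b) the domains of the maps in $J$ are small relative to $I$; and (c) every cofibration is an effective monomorphism, i.e. is the equalizer of its own cokernel pair. I would take $I=\{\partial\Delta_n\to\Delta_n\}_{n\in\N}$ and $J=\{\Lambda^r_n\to\Delta_n\}$, the generating cofibrations and generating trivial cofibrations of the model structure on $\sS$ defined above. Conditions (a) and (b) are the routine ones: each of the simplicial sets $\partial\Delta_n$, $\Delta_n$ and $\Lambda^r_n$ is \emph{finite}, having only finitely many nondegenerate simplices, and a map out of a finite simplicial set into a transfinite composite of monomorphisms must factor through a finite stage; this gives smallness in the sense defined earlier, and its bounded refinement gives compactness.

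The main obstacle is condition (c), which I would reduce to two separate facts. First, the cofibrations of $\sS$ are exactly the monomorphisms (the levelwise injections): this is the classical identification of the saturation of $\{\partial\Delta_n\to\Delta_n\}$ under pushout, transfinite composition and retract with the class of monomorphisms, proved by a skeletal induction that attaches nondegenerate simplices one dimension at a time. Second, in $\sS$ every monomorphism is an effective monomorphism. The cleanest route I know is to observe that colimits and finite limits in $\sS$ are computed levelwise in $\Set$, where an injection $A\to B$ is visibly the equalizer of the two structure maps $B\rightrightarrows B\coprod_A B$ into its cokernel pair; hence the same equalizer description holds levelwise for a monomorphism of simplicial sets. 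Combining these, every cofibration of $\sS$ is an effective monomorphism, establishing (c). With (a), (b) and (c) verified, $\sS$ is cellular, and together with left properness this gives the proposition.
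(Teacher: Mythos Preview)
The paper does not prove this proposition at all: it is stated with a bare citation to \cite[Prop.~4.1.4]{HLocal}, and the surrounding text explicitly says ``What is a cellular model category? It isn't important for us. We only need to know if the categories we work on are as such.'' There is no proof environment and no argument given. Your proposal therefore goes well beyond what the paper does, supplying an actual verification of the conditions in Hirschhorn's definition rather than treating the result as a black box.

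As to correctness, your sketch is sound. Left properness is indeed immediate from the fact that every simplicial set is cofibrant (and the paper already records that $\sS$ is proper). For cellularity you have identified the right three conditions from \cite{HLocal}, and your arguments are the standard ones: finiteness of $\partial\Delta_n$, $\Delta_n$, $\Lambda^r_n$ gives compactness and smallness; the identification of cofibrations with monomorphisms is classical; and effectiveness of monomorphisms reduces levelwise to $\Set$, where it is elementary. One small quibble: in Hirschhorn's precise formulation, condition (b) asks that the domains of $J$ be small relative to the cofibrations (equivalently, relative to $I$-cell), not merely relative to $I$ itself; your finiteness argument covers this stronger statement anyway, since finite simplicial sets are small relative to all monomorphisms, but the phrasing should match the reference.
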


\begin{prop}\label{ch. 1: fun good Bousfield}\cite[Prop. 4.1.5]{HLocal} Let $M$ be a left proper cellular model category and $T$ a small category. Then the diagram category $\Fun(T,M)$ is also a left proper cellular model category.
\end{prop}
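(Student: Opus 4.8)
The plan is to reduce both assertions to the projective model structure on $\Fun(T,M)$ furnished by Theorem \ref{ch. 1:projective}, and then to verify the extra axioms of a cellular model category one at a time, transferring each from $M$ through the free-diagram/evaluation adjunction. First I would fix the setup: since a cellular model category is in particular cofibrantly generated, $M$ comes with generating cofibrations $I$ and generating trivial cofibrations $J$, and Theorem \ref{ch. 1:projective} equips $\Fun(T,M)$ with the projective model structure whose generating sets are $F_I^T$ and $F_J^T$. Left properness is then immediate from the proposition stated earlier asserting that $\Fun(C,M)$ is left (resp. right) proper whenever $M$ is. The real content is cellularity: recall that a cellular model category is a cofibrantly generated one for which (i) the domains and codomains of the generating cofibrations are compact, (ii) the domains of the generating trivial cofibrations are small relative to the generating cofibrations, and (iii) every cofibration is an effective monomorphism. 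I would check these three conditions for $F_I^T$ and $F_J^T$.

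The key technical tool throughout is the evaluation adjunction. For each $\alpha\in T$ the generating map $F_A^\alpha\to F_B^\alpha$ is, by the definition of $F_I^T$, the image of a generating cofibration $A\to B$ of $M$ under the left adjoint $F^\alpha$ of evaluation $\mathrm{Ev}_\alpha:\Fun(T,M)\to M$, where $F^\alpha$ sends $A$ to $\coprod_{T(\alpha,\cdot)}A$. Since colimits in $\Fun(T,M)$ are computed objectwise, $\mathrm{Ev}_\beta$ preserves pushouts and transfinite compositions; hence evaluating a relative $F_I^T$-cell complex at any $\beta\in T$ yields, after rewriting each coproduct $\coprod_{T(\alpha,\beta)}(A\to B)$ as a transfinite composition of pushouts of maps in $I$, a relative $I$-cell complex in $M$. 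Combining this with the adjunction isomorphism $\Hom_{\Fun(T,M)}(F_A^\alpha,X)\cong \Hom_M(A,\mathrm{Ev}_\alpha X)$ lets me transfer compactness and smallness of $A$ and $B$ in $M$ to the corresponding statements for $F_A^\alpha$ and $F_B^\alpha$ in $\Fun(T,M)$, which gives conditions (i) and (ii).

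For condition (iii) I would argue objectwise. Effective monomorphisms are defined via equalizers and cokernel pairs, both of which are computed objectwise in $\Fun(T,M)$, so a map there is an effective monomorphism if and only if it is one at every $\alpha\in T$. By the proposition stated earlier, every projective cofibration is objectwise a cofibration in $M$; and since $M$ is cellular its cofibrations are effective monomorphisms. Hence a projective cofibration is objectwise an effective monomorphism, and therefore an effective monomorphism in $\Fun(T,M)$.

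I expect the main obstacle to be the compactness condition (i). Unlike smallness, Hirschhorn's notion of compactness is phrased in terms of presentations of relative cell complexes and the sizes of the subcomplexes through which a given map can factor, so transferring it requires carefully tracking how a presentation of an $F_I^T$-cell complex restricts, under evaluation, to a presentation of an $I$-cell complex, and then bounding the relevant cardinals uniformly over the objects of $T$ (here the smallness of $T$ is used). The smallness and effective-monomorphism conditions, by contrast, follow quite directly from the objectwise computation of colimits together with the adjunction.
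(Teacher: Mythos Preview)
The paper does not give its own proof of this proposition: it is stated as a citation of \cite[Prop.~4.1.5]{HLocal} and used as a black box. Your proposal therefore goes beyond what the paper does, and is in fact a reasonable sketch of Hirschhorn's own argument (his Proposition 12.1.5): one equips $\Fun(T,M)$ with the projective structure from Theorem~\ref{ch. 1:projective}, inherits left properness from the proposition already quoted in the paper, and then verifies the three cellularity axioms by pushing everything through the free-diagram/evaluation adjunction $F^\alpha \dashv \mathrm{Ev}_\alpha$ and the fact that colimits of diagrams are computed objectwise.

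Two small remarks on your write-up. First, Hirschhorn's condition (ii) asks that the domains of $J$ be small relative to the class of \emph{cofibrations}, not just relative to $I$; this does not change your argument, since smallness relative to $I$-cell (which is what the adjunction delivers) implies smallness relative to all cofibrations via the retract argument. Second, you are right to flag compactness as the delicate point: Hirschhorn's notion is phrased in terms of cell subcomplexes of a chosen presentation, so one must check that a presentation of an $F_I^T$-cell complex yields, at each object $\beta\in T$, a presentation of an $I$-cell complex with controlled cardinality of subcomplexes. This is exactly where the smallness of $T$ enters, and Hirschhorn handles it in his Proposition 12.1.5; your outline identifies the issue correctly.
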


Now that we have this localization and the existence of it in the cases we are interested in, let us state a few results that will be important later.

\begin{prop}Let $M$ be a model category and $C$ a class of morphisms on $M$. We suppose that the left Bousfield localization of $M$ with respect to $C$, $L_CM$, exists. Then we have the following properties.
\begin{enumerate}
	\item Every weak equivalence of $M$ is a weak equivalence of $L_CM$.
	\item Every fibration in $L_CM$ is a fibration in $M$. 
	\item Every trivial cofibration in $M$ is a trivial cofibration in $L_CM$.
\end{enumerate}
\end{prop}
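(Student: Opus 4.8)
The plan is to derive all three statements from a single observation — that the mapping space is homotopy invariant — together with the explicit description of $L_CM$ (it has the same cofibrations as $M$, and its weak equivalences are the $C$-local equivalences). The only statement carrying real content is the first; the other two then follow by purely formal manipulation of the lifting-property characterizations of fibrations and (trivial) cofibrations.

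First I would prove (1). Let $f\colon A\to B$ be a weak equivalence in $M$. The mapping space $\Map(-,-)$ occurring in the definition of a $C$-local equivalence is the derived one, built via the fibrant and cofibrant replacements as $\Map(C^*(A),RX)$ and $\Map(QA,C_*(X))$. By the theorem identifying it with the total right derived functor $\R\Map(-,-)$, it sends weak equivalences of $M$ to weak equivalences of simplicial sets in each variable. Hence for \emph{every} object $X$ of $M$ — a fortiori for every $C$-local object $X$ — the induced map $\Map(f,X)\colon\Map(A,X)\to\Map(B,X)$ is a weak equivalence. Thus $f$ satisfies the defining condition of a $C$-local equivalence, i.e. $f$ is a weak equivalence of $L_CM$. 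Note that this argument does not even use that $X$ is $C$-local.

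Next I would prove (3), which is now immediate. Let $i$ be a trivial cofibration in $M$. Then $i$ is a cofibration in $M$, hence a cofibration in $L_CM$ since those two classes coincide by the definition of the localization; and $i$ is a weak equivalence in $M$, hence a $C$-local equivalence by (1), i.e. a weak equivalence in $L_CM$. Therefore $i$ is a trivial cofibration in $L_CM$. Finally I would deduce (2) from (3) using the overdetermined-axioms characterization of fibrations (a map is a fibration if and only if it has the right lifting property with respect to all trivial cofibrations). Let $p$ be a fibration in $L_CM$; then $p$ has the right lifting property with respect to every trivial cofibration of $L_CM$. By (3), every trivial cofibration of $M$ is in particular a trivial cofibration of $L_CM$, so $p$ has the right lifting property with respect to every trivial cofibration of $M$; by the same characterization applied in $M$, $p$ is a fibration in $M$.

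The whole argument is formal once (1) is established, so I do not expect a serious obstacle; the only point requiring care is (1) itself. The subtlety is that the ``$\Map$'' in the definition of a $C$-local equivalence must be read as the derived mapping space, so that its homotopy invariance is available — with the naive (underived) mapping space the statement could fail. Since the paper has defined $\Map(X,Y)$ precisely through $\Map(C^*(X),RY)$ and $\Map(QX,C_*(Y))$, this homotopy invariance is exactly the content of the earlier theorem on $\R\Map$, and (1) goes through without difficulty.
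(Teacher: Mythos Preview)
Your argument is correct. Note that the paper does not actually supply a proof of this proposition: it is stated as a background fact, with the blanket attribution at the start of the section to \cite[Ch.~3]{HLocal}. So there is no ``paper's proof'' to compare against; your write-up is essentially the standard argument one finds in Hirschhorn, and it goes through without issue. The key step is exactly the one you flag: that the $\Map$ in the definition of $C$-local equivalence is the derived homotopy function complex, hence invariant under weak equivalences in the first variable; once (1) is in hand, (3) and (2) follow formally from the shared cofibrations and the lifting characterization of fibrations, just as you say.
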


\begin{prop}\label{ch.1: proper Bousfield}Let $M$ be a left proper model category and $C$ a class of morphisms on $M$. If it exists, then the left Bousfield localization $L_CM$ is also left proper.
\end{prop}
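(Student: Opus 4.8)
The plan is to exploit the two defining features of the left Bousfield localization recorded above: the cofibrations of $L_CM$ are exactly the cofibrations of $M$, and the weak equivalences of $L_CM$ are the $C$-local equivalences. Left properness of $L_CM$ therefore amounts to the following claim: given a pushout square
\begin{center}
\begin{tikzcd}
A\ar[r,"g"]\ar[d,"f"]&B\ar[d,"h"]\\
C\ar[r,"k"]&D
\end{tikzcd}
\end{center}
in which $g$ is a cofibration and $f$ is a $C$-local equivalence, the map $h$ is again a $C$-local equivalence. By the definition of a $C$-local equivalence, it suffices to fix an arbitrary (fibrant) $C$-local object $X$ and show that $\R\Map(h,X)\colon \R\Map(D,X)\to\R\Map(B,X)$ is a weak equivalence of simplicial sets.

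First I would observe that, because $M$ is left proper and $g$ is a cofibration, the strict pushout above computes the homotopy pushout of the span $C\xleftarrow{f}A\xrightarrow{g}B$; this is precisely the content of left properness that we need, and after replacing $A,B,C$ by cofibrant models (keeping $g$ a cofibration between cofibrant objects) the object $D$ represents the homotopy colimit of that span. I would then apply the derived mapping space functor $\R\Map(-,X)$. By Theorem \ref{ch. 1: Map commutes with hocolim}, $\R\Map(-,X)$ sends homotopy colimits to homotopy limits; a pushout being the homotopy colimit over a span, its image is a homotopy pullback square
\begin{center}
\begin{tikzcd}
\R\Map(D,X)\ar[r]\ar[d]&\R\Map(C,X)\ar[d,"\simeq"]\\
\R\Map(B,X)\ar[r]&\R\Map(A,X)
\end{tikzcd}
\end{center}
in $\sS$, in which $\R\Map(D,X)$ is the homotopy pullback of the displayed cospan.

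Now the key step: since $f$ is a $C$-local equivalence and $X$ is $C$-local, the right-hand vertical map $\R\Map(f,X)\colon \R\Map(C,X)\to\R\Map(A,X)$ is a weak equivalence, as marked. By the invariance of homotopy pullbacks under objectwise weak equivalences (a direct consequence of the homotopy-pullback invariance proposition above, comparing the cospan with the one in which the weak-equivalent leg is replaced by an identity), a weak equivalence along one leg of the cospan forces the parallel edge of the homotopy pullback to be a weak equivalence; hence $\R\Map(h,X)\colon \R\Map(D,X)\to\R\Map(B,X)$ is a weak equivalence. Since $X$ was an arbitrary $C$-local object, $h$ is a $C$-local equivalence, i.e. a weak equivalence of $L_CM$, which is what had to be shown.

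The part I expect to require the most care is not the final homotopy-pullback argument, which is formal, but the two compatibility issues around it: first, invoking left properness of $M$ to guarantee that the ordinary pushout really is a homotopy pushout (this is exactly where the hypothesis on $M$ enters, and one must be attentive to cofibrancy of the objects); and second, arranging the span to be objectwise cofibrant and $X$ fibrant so that Theorem \ref{ch. 1: Map commutes with hocolim} genuinely applies, which is why I would pass to cofibrant replacements before forming the mapping spaces. Once those bookkeeping points are settled, the conclusion drops out of the behaviour of $\R\Map$ on homotopy pushouts together with the very definition of a $C$-local equivalence.
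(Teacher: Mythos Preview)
The paper does not supply its own proof of this proposition: it is one of several background results quoted from \cite[Ch.~3]{HLocal} without argument. So there is no ``paper's proof'' to compare against; what matters is whether your argument stands on its own.

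Your outline is the standard one and is essentially correct. The key ingredients---that in a left proper model category a pushout along a cofibration models the homotopy pushout, that $\R\Map(-,X)$ turns homotopy pushouts into homotopy pullbacks, and that a homotopy pullback square with one edge a weak equivalence has the opposite edge a weak equivalence---are exactly the right ones, and you identify the bookkeeping correctly. One small remark: the step ``after replacing $A,B,C$ by cofibrant models'' deserves an extra sentence. You should note that the replacement maps $QA\to A$, $QB\to B$, $QC\to C$ are weak equivalences in $M$, hence $C$-local equivalences, so by two-out-of-three the map $Qf\colon QA\to QC$ remains a $C$-local equivalence; and that the homotopy pushout is invariant under these replacements, so the new pushout is still weakly equivalent to $D$. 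With that said explicitly, the argument is complete. This is precisely Hirschhorn's proof (Proposition~3.4.4 in \cite{HLocal}), phrased in terms of derived mapping spaces rather than his homotopy-function-complex language.
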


And lastly, let us give a result about the fibrant objects of such a localization.

\begin{prop}\label{ch. 1: fibrant and C-local}Let $M$ be a left proper model category and $C$ a class of morphisms on $M$. We suppose that the left Bousfield localization of $M$ with respect to $C$, $L_CM$, exists. Then an object $X$ is a fibrant object in $L_CM$ if and only if it is a fibrant $C$-local object in $M$.
\end{prop}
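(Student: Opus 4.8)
The plan is to prove the two implications separately, and the only real work lies in reconciling the various models of the mapping space. I first record the bridging observation I will lean on throughout. Since the cofibrations of $L_CM$ coincide with those of $M$ and, by the proposition asserting that $M$-weak equivalences are $L_CM$-weak equivalences while $L_CM$-fibrations are $M$-fibrations, an $M$-cosimplicial frame on an object $A$ is \emph{automatically} an $L_CM$-cosimplicial frame (the latching maps are the same cofibrations, and the required weak equivalences only get weaker), while dually an $L_CM$-simplicial frame on an object $X$ is automatically an $M$-simplicial frame (its matching maps are $L_CM$-fibrations, hence $M$-fibrations). Consequently, for $A$ cofibrant and $X$ fibrant \emph{in $L_CM$} (hence also in $M$), the simplicial set $\Map(A,X)$ can be computed with one and the same frame in both model structures, so $\Map_M(A,X)$ and $\Map_{L_CM}(A,X)$ agree. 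This is exactly what lets me move between the $C$-locality conditions (phrased via $\Map$ in $M$) and statements about trivial cofibrations of $L_CM$.

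For the forward implication, suppose $X$ is fibrant in $L_CM$. Since every fibration of $L_CM$ is a fibration of $M$, the map $X\to *$ is an $M$-fibration, so $X$ is fibrant in $M$. To see that $X$ is $C$-local, fix $f\colon A\to B$ in $C$. Note first that every map of $C$ is a $C$-local equivalence: by definition of a $C$-local object, $\Map(g,Y)$ is a weak equivalence for every $C$-local $Y$ and every $g\in C$, which is precisely the defining condition for $g$ to be a $C$-local equivalence. Because $\Map(f,X)$ is by convention computed on a cofibrant replacement of the arrow $f$, I may assume $A,B$ cofibrant and $f$ a cofibration; this replacement is still a $C$-local equivalence by two-out-of-three (using that $M$-weak equivalences are $C$-local equivalences), hence a trivial cofibration in $L_CM$. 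Since $X$ is $L_CM$-fibrant, Proposition \ref{ch. 1:cof-to-fib}, applied inside $L_CM$ and read through the common frame of the first paragraph, shows that $\Map(f,X)$ is a trivial fibration of simplicial sets, in particular a weak equivalence. Thus $X$ is $C$-local.

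For the converse, suppose $X$ is fibrant in $M$ and $C$-local; I must show $X\to *$ is a fibration in $L_CM$. Since $L_CM$ is cofibrantly generated (Theorem \ref{ch. 1: existence Bousfield}), it suffices to check the right lifting property against a set of generating trivial cofibrations, which one arranges to have cofibrant domains; a generic such map is a cofibration $j\colon A\to B$ of $M$ that is also a $C$-local equivalence. Because $X$ is $C$-local and $j$ is a $C$-local equivalence, $\Map(j,X)$ is a weak equivalence of simplicial sets, directly from the definition of $C$-local equivalence; because $j$ is a cofibration and $X$ is $M$-fibrant, Proposition \ref{ch. 1:cof-to-fib} makes $\Map(j,X)$ a fibration of simplicial sets. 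Hence $\Map(j,X)\colon \Map(B,X)\to \Map(A,X)$ is a trivial fibration, so it is surjective on $0$-simplices. Reading off $0$-simplices through the simplicial frame on $X$ identifies this map with precomposition by $j$, namely $\Hom(B,X)\to \Hom(A,X)$; surjectivity therefore produces, for any given $A\to X$, a map $B\to X$ restricting to it along $j$, which is exactly the lift required to solve the lifting problem for $X\to *$ against $j$. Thus $X$ is fibrant in $L_CM$.

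The main obstacle is the bookkeeping of competing models of the mapping space. The definitions of $C$-local object and $C$-local equivalence use the $\Map$ built from (co)simplicial frames together with the replacements $R$ and $Q$, whereas Proposition \ref{ch. 1:cof-to-fib} and the vertex-surjectivity argument in the converse want the \emph{unreplaced} simplicial set $\Map(-,C_*(X))$, whose $0$-simplices are literally $\Hom(-,X)$ and whose transition map is literally precomposition by $j$. Reconciling these is precisely why the common-frame observation of the first paragraph and the reduction to generating trivial cofibrations with cofibrant domains are needed: the surjectivity of a trivial fibration of simplicial sets yields an \emph{honest} lift only once the source of $j$ is cofibrant, and the identification $\Map_M=\Map_{L_CM}$ only once the frames are chosen compatibly and the objects are fibrant in $L_CM$. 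Once these matchings are in place, both implications reduce to a direct application of the cited propositions.
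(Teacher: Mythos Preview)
The paper does not supply its own proof of this proposition: it is listed among the background facts imported from \cite[Ch.~3]{HLocal}. Your argument is essentially the standard one found there (Hirschhorn, Prop.~3.4.1 together with Th.~4.1.1), so there is nothing to compare in terms of approach.

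Your proof is correct in substance, with one caveat worth flagging. In the converse direction you invoke Theorem~\ref{ch. 1: existence Bousfield} to conclude that $L_CM$ is cofibrantly generated and that the generating trivial cofibrations can be taken with cofibrant domains. Strictly speaking, the proposition as stated only assumes that the localization \emph{exists}, not that $M$ is cellular; so you are importing an extra hypothesis. In the paper's applications this is harmless, since every Bousfield localization used there is produced via Theorem~\ref{ch. 1: existence Bousfield} and hence is cofibrantly generated. If you want to match the stated generality, you can instead argue directly: given any trivial cofibration $j\colon A\to B$ in $L_CM$, factor $\emptyset\to A$ as a cofibration followed by a trivial fibration in $M$, push $j$ out along the cofibrant replacement, and use left properness to reduce to the case where the domain is cofibrant. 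The rest of your surjectivity-on-$0$-simplices argument then goes through unchanged.
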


\section{Simplicial machinery}

In this section we will introduce some machinery concerning simplicial objects and simplicial categories, and the construction of a simplicial localization. 

\subsection{Simplicial objects}

Unless stated otherwise, all definitions and results in this section come from \cite{DHI_hypercoverings} and \cite{HAG1}.

\begin{defin}Let $C$ be a category and $n$ an integer. We call an \textbf{$n$-truncated simplicial object in $C$} a functor $F:\Delta_{\leq n}\to C$. We denote by $sC_{\leq n}$ the category of $n$-truncated simplicial objects in $C$.
\end{defin}

\begin{defin}Let $C$ be a category. We call an \textbf{augmented simplicial object in $C$} a functor $F:\Delta_+\op\to C$, where $\Delta_+$ is the category of possibly empty finite totally ordered sets. Equivalently, it is a simplicial object $U_*$ of $C$ coupled with a morphism $U_*\to cX$ where $cX$ is the constant simplicial object that is $X$ in every degree. We denote it by $U_*\to X$. 
\end{defin}

\begin{nota}We denote the category of augmented simplicial objects in $C$ by $s_+C$, and the category of $n$-truncated augmented objects in $C$ by $s_+C_{\leq n}$.
\end{nota}

We have already discussed a model structure that would be available for $sC$, $s_+C$ and $sC_{\leq n}$ if $C$ is a cofibrantly generated model category. Indeed, if $C$ is a cofibrantly generated model category we have the projective model structure which would be available. But in the case of simplicial objects we have another structure: $\Delta$ (and $\Delta_+$ and $\Delta_{\leq n}$ as well) is a Reedy category. We will define, then, the Reedy model structure for these categories. For that, we will be using the latching and mapping objects we defined in Definition \ref{ch.1: latching and matching}. We will define this for simplicial objects, but the definitions can be easily adapted to all the other structures we have mentioned earlier. Results from this section come from \cite[Section 15.3]{HLocal}.

\begin{defin}Let $M$ be a model category and $X,Y:\Delta\op\to M$ two simplicial objects in $M$. We say that a map $f:X\to Y$ is a \textbf{Reedy weak equivalence} if it is an objectwise weak equivalence, i.e. if for every object $\Delta^n$ the induced map $X_n\to Y_n$ is a weak equivalence in $M$.
\end{defin}

\begin{defin}Let $M$ be a model category and $X,Y:\Delta\op\to M$ two simplicial objects in $M$. We say that a map $f:X\to Y$ is a \textbf{Reedy cofibration} if for every object $\Delta^n$ the induced map 
$$X_n\coprod_{L_nX}L_nY\to Y_n $$
is a cofibration in $M$.
\end{defin}

\begin{defin}Let $M$ be a model category and $X,Y:\Delta\op\to M$ two simplicial objects in $M$. We say that a map $f:X\to Y$ is a \textbf{Reedy fibration} if for every object $\Delta^n$ the induced map 
$$X_n\to Y_n\coprod_{M_nY}M_nX $$
is a fibration in $M$.
\end{defin}

\begin{teo}Let $M$ be a model category. The category $sM$ admits a model structure, which is given by the weak equivalences, fibrations and cofibrations given above. We call this model structure the \textbf{Reedy model structure on $sM$}. In particular, if the category $M$ is left (right) proper, the category $sM$ with the Reedy model structure is also left (right) proper.
\end{teo}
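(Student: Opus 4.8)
The plan is to recognize this as a special case of the general Reedy model structure, with $\Delta\op$ as the Reedy category, and to verify the model axioms by induction on the simplicial degree, following the template of \cite[Chapter 15]{HLocal}. First I would observe that $sM=\Fun(\Delta\op,M)$ is complete and cocomplete, since limits and colimits in a diagram category are computed objectwise and $M$ is bicomplete. Since the Reedy weak equivalences are by definition the objectwise weak equivalences, the two-out-of-three property and closure under retracts are inherited directly from the corresponding axioms in $M$, applied in each degree. Hence all the real work lies in the factorization and lifting axioms.

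The technical core is the following pair of characterizations, which I would establish first by induction on $n$: a map $f\colon X\to Y$ is a Reedy (trivial) cofibration if and only if each relative latching map $X_n\coprod_{L_nX}L_nY\to Y_n$ is a (trivial) cofibration in $M$, and dually $f$ is a Reedy (trivial) fibration if and only if each relative matching map $X_n\to Y_n\times_{M_nY}M_nX$ is a (trivial) fibration in $M$. From these, the central lifting lemma asserts that Reedy cofibrations have the left lifting property against Reedy trivial fibrations, and Reedy trivial cofibrations against Reedy fibrations. I would prove this by constructing a lift in a given square degree by degree: assuming the lift has been defined compatibly on all degrees $<n$, the obstruction to extending it to degree $n$ is precisely a single lifting problem in $M$ between the relative latching map of the left-hand map and the relative matching map of the right-hand map; the latching/matching formalism of Definition \ref{ch.1: latching and matching} guarantees the extension is automatically compatible with all face and degeneracy operators. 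Since one of these two maps is a (trivial) cofibration and the other a (trivial) fibration, the lift exists by the model axioms of $M$.

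The functorial factorizations are built by the same inductive scheme. Given $f\colon X\to Y$ I would construct the intermediate simplicial object $Z$ one degree at a time: having factored $f$ on degrees $<n$, I apply one of the functorial factorizations of $M$ to the map $X_n\coprod_{L_nX}L_nZ\to Y_n\times_{M_nY}M_nZ$ induced at stage $n$, and define $Z_n$ to be the intermediate object. Functoriality of the factorizations in $M$ propagates to functoriality in $sM$, and choosing the cofibration/trivial-fibration factorization in $M$ yields $(\alpha,\beta)$, while the trivial-cofibration/fibration factorization yields $(\gamma,\delta)$; the characterizations above guarantee these have the required Reedy (co)fibration and acyclicity properties. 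Together with the lifting lemma, this establishes the model structure.

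Finally, for properness, I would first record the standard consequence of the latching characterization that every Reedy cofibration is in particular an objectwise cofibration (one factors $X_n\to Y_n$ through $X_n\coprod_{L_nX}L_nY$ and uses that $L_nX\to L_nY$ is itself a cofibration, by induction), and dually that every Reedy fibration is objectwise a fibration. Since pushouts, pullbacks and weak equivalences in $sM$ are all computed objectwise, left (respectively right) properness of $sM$ reduces immediately, in each simplicial degree, to left (respectively right) properness of $M$. The main obstacle throughout is the inductive bookkeeping with latching and matching objects: the crux is verifying that a global lifting or factorization problem in $sM$ genuinely decomposes, degree by degree, into a single problem in $M$ for the relative latching and matching maps, with automatic compatibility across the simplicial operators. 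Everything else is formal.
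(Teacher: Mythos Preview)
Your proposal is correct and follows the standard approach; note, however, that the paper does not actually supply a proof of this theorem but simply records it as background, citing \cite[Section 15.3]{HLocal}. Your sketch is precisely the inductive latching/matching argument from that reference, so there is nothing to compare beyond observing that you have filled in what the paper leaves to Hirschhorn.
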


But it's not just the fibrations and the cofibrations that can be defined using matching and latching objects. In fact, trivial fibrations and cofibrations can be defined in the exact same way.

\begin{teo}Let $M$ be a model category, and $f:X\to Y$ be a morphism in $sM$. Then $f$ is a Reedy trivial cofibration if 
$$X_n\coprod_{L_nX}L_nY\to Y_n $$
is a trivial cofibration in $M$. Equivalently, $f$ is a Reedy trivial fibration if 
$$X_n\to Y_n\coprod_{M_nY}M_nX $$
is a trivial fibration in $M$.
\end{teo}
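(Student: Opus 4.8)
The plan is to bootstrap this characterization from the previous theorem, which already records that $f$ is a Reedy cofibration if and only if every relative latching map
$$\ell_n(f)\colon X_n\coprod_{L_nX}L_nY\to Y_n$$
is a cofibration in $M$ (and dually for fibrations). Since that theorem also establishes that the Reedy model structure exists, a Reedy trivial cofibration is nothing but a Reedy cofibration which is an objectwise weak equivalence. Writing $L_nf\colon L_nX\to L_nY$ for the induced map of latching objects, the whole statement therefore reduces to the claim that, for a Reedy cofibration $f$, the map $X_n\to Y_n$ is a weak equivalence for every $n$ if and only if every $\ell_n(f)$ is a weak equivalence in $M$. It then suffices to run an induction on the simplicial degree $n$, the base case $n=0$ being immediate because the latching category at $[0]$ is empty, so $\ell_0(f)$ is just $X_0\to Y_0$.

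The first thing I would isolate is the key sub-lemma: if $f$ is a Reedy cofibration whose relative latching maps $\ell_m(f)$ are trivial cofibrations for all $m<n$, then $L_nf$ is a trivial cofibration in $M$. This is proved by exploiting the fact that the latching object $L_nX=\colim_{\partial(\Delta^+\downarrow n)}X$ is assembled from the data in degrees strictly below $n$ by an iterated pushout filtration indexed by the degrees of $\Delta^+$, in which the successive inclusions are pushouts along the lower relative latching maps. Because pushouts of trivial cofibrations are trivial cofibrations in $M$ and these colimits over $\Delta^+$ are suitably well behaved, the map $L_nf$ inherits the property of being a trivial cofibration. The analogue without the word ``trivial'' is exactly the ingredient already used to prove the cofibration half of the previous theorem, so only this refinement to trivial cofibrations is genuinely new.

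Granting the sub-lemma, both directions follow formally. Assuming every $\ell_n(f)$ is a trivial cofibration, the map $X_n\to X_n\coprod_{L_nX}L_nY$ is the pushout of the trivial cofibration $L_nf$ (trivial by the sub-lemma applied to the inductive hypothesis) along $L_nX\to X_n$, hence a trivial cofibration; composing with $\ell_n(f)$ exhibits $X_n\to Y_n$ as a composite of trivial cofibrations, so it is a weak equivalence, and $f$ is a Reedy cofibration since each $\ell_n(f)$ is a cofibration. Conversely, assuming $f$ is a Reedy cofibration and an objectwise weak equivalence, the same factorization $X_n\to X_n\coprod_{L_nX}L_nY\xrightarrow{\ell_n(f)}Y_n$ has first arrow a trivial cofibration (sub-lemma), and its composite $X_n\to Y_n$ is a weak equivalence by hypothesis; the two-out-of-three property then forces $\ell_n(f)$ to be a weak equivalence, and as it is already a cofibration it is a trivial cofibration. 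The statement for Reedy trivial fibrations requires no separate argument: passing to opposite categories, which simultaneously reverses $M$ and the Reedy category $\Delta$, interchanges latching objects with matching objects and (trivial) cofibrations with (trivial) fibrations, and carries the Reedy structure on $sM$ to the corresponding Reedy structure on $s(M\op)$, so the matching-map characterization is the trivial-cofibration statement read in $M\op$.

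I expect the main obstacle to be precisely the sub-lemma on $L_nf$: controlling the latching objects and verifying that the iterated pushout construction of $L_nX$ preserves trivial cofibrations requires the Reedy-category bookkeeping, namely the filtration of the latching object by the degrees of $\Delta^+$ and the identification of its successive layers as pushouts of lower relative latching maps. Once that technical point is in place, everything else is formal two-out-of-three and pushout manipulation, and the dual statement is pure duality.
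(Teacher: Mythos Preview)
Your proposal is correct and is essentially the standard argument one finds in the cited source (Hirschhorn, \cite[Theorem~15.3.15]{HLocal}); the paper itself does not supply a proof of this theorem, as it is a background result quoted from \cite[Section~15.3]{HLocal}. The inductive scheme you describe---reducing to the sub-lemma that $L_nf$ is a trivial cofibration once the lower relative latching maps are, then using the factorization $X_n\to X_n\coprod_{L_nX}L_nY\to Y_n$ together with two-out-of-three---is exactly the approach taken there, and your duality remark for the fibration half is likewise standard.
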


So, in the case when $M$ is a cofibrantly generated model category, we have defined two different model structures for $sM$. We see that the weak equivalences are the same, which means that the homotopy categories will be identical: but the model structures are not the same. Indeed, the Reedy model structure has less cofibrations and more fibrations than the projective model structure (every Reedy cofibration is a projective cofibration, but the reverse isn't true). We still have a Quillen equivalence, though.

\begin{teo}Let $M$ be a cofibrantly generated model category. Then the identity functor in $sM$ is a left Quillen equivalence from the projective model structure to the Reedy model structure, and a right Quillen equivalence in the other direction.
\end{teo}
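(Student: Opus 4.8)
The plan is to exploit the fact that the two model structures on $sM$ live on the \emph{same} underlying category and, crucially, share the same class of weak equivalences: by definition a Reedy weak equivalence is an objectwise weak equivalence, which is exactly a weak equivalence in the projective structure of Theorem \ref{ch. 1:projective}. Since the identity functor is its own adjoint, proving the statement splits into two tasks. First, I would show that $(\Id,\Id)\colon sM_{proj}\rightleftharpoons sM_{Reedy}$ is a Quillen adjunction with the displayed left adjoint on top. Second, I would upgrade this to a Quillen equivalence; this turns out to be almost formal once the first task is done, precisely because the weak equivalences coincide.

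For the Quillen adjunction, I would check the condition on the right adjoint $\Id\colon sM_{Reedy}\to sM_{proj}$, namely that it preserves fibrations and trivial fibrations; equivalently, that every Reedy (trivial) fibration is objectwise a (trivial) fibration in $M$. I would prove this by induction on $n$, establishing simultaneously that if $f\colon X\to Y$ is a Reedy fibration then the induced map on matching objects $M_nX\to M_nY$ is a fibration in $M$. Granting this, for each $n$ the objectwise map $X_n\to Y_n$ factors as $X_n\to Y_n\times_{M_nY}M_nX\to Y_n$, where the first map is a fibration by the very definition of a Reedy fibration, and the second is the base change of the fibration $M_nX\to M_nY$ along $Y_n\to M_nY$, hence a fibration; so $X_n\to Y_n$ is a fibration. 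Replacing "fibration" by "trivial fibration" throughout settles the trivial case in the same way. This inductive bookkeeping with matching objects, together with the pullback-stability of fibrations, is the technical heart of the argument and the step I expect to be the main obstacle. Once it is in place, $\Id\colon sM_{Reedy}\to sM_{proj}$ preserves fibrations and trivial fibrations; using the characterization of (trivial) cofibrations by the left lifting property against (trivial) fibrations recalled earlier, this is equivalent to the assertion that the top identity $sM_{proj}\to sM_{Reedy}$ preserves cofibrations and trivial cofibrations, i.e. that every projective cofibration is a Reedy cofibration. (Alternatively, one may invoke the criterion of Proposition \ref{ch. 1: criteron Quillen adjunction}.) Hence $(\Id,\Id)$ is a Quillen adjunction.

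It remains to see this is a Quillen equivalence. Let $X$ be cofibrant in $sM_{proj}$ and $Y$ fibrant in $sM_{Reedy}$, and let $f\colon X\to Y$ be a map. Since the left adjoint, the right adjoint and the adjunction isomorphism are all the identity, the map $\Id X\to Y$ coincides with its transpose $X\to\Id Y$, and being a weak equivalence in the Reedy structure is literally the same condition as being a weak equivalence in the projective structure. Thus the defining condition of a Quillen equivalence holds trivially. Equivalently, $\Ho(sM_{proj})$ and $\Ho(sM_{Reedy})$ are both the localization of $sM$ at the one class of objectwise weak equivalences, and the derived functors $\L\Id$ and $\R\Id$ are each naturally isomorphic to the identity of this common homotopy category, so together they form an adjoint equivalence. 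Finally, the symmetric assertion that the identity $sM_{Reedy}\to sM_{proj}$ is a right Quillen equivalence is simply the reading of this same Quillen equivalence from the side of its right adjoint, and requires no further work.
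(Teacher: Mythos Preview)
The paper does not supply a proof of this theorem: it is stated in the background chapter among results quoted from \cite[Section 15.3]{HLocal}, with no accompanying \texttt{proof} environment. So there is no ``paper's own proof'' to compare against.

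Your proposal is correct and is essentially the standard argument one finds in Hirschhorn. The two key observations are exactly the ones you isolate: first, that the projective and Reedy weak equivalences on $sM$ are literally the same class (objectwise weak equivalences), so the Quillen equivalence part is automatic once the adjunction is Quillen; second, that every Reedy (trivial) fibration is objectwise a (trivial) fibration. Your inductive scheme for the second point---show $M_nX\to M_nY$ is a fibration, then factor $X_n\to Y_n$ through the pullback $Y_n\times_{M_nY}M_nX$---is the right one and matches Hirschhorn's Proposition 15.3.11. The only place to be careful is the inductive step ``$M_nX\to M_nY$ is a fibration'': this is not just a limit of fibrations but requires the Reedy machinery (the matching category is an inverse category, and one uses that the relative matching maps at lower degrees are fibrations). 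You flag this as the main obstacle, which is accurate; once that bookkeeping is done the rest is formal.
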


Now we can go back to discussing other functors that will be important when talking about simplicial objects. As expected, there is a close link between simplicial objects and truncated simplicial objects. Although the definition of these functors might be easy, they are very useful and as such they deserve to be discussed explicitly.

\begin{defin}Let $C$ be a category and $n$ an integer. We call the \textbf{$n$-skeleton functor} the forgetful functor $\sk_n:s_+C\to s_+C_{\leq n}$. We call the \textbf{$n$-skeleton of $X$} the image of an augmented simplicial object $X$ by said functor, and we denote it by $\sk_nX$.
\end{defin}

\begin{prop}Let $C$ be a category. There exists a right adjoint to the $n$-skeleton functor, $\cosk_n:s_+C_{\leq n}\to s_+C$, which is a right Quillen adjunction for the Reedy model structure. We call it the \textbf{$n$-coskeleton functor} and for an $n$-truncated simplicial object $X$, we call the image by $\cosk_n$ of $X$ the \textbf{$n$-coskeleton of $X$}.
\end{prop}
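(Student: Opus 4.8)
The plan is to obtain $\cosk_n$ as a right Kan extension and then check the Quillen condition on the \emph{left} adjoint $\sk_n$, which is the more transparent half. First I would observe that the forgetful functor $\sk_n\colon s_+C\to s_+C_{\leq n}$ is nothing but restriction (precomposition) along the inclusion of categories $\iota\colon \Delta_{+,\leq n}\op\hookrightarrow \Delta_+\op$. Restriction along any functor admits a right adjoint given by the pointwise right Kan extension $\mathrm{Ran}_\iota$, as soon as the target category has the limits indexing that extension; for each object $[m]\in\Delta_+$ the relevant comma category is finite (there are only finitely many arrows between finite ordinals), so the required limits are finite, and these exist in any model category. Set $\cosk_n:=\mathrm{Ran}_\iota$. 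I would stress that this is genuinely the right adjoint to restriction, so Proposition \ref{ch.1: pull adjunction}, which produces the \emph{left} adjoint $f_!$, does not apply directly; one instead invokes the dual Kan extension construction. Concretely $(\cosk_n X)_m=X_m$ for $m\leq n$, while for $m>n$ it is a matching-type finite limit assembled only from the levels $\leq n$.

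Second, to upgrade this to a Quillen adjunction I would show that the left adjoint $\sk_n$ is a left Quillen functor; since the right adjoint of a left Quillen functor is automatically right Quillen, as noted in an earlier remark, this suffices. The crucial point is that the latching object $L_mX$ depends only on the levels $X_k$ with $k<m$. Hence for every $m\leq n$ the latching object computed in $s_+C$ and the one computed in $s_+C_{\leq n}$ coincide, so that for a Reedy cofibration $f\colon X\to Y$ the relative latching map $X_m\coprod_{L_mX}L_mY\to Y_m$ at a level $m\leq n$ is literally unchanged by $\sk_n$. Since a Reedy (trivial) cofibration is tested by exactly these maps, $\sk_n f$ is again a Reedy (trivial) cofibration in $s_+C_{\leq n}$. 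Thus $\sk_n$ preserves cofibrations and trivial cofibrations, and $(\sk_n,\cosk_n)$ is a Quillen adjunction by the definition of a left Quillen functor (equivalently, by Proposition \ref{ch. 1: criteron Quillen adjunction}).

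The main obstacle is bookkeeping rather than deep content: one must verify that the Reedy structures on $s_+C$ and on the truncation $s_+C_{\leq n}$ are compatible, i.e. that the latching category of $[m]$ for $m\leq n$ really sits inside the truncated index category — including its behaviour at the augmentation level of $\Delta_+$ — so that truncation leaves the relevant latching maps untouched. Once that compatibility is pinned down, the cofibration half is immediate. I deliberately avoid proving that $\cosk_n$ preserves fibrations directly, since analysing how the coskeleton reshapes the matching objects above level $n$ is considerably more painful than this left-adjoint argument.
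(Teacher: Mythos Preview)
The paper states this proposition as a background result without proof (it is attributed to \cite{DHI_hypercoverings} and \cite{HAG1}), so there is no argument in the paper to compare against. Your approach is the standard one and is correct: constructing $\cosk_n$ as the pointwise right Kan extension along the inclusion $\Delta_{+,\leq n}\op\hookrightarrow\Delta_+\op$ (finite comma categories, hence finite limits suffice), and then verifying the Quillen condition on the left adjoint $\sk_n$ by observing that latching objects at level $m\leq n$ only involve levels strictly below $m$ and are therefore unchanged by truncation. Your caution about the augmentation and about not invoking Proposition~\ref{ch.1: pull adjunction} (which produces a \emph{left} Kan extension) is well placed.
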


\begin{rem}\label{ch.1: cosk igualdad} By construction, it is easy to see that, if we fix an integer $n\in\N$ and a simplicial object $X$, we have $X_i=(\cosk_n\sk_n X)_i$ for all $i\leq n$. 
\end{rem}

On top of that adjunction there is another one we will be using in our definitions. 

\begin{prop}Let $M$ be a model category. Then there exists a structure of tensored category over $\sS$ on $sM$ and $s_+M$, defined as follows: let $X_*\in sM$ (or $s_+M$) be a simplicial object, and $\underline{A}\in \sS$, then the external product is given by 
$$(\underline{A}\otimes X_*)_n=\coprod_{A_n} X_n. $$
There exists a right adjoint to $-\otimes-$, that we will call \textbf{the exponential of $X$ by $\underline{A}$} and we denote by $X_*^{\underline{A}}$.
\end{prop}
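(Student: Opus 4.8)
The plan is to construct $-\otimes-$ explicitly, check it makes $sM$ (and $s_+M$) tensored over $(\sS,\times)$, and then produce its right adjoint by an end formula, using only the fact that a model category is by definition complete and cocomplete. First I would turn the formula into an honest bifunctor $\otimes\colon\sS\times sM\to sM$. Given $\underline A\in\sS$ and $X_*\in sM$, set $(\underline A\otimes X_*)_n=\coprod_{A_n}X_n$; this copower exists since $M$ is cocomplete. For a map $\theta\colon[m]\to[n]$ in $\Delta$ the structure map is the coproduct of the maps $X(\theta)\colon X_n\to X_m$ reindexed along $A(\theta)\colon A_n\to A_m$, i.e. the summand indexed by $a\in A_n$ is carried by $X(\theta)$ into the summand indexed by $A(\theta)(a)$. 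Functoriality in $\theta$, and in both $\underline A$ and $X_*$, is routine. One then checks the two tensoring axioms: $\Delta^0\otimes X_*\cong X_*$ (as $\Delta^0_n$ is a point) and $(\underline A\times\underline B)\otimes X_*\cong\underline A\otimes(\underline B\otimes X_*)$ (both levelwise $(A_n\times B_n)\cdot X_n$), immediate from the formula. The same construction with $\Delta$ replaced by $\Delta_+$ handles $s_+M$.

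The real content is the right adjoint, which cannot be defined levelwise. If one tries $(X^{\underline A})_n=(X_n)^{A_n}$ (the $A_n$-fold power in $M$), the candidate structure maps coming from $X(\theta)$ and from $A(\theta)$ have incompatible variance — the power is contravariant in its exponent — so no simplicial object results. Instead I would define, for fixed $\underline A$,
$$(X_*^{\underline A})_n=\int_{[m]\in\Delta}(X_m)^{A_m\times\Hom_\Delta([m],[n])},$$
where $(X_m)^{S}$ denotes the $S$-indexed power in $M$ and the integral is the end over $\Delta$. This limit exists because $M$ is complete, and it is functorial in $[n]$ through the variable $\Hom_\Delta([m],[n])$, so $X_*^{\underline A}\in sM$.

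To verify the adjunction $\Hom_{sM}(\underline A\otimes Y_*,X_*)\cong\Hom_{sM}(Y_*,X_*^{\underline A})$ I would probe one level at a time. Writing $F_n\colon M\to sM$ for the left adjoint of evaluation $\mathrm{ev}_n$, so that $(F_nW)_m=\coprod_{\Hom_\Delta([m],[n])}W$ and $\Hom_{sM}(F_nW,Z)\cong\Hom_M(W,Z_n)$, a direct computation gives $\Hom_M(W,(X_*^{\underline A})_n)\cong\Hom_{sM}(F_nW,X_*^{\underline A})\cong\Hom_{sM}(\underline A\otimes F_nW,X_*)$; expanding $(\underline A\otimes F_nW)_m=(A_m\times\Hom_\Delta([m],[n]))\cdot W$ and pulling the representable $\Hom_M(W,-)$ through the end recovers exactly the end defining $(X_*^{\underline A})_n$. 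Naturality in $Y_*$ then promotes these levelwise bijections to the adjunction. I expect this end manipulation — keeping track of the dinaturality so that $\Hom_M(W,-)$ genuinely commutes with the limit — to be the only delicate point; the augmented case is identical with $\Delta_+$ in place of $\Delta$. One could instead observe that $\underline A\otimes-$ preserves all colimits (computed levelwise) and invoke an adjoint functor theorem, but since a general model category need not be locally presentable, the explicit end is the safer route.
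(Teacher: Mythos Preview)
Your argument is correct. The paper does not actually supply a proof of this proposition: it is stated as a background result drawn from \cite{DHI_hypercoverings} and \cite{HAG1}, so there is nothing to compare against directly. Your explicit end formula for the exponential is sound, and the verification via the free functors $F_n$ left adjoint to evaluation is a clean way to check the adjunction; the Yoneda-type identification $\int_{[m]}\Hom_{\Set}(\Hom_\Delta([m],[n]),\Hom_M(W,X_m))\cong\Hom_M(W,X_n)$ is exactly what makes $\Hom_{sM}(F_nW,Z)\cong\Hom_M(W,Z_n)$ go through. Your remark that the naive levelwise power fails for variance reasons, and that the adjoint functor theorem route is unsafe without local presentability, are both to the point. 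It is worth noting that the paper, in the definition immediately following this proposition, does write the $0$-th level $X_*^A$ (after Reedy fibrant replacement) as an end $\operatorname{End}(F)$ of a bifunctor on $\Delta\op\times\Delta$, which is the $n=0$ specialization of your formula; so your construction is fully in line with how the paper treats the exponential, just carried out in full rather than quoted.
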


\begin{defin}We denote by $X_*^A$ the 0-th level of the simplicial object $(R X)_*^{\underline{A}}$, where $R X$ is the fibrant replacement of $X$ for the Reedy model structure. This is explicitly given by the following formula: $ X_*^A=\operatorname{End}(F) $ where $F$ is given by
\begin{center}
	\begin{tikzcd}
		F:\Delta\op\times\Delta\ar[r] &sM\\
		(\l n\r,\l m\r)\ar[r,mapsto]& \coprod_{A_m} (R X)_n
	\end{tikzcd}
\end{center}
\end{defin}

\begin{rem}In principle we could have given this definition to be just the 0-th level of the simplicial set $X_*^{\underline{A}}$. If $X$ is an object in $M$, we do have an isomorphism on the homotopy categories between $X_*^A$ and $(c(X)^{\R A})_0$, the 0-th level of the derived exponential over $c(X)$ the constant simplicial object. However, we have to remember that these two constructions are not isomorphic directly, only on the homotopy categories.
\end{rem}

We can now give a few computations using this construction.

\begin{prop}\label{ch.1: computations_exp} Let $U_*\to X$ be an augmented simplicial object of $M$ a model category. We have then the following isomorphisms:
\begin{enumerate}
	\item Let $\emptyset$ be the initial object in $\sS$. Then we have the isomorphism $U_*^\emptyset\simeq X$.
	\item For all $n\in\N$ we have the isomorphism $U_*^{\Delta^n}\simeq U_n$.
	\item Let $\underline{A}$ be a simplicial set. We have an isomorphism $U_*^{\sk_n \underline{A}}\simeq (\R\cosk_n U_*)^A$. In particular, $U_*^{\partial \Delta^n}\simeq (\R\cosk_{n-1}(\sk_{n-1} U_*))_n$.
\end{enumerate}
\end{prop}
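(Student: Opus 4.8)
The plan is to reduce all three isomorphisms to the defining adjunction between the external product $\underline{A}\otimes-$ and the exponential $(-)^{\underline{A}}$, supplemented by the Yoneda/density presentation of simplicial sets and by the skeleton–coskeleton adjunction $\sk_n\dashv\cosk_n$. Throughout, the Reedy fibrant replacement $RU$ built into the definition of $U_*^A$ lets me read the literal (co)limits below as homotopy (co)limits, and the fact that $\cosk_n$ is right Quillen will let me commute fibrant replacement past the coskeleton when passing to the derived functor $\R\cosk_n$; the homotopical good behaviour of the exponential on fibrant objects is governed by Proposition \ref{ch.1:Map Quillen}.

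For the first isomorphism I would compute $\underline{\emptyset}\otimes Y$ for an arbitrary test object $Y\in s_+M$ directly from the levelwise formula $(\underline{A}\otimes Y)_n=\coprod_{A_n}Y_n$. Since $\emptyset_n=\emptyset$ for $n\geq 0$ while the augmentation contributes a single copy of $Y_{-1}$ in the augmentation degree, the object $\underline{\emptyset}\otimes Y$ is initial in every nonnegative degree and equals $Y_{-1}$ in the augmentation degree. Hence $\Hom_{s_+M}(\underline{\emptyset}\otimes Y,U_*)\cong\Hom_M(Y_{-1},X)$, and by adjunction $U_*^{\emptyset}$ is the augmented object carrying $X$ in its augmentation degree and the terminal object above it; up to weak equivalence this is $X$, so $U_*^{\emptyset}\simeq X$. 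For the second isomorphism I would invoke the (co-)Yoneda lemma: the exponential by the representable $\Delta^n$ is evaluation at level $n$, that is $\int_{[k]\in\Delta}\prod_{\Hom_\Delta([k],[n])}(RU)_k\cong (RU)_n$, and since $RU\to U$ is a levelwise weak equivalence we get $U_*^{\Delta^n}\simeq U_n$.

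The third isomorphism is where the real work lies. I would first write $\sk_n\underline{A}$ as the colimit $\colim_{(\Delta^k\to A,\,k\leq n)}\Delta^k$ over its simplices of dimension at most $n$. Because the exponential sends colimits in the exponent to limits, and using the second isomorphism to evaluate each $U_*^{\Delta^k}\simeq U_k$, this exhibits $U_*^{\sk_n\underline{A}}$ as the homotopy limit $\holim_{(\Delta^k\to A,\,k\leq n)}U_k$ over the $n$-truncated simplex category of $A$. On the other side, the adjunction $\sk_n\dashv\cosk_n$ shows that an $m$-simplex of $A$ pairs with $\cosk_n U_*$ only through its $n$-truncation, so the end computing $(\R\cosk_n U_*)^A$ collapses to the same homotopy limit over simplices of dimension at most $n$; here I use that $\cosk_n$ is right Quillen to identify $\R\cosk_n U_*$ with $\cosk_n(RU)$. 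Matching the two homotopy limits yields $U_*^{\sk_n\underline{A}}\simeq(\R\cosk_n U_*)^A$. The special case then follows by taking $A=\Delta^n$ and truncation level $n-1$: since $\partial\Delta^n=\sk_{n-1}\Delta^n$, the general formula together with the second isomorphism gives $U_*^{\partial\Delta^n}\simeq(\R\cosk_{n-1}U_*)^{\Delta^n}\simeq(\R\cosk_{n-1}U_*)_n$, and because the coskeleton depends only on the truncation we have $\R\cosk_{n-1}U_*\simeq\R\cosk_{n-1}(\sk_{n-1}U_*)$, which by Remark \ref{ch.1: cosk igualdad} identifies the $n$-th level with $(\R\cosk_{n-1}(\sk_{n-1}U_*))_n$.

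The main obstacle will be the bookkeeping in the third part: making the passage from a restriction on the exponent ($\sk_n A$) to a coskeleton on the base ($\cosk_n U$) precise at the level of homotopy limits, and in particular checking that the Reedy fibrant replacement inside the definition of $U_*^A$ is compatible with $\cosk_n$ so that no spurious derived corrections appear. This is exactly the point where the right-Quillen property of $\cosk_n$ and the good homotopical behaviour of the exponential on fibrant objects must be used with care, whereas the first two isomorphisms are essentially formal consequences of the adjunction and of Yoneda.
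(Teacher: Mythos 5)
The paper states this proposition without proof --- it is background imported from \cite{DHI_hypercoverings} and \cite{HAG1} --- so there is no internal proof to compare against; judged on its own terms, your strategy (the tensor/exponential adjunction, the Yoneda lemma for representable weights, and the $\sk_n\dashv\cosk_n$ Kan-extension formalism) is the right one, and parts 2 and 3 are essentially correct sketches. Part 1, however, contains a genuine error. From the correct computation $\Hom_{s_+M}(\underline{\emptyset}\otimes Y,U_*)\cong\Hom_M(Y_{-1},X)$ you conclude that $U_*^{\underline{\emptyset}}$ is ``the augmented object carrying $X$ in its augmentation degree and the terminal object above it''. That object does not represent the functor $Y\mapsto\Hom_M(Y_{-1},X)$: it is not even well defined without choosing a point $*\to X$, and with such a choice naturality forces the composite $Y_0\to Y_{-1}\to X$ to equal the constant map at that point, a condition absent from $\Hom_M(Y_{-1},X)$. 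The actual representing object is the right Kan extension of $X$ along the inclusion $\{[-1]\}\hookrightarrow\Delta_+$; since $[-1]$ is initial in $\Delta_+$, this is the \emph{constant} augmented object $cX$, with $X$ in every degree. The difference is fatal to your own conclusion: $U_*^{\emptyset}$ is by definition the $0$-th level of the exponential, which for your object would be the terminal object of $M$, whereas for $cX$ it is $(RU)_{-1}\simeq X$, as the proposition asserts. The fix is one line, but the step as written fails.

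The same oversight resurfaces, more quietly, in part 3: the exponential of an augmented object is a weighted limit \emph{relative to the augmentation} (an end over $\Delta_+$, not over $\Delta$), so your $\holim_{(\Delta^k\to A,\,k\leq n)}U_k$ must be read as a homotopy limit in $M/X$. For nonempty connected weights this agrees with the absolute homotopy limit, but not in general: already for $\partial\Delta^1$, which is two points, the correct value is $U_0\times^h_X U_0$ rather than $U_0\times^h U_0$ --- and $\partial\Delta^n$ is exactly the case the paper needs for its hypercovers, where the fiber product over $X$ is the whole point. Your identification of the two sides of item 3 does go through, but only once both limits are taken relative to $X$; as literally written, the intermediate object you interpolate through is the wrong one whenever the skeleton of the weight is empty or disconnected.
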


\subsection{Simplicial categories}\label{Simplicial categories}

Let us continue with the simplicial categories. The results from this part come from \cite[Section 2]{HAG1} unless stated otherwise.

\begin{defin}We define $T$ a \textbf{simplicial category} to be a category enriched over $\sS$ the category of simplicial sets. Equivalently, a simplicial category consists of the following data: 
\begin{itemize}
	\item A set of objects $\Ob(T)$.
	\item For every pair of objects in $T$, $(x,y)\in\Ob(T)^2$, a simplicial set $\Hom(x,y)\in \sS$. 
	\item For every triple of objects in $T$, $(x,y,z)\in\Ob(T)^3$ a composition morphism in $\sS$
	$$\mu: \Hom(x,y)\times\Hom(y,z)\to \Hom(x,z) $$
	with the usual associativity condition.
	\item For every object in $T$, $x\in T$, a 0-simplex $\Id_x\in \Hom(x,x)_0$ that satisfies the usual unit condition with respect to the composition stated above.
\end{itemize}
\end{defin} 

\begin{defin} Let $T$ and $T'$ be two simplicial categories. A \textbf{simplicial functor} (also called a\textbf{ morphism of simplicial categories}) is a functor $f: T\to T'$ enriched over the category of simplicial sets. Equivalently, it consists of the following data: 
\begin{itemize}
	\item A map of sets $\Ob(T)\to \Ob(T')$.
	\item For every pair of objects in $T$, $(x,y)\in Ob(T)^2$, a morphism of simplicial sets
	$$\Hom(x,y)\to \Hom(f(x), f(y)). $$
	satisfying the usual associativity and unit conditions.
\end{itemize}  
\end{defin}

\begin{nota}We denote $\sC$ the category given by simplicial categories and their morphisms.
\end{nota}

\begin{ex}\label{ch. 1:sS} The category of simplicial sets, $\sS$, is a simplicial category, where for all $x,y\in \sS$ we take $\Hom_n(x,y)=\Hom(x\times \Delta_n,y)$.
\end{ex}

\begin{nota}Let $T,T'$ be two simplicial categories. We will denote by $\Fun^\S(T,T')$ the category of simplicial functors between them.
\end{nota}

\begin{rem}Using the inclusion functor $i:\Set\to \sS$, we can see that there is an obvious inclusion $\Cat\to \sC$. Consequently, that means that we can see all categories as simplicial categories, by considering the simplicial category with the same objects and the constant simplicial set as the simplicial set of morphisms. By abuse of notation, we will still call $C$ the simplicial category associated to a category $C$, unless the difference is important to the result.
\end{rem}

For any simplicial category $T$, we can define an associated category, which we will call the homotopy category of $T$.

\begin{defin}Let $T$ be a simplicial category. We call the \textbf{homotopy category of $T$}, and we denote by $\pi_0(T)$, a category which has the same objects as $T$ and whose morphisms are given by 
$$\forall x,y\in \Ob(T),\ \Hom_{\pi_0(T)}(x,y)=\pi_0(\Hom_T(x,y)), $$
i.e. the set of connected components of the simplicial set of morphisms.
\end{defin}

\begin{rem}It is easy to check that the functor $\pi_0:\sC\to \Cat$ is a left adjoint of the inclusion functor.
\end{rem}

As it is the case with every category we have defined so far, there exists a model structure for the category of simplicial categories. We can even prove that it is cofibrantly generated. The weak equivalences and fibrations are defined as follows.

\begin{defin}\cite[Introduction (1)]{Berg-simplicial} Let $f:T\to T'$ be a morphism of simplicial categories. We say that $f$ is a \textbf{weak equivalence} if it satisfies the following conditions:
\begin{itemize}
	\item For all $x,y\in \Ob(T)$, the associated morphism $\Hom(x,y)\to \Hom(f(x),f(y))$ is a weak equivalence of simplicial sets.
	\item The induced functor of homotopy categories $\pi_0(f):\pi_0(T)\to \pi_0(T')$ is an equivalence of categories.
\end{itemize}
\end{defin}

\begin{defin}\cite[Introduction]{Berg-simplicial} Let $T$ be a simplicial category. We say that a morphism in $T$, $f\in \Hom_T(x,y)_0$, is a \textbf{homotopy equivalence} if it becomes an isomorphism $\pi_0(f)$ in $\pi_0(T)$. 
\end{defin}

\begin{defin}\cite[Introduction (2)]{Berg-simplicial} Let $f:T\to T'$ be a morphism of simplicial categories. We say that $f$ is a \textbf{fibration} if it satisfies the following conditions:
\begin{itemize}
	\item For all $x,y\in\Ob(T)$, the associated morphism $\Hom(x,y)\to\Hom(f(x), f(y))$ is a fibration of simplicial sets.
	\item For all $x\in \Ob(T)$,  $y'\in \Ob(T')$, and all $h:f(x)\to y'$ homotopy equivalence in $T'$, there exists an object $y\in \Ob(T)$ and a homotopy equivalence $g:x\to y$  such that $f(g)=h$. 
\end{itemize}
\end{defin}

\begin{teo}\cite[Th. 1.1]{Berg-simplicial} The category $\sC$ admits a model structure, which is given by the weak equivalences and fibrations defined above. In particular, this model structure is cofibrantly generated.
\end{teo}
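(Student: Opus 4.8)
The plan is to apply the recognition theorem for cofibrantly generated model categories stated earlier in the chapter, by exhibiting explicit generating sets $I$ and $J$ and checking its hypotheses. First I would record that $\sC$ is complete and cocomplete: in fact it is locally presentable (categories enriched in a locally presentable cartesian closed category such as $\sS$ form a locally presentable category), so in particular every object is small and the smallness hypotheses of the recognition theorem will be automatic once the generating sets are chosen among small objects.

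To build those sets I would introduce the elementary two-object functor: for a simplicial set $X$, let $U_X$ denote the simplicial category with objects $a,b$, with $\Hom(a,b)=X$, with $\Hom(a,a)=\Hom(b,b)=*$, and with $\Hom(b,a)=\emptyset$. A map $X\to Y$ in $\sS$ then induces a map $U_X\to U_Y$ in $\sC$ that alters a single mapping space. I would then set
$$I=\{U_{\partial\Delta_n}\to U_{\Delta_n}\}_{n\geq 0}\ \cup\ \{\emptyset\to *\},$$
$$J=\{U_{\Lambda^r_n}\to U_{\Delta_n}\}_{n\geq 1,\,0\leq r\leq n}\ \cup\ \{*\to H\},$$
where $*$ is the terminal one-object simplicial category and $H$ is the ``free-living homotopy equivalence'', the simplicial category on two objects whose homs encode a single morphism invertible up to homotopy, with $*\to H$ the inclusion of one object. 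The $U$-maps govern the mapping spaces and are inherited from the Dwyer--Kan model structure on simplicial categories with a fixed object set; the map $\emptyset\to *$ lets relative cell complexes add objects, and the map $*\to H$ is what will force $\pi_0$ to become essentially surjective by allowing homotopy equivalences in the target to be lifted.

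For the verification, the $2$-out-of-$3$ property and closure under retracts of the class $W$ of weak equivalences follow from the corresponding properties for weak equivalences of simplicial sets together with the fact that $\pi_0$ preserves and reflects the data defining an equivalence of homotopy categories. I would then identify $J\text{-inj}$ with the fibrations in the statement (a fibration on each mapping space, together with the homotopy-lifting condition, the latter being precisely the right lifting property against $*\to H$) and $I\text{-inj}$ with the trivial fibrations, reducing everything to the two inclusions $J\text{-cof}\subseteq W\cap I\text{-cof}$ and $I\text{-inj}\subseteq W\cap J\text{-inj}$ with one of them an equality.

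The hard part will be showing that the generating trivial cofibrations, and more importantly their pushouts and transfinite compositions, are genuine Dwyer--Kan equivalences; this is the content of $J\text{-cof}\subseteq W$. For the $U_{\Lambda^r_n}\to U_{\Delta_n}$ this reduces cleanly to the fixed-object-set theory of Dwyer and Kan, where the transferred model structure on simplicial categories with a given object set is already available. The genuinely delicate map is $*\to H$: one must check it is a weak equivalence and that this property is stable under pushout along an \emph{arbitrary} map of simplicial categories and under transfinite composition, and here a direct analysis of pushouts in $\sC$ is required, since gluing in a new homotopy-equivalent object can a priori disturb all of the other mapping spaces. Controlling this interaction is the crux on which the whole argument turns; once it is established, the recognition theorem produces the cofibrantly generated model structure, and by the construction of $I$ and $J$ its fibrations and weak equivalences are exactly those in the statement.
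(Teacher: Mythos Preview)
The paper itself does not prove this theorem; it is quoted as a background result with a citation to \cite[Th.~1.1]{Berg-simplicial}, so there is no in-paper proof to compare against. Your outline is essentially Bergner's own strategy: apply the recognition theorem with generating sets built from the $U_X$-construction on the generating (trivial) cofibrations of $\sS$, together with object-creating maps, and isolate the pushout-stability of the ``walking homotopy equivalence'' inclusion as the hard step.

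One point to flag: in Bergner's proof the set $J$ does not contain a single map $*\to H$, but rather a \emph{set} of inclusions $\{x\}\hookrightarrow\mathcal H$ where $\mathcal H$ ranges over isomorphism classes of cofibrant two-object simplicial categories with weakly contractible mapping spaces and countably many simplices. The reason is the identification $J\text{-inj}=\{\text{fibrations}\}$: the fibration condition asks that an \emph{arbitrary} homotopy equivalence $f(x)\to y'$ lift, i.e.\ a single $0$-simplex that becomes invertible in $\pi_0$, with no coherence data attached. Lifting against one fixed cofibrant $H$ only tests homotopy equivalences that come packaged with a chosen inverse and homotopies, so you would need an extra argument (using the Kan condition on mapping spaces coming from the $U_{\Lambda^r_n}\to U_{\Delta_n}$ part of $J$) to pass from one to the other. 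Bergner sidesteps this by enlarging $J$. Either route can be made to work, but your write-up should either use the family as she does or supply that bridging argument explicitly; as stated, the claim ``$J\text{-inj}$ equals the fibrations'' is where the single-$H$ version is not yet justified.
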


\begin{prop}\cite[Prop. 3.5]{Berg-simplicial} With the model structure defined above, the model category $\sC$ is right proper.
\end{prop}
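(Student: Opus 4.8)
The plan is to verify right properness directly from the definition, since the blanket criterion recorded earlier (that a model category in which every object is fibrant is right proper) does not apply here: a simplicial category $T$ is fibrant exactly when all of its hom-spaces $\Hom_T(x,y)$ are Kan complexes, and this fails for a general $T$. So I would start from an arbitrary pullback square in $\sC$
\begin{center}
\begin{tikzcd}
A\ar[r, "g"]\ar[d, "f"]&B\ar[d, "h"]\\
C\ar[r, "k"]&D
\end{tikzcd}
\end{center}
with $k$ a fibration and $h$ a weak equivalence, and show that $f$ is a weak equivalence. The first step is to record how this pullback is computed. Since $\sC$ is complete and limits are created on objects and on hom-spaces, $A$ has object set $\Ob(B)\times_{\Ob(D)}\Ob(C)$ and, for objects $(b,c),(b',c')$ of $A$ (so that $h(b)=k(c)$ and $h(b')=k(c')$),
$$\Hom_A((b,c),(b',c'))=\Hom_B(b,b')\times_{\Hom_D(h(b),h(b'))}\Hom_C(c,c'),$$
a pullback taken in $\sS$.

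Next I would treat the two clauses in the definition of a weak equivalence of simplicial categories separately. For the hom-space clause, fix $(b,c),(b',c')$ in $A$. The map induced by $f$ on hom-spaces is precisely the base change of $\Hom_B(b,b')\to\Hom_D(h(b),h(b'))$ along $\Hom_C(c,c')\to\Hom_D(k(c),k(c'))$. The latter is a fibration of simplicial sets because $k$ is a fibration of simplicial categories, and the former is a weak equivalence because $h$ is a weak equivalence of simplicial categories. Since $\sS$ is right proper (it is proper, as stated earlier), the induced map $\Hom_A((b,c),(b',c'))\to\Hom_C(c,c')$ is a weak equivalence of simplicial sets. Hence $f$ is a levelwise weak equivalence on all hom-spaces, and in particular $\pi_0(f)$ is fully faithful, because $\pi_0$ turns weak equivalences of simplicial sets into bijections on connected components.

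It then remains to prove that $\pi_0(f)\colon\pi_0(A)\to\pi_0(C)$ is essentially surjective, and this is where I expect the real work to lie. Given $c\in\Ob(C)$, I would first use that $\pi_0(h)$ is essentially surjective (part of $h$ being a weak equivalence) to produce $b\in\Ob(B)$ together with a homotopy equivalence $k(c)\to h(b)$ in $D$. I would then invoke the second clause in the definition of a fibration of simplicial categories, applied to the fibration $k$ with $x=c$, $y'=h(b)$ and this homotopy equivalence: it yields an object $c''\in\Ob(C)$ and a homotopy equivalence $g\colon c\to c''$ in $C$ with $k(c'')=h(b)$. Because $k(c'')=h(b)$, the pair $(b,c'')$ is a genuine object of $A$, and $f(b,c'')=c''$ is isomorphic to $c$ in $\pi_0(C)$ via $g$. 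This gives essential surjectivity, and combined with the fully faithfulness above it shows $\pi_0(f)$ is an equivalence of categories. Thus $f$ satisfies both conditions defining a weak equivalence of simplicial categories, which proves that $\sC$ is right proper. The main obstacle is exactly this essential-surjectivity step: it is the only place where the isofibration-type lifting property built into the definition of a fibration is used, and one must take care to match the direction of the homotopy equivalence furnished by $\pi_0(h)$ with the direction demanded by the lifting property of $k$.
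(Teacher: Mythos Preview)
Your proof is correct. The paper itself does not provide a proof of this proposition; it simply cites \cite[Prop.~3.5]{Berg-simplicial} and moves on, so there is no in-paper argument to compare against. Your argument is essentially the one Bergner gives: compute the pullback levelwise, use right properness of $\sS$ on hom-spaces to get the fully faithful part, and then use the isofibration lifting clause of the fibration $k$ to obtain essential surjectivity of $\pi_0(f)$. The only minor remark is stylistic: in the essential-surjectivity step you implicitly use that a homotopy equivalence in $D$ can be taken in either direction (since it becomes an isomorphism in $\pi_0(D)$), which is what lets you match the orientation required by the lifting condition for $k$; you flag this yourself at the end, and it is indeed harmless.
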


So the category of simplicial categories is a model category. But we could also ask, what happens when we get a simplicial category which already had a model structure? Can we say something about it? Of course we can. We take the following definition from \cite[Def. 9.1.6]{HLocal}.

\begin{defin} Let $M$ be a model category which is also a simplicial category. We say that $M$ is a \textbf{simplicial model category} if it satisfies the following conditions:
\begin{itemize}
	\item For all $X,Y\in \Ob(M)$, and for all simplicial set $K$, there are objects $X\otimes K$ and $Y^K$ in $M$ such that there are isomorphisms of simplicial sets
	$$\Hom_M(X\otimes K,Y)\simeq\Hom_\sS(K,\Hom(X,Y))\simeq \Hom_M(X,Y^K). $$
	\item If $i:A\to B$ is a cofibration in $M$ and $p:X\to Y$ is a fibration in $M$, then the map of simplcial sets
	$$\Hom(B,X)\xrightarrow{i^*\times p_*}\Hom(A,X)\times_ {\Hom(A,Y)}\Hom(B,Y) $$ 
	is a fibration that is a trivial fibration if either $i$ or $p$ is a weak equivalence.
\end{itemize}
\end{defin}

We have seen that the category of diagrams over a model category is still a model category. So it isn't absurd to wonder whether the category of diagrams over a simplicial model category is still a simplicial model category, and the answer is yes, it is. We take the construction of the simplicial model category from \cite[Section 11.7]{HLocal}

\begin{prop}\label{ch. 1:simplicial} Let $M$ be a simplicial model category and $C$ a small category. Then the category of diagrams $\Fun(C,M)$ is a simplicial category. For all pairs of diagrams $X,Y\in \Fun(C,M)$, we define the simplicial set of morphisms between $X$ and $Y$ as follows: the n-simplices of $\Hom(X,Y)_*$ are given by the maps $X\otimes\Delta_n\to Y$, where $X\otimes \Delta_n$ is defined for all objects $\alpha\in\Ob(C)$ as $(X\otimes\Delta_n)(\alpha)=X(\alpha)\otimes \Delta_n$ and for all morphisms $f\in C$ as $(X\otimes\Delta_n)(f)=X(f)\otimes \Id_{\Delta_n}$.
\end{prop}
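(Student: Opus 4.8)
The plan is to treat the formula in the statement as defining the enrichment and to verify, one axiom at a time, that it assembles into a simplicial category, deferring every coherence to the fact that $M$ is already a simplicial category.

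First I would make the construction $X\otimes\Delta_n$ precise. Since $M$ is a simplicial model category, the tensor $-\otimes-\colon M\times\sS\to M$ is functorial in both variables, so for fixed $X\in\Fun(C,M)$ the objectwise assignment $(X\otimes\Delta_n)(\alpha)=X(\alpha)\otimes\Delta_n$ is functorial in $\alpha$, hence a diagram $C\to M$ (on a morphism $f$ of $C$ it is $X(f)\otimes\Id_{\Delta_n}$, as stated; functoriality and unitality in $C$ follow from those of $X$ and of $-\otimes\Delta_n$). Letting $[n]$ vary and using that $\Delta_*\colon\Delta\to\sS$ is the standard cosimplicial simplicial set, the assignment $[n]\mapsto X\otimes\Delta_n$ becomes a cosimplicial object in $\Fun(C,M)$. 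Applying the contravariant functor $\Hom_{\Fun(C,M)}(-,Y)$ to it turns it into a simplicial set, which is exactly $\Hom(X,Y)_*$; so the proposed hom is a well-defined simplicial set. I would also record the identification, obtained by applying the tensor--cotensor adjunction of $M$ objectwise, that an $n$-simplex of $\Hom(X,Y)$ -- a natural family $X(\alpha)\otimes\Delta_n\to Y(\alpha)$ -- is the same as a compatible family $\Delta_n\to\Hom_M(X(\alpha),Y(\alpha))$, so that $\Hom(X,Y)_*\cong\int_{\alpha\in C}\Hom_M(X(\alpha),Y(\alpha))$ as simplicial sets. This end description makes the remaining axioms transparent.

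Next I would define units and composition. For the unit I use the canonical isomorphism $X\otimes\Delta_0\cong X$ inherited objectwise from $M$, which identifies $\Hom(X,X)_0=\Hom_{\Fun(C,M)}(X\otimes\Delta_0,X)$ with $\Hom_{\Fun(C,M)}(X,X)$, and I take $\Id_X$ to be the identity natural transformation. For composition $\mu\colon\Hom(X,Y)\times\Hom(Y,Z)\to\Hom(X,Z)$ I use the objectwise composition of the simplicial category $M$, namely $\Hom_M(X\alpha,Y\alpha)\times\Hom_M(Y\alpha,Z\alpha)\to\Hom_M(X\alpha,Z\alpha)$, and pass to the end over $C$. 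Equivalently, in terms of the tensor formula, $\mu$ sends a pair $(f,g)$ of $n$-simplices to the composite $X\otimes\Delta_n\to X\otimes(\Delta_n\times\Delta_n)\cong(X\otimes\Delta_n)\otimes\Delta_n\xrightarrow{f\otimes\Id}Y\otimes\Delta_n\xrightarrow{g}Z$, where the first map is induced by the diagonal of $\Delta_n$ and the isomorphism is the associativity constraint $X\otimes(K\times L)\cong(X\otimes K)\otimes L$ of the tensoring.

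The step requiring the most care, and the main (if modest) obstacle, is checking that $\mu$ is a morphism of simplicial sets, i.e. natural in $[n]$, and that it is associative and unital. Each of these reduces to a coherence already holding in $M$. Naturality in $[n]$ holds because the diagonal $\Delta_n\to\Delta_n\times\Delta_n$ is natural in $[n]$ (a map of cosimplicial simplicial sets) and the associativity constraint of the tensoring is natural; associativity and the unit laws for $\mu$ follow from the associativity and unit axioms of the simplicial category $M$ together with the coherence of its tensoring over $(\sS,\times)$. Working with the end description $\int_{\alpha}\Hom_M(X\alpha,Y\alpha)$ makes this bookkeeping immediate: composition is induced over each $\alpha\in C$ from that of $M$ and is compatible with the end, so its associativity and unitality are inherited from those in $M$. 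Hence the data $\bigl(\Ob\Fun(C,M),\Hom(-,-),\mu,\Id\bigr)$ satisfies the axioms of a simplicial category, as required.
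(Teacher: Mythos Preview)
Your proof is correct and complete: you construct the objectwise tensor, show the hom is a simplicial set via the cosimplicial object $[n]\mapsto X\otimes\Delta_n$, give the end description $\int_{\alpha}\Hom_M(X\alpha,Y\alpha)$, and derive units, composition, and their coherences from those of $M$. This is the standard verification.

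The paper itself does not give a proof of this proposition; it is stated as background and attributed to \cite[Section 11.7]{HLocal}. So there is no in-paper argument to compare against, but your approach matches the standard one found in that reference.
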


\begin{teo}\label{ch. 1: functors simplicial} Let $M$ be a simplicial cofibrantly generated model category and $C$ a small category. Then the projective model structure of $\Fun(C,M)$ is compatible with the simplicial enrichment from Proposition \ref{ch. 1:simplicial}, and $\Fun(C,M)$ is a simplicial cofibrantly generated model category.
\end{teo}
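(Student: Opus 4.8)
The plan is to verify the two conditions in the definition of a simplicial model category for $\Fun(C,M)$, equipped with the projective model structure of Theorem \ref{ch. 1:projective} and the simplicial enrichment of Proposition \ref{ch. 1:simplicial}. The tensoring is already implicit in that proposition: for a simplicial set $K$ and a diagram $X$, I would set $(X\otimes K)(\alpha)=X(\alpha)\otimes K$ objectwise, and dually define the cotensor by $(Y^K)(\alpha)=Y(\alpha)^K$, using that $M$ is itself tensored and cotensored over $\sS$. The first condition — the isomorphisms of simplicial sets $\Hom(X\otimes K,Y)\cong\Hom_{\sS}(K,\Hom(X,Y))\cong\Hom(X,Y^K)$ — then follows objectwise and formally. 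Indeed, the $n$-simplices of $\Hom(X\otimes K,Y)$ are the maps $(X\otimes K)\otimes\Delta_n\to Y\cong X\otimes(K\times\Delta_n)\to Y$; writing $K\times\Delta_n$ as the colimit of its simplices, using that $X\otimes(-)$ preserves colimits and that $\Hom_{\Fun(C,M)}(X\otimes\Delta_m,Y)=\Hom(X,Y)_m$, the co-Yoneda lemma gives the first two isomorphisms, and the cotensor adjunction follows from the objectwise adjunction in $M$ together with the fact that natural transformations are detected objectwise.

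The substance is the second condition, the pushout-product (SM7) axiom. First I would use the standard two-variable-adjunction calculus to observe that SM7 is equivalent to its pushout-product form: for $i:A\to B$ a projective cofibration and $j:K\to L$ a cofibration of simplicial sets, the induced map
$$A\otimes L\coprod_{A\otimes K}B\otimes K\to B\otimes L$$
is a projective cofibration in $\Fun(C,M)$, trivial if either $i$ or $j$ is trivial. The class of maps $i$ for which this pushout-product is a (trivial) cofibration for every such $j$ is closed under pushouts, transfinite composition and retracts, and the situation is symmetric in $j$; hence it suffices to verify the claim when $i$ and $j$ both range over generating cofibrations.

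The key computation reduces everything to $M$. The generating projective cofibrations are the maps $F_A^\alpha\to F_B^\alpha$ with $A\to B$ a generating cofibration of $M$ and $\alpha\in\Ob(C)$, where $F_A^\alpha=\coprod_{C(\alpha,\cdot)}A$. Since the tensor is objectwise and $(-)\otimes K$ commutes with the coproduct indexed by $C(\alpha,\beta)$, I obtain the identity $F_A^\alpha\otimes K\cong F_{A\otimes K}^\alpha$. Consequently, the pushout-product of $F_A^\alpha\to F_B^\alpha$ with a generating cofibration $K\to L$ of $\sS$ is precisely $F^\alpha$ applied to the pushout-product of $A\to B$ with $K\to L$ computed in $M$. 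Now $M$ is a simplicial model category, so that latter pushout-product is a cofibration in $M$, trivial whenever one of the two maps is; and the free-diagram functor $F^\alpha=\coprod_{C(\alpha,\cdot)}(-)$ is left adjoint to evaluation at $\alpha$, which preserves fibrations and trivial fibrations because these are objectwise. Hence $F^\alpha$ is left Quillen and sends (trivial) cofibrations of $M$ to (trivial) projective cofibrations, which finishes the verification on generators.

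The main obstacle I anticipate is not any single computation but the bookkeeping of the two-variable-adjunction reduction: one must set up the equivalence between SM7 and the pushout-product axiom correctly, and check that the closure of the pushout-product operation under pushouts, retracts and transfinite compositions genuinely permits restricting to generating cofibrations on \emph{both} sides simultaneously. Once that formal framework is in place, the geometric content collapses to the single identity $F_A^\alpha\otimes K\cong F_{A\otimes K}^\alpha$ together with the fact that $M$ is already a simplicial model category. Combining the two verified conditions with Theorem \ref{ch. 1:projective}, which supplies the cofibrantly generated projective model structure, yields that $\Fun(C,M)$ is a simplicial cofibrantly generated model category.
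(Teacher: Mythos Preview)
The paper does not give its own proof of this theorem: it is stated as a background result and attributed to \cite[Section 11.7]{HLocal}. Your proposal is a correct outline of the standard argument (essentially the one in Hirschhorn): define tensor and cotensor objectwise, reduce SM7 to the pushout-product form, restrict to generating (trivial) cofibrations, and use the key identity $F_A^\alpha\otimes K\cong F_{A\otimes K}^\alpha$ together with the fact that $F^\alpha$ is left Quillen. So there is nothing to compare against in the paper itself, and your sketch matches the cited source's approach.
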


\begin{teo} Let $M$ be a simplicial cofibrantly generated model category. Then the Reedy model structure of $sM$ is compatible with the simplicial enrichment from Proposition \ref{ch. 1:simplicial}, and $sM$ is a simplicial model category.
\end{teo}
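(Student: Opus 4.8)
The plan is to verify the two axioms of a simplicial model category for $sM$ equipped with the Reedy model structure and the simplicial enrichment of Proposition \ref{ch. 1:simplicial}. The first axiom---the existence of tensors $X\otimes K$ and cotensors $Y^K$ together with the adjunction isomorphisms---is purely a statement about the underlying enriched category and does not refer to the model structure at all. Since $M$ is itself a simplicial model category it carries objectwise tensors and cotensors, and as $sM=\Fun(\Delta\op,M)$ is complete and cocomplete these are inherited levelwise, exactly as in the projective case treated in Theorem \ref{ch. 1: functors simplicial}; concretely $(X\otimes K)_n=X_n\otimes K$ and $(Y^K)_n=(Y_n)^K$, with the isomorphisms $\Hom(X\otimes K,Y)\simeq\Hom_\sS(K,\Hom(X,Y))\simeq\Hom(X,Y^K)$ assembled from the corresponding ones in $M$. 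Hence only the pushout-product axiom remains, and this is where the Reedy structure genuinely enters.

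To handle the pushout-product axiom I would first pass to its equivalent tensored form. The map $\Hom(B,X)\to \Hom(A,X)\times_{\Hom(A,Y)}\Hom(B,Y)$ is a fibration (resp. trivial fibration) of simplicial sets for every Reedy cofibration $i:A\to B$ and Reedy fibration $p:X\to Y$ if and only if, for every cofibration $j:K\to L$ of simplicial sets, the pushout-product map
$$i\,\Box\,j:\ A\otimes L\ \coprod_{A\otimes K} B\otimes K\ \longrightarrow\ B\otimes L$$
is a Reedy cofibration, which is moreover a trivial Reedy cofibration whenever $i$ or $j$ is a weak equivalence. This equivalence is the standard adjunction between the tensoring over $\sS$ and the mapping space, together with the fact that the several forms of the pushout-product axiom are interchangeable via the lifting characterisations of (trivial) Reedy cofibrations and (trivial) Reedy fibrations.

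The crucial computation is then the identification of the $n$-th relative latching map of $i\,\Box\,j$. Because tensoring with a fixed simplicial set $K$ is a left adjoint, it is computed levelwise and commutes with the colimit defining the latching object and with the pushout appearing in $i\,\Box\,j$; in particular $L_n(A\otimes K)=(L_nA)\otimes K$. Carrying out the ensuing colimit manipulation, the $n$-th relative latching map of $i\,\Box\,j$ is naturally isomorphic to the pushout-product, formed in the simplicial model category $M$, of the $n$-th relative latching map $A_n\coprod_{L_nA}L_nB\to B_n$ of $i$ with the map $j$. Since $i$ is a Reedy cofibration this relative latching map is a cofibration in $M$, and since $M$ satisfies the pushout-product axiom its pushout-product with the cofibration $j$ of $\sS$ is again a cofibration in $M$, trivial when either factor is trivial. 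By the latching characterisation of Reedy cofibrations and Reedy trivial cofibrations recalled just above in the excerpt, this says precisely that $i\,\Box\,j$ is a Reedy cofibration, trivial whenever $i$ or $j$ is a weak equivalence. Combining this with the first axiom yields the claim.

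The main obstacle is exactly the latching-map identification of the third paragraph: one must commute the latching colimit past both the tensor with $K$ and the defining pushout of $i\,\Box\,j$, and check that the combinatorics reassemble into a genuine pushout-product in $M$. Everything else is formal, resting on the characterisations of Reedy cofibrations and trivial cofibrations already established and on $M$ being a simplicial model category. If one prefers to sidestep the bespoke computation, the identical conclusion follows from the general principle that $\Fun(\mathcal{C},M)$ with its Reedy structure is a simplicial model category whenever $M$ is one and $\mathcal{C}$ is a Reedy category, applied here to $\mathcal{C}=\Delta\op$.
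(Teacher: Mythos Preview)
Your proof is correct and follows the standard argument for this result. However, the paper does not actually give a proof of this theorem: it appears in the background chapter where results are recalled from the literature (with the section header indicating ``The results from this part come from \cite[Section 2]{HAG1} unless stated otherwise''), and the statement is simply asserted without proof. So there is no proof in the paper to compare against.

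That said, your argument is the right one and is essentially the one found in standard references such as Hirschhorn. The key insight---that the $n$-th relative latching map of $i\,\Box\,j$ is the pushout-product in $M$ of the $n$-th relative latching map of $i$ with $j$---is exactly the computation that makes the Reedy case work, and your identification of this as the main obstacle is accurate. The commutation of latching objects past levelwise tensors follows because $L_n$ is a colimit and $-\otimes K$ is a left adjoint, and the subsequent reshuffling of iterated pushouts into a single pushout-product is a routine (if slightly tedious) colimit manipulation. Your closing remark that this is a special case of the general fact for Reedy diagrams in a simplicial model category is also correct and is how most references package the result.
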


\begin{prop}Let $M$ be a simplicial cofibrantly generated model category and $f:T\to T'$ a equivalence of simplicial categories. Then the induced Quillen adjunction
$$f_!:\Fun^\S(T,M)\rightleftharpoons \Fun^\S(T',M):f^* $$
is a Quillen equivalence. 
\end{prop}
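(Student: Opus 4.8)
The plan is to first check that $(f_!,f^*)$ is a Quillen adjunction, and then upgrade it to a Quillen equivalence by showing that the induced adjunction on homotopy categories is an adjoint equivalence.

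First I would verify that $(f_!,f^*)$ is a Quillen adjunction. In the projective model structure on $\Fun^\S(T',M)$ (the simplicial analogue of Theorem \ref{ch. 1: functors simplicial}) the fibrations and weak equivalences are detected objectwise, and since $(f^*Y)(t)=Y(f(t))$ the functor $f^*$ carries objectwise fibrations (resp. objectwise weak equivalences) to objectwise fibrations (resp. weak equivalences). In particular $f^*$ preserves fibrant objects, while its left adjoint $f_!$ sends the generating (trivial) cofibrations $F^\alpha_A\to F^\alpha_B$ to the generating (trivial) cofibrations $F^{f(\alpha)}_A\to F^{f(\alpha)}_B$, hence preserves cofibrations. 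By Proposition \ref{ch. 1: criteron Quillen adjunction} this is a Quillen adjunction.

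Next, since $f^*$ preserves all objectwise weak equivalences, its total right derived functor is simply $\Ho(f^*)$, with no fibrant replacement needed. So it suffices to prove that $\L f_!\dashv \Ho(f^*)$ is an adjoint equivalence, equivalently that the derived unit $X\to f^*(R f_! X)$ is a weak equivalence for every cofibrant $X\in\Fun^\S(T,M)$ and the derived counit $\L f_!\,f^*Y\to Y$ is a weak equivalence for every fibrant $Y\in\Fun^\S(T',M)$. I would compute $\L f_!$ pointwise: by the definition of the homotopy left Kan extension as a homotopy colimit, its value at $t'\in T'$ is the homotopy coend
$$(\L f_! Z)(t')\simeq \int^{t\in T}\Hom_{T'}(f(t),t')\otimes^{\L} Z(t),$$
which I can analyze using Theorem \ref{ch. 1: Map commutes with hocolim} together with the enriched end formula for the mapping spaces of the diagram categories. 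The crux of the argument is then an enriched, homotopy-coherent co-Yoneda computation: substituting $Z=f^*Y$ for the counit, I would show that the weights $\Hom_{T'}(f(t),t')$ together with the local weak equivalences $\Hom_T(s,t)\to\Hom_{T'}(f(s),f(t))$ force the homotopy coend to collapse onto $Y(t')$. This is where both defining properties of an equivalence of simplicial categories get used, the local condition controlling the bar resolution degreewise and the homotopy essential surjectivity of $\pi_0(f)$ guaranteeing that every $t'$ is reached; the derived unit is handled dually, using that $f$ is homotopically fully faithful so that $f^*$ loses no information between generators.

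I expect the main obstacle to be exactly this coend collapse, together with the homotopy-invariance bookkeeping it requires. In particular, fibrant diagrams must be shown to send homotopy equivalences of $T'$ to weak equivalences of $M$, and because $M$ is only assumed to be a simplicial model category this forces me to pass to fibrant--cofibrant replacements before identifying simplicial homotopy equivalences with weak equivalences. A simplification I would keep in reserve is to factor $f$ in the Bergner model structure on $\sC$ as a trivial cofibration followed by a trivial fibration and treat the two resulting special cases separately, since Quillen equivalences compose.
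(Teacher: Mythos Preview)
The paper does not actually prove this proposition: it is stated in the background chapter as a result taken from \cite{HAG1}, with no argument given. So there is no ``paper's own proof'' to compare against here.

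Your outline is a reasonable sketch of how such a result is established in the literature. The verification that $(f_!,f^*)$ is Quillen is correct and standard. Your plan to compute $\L f_!$ as a homotopy coend and then collapse it via an enriched co-Yoneda argument is the right strategy; this is essentially how it is done in \cite{HAG1} (see also the treatment in \cite{HTT}, Proposition A.3.3.8, which handles the combinatorial simplicial case directly). One point worth tightening: your claim that $f_!$ sends generating cofibrations $F^\alpha_A\to F^\alpha_B$ to $F^{f(\alpha)}_A\to F^{f(\alpha)}_B$ is not literally true in the enriched setting, since the free diagrams are $\Hom_T(\alpha,-)\otimes A$ rather than coproducts indexed by hom-\emph{sets}; but the conclusion that $f_!$ preserves cofibrations still follows, either by the same adjunction argument or simply because $f^*$ preserves trivial fibrations. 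The factorization trick through the Bergner model structure is a good way to organize the coend computation if the direct argument becomes unwieldy.
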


And while we're talking about categories of simplicial functors, we will construct a simplicial equivalent of the Yoneda embedding but in the case of simplicial functors. 

\begin{nota}Let $T$ be a simplicial category. We will denote the natural simplicial enrichment of $\Fun^\S(T\op,\sS)$ by $\Fun^\S(T\op,\sS)_s$. 
\end{nota}

\begin{nota}\label{ch.1:notation h simp}Let $T$ be a simplicial category, and $x$ be an object in $T$. We denote by $\underline{h}_x:T\op\to \sS$ in $ \Fun^\S(T\op,\sS)_s$ the simplicial functor $\underline{h}_x(y)=\Hom_T(y,x)$ where we take the simplicial set of morphisms in $T$. 
\end{nota}

\begin{defin}Let $T$ be a simplicial category. We define a morphism of simplicial categories $\underline{h}:~T\to \Fun^\S(T\op,\sS)_s$ by $\underline{h}(x)=\underline{h}_x$.
\end{defin}

\begin{prop}Let $T$ be a simplicial category, $x$ an object in $T$ and $F$ a simplical functor $F:~T\to ~\sS$. There exists a canonical isomorphism of simplicial sets 
$$F(x)\simeq\Hom_{\Fun^\S(T\op,\sS)_s}(\underline{h}_x,F) $$
which is functorial on the pair $(F,x)$. In particular, $\underline{h}$ is fully faithful as a simplicial functor.
\end{prop}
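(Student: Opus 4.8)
The plan is to establish the enriched (simplicial) Yoneda lemma by writing down an explicit evaluation map together with an inverse, and then to read off full faithfulness as the special case $F=\underline{h}_{x'}$. First I would unwind the target: by the description of the simplicial enrichment on a diagram category (Proposition \ref{ch. 1:simplicial}, with the tensor being the cartesian product since $\sS$ is cartesian), an $n$-simplex of $\Hom_{\Fun^\S(T\op,\sS)_s}(\underline{h}_x,F)$ is a simplicial natural transformation $\eta:\underline{h}_x\times\Delta_n\Rightarrow F$, that is, a family of maps $\eta_y:\Hom_T(y,x)\times\Delta_n\to F(y)$, natural in $y\in T\op$ and compatible with the simplicial structure.

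Next I would define the evaluation map $\Phi:\Hom(\underline{h}_x,F)\to F(x)$. On $n$-simplices it sends $\eta$ to the composite $\Delta_n\to\Hom_T(x,x)\times\Delta_n\xrightarrow{\eta_x}F(x)$, where the first map is the identity on the $\Delta_n$ factor and is constant at the degeneracy of $\Id_x\in\Hom_T(x,x)_0$ on the other. Conversely I would build $\Psi:F(x)\to\Hom(\underline{h}_x,F)$ out of the simplicial functoriality of $F$: the action of $F$ on morphisms gives, for each $y$, a map $\Hom_T(y,x)\to\Map_\sS(F(x),F(y))$ (here the variance matters, as $\Hom_T(y,x)=\Hom_{T\op}(x,y)$), and post-composing with evaluation produces a natural pairing $\Hom_T(y,x)\times F(x)\to F(y)$. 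Given $a\in F(x)_n$, i.e. a map $a:\Delta_n\to F(x)$, I set $\Psi(a)_y$ to be $\Hom_T(y,x)\times\Delta_n\xrightarrow{\Id\times a}\Hom_T(y,x)\times F(x)\to F(y)$.

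There remain two checks. The identity $\Phi\circ\Psi=\Id$ is immediate from $F(\Id_x)=\Id_{F(x)}$. The identity $\Psi\circ\Phi=\Id$ is the real content, and this is the step I expect to be the main obstacle. For it I would use the naturality of $\eta$: every $g\in\Hom_T(y,x)$ is the image of $\Id_x$ under the precomposition map $\Hom_T(x,x)\to\Hom_T(y,x)$ induced by $g$, so naturality of $\eta$ along $g$ (viewed as a morphism $x\to y$ of $T\op$) forces $\eta_y(g,s)=F(g)(\eta_x(\Id_x,s))$, which is exactly $\Psi(\Phi(\eta))_y(g,s)$. The delicate point is to track the contravariance correctly and to verify that this argument, which I phrased on vertices, upgrades to the full simplicial-enriched naturality required in $\Fun^\S(T\op,\sS)_s$, together with an honest handling of the degeneracies that turn $\Id_x$ into an $n$-simplex.

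Finally, functoriality of the isomorphism in the pair $(F,x)$ is routine, since a simplicial natural transformation $F\to F'$ and a morphism $x\to x'$ in $T$ both commute with $\Phi$ by construction. To deduce that $\underline{h}$ is fully faithful, I would specialize to $F=\underline{h}_{x'}$, obtaining $\Hom_{\Fun^\S(T\op,\sS)_s}(\underline{h}_x,\underline{h}_{x'})\cong\underline{h}_{x'}(x)=\Hom_T(x,x')$, and check that under this identification $\Phi$ is inverse to the map on hom-objects induced by the simplicial functor $\underline{h}$; hence $\underline{h}$ induces isomorphisms on all mapping simplicial sets, which is precisely full faithfulness as a simplicial functor.
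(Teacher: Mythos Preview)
Your argument is the standard proof of the enriched Yoneda lemma, and it is correct. The paper itself does not supply a proof of this proposition: it appears in the background chapter with the blanket attribution ``The results from this part come from \cite[Section 2]{HAG1} unless stated otherwise,'' and the statement is immediately followed by the next paragraph with no proof environment. So there is nothing to compare against; the paper simply quotes the result.

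A couple of minor remarks on your write-up. First, you correctly silently repair the typo in the statement (the paper writes $F:T\to\sS$ but means a presheaf $F:T\op\to\sS$, since both $\underline{h}_x$ and the ambient category $\Fun^\S(T\op,\sS)_s$ are contravariant). Second, your identification of $n$-simplices of the enriched hom as maps $\underline{h}_x\times\Delta_n\Rightarrow F$ is exactly what Proposition~\ref{ch. 1:simplicial} gives when $M=\sS$, so that step is justified by the paper's own conventions. The only place to be a little more careful is the ``upgrades to full simplicial-enriched naturality'' step: the clean way is to run your $\Psi\circ\Phi=\Id$ computation not vertex-by-vertex but by taking $g$ to be the universal element, i.e.\ work with the map $\Hom_T(y,x)\times\Delta_n\to\Hom_T(y,x)\times\Hom_T(x,x)\times\Delta_n$ given by $(g,s)\mapsto(g,\Id_x,s)$ and use naturality of $\eta$ as a diagram of simplicial sets. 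This avoids any worry about degeneracies.
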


So the functor $\underline{h}$ induces a functor $\pi_0(h):\pi_0(T)\to \pi_0(\Fun^\S(T\op,\sS)_s)$. But we would want a fully faithful functor that goes to the homotopy category of simplicial presheaves, without the simplicial enrichment. We have an identity functor $\Fun^\S(T\op, \sS)_s\to\Fun^\S(T\op, \sS)$ we could use, but does it induce a well-defined functor in  the homotopy categories? 

\begin{rem}We remind the reader that we are here working on two different things that are called homotopy categories: on one side we have a homotopy category as a simplicial category, and on the other a homotopy category as a model category. That means that if we take $F, G\in \Ob(\Fun^\S(T\op, \sS)_s)$ simplicial presheaves, and $f,g$ two morphisms from $F$ to $G$, on one side we have $\pi_0(\Fun^\S(T\op, \sS)_s)$, in which $f$ and $g$ are equal if they are on the same connected component (are simplicially homotopic), and on the other side we have $\Ho(\Fun^\S(T\op,\sS))$, in which $f$ and $g$ are equal if there exists a homotopy equivalence between them (up to a fibrant/cofibrant replacement). 
\end{rem}

Are these two localizations indeed compatible? Can the identity functor induce a well-defined functor in the homotopy categories? Yes, it does. We get the necessary result from \cite[Lem. 9.5.15]{HLocal}.

\begin{lema}Let $M$ be a simplicial model category and let $X,Y$ be two objects of $M$. If $f,g:X\to Y$ are simplicially homotopic maps, then $f$ and $g$ are the same map in the homotopy category of $M$ as a model category.
\end{lema}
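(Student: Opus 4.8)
The plan is to reduce to the case where the source is cofibrant, where a simplicial homotopy is literally a left homotopy in the model-categorical sense, and then to transport the conclusion back along a cofibrant replacement. The only input I will need beyond the simplicial enrichment is the standard fact, recorded earlier, that the localization functor $l\colon M\to \Ho(M)$ inverts weak equivalences, together with the structure maps of a cylinder object being weak equivalences.

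First I would unwind what "simplicially homotopic" means. Using the tensor--hom adjunction $\Hom_M(X\otimes K,Y)\simeq \Hom_\sS(K,\Hom(X,Y))$ of a simplicial model category, a simplicial homotopy from $f$ to $g$ is precisely a map $H\colon X\otimes\Delta_1\to Y$ whose restrictions along the two vertex inclusions $i_0,i_1\colon X\cong X\otimes\Delta_0\to X\otimes\Delta_1$ are $f$ and $g$ respectively. Next I would fix a cofibrant replacement $q\colon QX\to X$ (a trivial fibration with $QX$ cofibrant) and form the composite $\hat H:=H\circ(q\otimes\Id_{\Delta_1})\colon QX\otimes\Delta_1\to Y$. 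Since $q\otimes\Id_{\Delta_1}$ restricts to $q$ along each $i_j$, the map $\hat H$ is a simplicial homotopy from $f\circ q$ to $g\circ q$.

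The point of passing to $QX$ is that $QX\otimes\Delta_1$ is now a genuine cylinder object for $QX$. Indeed, the inclusion $QX\otimes\partial\Delta_1\to QX\otimes\Delta_1$ is a cofibration: this is the compatibility axiom of a simplicial model category applied to the cofibration $\emptyset\to QX$ (valid because $QX$ is cofibrant) and the cofibration $\partial\Delta_1\to\Delta_1$ of simplicial sets. Moreover, the projection $s\colon QX\otimes\Delta_1\to QX\otimes\Delta_0=QX$ is a weak equivalence: because $QX$ is cofibrant, $QX\otimes-\colon\sS\to M$ is a left Quillen functor, so it preserves weak equivalences between cofibrant objects, and $\Delta_1\to\Delta_0$ is such a weak equivalence in $\sS$ (every simplicial set being cofibrant). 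Hence $\hat H$ exhibits a left homotopy between $f\circ q$ and $g\circ q$.

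Finally I would invert everything in $\Ho(M)$. The cylinder identities $s\circ i_0=s\circ i_1=\Id_{QX}$ together with $l(s)$ being an isomorphism force $l(i_0)=l(i_1)=l(s)^{-1}$, so that $l(fq)=l(\hat H\,i_0)=l(\hat H)\,l(i_0)=l(\hat H)\,l(i_1)=l(\hat H\,i_1)=l(gq)$. Since $q$ is a weak equivalence, $l(q)$ is an isomorphism, and cancelling it on the right in $l(f)\,l(q)=l(g)\,l(q)$ yields $l(f)=l(g)$ in $\Ho(M)$. I do not expect a genuinely hard step here; the only subtlety worth care is the verification that $QX\otimes\Delta_1$ is a cylinder object, which is exactly where the cofibrancy supplied by the replacement is used, and the observation that fibrancy of $Y$ is never needed, because the left-homotopy argument only invokes the structure maps of the cylinder. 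If "simplicially homotopic" is read in the weaker sense of lying in a single path component of $\Hom(X,Y)$, the same argument applied edgewise to a connecting zigzag of $1$-simplices gives the conclusion by transitivity.
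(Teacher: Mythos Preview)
Your argument is correct. The paper does not give its own proof of this lemma; it merely records it as a background fact and cites \cite[Lem.~9.5.15]{HLocal}. So there is nothing substantive to compare against here.

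For what it is worth, your route---pull the simplicial homotopy back along a cofibrant replacement $q\colon QX\to X$, verify that $QX\otimes\Delta_1$ is a genuine cylinder object via the pushout-product axiom and Ken Brown, then cancel $l(q)$ in $\Ho(M)$---is exactly the standard one, and essentially what Hirschhorn does. One cosmetic point: you invoke ``the compatibility axiom of a simplicial model category'' in its pushout-product form, whereas the paper states it in the dual mapping-space form; these are equivalent by adjunction, so nothing is missing, but if you were writing this up inside the paper you might want to remark on that equivalence or cite it. Your closing observation that fibrancy of $Y$ is irrelevant is also correct and worth keeping.
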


\begin{prop}Let $T$ be a simplicial category. The identity morphism induces a well-defined functor
$$\pi_0(\Fun^\S(T\op, \sS)_s)\to \Ho(\Fun^\S(T\op,\sS)).$$
\end{prop}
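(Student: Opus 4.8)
The plan is to write the functor down explicitly and to reduce its well-definedness to the preceding lemma. Both categories have the same objects — the simplicial presheaves $T\op\to\sS$ — so I take the candidate functor $\Phi$ to be the identity on objects. For a morphism, recall that by the definition of the homotopy category of a simplicial category an arrow $F\to G$ in $\pi_0(\Fun^\S(T\op,\sS)_s)$ is an element of $\pi_0(\Hom_{\Fun^\S(T\op,\sS)_s}(F,G))$, that is, a connected component of the simplicial mapping space; since $\pi_0$ of a simplicial set is a quotient of its set of $0$-simplices, every such component is represented by an honest morphism $f:F\to G$. I define $\Phi([f])$ to be the image $l(f)$ of $f$ under the localization functor $l:\Fun^\S(T\op,\sS)\to\Ho(\Fun^\S(T\op,\sS))$. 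Before anything else I would record that $\Fun^\S(T\op,\sS)$, with its natural simplicial enrichment and projective model structure, is a simplicial model category (compare Proposition \ref{ch. 1:simplicial} and Theorem \ref{ch. 1: functors simplicial}), so that the preceding lemma is available.

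The crux is to check that $\Phi$ is well defined on morphisms, i.e. independent of the chosen representative. Two $0$-simplices $f,g:F\to G$ lie in the same component of $\Hom(F,G)$ exactly when they are joined by a finite zigzag of $1$-simplices, since $\pi_0$ is the coequalizer of the two face maps $d_0,d_1:\Hom(F,G)_1\to\Hom(F,G)_0$. By the defining adjunction of a simplicial model category, a $1$-simplex of $\Hom(F,G)$ is the same datum as a map $F\otimes\Delta_1\to G$, hence a simplicial homotopy between its two endpoints. Each edge of the zigzag therefore exhibits its endpoints as simplicially homotopic, so by the preceding lemma they become equal in $\Ho(\Fun^\S(T\op,\sS))$; since equality there is transitive, all the $0$-simplices along the zigzag — in particular $f$ and $g$ — have a common image, so $l(f)=l(g)$ and $\Phi([f])$ is unambiguous.

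It then remains to verify that $\Phi$ is a functor. The identity of $F$ in $\pi_0(\Fun^\S(T\op,\sS)_s)$ is the class of $\Id_F$, and $\Phi$ sends it to $l(\Id_F)=\Id_F$. For composites, composition in the homotopy category of a simplicial category is induced by applying $\pi_0$ to the composition morphisms, so $[g]\circ[f]=[g\circ f]$ with $g\circ f$ the ordinary composite of representatives in $\Fun^\S(T\op,\sS)$; as $l$ is a functor, $\Phi([g]\circ[f])=l(g\circ f)=l(g)\circ l(f)=\Phi([g])\circ\Phi([f])$, as required.

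Since the statement is essentially a repackaging of the preceding lemma, I do not expect a serious obstacle. The only point that genuinely needs care is the interplay between the two meanings of ``being identified in the homotopy category'': connected components of the simplicial mapping space on the left, versus formal inversion of the weak equivalences on the right. In particular one must remember that $\pi_0$ identifies $0$-simplices only up to zigzags of $1$-simplices, which is why the argument invokes the transitivity of equality in $\Ho(\Fun^\S(T\op,\sS))$ rather than a single application of the lemma.
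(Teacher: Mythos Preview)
Your proof is correct and is exactly the argument the paper intends: the proposition is stated immediately after the lemma as its direct consequence, and the paper gives no separate proof. Your unwinding of $\pi_0$ as a quotient by zigzags of $1$-simplices, identification of $1$-simplices with simplicial homotopies, and appeal to the lemma is the standard and expected route.

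One small remark: the references you cite (Proposition~\ref{ch. 1:simplicial} and Theorem~\ref{ch. 1: functors simplicial}) are stated in the paper for ordinary diagram categories $\Fun(C,M)$ with $C$ a small category, not for simplicial functor categories $\Fun^\S(T\op,\sS)$ with $T$ simplicial. The simplicial model structure on the latter does exist and is used throughout the paper, but strictly speaking it is not covered by those two results as written; you may want to note that the analogous statement for simplicial diagrams is what you are invoking.
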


And his gives us finally a homotopy version of the enriched Yoneda embedding.

\begin{prop}Let $T$ be a simplicial category, $x$ an object in $T$ and $F$ a simplicial functor \\$F:T\op\to \sS$. There exists a canonical isomorphism in $\Ho(\sS)$
$$F(x)\simeq \R\Hom_{\Fun^\S(T\op, \sS)}(\underline{h}_x,F)$$
which is functorial on the pair $(F,x)$. In particular, the functor 
$$\underline{h}: \pi_0(T)\to \pi_0(\Fun^\S(T\op,\sS)_s)\to \Ho(\Fun^\S(T\op,\sS))$$
is fully faithful. 
\end{prop}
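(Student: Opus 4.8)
The plan is to identify the strict simplicial mapping space furnished by the enriched Yoneda lemma with the derived mapping space $\R\Hom_{\Fun^\S(T\op,\sS)}$; the decisive input will be that every representable $\underline{h}_x$ is cofibrant for the projective model structure.

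First I would establish this cofibrancy. For each $x\in T$ the evaluation functor $\mathrm{ev}_x\colon\Fun^\S(T\op,\sS)\to \sS$, $F\mapsto F(x)$, preserves fibrations and weak equivalences, since both are defined objectwise in the projective structure (Theorem \ref{ch. 1: functors simplicial}); hence $\mathrm{ev}_x$ is a right Quillen functor. Its left adjoint exists because $\sS$ is cocomplete and is given by $K\mapsto \underline{h}_x\otimes K$ for the simplicial tensoring of Proposition \ref{ch. 1:simplicial}, so that it sends $\Delta_0$ to $\underline{h}_x$. As every object of $\sS$ is cofibrant, $\Delta_0$ is cofibrant, and since a left Quillen functor preserves cofibrant objects, $\underline{h}_x$ is cofibrant.

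Next I would identify the derived hom. The enriched Yoneda isomorphism already gives $\Hom_{\Fun^\S(T\op,\sS)_s}(\underline{h}_x,-)\cong \mathrm{ev}_x$ as functors to $\sS$. Because $\Fun^\S(T\op,\sS)$ is a simplicial model category and $\underline{h}_x$ is cofibrant, the compatibility axiom of a simplicial model category (applied to $\emptyset\to \underline{h}_x$) shows that $\Hom_s(\underline{h}_x,-)$ carries fibrations to fibrations and trivial fibrations to trivial fibrations, so it is right Quillen and its right derived functor computes the derived mapping space. Taking a fibrant replacement $RF$, this yields $\R\Hom_{\Fun^\S(T\op,\sS)}(\underline{h}_x,F)\simeq \Hom_s(\underline{h}_x,RF)\cong (RF)(x)$; since $F\to RF$ is an objectwise weak equivalence one has $(RF)(x)\simeq F(x)$ in $\Ho(\sS)$, giving the asserted isomorphism. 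Its naturality in $(F,x)$ will follow from the functoriality of the Yoneda isomorphism and of the fibrant replacement functor.

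Finally, for the full faithfulness I would specialize to $F=\underline{h}_y$, obtaining $\R\Hom_{\Fun^\S(T\op,\sS)}(\underline{h}_x,\underline{h}_y)\simeq \underline{h}_y(x)=\Hom_T(x,y)$; applying $\pi_0$ and using that maps in the homotopy category between a cofibrant and a fibrant object are the components of the derived mapping space gives
$$\Hom_{\Ho(\Fun^\S(T\op,\sS))}(\underline{h}_x,\underline{h}_y)\cong \pi_0\Hom_T(x,y)=\Hom_{\pi_0(T)}(x,y),$$
a bijection which, by naturality, is the one induced by $\underline{h}$. The hard part will be the third paragraph: one must make sure that the strict simplicial mapping space genuinely computes the derived hom, which rests both on the cofibrancy of $\underline{h}_x$ and on the compatibility of the projective structure with the simplicial enrichment --- this cannot be obtained merely by composing the fully faithful map $\pi_0(T)\to \pi_0(\Fun^\S(T\op,\sS)_s)$ with $\pi_0(\Fun^\S(T\op,\sS)_s)\to \Ho(\Fun^\S(T\op,\sS))$, since the latter is not known to be fully faithful.
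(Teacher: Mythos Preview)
The paper does not supply its own proof of this proposition; it is stated as a background result taken from \cite{HAG1}. Your argument is correct and is precisely the standard one: cofibrancy of the representables $\underline{h}_x$ in the projective structure (obtained via the evaluation/free adjunction), combined with the simplicial-model-category axiom, identifies the strict enriched hom with the derived mapping space, whereupon the enriched Yoneda lemma gives the isomorphism. Your closing remark is also to the point: the full faithfulness of the composite genuinely rests on this derived identification and is not a formal consequence of composing the two displayed functors.
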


\subsection{Simplicial localizations}

In the section about the homotopy category of a model category we have already mentioned the Gabriel-Zisman localization. But we can also give the definition of an enhanced localization that, instead of giving us a category, gives us a simplicial category. All results in this section come from \cite[2.2 and 2.3]{HAG1} unless stated otherwise.

\begin{defin}\label{Ch. 1: def localization} Let $C$ be a category and $W$ a subset of its morphisms. We call a \textbf{simplicial localization of $C$ with respect to $W$} a pair $(L_WC, l)$ where $L_WC$ is a simplicial category and $l:C\to L_WC$ is a morphism of simplicial categories, called \textbf{the localization morphism}, such that for every simplicial category $T$, the aforementioned morphism induces a equivalence of simplicial categories
$$l^*:\R\Fun^\S(L_WC,T)\simeq\R\Fun^\S_W(C,T) $$
where $\R \Fun^\S(L_W, T)$ is seen as an object of $\Ho(\sC)$, and $\R\Fun^\S_W(C,T)$ denotes the full subcategory of $\R\Fun^\S(C,T)$ in $\Ho(\sC)$ consisting of all simplicial morphisms that send $W$ to equivalences in $T$. In other words, the localization morphism is such that, for every simplicial category $T$, $l$ induces a morphism of simplicial categories
$$l^*:\R\Fun^\S(L_WC,T)\to\R\Fun^\S(C,T) $$
which is fully faithful and whose essential image consists of the functors sending the morphisms in $W$ to equivalences in $T$.
\end{defin}

\begin{rem}Once again, we have given a definition in an abstract way, with no certainty that such a thing exists. It has been proven, though, that a simplicial localization as defined above always exists, and that it is equivalent to the Dwyer-Kan simplicial localization from \cite{DK-local}. 
\end{rem}

\begin{rem}A reader familiar with \cite{HAG1} will probably remark that the universal property given here looks much stronger than the one given in that paper; indeed, we have taken it from \cite[Localization and model categories]{DAG}. We can prove, though, that those two universal properties are, in fact, equivalent.
\end{rem}

So we now have a localization which is a simplicial category instead of a classical category. It could happen that that simplicial category was always trivial, in which case we would have gained nothing from this endeavour. Luckily, that is not the case: in general the morphism spaces in $L_WC$ aren't 0-truncated. There is still a close relationship between the Gabriel-Zisman localization and this one, though.

\begin{prop}\label{ch.1: simplicial loc. and pi_0}Let $C$ be a category, $W$ a subset of its morphisms. We take the simplicial localization of $C$ with respect to $W$, $(L_WC,l)$. The localization morphism induces an equivalence between the Gabriel-Zisman localization $C\l  W^{-1}\r$ and the homotopy category of $L_WC$, $\pi_0(L_WC)$.
\end{prop}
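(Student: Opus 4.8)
The plan is to show that $\pi_0(L_WC)$, together with the induced functor $\pi_0(l):C\to\pi_0(L_WC)$, corepresents the same $2$-functor as the Gabriel--Zisman localization $C[W^{-1}]$, namely $D\mapsto \Fun_W(C,D)$, the full subcategory of $\Fun(C,D)$ spanned by the functors that invert $W$; the asserted equivalence then drops out of the $2$-categorical Yoneda lemma. First I would record that $\pi_0(l)$ does invert $W$: since the identity of $L_WC$ lies in $\R\Fun^\S(L_WC,L_WC)$ and $l=l^*(\Id)$, the localization morphism $l$ belongs to the essential image of $l^*$, which by definition consists of the simplicial functors sending $W$ to equivalences. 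Hence $l$ carries $W$ to homotopy equivalences in $L_WC$, and applying $\pi_0$ carries those to isomorphisms in $\pi_0(L_WC)$, so $\pi_0(l)$ is a legitimate candidate for a localization functor.

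The heart of the argument is to evaluate the universal property of $L_WC$ on a \emph{discrete} target. Let $D$ be an ordinary category, regarded as a simplicial category $iD$ with discrete mapping spaces. Because each $\Hom_{iD}(a,b)$ is discrete, any simplicial functor $T\to iD$ factors through the maps collapsing every $\Hom_T(x,y)$ onto its $\pi_0$, and the same collapse applies to simplicial natural transformations and homotopies; consequently $\Fun^\S(T,iD)$ is itself a discrete simplicial category, canonically identified with $i\Fun(\pi_0 T,D)$. As this object is already $0$-truncated it coincides with its derived version, so in $\Ho(\sC)$ we obtain $\R\Fun^\S(T,iD)\simeq\Fun(\pi_0 T,D)$; this is exactly the adjunction $\pi_0\dashv i$ promoted to the homotopy level.

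Applying this with $T=L_WC$ and with $T=C$ (where $\pi_0 C=C$), the universal property of the simplicial localization yields a chain of equivalences, natural in $D$,
\begin{equation*}
\Fun(\pi_0(L_WC),D)\;\simeq\;\R\Fun^\S(L_WC,iD)\;\xrightarrow[\ \sim\ ]{\,l^*\,}\;\R\Fun^\S_W(C,iD)\;=\;\Fun_W(C,D),
\end{equation*}
where the last identification uses that an equivalence in the discrete simplicial category $iD$ is precisely an isomorphism in $D$. On the other side, the defining universal property of the Gabriel--Zisman localization identifies $\Fun(C[W^{-1}],D)$ with the same category $\Fun_W(C,D)$, naturally in $D$. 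Composing, I get a natural equivalence $\Fun(\pi_0(L_WC),D)\simeq\Fun(C[W^{-1}],D)$ for every $D$; letting $\Phi:C[W^{-1}]\to\pi_0(L_WC)$ be the functor produced by the Gabriel--Zisman universal property from $\pi_0(l)$, one checks that this natural equivalence is precomposition with $\Phi$. By the $2$-Yoneda lemma $\Phi$ is then an equivalence of categories, and by construction it is compatible with the two localization functors, which is the claim.

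The step I expect to be the main obstacle is the discreteness argument: one must verify carefully that both the simplicial functors into $iD$ and the homotopies between them are entirely detected by $\pi_0$, so that no higher homotopical data survives and $\R\Fun^\S(-,iD)$ is genuinely computed by the ordinary functor category out of $\pi_0$. Once that collapse is secured, everything else is a formal consequence of the two universal properties, together with the bookkeeping that equivalences in a discrete simplicial category are isomorphisms.
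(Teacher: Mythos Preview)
The paper does not give its own proof of this proposition: it is stated as background and attributed to \cite[2.2 and 2.3]{HAG1}. So there is nothing to compare against, and your argument stands on its own. The overall strategy---test both universal properties against \emph{discrete} targets $iD$ and use the adjunction $\pi_0\dashv i$ to collapse the simplicial side---is the standard one and is correct.

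One step deserves a sharper justification. You write that because $\Fun^\S(T,iD)$ is $0$-truncated ``it coincides with its derived version''. Discreteness alone does not buy you that; what makes it work is that $iD$ is fibrant in $\sC$ (its hom-sets are discrete hence Kan, and the isofibration condition over the terminal object is vacuous), so $\R\Fun^\S(T,iD)\simeq\Fun^\S(QT,iD)\cong i\Fun(\pi_0(QT),D)$ for a cofibrant replacement $QT\to T$; and then the very definition of weak equivalence in $\sC$ forces $\pi_0(QT)\to\pi_0(T)$ to be an equivalence of categories. That is the actual reason the derived and underived functor categories agree, and it is worth saying explicitly. A second, smaller point: the Gabriel--Zisman universal property as recorded in the paper is the $1$-categorical one (unique factorization of functors), whereas you invoke the $2$-categorical upgrade $\Fun(C[W^{-1}],D)\simeq\Fun_W(C,D)$. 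This upgrade is routine (natural transformations between $W$-inverting functors automatically descend), but since you lean on $2$-Yoneda at the end you should state it. With these two clarifications your proof is complete.
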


In the case where the category $C$ is a model category, we have a useful result on top of this. Once again, we will take the result not from \cite{HAG1}, but from \cite[Localization and model categories]{DAG}.

\begin{prop} Let $M$ be a model category, and $C$ a small category. Then there exists a natural equivalence of simplicial categories 
$$L_{W_C}(\Fun(C,M))\simeq \R\Fun^\S(C,L_WM), $$
where $W$ are the weak equivalences in $M$ and $W_C$ are the weak equivalences on $\Fun(C,M)$ using the projective model structure.
\end{prop}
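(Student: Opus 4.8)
The plan is to produce an explicit comparison morphism of simplicial categories and then check that it is a Dwyer--Kan equivalence, using the criterion built into the definition of weak equivalence in $\sC$: a simplicial functor is a weak equivalence exactly when it is a weak equivalence on every mapping space and induces an equivalence of homotopy categories. First I would post-compose with the localization morphism $l:M\to L_WM$. Since $C$ is an ordinary category (viewed as a discrete simplicial category), every functor $F:C\to M$ gives a simplicial functor $l\circ F:C\to L_WM$, and this assignment is functorial, yielding $l_*:\Fun(C,M)\to \Fun^\S(C,L_WM)$; composing with a fibrant replacement lands us in $\R\Fun^\S(C,L_WM)$. As $l$ sends $W$ to equivalences, $l_*$ sends the objectwise equivalences $W_C$ to objectwise equivalences, hence to equivalences. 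By the universal property of the simplicial localization (Definition \ref{Ch. 1: def localization}) applied with target $T=\R\Fun^\S(C,L_WM)$, the functor $l_*$ lies in the essential image of $l^*$ and therefore factors, up to homotopy, through a morphism
$$\Psi:L_{W_C}(\Fun(C,M))\longrightarrow \R\Fun^\S(C,L_WM),$$
which is my candidate equivalence; its naturality in $C$ and $M$ is inherited from that of $l$.

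For essential surjectivity I would pass to homotopy categories. By Proposition \ref{ch.1: simplicial loc. and pi_0}, $\pi_0L_{W_C}(\Fun(C,M))$ is the Gabriel--Zisman localization $\Ho(\Fun(C,M))$ of the projective model structure, so it suffices to show that every object of $\pi_0\R\Fun^\S(C,L_WM)$ is, up to equivalence, the image of a strict diagram $C\to M$. This is precisely a rectification statement: an object of $\R\Fun^\S(C,L_WM)$ is a homotopy-coherent $C$-diagram in $L_WM$, and since $L_WM$ is modelled by the bifibrant objects of $M$ with their mapping spaces and $C$ is a $1$-category, the Dwyer--Kan rectification theorem lets us replace any such coherent diagram by a strict $C$-diagram in $M$ that $\Psi$ carries to the given object up to equivalence.

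For homotopical full faithfulness I would compare derived mapping spaces. For bifibrant $F,G\in\Fun(C,M)$, the mapping space $\R\Map_{\Fun(C,M)}(F,G)$ of the projective model structure is computed as the homotopy end $\int_{c\in C}^{h}\Map_M(F(c),G(c))$ of the mapping spaces of $M$ (constructed via the (co)simplicial framings), while the derived mapping space in $\Fun^\S(C,L_WM)$ is the homotopy end $\int_{c\in C}^{h}\Map_{L_WM}(F(c),G(c))$. The Dwyer--Kan identification of mapping spaces in the simplicial localization of a model category gives natural weak equivalences $\Map_{L_WM}(F(c),G(c))\simeq \R\Map_M(F(c),G(c))$ for each $c$; taking homotopy ends then yields a weak equivalence of mapping spaces, so $\Psi$ is homotopically fully faithful and hence, together with the previous step, an equivalence.

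The main obstacle will be the two homotopy-coherence inputs. First, one must prove that the projective mapping space really is the homotopy end of the pointwise derived mapping spaces, so that it matches the functor-category mapping space on the $L_WM$ side; this requires care with cofibrant and fibrant replacements in the diagram category and with the framings used to define $\Map$. Second, the rectification used for essential surjectivity genuinely controls the passage between strict $C$-diagrams in $M$ and homotopy-coherent diagrams in $L_WM$, and is where the real content sits. Once the homotopy-end description of mapping spaces and the rectification theorem are granted, assembling $\Psi$ into a Dwyer--Kan equivalence is formal.
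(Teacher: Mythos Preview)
The paper does not prove this proposition at all: it is stated as a background result, attributed to \cite[Localization and model categories]{DAG}, and used without further justification. So there is no ``paper's own proof'' to compare against.

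Your outline is the standard route to this statement and is correct at the level of strategy: build the comparison functor from the universal property of $L_{W_C}$, then verify it is a Dwyer--Kan equivalence by checking essential surjectivity (rectification of homotopy-coherent diagrams) and full faithfulness (identification of both sides' mapping spaces with the same homotopy end). You are also honest about where the actual mathematics lives: the rectification input and the homotopy-end description of derived mapping spaces in the projective model structure are precisely the two nontrivial ingredients, and neither is a triviality. If you want to turn this into a self-contained proof rather than a sketch, those are the two lemmas you would need to establish or cite precisely; everything else in your argument is formal. One small caution: the factorization of $l_*$ through $L_{W_C}$ via the universal property gives $\Psi$ only up to equivalence in $\Ho(\sC)$, which is enough for the statement but means ``naturality'' should also be understood at that level.
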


\begin{rem}One consequence of this result is that if $M$ is a model category, $L_WM$ has all limits and colimits, and those limits and colimits can be computed using homotopy limits and colimits.
\end{rem}

While we are talking about categories of simplicial diagrams from a localization to a model category $M$, we can ask what happens when we take $M=\sS$. In that case, we have a Quillen equivalent construction. We take $\sS$ as a simplicial model category using the structure given in Example \ref{ch. 1:sS}.

\begin{defin}Let $C$ be a simplicial category and $W$ a subset of its morphisms. We call the \textbf{model category of restricted diagrams from  $(C,W)$ to $\sS$} the left Bousfield localization of $\Fun^\S(C,\sS)$ along the set of morphisms of the form $\underline{h}_x\to \underline{h}_y$ for all $x\to y\in W$, with $\underline{h}_x$ as defined in Notation \ref{ch.1:notation h simp}. 
 We denote it by $\sS^{C,W}$.
\end{defin}

\begin{rem}By the general theory of Bousfield localizations, the fibrant objects of $\sS^{C,W}$ are those functors $f:C\to \sS$ that satisfy the following conditions:
\begin{itemize}
	\item The functor $f$ is a fibration for the projective model structure.
	\item For all $x\to y$ morphism in $W$, the induced morphism $f(x)\to f(y)$ is an equivalence in $\sS$.
\end{itemize}
\end{rem}

\begin{teo}\label{ch. 1: equivalence for functor categories}Let $C$ be a simplicial category, $W$ a subset of its morphisms. Let $(F_*C, F_*W)$ be the canonical free resolution of $(C,W)$ as simplicial categories. There exist two natural functors 
$$(C,W)\xleftarrow{p} (F_*C,F_*W)\xrightarrow{l} L_{F_*W}F_*C=L_WC $$
which induce two right Quillen functors
$$\sS^{C,W}\xrightarrow{p^*}\sS^{F_*C,F_*W}\xleftarrow{l^*}\Fun^\S(L_WC,\sS).$$
Those Quillen functors $p^*, l^*$ are Quillen equivalences. In particular, there exists a chain of Quillen equivalences between $\sS^{C,W}$ and $\Fun^\S(L_WC,\sS)$.
\end{teo}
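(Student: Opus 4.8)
The plan is to split the comparison into its two constituent maps and handle each on its own: the free resolution $p\colon F_*C\to C$ and the localization $l\colon F_*C\to L_{F_*W}F_*C=L_WC$, proving in each case that precomposition is a right Quillen equivalence, and then assembling the two legs into the desired zigzag. Throughout, the relevant adjunctions $(p_!,p^*)$ and $(l_!,l^*)$ are the Kan-extension adjunctions of Proposition~\ref{ch.1: pull adjunction}, which are Quillen for the projective structures; to promote them to the Bousfield-localized structures I would invoke the criterion of Proposition~\ref{ch. 1: criteron Quillen adjunction}. Since cofibrations are unchanged by a left Bousfield localization, $p_!$ and $l_!$ still preserve cofibrations, so the only thing to check is that $p^*$ and $l^*$ preserve fibrant objects. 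By Proposition~\ref{ch. 1: fibrant and C-local} the fibrant objects of $\sS^{C,W}$ (resp.\ $\sS^{F_*C,F_*W}$) are the projectively fibrant functors inverting the representables attached to $W$ (resp.\ $F_*W$); since $p$ and $l$ carry $F_*W$ into $W$ and into equivalences respectively, and are right Quillen at the projective level, both $p^*$ and $l^*$ take fibrant objects to fibrant objects, so both are right Quillen.

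For the resolution leg I would argue as follows. Because $F_*C$ is the canonical free (hence cofibrant) resolution, $p$ is an equivalence of simplicial categories, so by the proposition asserting that an equivalence of simplicial categories induces a Quillen equivalence on functor categories, $(p_!,p^*)$ is already a Quillen equivalence for the projective model structures on $\Fun^\S(F_*C,\sS)$ and $\Fun^\S(C,\sS)$. To descend this to the localized structures I would use the standard fact that a Quillen equivalence transports to left Bousfield localizations whenever the derived left adjoint matches up the two localizing sets. The only input needed is that left Kan extension preserves representables, $p_!\,\underline{h}_x=\underline{h}_{p(x)}$ (Notation~\ref{ch.1:notation h simp}), so that $\L p_!$ sends the $F_*W$-localizing maps $\underline{h}_x\to\underline{h}_y$ to the $W$-localizing maps $\underline{h}_{p(x)}\to\underline{h}_{p(y)}$. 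This yields that $(p_!,p^*)$ is a Quillen equivalence between $\sS^{F_*C,F_*W}$ and $\sS^{C,W}$.

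For the localization leg most of the work is already done by the universal property of the simplicial localization. Applying Definition~\ref{Ch. 1: def localization} with $T=\sS$ shows that the derived functor $\R l^*$ is fully faithful with essential image exactly the functors $F_*C\to\sS$ inverting $F_*W$; by Proposition~\ref{ch. 1: fibrant and C-local} this essential image is precisely $\Ho(\sS^{F_*C,F_*W})$. Hence $\R l^*\colon\Ho(\Fun^\S(L_WC,\sS))\to\Ho(\sS^{F_*C,F_*W})$ is an equivalence of homotopy categories, and since $\R l^*$ is the total right derived functor of the right adjoint in $(l_!,l^*)$, the characterization of Quillen equivalences through their derived adjunctions gives that $(l_!,l^*)$ is a Quillen equivalence. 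Composing the two legs then produces the asserted chain of Quillen equivalences between $\sS^{C,W}$ and $\Fun^\S(L_WC,\sS)$.

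I expect the main obstacle to be the descent step for $p$: proving that a projective Quillen equivalence really does restrict to a Quillen equivalence of the two Bousfield localizations. The delicate point is not the formal transport lemma but checking that the two localizing classes correspond under the derived adjunction — concretely, that $\L p_!$ takes $F_*W$-local equivalences to $W$-local equivalences and that $p^*$ reflects local objects — which is where the compatibility $p_!\,\underline{h}_x\simeq\underline{h}_{p(x)}$ and the identification $L_{F_*W}F_*C=L_WC$ must be used with care. The $l$-leg, by contrast, is nearly formal once the universal property of the simplicial localization is in hand.
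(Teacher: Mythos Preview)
The paper does not supply a proof of this theorem; it is stated in the background chapter as a result imported from \cite[Section 2]{HAG1} without argument. There is therefore no in-paper proof to compare your proposal against.

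Your two-leg decomposition is the natural structure and matches how the result is established in the source. The $p$-leg is handled cleanly: using that $p$ is a weak equivalence of simplicial categories to get a projective Quillen equivalence, and then transporting it along the Bousfield localizations via $p_!\,\underline{h}_x\simeq\underline{h}_{p(x)}$, is exactly the right mechanism.

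The soft spot is in the $l$-leg. The universal property of Definition~\ref{Ch. 1: def localization} is an equivalence in $\Ho(\sC)$ between the derived \emph{simplicial categories} $\R\Fun^\S(L_WC,T)$ and $\R\Fun^\S_W(C,T)$; it is not, on its face, a statement about the homotopy categories of the model categories $\Fun^\S(L_WC,\sS)$ and $\sS^{F_*C,F_*W}$. To pass from one to the other you need to identify $\R\Fun^\S(D,\sS)$ with the simplicial localization of the model category $\Fun^\S(D,\sS)$ at its levelwise equivalences (the proposition immediately preceding the theorem, with $M=\sS$), and then argue that a Quillen adjunction inducing an equivalence of simplicial localizations is a Quillen equivalence. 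You invoke Proposition~\ref{ch. 1: fibrant and C-local} as if it closed this gap, but it only describes fibrant objects; the identification of $\Ho(\sS^{F_*C,F_*W})$ with the essential image of $\R l^*$ requires the extra bridge just described. This is where the real content of the argument lives, and it should be made explicit rather than absorbed into ``the universal property''.
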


\begin{rem}We bring to your attention that we have said "there exists a chain of Quillen equivalences" and not "there exists a Quillen equivalence". As we said in Remark \ref{ch1: composition}, we cannot compose $p_*$ and the left Quillen adjoint of $l_*$ and get a Quillen adjunction, as they aren't both right adjoints.
\end{rem}

\begin{coro}Let $C, D$ be two simplicial categories, $W$ a subset of morphisms of $C$ and $V$ a subset of morphisms of $D$. Let $f:C\to D$ be a morphism of simplicial categories such that $f(W)\subset V$. If the induced functor $Lf:L_WC\to L_VD$ is an equivalence of simplicial categories, then the Quillen adjunction 
$$f_!:\sS^{C,W}\rightleftharpoons \sS^{D,V}:f^*$$
 is a Quillen equivalence.
\end{coro}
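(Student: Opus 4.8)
The plan is to reduce the statement to Theorem \ref{ch. 1: equivalence for functor categories} together with the fact that an equivalence of simplicial categories induces a Quillen equivalence on the associated categories of simplicial functors into $\sS$. First I would check that $(f_!,f^*)$ really is a Quillen adjunction between the two Bousfield localizations: by the criterion of Proposition \ref{ch. 1: criteron Quillen adjunction} it suffices to see that $f_!$ preserves cofibrations and that $f^*$ preserves fibrant objects. The first is immediate because the cofibrations of $\sS^{C,W}$ and $\sS^{D,V}$ coincide with the projective cofibrations and $f_!$ is left Quillen for the projective structures. For the second, recall that a fibrant object of $\sS^{D,V}$ is a projectively fibrant $G:D\to\sS$ sending the morphisms of $V$ to weak equivalences; since $f(W)\subseteq V$, the functor $f^*G=G\circ f$ is projectively fibrant and sends $W$ to weak equivalences, hence is fibrant in $\sS^{C,W}$.

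Next I would exploit the naturality of the constructions in Theorem \ref{ch. 1: equivalence for functor categories}. The canonical free resolution $F_*$, its counit $p:F_*C\to C$ and the localization morphism $l:F_*C\to L_WC$ are all functorial in the pair $(C,W)$, so $f$ induces $F_*f:(F_*C,F_*W)\to(F_*D,F_*V)$ and two strictly commuting squares of simplicial categories, $p_D\circ F_*f=f\circ p_C$ and $l_D\circ F_*f=(Lf)\circ l_C$. Applying precomposition $(-)^*$ turns these into strictly commuting squares of right Quillen functors (each descending to the localizations by the same criterion as above, using functoriality of $F_*$ on pairs):
$$(F_*f)^*\circ p_D^*=p_C^*\circ f^*,\qquad (F_*f)^*\circ l_D^*=l_C^*\circ (Lf)^*,$$
between $\sS^{D,V}$, $\sS^{C,W}$, $\sS^{F_*D,F_*V}$, $\sS^{F_*C,F_*W}$ and the functor categories $\Fun^\S(L_VD,\sS)$ and $\Fun^\S(L_WC,\sS)$.

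Now the endgame is a $2$-out-of-$3$ argument on homotopy categories. By Theorem \ref{ch. 1: equivalence for functor categories} the four functors $p_C^*,p_D^*,l_C^*,l_D^*$ are Quillen equivalences, and by the proposition asserting that an equivalence of simplicial categories induces a Quillen equivalence on simplicial functor categories into $\sS$, the hypothesis that $Lf$ is an equivalence makes $(Lf)^*$ a Quillen equivalence. Passing to total right derived functors (which compose, since the squares commute on the nose), the second square gives that $\R(F_*f)^*\circ\R l_D^*=\R l_C^*\circ\R(Lf)^*$ is an equivalence of categories; since $\R l_D^*$ is an equivalence, $\R(F_*f)^*$ is an equivalence. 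Feeding this into the first square, $\R p_C^*\circ\R f^*=\R(F_*f)^*\circ\R p_D^*$ is an equivalence, and since $\R p_C^*$ is an equivalence we conclude that $\R f^*$ is an equivalence of homotopy categories. A right adjoint whose derived functor is an equivalence of categories forms an adjoint equivalence with its left adjoint, so by the characterization of Quillen equivalences (the derived adjunction being an adjoint equivalence) $(f_!,f^*)$ is a Quillen equivalence.

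The main obstacle I anticipate is the bookkeeping in the second paragraph: one must be sure that $p$ and $l$, and in particular the induced functor $Lf$ on the localizations, are genuinely natural in $(C,W)$ so that the two squares commute strictly (not merely up to homotopy), for only then does the clean $2$-out-of-$3$ on derived functors apply. Everything else is formal once the naturality of the free resolution and of the simplicial localization is in hand.
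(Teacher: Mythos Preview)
Your proof is correct and is precisely the argument the corollary is meant to encode. The paper states this result without proof, leaving it as an immediate consequence of Theorem \ref{ch. 1: equivalence for functor categories} together with the proposition (stated just before that theorem) that an equivalence of simplicial categories induces a Quillen equivalence on categories of simplicial functors into $\sS$; your two commuting squares and the $2$-out-of-$3$ chase on derived functors spell out exactly that deduction. The caveat you flag about strict naturality of $p$, $l$, and $Lf$ is the only nontrivial point, and it is handled by the functoriality of the canonical free resolution on pairs $(C,W)$, which the paper takes for granted.
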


As usual we can construct a Yoneda lemma for this type of localization too. This result comes from \cite[Theorem 4.2.3]{HAG1}. It is given there for a pseudo-model category.

\begin{teo}\label{ch. 1:fully-faith}Let $C$ a model category that is also a simplicial category and $W$ the set of weak equivalences, then the functor 
$$\R\Sing(-):\Ho(C)\to \Ho(\sS^{C,W}) $$
is fully faithful. 
\end{teo}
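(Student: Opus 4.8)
The plan is to prove full faithfulness by computing the hom-sets of $\Ho(\sS^{C,W})$ as $\pi_0$ of derived mapping spaces and identifying them, via the Yoneda lemma, with those of $\Ho(C)$. Recall that $\R\Sing$ sends an object $x$ to the representable simplicial presheaf $\underline{h}_x=\Hom_C(-,x)$, so for $x,y\in C$ full faithfulness amounts to showing that the natural map induced by $\Sing$,
$$\R\Map_C(x,y)\longrightarrow \R\Map_{\sS^{C,W}}(\underline{h}_x,\underline{h}_y),$$
is a weak equivalence of simplicial sets. Passing to $\pi_0$ then yields the desired bijection $\Hom_{\Ho(C)}(x,y)\cong \Hom_{\Ho(\sS^{C,W})}(\R\Sing x,\R\Sing y)$, and one checks directly that it is the map induced by $\R\Sing$.

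First I would reduce the mapping space in the localization to one in the unlocalized presheaf category. Since $\sS^{C,W}$ is a left Bousfield localization of $\Fun^\S(C\op,\sS)$ it has the same cofibrations, and by Proposition \ref{ch. 1: fibrant and C-local} its fibrant objects are exactly the projectively fibrant $W$-local presheaves. Using the homotopy Yoneda lemma established above, a presheaf $F$ is $W$-local precisely when it sends the morphisms of $W$ to weak equivalences in $\sS$; for $y$ fibrant the representable $\underline{h}_y$ has this property on weak equivalences between cofibrant objects, so after passing to the cofibrant objects of $C$ it becomes a local fibrant object of $\sS^{C,W}$. For such a target the derived mapping space computed in the localization agrees with the one computed in $\Fun^\S(C\op,\sS)$, giving
$$\R\Map_{\sS^{C,W}}(\underline{h}_x,\underline{h}_y)\simeq \R\Hom_{\Fun^\S(C\op,\sS)}(\underline{h}_x,\underline{h}_y).$$

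Next I would apply the homotopy Yoneda lemma directly to the right-hand side with $F=\underline{h}_y$, obtaining a canonical equivalence in $\Ho(\sS)$
$$\R\Hom_{\Fun^\S(C\op,\sS)}(\underline{h}_x,\underline{h}_y)\simeq \underline{h}_y(x)=\Hom_C(x,y).$$
Because $C$ is a simplicial model category, the simplicial hom $\Hom_C(x,y)$, formed with a cofibrant replacement of $x$ and a fibrant replacement of $y$, is a model for the derived mapping space $\R\Map_C(x,y)$, whence $\pi_0\R\Map_C(x,y)=\Hom_{\Ho(C)}(x,y)$. Chaining the three equivalences gives the claimed bijection on hom-sets, and identifies it with the map induced by $\R\Sing$.

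The main obstacle is the locality and fibrancy of the representable $\underline{h}_y$ inside $\sS^{C,W}$: on the nose $\Hom_C(-,y)$ only inverts weak equivalences between cofibrant objects and is only objectwise Kan on cofibrant objects, so it is not literally a $W$-local fibrant presheaf on all of $C$. I would handle this by restricting to the full subcategory of cofibrant objects, where both properties hold, and then transport the statement back using that the inclusion induces an equivalence of homotopy categories (Proposition \ref{ch 1:equiv}) together with the invariance of the restricted-diagram categories under an equivalence of the indexing simplicial localization (the corollary to Theorem \ref{ch. 1: equivalence for functor categories}). Making this reduction precise --- that is, checking that fibrant and cofibrant replacements can be chosen compatibly so that the Yoneda identification genuinely computes the derived mapping space in $\sS^{C,W}$ --- is the technical heart of the argument, and it is exactly the point addressed by the pseudo-model-category version in \cite[Theorem 4.2.3]{HAG1}.
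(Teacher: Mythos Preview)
The paper does not prove this theorem: it is stated in the background chapter with the attribution ``This result comes from \cite[Theorem 4.2.3]{HAG1}. It is given there for a pseudo-model category,'' and no argument is supplied. Your sketch is a correct outline of the standard proof of that result --- reduce to derived mapping spaces, use that $\sS^{C,W}$ is a left Bousfield localization so that mapping into a local fibrant target agrees with the unlocalized mapping space, apply the homotopy Yoneda lemma, and identify the answer with $\R\Map_C(x,y)$ via the simplicial model structure --- and you correctly flag the one genuine technical point (that $\underline{h}_y$ is only $W$-local and projectively fibrant after restricting to cofibrant sources), together with the appropriate fix. This is precisely the argument of \cite[Theorem 4.2.3]{HAG1}, so your proposal matches what the paper defers to.
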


\subsection{The universal model category}

And lastly in this section, we will construct a "universal model category" $UC$ for every category $C$, in the sense that for all model category $M$ and all functor $\gamma:C\to M$ there exists a factorization of $\gamma$ by $UC$ which is, in some sense, unique. From here on all results come from \cite{Dugger} unless stated otherwise.

\begin{defin}Let $C$ be a category, let $M$ and $N$ be two model categories. We fix a functor $f:C\to M$. For all $g:C\to N$, we define a \textbf{factorization of $g$ through $M$} to be a triple $(L,R,\eta)$ such that 
\begin{itemize}
	\item The functors $L:M\rightleftharpoons N:R$ form a Quillen adjunction.
	\item We have a weak equivalence $\eta:L\circ f\simeq g$.
\end{itemize}
\end{defin}

\begin{defin}Let $C$ be a category, let $M$ and $N$ be two model categories. We fix a functor $f:C\to M$, and take $g:C\to N$. If we have $(L,R,\eta)$ and $(L',R',\eta')$ two factorizations of $g$ through $M$, we define a \textbf{morphism of factorizations} to be a natural transformation $F:L\to L'$ such that for all $x\in C$ the diagram
\begin{center}
	\begin{tikzcd}
		L\circ f(x)\ar[rr,"F\circ \Id_f"]\ar[rd,"\eta"]& &L'\circ f(x)\ar[ld,"\eta'"]\\
		 & g(x) &
	\end{tikzcd}
\end{center}
commutes.
\end{defin}

\begin{nota}With the above conditions, we denote $\Fact_f(g)$ the category of factorizations of $g$ through $M$ and morphisms between them.
\end{nota}

\begin{prop}\label{ch. 1: Dugger}\cite[Prop. 2.3]{Dugger} Let $C$ be a category and $M$ be a model category. There exists a closed model category $UC$ and a functor $r:C\to UC$ such that the following is true: for every functor $\gamma:C\to M$ there exists a factorization of $\gamma$ through $UC$, $(\Re,\Sing,\eta)$, 
\begin{center}
	\begin{tikzcd}
		C\ar[rr,"r"]\ar[rrdd,"\gamma" {name=A}]&&UC\ar[dd,"\Re"', shift right]\ar[Rightarrow, to=A, "\eta" above]\\
		&  &\\
		 &&M\ar[uu, "\Sing"', shift right]
	\end{tikzcd}
\end{center}
and the category of factorizations $\Fact_r(\gamma)$ is contractible.
\end{prop}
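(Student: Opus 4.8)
The plan is to take $UC=\Fun(C\op,\sS)$, the category of simplicial presheaves on $C$ with its projective model structure, and to let $r\colon C\to UC$ be the Yoneda embedding $r(c)=\Hom_C(-,c)$, a set being regarded as a discrete simplicial set. By Theorem \ref{ch. 1:projective} this is a cofibrantly generated model category, and by Theorem \ref{ch. 1: functors simplicial} it is moreover a simplicial model category; its generating cofibrations are the maps $F_{\partial\Delta_n}^\alpha\to F_{\Delta_n}^\alpha$ obtained by copowering the representable $r(\alpha)$ with the boundary inclusions $\partial\Delta_n\to\Delta_n$. The conceptual input that makes $UC$ ``universal'' is the density (co-Yoneda) theorem: every simplicial presheaf is canonically a colimit, and in fact a homotopy colimit, of representables. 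This is what will force a colimit-preserving functor out of $UC$ to be determined, up to homotopy, by its restriction along $r$.

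Given $\gamma\colon C\to M$, I would first replace it by a cofibrant resolution and frame it: using the cofibrant replacement functor $Q$ and the functorial cosimplicial frame $C^*(-)$, set $\Gamma(c)=C^*(Q\gamma(c))$, a functor $\Gamma\colon C\to cM$ with $\Gamma(c)^0\simeq\gamma(c)$ and cofibrant latching maps. I then define the realization and its right adjoint by
$$\Re(F)=\int^{c\in C}\Gamma(c)\otimes F(c),\qquad \Sing(X)(c)=\Map(\Gamma(c),X),$$
where $\Gamma(c)\otimes-\colon\sS\to M$ and $\Map(\Gamma(c),-)\colon M\to\sS$ are the adjoint pair attached to the cosimplicial object $\Gamma(c)$ in Proposition \ref{ch. 1:tensor}. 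The coend/end adjunction together with those level-wise adjunctions gives that $(\Re,\Sing)$ is an adjunction between $UC$ and $M$.

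Next I would check this is a Quillen adjunction via the criterion of Proposition \ref{ch. 1: criteron Quillen adjunction}, i.e. that $\Re$ preserves cofibrations and $\Sing$ preserves fibrant objects. Since $\Re$ is cocontinuous it suffices to test it on the generating cofibrations, and $\Re(F_{\partial\Delta_n}^\alpha\to F_{\Delta_n}^\alpha)$ is $\Gamma(\alpha)\otimes(\partial\Delta_n\to\Delta_n)$, a cofibration because $Q\gamma(\alpha)$ is cofibrant and hence $\Gamma(\alpha)\otimes-$ is left Quillen by Proposition \ref{ch.1:Map Quillen}. Dually, for $X$ fibrant the same proposition makes each $\Map(\Gamma(c),X)$ a Kan complex, so $\Sing(X)$ is objectwise fibrant, hence projectively fibrant. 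The weak equivalence $\eta\colon\Re\circ r\simeq\gamma$ then comes from the density theorem: $\Re(r(c))=\int^{c'}\Gamma(c')\otimes\Hom_C(c',c)\cong\Gamma(c)^0=Q\gamma(c)$, which is weakly equivalent to $\gamma(c)$. This produces the desired factorization $(\Re,\Sing,\eta)$.

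The final and hardest step is the contractibility of $\Fact_r(\gamma)$. The idea is to show that a factorization of $\gamma$ through $UC$ is, up to the morphisms allowed in $\Fact_r(\gamma)$, the same datum as a cosimplicial resolution of $\gamma$: a left adjoint out of $UC$ is pinned down by where it sends representables together with a resolution making it homotopically well behaved, and the weak equivalence $\eta$ identifies that data with a frame of $\gamma$. I would then invoke the fact that the category of cosimplicial resolutions of a fixed functor is contractible, any two resolutions being joined by a functorial zig-zag so that the nerve has trivial homotopy type. Translating this contractibility back through the above identification gives that $\Fact_r(\gamma)$ is contractible. The main obstacle is precisely this last identification: one must show both that every factorization is equivalent to one coming from a resolution and that the comparison morphisms of factorizations correspond exactly to maps of resolutions, so that the two categories have the same homotopy type.
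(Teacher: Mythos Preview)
Your approach is essentially the same as the paper's: both take $UC=\Fun(C\op,\sS)$ with the projective model structure, $r$ the Yoneda embedding viewed via constant simplicial sets, $\Re$ the coend against a cosimplicial frame of $\gamma$, and $\Sing(X)=\Map(\gamma(-),X)$. The paper in fact only gives a ``Sketch of construction'' (citing Dugger for the proof) and stops at writing down $\Re$ and $\Sing$; you go further by explicitly inserting the cofibrant replacement $Q$, verifying the Quillen property on generating cofibrations via Proposition~\ref{ch.1:Map Quillen}, and outlining the contractibility argument through the space of cosimplicial resolutions --- all of which is the content of Dugger's original paper rather than of this text.
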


\begin{proof}[Sketch of construction]\cite[9.5 Section 3]{Dugger}
The universal model category $UC$ is no other than $UC=\Fun(C\op,\sS)$, the category of simplicial presheafs over $C$. The inclusion $i:\Set\to \sS$ induces an obvious inclusion $j:\Fun(C\op,\Set)\to \Fun(C\op,\sS)$,  which composed with the Yoneda embedding gives us the needed functor from $C$ to $\Fun(C\op,\sS)$, $r=j\circ h:C\to \Fun(C\op,\Set)\to \Fun(C\op, \sS)$. 
\\
\\Now that we have the closed model category $\Fun(C\op, \Set)$ and the functor $r:C\to \Fun(C\op, \sS)$, we take a functor $\gamma:C\to M$ to another model category. We are going to give the factorization, but we won't prove that it is in fact one. The easier adjoint to define is $\Sing$. In fact, this is the reason why we have introduced cosimplicial framings and mapping spaces. We define $\Sing$ as follows:
$$\Sing(x)=\Map( \gamma(-),x)\in \Fun(C\op, \sS). $$
The functor $\Re$ is just the left adjoint of $\Sing$. In particular, it looks as follows:
\begin{center}
	\begin{tikzcd}
		\Re(F)=(C^*\circ \gamma)\otimes_C F=\coeq\ \l\coprod_{a\to b}(C^*\circ\gamma) (a)\otimes F(b)\right.\ar[r, shift left]\ar[r, shift right] & \left.\coprod_c (C^*\circ \gamma)(c)\otimes F(c)\r
	\end{tikzcd}
\end{center}
where the tensor product is the one given in Proposition \ref{ch. 1:tensor}.
\\\end{proof}

\begin{rem}The fact that these functors are called $\Re$ and $\Sing$ is obviously not a coincidence: indeed, the idea comes from the construction of a cocomplete category from $\Delta$, which gives us the adjoints $\Re:\sS=\Fun(\Delta,\Set)\rightleftharpoons \Top:\Sing$, the geometric realization and the singular functor respectively.
\end{rem}

\begin{rem}The advantage of this construction is that $\Fun(C\op, \sS)$ is a particularly well-behaved model category, as it inherits most of $\sS$'s good properties. In particular, $\Fun(C\op, \sS)$ is simplicial, cofibrantly generated and proper. 
\end{rem}

\begin{rem}The representable functors $rX$ are always cofibrant, so the images of $\Re\circ r$ are all cofibrant. That is the reason why we have asked for the factorization to be commutative up to a certain weak equivalence: unless the functor $\gamma$ sent all objects to cofibrant objects in $M$, the diagram wouldn't commute strictly.
\end{rem}

As in the classical case, where we know that every presheaf can be expressed as a colimit of representables, we can now prove that every object in $\Fun(C\op, \sS)$ is a homotopy colimit of representables. Let us construct the representables needed for it.

\begin{defin}Let $C$ be a category, and $F$ an object in $\Fun(C\op, \Set)$, i.e. a classical presheaf. We can define a \textbf{simplicial presheaf associated to $F$} by using the following formula:
$$(\tilde{Q}F)_n=\coprod_{rX_n\to\ldots\to rX_0\to F} rX_n.$$ 
\end{defin}

\begin{defin}Let $C$ be a category, and $F_*$ an object in $\Fun(C\op, \sS)$, i.e. an arbitrary simplicial presheaf. The construction $\tilde{Q}F_*$ gives us a bisimplicial presheaf, $(\tilde{Q}F_*)_n=\tilde{Q}F_n. $ We can define the following simplicial presheaf:
$$QF=\diag (\tilde{Q}F_*), $$
the diagonal of the bisimplicial presheaf $\tilde{Q}F_*$.
\end{defin}

\begin{prop}Let $C$ be a category and $F_*$ an object in $\Fun(C\op,\sS)$, i.e. an arbitrary simplicial presheaf. Then the associated simplicial presheaf $QF$ is cofibrant, and the natural map $QF\to F_*$ is a weak equivalence. In particular, every simplicial presheaf is an homotopy colimit of representables.
\end{prop}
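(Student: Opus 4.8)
The plan is to reduce everything to the case of a discrete presheaf and then pass to a general simplicial presheaf by taking a diagonal. First I would treat a presheaf $F$ of sets, i.e.\ an object of $\Fun(C\op,\Set)$, and analyse $\tilde{Q}F$ directly. For cofibrancy the point is that each level $(\tilde{Q}F)_n=\coprod_{rX_n\to\cdots\to rX_0\to F}rX_n$ is a coproduct of representables, hence cofibrant, and that the degeneracy maps are given by inserting identity arrows into the chains, so they are inclusions of summands. Consequently $\tilde{Q}F$ is a split simplicial presheaf whose skeletal filtration $\sk_{n-1}\tilde{Q}F\to\sk_n\tilde{Q}F$ is a single pushout of a coproduct of the generating cofibrations $rX_n\otimes\partial\Delta_n\to rX_n\otimes\Delta_n$, one copy for each non-degenerate chain. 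Thus $\emptyset\to\tilde{Q}F$ is a relative cell complex, and $\tilde{Q}F$ is cofibrant.

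For the weak equivalence in the discrete case I would evaluate at an object $X\in C$ and identify the simplicial set $(\tilde{Q}F)(X)$ with the nerve of an explicit category $\mathcal{G}_X$. By the Yoneda lemma an $n$-simplex of $(\tilde{Q}F)(X)$ is the data of a chain $X_n\to\cdots\to X_0$ in $C$, an element $s\in F(X_0)$, and a map $f\colon X\to X_n$; matching faces and degeneracies, this is exactly $N(\mathcal{G}_X)_n$, where $\mathcal{G}_X$ has objects $(Y,s\in F(Y),f\colon X\to Y)$ and a morphism $(Y,s,f)\to(Y',s',f')$ is a map $g\colon Y\to Y'$ with $s=g^*s'$ and $f'=gf$. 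The assignment $(Y,s,f)\mapsto f^*s$ defines a functor $\mathcal{G}_X\to F(X)$ to the discrete category $F(X)$ that is preserved by all morphisms, so $\mathcal{G}_X$ is the disjoint union of its fibres $\mathcal{G}_X^u$ over $u\in F(X)$. Each fibre has an initial object, namely $(X,u,\Id_X)$, hence $N(\mathcal{G}_X^u)$ is contractible; therefore $(\tilde{Q}F)(X)\simeq F(X)$, and the augmentation $(Y,s,f)\mapsto f^*s$ is precisely the natural map. As this holds for every $X$, the map $\tilde{Q}F\to F$ is an objectwise, hence a projective, weak equivalence.

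Having settled the discrete case, I would pass to an arbitrary simplicial presheaf $F_*$. Applying the above levelwise gives a map of bisimplicial presheaves $\tilde{Q}F_*\to F_*$, with $F_*$ constant in the resolution direction, that is a weak equivalence in each degree; since the diagonal of a levelwise weak equivalence of bisimplicial presheaves is a weak equivalence, $QF=\diag\tilde{Q}F_*\to\diag F_*=F_*$ is a weak equivalence. For the cofibrancy of $QF$ I would run the same skeletal-filtration argument for the diagonal: each $(QF)_n=\coprod_{rX_n\to\cdots\to rX_0\to F_n}rX_n$ is a coproduct of representables and the degeneracies remain summand inclusions, so $QF$ is again a cell complex built from the generating cofibrations and is cofibrant. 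Finally, because $QF$ is by construction the bar-type simplicial replacement of the canonical diagram of representables indexed by the category of elements of $F_*$, it is a model for the homotopy colimit of representables; combined with $QF\simeq F_*$, this shows that every simplicial presheaf is a homotopy colimit of representables.

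I expect the main obstacle to be the cofibrancy statements, specifically the verification that the chain construction yields a genuinely split (free-degeneracy) simplicial presheaf, so that the skeletal filtration attaches cells only along the generating cofibrations $rX\otimes\partial\Delta_n\to rX\otimes\Delta_n$; the bookkeeping of non-degenerate simplices in the diagonal, where the resolution degree and the intrinsic simplicial degree of $F_*$ interact, is the delicate part. The weak-equivalence half is comparatively safe once the identification with $N(\mathcal{G}_X)$ and the initial object $(X,u,\Id_X)$ are in hand, together with the standard fact that $\diag$ preserves levelwise weak equivalences.
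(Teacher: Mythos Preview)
The paper does not supply a proof of this proposition: it appears in the background chapter on Dugger's universal model category, where the result is simply quoted from \cite{Dugger} without argument. So there is no in-paper proof to compare against.

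Your proposal is essentially the standard proof from Dugger's original paper. The reduction to the discrete case, the identification of $(\tilde{Q}F)(X)$ with the nerve of the overcategory-style category $\mathcal{G}_X$, the contraction of each fibre via the initial object $(X,u,\Id_X)$, and the passage to general $F_*$ via the diagonal are all exactly what Dugger does. Your own diagnosis of the delicate point is accurate: the only place requiring care is the cofibrancy of $QF=\diag\tilde{Q}F_*$, where one must check that the combined degeneracies (those coming from the chain direction and those coming from the internal simplicial direction of $F_*$) still yield a split simplicial object, so that the latching maps are honest coproduct inclusions and the skeletal filtration attaches cells along generating projective cofibrations. This goes through, but it is worth writing out explicitly rather than asserting that ``the same argument works''.
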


\section{Differential graded categories}\label{Dg-categories}

Before we get to the core result of this thesis, we need to recall some basic results and notations concerning the main object we will be talking about, dg-categories.

\subsection{Definition and model structure}

Let us start by defining our terms. What is a differential graded category, and how do we give it a model structure? The results in this section should ring a familiar bell on the reader, as both these and the simplicial categories are common examples of a more general setting, enriched categories. In consequence, the definitions will be pretty similar. Unless stated otherwise, the results from this section are taken from \cite{Toen-dg}, but \cite{Keller} is also  a very good reference for it.

\begin{defin}We define $T$ a \textbf{dg-category} (differential graded category) to be a category enriched over $\Ch$ the category of cochain complexes. Equivalently, a dg-category consists of the following data: 
\begin{itemize}
	\item A set of objects $\Ob(T)$.
	\item For every pair of objects in $T$, $(x,y)\in\Ob(T)^2$, a cochain complex $\Hom(x,y)\in \Ch$. 
	\item For every triple of objects in $T$, $(x,y,z)\in\Ob(T)^3$ a composition morphism in $\Ch$
	$$\mu: \Hom(x,y)\otimes\Hom(y,z)\to \Hom(x,z) $$
	with the usual associativity condition.
	\item For every object in $T$, $x\in T$, a morphism $e_x: k\to \Hom(x,x)$ that satisfies the usual unit condition with respect to the composition stated above, where $k$ is the dg-category with a single object and $k$ as its complex of morphisms.
\end{itemize}
\end{defin} 

\begin{defin} Let $T$ and $T'$ be two dg-categories. A \textbf{dg-functor} (also called a\textbf{ morphism of dg-categories}) is a functor $f: T\to T'$ enriched over the category of complexes. Equivalently, it consists of the following data: 
\begin{itemize}
	\item A map of sets $\Ob(T)\to \Ob(T')$.
	\item For every pair of objects in $T$, $(x,y)\in Ob(T)^2$, a morphism of complexes
	$$\Hom(x,y)\to \Hom(f(x), f(y)). $$
	satisfying the usual associativity and unit conditions.
\end{itemize}  
\end{defin}

\begin{nota} We denote $\dg$ the category of dg-categories and dg-functors.
\end{nota}

For any dg-category $T$, we can define an associated category: the homotopy category of $T$. 

\begin{defin}Let $T$ be a dg-category. We call the \textbf{homotopy category of $T$}, and we denote it by $\l T\r$, a category which has the same objects as $T$ and whose morphisms are given by 
$$\l T \r(x,y)=H^0(\Hom_T(x,y))\ \ \forall (x,y)\in \Ob(T)^2, $$
i.e. the cohomology groups of degree 0 of the complex of morphisms.
\\
\\The composition in this category is given for all $(x,y,z)\in \Ob(T)^3$ by the composition of morphisms
$$H^0(\Hom_T(x,y))\otimes H^0(\Hom_T(y,z))\to H^0(\Hom_T(x,y)\otimes\Hom_T(y,z))\to H^0(\Hom_T(x,z)). $$
\end{defin}

\begin{rem}We have already defined three different things called "the homotopy category of" something. Although there are certain links between them, it is important to remember that those definitions are not interchangeable: it is for that reason that it is essential to keep in mind what it is that we are taking the homotopy category of. A model category, a simplicial category or a dg-category?
\end{rem}

It has been proven that $\dg$ has a model structure, and even a cofibrantly generated model structure. It is defined as follows: 

\begin{defin}Let $f:T\to T'$ be a morphism of dg-categories. 
\begin{itemize}
	\item We say that $f$ is \textbf{quasi-essentially surjective} if the induced morphism of homotopy categories, $\l f\r: \l T\r\to \l T'\r$ is essentially surjective.
	\item We say that $f$ is \textbf{quasi-fully faithful} if for any two objects $(x,y)\in \Ob(T)^2$ the corresponding morphism of complexes $T(x,y)\to T(f(x),f(y))$ is a weak equivalence of complexes.
	\item We say that $f$ is a \textbf{quasi-equivalence} if it is quasi-essentially surjective and quasi-fully faithful. 
\end{itemize}
\end{defin}

\begin{defin}\cite[Not. 2.5]{TAB} Let $T$ be a dg-category. We say that a morphism in $T$, $f\in Z^0(\Hom_T(x,y))$, is a \textbf{homotopy equivalence} if it becomes an isomorphism $H^0(f)$ in $\l T\r$. 
\end{defin}

\begin{defin}\cite[Def. 2.12]{TAB} Let $f:T\to T'$ be a morphism of dg-categories. We say that $f$ is a \textbf{fibration} if 
\begin{itemize}
	\item for every two objects $(x,y)\in \Ob(T)^2$, the corresponding morphism of complexes $T(x,y)\to T'(f(x),f(y))$ is a fibration of complexes, i.e. is surjective.
	\item For all $x\in \Ob(T)$,  $y'\in \Ob(T')$, and all $h:f(x)\to y'$ homotopy equivalence in $T'$, there exists an object $y\in \Ob(T)$ and a homotopy equivalence $g:x\to y$ in $T$ such that $f(y)=y'$ and $f(g)=h$. 
\end{itemize}
\end{defin}

\begin{teo}\label{ch. 1:Tab-model} (\cite[Def. 2.14]{TAB}, see \cite[Th. 2.1]{TAB_Fr} for a proof in French) The category $\dg$ admits a model structure with the quasi-equivalences as weak equivalences and the fibrations as defined above. It is a cofibrantly generated model category, and the generating cofibrations $\{I, P(s)/\ s\in\Z\}$ are the following:
\begin{itemize}
	\item The functor $I$ is the unique dg-functor $\0\to k$, where $\0$ is the initial object in $\dg$. 
	\item For all $s\in\Z$, let $\D(1,s,1)$ be the dg-category with two objects, $0$ and $1$, where $\Hom(0,0)=\Hom(1,1)=k$, $\Hom(1,0)=0$ and $\Hom(0,1)=k[s]$; and let $\D^c(1,s,1)$ be the dg-category with two objects, $0$ and $1$, where $\Hom(0,0)=\Hom(1,1)=k$, $\Hom(1,0)=0$ and $\Hom(0,1)=k^c\l s\r$. The $P(s):\D(1,s,1)\to\D^c(1,s,1)$ are, for all $s\in\Z$, the dg-functors that send $0$ to $0$, $1$ to $1$, and $\Hom_{\D(1,s,1)}(0,1)$ to $\Hom_{\D^c(1,s,1)}(0,1)$ by the following morphism:
	\begin{center}
		\begin{tikzcd}
			\ldots\ar[r]&0\ar[r]\ar[d]&0\ar[r]\ar[d]& k\ar[r]\ar[d, "id"]& 0\ar[r]\ar[d]&\ldots\\
			\ldots\ar[r]&0\ar[r]      &k\ar[r]      &k\ar[r]             &0\ar[r]       &\ldots
		\end{tikzcd}
	\end{center}
\end{itemize}
\end{teo}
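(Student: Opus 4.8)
The plan is to apply the recognition theorem for cofibrantly generated model categories stated above, taking $W$ to be the class of quasi-equivalences, taking as generating cofibrations the map $I\colon\emptyset\to k$ together with the maps $P(s)$ ($s\in\Z$) of the statement, and supplying an explicit set $J$ of generating trivial cofibrations. First I would record that $\dg$ is complete and cocomplete and that the smallness hypotheses hold automatically: the category $\dg$ is locally presentable (dg-categories are models of an essentially algebraic theory), which gives all small limits and colimits and guarantees that every object, in particular the domains $\emptyset$ and $\D(1,s,1)$ of the maps in $I$ and the domains of the maps in $J$, is small relative to any class of morphisms. This disposes of both smallness conditions at once.

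The conditions on $W$ are immediate from the definitions: quasi-full-faithfulness is tested by quasi-isomorphisms of Hom-complexes and quasi-essential surjectivity by essential surjectivity of the functor $[T]\to[T']$ of homotopy categories, and since cohomology isomorphisms and equivalences of categories both satisfy two-out-of-three and are retract-stable, so is $W$. Next I would identify the injective classes. A dg-functor has the right lifting property against $I\colon\emptyset\to k$ exactly when it is surjective on objects, and against all the $P(s)$ exactly when each Hom-complex map $T(x,y)\to T'(fx,fy)$ is a surjective quasi-isomorphism, because the maps $k[s]\to k^c[s]$ induced by $P(s)$ detect precisely the surjective quasi-isomorphisms of $\Ch$. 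Hence $I\text{-inj}$ consists of the dg-functors that are surjective on objects and induce surjective quasi-isomorphisms on all Hom-complexes, and a short check shows this coincides with $W\cap(\text{fibrations})$, the trivial fibrations.

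For the generating trivial cofibrations I would let $J$ consist of two families: the enriched trivial cofibrations of $\Ch$, namely the maps $k\sqcup k\to\D^c(1,s,1)$ inducing $0\to k^c[s]$ on the single nontrivial Hom-complex (for all $s\in\Z$), which force the maps $T(x,y)\to T'(fx,fy)$ to be surjective; and a single map $k\to\mathcal{K}$, where $\mathcal{K}$ is a cofibrant dg-category with two objects joined by a homotopy equivalence and $k\to\mathcal{K}$ names one of them, which forces the lifting of homotopy equivalences. One checks directly that every map in $J$ is a quasi-equivalence and that $J\text{-inj}$ is exactly the class of fibrations. Since $I$ and each $P(s)$ then have the left lifting property against fibrations, we obtain $I\text{-inj}\subseteq J\text{-inj}$, hence $J\text{-cof}\subseteq I\text{-cof}$; and the computation of $I\text{-inj}$ above gives the equality $I\text{-inj}=W\cap J\text{-inj}$ required by the recognition theorem.

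The remaining and genuinely hard condition is $J\text{-cof}\subseteq W\cap I\text{-cof}$; given the previous paragraph this reduces to $J\text{-cof}\subseteq W$, and since maps in $J\text{-cof}$ are retracts of relative $J$-cell complexes and $W$ is retract-closed, to showing that every relative $J$-cell complex is a quasi-equivalence. The main obstacle lies here, in controlling Hom-complexes under pushouts of dg-categories, where composition makes the pushout behave like a free product rather than a naive gluing. I would treat the two families separately: a pushout along $k\to\mathcal{K}$ adjoins an object homotopy equivalent to an existing one and is visibly a quasi-equivalence, while a pushout along $k\sqcup k\to\D^c(1,s,1)$ freely adjoins a contractible generator $k^c[s]$ to a Hom-complex. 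For the latter I would filter the Hom-complexes of the pushout by the number of occurrences of the new generator and identify the associated graded as a direct sum of tensor products containing the acyclic factor $k^c[s]$, so that the associated graded, and hence the Hom-complex map, is a quasi-isomorphism. Finally I would pass to transfinite compositions using that cohomology commutes with the filtered colimits involved, so that a transfinite composite of such quasi-equivalences is again one. Assembling these verifications, the recognition theorem yields the asserted cofibrantly generated model structure on $\dg$.
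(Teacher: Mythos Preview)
The paper does not give its own proof of this theorem; it is stated as background with a citation to Tabuada's original articles \cite{TAB} and \cite{TAB_Fr}. Your outline follows exactly the strategy of Tabuada's proof: apply the recognition theorem with the indicated $I$, take as $J$ the maps encoding $0\to k^c[s]$ together with an inclusion $k\to\mathcal{K}$ into an ``interval'' dg-category, identify $I\text{-inj}$ with the trivial fibrations, and reduce everything to showing that relative $J$-cell complexes are quasi-equivalences.

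One point deserves correction in your assessment of where the difficulty lies. You write that a pushout along $k\to\mathcal{K}$ ``adjoins an object homotopy equivalent to an existing one and is visibly a quasi-equivalence,'' reserving the filtration argument for the $k^c[s]$ case. In fact the $\mathcal{K}$-pushout is the delicate step in Tabuada's argument: pushouts in $\dg$ are free-product-like, and the Hom-complexes of $T\coprod_k\mathcal{K}$ involve arbitrary alternating words in morphisms of $T$ and of $\mathcal{K}$, so it is not at all immediate that the old Hom-complexes embed quasi-isomorphically or that the new object is homotopy equivalent to the old one \emph{in the pushout}. Tabuada handles this by a careful choice of $\mathcal{K}$ (not an arbitrary cofibrant interval) together with an explicit analysis of those word-complexes; the acyclic-generator filtration you describe for the $k^c[s]$ case is, by comparison, routine. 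Your sketch is otherwise an accurate summary of the cited proof.
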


\begin{coro}\label{ch. 1:hom-cofib}With that model structure, all dg-categories are fibrant. Also, if a dg-category $T$ is cofibrant, for all $(x,y)\in\Ob(T)^2$ the complex $\Hom_T(x,y)$ is cofibrant for the model structure on $\Ch$.
\end{coro}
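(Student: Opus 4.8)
The statement splits into two independent claims, and I would prove them one after the other.

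\emph{Fibrancy.} The terminal object of $\dg$ is the dg-category $\mathbf{1}$ with one object $*$ and $\Hom(*,*)=0$ the zero complex, so a dg-category $T$ is fibrant precisely when the unique functor $T\to\mathbf 1$ is a fibration in the sense used in Theorem \ref{ch. 1:Tab-model}. The first fibration condition asks that each $\Hom_T(x,y)\to\Hom_{\mathbf 1}(*,*)=0$ be surjective, which is automatic. For the second, I observe that the homotopy category $[\mathbf 1]$ has a single object whose endomorphism monoid is $H^0(0)=0$; hence the identity is its only morphism, and the only homotopy equivalence $h\colon f(x)\to *$ is $\Id_*$. This lifts through $\Id_x\colon x\to x$, a homotopy equivalence carried by $T\to\mathbf 1$ to $\Id_*$. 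Both conditions hold, so every $T$ is fibrant.

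\emph{Reduction to cell complexes.} Because $\dg$ is cofibrantly generated with generating cofibrations $\{I,\,P(s)\mid s\in\Z\}$ (Theorem \ref{ch. 1:Tab-model}), every cofibrant $T$ is a retract of a cell complex $S$, that is, of a transfinite composition of pushouts of the generating cofibrations starting from $\emptyset$. If $i\colon T\to S$ and $r\colon S\to T$ witness the retract, then on morphism complexes $\Hom_T(x,y)\to\Hom_S(i x,i y)\to\Hom_T(x,y)$ composes to the identity, exhibiting $\Hom_T(x,y)$ as a retract of $\Hom_S(i x,i y)$. Since cofibrant objects of $\Ch$ are closed under retracts, it suffices to prove the claim for the cell complexes $S$.

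\emph{The induction scaffolding.} I would argue by transfinite induction on the cell attachments, proving that at every stage $S_\beta$ all morphism complexes are cofibrant and that the transition maps $\Hom_{S_\beta}(x,y)\to\Hom_{S_{\beta'}}(x,y)$ (for objects already present at stage $\beta$) are cofibrations in $\Ch$. Limit stages are filtered colimits of morphism complexes, hence transfinite compositions of cofibrations, so cofibrancy passes to the colimit. A successor step attaching an $I$-cell $\emptyset\to k$ forms the coproduct $S_\beta\coprod k$: it creates one new object with endomorphism complex $k[0]$ and zero morphism complexes between the two components, all cofibrant, and leaves the old complexes unchanged.

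\emph{The $P(s)$-cell (main obstacle).} The genuinely delicate case is a pushout of $P(s)\colon\D(1,s,1)\to\D^c(1,s,1)$. Such a pushout is classified by two objects $a,b$ of $S_\beta$ and a cocycle $\alpha\in Z^s(\Hom_{S_\beta}(a,b))$, and it freely adjoins a degree $(s-1)$ morphism $h\colon a\to b$ with $dh=\alpha$. I would describe $\Hom_{S_{\beta+1}}(x,y)$ through the filtration $F_0\subseteq F_1\subseteq\cdots$, where $F_n$ is generated by the composites using at most $n$ occurrences of $h$; here $F_0=\Hom_{S_\beta}(x,y)$, and each associated graded piece $F_n/F_{n-1}$ is a tensor product over $k$ of morphism complexes of $S_\beta$ with $n$ copies of $kh\cong k[s-1]$ (for instance $\Hom_{S_\beta}(b,y)\otimes k[s-1]\otimes\Hom_{S_\beta}(b,a)\otimes\cdots\otimes k[s-1]\otimes\Hom_{S_\beta}(x,a)$ when $a\neq b$, with the evident variant when $a=b$). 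As the factors are cofibrant by the inductive hypothesis, $k[s-1]$ is cofibrant, and cofibrant complexes are stable under $\otimes_k$, every $F_n/F_{n-1}$ is cofibrant; hence each $F_{n-1}\to F_n$ is a monomorphism with cofibrant cokernel, so a cofibration, and $F_0\to\colim_n F_n=\Hom_{S_{\beta+1}}(x,y)$ is a cofibration onto a cofibrant complex. I expect the bulk of the work to be exactly this pushout computation: getting the bimodule-style description of $\Hom_{S_{\beta+1}}$ right (the bookkeeping of the intermediate factors and the degenerate case $a=b$), and confirming that the filtration quotients make the transition maps cofibrations in $\Ch$. Everything else is formal.
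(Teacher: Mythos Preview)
Your argument is correct and is the standard way to prove this result. The paper itself does not give a proof: the corollary is stated immediately after Tabuada's theorem (Theorem~\ref{ch. 1:Tab-model}) as a known consequence, with no proof environment. So there is nothing to compare against except the folklore proof, which is precisely the one you wrote.

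A couple of small points you might tighten in a final write-up. First, the assertion ``cofibrant complexes are stable under $\otimes_k$'' is true because the projective model structure on $\Ch$ is monoidal with cofibrant unit, but it is worth saying so explicitly since you rely on it repeatedly. Second, ``monomorphism with cofibrant cokernel is a cofibration'' in $\Ch$ holds because cofibrant complexes are degreewise projective, so the short exact sequence $0\to F_{n-1}\to F_n\to F_n/F_{n-1}\to 0$ splits degreewise; again standard, but stating it avoids any doubt. Finally, your identification of the successive quotients $F_n/F_{n-1}$ is right: the key is that $dh=\alpha$ lands in $F_{n-1}$, so in the quotient $h$ becomes a closed generator and the graded piece really is the tensor product you wrote. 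None of this is a gap---your sketch already contains the full content of the proof.
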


\begin{rem}Both Bergner's model structure on simplicial categories and Tabuada's model structure on dg-categories (among quite a few others) can be seen as particular cases of the canonical model structure on the category of categories enriched over $N$, $N\mathbf{-cat}$, where $N$ is a right proper, adequate monoidal model category with cofibrant unit and a generating set of $N$-intervals. A full construction and proof of that can be found in \cite[Th. 2.5]{general-model-cat}.
\end{rem}

\begin{prop}\cite[Th. 1.10]{general-model-cat} With the model structure described above, the model category $\dg$ is right proper.
\end{prop}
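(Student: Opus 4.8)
The plan is to deduce the result directly from the fact that every object of $\dg$ is fibrant, rather than by verifying the pullback condition through explicit computations with cochain complexes. First I would recall from Corollary \ref{ch. 1:hom-cofib} that, for Tabuada's model structure, every dg-category is fibrant; equivalently, the terminal map $T\to \ast$ is a fibration for every $T\in\dg$. Since $\dg$ is a model category it is in particular complete, so all the pullbacks appearing in the definition of right properness do exist.

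With this in hand, I would invoke the general criterion recorded earlier, namely the corollary stating that if every object of a model category $M$ is fibrant then $M$ is right proper. To see concretely why it applies, consider a pullback square

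\begin{center}
	\begin{tikzcd}
		A\ar[r,"g"]\ar[d,"f"]&B\ar[d,"h"]\\
		C\ar[r,"k"]          &D
	\end{tikzcd}
\end{center}

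in $\dg$ in which $k$ is a fibration and $h$ is a quasi-equivalence. Because every dg-category is fibrant, $h$ is automatically a weak equivalence \emph{between fibrant objects}, so the proposition asserting that every pullback of a weak equivalence between fibrant objects along a fibration is again a weak equivalence applies verbatim and forces $f$ to be a quasi-equivalence. This is exactly the right properness condition for $\dg$.

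The main point is that there is essentially no obstacle here: the whole content has been front-loaded into Corollary \ref{ch. 1:hom-cofib}, whose assertion that all dg-categories are fibrant is the only nontrivial input (this in turn relies on every $T(x,y)\to T'(f(x),f(y))$ being surjective-testable and on the two-object generating data of Theorem \ref{ch. 1:Tab-model}). Once universal fibrancy is known, right properness is a formal consequence of the general properness criterion, and no properties specific to cochain complexes or quasi-equivalences beyond those already recorded in the excerpt are required.
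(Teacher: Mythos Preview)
Your argument is correct. The paper itself does not prove this proposition at all: it simply records the statement with a citation to \cite[Th.~1.10]{general-model-cat} and moves on. Your approach instead derives right properness internally from material already established in the background chapter, namely Corollary~\ref{ch. 1:hom-cofib} (every dg-category is fibrant) together with the general corollary that a model category in which every object is fibrant is automatically right proper. This is a perfectly valid and more self-contained route than the bare citation; the cited theorem in \cite{general-model-cat} covers the general enriched setting, whereas your argument exploits the specific feature of Tabuada's structure that fibrancy is automatic.
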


Before we go on, we will give a different, equivalent way of defining quasi-fully faithful dg-functors in $\dg$. For this, we will recall a definition that exists in general in any model category. 

\begin{defin}Let $M$ be a model category and let $f:X\to Y$ be a morphism in $M$. We say that $f$ is \textbf{a homotopy monomorphism} if for all $Z\in M$ the induced morphism of simplicial sets
$$f_*:\Map(Z,X)\to \Map(Z,Y) $$
induces an injection on the $\pi_0$ and isomorphisms on $\pi_i$ for all $i>0$ and for all base points. This definition is equivalent to asking that the morphism $X\to X\times_Y^h X$ is an isomorphism in $\Ho(M)$. 
\end{defin}

In the case of dg-categories, we haven't actually defined anything new: we can prove that those dg-functors are exactly the quasi-fully faithful dg-functors.

\begin{prop}\cite[Lem. 2.4]{Toen} Let $f:T\to T'$ be a dg-functor in $\dg$. Then $f$ is a homotopy monomorphism if and only if $f$ is quasi-fully faithful. In particular, if $f$ is a weak equivalence in $\dg$, then $f$ is a homotopy monomorphism.
\end{prop}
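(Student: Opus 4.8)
The plan is to use the equivalent formulation of the homotopy monomorphism condition recorded in the definition above: $f$ is a homotopy monomorphism if and only if the diagonal $\delta\colon T\to T\times^h_{T'}T$ is an isomorphism in $\Ho(\dg)$, i.e. a quasi-equivalence. Since a quasi-equivalence is precisely a dg-functor that is both quasi-fully faithful and quasi-essentially surjective, it suffices to analyse $\delta$ through these two properties and compare them with the quasi-full faithfulness of $f$. To obtain a usable model of the homotopy fiber product, I would first factor $f$ as $T\xrightarrow{j}\hat T\xrightarrow{p}T'$ with $j$ a trivial cofibration and $p$ a fibration; as $\dg$ is right proper and $p$ is a fibration, $T\times^h_{T'}T$ is computed by the strict pullback $Q:=\hat T\times_{T'}\hat T$, and $\delta$ becomes the composite of $j$ with the strict diagonal $\Delta\colon\hat T\to Q$.

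The heart of the argument is the computation of the morphism complexes of $Q$. For objects $a,b,a',b'$ of $\hat T$ lying over the same objects of $T'$, one has $Q((a,b),(a',b'))=\hat T(a,a')\times_{T'(pa,pa')}\hat T(b,b')$, and since $p$ is a fibration, hence levelwise surjective, this strict pullback of complexes is a homotopy pullback in $\Ch$. In particular $Q(\delta x,\delta y)\simeq \hat T(jx,jy)\times^h_{T'(fx,fy)}\hat T(jx,jy)$, the homotopy pullback of $p_*$ against itself. I would then invoke the elementary fact in $\Ch$ that, for a map $g\colon A\to B$, the diagonal $A\to A\times^h_B A$ is a quasi-isomorphism if and only if $g$ is: the first projection $A\times^h_B A\to A$ is the base change of $g$ along itself, so its fiber is $\mathrm{fib}(g)$, and the diagonal is a section of this projection, so it is a quasi-isomorphism precisely when $g$ is. Applied levelwise, and using that $j_*$ is a quasi-isomorphism, this shows that $\delta$ is quasi-fully faithful if and only if $f_*\colon T(x,y)\to T'(fx,fy)$ is a quasi-isomorphism for all $x,y$, i.e. if and only if $f$ is quasi-fully faithful.

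It remains to match the essential surjectivity of $\delta$ with the quasi-full faithfulness of $f$. Here I would show that if $f$ (equivalently $p$) is quasi-fully faithful, then $\delta$ is quasi-essentially surjective: an object of $Q$ is a pair $(a,b)$ with $pa=pb$, and $\mathrm{id}_{pa}\colon pa\to pb$ is then a homotopy equivalence in $T'$. Quasi-full faithfulness makes $p$ reflect isomorphisms and identifies $[\hat T](a,b)$ with $[T'](pa,pb)$, so $\mathrm{id}_{pa}$ lifts to an isomorphism $a\to b$ in $[\hat T]$; a Mayer--Vietoris argument for the homotopy pullback then promotes the compatible pair $(\mathrm{id}_a,\text{this lift})$ to an isomorphism $(a,a)\to (a,b)$ in $[Q]$. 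Combined with $a\cong jx$ in $[\hat T]$ (since $j$ is a quasi-equivalence), this exhibits every object of $Q$ as isomorphic to some $\delta(x)$. Conversely, if $f$ is a homotopy monomorphism then $\delta$ is a quasi-equivalence, hence in particular quasi-fully faithful, and the previous paragraph forces $f$ to be quasi-fully faithful.

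Putting the two directions together proves the equivalence, and the final clause is then immediate: a weak equivalence in $\dg$ is by definition a quasi-equivalence, hence quasi-fully faithful, hence a homotopy monomorphism. The main obstacle I anticipate is the object-level analysis of the homotopy fiber product: deciding when $(a,b)$ is isomorphic to a diagonal object requires carefully controlling the passage from strict to homotopy pullbacks of Hom-complexes and the resulting description of $[Q]$, and it is exactly here that the second (lifting) clause in the definition of fibrations in $\dg$, together with quasi-full faithfulness, is needed to produce the required homotopy equivalence $a\to b$.
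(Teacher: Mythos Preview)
The paper does not give its own proof of this statement: it is quoted directly from \cite[Lem.~2.4]{Toen} and only the corollary following it is used. So there is no in-paper argument to compare against; your task was really to reconstruct To\"en's proof.

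Your reconstruction is correct and is essentially the standard argument. Two small comments on presentation. First, in your $\Ch$ lemma you say ``the diagonal is a section of this projection, so it is a quasi-isomorphism precisely when $g$ is'': the logical link here is that in the stable setting a section of a map with homotopy fiber $F$ has cofiber $F$, hence is a quasi-isomorphism iff $F$ is acyclic iff $g$ is; you might make that explicit. Second, your essential surjectivity step can be streamlined: once $p$ is quasi-fully faithful, the first projection $Q((a,a),(a,b))=\hat T(a,a)\times_{T'(pa,pa)}\hat T(a,b)\to \hat T(a,a)$ is the base change of the quasi-isomorphism $\hat T(a,b)\to T'(pa,pb)$ along a fibration, hence itself a quasi-isomorphism. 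This immediately produces a class in $H^0(Q((a,a),(a,b)))$ lifting $\mathrm{id}_a$, and the same argument on the other side gives its inverse, avoiding the Mayer--Vietoris bookkeeping. Your identification of the delicate point (lifting $\mathrm{id}_{pa}$ to an isomorphism $a\to b$ in $[\hat T]$ and then to $[Q]$) is exactly right, and your outline handles it.
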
 

\begin{coro}\cite[Cor. 2.5]{Toen} Let $f:T\to T'$ be a quasi-fully faithful morphism in $\dg$, and let $Y\in \dg$ be a dg-category. Then the image of the induced injection on $\pi_0$, 
$$\pi_0(\Map(Y,T))=\l Y,T \r\to \pi_0(\Map(Y,T'))=\l Y,T'\r ,$$
is given by the morphisms such that the induced functor $\l Y \r\to \l T'\r$ can be factored through the essential image of $\l Y\r\to \l T\r$.
\end{coro}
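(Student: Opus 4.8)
The plan is to lean on the immediately preceding proposition: since $f$ is quasi-fully faithful it is a homotopy monomorphism, so $f_*:\pi_0(\Map(Y,T))\to\pi_0(\Map(Y,T'))$ is injective and only its image is in question. One inclusion costs nothing. If a class $g\in\l Y,T'\r$ lies in the image, say $g=f\circ h$ in $\Ho(\dg)$ for some $h\in\l Y,T\r$, then the induced functor $\l g\r:\l Y\r\to\l T'\r$ agrees, up to natural isomorphism, with $\l f\r\circ\l h\r$ and therefore factors through the essential image of $\l f\r:\l T\r\to\l T'\r$. All the real work is in the converse.

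For the converse the main device I would introduce is the full sub-dg-category $T''\subseteq T'$ whose objects are exactly those $z\in\Ob(T')$ isomorphic in $\l T'\r$ to some $f(x)$; that is, the objects lying in the essential image of $\l f\r$. Being full, its complexes of morphisms coincide with those of $T'$, so $\l T''\r(z,w)=\l T'\r(z,w)$ for all $z,w\in\Ob(T'')$, and the inclusion $\iota:T''\hookrightarrow T'$ is quasi-fully faithful. Writing $f=\iota\circ f''$ with $f'':T\to T''$ the corestriction, I would then verify that $f''$ is a quasi-equivalence: it is quasi-fully faithful because $f$ is and $T''(f(x),f(x'))=T'(f(x),f(x'))$, and it is quasi-essentially surjective because every object of $T''$ is, by construction, isomorphic to some $f(x)$ in $\l T'\r$, hence also in $\l T''\r$ by the hom-set identity above. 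Thus $f''$ is a weak equivalence in $\dg$, and therefore $(f'')_*:\l Y,T\r\to\l Y,T''\r$ is a bijection.

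Now take $g\in\l Y,T'\r$ whose induced functor $\l g\r:\l Y\r\to\l T'\r$ factors through the essential image of $\l f\r$, and fix a dg-functor $Y\to T'$ representing it (legitimate since all dg-categories are fibrant). The hypothesis says precisely that $g(y)\in\Ob(T'')$ for every object $y$ of $Y$; as $T''$ is full, this representative corestricts to a genuine dg-functor $g'':Y\to T''$ with $\iota\circ g''=g$. Using the bijection above, $g''$ lifts to a class $h\in\l Y,T\r$ with $f''\circ h=g''$ in $\Ho(\dg)$, and composing with $\iota$ gives $f\circ h=\iota\circ f''\circ h=\iota\circ g''=g$. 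Hence $g$ lies in the image of $f_*$, which completes the characterisation.

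The step I expect to be the main obstacle is not any single computation but making the corestriction rigorous: one has to see that "$\l g\r$ factors through the essential image" is genuinely equivalent to "$g$ factors as a dg-functor through the full subcategory $T''$", and that passing to $T''$ does not change whether two of its objects are isomorphic. Both points hinge on the identity $\l T''\r(z,w)=\l T'\r(z,w)$ coming from fullness together with $H^0$ being computed on the very same complexes, so I would isolate that identity first and let the rest follow from the fact that weak equivalences in $\dg$ induce bijections on $\l Y,-\r$.
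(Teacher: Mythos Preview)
The paper does not supply a proof of this corollary; it is stated as a background result with a citation to \cite[Cor.~2.5]{Toen}. Your argument is essentially the one given there: factor $f$ as $T\xrightarrow{f''}T''\hookrightarrow T'$ through the full sub-dg-category on the essential image, observe that $f''$ is a quasi-equivalence, and corestrict a representative of $g$ to $T''$.

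One small imprecision worth tightening: when you say ``fix a dg-functor $Y\to T'$ representing it (legitimate since all dg-categories are fibrant)'', fibrancy of the target is only half the story. To represent a class in $\l Y,T'\r$ by an honest dg-functor you also need the source to be cofibrant, so you should replace $Y$ by a cofibrant model $QY$ with the same objects (such a replacement exists in $\dg$ and is used elsewhere in the paper). Once you do that, the corestriction $QY\to T''$ goes through exactly as you describe, and the rest of the argument is correct.
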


And now, to finish this section, we will give a couple of definitions for subcategories who will be useful later.

\begin{defin}\label{ch. 1: def free}There exists a Quillen adjunction $L:Gr(\Ch) \rightleftharpoons \dg:U$ where $Gr(\Ch)$ is the category of graphs over the complexes of modules and $U$ is the forgetful functor. We call a \textbf{free dg-category} $T$ a dg-category such that there exists a $T'\in Gr(\Ch)$ with $T=L(T')$. We denote the full subcategory of free dg-categories by $\mathcal{L}$.
\\In particular, if $X$ is a graph on $\Ch$, then we have that $L(X)$ has the same objects as $X$ and that for all objects $x,y\in \Ob(X)=\mathcal{O}$,
$$L(X)(x,y)=\bigoplus_{m\in\N}\bigoplus_{(x_1,\ldots, x_m)\in\mathcal{O}^m}(X(x,x_1)\otimes \ldots \otimes X(x_m,y)). $$
\end{defin}

But still, free dg-categories will be too big for our purposes. We will use something slightly smaller: free dg-categories of finite type. As we are working with free dg-categories, we can define these objects as graphs and then send them over with the free functor.

\begin{defin}Let $G\in Gr(\Ch)$ be a graph over the complexes of modules. We say that $G$ is \textbf{a graph of finite type} if it has a finite number of vertices and the edges between two vertices are always perfect complexes. We denote the full subcategory of graphs of finite type by $Gr(\Ch)^{tf}$.
\end{defin}

\begin{defin}Let $L=L(G)$ be a free dg-category. We say that $L$ is a \textbf{free dg-category of finite type} if the underlying graph $G$ is a graph of finite type.
\end{defin}

\subsection{Modules over dg-categories}

And as the last section of this chapter, let us talk about modules over dg-categories. Unless stated otherwise, all results in this section come from \cite[Sections 3 and 4]{Toen}.

\begin{defin}Let $T$ be a dg-category. We call a \textbf{$T$-dg-module} (or a $T$-module if it is obvious we are working on $\dg$) a dg-functor $F$ of the form $F:T\to \Ch$. If we have $F,G$ two $T$-modules, we define a morphism of $T$-modules between $F$ and $G$ to be an enriched natural transformation.
\end{defin}

\begin{nota}Let $T$ be a dg-category. We denote by $T-\Mod$ the category of $T$-modules.
\end{nota}

\begin{rem}As $\Ch$ is a cofibrantly generated model category, then $T-\Mod$ is a model category with the projective structure given in Theorem \ref{ch. 1:projective}. 
\end{rem}

We can define some kind of dg-enhanced Yoneda embedding in this case. For that, though, we will need to introduce one more definition: that of an internal category. 

\begin{defin}Let $T$ be a dg-category and $T-\Mod$ its category of $T$-modules. We define the \textbf{internal category of $T-\Mod$} to be the sub-dg-category where the objects are the fibrant and cofibrant objects of $T-\Mod$, and we will denote it by $\Int(T-\Mod)$.
\end{defin}

\begin{rem}In \cite{Toen} the definition is given for any $M$ $\Ch$-model category, but as we are only going to use it in this context we have decided to only state it in the case of $T-\Mod$. In fact, this next result is also true for all $T-\Mod$. 
\end{rem}

As we remember from the section on the homotopy category of a model category $M$, the homotopy category can be defined as a quotient of the subcategory $M^{cf}$ of fibrant and cofibrant objects; the internal category could be seen as a $\Ch$-enrichment of $T-\Mod^{cf}$. When we defined the homotopy functor in $\dg$ that sends dg-categories to classical categories we warned that although they had the same name, the homotopy category of a model category and that of a dg-category are not the same definition; although, we added, there are links between the two. Here is the link.

\begin{prop}Let $T$ be a dg-category. Then the homotopy category of $\Int(T-\Mod)$ as a dg-category is naturally isomorphic to the homotopy category of $T-\Mod$ as a model category,
$$\l\Int(T-\Mod) \r\simeq \Ho(T-\Mod). $$  
\end{prop}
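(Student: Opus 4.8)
The plan is to factor the comparison through the subcategory of fibrant--cofibrant objects, so that the only real content becomes an identification of morphism sets. First I would invoke the general description of the homotopy category established earlier: by Proposition \ref{ch 1:equiv} and the summarizing theorem on $\Ho(M)$, the inclusion $(T-\Mod)_{cf}\hookrightarrow T-\Mod$ induces an equivalence $(T-\Mod)_{cf}/\!\sim\;\cong\Ho((T-\Mod)_{cf})\to\Ho(T-\Mod)$, where $\sim$ is the homotopy relation, which on $(T-\Mod)_{cf}$ is an equivalence relation compatible with composition. It therefore suffices to produce an identity-on-objects isomorphism of categories between $\l\Int(T-\Mod)\r$ and $(T-\Mod)_{cf}/\!\sim$. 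Both have exactly the same objects, namely the $T$-modules that are fibrant and cofibrant for the projective model structure (Theorem \ref{ch. 1:projective}); this is the definition of $\Int(T-\Mod)$. So the whole point is to match the morphisms.

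Next I would compute each morphism set. On the $\l\Int\r$ side, the morphisms $F\to G$ are by definition $H^0$ of the $\Ch$-enriched hom-complex $\underline{\Hom}(F,G)$. Since morphisms of $T$-modules are precisely the enriched natural transformations, i.e.\ the closed degree-$0$ elements, one has $\Hom_{T-\Mod}(F,G)=Z^0(\underline{\Hom}(F,G))$, and hence $H^0(\underline{\Hom}(F,G))=Z^0/B^0$ is the set of $T$-module maps modulo \emph{dg-homotopy}: two maps $f,g$ are identified exactly when $f-g=dh+hd$ for some degree $-1$ family $h$. On the $(T-\Mod)_{cf}/\!\sim$ side the morphisms are $\Hom_{T-\Mod}(F,G)/\!\sim$. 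Thus the proposition reduces to the key lemma: for $F$ cofibrant and $G$ fibrant, two maps $f,g\colon F\to G$ are homotopic in the model-category sense if and only if they are dg-homotopic.

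To prove this lemma I would build an explicit path object. Because the projective structure is computed objectwise from $\Ch$, and every object of $\Ch$ is fibrant, it suffices to produce a functorial path object $G\to G^{I}\to G\times G$ in $\Ch$, applied objectwise and reassembled into a $T$-module by functoriality of $G\colon T\to\Ch$, whose right-homotopies encode exactly chain homotopies. The usual interval/path complex (built from the acyclic complex underlying the objects $k^c\l s\r$) does this: a right homotopy $H\colon F\to G^{I}$ from $f$ to $g$ is precisely a degree $-1$ map $h$ with $dh+hd=f-g$. Since $F$ is cofibrant and $G$ is fibrant, the earlier proposition that left and right homotopy coincide and form an equivalence relation on $M_{cf}$ gives $\Hom_{T-\Mod}(F,G)/\!\sim\;=H^0(\underline{\Hom}(F,G))$. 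Finally I would check compatibility with identities and composition: composition of homotopy classes corresponds to the multiplication $H^0(\underline{\Hom}(F,G))\otimes H^0(\underline{\Hom}(G,K))\to H^0(\underline{\Hom}(F,K))$ induced by the enriched composition, which is exactly the composition law defining $\l\Int(T-\Mod)\r$. This yields the desired identity-on-objects isomorphism, canonical in $T$ because every ingredient is induced by the $\Ch$-enrichment. The main obstacle is the key lemma, i.e.\ matching the abstract model-categorical homotopy relation with concrete chain homotopy; the remaining steps are bookkeeping.
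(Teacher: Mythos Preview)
The paper does not actually prove this proposition: it is stated as a background result imported from \cite[Sections 3 and 4]{Toen}, with no proof environment in the text. So there is no ``paper's own proof'' to compare against.

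That said, your argument is correct and is precisely the standard one. Reducing to $(T-\Mod)_{cf}/\!\sim$ via Proposition \ref{ch 1:equiv}, identifying $Z^0(\underline{\Hom}(F,G))$ with the set of $T$-module maps, and then matching the abstract homotopy relation with chain homotopy by exhibiting an explicit objectwise path object is exactly how this is carried out in the references. One small point worth making explicit: the degree $-1$ witness $h$ must be an element of the enriched hom complex $\underline{\Hom}(F,G)$, i.e.\ a family that is compatible with the $T$-action in the graded sense, not just an arbitrary family of degree $-1$ maps; this is what guarantees that $H^0$ on the enriched side really matches $\Hom/\!\sim$ on the model side. Your path-object construction does produce such natural $h$'s automatically (because $G^{I}$ is built as a $T$-module and $H\colon F\to G^{I}$ is a map of $T$-modules), so the argument goes through; it is just worth saying so.
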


And now we can define the Yoneda embedding.

\begin{nota}Let $T$ be a dg-category, and $x$ be an object in $T$. We denote by $\underline{h}_x:T\op\to \Ch$ the dg-functor $\underline{h}_x(y)=\Hom_T(y,x)$ where we take the cochain complex of morphisms in $T$. 
\end{nota}

\begin{defin}Let $T$ be a dg-category. We define a morphism of dg-categories $\underline{h}:~T\to T\op-\Mod$ by $\underline{h}(x)=\underline{h}_x$ by using the natural $\Ch$-enrichment of $T\op-\Mod$.
\end{defin}

\begin{prop}Let $T$ be a dg-category. For all $x\in T$, the object $\underline{h}_x$ in $T\op-Mod$ is fibrant and cofibrant. That defines a dg-functor
$$\underline{h}:T\to \Int(T\op-\Mod)$$
and it is quasi-fully faithful. 
\end{prop}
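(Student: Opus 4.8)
The plan is to reduce the whole statement to the $\Ch$-enriched Yoneda lemma, after first disposing of the fibrancy and cofibrancy claims so that $\underline{h}_x$ genuinely lands in the internal category. First I would observe that the representable module $\underline{h}_x=\Hom_T(-,x)=\Hom_{T\op}(x,-)$ is exactly the free $T\op$-module generated at $x$ with coefficients $k$; in the notation of the projective model structure it is $F^x_k$, the value at $k$ of the left adjoint $F^x:\Ch\to T\op-\Mod$ of the evaluation functor $\mathrm{ev}_x:T\op-\Mod\to\Ch$, $G\mapsto G(x)$. Since projective fibrations and weak equivalences are detected objectwise, $\mathrm{ev}_x$ preserves fibrations and trivial fibrations, so it is right Quillen and $F^x$ is left Quillen.

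Next I would check cofibrancy of the coefficient $k=k[0]\in\Ch$. It is cofibrant (a bounded complex of free modules); concretely it arises from the initial object as the pushout of a generating cofibration $k[-1]\to k^c[0]$ along $k[-1]\to 0$, collapsing the degree $-1$ summand of the acyclic complex $k^c[0]$. Because $F^x$ is left Quillen, $\underline{h}_x=F^x_k$ is therefore cofibrant in $T\op-\Mod$. Fibrancy is immediate: every object of $\Ch$ is fibrant, so every $T\op$-module is objectwise fibrant, hence fibrant for the projective structure. Thus each $\underline{h}_x$ is both fibrant and cofibrant and so lies in $\Int(T\op-\Mod)$; since $\Int(T\op-\Mod)$ is a full sub-dg-category, the dg-functor $\underline{h}:T\to T\op-\Mod$ corestricts to a dg-functor $\underline{h}:T\to\Int(T\op-\Mod)$.

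For quasi-full faithfulness I would invoke the $\Ch$-enriched Yoneda lemma: for any $T\op$-module $G$ there is a natural isomorphism of complexes $\Hom_{T\op-\Mod}(\underline{h}_x,G)\cong G(x)$. Taking $G=\underline{h}_y$ gives $\Hom_{T\op-\Mod}(\underline{h}_x,\underline{h}_y)\cong \underline{h}_y(x)=\Hom_T(x,y)$, and this isomorphism is precisely the inverse of the map induced by $\underline{h}$ on hom-complexes. Since the hom-complexes in $\Int(T\op-\Mod)$ coincide with those in $T\op-\Mod$, the structure map $\Hom_T(x,y)\to\Hom_{\Int(T\op-\Mod)}(\underline{h}_x,\underline{h}_y)$ is an isomorphism of complexes, in particular a weak equivalence; hence $\underline{h}$ is (strictly, and a fortiori quasi-)fully faithful.

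The step that carries the real content is the cofibrancy of the representables; everything afterwards is formal once the enriched Yoneda lemma is available. The care needed there is in correctly identifying $\underline{h}_x$ with the enriched free module $F^x_k$ and verifying that the free-module/evaluation adjunction is Quillen for the projective structure on dg-modules, which is the enriched analogue of Theorem \ref{ch. 1:projective}. A minor but genuine point is the variance: writing $\underline{h}_x=\Hom_{T\op}(x,-)$ realizes it as the covariant representable on $T\op$, which is exactly what makes the Yoneda isomorphism land on $\Hom_T(x,y)$ with the structure map of $\underline{h}$ pointing the right way.
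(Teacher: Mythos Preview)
Your proof is correct and is essentially the standard argument. Note that the paper does not actually prove this proposition: it is stated as a background result taken from \cite[Sections 3 and 4]{Toen}, so there is no ``paper's own proof'' to compare against. Your argument --- identifying $\underline{h}_x$ with the free $T\op$-module $F^x_k$, using that the free/evaluation adjunction is Quillen for the projective structure to get cofibrancy from the cofibrancy of $k$ in $\Ch$, observing that fibrancy is automatic since every object of $\Ch$ is fibrant, and then invoking the enriched Yoneda lemma for quasi-full faithfulness --- is exactly the proof one finds in To\"en's paper and in the broader literature on dg-modules.
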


\begin{defin}Let $T$ be a dg-category, and let $F$ be a $T\op$-module.
\begin{itemize}
	\item We say that $F$ is \textbf{representable } if there exists an object $x\in T$ in $T$ such that $F$ is isomorphic in $\Int(T\op-\Mod)$ to $\underline{h}_x$.
	\item We say that $F$ is \textbf{quasi-representable} if there exists an object $x\in T$ in $T$ such that $F$ is weak equivalent in $\Int(T\op-\Mod)$ to $\underline{h}_x$, i.e. if $F$ is isomorphic to $\underline{h}_x$ in $\Ho(T\op-\Mod)$. 
\end{itemize}
\end{defin}

\begin{rem}As $\underline{h}_x$ is quasi-fully faithful, this definition means that there exists a weak equivalence in $\dg$ between $T$ and the subcategory of $\Int(T\op-\Mod)$ consisting of quasi-representable objects.
\end{rem}

We know that $\Ch$ has a tensor product, that we have denoted $-\otimes -$: using that, we can easily define a tensor product over the whole category $\dg$. 

\begin{defin}Let $T$ and $T'$ be two dg-categories. We define the tensor product of $T$ and $T'$ as a category $T\otimes T'$ such that
\begin{itemize}
	\item The objects in $T\otimes T'$ are the objects in $\Ob(T)\times \Ob(T')$. 
	\item For every pair of objects $(x,y), (x',y')$ in $T\otimes T'$, a cochain complex of the form
	$$T'\otimes T'((x,y), (x',y'))=T(x,x')\otimes T'(y,y'). $$
\end{itemize}
\end{defin}

\begin{prop}The tensor product defined above gives $\dg$ a closed symmetric monoidal structure on $\dg$. The unit for the monoidal structure is the dg-category with one object and $k$ as its complex of morphisms, and we will denote it by $\D(0)$ or $k$.
\end{prop}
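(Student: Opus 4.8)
The plan is to observe that $\dg$ is exactly the category $\Ch\text{-}\Cat$ of small categories enriched over $\Ch$, and that $\Ch$ is a complete and cocomplete closed symmetric monoidal category. The statement then becomes an instance of the standard fact (see Kelly, \emph{Basic Concepts of Enriched Category Theory}) that for any such base the category of small enriched categories inherits a closed symmetric monoidal structure whose tensor product and unit are precisely the ones written above. Accordingly I would split the work into recalling the relevant structure on $\Ch$ and then checking that the given formula assembles into this structure.

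First I would recall that $-\otimes-$ makes $\Ch$ into a symmetric monoidal category with unit $k$, that it is closed with internal hom $\underline{\Hom}(A,B)$ the usual complex of graded $k$-linear maps with the commutator differential, and that $\Ch$ is complete and cocomplete. Next I would verify that the formula $(T\otimes T')((x,y),(x',y'))=T(x,x')\otimes T'(y,y')$ really defines a dg-category: the composition is obtained from $\mu_T\otimes\mu_{T'}$ after permuting the four tensor factors by the symmetry of $\Ch$, and the unit at $(x,y)$ is $e_x\otimes e_y$ precomposed with the coherence isomorphism $k\cong k\otimes k$. Associativity and unitality of this composition reduce to those of $T$ and $T'$ together with the coherence axioms in $\Ch$.

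I would then produce the symmetry $T\otimes T'\cong T'\otimes T$ and the associativity constraint, both induced objectwise from the corresponding constraints of $\Ch$, and note that the pentagon and hexagon coherences for $\dg$ follow degreewise from those of $\Ch$. For the unit, the left unit isomorphism $k\otimes T\cong T$ is given on objects by $\{\ast\}\times\Ob(T)\cong\Ob(T)$ and on hom-complexes by the left unit isomorphism $k\otimes T(x,y)\cong T(x,y)$ of $\Ch$, and symmetrically on the right.

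The main obstacle is closedness. Here I would construct the internal hom $\underline{\Hom}(T',T'')$ as the dg-category whose objects are the dg-functors $T'\to T''$ and whose complex of morphisms between two functors $F,G$ is the end $\int_{x\in T'}T''(Fx,Gx)$, computed as the equalizer of the pair of natural maps from $\prod_{x}T''(Fx,Gx)$ to $\prod_{x,y}\underline{\Hom}(T'(x,y),T''(Fx,Gy))$; this exists because $\Ch$ is complete. The remaining task is to exhibit a bijection $\Hom_{\dg}(T\otimes T',T'')\cong\Hom_{\dg}(T,\underline{\Hom}(T',T''))$ natural in all three variables, i.e. to check that a dg-functor out of a tensor product is the same datum as a dg-functor into the internal hom. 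This verification, which amounts to unwinding the enriched composition and the end condition and comparing them with the bifunctoriality of a map defined on $T\otimes T'$, is where the bookkeeping concentrates; I would carry out the comparison on objects and on hom-complexes and then invoke the general enriched-category machinery to conclude that the constraints are coherent, rather than re-deriving every coherence diagram by hand.
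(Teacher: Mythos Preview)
Your proposal is correct and follows the standard enriched-category argument. The paper itself does not prove this proposition: it is stated as a background fact in the section on modules over dg-categories, with the blanket attribution ``unless stated otherwise, all results in this section come from \cite[Sections 3 and 4]{Toen}'', so your sketch is in fact more detailed than anything the paper supplies.
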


\begin{rem}This symmetric monoidal structure is unfortunately not compatible with the model structure, which means we don't have a symmetric monoidal model category: indeed, the tensor product of two cofibrant objects isn't necessarily cofibrant itself. The tensor product does have enough good properties, though, so that it can be derived. 
\end{rem}

\begin{prop}The tensor product functor on $\dg$ can be derived into a functor $-\otimes^\L -:\Ho(\dg)\times\Ho(\dg)\to\Ho(\dg)$ as follows: let $T$ and $D$ be two dg-categories. We compute the derived tensor product as
$$T\otimes^\L D=Q(T)\otimes D $$
where $Q$ is a cofibrant replacement that is the identity on the objects. When $T$ is cofibrant there exists a natural quasi-equivalence $T\otimes^\L D\to T\otimes D$. 
\end{prop}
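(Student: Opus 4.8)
The plan is to reduce the whole statement to one lemma: \emph{if $T$ is cofibrant then $T\otimes-:\dg\to\dg$ preserves quasi-equivalences} (and, by symmetry of the tensor product, so does $-\otimes D$ whenever $D$ is cofibrant). I would prove this directly from the explicit description of $T\otimes D$ rather than by a transfinite induction over cells, since $\dg$ is only known to be right proper and a gluing argument along cofibrations would be awkward. Fix a quasi-equivalence $f\colon D\to D'$. On Hom-complexes $T\otimes f$ is the map $\Id_{T(x,x')}\otimes f_{y,y'}\colon T(x,x')\otimes D(y,y')\to T(x,x')\otimes D'(f(y),f(y'))$; since $T$ is cofibrant, Corollary \ref{ch. 1:hom-cofib} says each $T(x,x')$ is a cofibrant, hence homotopically flat, complex, so tensoring it with the quasi-isomorphism $f_{y,y'}$ stays a quasi-isomorphism, and $T\otimes f$ is quasi-fully faithful. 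For quasi-essential surjectivity, every object of $\l T\otimes D'\r$ is a pair $(x,y')$; as $\l f\r$ is essentially surjective I may choose $y\in\Ob(D)$ together with an isomorphism $\phi\in H^0(D'(y',f(y)))$, and tensoring the class of $\Id_x$ with $\phi$ produces an isomorphism $(x,y')\simeq(x,f(y))$ in $\l T\otimes D'\r$, the target lying in the image of $\l T\otimes f\r$. This uses only that the Künneth map $H^0(A)\otimes H^0(B)\to H^0(A\otimes B)$ is compatible with composition in $T\otimes D'$.

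Granting the lemma, I would assemble the proposition in three moves. First, for fixed $T$ the assignment $D\mapsto Q(T)\otimes D$ carries quasi-equivalences to quasi-equivalences (the lemma applied to the cofibrant object $Q(T)$), so it descends to $\Ho(\dg)$ in the second variable. Second, a quasi-equivalence $T\to T'$ induces a quasi-equivalence $Q(T)\to Q(T')$ of cofibrant objects; I would compare $Q(T)\otimes D$ with $Q(T')\otimes D$ through the commutative square whose remaining corners are $Q(T)\otimes Q(D)$ and $Q(T')\otimes Q(D)$. The top edge is a quasi-equivalence by the lemma applied to $-\otimes Q(D)$, and the two vertical edges are quasi-equivalences by the lemma applied to $Q(T)\otimes-$ and $Q(T')\otimes-$ along $Q(D)\to D$; two-out-of-three then forces the bottom edge $Q(T)\otimes D\to Q(T')\otimes D$ to be a quasi-equivalence. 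Hence $(T,D)\mapsto Q(T)\otimes D$ is a well-defined functor on $\Ho(\dg)\times\Ho(\dg)$, which is exactly the total left derived functor $-\otimes^\L-$.

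Third, when $T$ is already cofibrant the replacement map $Q(T)\to T$ is a quasi-equivalence between cofibrant objects, so the same square-and-two-out-of-three argument applied to $Q(T)\to T$ yields the natural quasi-equivalence $T\otimes^\L D=Q(T)\otimes D\to T\otimes D$. The hypothesis that $Q$ is the identity on objects is not essential here; it merely keeps $\Ob(Q(T)\otimes D)=\Ob(T)\times\Ob(D)$ and so simplifies the bookkeeping above. The main obstacle is the lemma, and within it the quasi-fully-faithful half: it rests entirely on Corollary \ref{ch. 1:hom-cofib} together with the standard fact that a cofibrant cochain complex is homotopically flat, so that $T(x,x')\otimes-$ preserves \emph{all} quasi-isomorphisms and not only those between cofibrant complexes. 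Without cofibrancy of the Hom-complexes this step fails, which is precisely where the incompatibility between the monoidal and model structures makes itself felt and why $Q(T)$ cannot be dropped.
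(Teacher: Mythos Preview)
Your argument is correct and is essentially the standard one. The paper itself does not prove this proposition: it is a background result quoted from \cite[Sections 3 and 4]{Toen}, and the approach there is exactly the one you outline --- the key input is that cofibrant dg-categories have cofibrant Hom-complexes (Corollary~\ref{ch. 1:hom-cofib}), hence homotopically flat ones, which makes $Q(T)\otimes-$ preserve quasi-equivalences; the rest is the routine two-out-of-three bookkeeping you describe.

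One small remark on presentation: in the quasi-essential surjectivity step you should perhaps say explicitly that $\Id_x\otimes\phi$ is invertible in $\l T\otimes D'\r$ because composition in $T\otimes D'$ is componentwise up to sign, so $(\Id_x\otimes\phi^{-1})$ furnishes the inverse; your parenthetical about the K\"unneth map being compatible with composition is the right idea but a reader might appreciate seeing the inverse named. Also, your closing comment that ``the hypothesis that $Q$ is the identity on objects is not essential here'' is true for the well-definedness of $\otimes^\L$, but note that the paper (and \cite{Toen}) do use this hypothesis later when defining right quasi-representability via the functors $i_x$, so it is not merely cosmetic in the broader development.
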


We can now consider the category of $(T\otimes D\op)$-modules. For any $x\in T$, there exists a natural dg-functor $j_x:D \op\to T\otimes D\op$ that sends an object $y\in D\op$ to $(x,y)\in T\otimes D\op$, and, for every pair of objects $y,z\in D\op$, the morphism of complexes $D\op(y,z)$ to $T(x,x)\otimes D\op(y,z)$ by $k\otimes \Id_{D\op}$. 

\begin{defin}Let $T, D$ be two dg-categories. Then for all $x\in T$ we define a dg-functor 
$$i_x:D\op\to Q(T)\otimes^\L D\op$$
induced by $j_x$
\end{defin}

\begin{defin}Let $T$ and $D$ be two dg-categories. An object $F\in (T\otimes^\L D\op)-\Mod$ is called \textbf{right quasi-representable} if for all $x\in T$, $i_{x}^*(F)$ is quasi-representable.
\end{defin}

\begin{rem}If we have $T=k$, it is easy to see that an object $F\in (k\otimes^L D\op)-\Mod=D\op-\Mod$ is right quasi-representable if and only if it is quasi representable.
\end{rem}

\begin{defin}Let $T$ and $D$ be two dg-categories. We denote by $F(Q(T), D)$ the (non-full) subcategory of $\dg$ defined as follows:
\begin{itemize}
	\item A set of objects given by the $Q(T)\otimes D\op$-modules that are right quasi-representable.
	\item The weak equivalences for the projective model structure of $(Q(T)\otimes D\op)-\Mod$ between right quasi-representable modules.
\end{itemize}
\end{defin}

And we can now write the results that we were aiming for:

\begin{teo}\cite[Th. 4.2]{Toen}\label{ch. 1:map k} Let $T$ and $D$ be two dg-categories. There are weak equivalences in $\sS$ such that
$$\Map(T,D)\to \diag(N(M(C^*(T),D)))\leftarrow N(F(Q(T),D))$$
where $N$ stands for the nerve of a category and for all $n\in\N$, $M(C^n(T),D)$ is the subcategory of $F(C^n(T),D)$ where the objects are $T\otimes D\op$-modules $F$ that are right quasi-representable and for all $x\in C^n(T)$ the $D\op$-module $F(x,-)$ is cofibrant in $D\op-\Mod$.
\end{teo}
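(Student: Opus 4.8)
The plan is to compute $\Map(T,D)$ from its cosimplicial-frame description and then to compare this strict model with the moduli-style model built from nerves of categories of bimodules, using $\diag(N(M(C^*(T),D)))$ as a common term receiving a map from each of the two outer objects. Since every dg-category is fibrant (Corollary \ref{ch. 1:hom-cofib}), the fibrant replacement may be omitted and $\Map(T,D)$ is already computed by the frame: by Proposition \ref{ch. 1:tensor} the simplicial set $\Map(C^*(T),D)$ has $n$-simplices $\Hom_{\dg}(C^n(T),D)$, with each $C^n(T)$ cofibrant and quasi-equivalent to $T$. I would read the two remaining objects as diagonals of bisimplicial sets: the cosimplicial structure maps $C^m(T)\to C^n(T)$ induce, by functoriality of bimodules, a simplicial object $n\mapsto M(C^n(T),D)$ in $\Cat$, so that $(m,n)\mapsto N_m(M(C^n(T),D))$ is a bisimplicial set with diagonal the middle term, while $N(F(Q(T),D))$ is regarded as constant in the cosimplicial direction.

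Next I would build the two maps. For the left-hand one, a strict dg-functor $f\colon C^n(T)\to D$ is sent to the $C^n(T)\otimes D\op$-module $(x,y)\mapsto\Hom_D(y,f(x))$; for fixed $x$ this is the representable $\underline{h}_{f(x)}$, which is right quasi-representable and moreover cofibrant in $D\op-\Mod$ (representable modules are fibrant and cofibrant), so the associated module lands in $M(C^n(T),D)$ as a $0$-simplex of its nerve. Viewing the discrete set $\Hom_{\dg}(C^n(T),D)$ as constant in the nerve direction and taking diagonals yields $\Map(C^*(T),D)\to\diag(N(M(C^*(T),D)))$. For the right-hand map I would fix compatible quasi-equivalences $Q(T)\to C^n(T)$ (through $Q(T)\to C^0(T)=T$ and the cosimplicial cofaces); since these are quasi-equivalences of cofibrant sources, induction of bimodules carries right quasi-representable modules to right quasi-representable ones and gives functors $F(Q(T),D)\to M(C^n(T),D)$, whence a map of bisimplicial sets, constant in $n$, producing $N(F(Q(T),D))\to\diag(N(M(C^*(T),D)))$.

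That the right-hand map is a weak equivalence I would deduce from the standard realization lemma for bisimplicial sets: it suffices that for each $n$ the row $N(M(C^n(T),D))$ be weakly equivalent to $N(F(Q(T),D))$ compatibly in $n$. This holds because each $C^n(T)$ is quasi-equivalent to $Q(T)$ and the assignment $S\mapsto N(F(S,D))$ — the nerve of the category of weak equivalences between right quasi-representable $S\otimes D\op$-modules — is invariant under quasi-equivalences of $S$, since the induction–restriction adjunction along a quasi-equivalence of cofibrant sources is a Quillen equivalence of module categories; the cofibrancy condition cutting out $M$ inside $F$ changes the nerve only up to weak equivalence, as every bimodule admits a weakly equivalent cofibrant one.

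The main obstacle is the left-hand map, and its treatment is the genuine derived Morita content. At a fixed level $n$ this map is far from an equivalence — a discrete set of strict functors maps into a whole space — so the equivalence must come from the frame direction, which supplies exactly the higher cells and homotopies absent from the rigid functor-sets. I would prove it by reducing to the generating cofibrations of Theorem \ref{ch. 1:Tab-model}: both $\Map(-,D)$ and $S\mapsto\diag(N(M(C^*(S),D)))$ send homotopy colimits in the source variable to homotopy limits of simplicial sets — for $\Map(-,D)$ this is Theorem \ref{ch. 1: Map commutes with hocolim}, and for the bimodule functor it follows from descent for modules along the cellular presentation of $S$ — while every cofibrant dg-category is a homotopy colimit of the domains and codomains of the maps $\0\to k$ and $\D(1,s,1)\to\D^c(1,s,1)$. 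It then remains to verify the equivalence directly on these generators, where both sides reduce to explicit spaces: for $k$ both compute the space of objects of $D$ (equivalently the classifying space of homotopy equivalences in $\Int(D\op-\Mod)$, via $\l\Int(D\op-\Mod)\r\simeq\Ho(D\op-\Mod)$), and for $\D(1,s,1)$ both compute a mapping complex $\Map_\Ch(k[s],\Hom_D(a,b))$ fibered over pairs of objects. Matching these base cases, together with the homotopy-(co)limit preservation, propagates the equivalence to arbitrary $T$; the delicate point throughout is that the cofibrancy condition defining $M(C^n(T),D)$ is precisely what makes these nerves homotopically meaningful, and the bookkeeping of the compatible quasi-equivalences must be handled with care.
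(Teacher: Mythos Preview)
The paper does not contain a proof of this statement: it is quoted verbatim as \cite[Th.~4.2]{Toen} and used as a background input, so there is no ``paper's own proof'' to compare your attempt against. What can be said is how your sketch stands relative to To\"en's original argument.

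Your overall architecture --- interpret the middle term as the diagonal of a bisimplicial set, handle the right-hand arrow by invariance of the nerve of right quasi-representable modules under quasi-equivalence of the source, and treat the left-hand arrow as the hard part --- is correct in spirit and matches the shape of To\"en's proof. The right-hand equivalence is indeed essentially a Quillen-equivalence-of-module-categories argument, and your justification there is adequate.

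The genuine gap is in your treatment of the left-hand map. You assert that $S\mapsto \diag(N(M(C^*(S),D)))$ sends homotopy colimits in $S$ to homotopy limits, appealing to ``descent for modules along the cellular presentation of $S$'', and then reduce to the generating cofibrations $\emptyset\to k$ and $\D(1,s,1)\to\D^c(1,s,1)$. This step is not justified and is in fact the crux of the whole theorem: there is no a priori reason that the nerve of a category of bimodules satisfying a pointwise cofibrancy and quasi-representability condition should convert cellular pushouts of dg-categories into homotopy pullbacks of simplicial sets. To\"en does not proceed this way; his argument analyses the map fibrewise over the space of pairs of objects, comparing on each fibre a strict mapping-complex description with a moduli description of modules, and uses a delicate compatibility between the cosimplicial frame on $T$ and the simplicial direction of the nerve. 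Your checks on the generators $k$ and $\D(1,s,1)$ are also too brisk: for $S=k$ the identification of both sides with ``the space of objects of $D$'' already requires essentially Corollaries \ref{ch. 1:map to iso classes } and \ref{ch. 1:pi(k,T)}, which in To\"en's paper are \emph{deduced from} this theorem rather than used to prove it, so invoking them here is circular.
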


\begin{coro}\label{ch. 1:map to iso classes }Let $T$ be a dg-category. Then there exists a functorial isomorphism between the set of maps $\l k,T \r$ in $\Ho(\dg)$ and the set of isomorphism classes of the category $\l T\r$.
\end{coro}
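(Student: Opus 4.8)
The plan is to specialize Theorem \ref{ch. 1:map k} to the pair $(k,T)$ and then pass to connected components. First I would note that $k=\D(0)$ is cofibrant: the generating cofibration $I$ is precisely the map $\0\to k$ from the initial object, so this map is a cofibration and $k$ is cofibrant. Hence a cofibrant replacement $Q(k)$ may be taken to be $k$ itself, and $F(Q(k),T)$ may be taken to be $F(k,T)$. Applying Theorem \ref{ch. 1:map k} with first variable $k$ and second variable $T$ then yields a zig-zag of weak equivalences of simplicial sets
$$\Map(k,T)\to \diag(N(M(C^*(k),T)))\leftarrow N(F(k,T)),$$
and all the maps involved are natural in $T$.

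Next I would pass to $\pi_0$. Since $\pi_0$ carries weak equivalences of simplicial sets to bijections, the zig-zag above induces a bijection $\pi_0(\Map(k,T))\cong \pi_0(N(F(k,T)))$, natural in $T$. On the left-hand side, $\pi_0(\Map(k,T))=\l k,T\r$ by the very definition of the mapping space and of morphisms in $\Ho(\dg)$. So the remaining task is to identify the right-hand side with the set of isomorphism classes of $\l T\r$.

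For this I would unwind $F(k,T)$. By the remark following the definition of right quasi-representable modules, an object of $(k\otimes^\L T\op)-\Mod=T\op-\Mod$ is right quasi-representable exactly when it is quasi-representable; thus the objects of $F(k,T)$ are the quasi-representable $T\op$-modules and its morphisms are the weak equivalences between such modules. The set $\pi_0$ of the nerve of a category all of whose morphisms are weak equivalences is the set of objects modulo zig-zags of weak equivalences, which is exactly the set of isomorphism classes of quasi-representable objects in $\Ho(T\op-\Mod)$.

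Finally I would invoke the dg-Yoneda embedding $\underline{h}:T\to \Int(T\op-\Mod)$, which is quasi-fully faithful and so induces a fully faithful functor $\l T\r\to \l\Int(T\op-\Mod)\r\simeq \Ho(T\op-\Mod)$. Since the quasi-representable modules are by definition those weakly equivalent to some $\underline{h}_x$, and since a fully faithful functor reflects isomorphisms, the isomorphism classes of quasi-representable modules are in natural bijection with the isomorphism classes of $\l T\r$. Composing the bijections then gives the desired functorial isomorphism between $\l k,T\r$ and the set of isomorphism classes of $\l T\r$. The step I expect to be the main obstacle is the third one: pinning down precisely that $\pi_0$ of the nerve of the ``category of weak equivalences'' $F(k,T)$ computes isomorphism classes in $\Ho(T\op-\Mod)$ rather than a coarser relation, and verifying that every identification above is genuinely natural in $T$, so that the resulting bijection is functorial.
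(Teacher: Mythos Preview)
Your proposal is correct and follows precisely the intended route: the paper states this result as an immediate corollary of Theorem~\ref{ch. 1:map k} (both are quoted from \cite{Toen}) and gives no further argument, so specializing that theorem to the pair $(k,T)$, passing to $\pi_0$, and then identifying $\pi_0(N(F(k,T)))$ with isomorphism classes of $\l T\r$ via the dg-Yoneda embedding is exactly what is meant. The point you flag as a potential obstacle is handled by noting that quasi-representability is closed under weak equivalence, so any zig-zag of weak equivalences between quasi-representable modules can be taken inside $F(k,T)$; hence $\pi_0(N(F(k,T)))$ really computes isomorphism classes in $\Ho(T\op-\Mod)$ restricted to quasi-representables, and your argument goes through.
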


\begin{coro}\label{ch. 1:pi(k,T)}Let $T$ be a dg-category, and let $x\in T$ be an object in $T$. Then there are natural isomorphisms of groups
$$\pi_1(\Map(k,T),x)\simeq \Aut_{\l T\r}(x) $$
$$\pi_i(\Map(k,T),x)\simeq H^{1-i}(T(x,x))\ \ \forall i>1. $$
\end{coro}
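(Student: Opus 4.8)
The plan is to use Theorem \ref{ch. 1:map k} to replace $\Map(k,T)$ by a nerve of quasi-representable modules, and then to recognise its homotopy groups based at $\underline{h}_x$ via the endomorphism complex $T(x,x)$. First I would apply that theorem with source dg-category $k$ and target $T$. Since $k$ is cofibrant I may take $Q(k)=k$, and by the remark following the definition of right quasi-representability a $(k\otimes T\op)$-module is right quasi-representable precisely when it is a quasi-representable $T\op$-module. The theorem then gives a weak equivalence $\Map(k,T)\simeq N(F(k,T))$, where $F(k,T)$ is the category whose objects are the quasi-representable $T\op$-modules and whose morphisms are the weak equivalences between them. The object $x$ corresponds to $\underline{h}_x$, which is fibrant and cofibrant in $T\op-\Mod$, hence a vertex of $N(F(k,T))$; on $\pi_0$ this reproduces Corollary \ref{ch. 1:map to iso classes }, so it remains to handle $\pi_i$ for $i\geq 1$.

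Next I would use that $N(F(k,T))$ is the nerve of a category in which every morphism is a weak equivalence, i.e. a classification space in the sense of Dwyer and Kan. Its connected component containing $\underline{h}_x$ is weakly equivalent to $B\operatorname{haut}(\underline{h}_x)$, the classifying space of the derived space $\operatorname{haut}(\underline{h}_x)$ of self-equivalences of $\underline{h}_x$ in $T\op-\Mod$. This yields isomorphisms $\pi_i(\Map(k,T),x)\cong\pi_{i-1}(\operatorname{haut}(\underline{h}_x))$ for every $i\geq 1$, with the convention that $\pi_1$ corresponds to $\pi_0(\operatorname{haut}(\underline{h}_x))$.

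Then I would compute $\operatorname{haut}(\underline{h}_x)$. It is the union of the invertible connected components of $\R\Map_{T\op-\Mod}(\underline{h}_x,\underline{h}_x)$, so restricting to these components changes only $\pi_0$. Because $\underline{h}$ is quasi-fully faithful we have $\R\Hom_{T\op-\Mod}(\underline{h}_x,\underline{h}_x)\simeq T(x,x)$, and the associated mapping space is obtained from this complex by the Dold--Kan correspondence, giving $\pi_j\cong H^{-j}(T(x,x))$ for all $j\geq 0$. Hence $\pi_0(\operatorname{haut}(\underline{h}_x))=\Aut_{\Ho(T\op-\Mod)}(\underline{h}_x)\cong\Aut_{\l T\r}(x)$, again by quasi-full faithfulness, while $\pi_j(\operatorname{haut}(\underline{h}_x))\cong H^{-j}(T(x,x))$ for $j\geq 1$. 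Substituting into the previous isomorphism gives $\pi_1(\Map(k,T),x)\cong\Aut_{\l T\r}(x)$ and, for $i>1$, $\pi_i(\Map(k,T),x)\cong H^{-(i-1)}(T(x,x))=H^{1-i}(T(x,x))$; naturality in $(T,x)$ follows from the naturality of each construction used.

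The main obstacle I anticipate is the classification-space identification of the middle step: showing cleanly that the component of the nerve of weak equivalences at $\underline{h}_x$ is $B\operatorname{haut}(\underline{h}_x)$, with loop space the derived self-equivalences, needs the Dwyer--Kan theory of classification spaces (equivalently, the simplicial-localization results recalled earlier) rather than anything elementary. A smaller but genuine point of care is tracking the grading conventions in the Dold--Kan step, so that the degree shift between $\pi_i$ of the nerve and $\pi_{i-1}$ of the automorphism space lands exactly on $H^{1-i}(T(x,x))$.
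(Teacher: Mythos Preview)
Your argument is correct and follows the same route as the original proof in \cite{Toen}, from which the paper simply imports this corollary without reproducing a proof. The key steps---identifying $\Map(k,T)$ with the nerve of weak equivalences between quasi-representable $T\op$-modules via Theorem \ref{ch. 1:map k}, invoking the Dwyer--Kan classification space description to obtain $B\operatorname{haut}(\underline{h}_x)$ on the relevant component, and then reading off the homotopy groups from the endomorphism complex $T(x,x)$ via quasi-full faithfulness of $\underline{h}$---are exactly those used by To\"en. Your caveats are well placed: the Dwyer--Kan identification is the one genuinely nontrivial input, and the degree bookkeeping is the only place one can slip, but you have handled both correctly.
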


\chapter{dg-Segal spaces}

\epigraph{\textit{"Once one decides on what the correct definitions and theorems are, then the results almost prove themselves."}}{---Daniel Dugger, \textit{Universal Homotopy Theories}}

And now that we have all the background information we needed, we can start defining the objects we will be using for our main results. We fix $k$ a commutative ring.

\section{Constructing the adjunction}

\begin{nota}We denote the full subcategory of cofibrant free dg-categories of finite type by $\Free\subset\mathcal{L}$. As we won't consider any other type in this text, we will most of the time omit the term "cofibrant" from our explanations.
\end{nota}

\begin{defin}Let $W$ be the set of weak equivalences on $\dg$ the category of dg-categories. We construct the simplicial localization $L_W\dg$ of dg-categories with respect to $W$, the weak equivalences, as in Definition \ref{Ch. 1: def localization}. We define the \textbf{simplicial $\Free$}, and we denote it by $\FreeS$, the full simplicial subcategory of $L_W\dg$ whose objects are the ones in $\Free$.
\end{defin}

\begin{rem}We must be careful with the definition of $\FreeS$. It is tempting to just define it as $L_W\Free$, but the two categories $L_W\Free$ and $\FreeS$ do not coincide. 
\end{rem}

\begin{teo}\label{Ch.2: construction Sing_w}There exists a chain of Quillen adjunctions of the form 
$$\Re:\Fun^\mathbb{S} (\FreeS\op, \sS)\rightleftharpoons \ldots\rightleftharpoons\dg:\Sing,$$
and it can be derived into a single adjunction
$$\Ho(\Fun^\mathbb{S} (\FreeS^{op}, \sS))\rightleftharpoons \Ho(\dg). $$
\end{teo}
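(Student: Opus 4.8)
The plan is to build the chain in two stages: first produce an honest Quillen adjunction at the $\dg$-end coming from the inclusion of the free generators, and then bridge the ordinary presheaf category that appears there to the simplicial presheaf category $\Fun^\S(\FreeS\op,\sS)$ using the simplicial-localization machinery. The functor I ultimately want to realize is $\Sing(T)=\Map_{\dg}(-,T)$ restricted to $\FreeS$; this is genuinely a simplicial presheaf precisely because the mapping spaces of $\FreeS$ are the derived mapping spaces of $\dg$ (as $\FreeS$ is by definition a full simplicial subcategory of $L_W\dg$), so $\Map_{\dg}(-,T)$ is simplicial on it. Its left adjoint $\Re$ is the realization coend sending a presheaf to the weighted homotopy colimit of the representing free dg-categories.

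First I would apply Dugger's universal model category (Prop \ref{ch. 1: Dugger}) to the inclusion $\gamma\colon\Free\hookrightarrow\dg$. This yields a Quillen adjunction $\Re_0\colon\Fun(\Free\op,\sS)\rightleftharpoons\dg\colon\Sing_0$ with $\Sing_0(T)=\Map(\gamma(-),T)$. To check it is Quillen I would use the criterion of Prop \ref{ch. 1: criteron Quillen adjunction}: $\Re_0$ preserves cofibrations because it sends the generating cofibrations $\underline h_L\otimes(\partial\Delta^n\to\Delta^n)$ to cofibrations in $\dg$ — here one uses that every $L\in\Free$ is cofibrant with cofibrant hom-complexes (Cor \ref{ch. 1:hom-cofib}) — while $\Sing_0$ lands in projectively fibrant objects, since every dg-category is fibrant and $\Map(L,-)$ preserves fibrations for $L$ cofibrant (Prop \ref{ch.1:Map Quillen}).

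Next I would upgrade ordinary presheaves to simplicial presheaves on $\FreeS$. The key observation is that $\Sing_0(T)=\Map(\gamma(-),T)$ sends a weak equivalence between free dg-categories to a weak equivalence of simplicial sets (the mapping space functor takes a weak equivalence between cofibrant objects to a weak equivalence when the target is fibrant), so $\Sing_0(T)$ is a $W$-local object; hence $\Sing_0$ factors through the left Bousfield localization $\sS^{\Free,W}$, and composing the reflective localization $\L\Id\dashv\R\Id$ with $(\Re_0,\Sing_0)$ yields an adjunction $\sS^{\Free,W}\rightleftharpoons\dg$ on homotopy categories — the composition is legitimate exactly because $\Sing_0$ already takes values in the local objects fixed by the reflection, so the left adjoints of Dugger and of the localization, though both emanating from $\Fun(\Free\op,\sS)$, combine correctly. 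Then, invoking the simplicial-localization comparison (Theorem \ref{ch. 1: equivalence for functor categories} and its corollary, with Theorem \ref{ch. 1:fully-faith}), I would identify the homotopy theory of these restricted diagrams with $\Fun^\S(\FreeS\op,\sS)$, obtaining the remaining links as Quillen equivalences that pass through an intermediate free-resolution stage; this cospan of Quillen equivalences is exactly why the statement is a chain rather than a single adjunction.

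The hard part will be the bridge of the previous paragraph, and specifically producing $\FreeS$ rather than $L_W\Free$. As the remark following the definition of $\FreeS$ warns, these do not agree: the mapping spaces of $L_W\Free$ are computed by zig-zags internal to $\Free$, whereas those of $\FreeS$ are the ambient derived mapping spaces $\R\Map_{\dg}(L,L')$, which may see zig-zags through dg-categories outside $\Free$. I therefore cannot apply the comparison theorem to $(\Free,W)$ in isolation; instead I would realize $\FreeS$ through the ambient localization $L_W\dg$, restricting simplicial presheaves along $\FreeS\hookrightarrow L_W\dg$ via the Quillen adjunction $j_!\dashv j^*$ of Prop \ref{ch.1: pull adjunction}, and use that free dg-categories of finite type are cofibrant (so their homotopy function complexes are modelled by honest frames) to ensure the comparison lands on $\FreeS$ and not its naive localization. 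Finally I would derive: each Quillen equivalence induces an equivalence of homotopy categories, composable in either direction and hence neutralizing the cospans, while the single directional Dugger adjunction supplies the adjoint pair $\L\Re_0\dashv\R\Sing_0$; composing produces the claimed adjunction $\Ho(\Fun^\S(\FreeS\op,\sS))\rightleftharpoons\Ho(\dg)$, where I emphasize that only an adjunction, not an equivalence, is asserted at this stage.
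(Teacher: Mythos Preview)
Your overall strategy is sound and you correctly flag the central subtlety, namely that $L_W\Free\neq\FreeS$, but the route you take differs from the paper's in a way that leaves a gap.

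The paper applies Dugger's construction not to the inclusion $\gamma:\Free\hookrightarrow\dg$ but to the \emph{identity} $\Id:\dg\to\dg$, obtaining $\Re_W:\Fun(\dg\op,\sS)\rightleftharpoons\dg:\Sing_W$ with $\Sing_W(T)=\Map_\dg(-,T)$. The chain is then
\[
\Fun^\S(\FreeS\op,\sS)\ \overset{j_!}{\underset{j^*}{\rightleftharpoons}}\ \Fun^\S(L_W\dg\op,\sS)\ \overset{l_!}{\underset{l^*}{\leftrightharpoons}}\ \Fun(\dg\op,\sS)\ \overset{\Re_W}{\underset{\Sing_W}{\rightleftharpoons}}\ \dg,
\]
where $l:\dg\to L_W\dg$ is the localization morphism. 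The middle adjunction points the wrong way, and the paper resolves this not via Theorem~\ref{ch. 1: equivalence for functor categories} but directly from the universal property of the simplicial localization: since every dg-category is fibrant, $\Sing_W(T)$ inverts $W$ and hence factors as $\Sing_W=l^*\circ\Sing'$ on homotopy categories; then the \emph{full faithfulness} of $l^*$ (Definition~\ref{Ch. 1: def localization}) yields the isomorphism $[j_!X,\Sing'(Y)]\simeq[l^*j_!X,\Sing_W(Y)]$, which is what makes the derived composite an honest adjunction.

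Your version applies Dugger to $\Free\hookrightarrow\dg$, landing in $\Fun(\Free\op,\sS)$, and then tries to reach $\Fun^\S(\FreeS\op,\sS)$ via $\sS^{\Free,W}$ and Theorem~\ref{ch. 1: equivalence for functor categories}. As you yourself observe, this produces $L_W\Free$ rather than $\FreeS$. Your proposed fix, routing through $L_W\dg$ via $j_!\dashv j^*$, is exactly the right move, but you do not explain how to connect $\Fun^\S(L_W\dg\op,\sS)$ back to your starting point $\Fun(\Free\op,\sS)$; doing so would require either a further restriction $\Fun(\dg\op,\sS)\to\Fun(\Free\op,\sS)$ or, more naturally, re-running Dugger on $\dg$ itself, at which point you recover the paper's argument verbatim. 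The paper's choice of applying Dugger to $\Id_{\dg}$ is what makes the whole chain line up without this detour, and the explicit use of the full faithfulness of $l^*$ is the mechanism (absent from your sketch) that repairs the wrong-way adjunction.
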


\begin{proof}
Let us start from the right. As we have defined $\FreeS$ to have the simplicial structure induced by the simplicial structure of $L_W\dg$, there exists a natural simplicial inclusion functor $j:\FreeS\hookrightarrow L_W\dg$. If we take the projective model structure on the categories of simplicial functors, we have a Quillen adjunction of the form 
$$j_!:\Fun^\S(\FreeS\op, \sS)\rightleftharpoons \Fun^\S(L_W\dg\op,\sS):j^* $$
Now, by definition of the simplicial localization, there exists a functor $l:\dg\to L_W\dg$ that gives us a Quillen adjunction of the form 
$$l_!:\Fun(\dg\op, \sS)\rightleftharpoons \Fun^\S(L_W\dg\op, \sS):l^* $$
And lastly, using Dugger's construction from Proposition \ref{ch. 1: Dugger}, if we take both the category $C$ and the model category $M$ to be $\dg$, and the functor $\gamma$ to be the identity, we have a factorization $(\Re_W, \Sing_W,\eta)$,
$$\Re_W:\Fun(\dg\op, \sS)\rightleftharpoons \dg:\Sing_W $$
where we know that the right adjoint is given by $\Sing_W(X)=\Map(-,X)$.
\\
\\We have constructed a chain of Quillen adjunctions from $\dg$ to $\Fun(\FreeS\op,\sS)$. But, as we have remarked a few times before, we cannot compose right Quillen functors to left Quillen functors and still get a Quillen adjunction. And in this case, the adjunction $(l_!,l^*)$ goes in the "wrong direction". Indeed, if we write the chain of Quillen adjunctions and we write the left Quillen functor always on top, we get the following diagram:
\begin{center}
	\begin{tikzcd}
		\Fun^\S(\FreeS\op,\sS)\ar[r, "j_!", shift left]& \ar[l, "j^*", shift left]\Fun^\S(L_W\dg\op,\sS)\ar[r,  "l^*" below, shift right]& \ar[l, "l_!" above, shift right]\Fun(\dg\op,\sS)						\ar[r,"\Re_W", shift left]& \ar[l,"\Sing_W", shift left] \dg.
	\end{tikzcd}
\end{center}
At this level there is nothing we can do to fix this: but on the homotopy categories there is. Indeed, in the homotopy categories we can construct a factorization of the functor $\Sing_W$, which would bypass the need for the adjoint $l_!$ altogether. In other words, we are going to try and find a functor $\Sing'$ such that the diagram 
\begin{center}
	\begin{tikzcd}
		\Ho(\Fun^\S(\FreeS\op,\sS))\ar[r, "j_!", shift left]& \ar[l, "j^*", shift left]\Ho(\Fun^\S(L_W\dg\op,\sS))\ar[r,  "l^*" below, shift right]& \ar[l, "l_!" above, shift right]\Ho(\Fun(\dg\op,\sS))						\ar[d,"\Re_W", shift left]\\
		&& \ar[u,"\Sing_W", shift left] \Ho(\dg)\ar[ul, "\Sing'", dashed].
	\end{tikzcd}
\end{center}
 commutes. 
\\
\\By definition of the simplicial localization category, if we have a functor $F:\dg\to \sS$ that sends all morphisms in $W$ to weak equivalences in $\sS$, then it can be factorized through $l^*$ on the homotopy categories. But $\Sing_W(X)=\Map(-,X)$ is a right Quillen adjoint, so it sends weak equivalences between fibrant objects to weak equivalences, and all objects in $\dg$ are fibrant. So for all $X\in\dg$, the image $\Sing_W(X)$ can be factorized through $L_W\dg$. That gives us a functorial factorization of the form
$$\Sing_W(X)=l^*\circ \Sing'(X) $$
in the homotopy categories.
\\
\\Now we only need to prove that the functors $\Sing=j^*\circ\Sing'$ and $\Re=\Re_W\circ l^*\circ j_!$ are truly adjoints, i.e. that for all $X\in\Fun^\S(\FreeS\op,\sS)$ and for all $Y\in\dg$ there exists an functorial isomorphism of the form
$$\l X,j^*\circ \Sing'(Y) \r_{\Fun^\S(\FreeS\op,\sS)}\simeq \l\Re_W\circ l^*\circ j_!(X),Y\r_{\dg}. $$
Let us start on the left side and work our way through. We start with $\l X,j^*\circ \Sing'(Y) \r_{\Fun^\S(\FreeS\op,\sS)}$. As the functors $j_!$ and $j^*$ are adjoints, we have that 
$$\l X,j^*\circ \Sing'(Y) \r_{\Fun^\S(\FreeS\op,\sS)}\simeq \l j_!(X),\Sing'(Y)\r_{\Fun^\S(L_W\dg\op,\sS)}. $$
Because we are working on the homotopy categories, by the definition of simplicial localizations (see Definition \ref{Ch. 1: def localization}) the functor $l^*$ is fully faithful. That means, in particular, that we get the following isomorphism:
$$\l j_!(X),\Sing'(Y)\r_{\Fun^\S(L_W\dg\op,\sS)}\simeq  \l l^*\circ j_!(X),\Sing_W(Y)\r_{\Fun(\dg\op,\sS)}.$$
And finally, using the definition of an adjunction again on $\Re_W$ and $\Sing_W$, we get that
$$\l l^*\circ j_!(X),\Sing_W(Y)\r_{\Fun(\dg\op,\sS)}\simeq \l\Re_W\circ l^*\circ j_!(X),Y \r_\dg. $$
We have the isomorphism
$$\l X,\Sing(Y) \r_{\Fun^\S(\FreeS\op,\sS)}\simeq \l\Re(X),Y \r_{\dg}$$
and the pair $\Re:\Ho(\Fun^\S(\FreeS\op,\sS))\rightleftharpoons \Ho(\dg):\Sing$ is an adjunction on the homotopy categories. We have finished the proof.
\\\end{proof}

We have now an adjunction between the categories we wanted itfor. The next step is proving that this functor is an equivalence. But it is not that easy. In order to do that, we need a few background concepts and constructions.

\section{dg-Segal spaces}

 As we said in the introduction, we take our inspiration for this section from complete Segal spaces. We remind the reader that the definition of said spaces is the following.

\begin{defin}\cite[Def. 4.1]{ComSegalSpacesREZK} Let $W$ be a Reedy fibrant simplicial space. We say that $W$ is a \textbf{Segal space} if the maps 
$$W_k\to \overbrace{W_1\times_{W_0}\ldots\times_{W_0}W_1}^{\text{$k$ times}}$$
are weak equivalences for all $k\geq 2$.
\end{defin}

We can put some similar conditions on our functors, but in our case they won't be enough. Indeed, we aren't just working with simplicial sets: we have a linear structure to worry about. Consequently, we need an additional condition on the shift, and for that, we are going to define the action of adding a module to a complex of modules.

\begin{defin}Let $G\in Gr(\Ch)$ be a graph in the category of complexes, $x,y\in \Ob(G)$ two objects in $G$, and $\alpha\in Z^n(G(x,y))$ a cycle in $G(x,y)$. We define the graph $G(<\alpha>)$ to be a graph of complexes such that 
\begin{itemize}
	\item The graph $G(<\alpha>)$ has the same objects as $G$.
	\item The graph $G(<\alpha>)$ has the same morphisms as $G$ between $x',y'\in\Ob(G)$ if $(x',y')\neq (x,y)$, i.e.\\ $G(<\alpha>)(x',y')=G(x',y')$.
	\item We define $G(<\alpha>)(x,y)$ to be the complex of morphisms $G(x,y)\oplus_k \beta$ where $d\beta=\alpha$.
\end{itemize}
In other words, we have that $G(<\alpha>)$ is a pushout in the graphs over the morphism $k[n]\to k^c[n]$, where $k[n]$ is the graph with two objects, $0,1$, and $k[n]$ as $\Hom(0,1)$ the morphism between the two, and $k^c[n]$ is the graph with two objects, $0,1$, and the complex $\Hom(0,1)$ which is always zero except for the degrees $n-1$ and $n$, where it is $k$.
\begin{center}
	\begin{tikzcd}
		k[n]\ar[r, "\alpha"]\ar[d]& G\ar[d]\\
		k^c[n]\ar[r]& G(<\alpha>).
	\end{tikzcd}
\end{center}

\end{defin}

\begin{rem}It isn't hard to see that all we have done here has been adding a term in degree $n-1$ to the complex of modules $G(x,y)$.
\end{rem}

Now that we have this definition, we can apply it to finally define the conditions of our image.

\begin{defin} Let $F\in\Fun^\S(\Free_\S\op,\sS)$ be a simplicial functor from the cofibrant free dg-categories to the simplicial sets. We say that $F$ satisfies the \textbf{dg-Segal conditions} if:
\begin{enumerate}
	\item For all $L, K\in\Free_\S$, $F(L\coprod K)\to F(L)\times F(K)$ is a weak equivalence. 
	\item The image of the initial object is a point, i.e. $F(\emptyset)\simeq *$.
	\item Let $G$ be a graph in $Gr(\Ch)$ and $x,y\in\Ob(G)$. For all $\alpha\in Z^n(G(x,y))$, the image of the free dg-category issued from $G(<\alpha>)$ is a homotopy pullback in $\sS$ of the following form:
	\begin{center}
		\begin{tikzcd}
			F(L(G(<\alpha>)))\ar[r]\ar[d]\arrow[dr,phantom, "\ulcorner^h", very near start]& F(L(G))\ar[d]\\
			F(\D^c(1,n,1))\ar[r]& F(\D(1,n,1))
		\end{tikzcd}
	\end{center}	
	where $\D^c(1,n,1)=L(k^c[n])$ and $\D(1,n,1)=L(k[n])$; i.e. $F$ sends the homotopy pushouts of the previous definition to homotopy pullbacks.
\end{enumerate}
	We denote the full subcategory of $F\in\Fun^\S(\Free_\S\op,\sS)$ that satisfies the dg-Segal conditions by $\mathbf{dg-Segal}$ and call its objects \textbf{dg-Segal spaces}.
\end{defin}
Our conjecture at this point is that the image of the functor $\Sing$ we defined in Section 4.1 is formed up to weak equivalence of the functors that satisfy the dg-Segal conditions. In order to prove that, first we need to prove that every object in the image is of this form.

\begin{prop}\label{ch.2 Sing is dg-Segal}Let $T\in \dg$ be a dg-category. Then the functor $\Sing(T)$ satisfies the dg-Segal conditions.
\end{prop}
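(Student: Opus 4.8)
The plan is to exploit the explicit description of $\Sing$ obtained in Theorem \ref{Ch.2: construction Sing_w}: for a free dg-category of finite type $L\in\Free$ and any $T\in\dg$, one has $\Sing(T)(L)\simeq\Map(L,T)$, the mapping space of the model category $\dg$, where $L$ is cofibrant and $T$ is fibrant (every dg-category is fibrant by Corollary \ref{ch. 1:hom-cofib}). The three dg-Segal conditions then become statements about how the contravariant functor $\Map(-,T)$ behaves on coproducts, on the initial object, and on a specific pushout, and the common engine is that $\Map(-,T)$ sends homotopy colimits to homotopy limits, by Theorem \ref{ch. 1: Map commutes with hocolim}.

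Conditions (1) and (2) I would dispatch quickly. For (1), the coproduct $L\coprod K$ of the cofibrant objects $L,K$ is a homotopy coproduct, so Theorem \ref{ch. 1: Map commutes with hocolim} makes $\Map(L\coprod K,T)\to\Map(L,T)\times\Map(K,T)$ a weak equivalence, the homotopy product of simplicial sets being the ordinary product. For (2), the initial object $\emptyset$ is cofibrant and $\Map(\emptyset,T)$ is the homotopy limit of the empty diagram, hence a point, so $\Sing(T)(\emptyset)\simeq *$.

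The substance is condition (3). By definition $G(<\alpha>)$ is the pushout of graphs along $k[n]\to k^c[n]$; applying the free functor $L$, which is a left adjoint and so preserves pushouts (Definition \ref{ch. 1: def free}), yields a pushout square in $\dg$
\begin{center}
	\begin{tikzcd}
		\D(1,n,1)\ar[r]\ar[d, "P(n)"]& L(G)\ar[d]\\
		\D^c(1,n,1)\ar[r]& L(G(<\alpha>)),
	\end{tikzcd}
\end{center}
where I have used $L(k[n])=\D(1,n,1)$ and $L(k^c[n])=\D^c(1,n,1)$. The left leg $\D(1,n,1)\to\D^c(1,n,1)$ is exactly the generating cofibration $P(n)$ of Theorem \ref{ch. 1:Tab-model}, hence a cofibration, and all three corners $\D(1,n,1)$, $\D^c(1,n,1)$, $L(G)$ are cofibrant free dg-categories. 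Thus the span is cofibrant as a diagram and its pushout $L(G(<\alpha>))$ is a homotopy pushout. Applying $\Map(-,T)$, which by Theorem \ref{ch. 1: Map commutes with hocolim} (the span is objectwise cofibrant and $T$ is fibrant) sends this homotopy pushout to a homotopy limit, reverses all arrows and places $\Map(L(G(<\alpha>)),T)$ at the apex mapping to $\Map(L(G),T)$ and $\Map(\D^c(1,n,1),T)$, both mapping to $\Map(\D(1,n,1),T)$. This is precisely the homotopy pullback demanded in condition (3).

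The main obstacle will be the bookkeeping that makes the engine run: first, justifying the identification $\Sing(T)(L)\simeq\Map(L,T)$ from the chain of adjunctions, and second, verifying that the strict pushout produced by $L$ genuinely computes the homotopy pushout, so that Theorem \ref{ch. 1: Map commutes with hocolim} is applicable. The latter I would base on the cofibrancy of every object of the span together with $P(n)$ being a cofibration, rather than on left properness of $\dg$, which is not asserted in the excerpt. A secondary point to check with care is that the contravariant image of the defining pushout square has its arrows oriented exactly as in the statement, so that the apex of the resulting homotopy pullback is indeed $\Sing(T)(L(G(<\alpha>)))$.
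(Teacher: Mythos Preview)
Your proof is correct and reaches the same conclusion, but your route for condition (3) differs from the paper's. You argue that the defining square in $\dg$ is a \emph{homotopy} pushout (cofibrant corners, one leg the generating cofibration $P(n)$) and then invoke Theorem \ref{ch. 1: Map commutes with hocolim} to turn it into a homotopy pullback under $\Map(-,T)$. The paper instead works on the target side: since $P(n)$ is a cofibration, Proposition \ref{ch. 1:cof-to-fib} makes the induced map $\Map(\D^c(1,n,1),T)\to\Map(\D(1,n,1),T)$ a fibration of simplicial sets, and then right properness of $\sS$ identifies the strict pullback (which $\Hom(-,C_*(T))$ produces from the strict pushout $L(G(<\alpha>))$) with the homotopy pullback. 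For conditions (1) and (2) the paper is similarly more direct, using the universal property of the coproduct and of the initial object at the level of $\Hom(-,C_*(T))$ rather than the general hocolim-to-holim theorem.

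Your approach is more uniform---one engine, applied three times---at the price of checking that the strict colimits in question are homotopy colimits. The paper's approach is more elementary and avoids Theorem \ref{ch. 1: Map commutes with hocolim} entirely, trading it for the specific facts that $\Map$ sends cofibrations to fibrations and that $\sS$ is proper. Either way is fine; your version has the advantage of making transparent that all three conditions are instances of the same phenomenon.
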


\begin{proof}
 We have to see that $T$ fulfills the three conditions of the definition.  
\\
\\1. Let $L,K\in\Free_\S$ be two cofibrant free dg-categories. As $L,K$ are cofibrant, we have that 
$$\Sing(T)(L\coprod K)=\Map(L\coprod K,T)=\Hom(L\coprod K, C_*(T)).$$
But by definition of coproduct, the condition 1. holds in this case:
$$\Hom(L\coprod K, C_*(T))=\Hom(L,C_*(T))\times \Hom(K,C_*(T))=\Map(L,T)\times \Map(K,T) $$
and we have finished.
\\
\\2. This condition is evident: $\Sing(\emptyset)=\Map(-,\emptyset)=*$ by definition of final object.
\\
\\3. We need to prove that the following diagram is a homotopy pullback
\begin{center}
		\begin{tikzcd}
			\Sing(T)(L(G(<\alpha>)))=\Map(L(G(<\alpha>)), T)\ar[r]\ar[d]& \Sing(T)(L(G))=\Map(L(G),T)\ar[d]\\
			\Sing(T)(\D^c(1,n,1))=\Map(\D^c(1,n,1), T)\ar[r]& \Sing(T)(\D(1,n,1))=\Map(\D(1,n,1),T).
		\end{tikzcd}
	\end{center}
	
All these objects are in $\sS$, which is a proper category, which means that if one of these arrows is a fibration, then we have a homotopy pullback. By Theorem \ref{ch. 1:Tab-model}, we have that $\D(1,s,1)\to \D^c(1,s,1)$ is a generating cofibration in $\dg$, and by Proposition \ref{ch. 1:cof-to-fib}, $\Sing(T)(\D^c(1,s,1))\to \Sing(T)(\D(1,s,1))$ is a fibration. We then have that the previous diagram is a homotopy pullback.
\\
\\The functor $\Sing(T)$ satisfies the dg-Segal conditions and we have finished our proof.
\\\end{proof}

\begin{rem}We draw the attention of our readers to the fact that, even though we haven't asked for dg-Segal spaces to be fibrant, by adjointness every $\Sing(T)$ is actually fibrant.
\end{rem}

We have proven that every element in the image of $\Sing$ is a dg-Segal space. We ask the reader to keep that in mind for when we have to prove the essential surjectivity. 
\\
\\But for now, let us focus on the model structure we can get for these dg-Segal spaces. In \cite{ComSegalSpacesREZK}, Rezk takes his model structure over $\Fun(\Delta\op,\sS)$ and does a Bousfield localization that makes the Segal spaces into its fibrant objects; he calls that \textbf{the Segal space model category structure}. Following his footsteps, we get a new model structure for $\Fun^\S(\Free_\S\op,\sS)$ where the fibrant objects are the dg-Segal spaces that are fibrant for the projective structure. Even though the results about Segal and complete Segal spaces in this section are due to Rezk, we will take inspiration in Rasekh's lecture notes in \cite{Nima} for their presentation.

\begin{teo}There exists a simplicial closed model structure on $\Fun^\S(\Free_\S\op,\sS)$ and a class of morphisms $C$ such that
\begin{enumerate}
	\item The cofibrations are the same as in the projective model structure.
	\item The fibrant objects are the dg-Segal spaces which are fibrant for the projective model structure.
	\item The weak equivalences are the $C$-local equivalences with respect to the class $C$.
\end{enumerate}
We call such a model structure \textbf{the dg-Segal model structure.}
\end{teo}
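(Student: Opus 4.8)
The plan is to obtain the dg-Segal model structure as a left Bousfield localization of the projective model structure on $\Fun^\S(\Free_\S\op,\sS)$ at a carefully chosen set of maps $C$. First I would record that the projective model structure on $\Fun^\S(\Free_\S\op,\sS)$ is left proper, cellular and simplicial: since $\sS$ is left proper and cellular (Proposition \ref{ch. 1: sSet good Bousfield}) and the simplicial enrichment of the projective structure is compatible with it (Theorem \ref{ch. 1: functors simplicial}), the analogue of Proposition \ref{ch. 1: fun good Bousfield} for simplicial functor categories shows that $\Fun^\S(\Free_\S\op,\sS)$ is a left proper cellular simplicial model category. With this in hand, Theorem \ref{ch. 1: existence Bousfield} guarantees that $L_C\Fun^\S(\Free_\S\op,\sS)$ exists for any set $C$, that it is again left proper cellular simplicial, that its cofibrations agree with the projective ones, and that its weak equivalences are the $C$-local equivalences. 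Items (1) and (3) of the statement are then automatic from the very definition of a left Bousfield localization; the whole content lies in choosing $C$ so that (2) holds.

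The class $C$ I would take to be the union of three families of maps, each encoding one of the dg-Segal conditions through the enriched Yoneda lemma, which identifies $\Map(\underline{h}_x,F)$ with $F(x)$ in $\Ho(\sS)$. For the coproduct condition, for every $L,K\in\Free_\S$ I take the canonical map $\underline{h}_L\coprod\underline{h}_K\to\underline{h}_{L\coprod K}$; since representables are cofibrant, $\Map(-,F)$ sends its source to $F(L)\times F(K)$ and its target to $F(L\coprod K)$, so $F$ is local with respect to it precisely when $F(L\coprod K)\to F(L)\times F(K)$ is a weak equivalence. For the second condition I take the single map $\emptyset\to\underline{h}_\emptyset$ out of the initial object, for which $\Map(\emptyset,F)=*$, so that locality means $F(\emptyset)\simeq *$. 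For the Segal-type pullback condition, for every finite-type graph $G$ and every $\alpha\in Z^n(G(x,y))$ I take the comparison map
$$\underline{h}_{L(G)}\coprod^h_{\underline{h}_{\D(1,n,1)}}\underline{h}_{\D^c(1,n,1)}\to\underline{h}_{L(G(<\alpha>))}$$
out of the homotopy pushout of representables. By Theorem \ref{ch. 1: Map commutes with hocolim}, $\Map(-,F)$ carries this homotopy pushout to the homotopy pullback appearing in the third dg-Segal condition, so $F$ is local with respect to this map exactly when that square is a homotopy pullback.

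With $C$ so defined, the $C$-local objects are by construction exactly the functors satisfying the dg-Segal conditions, and Proposition \ref{ch. 1: fibrant and C-local} then identifies the fibrant objects of $L_C\Fun^\S(\Free_\S\op,\sS)$ with the fibrant $C$-local objects, i.e.\ the dg-Segal spaces that are projectively fibrant; this yields (2). I would then simply declare the dg-Segal model structure to be $L_C\Fun^\S(\Free_\S\op,\sS)$, its simplicial enrichment being inherited through Theorem \ref{ch. 1: existence Bousfield}.

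The main obstacle, and the place requiring genuine care, is the construction of $C$ as an honest set rather than a proper class, together with the verification that locality with respect to the third family is really equivalent to the homotopy-pullback condition as stated. For smallness I would use that $\Free_\S$ is essentially small and that it suffices to range over graphs of finite type, so the representables and the homotopy pushouts above form a set up to isomorphism; one must produce these homotopy pushouts functorially (via a fixed factorization) so that the maps are well defined. For the third condition I must check that the natural map from the homotopy pushout of representables to $\underline{h}_{L(G(<\alpha>))}$ induces, after $\Map(-,F)$, exactly the comparison map into the homotopy pullback — this is where the interplay between the pushout of graphs defining $G(<\alpha>)$, the left adjoint $L$, and the Yoneda embedding must be tracked precisely. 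Finally I would flag the one structural point deserving a separate lemma: that the projective structure on the \emph{simplicial} functor category $\Fun^\S$, as opposed to the ordinary diagram category treated in Proposition \ref{ch. 1: fun good Bousfield}, is indeed cellular, so that Theorem \ref{ch. 1: existence Bousfield} applies.
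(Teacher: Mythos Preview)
Your proposal is correct and follows essentially the same approach as the paper: both construct the dg-Segal model structure as a left Bousfield localization at a set $C$ built from three families of maps between representables (one per dg-Segal condition), verify via the enriched Yoneda lemma that the $C$-local objects are exactly the dg-Segal spaces, and invoke the left proper cellular hypotheses to guarantee existence. You are in fact more careful than the paper on a few points it leaves implicit --- smallness of $C$, the use of homotopy pushouts in the third family, and the cellularity of the simplicial (as opposed to ordinary) functor category.
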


\begin{proof}
In order to prove this, we are going to utilize the left Bousfield localization. The first thing we need to do is find a class of morphisms $C$ such that the dg-Segal spaces are exactly the $C$-local objects, i.e. such that for every dg-Segal space $F$ and for every morphism $f:A\to B$ in $C$, the morphism 
$$\Map(B,F)\to\Map(A,F) $$
is a weak equivalence. For that, we define three classes of morphisms, one for each condition in the definition of a dg-Segal space.
\\
\\1. Our first class of morphisms is $C_1=\{\Sing(L)\coprod\Sing(L')\to \Sing(L\coprod L')/\ L,L'\in \FreeS\}$. Let us check that the $C_1$-local objects are exactly the functors that satisfy the first condition of the definition of dg-Segal spaces. Let $F$ be a functor in $\Fun^\S(\Free_\S\op,\sS)$ that is a $C_1$-local object. Then, by the Yoneda Lemma, we have that 
$$\Map(\Sing(L\coprod L'),F)\simeq F(L\coprod L')\to \Map(\Sing(L)\coprod\Sing(L'),F)\simeq F(L)\times F(L') $$
is a weak equivalence. By construction, $F$ satisfies the first dg-Segal condition. 
\\
\\2. As the second condition of the definition of a dg-Segal space is just one single weak equivalence, the class of morphisms associated to it will also have just one morphism. We consider $C_2=\{\emptyset\to \Sing(\emptyset\}$. Let $F$ be a $C_2$-local object. Then, also by the Yoneda lemma, we have that
$$\Map(\Sing(\emptyset),F)\simeq F(\emptyset)\to \Map(\emptyset,F)\simeq * $$
is a weak equivalence, and $F$ satisfies the second dg-Segal condition.
\\
\\3. Lastly, we take the class $C_3=\{\Sing(L(G))\coprod_{\Sing(\D(1,s,1))}\Sing(\D^c(1,s,1))\to \Sing(L(G(<\alpha>))/\ G\in Gr(\Ch),s\in\Z, x,y\in\Ob(G), \alpha\in Z^n(x,y)\}$. Let $F$ be a $C_3$-local object. The, by the Yoneda lemma, we have that the morphism 
$$\Map(\Sing(L(G(<\alpha>))),F)\simeq F(L(G(<\alpha>)))\to $$
$$\Map(\Sing(L(G))\coprod_{\Sing(\D(1,n,1))}\Sing(\D^c(1,n,1)), F)\simeq F(L(G))\times_{\Sing(\D(1,n,1))}F(\D^c(1,n,1)) $$
is a weak equivalence. Alternatively, that means that the diagram in condition 3 of the dg-Segal condition is a homotopy pullback and $F$ satisfies the third dg-Segal condition.
\\
\\We take the class of morphisms $C=C_1\cup C_2\cup C_3$ to be our $C$ in the Bousfield localization. If such a localization exists, its fibrant objects will be exactly the dg-Segal spaces which are fibrant for the projective model structure.
\\
\\We have by Theorem \ref{ch. 1: existence Bousfield} that if the category we are trying to localize is a left proper cellular model category, then the left Bousfield localization exists. We know from Proposition \ref{ch. 1: sSet good Bousfield} that the category of simplicial sets $\sS$ is left proper and cellular, and from Proposition \ref{ch. 1: fun good Bousfield} that the functors on it are also left proper and cellular. So this localization exists and we have finished.
\\\end{proof}

Considering how we have followed Rezk's method pretty closely, it won't be surprising to our readers to see that there is a close relationship between our dg-Segal spaces and classic Segal spaces. Indeed, there is a Quillen adjoint between the model category $\Fun^\S(\Free_\S\op,\sS)$ and the model category $\Fun(\Delta\op, \sS)$. Let us construct that.

\begin{prop}There exists a morphism, called \textbf{the linearisation of $\Delta$}, between the categories $\Delta$ and $\Free_\S$, and it defines a Quillen adjunction between the categories $\Fun^\S(\Free_\S\op,\sS)$ and $\Fun(\Delta\op, \sS)$ with their respective projective structures.
\end{prop}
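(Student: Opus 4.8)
The plan is to realise the linearisation as a cosimplicial object of $\dg$ which lands in $\Free$, to push it into the simplicial localisation so as to obtain a morphism of simplicial categories $\Delta\to\Free_\S$, and then to invoke the (enriched) precomposition adjunction between the associated projective diagram categories. First I would define a functor $\lambda\colon\Delta\to\Free$ on objects by $[n]\mapsto L(\ell_n)$, where $\ell_n\in Gr(\Ch)$ is the linear graph with vertices $\{0,\dots,n\}$, one generating edge $k[0]$ from $i$ to $i+1$ for each $0\le i<n$ and no other edges, and $L$ is the free functor of Definition \ref{ch. 1: def free}. Unwinding that definition, $L(\ell_n)(i,j)=k$ in degree $0$ whenever $i\le j$ and $0$ otherwise, so $L(\ell_n)$ is just the $k$-linearisation of the poset $[n]$; in particular $L(\ell_1)=\D(1,0,1)$. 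Each $\ell_n$ has finitely many vertices and perfect edge complexes, so each $L(\ell_n)$ is a cofibrant free dg-category of finite type and lies in $\Free$. A monotone map $[m]\to[n]$ induces an evident map of graphs $\ell_m\to\ell_n$, hence a dg-functor $L(\ell_m)\to L(\ell_n)$, making $\lambda$ a genuine functor.

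Next I would promote $\lambda$ to a morphism of simplicial categories. Regarding $\Delta$ as a discrete simplicial category, a simplicial functor $\Delta\to\Free_\S$ is the same datum as an ordinary functor into the category whose morphisms are the $0$-simplices of the mapping spaces of $\Free_\S$. Composing $\lambda$ with the localisation morphism $l\colon\dg\to L_W\dg$ of Definition \ref{Ch. 1: def localization}, which carries each object of $\Free$ to an object of $\Free_\S$ and each dg-functor to a $0$-simplex of the corresponding mapping space, yields the sought morphism of simplicial categories $\hat\lambda\colon\Delta\to\Free_\S$. This $\hat\lambda$ is what I would call the linearisation of $\Delta$.

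Finally I would build the adjunction. Taking opposites and precomposing, $\hat\lambda$ induces the restriction functor $\hat\lambda^*\colon\Fun^\S(\Free_\S\op,\sS)\to\Fun(\Delta\op,\sS)$, $F\mapsto F\circ\hat\lambda\op$ (note that, $\Delta\op$ being discrete, $\Fun^\S(\Delta\op,\sS)=\Fun(\Delta\op,\sS)$), whose left adjoint is the enriched left Kan extension $\hat\lambda_!$. This is precisely the simplicial analogue of the adjunction of Proposition \ref{ch.1: pull adjunction}, now for the simplicial projective model structures provided by Theorem \ref{ch. 1: functors simplicial}. To see it is a Quillen adjunction I would apply the criterion of Proposition \ref{ch. 1: criteron Quillen adjunction} to the right adjoint: in the projective structures fibrations and trivial fibrations are detected objectwise, and $\hat\lambda^*$ merely reindexes the objects along $\hat\lambda\op$, so it takes objectwise fibrations (resp. trivial fibrations) to objectwise fibrations (resp. trivial fibrations). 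Hence $\hat\lambda^*$ is a right Quillen functor and $(\hat\lambda_!,\hat\lambda^*)$ is a Quillen adjunction, with the left adjoint $\hat\lambda_!$ going from simplicial spaces into $\Fun^\S(\Free_\S\op,\sS)$; the restriction $\hat\lambda^*(F)$ is the simplicial space $[n]\mapsto F(L(\ell_n))$, which is the object whose Segal structure the next proposition will analyse.

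The main obstacle I expect is not the Quillen verification, which is formal given the objectwise nature of the projective structure, but the careful bookkeeping required to realise $\hat\lambda$ as an honest morphism of simplicial categories valued in $\Free_\S$ rather than merely a functor into $\dg$ or $L_W\dg$. In particular one must confirm that the localisation functor restricts to $\Free_\S$ exactly as defined — the full simplicial subcategory of $L_W\dg$ on the objects of $\Free$ — and that the enriched left Kan extension $\hat\lambda_!$ exists and is computed objectwise enough that the simple objectwise argument for $\hat\lambda^*$ applies without modification.
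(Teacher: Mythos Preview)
Your proposal is correct and follows essentially the same route as the paper: define the linearisation on $[n]$ as the free dg-category on the linear graph with edges $k[0]$ (the paper writes this as $j([n])=[n]\times k$ and later denotes it $\D(n,0,1)$), observe it lands in $\Free$, and then invoke the precomposition/left Kan extension adjunction between the projective diagram categories. The paper is terser and simply asserts the Quillen adjunction, whereas you supply the objectwise argument and are more explicit about promoting the functor to a simplicial functor via the localisation map; note that the criterion you actually use (the right adjoint preserves fibrations and trivial fibrations) is the direct definition of a right Quillen functor rather than the statement of Proposition~\ref{ch. 1: criteron Quillen adjunction}, but either suffices.
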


\begin{proof}
Let $[n]\in\Delta$ be an object in $\Delta$. We define $j([n])=[n]\times k$ to be a free dg-category such that 
\begin{center}
	\begin{tikzcd}
		j([n])=0\ar[r,"k"]& 1\ar[r,"k"]&\ldots\ar[r,"k"]&n.
	\end{tikzcd}
\end{center}
This is a free dg-category of finite type, and it is also cofibrant (for a detailed proof of the cofibrancy, see Corollary \ref{ch. 3: Delta cofibrant}), so this morphism $j$ is well-defined as $\Delta\to \Free_\S$. We construct then the following Quillen adjunction:
$$j_!:\Fun(\Delta\op,\sS)\simeq \Fun^\S(\Delta\op, \sS)\rightleftharpoons \Fun^\S(\Free\op_\S,\sS):j^* $$
and we have finished our proof.
\\\end{proof}

\begin{nota}
    We denote the images by the linearization morphism by $j(\l n\r)=\D(n,0,1)$. 
\end{nota}

\begin{defin}
    We call the morphism $j^*: \Fun^\S(\Free\op_\S,\sS)\to \Fun^\S(\Delta\op, \sS)$ the \textbf{delinearlisation morphism}.
\end{defin}

Now, we have calculated that Quillen adjunction for the projective model structure. But we have two localizations here: let us prove that this stays a Quillen adjunction in the localizations. Let us see that this adjunction sends dg-Segal spaces to classic Segal spaces.

\begin{prop}\label{ch. 2: dg-Segal are Segal}Let $F\in\dgS$ be a dg-Segal space. Then its image by the delinearisation morphism $j^*$ is a Segal space.
\end{prop}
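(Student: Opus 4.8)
The plan is to unwind $j^{*}F$ levelwise and then verify its Segal maps directly from the three dg-Segal conditions on $F$. By definition of the linearisation $j$ we have $(j^{*}F)_{n}=F(j([n]))=F(\D(n,0,1))$; in particular $(j^{*}F)_{0}=F(k)$ and $(j^{*}F)_{1}=F(\D(1,0,1))$, so the Segal map at level $n$ is the canonical comparison
$$F(\D(n,0,1))\longrightarrow F(\D(1,0,1))\times_{F(k)}\cdots\times_{F(k)}F(\D(1,0,1)).$$
The strategy is to exhibit $\D(n,0,1)$ as an iterated homotopy pushout of copies of $\D(1,0,1)$ glued along $k$, so that the Segal map is exactly the comparison to the associated homotopy limit, and then to show that $F$ turns that homotopy pushout into the corresponding homotopy pullback.

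First I would record that $\D(n,0,1)=L(G_{n})$, where $G_{n}$ is the linear graph on the vertices $0,\dots,n$, and that $G_{n}$ is the pushout in $Gr(\Ch)$ of $n$ single edges glued along their shared vertices. Since the free functor $L$ is a left adjoint it preserves these pushouts, and since the inclusion of a vertex $k\to\D(1,0,1)$ is a cofibration between cofibrant objects in $\dg$ (it is obtained from $I$ and $P(1)$), each gluing square is a homotopy pushout. Hence $\D(n,0,1)$ is the homotopy pushout of the span $\D(1,0,1)\leftarrow k\rightarrow\D(n-1,0,1)$, and inductively an iterated homotopy pushout of $\D(1,0,1)$'s over $k$.

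The key point is then that $F$ sends these vertex-gluing homotopy pushouts to homotopy pullbacks, and this I would \emph{deduce} from conditions 1 and 3 rather than assume. Concretely, one realises $\D(n,0,1)$ from the discrete dg-category $L(\Ob(G_{n}))=k\coprod\cdots\coprod k$ by adjoining the $n$ generating edges one at a time: adjoining an edge in degree $0$ to a graph $G$ is exactly the operation $G\mapsto G(<\alpha>)$ with $n=1$ and $\alpha=0\in Z^{1}(G(x,y))$, so condition 3 makes $F$ of each step a homotopy pullback over $F(\D^{c}(1,1,1))\to F(\D(1,1,1))$, while condition 1 identifies $F$ of the initial $(n+1)$-fold coproduct of $k$'s with the product $F(k)\times\cdots\times F(k)$. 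Pasting these $n$ homotopy pullback squares together, using the standard pasting lemma for homotopy pullbacks and the fact that each comparison map into the $F(\D(1,1,1))$ factor factors through the single shared vertex, collapses the auxiliary terms $F(\D^{c}(1,1,1))$ and $F(\D(1,1,1))$ and leaves precisely the fibre product $F(\D(1,0,1))\times_{F(k)}^{h}\cdots\times_{F(k)}^{h}F(\D(1,0,1))$. I expect the bookkeeping of this pasting to be the main obstacle: one must track carefully which objects each comparison map restricts to, check that the relevant maps genuinely factor through the shared vertex, and invoke the homotopy invariance of homotopy pullbacks (from the results on proper model categories) at each stage.

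Finally, to conclude that $j^{*}F$ is a Segal space in the sense of the definition I would address Reedy fibrancy: since $(j_{!},j^{*})$ is a Quillen adjunction, $j^{*}$ is a right Quillen functor and hence preserves fibrant objects, and projective fibrancy implies Reedy fibrancy for simplicial spaces. Replacing $F$ by a fibrant model if necessary — which changes neither the dg-Segal conditions nor the Segal maps up to weak equivalence — we may assume $j^{*}F$ is Reedy fibrant. Combined with the weak equivalences established above, this shows that $j^{*}F$ is a Segal space.
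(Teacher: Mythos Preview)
Your strategy is the paper's: both build $\D(n,0,1)$ by attaching edges one at a time via condition~3 and split off disjoint copies of $k$ via condition~1. The paper runs it as a clean induction proving the single step $F(\D^n)\simeq F(\D^{n-1})\times_{F(k)}F(\D^1)$: with $G^0=\D^{n-1}\coprod k$ (the last edge deleted), condition~3 gives $F(\D^n)\simeq F(L(G^0))\times_{F(\D(1,1,1))}F(\D^c(1,1,1))$; condition~1 gives $F(L(G^0))\simeq F(\D^{n-1})\times F(k)$; and since $\alpha=0$ the map $\D(1,1,1)\to L(G^0)$ factors through $k\coprod k$, so the fibre product rewrites as
\[
F(\D^{n-1})\times_{F(k)}\bigl(F(k)^2\times_{F(\D(1,1,1))}F(\D^c(1,1,1))\bigr),
\]
and the bracketed factor is exactly $F(\D^1)$ by the same computation at $n=1$. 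That manipulation \emph{is} the ``collapsing of auxiliary terms'' you gesture at; you are right that it is the crux, and the inductive packaging turns it into a one-line identity rather than an $n$-fold paste.

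One genuine slip: projective fibrancy does not imply Reedy fibrancy for simplicial spaces --- the identity is \emph{left} Quillen from projective to Reedy, so the implication runs the other way. The paper's own proof in fact never addresses Reedy fibrancy (it only verifies that the Segal maps are weak equivalences), so this is a loose end in the statement rather than a flaw in your argument; but if you really need a Reedy fibrant object you must replace $j^*F$ itself, not $F$.
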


\begin{proof}
Let $F$ be a dg-Segal space. Then, in order to prove that its image by $j^*$ is a Segal space, by definition of Segal space we need to prove that for all $n\geq 1$, the morphism
$$j^*(F)_n\to  \overbrace{j^*(F)_1\times_{j^*(F)_0}\ldots\times_{j^*(F)_0}j^*(F)_1}^{\text{$n$ times}}$$
is a weak equivalence. If we unravel that definition, we have that for every $i\in\N$, $j^*(F)_i=j^*(F)([i])=F(j([i]))=F(\D(i,0,1))$. So proving that $j^*(F)$ is a Segal space can be rewritten as asking that for all $n\geq 1$,
$$\Phi_n:F(\D(n,0,1))\to F(\D(1,0,1))\times_{F(\D(0,0,1))}\ldots\times_{F(\D(0,0,1))}F(\D(1,0,1))$$
is a weak equivalence of simplicial spaces.
\\
\\In order to simplify the notation, we will denote $\D(i,0,1)$ by $\D^i$. We remark too that $F(\D^0)=F(k)$.
\\
\\We will prove the proposition by induction.
\\
\begin{itemize}
	\item $n=1$. This is obvious, since $F(\D^1)\simeq F(\D^1)$. There is nothing to prove.
	\item $n\geq 2$. We assume that the morphism 
	$$\Phi_{n-1}: F(\D^{n-1})\to  \overbrace{F(\D^1)\times_{F(k)}\ldots\times_{F(k)}F(\D^1)}^{\text{$n-1$ times}} $$
	is a weak equivalence. Let us prove that the morphism $\Phi_n$ is also a weak equivalence.
	\\
	\\As usual for inductions, we have to decompose $F(\D^n)$ in a way that makes $F(\D^{n-1})$ appear. In this case, we will use the properties of a dg-Segal category to do so. We define $G^0$ to be a graph of the following form:
	\begin{center}
		\begin{tikzcd}
		G^0=j([n-1])\coprod *=0\ar[r,"k"]& 1\ar[r,"k"]&\ldots\ar[r,"k"]& n-1\ar[r,"0"]&n
		\end{tikzcd}
	\end{center}
	Then, we can construct $\D^n$ as the following pushout:
	\begin{center}
		\begin{tikzcd}
		\D(1,-1,1)\ar[r]\ar[d, "\alpha"]&\D^c(1,-1,1)\ar[d]\\
		L(G^0)\ar[r]& \D^n
		\end{tikzcd}
	\end{center}
	By using the third dg-Segal condition, we can write $F(\D^n)$ in the following way:
	$$F(\D^n)\simeq F(G^0)\times_{F(\D(1,-1,1))}F(\D^c(1,-1,1))\simeq F(\D^{n-1}\coprod k)\times_{F(\D(1,-1,1))}F(k\coprod k)). $$
	By the first dg-Segal condition, we can make those coproducts commute with $F$ in the following way:
	$$F(\D^n)\simeq F(\D^{n-1}\coprod k)\times_{F(\D(1,-1,1))}F(k\coprod k))\simeq (F(\D^{n-1})\times F(k))\times_{F(\D(1,-1,1))}(F(k)\times F(k)). $$
	Now, in particular, if we take $n=1$, we get the following formula: 
	$$F(\D^1)\simeq (F(\D^0)\times F(k))\times_{F(\D(1,-1,1))}F(k)^2=F(k)^2\times_{F(\D(1,-1,1))} F(k)^2.$$ 
	We are almost there. If we add and subtract one $F(k)$ to the formula of $F(\D^n)$ we will be done.
	$$F(\D^n)\simeq (F(\D^{n-1})\times F(k))\times_{F(\D(1,-1,1))}F(k)^2\simeq F(\D^{n-1})\times_{F(k)}(F(k)\times F(k))\times_{F(\D(1,-1,1))}F(k)^2 $$
	and we have that $F(\D^n)\simeq F(\D^{n-1})\times_{F(k)}F(\D^1)$. By the induction hypothesis, we have 
	$$F(\D^n)\simeq \overbrace{F(\D^1)\times_{F(\D^0)}\ldots\times_ {F(\D^0)}F(\D^1)}^{\text{$n-1$ times}}\times_{F(\D^0)} F(\D^1).$$
\end{itemize}
	The image $j^*(F)$ is a Segal space and we have finished our proof.
\\\end{proof}

\begin{rem}
    The reader is probably wondering why we introduced the notation $\D(n,0,1)$ only to immediately simplify it. The answer is that those dg-categories are actually a particular case of some dg-categories we will denote $\D(n,s,d)$, where $s\in\Z^n$ and $d\in\N^n$, which we think might form a full subcategory of $\Free$ that would be sufficient for this construction. We will expand on this hypothesis in Section \ref{linear simplex}  of Chapter \ref{Future work}.
\end{rem}

\begin{coro}The adjunction $j_!:\Fun(\Delta\op,\sS)\rightleftharpoons \Fun^\S(\Free\op_\S,\sS):j^*$ is a Quillen adjunction for the Segal and dg-Segal model structures, respectively.
\end{coro}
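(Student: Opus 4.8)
The plan is to invoke the Quillen adjunction criterion of Proposition \ref{ch. 1: criteron Quillen adjunction}: since we already have the adjoint pair $(j_!, j^*)$, it suffices to check that $j_!$ sends cofibrations of the Segal model structure to cofibrations of the dg-Segal model structure, and that $j^*$ sends fibrant objects of the dg-Segal model structure to fibrant objects of the Segal model structure.

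The statement about cofibrations is immediate. Both the Segal and the dg-Segal model structures are left Bousfield localizations of the corresponding projective model structures, and by definition of a left Bousfield localization the class of cofibrations is unchanged. Since the Proposition above already established that $(j_!, j^*)$ is a Quillen adjunction for the projective structures, $j_!$ preserves projective cofibrations; as these coincide with the cofibrations of the two localized structures, $j_!$ still preserves cofibrations.

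The real content is the preservation of fibrant objects, and here the plan is to assemble two facts already at our disposal. First, $j^* = (j\op)^*$ is precomposition with $j\op:\Delta\op\to\Free_\S\op$, so by Proposition \ref{ch.1: pull adjunction} it is a right Quillen functor for the projective model structures; in particular it preserves projective fibrations, and hence projectively fibrant objects. Second, by Proposition \ref{ch. 2: dg-Segal are Segal}, $j^*$ sends dg-Segal spaces to Segal spaces. Now a fibrant object of the dg-Segal model structure is exactly a dg-Segal space that is projectively fibrant; its image under $j^*$ is therefore a Segal space that is projectively fibrant, which is precisely a fibrant object of the Segal model structure. This gives the desired preservation of fibrant objects, and Proposition \ref{ch. 1: criteron Quillen adjunction} then yields the claim.

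The only point requiring care—and the step I expect to be the main (if modest) obstacle—is the bookkeeping around which fibrant objects are in play: one must use the characterisation of the fibrant objects of each localized structure as the projectively fibrant objects satisfying the respective Segal conditions, and check that $j^*$ preserves simultaneously the fibrancy (via Proposition \ref{ch.1: pull adjunction}) and the Segal condition (via Proposition \ref{ch. 2: dg-Segal are Segal}). Since both ingredients are already available, no genuinely new computation is needed; the corollary is essentially a repackaging of the preceding proposition together with the general theory of Bousfield localizations.
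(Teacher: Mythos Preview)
Your proposal is correct and follows essentially the same route as the paper: both invoke Proposition~\ref{ch. 1: criteron Quillen adjunction}, note that cofibrations are unchanged under left Bousfield localization so $j_!$ still preserves them, and then combine the projective right-Quillen property of $j^*$ with Proposition~\ref{ch. 2: dg-Segal are Segal} to conclude that $j^*$ sends fibrant dg-Segal objects to fibrant Segal objects. The only cosmetic difference is that you cite Proposition~\ref{ch.1: pull adjunction} explicitly for the projective fibrancy step, whereas the paper simply refers back to the adjunction being Quillen for the projective structures.
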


\begin{proof}
This result is a direct consequence of the last proposition. Indeed, by Proposition \ref{ch. 1: criteron Quillen adjunction}, we know that if we have an adjunction between two model categories $F:M\rightleftharpoons N:G$ and we want to prove it is a Quillen adjunction, then we only need to prove that $F$ preserves cofibrations and $G$ preserves fibrant objects. 
\\
\\ Now, the left Bousfield localization doesn't change cofibrations, and our adjunction was already a Quillen adjunction on the projective model structure; $j_!$ preserves cofibrations. We only have left to prove that $j^*$ preserves fibrant objects. But the fibrant objects in the dg-Segal model structure are the dg-Segal spaces that were fibrant in the projective model structure. By the last proposition, $j^*$ sends dg-Segal spaces to Segal spaces, and because it is already a Quillen adjunction in the projective model structure, it preserves fibrant objects in that structure. So it preserves fibrant objects in the dg-Segal model structure.
\\
\\The adjunction  is a Quillen adjunction on the Segal and dg-Segal model structures, respectively, and we have finished our proof.
\\\end{proof}

\section{Complete dg-Segal spaces}

When trying to characterize $\infty$-categories using Segal spaces, we realize quickly that the definition of Segal spaces that has been already given is not enough. Indeed, there is a class of morphisms, called Dwyer-Kan morphisms, that should be equivalences but aren't. In \cite{ComSegalSpacesREZK}, in his quest to invert those, the author gets to the following result: 

\begin{defin}We define $E(1)$ to be the discrete space given at level $n\in\N$ by
$$E(1)_n=\{x,y\}^{[n]}, $$
i.e. by two non-degenerate cells on each level. Those are given by $(xy)^{n/2}$ and $(yx)^{n/2}$ if $n$ is odd and $(xy)^{(n-1)/2}$ and $(yx)^{(n-1)/2}$ if $n$ is odd.
\end{defin}

\begin{prop}\cite[Prop. 7.6]{ComSegalSpacesREZK}, \cite[Prop. 4.5]{Nima} Let $f:V\to W$ be a map between two Segal spaces. We assume that the morphism
$\Map(E(1),X)\to \Map(*,X)$
is a weak equivalence, with both $X=V$and $W$, for a certain morphism $ *\to E(1)$. Then $f$ is a Dwyer-Kan equivalence if and only if it is a weak equivalence.
\end{prop}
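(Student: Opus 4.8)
The plan is to read the hypothesis as a completeness assumption and then prove the two implications separately, the backward one being formal and the forward one carrying the real content. First I would record that $\Map(E(1),X)$ is the space of homotopy equivalences $X_{hoequiv}$ of a Segal space $X$, and that the map $\Map(E(1),X)\to\Map(*,X)\simeq X_0$ induced by $*\to E(1)$ is, up to weak equivalence, the degeneracy $s_0:X_0\to X_{hoequiv}$; so the hypothesis says exactly that $V$ and $W$ are \emph{complete}, i.e. $s_0:V_0\to V_{hoequiv}$ and $s_0:W_0\to W_{hoequiv}$ are weak equivalences. Throughout, for $x,y\in X_0$ I write $\map_X(x,y)$ for the homotopy fiber of $(d_1,d_0):X_1\to X_0\times X_0$ over $(x,y)$, and I recall that a Dwyer--Kan equivalence is a map that is homotopically fully faithful (a weak equivalence on all these mapping spaces) and homotopically essentially surjective (inducing an essentially surjective functor $\Ho V\to\Ho W$).

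The backward implication is formal and uses no completeness: if $f$ is a levelwise (Reedy) weak equivalence then $f_0:V_0\to W_0$ and $f_1:V_1\to W_1$ are weak equivalences, hence so is the induced map of homotopy fibers $\map_V(x,y)\to\map_W(fx,fy)$, giving full faithfulness, while surjectivity of $\pi_0 f_0$ gives essential surjectivity. For the forward implication I would assume $f$ is a Dwyer--Kan equivalence and prove it is a level equivalence one simplicial degree at a time. Full faithfulness gives $\map_V(x,y)\xrightarrow{\sim}\map_W(fx,fy)$ for all $x,y$; applying $\pi_0$ this says $\Ho f$ is fully faithful, and together with essential surjectivity the functor $\Ho f:\Ho V\to\Ho W$ is an equivalence of categories.

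The crux is $f_0:V_0\to W_0$, and this is where completeness must enter. On $\pi_0$ it is easy: by completeness $\pi_0 V_0\cong\pi_0 V_{hoequiv}$ is the set of isomorphism classes of $\Ho V$, so the equivalence $\Ho f$ forces a bijection $\pi_0 V_0\to\pi_0 W_0$. For the higher homotopy I would establish the key identification $\Omega_x V_0\simeq\map_V(x,x)^{\simeq}$, where $\map_V(x,x)^{\simeq}\subseteq\map_V(x,x)$ is the union of the components of invertible endomorphisms. This drops out of completeness: the map $d_1:V_{hoequiv}\to V_0$ has the section $s_0$, which is a weak equivalence, so $d_1$ is a weak equivalence and its homotopy fiber $F_x$ over $x$ is contractible; the target map then produces a fiber sequence $\map_V(x,x)^{\simeq}\to F_x\to V_0$ over the component of $x$, and contractibility of $F_x$ yields $\Omega_x V_0\simeq\map_V(x,x)^{\simeq}$. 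Since $\Ho f$ is an equivalence it preserves and reflects invertibility, so the full-faithfulness equivalence restricts to a weak equivalence $\map_V(x,x)^{\simeq}\xrightarrow{\sim}\map_W(fx,fx)^{\simeq}$ carrying $\Id_x$ to $\Id_{fx}$; combined with the identification this gives $\pi_i(V_0,x)\cong\pi_i(W_0,fx)$ for all $i\geq 1$, and with the $\pi_0$-bijection I conclude $f_0$ is a weak equivalence.

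With $f_0$ settled the rest is bookkeeping. For $f_1$ I would compare the homotopy fiber sequences $\map_V(x,y)\to V_1\to V_0\times V_0$ and $\map_W(fx,fy)\to W_1\to W_0\times W_0$: the base map is a weak equivalence by the $f_0$ case and the fibers are weak equivalences by full faithfulness, so $f_1:V_1\to W_1$ is a weak equivalence. For $n\geq 2$ I would invoke the Segal condition, which presents $V_n$ as the homotopy pullback $V_1\times^h_{V_0}\cdots\times^h_{V_0}V_1$ with $n$ factors, and likewise for $W$; since $f_0,f_1$ are weak equivalences and homotopy pullbacks in the proper model category $\sS$ are invariant under weak equivalences, $f_n:V_n\to W_n$ is a weak equivalence for all $n$, i.e. $f$ is a level weak equivalence. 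The main obstacle is the $f_0$ step, and precisely the loop-space identification $\Omega_x V_0\simeq\map_V(x,x)^{\simeq}$: this is the one point where completeness is indispensable, since for non-complete Segal spaces a Dwyer--Kan equivalence (for instance the completion map) need not be a level equivalence.
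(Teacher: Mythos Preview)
The paper does not give its own proof of this proposition; it is stated with citations to \cite[Prop.~7.6]{ComSegalSpacesREZK} and \cite[Prop.~4.5]{Nima} and used as background. Your argument is correct and is essentially Rezk's original proof: the backward implication is formal, and for the forward one you correctly identify that completeness forces $f_0$ to be a weak equivalence via the loop-space identification $\Omega_x V_0\simeq\map_V(x,x)^{\simeq}$ (this is Rezk's analysis of the homotopy type of $V_0$ in a complete Segal space), after which $f_1$ follows from the fiber sequence on mapping spaces and $f_n$ for $n\geq 2$ from the Segal condition.
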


But that just means that the Dwyer-Kan equivalences are equivalences if they are between two $C$-local objects in $\sS$, where $C$ is a class with one object, $*\to E(1)$. So they define a new concept, that of complete Segal spaces, which are just the objects we have just defined.

\begin{defin}\cite[Def. 4.1, Section 6]{ComSegalSpacesREZK} \cite[Prop. 4.5]{Nima} Let $W$ be a Reedy fibrant simplicial space. We say that $W$ is a \textbf{complete Segal space} if the maps 
$$W_k\to \overbrace{W_1\times_{W_0}\ldots\times_{W_0}W_1}^{\text{$k$ times}} \text{  and  }\Map(E(1),W)\to\Map(*,W)$$
are weak equivalences for all $k\geq 1$. Or, equivalently, 
$$W_k\to \overbrace{W_1\times_{W_0}\ldots\times_{W_0}W_1}^{\text{$k$ times}} \text{  and  } W_0\to W_{hoequiv}$$
are weak equivalences for all $k\geq 1$ and $W_{hoequiv}$ the space of homotopy equivalences.
\end{defin}

And now Rezk uses the left Bousfield localization with $\{*\to E(1)\}$ as his $C$ and the complete Segal spaces as his $C$-local objects in order to have a model category that works for him.

\begin{teo}\cite{ComSegalSpacesREZK} There exists a simplicial closed model structure on the category of simplicial spaces, with the following properties:
\begin{enumerate}
	\item The cofibrations are precisely the monomorphisms.
	\item The fibrant objects are precisely the complete Segal spaces.
	\item The weak equivalences are precisely the Dwyer-Kan equivalences between complete Segal spaces.
\end{enumerate}
\end{teo}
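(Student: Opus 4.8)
The plan is to obtain the model structure exactly as Rezk does, by iterating the left Bousfield localization twice, starting from the Reedy model structure on the category of simplicial spaces $\Fun(\Delta\op,\sS)$. First I would pin down the cofibrations: since every simplicial set is cofibrant and the cofibrations of $\sS$ are the monomorphisms, a routine computation with the latching maps shows that the Reedy cofibrations of $\Fun(\Delta\op,\sS)$ are precisely the monomorphisms (i.e. the levelwise injections). This already secures property (1), because a left Bousfield localization never changes the class of cofibrations. By Proposition \ref{ch. 1: sSet good Bousfield} and Proposition \ref{ch. 1: fun good Bousfield} this Reedy structure is left proper, cellular and simplicial, so all the localizations we need will exist and remain simplicial by Theorem \ref{ch. 1: existence Bousfield}.

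Next I would localize with respect to the set of spine inclusions $G(k)\hookrightarrow \Delta^k$ for $k\geq 2$, where $G(k)=\Delta^1\coprod_{\Delta^0}\cdots\coprod_{\Delta^0}\Delta^1$ is the spine, all viewed as discrete simplicial spaces. By Proposition \ref{ch. 1: fibrant and C-local} the fibrant objects of this localization are the Reedy fibrant $C$-local objects, and since $\Map(\Delta^k,W)\simeq W_k$, the locality condition unwinds exactly to the Segal maps $W_k\to W_1\times_{W_0}\cdots\times_{W_0}W_1$ being weak equivalences; these are the Segal spaces. Proposition \ref{ch.1: proper Bousfield} guarantees that this intermediate structure is again left proper (and it remains cellular), so Theorem \ref{ch. 1: existence Bousfield} applies a second time. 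I would then localize once more with respect to the single completeness map $*\to E(1)$. A second application of Proposition \ref{ch. 1: fibrant and C-local} identifies the fibrant objects as the Reedy fibrant Segal spaces that are in addition $\{*\to E(1)\}$-local, i.e. those for which $\Map(E(1),W)\to\Map(*,W)$, equivalently $W_0\to W_{hoequiv}$, is a weak equivalence. These are exactly the complete Segal spaces, which gives property (2).

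The remaining and genuinely substantive step is to identify the weak equivalences. By construction they are the $C$-local equivalences for $C=\{\text{spine inclusions}\}\cup\{*\to E(1)\}$, and the claim is that between two complete Segal spaces these coincide with the Dwyer--Kan equivalences. For a map $f:V\to W$ of complete Segal spaces (both fibrant in the final structure) I would first reduce to the Reedy level: between fibrant objects the $C$-local equivalences are detected by the derived mapping spaces $\Map(-,Z)$ into fibrant $Z$, and a Yoneda-type argument brings this back to a levelwise weak equivalence of simplicial spaces. I would then invoke the proposition stated just above the theorem (\cite[Prop. 7.6]{ComSegalSpacesREZK}, \cite[Prop. 4.5]{Nima}), which says that, under the completeness condition, a map is a Dwyer--Kan equivalence if and only if it is a weak equivalence; combining this with the previous reduction yields property (3). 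The main obstacle is precisely this last translation: relating the abstractly defined $C$-local equivalences to the combinatorially defined Dwyer--Kan equivalences requires controlling the homotopy mapping spaces of the doubly localized structure and showing that completeness is exactly the condition forcing Dwyer--Kan equivalences between complete Segal spaces to agree with levelwise equivalences.
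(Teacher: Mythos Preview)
The paper does not give its own proof of this statement: the theorem is simply quoted from \cite{ComSegalSpacesREZK} as background, with no argument supplied. There is therefore nothing in the paper to compare your proposal against.

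That said, your sketch is essentially the argument Rezk gives, and it is correct in outline. Two small points are worth flagging. First, the references you invoke for left properness and cellularity (Propositions \ref{ch. 1: sSet good Bousfield} and \ref{ch. 1: fun good Bousfield}) are stated in the paper for the projective structure, not the Reedy one; the Reedy structure on $\Fun(\Delta\op,\sS)$ is indeed left proper and cellular, but you would need to cite this separately (e.g.\ from \cite{HLocal} or \cite{ComSegalSpacesREZK} directly) rather than from those propositions. Second, your identification of the weak equivalences in (3) is the right idea but slightly compressed: Rezk's actual argument goes via a completion functor $W\mapsto\widehat{W}$ (as the present paper recalls immediately after the theorem), and the reduction ``between fibrant objects $C$-local equivalences are levelwise equivalences'' together with Proposition 7.6 of \cite{ComSegalSpacesREZK} is exactly what makes that work. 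Your proposal captures the shape of the proof accurately.
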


In our case we will see that, mirroring the classic situation, the dg-Segal spaces we have defined are also not enough to completely characterize our dg-categories. Indeed, we can't prove that the functor $\Sing$ is fully faithful; we will be able to do so up to a certain morphism, that we will call a DK-equivalence. As such, we will need to do another left Bousfield localization in order to invert those, and then define its local objects to be our complete dg-Segal spaces.
\\
\\Instead of doing it in that order, though, we will start with the definition of the complete dg-Segal model structure. For that, we will use the linearisation functor that we defined on the last section. 

\begin{defin}Let $E(1)$ be as defined above. We define
$$E_k=\L(j_!(E(1))). $$
\end{defin}

\begin{rem}We warn our readers of the fact that $E_k$ is not the image of a dg-category through our functor $\Sing$; in fact, it isn't even a dg-Segal space!
\end{rem}

\begin{teo}There exists a class of morphisms $C$ and a simplicial closed model structure on $\Fun^\S(\Free_\S\op,\sS)$ such that
\begin{enumerate}
	\item The cofibrations are the same as in the projective model structure.
	\item The fibrant objects are the dg-Segal spaces which satisfy that $F(k)\to j^*(F)_{hoequiv}$ is a weak equivalence of simplicial spaces and are fibrant for the projective model structure.
	\item The weak equivalences are the $C$-local equivalences.
\end{enumerate}
We call such a model structure \textbf{the complete dg-Segal model structure.}
\end{teo}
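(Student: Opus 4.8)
The plan is to obtain the complete dg-Segal model structure, exactly as in the dg-Segal case, as a further left Bousfield localization of the projective model structure on $\Fun^\S(\Free_\S\op,\sS)$, this time along a class $C$ that contains, in addition to the classes $C_1,C_2,C_3$ used for the dg-Segal model structure, one more morphism $C_4$ forcing completeness. The first step is to identify $C_4$. I would use the derived adjunction $\L j_!:\Ho(\Fun(\Delta\op,\sS))\rightleftharpoons\Ho(\Fun^\S(\Free_\S\op,\sS)):\R j^*$ coming from the linearisation. Under $\L j_!$ the point $*$ (the representable at $[0]$) is sent to $\Sing(k)$, since $j([0])=k$, while $E(1)$ is sent to $E_k=\L(j_!(E(1)))$ by definition. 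I therefore set $C_4=\{\Sing(k)\to E_k\}$, the image under $\L j_!$ of the Rezk morphism $*\to E(1)$, and take $C=C_1\cup C_2\cup C_3\cup C_4$.

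The second, and main, step is to check that the $C$-local objects are precisely the dg-Segal spaces $F$, fibrant for the projective structure, for which $F(k)\to j^*(F)_{hoequiv}$ is a weak equivalence. Being local with respect to $C_1\cup C_2\cup C_3$ already characterizes the projectively fibrant dg-Segal spaces, as shown in the proof of the dg-Segal model structure. For such an $F$, Proposition \ref{ch. 2: dg-Segal are Segal} guarantees that $j^*(F)$ is a genuine Segal space, so that $j^*(F)_{hoequiv}$ is defined. Then the derived adjunction gives weak equivalences $\R\Map(E_k,F)\simeq\R\Map(E(1),j^*(F))$ and $\R\Map(\Sing(k),F)\simeq F(k)\simeq\R\Map(*,j^*(F))$, so that $F$ is $C_4$-local if and only if $\Map(E(1),j^*(F))\to\Map(*,j^*(F))$ is a weak equivalence. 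By the cited result of Rezk (\cite[Prop. 7.6]{ComSegalSpacesREZK}, \cite[Prop. 4.5]{Nima}), for the Segal space $j^*(F)$ this last condition is equivalent to $j^*(F)_0=F(k)\to j^*(F)_{hoequiv}$ being a weak equivalence, which is exactly the completeness condition. Hence the $C$-local objects are precisely the complete dg-Segal spaces.

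The third step is to invoke existence. Since $\sS$ is a left proper cellular (and simplicial) model category by Proposition \ref{ch. 1: sSet good Bousfield}, and diagram categories over such a category are again left proper cellular by Proposition \ref{ch. 1: fun good Bousfield}, the category $\Fun^\S(\Free_\S\op,\sS)$ is left proper cellular and simplicial; thus by Theorem \ref{ch. 1: existence Bousfield} the left Bousfield localization $L_C\Fun^\S(\Free_\S\op,\sS)$ exists and is again simplicial and left proper. By the definition of a left Bousfield localization its cofibrations coincide with the projective cofibrations, giving (1), and its weak equivalences are the $C$-local equivalences, giving (3). Finally, by Proposition \ref{ch. 1: fibrant and C-local} its fibrant objects are exactly the fibrant $C$-local objects, which by the second step are the projectively fibrant complete dg-Segal spaces, giving (2).

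The hard part will be the identification in the second step, and specifically the careful bookkeeping of derived functors: one must verify that $\L j_!(*)\simeq\Sing(k)$ and $\L j_!(E(1))=E_k$ really do identify $C_4$ with the Rezk morphism after deriving, and that the adjunction isomorphisms $\R\Map(\L j_!(A),F)\simeq\R\Map(A,j^*(F))$ hold with $F$ fibrant for the projective structure (using that representables such as $\Sing(k)$ are cofibrant). Once the completeness condition is translated through $j^*$ into Rezk's $\Map(E(1),-)$ condition, the remainder is a direct application of the general machinery of left Bousfield localizations.
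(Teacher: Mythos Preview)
Your proposal is correct and follows essentially the same approach as the paper: localize along the image under $j_!$ of Rezk's completeness map, use the $(j_!,j^*)$ adjunction to translate $C_4$-locality into Rezk's condition $\Map(*,j^*(F))\simeq\Map(E(1),j^*(F))$, and invoke the existence theorem for left Bousfield localizations. The only cosmetic differences are that the paper localizes the already-constructed dg-Segal model structure by the single map $E_k\to\Sing(k)$ (coming from $E(1)\to *$) rather than the projective structure by $C_1\cup C_2\cup C_3\cup C_4$, and uses the opposite orientation of the map; neither affects the local objects or the resulting model structure.
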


\begin{proof}
The proof of this is straightforward, as we are just transporting the complete Segal model structure to our new setting via the linearisation functor. Indeed, we can construct this by using a left Bousfield localization on the dg-Segal model structure by the class $C=\{\Phi:E_k\to \Sing(k)\}=\{\Phi:j_!(E(1))\to j_!(*)\}$. We won't write down the details about whether this gives us an actual localization, as they are the exact same as in the construction of the dg-Segal model structure.
\\
\\The only thing left to do is to see that the $C$-local objects are really as we have defined them above. Let $F$ be a $C$-local object. We have that $F$ is a dg-Segal space that is fibrant for the projective structure, and also that the following morphism
$$\Map(\Sing(k),F)\to\Map(E_k,F) $$
is a weak equivalence. By the definition of an adjunction, we have that 
$$\Map(E_k,F)=\Map(j_!(E(1)),F)\simeq \Map(E(1),j^*(F))\text{ and }\Map(\Sing(k),F)=\Map(j_!(*),F)\simeq \Map(*,j^*(F)).$$
So the condition is equivalent to asking for $\Map(*,j^*(F))\to \Map(E(1),j^*(F))$ to be a weak equivalence. And that is exactly the definition of $j^*(F)$ being a complete Segal space. So if $F$ is a $C$-local object, then $F$ is a dg-Segal space such that $j^*(F)_0=F(k)\to j^*(F)_{hoequiv}$ is a weak equivalence. By Proposition \ref{ch. 1: fibrant and C-local}, a fibrant object for the complete dg-Segal model structure is such an $F$ which is also fibrant for the projective model structure.
\\
\\We have all the conditions we needed and our proof is complete.
\\\end{proof}

\begin{rem}It is important to remember that $j^*(F)_{hoequiv}$ is the subset of $j^*(F)_1$ whose 0-simplexes are homotopy equivalences. Considering we've already seen that $j^*(F)_1=F(\D(1,0,1))$, we can rewrite $j^*(F)_{hocolim}$ as the subset of $F(\D(1,0,1))$ whose 0- simplexes are homotopy equivalences, with no mentions of the delinearisation morphism.
\end{rem}

\begin{nota}Let $F$ be a dg-Segal space. We denote the subset of  $F(\D(1,0,1))$ whose 0- simplexes are homotopy equivalences by $F_{hoequiv}$.
\end{nota}

\begin{defin}Let $F$ be an object in $\Fun^\S(\Free_\S\op,\sS)$. We say that $F$ is \textbf{a complete dg-Segal space} if it is a dg-Segal space and the morphism
$$F(k)\to F_{hoequiv} $$
is a weak equivalence. 
\end{defin}

\begin{defin}We denote the full subcategory of $\dgS$ of complete dg-Segal spaces by $\dgSc$.
\end{defin}

As we did with the dg-Segal spaces, we will prove now that every element of $Sing$ is actually a complete dg-Segal space.

\begin{prop}Let $T\in\dg$ be a dg-category. Then $\Sing(T)$ is a complete dg-Segal space.
\end{prop}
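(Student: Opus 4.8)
The plan is to invoke Proposition \ref{ch.2 Sing is dg-Segal}, which already shows that $\Sing(T)$ is a dg-Segal space, so that only the completeness condition remains: I must prove that the map $\Sing(T)(k)\to\Sing(T)_{hoequiv}$ is a weak equivalence. The first step is to identify this map explicitly. Unravelling the delinearisation identifications, the completeness map is the one induced on $\Map(-,T)$ by the degeneracy $s_0$, which under the linearisation morphism $j$ corresponds to the dg-functor $s:\D(1,0,1)\to k$ collapsing the two objects and sending the generating degree-$0$ morphism to $\Id$. Since $\Sing(T)(k)=\Map(k,T)$ and $\Sing(T)(\D(1,0,1))=\Map(\D(1,0,1),T)$, precomposition with $s$ sends an object $x$ of $T$ to the identity $\Id_x$, which is a homotopy equivalence; hence the map does land in $\Sing(T)_{hoequiv}$, as required.

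Next I would observe that $\Sing(T)_{hoequiv}$ is a union of connected components of $\Map(\D(1,0,1),T)$: being a homotopy equivalence means being invertible in $\l T\r=H^0(T)$, a condition invariant under the homotopy relation, so it is detected on $\pi_0$. I would then introduce the source map $d_0:\Map(\D(1,0,1),T)\to\Map(k,T)$ induced by the inclusion $k\hookrightarrow\D(1,0,1)$ of the object $0$, and restrict it to $\Sing(T)_{hoequiv}$. The composite $d_0\circ s$ is induced by $k\hookrightarrow\D(1,0,1)\xrightarrow{s}k$, which is the identity of $k$, so that $d_0\circ s=\Id_{\Map(k,T)}$.

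The heart of the argument is then to prove that $d_0:\Sing(T)_{hoequiv}\to\Map(k,T)$ is a weak equivalence; once this is established, the relation $d_0\circ s=\Id$ together with the two-out-of-three property forces $s$ to be a weak equivalence, finishing the proof. To see that $d_0$ is a weak equivalence I would compute its homotopy fibre over a point $x:k\to T$. This fibre is the space of homotopy equivalences in $T$ with source $x$, i.e. of pairs $(y,\alpha)$ with $\alpha:x\xrightarrow{\sim}y$, and I would argue that it is contractible, since dragging the target back along a homotopy inverse of $\alpha$ deformation-retracts this space onto the single point $\Id_x$. Concretely, this can be double-checked on homotopy groups using Corollaries \ref{ch. 1:map to iso classes } and \ref{ch. 1:pi(k,T)}: one verifies that $s$ induces a bijection between $\pi_0(\Map(k,T))$ and the set of components of $\Sing(T)_{hoequiv}$, both being the isomorphism classes of $\l T\r$, and isomorphisms on higher homotopy groups based at $\Id_x$, namely $\Aut_{\l T\r}(x)$ in degree $1$ and $H^{1-i}(T(x,x))$ in degrees $i>1$.

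I expect the main obstacle to be a rigorous description of the simplicial set $\Sing(T)_{hoequiv}$ and of the fibres of $d_0$: since $\Map(\D(1,0,1),T)$ is only accessible through the quasi-representable-module model of Theorem \ref{ch. 1:map k} and its associated nerve, proving the contractibility of the fibre of $d_0$ cleanly, rather than merely matching abstract homotopy groups, requires some care. I would handle this by using that $\Sing(T)_{hoequiv}$ is a union of components, so the contraction $\alpha\mapsto\Id$ can be carried out componentwise, and by exploiting the Segal condition already established in Proposition \ref{ch.2 Sing is dg-Segal} to control the space of morphisms inside $F(\D(1,0,1))$.
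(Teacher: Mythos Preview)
Your proposal is correct, and its ``double-check on homotopy groups'' portion is precisely the paper's own proof: the paper simply cites the proof of Corollary~8.7 in \cite{Toen} for the fact that $\Map(k,T)\to\Map(\D(1,0,1),T)$ induces an injection on $\pi_0$ with image the isomorphisms of $\l T\r$ and bijections on all higher $\pi_i$, and concludes immediately that $\Sing(T)(k)\to\Sing(T)_{hoequiv}$ is a weak equivalence.

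Your route via the retraction $d_0$ and contractibility of its homotopy fibres is a legitimate alternative, but it is a detour: once you have the $\pi_i$ computations from Corollaries~\ref{ch. 1:map to iso classes } and~\ref{ch. 1:pi(k,T)} (which you invoke anyway), you can verify directly that $s$ is a weak equivalence onto the relevant union of components, without introducing $d_0$ at all. The fibre argument buys you a more geometric picture but, as you yourself anticipate, making it rigorous through the nerve model of Theorem~\ref{ch. 1:map k} is more work than the bare $\pi_i$ match. The paper avoids this entirely by outsourcing to To\"en.
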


\begin{proof}
This is a direct consequence of the proof of Corollary 8.7 in \cite{Toen}. Indeed, during that proof Toën proves that for all $T\in\dg$, the morphism $\Map(k,T)=\Sing(T)(k)\to \Map(\D(1,0,1),T)=\Sing(T)(\D(1,0,1))$ induces an injection on $\pi_0$ and a bijection on $\pi_i$ for all $i>0$, and that its image in the homotopy category are the morphisms of $\l T\r$ that are isomorphisms. That means that the morphism is fully faithful, and that its essential image is $\Sing(T)_{hoequiv}$.
\\
\\So for all $T$, the morphism $\Sing(T)(k)\to \Sing(T)_{hoequiv}$ is a weak equivalence, and $\Sing(T)$ is a complete dg-Segal space.
\\\end{proof}

Now, we have a nice definition of our $C$-local objects, but we still have the $C$-local equivalences defined in an abstract manner. and we said before that we were going to define something called the DK-equivalences that we needed to make into equivalences. Before we do that, though, we need one additional definition.

\begin{nota}We denote the full subcategory of cofibrant complexes of modules by $\Ch^c\subset \Ch$.
\end{nota}

\begin{defin}Let $F\in\dgS$ be a functor that satisfies the dg-Segal conditions, let $x,y\in\pi_0(F(k))$. We define $F_{(x,y)}\in\Fun(\Ch^{c,op},\sS)$ to be, for all $E\in\Ch^c$ cofibrant complexes of modules, the homotopy fiber of $F(E_{x,y})\to F(k)\times F(k)$ where $E_{x,y}$ is the free dg-category given by the graph with two objects, $x$ and $y$, and $E$ as the complex between $x$ and $y$,  
\begin{center}
	\begin{tikzcd}
		F_{(x,y)}(E)\ar[r]\ar[d]& *\ar[d]\\
		F(E_{x,y})\ar[r]& F(k)\times F(k).
	\end{tikzcd}
\end{center}
We call $F_{(x,y)}$ \textbf{the dg-mapping space of $F$ at $x,y$}.
\end{defin}

This definition is more intuitive than we may think. Indeed, these functors are always representable. 

\begin{prop}Let $F\in\dgS$ be a functor that satisfies the dg-Segal conditions, let $x,y\in\pi_0(F(k))$. There exists a unique complex of modules up to weak equivalence $F(x,y)\in\Ch$ such that $F_{(x,y)}(-)\simeq \Map(-,F(x,y))$.
\end{prop}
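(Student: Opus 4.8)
The goal is to show that the simplicial presheaf $F_{(x,y)}\colon\Ch^{c,op}\to\sS$ is representable in the homotopy category, and the plan is to read off from the three dg-Segal conditions that $F_{(x,y)}$ carries homotopy colimits of cofibrant complexes to homotopy limits of spaces, and then to invoke a representability principle for $\Ch$.

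First I would translate the dg-Segal conditions into statements about $F_{(x,y)}$. For the zero complex, $0_{x,y}=k\coprod k$, so the first dg-Segal condition gives $F(k\coprod k)\simeq F(k)\times F(k)$ and hence the homotopy fiber $F_{(x,y)}(0)$ of the identity of $F(k)\times F(k)$ is contractible; thus $F_{(x,y)}$ sends the initial object of $\Ch^c$ to the terminal object of $\sS$. The third dg-Segal condition is the essential one: it states precisely that $F$ turns the cell-attachment homotopy pushout along a generating cofibration $k[n]\to k^c[n]$ into a homotopy pullback. Since taking homotopy fibers over the fixed object $F(k)\times F(k)$ commutes with homotopy pullbacks, this shows that for every cofibrant $E$ and every cycle $\alpha$ the square
\begin{center}
	\begin{tikzcd}
		F_{(x,y)}(E(<\alpha>))\ar[r]\ar[d]& F_{(x,y)}(E)\ar[d]\\
		F_{(x,y)}(k^c[n])\ar[r]& F_{(x,y)}(k[n])
	\end{tikzcd}
\end{center}
is a homotopy pullback in $\sS$.

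Next I would use that every object of $\Ch^c$ is a retract of a transfinite composition of pushouts along the generating cofibrations $k[n-1]\to k^c[n]$, built from the zero complex. Combined with the two observations above --- contractibility on $0$ and the pullback squares on cell attachments --- this exhibits $F_{(x,y)}$ as a functor that takes the whole cell-building process (coproducts, cell attachments, transfinite compositions) from homotopy colimits in $\Ch^c$ to homotopy limits in $\sS$; in other words $F_{(x,y)}$ is a homotopy-continuous, cohomological presheaf of spaces. At this point I would apply Dugger's universal construction (Proposition \ref{ch. 1: Dugger}) with $C=\Ch^c$ and $M=\Ch$ and the inclusion as $\gamma$, giving the adjunction $\Re\colon\Fun(\Ch^{c,op},\sS)\rightleftharpoons\Ch\colon\Sing$ with $\Sing(A)=\Map(-,A)$, together with Theorem \ref{ch. 1:fully-faith}, which makes $\Sing\colon\Ho(\Ch)\to\Ho(\Fun(\Ch^{c,op},\sS))$ fully faithful. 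Full faithfulness of $\Sing$ gives at once the uniqueness up to weak equivalence of the representing object, and reduces existence to showing that $F_{(x,y)}$ lies in the essential image of $\Sing$, i.e. that it is representable.

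The main obstacle is exactly this final representability step. Here the decisive ingredient is that $\Ch$ is \emph{stable}: its homotopy category is the derived category of $k$, a compactly generated triangulated category, so a presheaf that takes the cell-attachment pushouts to homotopy pullbacks and coproducts to products is cohomological, and Brown representability produces a representing complex. Concretely, I would first read off the candidate object from the homotopy groups of $F_{(x,y)}(k[s])$, which recover the cohomology of the sought complex $F(x,y)$ in the appropriate degrees, assemble these into an object of $\Ch$, and then check --- using the pullback squares established above, which encode the cofiber sequences --- that the natural comparison map $\Map(-,F(x,y))\to F_{(x,y)}(-)$ is a weak equivalence on every cell complex, hence on all of $\Ch^c$.
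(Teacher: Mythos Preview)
Your outline matches the paper's in spirit: both reduce existence to showing that $F_{(x,y)}$ converts homotopy colimits in $\Ch^c$ into homotopy limits in $\sS$, and then invoke a representability principle. The paper, however, does not appeal to Brown representability; it quotes a general $\infty$-categorical criterion (Prop.~1.9 in \cite{TV-Chern}) saying that any presheaf of spaces on a presentable $\infty$-category which sends homotopy colimits to homotopy limits is representable. Your use of Brown representability exploits the extra feature that $\Ch$ is stable, which is perfectly legitimate and arguably more down-to-earth. For uniqueness the paper simply invokes the enriched Yoneda lemma directly, which is lighter than routing through Dugger's construction and Theorem~\ref{ch. 1:fully-faith}.

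There is one genuine gap in your colimit-preservation step. You argue that every cofibrant complex is built from $0$ by transfinite cell attachment and conclude that $F_{(x,y)}$ ``takes the whole cell-building process (coproducts, cell attachments, transfinite compositions) \dots\ to homotopy limits''. But the dg-Segal conditions you have actually used only give you the value on $0$ and the behaviour on \emph{single} cell attachments along $k[n]\to k^c[n]$; they do not by themselves force $F_{(x,y)}$ to send infinite coproducts or filtered colimits of complexes to the corresponding homotopy limits, and Brown representability needs precisely the product axiom. The paper handles this point explicitly: it decomposes arbitrary homotopy colimits into filtered colimits, homotopy pushouts, and finite sums, observes that the third dg-Segal condition plus a filtered-colimit argument covers all homotopy pushouts, and treats the nullary sum separately using condition~1. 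You should insert the analogous step---showing that $F_{(x,y)}$ takes (possibly infinite) coproducts to products, or equivalently that it is compatible with filtered colimits---before invoking Brown representability; once that is done your route goes through.
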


\begin{proof}
The uniqueness is just a consequence of the Yoneda lemma. We assume the existence of two complexes of modules, $F_1(x,y)$ and $F_2(x,y)$, such that $F_{(x,y)}(-)\simeq \Map(-,F_1(x,y))$ and also $F_{(x,y)}(-)\simeq \Map(-,F_2(x,y))$. But the Yoneda lemma tells us that a weak equivalence of representable presheafs must come from a weak equivalence on the representing objects. Which means that we have
$$F_1(x,y)\simeq F_2(x,y). $$
We have the uniqueness.
\\
\\Let us see the existence next. This is a direct consequence of Proposition 1.9 in \cite{TV-Chern}, in particular of a certain point of the proof of part 1. Indeed, we prove there that if $A$ is an $\infty$-category, then all functor $G\in\Fun(A\op,\sS)$ that commutes with homotopy colimits is representable. That result has, then, reduced our problem to proving that the functor $F_{(x,y)}$ sends all homotopy colimits to homotopy limits (we remind the reader that $F_{(x,y)}$ is a contravariant functor). In order to make the writing of this proof easier, we will ignore the fact that $F$ is contravariant and call this "commuting with homotopy limits".
\\
\\Now, proving that a functor commutes with homotopy limits can be done by proving that it commutes with all filtered homotopy limits, all homotopy pushouts, and all finite sums. As $F$ satisfies the dg-Segal conditions, it already commutes with all filtered limits along perfect objects, which means that $F_{(x,y)}$ commutes with all filtered limits; and by the third dg-Segal condition it already commutes with homotopy pushouts along generating cofibrations. As we can write every homotopy pushout as a filtered homotopy limit over a homotopy pushout along a generating cofibration (in this case, the projective model structure), that means that $F_{(x,y)}$ commutes with all homotopy pushouts. Finally, all finite sums are homotopy pushouts except for the null sum, which means that is all we have left to prove.
\\
\\Let $G$ be a graph with two objects and the null complex between them. By definition, the free dg-category associated to $G$ is the coproduct $L(G)=k\coprod k$. By the first dg-Segal condition, that means that $F(L(G))=F(k\coprod k)\simeq F(k)\times F(k)$. If we apply now the definition of $F_{(x,y)}$, we have that $F_{(x,y)}(G)$ is the homotopy fiber of $F(L(G))=F(k)\times F(k)\to F(k)\times F(k)$. As this is an isomorphism, we have that $F_{(x,y)}(G)=*$, the null object in $\sS$, and we are done. 
\\
\\We have proven, then, that $F_{(x,y)}$ sends all homotopy colimits to homotopy limits. We have that $F_{(x,y)}$ is representable, i.e. there exists a complex of modules $F(x,y)$ such that $F_{(x,y)}(-)\simeq \Map(-, F(x,y))$.
\\
\\And we have finished this proof.
\\\end{proof}

\begin{rem}By the Yoneda lemma again, it is obvious that if the functor $F$ is of the form $\Sing(T)$ with $T\in\dg$, then we have that for all $E\in\Ch^c$,
$$F_{(x,y)}(E)=\Map(E,T(x,y)). $$
\end{rem}

And one last definition. 

\begin{defin}Let $F\in\dgS$ be a dg-Segal space. We call the \textbf{homotopy category of $F$}, and we denote it by $\l F\r$, the category whose objects are the 0-simplexes of $F(k)$ and whose morphisms are given for all $x,y\in\pi_0(F(k))$ by 
$$\l F \r(x,y)=H^0(F(x,y)) $$
i.e. the cohomology groups of degree 0 of the complex of morphisms associated to the dg-mapping space at $x$ and $y$.
\end{defin}

\begin{rem}We point to the reader that we have that if $F\in\dgS$, then
$$H^0(F(x,y))=\pi_0(\map_{j^*(F)}(x,y)), $$
where $\map_{j^*(F)}$ is the mapping space in the associated Segal space, so we can use the composition law there.
\end{rem}

We are now ready to define DK-equivalences.

\begin{defin}Let $f:F\to G\in\dgS$  be a morphism between two functors satisfying the dg-Segal conditions. We say that $f$ is \textbf{a DK-equivalence} if it satisfies the following conditions:
\begin{enumerate}
	\item The induced morphism $\l f\r:\l F\r\to \l G\r$ is essentially surjective.
	\item For all objects $x,y\in\pi_0(F(k))$, the induced morphism on the dg-mapping spaces, $F_{(x,y)}\to G_{(f(x),f(y))}$, is a quasi-equivalence of functors in $\Fun(\Ch^{c,op},\sS)$.
\end{enumerate}
\end{defin}

\begin{rem}\label{ch. 2: pseudo 2oo3}Some readers might ask themselves why we have defined DK-equivalences exclusively on dg-Segal spaces, and not on any functor. The reason is that if we do that, DK-equivalences would not satisfy the two-out-of-three condition. Indeed, if we have three morphisms $f$, $g$ and $f\circ g$, and two out of the three are DK-equivalences over any functor, then the third has the second condition of DK-equivalences because of the two-out-of-three condition on quasi-equivalences; but the first condition is not always true. It is only true if we have $f,g$ DK-equivalences or $g$ and $f\circ g$ DK-equivalences. It is true if the spaces are all dg-Segal, though.
\end{rem}

\begin{rem}If $F, G$ is of the form $F=\Sing(T), G=\Sing(T')$, what we just defined is pretty much exactly a weak equivalence between dg-categories.
\end{rem}

Now, the rest of this result hasn't been finished yet. We will discuss it in more detail in the next chapter, but for now, here is it.

\begin{hyp}\label{ch. 2: Re-zk}Let $f:F\to G$ be a morphism between two functors satisfying the dg-Segal conditions. Then $f$ is a DK-equivalence if and only if it is a weak equivalence in the complete dg-Segal model structure.  
\end{hyp}

We have now everything we need in order to prove that the functor is fully faithful.

\section{Hypercovers and free dg-categories}\label{hypercovers}

The next step of our proof is proving that $\Sing$ is, in fact, fully faithful. To do that, we will prove that the restriction to $\FreeS$ injects fully-faithfully, and then go on to prove that we can write every dg-category as a particular colimit of objects in $\FreeS$ and that $\Sing_W$ commutes with those colimits. And in order to do that, we will use hypercovers.
\\
\\ Hypercovers are a useful concept, and it has been defined in several different contexts. Some traits remain, though: in all of them, a hypercover of an object $A$ is some kind of augmented object $U_*\to A$ with a similar set of conditions, and in most of them we have the property that $\hocolim U_n\simeq A$. For example, Dugger and Isaksen proved in \cite{DI-TOPhyp} that we do have that weak equivalence $\hocolim U_n\simeq A$ result in $\Top$. In our case, the classic notion of hypercover that we will use more often is the one in the category of simplicial sets, $\sS$. For that, we will give here its definition. Although any reader familiar with Lurie's work will know that the definition there is given in much greater generality, we have decided to translate it here to the language of simplicial objects because it is the only context in which we will use it.

\begin{defin}\label{ch. 2: defin hypercovers sSet}\cite[Definition 6.5.3.2, Corollary 7.2.1.15]{HTT} Let $X\in\sS$ be a simplicial set, and $U_*\to X$ an augmented simplicial object in $\sS$. We say that $U_*$ is  \textbf{a hypercover of $X$} if for all $n\in\N$ the functor 
$$U_n\to U_*^{\partial\Delta^n}$$
is an effective epimorphism. In other words, $U_*\to X$ is a hypercover if for all $n\in\N$
$$\pi_0(U_n)\to \pi_0(U_*^{\partial\Delta^n})$$
is an epimorphism.
\end{defin}

\begin{prop}\label{Ch.2: hypercovers sSet} \cite[Theorem 6.5.3.12]{HTT} Let $X\in\sS$ be a simplicial set and $U_*\to X$ a hypercover of simplicial sets. Then we have that $\hocolim U_n\simeq X$.
\end{prop}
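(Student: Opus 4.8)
The goal is to show that the augmentation $\epsilon\colon\hocolim_{[n]\in\Delta\op}U_n\to X$ is a weak equivalence of simplicial sets. The plan is to identify $\hocolim U_n$ with the realization of the bisimplicial set $U_*$ and to read the whole statement inside the $\infty$-category $\mathcal{S}$ of spaces, where $\hocolim$ over $\Delta\op$ computes the geometric realization $|U_*|$ and the augmentation becomes the canonical map $|U_*|\to X$. Since weak equivalences of spaces are detected on homotopy groups (Whitehead's theorem), it is enough to prove that $\epsilon$ is $\infty$-connective, that is, that it induces a surjection on $\pi_0$ and isomorphisms on all $\pi_i$ at every basepoint.

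The decisive structural input is that $\mathcal{S}$ is an $\infty$-topos, so that effective epimorphisms, which by the definition above are exactly the $\pi_0$-surjections, are precisely the maps realized as the colimit of their own Čech nerve (the Giraud axiom asserting that groupoid objects are effective). First I would dispose of the base level: by Proposition \ref{ch.1: computations_exp} we have $U_*^{\partial\Delta^0}=U_*^{\emptyset}\simeq X$, so the case $n=0$ of the hypercover condition says exactly that the augmentation $U_0\to X$ is an effective epimorphism, which already pins down $\pi_0$ and controls $\pi_1$. For the higher levels I would use the identification $U_*^{\partial\Delta^n}\simeq(\R\cosk_{n-1}(\sk_{n-1}U_*))_n$, again from Proposition \ref{ch.1: computations_exp}, to rewrite the $n$-th hypercover condition as the statement that the matching map $U_n\to(\R\cosk_{n-1}\sk_{n-1}U_*)_n$ is an effective epimorphism.

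With these reformulations in hand, I would run a connectivity induction along the coskeletal tower $U_*\to\cdots\to\cosk_nU_*\to\cosk_{n-1}U_*\to\cdots$. The claim to prove by induction is that the canonical map $|\cosk_nU_*|\to X$ becomes more highly connected as $n$ grows, each passage from $\cosk_{n-1}$ to $\cosk_n$ gaining one degree of connectivity precisely because the relevant matching map is an effective epimorphism. Since a hypercover is recovered as the inverse limit of its coskeletal approximations, and since $\hocolim U_n\simeq\colim_n|\sk_nU_*|$, passing to the limit yields that $\epsilon$ is $\infty$-connective, and Whitehead's theorem then upgrades this to a genuine weak equivalence, proving $\hocolim U_n\simeq X$.

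The main obstacle is the mismatch between the two filtrations in play: $\hocolim U_n$ is naturally computed by the skeletal (latching) filtration, whereas the hypercover hypothesis is phrased through matching objects, i.e. the coskeletal side. A naive skeletal induction therefore never sees the hypothesis, and reconciling the two is exactly the content of descent for hypercovers in a hypercomplete $\infty$-topos; the technical heart is the connectivity bookkeeping in the bootstrap from Čech (i.e. $0$-coskeletal) covers to arbitrary hypercovers. Because this is carried out in full generality in \cite[Section 6.5.3]{HTT}, the most economical route for the thesis is to verify, via Proposition \ref{ch.1: computations_exp} and the identification of effective epimorphisms with $\pi_0$-surjections, that our notion of hypercover coincides with Lurie's, and then to invoke his theorem directly.
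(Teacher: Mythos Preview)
The paper does not give its own proof of this proposition: it is stated with a direct citation to \cite[Theorem 6.5.3.12]{HTT} and no argument is supplied. Your proposal ultimately lands in exactly the same place, namely verifying via Proposition \ref{ch.1: computations_exp} that the hypercover condition here matches Lurie's and then invoking his theorem; the connectivity-induction sketch you outline is a reasonable summary of what happens inside \cite{HTT}, but for the purposes of this thesis the citation alone is what is expected.
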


Unluckily for us, none of the known definitions of hypercover work for the context we want, so we'll be forced to write our own.

\begin{defin}Let $M$ be a model category and $M_0$ be a subcategory of $M$. Then we define $f:T\to T'$ a morphism in $M$ to be \textbf{an $M_0$-epimorphism} if for all $X\in M_0$ the induced functor 
$$\Map(X,T)\to \Map(X,T') $$
is an effective epimorphism in $\sS$, i.e. the morphism
$$\pi_0(\Map(X,T))\to \pi_0(\Map(X,T')) $$
is surjective.
\end{defin}

\begin{defin}Let $M$ be a model category and $M_0$ be a subcategory of $M$. Let $X\in M$ be an object of $M$, and $U_*\to X$ an augmented simplicial object in $M$. We say that $U_*$ is \textbf{an $M_0$-hypercover of $X$} if for all $n\in\N$ the functor 
$$U_n\to U_*^{\partial\Delta^n}$$
is an $M_0$-epimorphism. In other words, using Proposition \ref{ch.1: computations_exp}, $U_*\to X$ is an $M_0$-hypercover if 
$$U_0\to X $$
is an $M_0$-epimorphism and for all $n\geq 1$
$$U_n\to (\R\cosk_{n-1}\sk_{n-1}U)_{n} $$
is an $M_0$-epimorphism.
\end{defin}

Now that we have a definition of an $M_0$-hypercover, the next step will be proving that for all elements of a category we can construct a hypercover made entirely of objects in $M_0$. Of course, this cannot work for a general $M_0$: we will need a few extra conditions. As usual in these cases, we will construct our hypercover level by level, so we need a definition of a $n$-truncated hypercover.

\begin{defin}Let $M$ be a model category and $M_0$ be a subcategory of $M$. Let $X\in M$ be an object in $M$. We define \textbf{an $n$-truncated $M_0$-hypercover of $X$} to be an augmented $n$-truncated simplicial set $X_*\to X$ where for all $i\leq n$
$$X_i\to X_*^{\partial\Delta^i}  $$
is an $M_0$-epimorphism.
\end{defin}

And now for the theorem:

\begin{teo}\label{Ch. 2: existence hypercovers}Let $M$ be a model category where every object is fibrant, and let $M_0$ be a subcategory of $M$ which is closed for finite coproducts. We assume that for every $X\in M$ there exists an object in $M_0$, $U\in M_0$, and a morphism $U\to X$ that is an $M_0$-epimorphism. Then there exists an $M_0$-hypercover of $X$ consisting of objects in $M_0$.
\end{teo}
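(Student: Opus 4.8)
The plan is to build the hypercover by induction on the truncation level, constructing a \emph{split} augmented simplicial object whose non-degenerate pieces all lie in $M_0$. Splitness is the device that keeps every term inside $M_0$: for a split simplicial object the latching object $L_nU$ of Definition \ref{ch.1: latching and matching} is a \emph{finite} coproduct of copies of the non-degenerate pieces $N_m$ with $m<n$, so closure of $M_0$ under finite coproducts forces $L_nU\in M_0$, and hence $U_n=L_nU\coprod N_n\in M_0$ as soon as the new piece $N_n$ is chosen in $M_0$. Thus the only choices I ever have to make are the non-degenerate pieces, and each of them will be produced directly from the covering hypothesis.

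First I would treat the base case. Since $\partial\Delta^0=\emptyset$, part 1 of Proposition \ref{ch.1: computations_exp} gives $U_*^{\partial\Delta^0}\simeq X$, so the level-$0$ condition is just that $U_0\to X$ be an $M_0$-epimorphism. This is exactly the covering hypothesis: I choose $N_0\in M_0$ with an $M_0$-epimorphism $N_0\to X$ and set $U_0=N_0$.

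For the inductive step, suppose I have built an $n$-truncated split $M_0$-hypercover $U_{\le n}\to X$ with non-degenerate pieces $N_0,\dots,N_n\in M_0$. I would form the matching object $M_{n+1}:=U_*^{\partial\Delta^{n+1}}=(\R\cosk_n\sk_n U)_{n+1}$ using part 3 of Proposition \ref{ch.1: computations_exp}; this is an object of $M$, so the covering hypothesis supplies $N_{n+1}\in M_0$ together with an $M_0$-epimorphism $c\colon N_{n+1}\to M_{n+1}$. Taking $N_{n+1}$ as the new non-degenerate piece, I would extend the truncated object by the standard Reedy procedure: the splitting prescribes the degeneracies and the degenerate part of the face maps, while the map $c$ records precisely the compatible family of faces $N_{n+1}\to U_n$ that is needed to make $U_{n+1}=L_{n+1}U\coprod N_{n+1}$ a genuine term of an $(n+1)$-truncated simplicial object. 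The hypercover condition at the new level is then automatic: the matching map $p\colon U_{n+1}\to M_{n+1}$ satisfies $p\circ\iota=c$ for the summand inclusion $\iota\colon N_{n+1}\to U_{n+1}$, and applying $\pi_0\Map(Z,-)$ for $Z\in M_0$ shows that the composite $\pi_0\Map(Z,N_{n+1})\to\pi_0\Map(Z,U_{n+1})\to\pi_0\Map(Z,M_{n+1})$ is surjective, hence so is the second map; thus $p$ is an $M_0$-epimorphism. Passing to the colimit over all truncation levels yields an $M_0$-hypercover $U_*\to X$ with every $U_n\in M_0$, as required.

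The main obstacle will be making the inductive extension rigorous, that is, reconciling the \emph{derived} matching object $(\R\cosk_n\sk_n U)_{n+1}$ appearing in the definition of an $M_0$-hypercover with the strict simplicial identities needed to assemble $U_{n+1}=L_{n+1}U\coprod N_{n+1}$. This is where the hypothesis that every object of $M$ is fibrant does the real work: it makes the mapping spaces $\Map(Z,-)$ homotopy invariant and lets $\R\cosk_n\sk_n U$ be computed without introducing further fibrant replacements, so that the strict Reedy extension actually realizes the homotopy-matching condition. Checking the simplicial identities for the faces issued from $c$ together with the split degeneracies is routine bookkeeping, and I would defer it to the standard latching/matching extension machinery rather than carry it out explicitly.
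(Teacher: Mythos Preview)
Your proposal is correct and follows essentially the same strategy as the paper: build the hypercover by induction on the truncation level, at each stage use the covering hypothesis to produce a new piece in $M_0$ mapping $M_0$-epimorphically to the matching object, and use closure under finite coproducts to adjoin the degenerate part. The only difference is packaging: you phrase the degenerate part as the latching object of a split simplicial object, whereas the paper simply sets $U_{n+1}=U'_{n+1}\coprod_{i\le n}V_i$ with $V_i$ the already-built terms, defining degeneracies by the summand inclusions and faces via the coskeleton $\sk_{n+1}(\cosk_n V_*)$. Your latching description is the cleaner conceptual account of exactly the same coproduct. The paper also glosses over the strict/derived matching reconciliation you flag, implicitly identifying $(\cosk_n V_*)_{n+1}$ with $U_*^{\partial\Delta^{n+1}}$; so your explicit acknowledgement of that point is, if anything, more honest than the paper's treatment.
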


\begin{proof}
We are going to prove this by induction. Let $X\in M$ be an object in $M$. We will prove that for all $n\in \N$ there exists an $n$-truncated hypercover of $X$.
\\
\\$\bullet\ \ n=0$. This is true by hypothesis. We have assumed that there exists an object $U_0\in M_0$ with a morphism $U_0\to X$ which is $M_0$-epi. We then take the $0$-truncated simplicial set which is $U_0$ on degree $0$. It is a $0$-truncated hypercover by definition.
\\
\\ $\bullet\ \ n\in\N$. By induction hypothesis there exists an $n$-truncated hypercover of $X$, named $V_*\to X$. We have to construct an $(n+1)$-truncated hypercover of $X$, that we will call $U_*\to X$. 
\\
\\We define $V_*'=\sk_{n+1}(\cosk_n V_*)$. This simplicial object is $(n+1)$-truncated, but the term $n+1$ is not necessarily in $M_0$. By hypothesis, there exists a morphism $U'_{n+1}\to V_{n+1}'$ that is a $M_0$-epi and such that $U'_ {n+1}\in M_0$. We define the $(n+1)$-truncated $M_0$-hypercover $U_*\to X$ to be $U_i=V_i'=V_i$ for all $i\leq n$ and 
$$U_{n+1}=U'_{n+1}\coprod_{i\leq n} V_i$$
for $n+1$, where we take the colimit over $i\leq n$. As the set of elements $i\leq n$ is finite and we have assumed that $M_0$ is closed for finite colimits, we still have that $U_{n+1}\in M_0$.
\\
\\We need to prove that $U_*\to X$ is an augmented simplicial set, and also that we have the $M_0$-hypercover property. Let us construct the morphisms $U_m\to U_{n+1}$ and $U_{n+1}\to U_m$ for all morphism $[n+1]\to [m]$ and $[m]\to [n+1]$, with $m\leq n+1$.
\\
\\These morphisms are straightforward: in one direction we just take the composite
$$U_{n+1}\to U'_{n+1}\to V'_{n+1}\to V'_{m}=U_m. $$
and in the other sense we just take the morphism given by the colimit.
\\
\\And lastly, we need to prove that for all $i\leq n+1$ the morphism $U_i\to U_*^{\partial\Delta^i} $ is a $M_0$-epimorphism.
\\
\\For $i\leq n$ this is true by induction hypothesis. Indeed, we have defined $U_*$ in such a way that $U_i=V'_i$, and by Remark \ref{ch.1: cosk igualdad} we have that $V'_i=(\sk_{n+1}(\cosk_n V_*))_i=V_i$ for all $i\leq n$. We need then to prove that 
$$U_i=V'_i=V_i\to (\cosk_{i-1}\sk_{i-1}U_*)_i=(\cosk_{i-1}\sk_{i-1}V_*)_i $$
 is an $M_0$-epimorphism. Since $V_*$ is an $n$-truncated $M_0$-hypercover, this condition is verified. We only need to prove this for $n+1$.
\\
\\ We need to prove, then, that the morphism
$$U_ {n+1}=U'_{n+1}\coprod V_i\to (U_*)^{\partial\Delta^n}=(\cosk_n\sk_n U_*)_{n+1}=(\cosk_n V_*)_{n+1}=V'_{n+1} $$
is an $M_0$-epimorphism. Let $W\in M_0$ be an object in $M_0$, we are going to prove that 
$$\Map(W,U'_{n+1}\coprod V_i)\to \Map(W,V'_{n+1})$$
is surjective on the $\pi_0$. Every object in $M$ is fibrant by hypothesis, which means that we can write $\Map(W,U'_{n+1}\coprod V_i)$ as $\Hom(C^*(W),U'_{n+1}\coprod V_i)$, and similarly for $\Map(W,V'_{n+1})$. We are, then, going to prove that
$$\pi_0(\Map(W,U'_{n+1}\coprod V_i))=\Hom(W,U'_{n+1}\coprod V_i)/\sim\to \pi_0(\Map(W,V'_{n+1}))=\Hom(W, V'_{n+1})/\sim $$
is surjective, where $\sim$ is the homotopy equivalence relation.
\\
\\We will need an auxiliary morphism for this. By definition of a simplicial set, for all $i\leq n$ there exists a morphism $V_i=V'_i\to V'_{n+1}$. Also, by the way we have constructed $U_*$, there exists a morphism $u:U'_{n+1}\to V'_{n+1}$, which is an $M_0$-epimorphism. In consequence, by definition of a coproduct, there exists a morphism $f:U'_{n+1}\coprod V_i\to V'_{n+1}$, and a factorization $g:U'_{n+1}\to U'_{n+1}\coprod V_i$ such that $f\circ g=u$.
\\
\\And now let us prove the surjectivity. Let $F:W\to V'_{n+1}$ be a morphism. As $u:U'_{n+1}\to V'_{n+1}$ is an $M_0$-epimorphism, that means that it exists, up to homotopy, a morphism $F':W\to U'_{n+1}$ such that $u\circ F'=F$. But we have said that $u$ factorizes through $f$. We can then, compute a morphism 
$$g\circ F':W\to U'_{n+1}\to U'_{n+1}\coprod V_i $$
such that $f\circ g\circ F'=u\circ F'=F$ up to homotopy. We have that the morphism
$$\pi_0(\Map(W,U'_{n+1}\coprod V_i))=\Hom(W,U'_{n+1}\coprod V_i)/\sim\to \pi_0(\Map(W,V'_{n+1}))=\Hom(W, V'_{n+1})/\sim $$
is an $M_0$-epimorphism. The $(n+1)$-truncated simplicial set $U_*\to X$ is an $(n+1)$-truncated $M_0$-hypercover of $X$ and we have finished the proof.
\\\end{proof}

As the reader can imagine, we decided to use the free dg-categories of finite type expecting them to be well-behaved enough that we can work with $\FreeS$-hypercovers. But even though it would be possible to construct $\FreeS$-hypercovers directly, the number of objects in it would explode quite quickly, and we don't want that. So instead of working on the free dg-categories of finite type directly, we will first fix the objects. Once that is done, it will be the moment to make good on our word: let us prove that that is actually true.

\begin{prop}\label{ch. 2: free hyper} Let $X$ be a dg-category, and let $\O$ be its set of objects. Let $M$ be $\dg_\O$ the model category of dg-categories with $\O$ as a set of objects,  and $M_0=\Free_{\S\O}$ be the full subcategory of free dg-categories of finite type with $\O$ as a set of objects. There exists an $M_0$-hypercover $U_*\to X$ such that $U_i\in M_0$ for all $i\in\N$.
\end{prop}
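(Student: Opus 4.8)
The plan is to deduce this from the general existence theorem for $M_0$-hypercovers, Theorem \ref{Ch. 2: existence hypercovers}, applied to the model category $M=\dg_\O$ and the (underlying ordinary category of the) full subcategory $M_0=\Free_{\S\O}$ of free dg-categories of finite type with object set $\O$. That theorem produces a hypercover $U_*\to X$ with every $U_i\in M_0$ as soon as three hypotheses hold: that every object of $\dg_\O$ is fibrant, that $M_0$ is closed under finite coproducts, and that every $X\in\dg_\O$ admits an $M_0$-epimorphism $U\to X$ with $U\in M_0$. So the proof reduces to checking these three conditions, and I would devote most of the work to the last one.

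The first two are straightforward. For fibrancy, recall that the model structure on $\dg_\O$ has as its fibrations the dg-functors that are surjective on each Hom-complex; since every cochain complex is fibrant in $\Ch$, the terminal map of any object of $\dg_\O$ is such a levelwise surjection, so every object of $\dg_\O$ is fibrant (this is the fixed-object analogue of Corollary \ref{ch. 1:hom-cofib}). For closure under finite coproducts, I would use that the free functor $L$ of the adjunction in Definition \ref{ch. 1: def free} is a left adjoint and hence preserves coproducts: if $U=L(G)$ and $U'=L(G')$ lie in $M_0$, then $U\coprod U'=L(G\coprod G')$, where the coproduct of graphs on the fixed object set $\O$ is computed edgewise as $(G\coprod G')(x,y)=G(x,y)\oplus G'(x,y)$. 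A finite direct sum of perfect complexes is again perfect, so $G\coprod G'$ is again a graph of finite type and $U\coprod U'\in M_0$.

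The heart of the argument, and the step I expect to be the main obstacle, is producing for an arbitrary $X\in\dg_\O$ a finite-type free dg-category $U$ together with an $M_0$-epimorphism $U\to X$. By adjunction, a map out of a free object $W=L(H)\in M_0$ is the same datum as a graph morphism from $H$ to the underlying graph of the target, so that $\pi_0\Map(W,-)$ is controlled by the homotopy classes of chain maps $H(x,y)\to(-)(x,y)$ for the pairs $(x,y)\in\O^2$. The required $M_0$-epimorphism condition then becomes the statement that $U\to X$ is surjective on these homotopy classes for every finite-type $H$. I would construct $U$ as a free dg-category $L(G)$ on a graph $G$ whose edges are perfect complexes generating, up to homotopy, the morphism complexes $X(x,y)$, the natural candidate being obtained by applying the free-forgetful adjunction to a finite-type approximation of the underlying graph of $X$. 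The key point to verify, and where the finiteness really enters, is that a single such finite-type $U$ can simultaneously detect all maps into $X$ coming from finite-type free sources: this rests on the compactness of perfect complexes, which guarantees that any map out of a perfect $H(x,y)$ factors through a finitely generated piece, so that finitely much generating data in $G$ suffices. Once this $M_0$-epimorphism is in hand, all three hypotheses of Theorem \ref{Ch. 2: existence hypercovers} are satisfied, and the theorem yields the desired $M_0$-hypercover $U_*\to X$ with $U_i\in M_0$ for all $i$, completing the proof.
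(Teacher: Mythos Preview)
Your overall strategy---applying Theorem \ref{Ch. 2: existence hypercovers} and verifying its three hypotheses---matches the paper exactly, as does your treatment of fibrancy and of closure under finite coproducts.

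The divergence is in the third condition. The paper does not attempt any finite-type approximation or compactness argument: it simply takes $U=LU(X)$, the free dg-category on the underlying graph of $X$, and shows that the counit $LU(X)\to X$ is an $M_0$-epimorphism by an explicit lift. Given $f:C^*(A)\to X$ with $A\in M_0$, one defines $f':C^*(A)\to LU(X)$ on Hom-complexes by sending each morphism into the length-one summand of
\[
LU(X)(f(x),f(y))=\bigoplus_{m}\bigoplus_{(x_1,\ldots,x_m)}X(f(x),x_1)\otimes\cdots\otimes X(x_m,f(y)),
\]
which is already a strict lift of $f$, hence certainly a lift on $\pi_0$.

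Your proposed route via compactness of perfect complexes, by contrast, has a genuine gap. Compactness tells you that any \emph{single} map out of a perfect $H(x,y)$ factors through some finite piece of $X(x,y)$, but it does not produce one finite-type graph $G$ through which \emph{all} such maps factor simultaneously: if, say, $X(x,y)\simeq\bigoplus_{n\in\Z}k[n]$, no perfect complex can receive every map from every $k[m]$. So the argument as sketched does not close, and the paper's direct use of $LU(X)$ sidesteps the issue entirely. (Your instinct that the finite-type hypothesis deserves comment is reasonable---the paper asserts $LU(X)\in\Free_{\S\O}$ without discussing it---but the compactness approach you outline is not the fix.)
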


\begin{proof}
We just need to prove that the model category $M$ and the subcategory $M_0$ fulfill the conditions of Theorem \ref{Ch. 2: existence hypercovers}, i.e. that all object in $M$ is fibrant, that $M_0$ is closed for finite coproducts and that for all object $X\in M$ there exists an object $U\in M_0$ and a $M_0$-epimorphism $U\to X$. 

\begin{itemize}
	\item The first condition is Corollary \ref{ch. 1:hom-cofib}.
	\item Let $X,Y$ be two free dg-categories over $\O$. Then, by definition, there exists two graphs $X',Y'\in Gr(\Ch)^{tf}$ such that $L(X')=X$ and $L(Y')=Y$. But $L$ is a left adjoint, and a finite coproduct is a special case of a colimit: as such, we know that $L$ commutes with finite coproducts. That gives us that $X\coprod Y=L(X')\coprod L(Y')=L(X'\coprod Y')$, and we have that $X\coprod Y$ is a free category of finite type, coming from the coproduct of graphs $X'\coprod Y'$. We have that $\FreeS$ is closed for finite coproducts and we have finished.
	\item Let $X$ be a dg-category. We are going to prove that the morphism $LU(X)\to X$ is the $M_0$-epimorphism we need. First of all, by definition of a free dg-category, $LU(X)$ has the same objects as $X$ and it is an object in $\Free_{\S\O}$. Let $A$ be a free dg-category of finite type over $\O$. We need to prove, then, that 
	$$\Map(A,LU(X))\to \Map(A,X) $$
	is surjective on the $\pi_0$. But this morphism is already surjective. Indeed, if we take $f:C^*(A)\to X$, we can define a morphism $f':C^*(A)\to LU(X)$ that gives us the right result. Indeed, $X$ and $LU(X)$ have the same objects, so that won't change. We only need to define the morphisms of complexes.
	\\
	\\Let $x,y\in C^*(A)$. Then we have a morphism $\phi:C^*(A)(x,y)\to X(f(x),f(y))$. Now, we remind the reader that, by definition \ref{ch. 1: def free}, the morphisms in $L(U(X))$ are defined as follows:
$$L(U(X))(f(x),f(y))=\bigoplus_{m\in\N}\bigoplus_{x_1,\ldots, x_m\in\mathcal{O}}(U(X)(f(x),f(x_1))\otimes \ldots \otimes U(X)(f(x_m),f(y))). $$	
	 We define the morphism $\phi':C^*(A)(x,y)\to LU(X)(f(x),f(y)) $ such that for all $g\in C^*(A)(x,y)$, the image by $\phi'$ is the direct sum with $\phi(g)$ in the zero component and zero everywhere else.
\end{itemize} 
We have all three conditions for the existence of an $M_0$-hypercover. For all $X$ there exists a $\FreeS$-hypercover of $X$ composed of objects in $\FreeS$.
\end{proof}

\begin{rem}There is one important thing that we need to highlight from this construction. If we have a $\FreeS$-hypercover $U_* \to X$ defined like that then for all $x,y\in \Ob(X)$ and for all $n\in\N$ we have that $U_n(x,y)\to U_*(x,y)^{\partial\Delta^n}$ is a split epimorphism.
\end{rem}

This second remark gives us a concept we will need for the next result.

\begin{defin} We define a \textbf{split hypercover} of $E$ in $\Ch$ to be an augmented simplicial complex $E_*\to E$ such that for all $n\in\N$ the morphism $E_n\to E_*^{\partial\Delta^n}$ is a split epimorphism.
\end{defin}

Once this definition is set, there is also another thing we have to keep in mind.

\begin{coro}\label{ch. 2 split hyper} If we have an augmented simplicial object $T_*\to T$ in $\dg_O$ such that for all $x,y\in\O$ the associated augmented simplicial complex $T_*(x,y)\to T(x,y)$ is a split hypercover in $\Ch$, then $T_*\to T$ is a hypercover in $\dg_\O$.
\end{coro}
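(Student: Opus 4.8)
The plan is to unwind the definition of a hypercover in $\dg_\O$ and reduce it, pair of objects by pair of objects, to the split hypercover hypothesis in $\Ch$. Recall that $T_*\to T$ being a hypercover in $\dg_\O$ means it is a $\Free_{\S\O}$-hypercover, i.e. that for every $n$ the natural map $T_n\to T_*^{\partial\Delta^n}$ is a $\Free_{\S\O}$-epimorphism; by Proposition \ref{ch.1: computations_exp} the target is the (derived) matching object $(\R\cosk_{n-1}\sk_{n-1}T_*)_n$. So the task is to show that for every $A\in\Free_{\S\O}$ the induced map $\pi_0\Map(A,T_n)\to\pi_0\Map(A,T_*^{\partial\Delta^n})$ is surjective.

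First I would record that the hypothesis already forces $T_*$ to be Reedy fibrant. Indeed, for each $x,y\in\O$ the matching map $T_n(x,y)\to T_*(x,y)^{\partial\Delta^n}$ is a split epimorphism, hence a degreewise surjection, hence a fibration in $\Ch$; since fibrations in $\dg_\O$ are exactly the identity-on-objects dg-functors that are degreewise surjective on each hom-complex (Theorem \ref{ch. 1:Tab-model}), this says precisely that the Reedy matching maps of $T_*$ are fibrations. Consequently the derived matching object agrees with the strict one, and $T_*^{\partial\Delta^n}$ may be computed on the nose as the strict matching object $(\cosk_{n-1}\sk_{n-1}T_*)_n$.

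Next comes the reduction to $\Ch$, which I regard as the heart of the argument. Write $A=L(G)$ for a graph $G\in Gr(\Ch)^{tf}$ over $\O$. Using the Quillen adjunction $L:Gr(\Ch)\rightleftharpoons\dg:U$ of Definition \ref{ch. 1: def free} restricted to the fixed object set $\O$, together with the facts that $A$ is cofibrant and every object of $\dg_\O$ is fibrant (Corollary \ref{ch. 1:hom-cofib}), the derived mapping space splits as
$$\Map_{\dg_\O}(L(G),Y)\simeq \Map_{Gr_\O(\Ch)}(G,UY)\simeq \prod_{x,y\in\O}\Map_{\Ch}(G(x,y),Y(x,y)),$$
since $Gr_\O(\Ch)$ is the product model category $\prod_{x,y}\Ch$ and mapping spaces in a product are the product of the factorwise mapping spaces. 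On the other hand $U$ is a right adjoint, so it preserves the limit computing the (now strict) matching object; evaluated factorwise this gives $U(T_*^{\partial\Delta^n})(x,y)=T_*(x,y)^{\partial\Delta^n}$, and the natural map is carried exactly to the split epimorphism furnished by the hypothesis. Hence the map $\pi_0\Map(A,T_n)\to\pi_0\Map(A,T_*^{\partial\Delta^n})$ is identified with the product over all $(x,y)$ of the maps $\pi_0\Map_\Ch(G(x,y),T_n(x,y))\to\pi_0\Map_\Ch(G(x,y),T_*(x,y)^{\partial\Delta^n})$.

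Finally I would conclude with the formal observation that makes the word ``split'' earn its keep: a split epimorphism is preserved by every functor, so applying $\Map_\Ch(G(x,y),-)$ to the split epi $T_n(x,y)\to T_*(x,y)^{\partial\Delta^n}$ yields a split surjection of simplicial sets, in particular surjective on $\pi_0$. A product of surjections is surjective, so $T_n\to T_*^{\partial\Delta^n}$ is a $\Free_{\S\O}$-epimorphism for every $n$, and therefore $T_*\to T$ is a hypercover in $\dg_\O$. I expect the only genuinely delicate point to be the reduction step, namely checking that evaluation on hom-complexes (the right adjoint $U$) commutes with the matching-object/exponential construction and that the resulting map is literally the split epimorphism of the hypothesis; the Reedy-fibrancy remark is exactly what keeps this on the strict, non-derived level, so that ``right adjoints preserve limits'' applies directly and no homotopy-coherence bookkeeping is required.
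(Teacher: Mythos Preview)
Your argument is correct and supplies precisely the details the paper omits: the corollary is stated without proof, presumably because the author regards it as immediate from the definitions and the preceding remark. Your route (reduce to hom-complexes via the free/forgetful Quillen adjunction, then use that split epimorphisms are preserved by any functor) is the intended one.

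One small point worth tightening: your Reedy-fibrancy paragraph is slightly circular as written. The hypothesis gives split epimorphisms onto the \emph{derived} $T_*(x,y)^{\partial\Delta^n}$, but Reedy fibrancy asks that the \emph{strict} matching maps be fibrations. The fix is to run the argument as an induction on $n$: once $\sk_{n-1}T_*(x,y)$ is Reedy fibrant, the derived and strict $n$th matching objects agree, so the split-epi hypothesis at level $n$ does give a fibration to the strict matching object, and the induction continues. With that adjustment the rest of your proof goes through verbatim.
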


We have now a definition of hypercover on dg-categories. When we described them at first, we gave one important property that those constructions tended to have: their homotopy colimits being quasi-equivalent to the original object. And we would like for it to happen in this context too. Let us work on that. In order to do so, though, we will need an auxiliary result beforehand.

\begin{lema}\label{ch.2: hyper complexes} Let $M=\dg$ and $M_0=\FreeS$. Let $T$ be an object in $\dg$ and $T_*\to T$ a $\FreeS$-hypercover constructed using Theorem \ref{Ch. 2: existence hypercovers} and Proposition \ref{ch. 2: free hyper}. Then for all $x,y\in \Ob(T)$ we have that $\hocolim_{\Delta\op}(T_i(x,y))\simeq T(x,y)$ in $\Ch$.
\end{lema}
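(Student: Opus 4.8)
The plan is to fix $x,y\in\Ob(T)$ and reduce the statement to a general fact about split hypercovers in $\Ch$, which I then prove by an explicit double-complex computation. First I would record the input: by the Remark following Proposition \ref{ch. 2: free hyper} (equivalently by Corollary \ref{ch. 2 split hyper}), the augmented simplicial complex $T_*(x,y)\to T(x,y)$ is a split hypercover in $\Ch$, meaning that for every $n\in\N$ the map $T_n(x,y)\to T_*(x,y)^{\partial\Delta^n}$ is a split epimorphism of complexes. Using Proposition \ref{ch.1: computations_exp} I would identify $T_*(x,y)^{\partial\Delta^n}$ with the matching object $M_n$ of the simplicial complex $T_*(x,y)$, the case $n=0$ giving $T_*(x,y)^{\partial\Delta^0}\simeq T(x,y)$, i.e. the augmentation itself. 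It therefore suffices to prove the general claim that for any split hypercover $E_*\to E$ in $\Ch$ one has $\hocolim_{\Delta\op}E_*\simeq E$.

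Next I would compute the homotopy colimit explicitly. Each $T_i(x,y)$ is cofibrant in $\Ch$ by Corollary \ref{ch. 1:hom-cofib}, so the diagram $E_*=T_*(x,y)$ is levelwise cofibrant; since $\Ch$ is additive, the homotopy colimit over $\Delta\op$ of such a simplicial object is computed by its (normalized) realization, namely the total complex of the double complex obtained by applying the normalized Moore functor $N_\bullet$ in the simplicial direction, $\hocolim_{\Delta\op}E_*\simeq \operatorname{Tot}(N_\bullet E_*)$. The augmentation then supplies a comparison map $\operatorname{Tot}(N_\bullet E_*)\to E$, and the goal becomes to show this map is a quasi-isomorphism, i.e. a weak equivalence in $\Ch$.

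The homological heart of the argument exploits splitness one cochain degree at a time. Limits in $\Ch$ are computed degreewise, so the matching objects are computed degreewise as well; hence for each fixed cochain degree $p$ the split epimorphisms $E_n\to M_nE_*$ restrict to split epimorphisms of $k$-modules $E_n^p\to M_n(E_*^p)$, exhibiting $E_*^p\to E^p$ as a split hypercover of simplicial $k$-modules. For such a split hypercover the augmented normalized complex $\cdots\to N_1E_*^p\to N_0E_*^p\to E^p\to 0$ is exact: the section of each matching map produces, by induction on $n$, a preimage lying in the normalized term that witnesses exactness (at level $0$ the section of $E_0^p\to E^p$ gives surjectivity of the augmentation, and at level $n$ a class killed by the appropriate lower faces is hit by applying the section of the matching map to the corresponding boundary datum in $M_n$).

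Finally I would assemble these vanishings through a spectral sequence. Filtering the augmented double complex $N_\bullet E_*\to E$ by cochain degree, the $E_1$-page is the simplicial (Moore) homology taken in each cochain degree, which by the previous step is concentrated in simplicial degree $0$ and there recovers $E^p$; it follows that the total complex of the augmented double complex is acyclic, so $\operatorname{Tot}(N_\bullet E_*)\to E$ is a quasi-isomorphism, and taking $E_*=T_*(x,y)$ yields $\hocolim_{\Delta\op}(T_i(x,y))\simeq T(x,y)$. I expect the main obstacle to be the careful identification of $\hocolim_{\Delta\op}$ with the normalized total complex — reconciling the derived colimit with the strict realization and handling the attendant Reedy/levelwise (co)fibrancy bookkeeping, together with the convergence of the double-complex spectral sequence; by comparison the split-epimorphism acyclicity is elementary.
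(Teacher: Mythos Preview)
Your argument is correct, but it takes a genuinely different route from the paper's. Both proofs begin by reducing to the claim that a split hypercover $E_*\to E$ in $\Ch$ satisfies $\hocolim_{\Delta\op}E_*\simeq E$, but they diverge from there. The paper first treats the connective case by applying the Dold--Kan equivalence to pass to $\sS$, where it invokes Lurie's hypercover theorem (Proposition \ref{Ch.2: hypercovers sSet}); it then reduces the unbounded case to the connective one via the naive truncation $\tau$, which requires a sublemma showing $H^i(E_*^K)\simeq H^i(E_*)^K$ for finite $K$ (proved by induction on $\dim K$, using splitness to push $H^i$ past the relevant pullbacks). Your approach instead stays entirely inside $\Ch$: you identify $\hocolim_{\Delta\op}$ with $\operatorname{Tot}(N_\bullet E_*)$, show degreewise that the split matching sections produce a contracting homotopy for the augmented Moore complex, and conclude via the cochain-degree spectral sequence. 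Your method is more elementary and self-contained --- it avoids the $\infty$-topos machinery and the somewhat delicate $H^i$-commutation sublemma --- at the cost of the bookkeeping you flag (the $\hocolim$/total-complex identification and convergence for unbounded complexes, both of which are standard). The paper's method, by contrast, is more conceptual and reusable: the sublemma $H^i(E_*^K)\simeq H^i(E_*)^K$ is of independent interest, and the Dold--Kan bridge makes the relation to classical hypercover descent transparent.
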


\begin{proof}

As we said in the last remark, if $T_*\to T$ is a $\FreeS$-hypercover constructed using the aforementioned theorem, then for all $x,y\in\O$, $T_*(x,y)\to T(x,y)$ is a split epimorphism. We will then prove that if $E_*\to E$ is a split hypercover of complexes, then $\hocolim_{\Delta\op}E_i \simeq E$.
\\
\\We start with the connective case. Let us assume that $E_*, E\in\Ch^{\leq 0}$, and $E_*\to E$ is a split hypercover in $\Ch^{\leq 0}$. We can use the Dold-Kan equivalence, $DK:\sS\rightleftharpoons \Ch^{\leq 0}:DK^{-1}$, and $DK^{-1}(E_*)\to DK^{-1}(E)$ is surjective over $\pi_0$ in $\sS$, i.e. it is a hypercover of simplicial sets. By Proposition \ref{Ch.2: hypercovers sSet} we have that $\hocolim_{\Delta\op}DK^{-1}(E_i)\simeq DK^{-1}(E)$ in $\sS$. But the Dold-Kan equivalence isn't just an equivalence: it is a model equivalence, meaning that it is an equivalence of categories that also preserves the model structure (\cite[Chapter III, Section 2]{JG}). In particular, if we apply $DK$ we have that $\hocolim_{\Delta\op}E_i\simeq E$ and we have finished.
\\
\\We have then that the connective case is true. We will now reduce the general case to the connective case, using the naïve truncation $\tau:\Ch\to \Ch^{\leq 0}$ such that if $E\in\Ch$, $H^i(\tau(E))=H^i(E)$ for all $i\leq 0$ and  $H^i(\tau(E))=0$ for all $i>0$. But knowing that $E_*\to E$ is a split hypercover in $\Ch$ doesn't assure us that $\tau(E_*)\to \tau(E)$ will also be a split hypercover in $\Ch^{\leq 0}$. To prove that, we need the following result.

\begin{slema}\label{ch.2: h puissance} Let $E_*\to E$ be a split hypercover in $\Ch$ and $K$ a finite simplicial object. Then  we have the equivalence
$$H^i(E_*^K)\simeq H^i(E_*)^K. $$ 
\end{slema}
\begin{proof}
We are going to prove this by induction over the dimension of $K$. Let us prove that for all $n\in\N$, if $\dim K=n$, then $H^i(E_*^K)\simeq H^i(E_*)^K$.
\begin{itemize}
	\item $n=0$. This is almost immediate. Indeed, if $\dim K=0$, then $K=\coprod_p *$, and $E_*^K=E_0^p$. And as cohomology commutes with products, we have that $H^i(E_*^K)\simeq H^i(E_*)^K$. 
	\item $n\geq 1$. We assume that for every $K'$ with $\dim K'< n$ we have $H^i(E_*^{K'})\simeq H^i(E_*)^{K'}$. Let $K$ be a simplicial objet of dimension $n$. Then we have the following homotopy coproduct
\begin{center}
	\begin{tikzcd}
		\coprod\partial\Delta^n\ar[r]\ar[d]\arrow[dr,phantom, "\lrcorner^h", very near end]&\coprod \Delta^n\ar[d]\\
		K_{\leq n-1}\ar[r]&K
	\end{tikzcd}
\end{center}
where $K_{\leq n-1}$ is the simplicial subobject of $K$ of dimension $n-1$. In turn, that square gives us the following homotopy products 
\begin{center}
	\begin{tikzcd}
		E_*^K\ar[r]\ar[d]\arrow[dr,phantom, "\ulcorner^h", very near start]&E_*^{K_{\leq n-1}}\ar[d]\\
		\prod E_*^{\Delta^n}\ar[r]&\prod E_*^{\partial\Delta^n}
	\end{tikzcd}
	\begin{tikzcd}
		H^i(E_*)^K\ar[r]\ar[d]\arrow[dr,phantom, "\ulcorner^h", very near start]&H^i(E_*)^{K_{\leq n-1}}\ar[d]\\
		\prod H^i(E_*)^{\Delta^n}\ar[r]&\prod H^i(E_*)^{\partial\Delta^n}.
	\end{tikzcd}
\end{center}
Now, we would like to have that if we take the $H^i$ on the first square we still have a homotopic product. That is not true in general. But we know something extra about this square: indeed, as $E_*\to E$ is a split hypercover, we know that $\prod E_*^{\Delta^n}\to\prod E_*^{\partial\Delta^n}$ is a split epimorphism, and in that case we do have that the square 
\begin{center}
	\begin{tikzcd}
		H^i(E_*^K)\ar[r]\ar[d]\arrow[dr,phantom, "\ulcorner^h", very near start]&H^i(E_*^{K_{\leq n-1}})\ar[d]\\
		\prod H^i(E_*^{\Delta^n})\ar[r]&\prod H^i(E_*^{\partial\Delta^n})
	\end{tikzcd}
\end{center}
is a homotopic product. By induction, we have $H^i(E_*)^{K_{\leq n-1}}\simeq H^i(E_*^{K_{\leq n-1})}$ and $\prod H^i(E_*)^{\partial\Delta^n}\simeq \prod H^i(E_*^{\partial\Delta^n})$ (because $\dim K_{\leq n-1}=\dim \partial\Delta^n=n-1$); and by definition we have that $H^i(E_*)^{\Delta^n}\simeq H^i(E_n)\simeq H^i(E_*^{\Delta^n})$. We have then the following cubic diagram, where both the front and the back square are homotopic products and three out of four front-to-back arrows are equivalences.
\begin{center}
	\begin{tikzcd}[row sep=scriptsize,column sep=scriptsize]
		& H^i(E_*^K)\arrow[dl]\arrow[rr]\arrow[dd] & & H^i(E_*^{K_{\leq n-1}})\arrow[dl, "\sim"]\arrow[dd] 
		\\H^i(E_*)^K\arrow[rr,crossing over]\arrow[dd] & & H^i(E_*)^{K_{\leq n-1}} \\
		& \prod H^i(E_*^{\Delta^n})\arrow[dl, "\sim"]\arrow[rr] &  & \prod H^i(E_*^{\partial\Delta^n})\arrow[dl, "\sim"] \\
		\prod H^i(E_*)^{\Delta^n}\arrow[rr] & & \prod H^i(E_*)^{\partial\Delta^n}\arrow[from=uu,crossing over]\\
	\end{tikzcd}
\end{center}
We have that the fourth arrow is also an equivalence, $H^i(E_*^K)\simeq H^i(E_*)^K$, and we have finished. 
\end{itemize}
\end{proof}
Now that we have that,  we can prove that if $E_*\to E$ is a split hypercover, then $\tau(E_*)\to \tau(E)$ is too. Indeed,  by Sublemma \ref{ch.2: h puissance}, we have that for all $i\leq 0$,
$$H^i(\tau(E_*)^{\partial\Delta^n})\simeq H^i(\tau(E_*))^{\partial\Delta^n}\simeq H^i(E_*)^{\partial\Delta^n}\simeq H^i(\tau(E_*^{\partial\Delta^n})) $$
and $\tau(E_*)^{\partial\Delta^n}\simeq \tau(E_*^{\partial\Delta^n})$. We then have that for all $n\in\N,$ $\tau(E)_n\to \tau(E_*)^{\partial\Delta^n}\simeq \tau (E_*^{\partial\Delta^n})$ is a split epimorphism, and $\tau(E_*)\to \tau(E)$ is a split epimorphism.
\\
\\Now, if $E_*\to E$ is a split hypercover, we have that for all $i\in\N$, $E_*[-i]\to E[-i]$ is also a split hypercover and $\tau(E_*[-i])\to \tau(E[-i])$ is too. By the connective case, we have that $\hocolim \tau(E_n[-i])\simeq \tau(E[-i])$. As this is true for all $i\in\N$, we have that $\hocolim E_n\simeq E$.
\\
\\In conclusion, if $T_*\to T$ is a hypercover constructed using Theorem \ref{Ch. 2: existence hypercovers}, we have that $\hocolim(T_i(x,y))\simeq T(x,y)$ for all $x,y\in\Ob(T)$ in $\Ch$ and we have finished. 
\\ \end{proof}

We're almost ready to prove that if $T_*\to T$ is a $\FreeS$-hypercover constructed as instructed, then $\hocolim T_i\to T$ is a weak equivalence. We have proven that the complexes of morphisms have that condition. But does it transfer well from complexes to dg-categories and vice-versa?

\begin{lema}\label{ch. 2: forget commutes} The forgetful functor $U:\dg_\mathcal{O}\to Gr(\Ch)_\mathcal{O}$ commutes with homotopy colimits over $\Delta\op$.
\end{lema}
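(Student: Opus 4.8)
The plan is to realise $U$ as the forgetful functor of a monadic adjunction and to exploit the fact that the relevant monad is built only from $\oplus$ and $\otimes$, both of which are compatible with homotopy colimits indexed by the sifted category $\Delta\op$. Write $Gr(\Ch)_{\mathcal O}\cong\prod_{(x,y)\in{\mathcal O}^2}\Ch$ with its objectwise (product) model structure, and recall from Definition~\ref{ch. 1: def free} the free--forgetful adjunction $L:Gr(\Ch)_{\mathcal O}\rightleftharpoons\dg_{\mathcal O}:U$. The monad $\mathbb T=UL$ is given on a graph $X$ by
\[
(\mathbb T X)(x,y)=\bigoplus_{m\in\N}\bigoplus_{(x_1,\ldots,x_m)\in{\mathcal O}^m}X(x,x_1)\otimes\cdots\otimes X(x_m,y),
\]
and $\dg_{\mathcal O}$ is its category of algebras. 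Two preliminary observations make $U$ easy to handle homotopically: since every object of $\dg$ is fibrant (Corollary~\ref{ch. 1:hom-cofib}) and $U$ is right Quillen, $U$ preserves all weak equivalences; and since an identity-on-objects map of $\dg_{\mathcal O}$ is a weak equivalence exactly when it is an objectwise quasi-isomorphism of hom-complexes, $U$ also reflects weak equivalences. In particular it suffices to compare the derived colimits on both sides.

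First I would check that $\mathbb T$ preserves homotopy colimits over $\Delta\op$. Coproducts commute with all colimits, so only the tensor factors require attention. The derived tensor product on $\Ch$ is a left Quillen bifunctor, hence preserves homotopy colimits in each variable separately; and because $\Delta\op$ is a sifted category, homotopy colimits over it commute with the finite tensor powers appearing above (the diagonal $\Delta\op\to\Delta\op\times\cdots\times\Delta\op$ is homotopy cofinal). Assembling these, each summand $X_\bullet(x,x_1)\otimes\cdots\otimes X_\bullet(x_m,y)$ of $\mathbb T$ applied to a simplicial graph $X_\bullet$ has its $\hocolim_{\Delta\op}$ computed factorwise, so that $\hocolim_{\Delta\op}\mathbb T X_\bullet\simeq\mathbb T(\hocolim_{\Delta\op}X_\bullet)$.

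The second step is to pass from the monad to its algebras. Given a simplicial object $T_\bullet$ in $\dg_{\mathcal O}$, I would resolve it by the monadic bar construction $\mathrm{Bar}(\mathbb T,T_\bullet)$, whose levels are free $\mathbb T$-algebras and whose realisation is objectwise weakly equivalent to $T_\bullet$. The functors $\oplus$ and $\otimes$ also preserve strict sifted colimits, so $\mathbb T$ preserves them and $U$, being the forgetful functor of a monadic adjunction, creates them; combined with the first step this shows that the strict colimit of $U\,\mathrm{Bar}(\mathbb T,T_\bullet)$ computes $\hocolim_{\Delta\op}U T_\bullet$ while simultaneously computing $U(\hocolim_{\Delta\op}T_\bullet)$. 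Since $U$ preserves the weak equivalence $\mathrm{Bar}(\mathbb T,T_\bullet)\to T_\bullet$, both descriptions agree and $U(\hocolim_{\Delta\op}T_\bullet)\simeq\hocolim_{\Delta\op}U(T_\bullet)$.

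The main obstacle is the interaction of the finite tensor powers in $\mathbb T$ with the homotopy colimit: one must verify that $\hocolim_{\Delta\op}$ genuinely commutes with these finite tensor products, which is exactly where the siftedness of $\Delta\op$ is essential, and one must keep track of cofibrancy of the resolving simplicial objects so that strict colimits of the bar resolution compute homotopy colimits on both sides. Once this compatibility is secured, the monadic bar resolution transports the complex-level computation of Lemma~\ref{ch.2: hyper complexes} to $\dg_{\mathcal O}$ with no further difficulty.
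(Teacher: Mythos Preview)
Your proposal is correct and follows essentially the same strategy as the paper, just at a different level of abstraction. The paper's proof is a one-line citation of \cite[Lemma~4.1.8.13]{LurieHA}: it equips $Gr(\Ch)_{\mathcal O}$ with the ``matrix'' monoidal product $(G\otimes G')(x,y)=\bigoplus_z G(x,z)\otimes G'(z,y)$, identifies $\dg_{\mathcal O}$ with the category of associative algebras for this product, and then invokes Lurie's general result that the forgetful functor $\Alg(A)\to A$ commutes with sifted homotopy colimits under mild hypotheses on $A$. What you have written is, in effect, a direct proof of (the relevant special case of) that lemma: the monad $\mathbb T=UL$ is built from $\oplus$ and iterated $\otimes$, siftedness of $\Delta\op$ makes these commute with the homotopy colimit, and a bar-resolution argument transfers this from free algebras to all algebras. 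So the two arguments share the same skeleton; the paper outsources the bar-resolution step to Lurie while you carry it out by hand.

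One small remark: your writeup frames things via the monad $\mathbb T$ on $Gr(\Ch)_{\mathcal O}$, whereas the paper frames them via a monoidal structure on $Gr(\Ch)_{\mathcal O}$ whose algebras are $\dg_{\mathcal O}$. These are equivalent packagings of the same adjunction, but the monoidal viewpoint makes it slightly cleaner to verify the hypotheses of Lurie's lemma (monoid axiom, left properness, cofibrant generation by cofibrations between cofibrant objects) rather than checking cofibrancy of bar resolutions directly.
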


\begin{proof}
This is a direct consequence of \cite[Lemma 4.1.8.13.]{LurieHA}, in particular of its proof. Indeed, in that result we say that if we have a combinatorial monoidal model category $A$ and a small category $C$ such that its nerve $N(C)$ is sifted, and we have on one hand that $A$'s monoidal structure is symmetric and satisfies the monoid axiom, and on the other hand that $A$ is left proper and its cofibrations are generated by cofibrations between cofibrant objects; then, the forgetful functor $\Alg(A)\to A$ commutes with homotopy colimits over $C$.
\\
\\Now, we take the monoidal structure on the graphs with fixed objects to be as follows: let $G,G'\in Gr(\Ch)_\O$ and $x,y\in\O$, we define
$$(G\otimes G')(x,y)=\oplus_z G(x,z)\otimes G'(z,y). $$
The category of dg-categories with fixed objects is the category of algebras over $Gr(\Ch)_\O$ with this monoidal structure, $\Alg (Gr(\Ch)_\O)$. As the category of graphs with this monoidal structure satisfies the above conditions and $N(\Delta)$ is sifted, we have that the forgetful functor $\dg_\O\to Gr(\Ch)_\O$ commutes with homotopy colimits and we have finished our proof.
\\\end{proof}

And now we're ready to prove our result.

\begin{prop}\label{ch. 2: free colimitant} Let $T$ be a dg-category with fixed objects, and $T_*\to T$ a $\FreeS$-hypercover in $\dg_\O$, the category of dg-categories with fixed objects $\O=\Ob(T)$. Then we have that $\hocolim T_i\simeq T$ in $\dg$.
\end{prop}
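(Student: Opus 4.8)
The plan is to reduce the statement to the level of morphism complexes, where Lemma \ref{ch.2: hyper complexes} has already done the essential work. Since every term of the hypercover $T_*$ as well as its homotopy colimit live in $\dg_\O$ with the fixed object set $\O=\Ob(T)$, the augmentation $T_*\to T$ induces a canonical comparison map $\hocolim_{\Delta\op} T_i \to T$ in $\dg_\O$, and I would check that this map is a quasi-equivalence. Because all the dg-categories involved share the same objects and the comparison map is the identity on objects, quasi-essential surjectivity is automatic; the entire content is quasi-full-faithfulness, i.e. that for every pair $x,y\in\O$ the induced map of complexes $(\hocolim_{\Delta\op} T_i)(x,y)\to T(x,y)$ is a weak equivalence in $\Ch$.

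To access the morphism complexes of the homotopy colimit, I would apply the forgetful functor $U:\dg_\O\to Gr(\Ch)_\O$. By Lemma \ref{ch. 2: forget commutes} this functor commutes with homotopy colimits over $\Delta\op$, so $U(\hocolim_{\Delta\op} T_i)\simeq \hocolim_{\Delta\op} U(T_i)$ in $Gr(\Ch)_\O$. Since $Gr(\Ch)_\O$ is, as a model category, the product $\prod_{(x,y)\in\O^2}\Ch$, homotopy colimits there are computed componentwise; evaluating at a pair $(x,y)$ therefore yields $(\hocolim_{\Delta\op} T_i)(x,y)\simeq \hocolim_{\Delta\op}(T_i(x,y))$ in $\Ch$.

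Finally I would invoke Lemma \ref{ch.2: hyper complexes}, which gives $\hocolim_{\Delta\op}(T_i(x,y))\simeq T(x,y)$ for all $x,y\in\O$. Chaining these equivalences shows the comparison map is an objectwise quasi-isomorphism on morphism complexes, hence quasi-fully faithful, hence a quasi-equivalence in $\dg_\O$ and therefore in $\dg$, which proves $\hocolim T_i\simeq T$.

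The only delicate point I anticipate is the bookkeeping about where the homotopy colimit is formed: I must be sure that forming $\hocolim$ in $\dg_\O$ and then regarding the result in $\dg$ is legitimate, and that a map of fixed-object dg-categories which is a componentwise quasi-isomorphism on hom-complexes is indeed a weak equivalence in the full category $\dg$ — where it is automatically quasi-essentially surjective, being the identity on objects. Everything else is a formal consequence of the two lemmas, so the substance of the argument has already been front-loaded into Lemma \ref{ch.2: hyper complexes} and Lemma \ref{ch. 2: forget commutes}.
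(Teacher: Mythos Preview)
Your reduction to the morphism-complex level via Lemmas \ref{ch. 2: forget commutes} and \ref{ch.2: hyper complexes} is exactly what the paper does, and your argument that the comparison map $\hocolim_{\dg_\O} T_i\to T$ is a quasi-equivalence in $\dg_\O$ is correct and complete. The issue is the passage to $\dg$, which you flag as a ``delicate point'' but then dismiss as bookkeeping. It is not: the statement asserts $\hocolim T_i\simeq T$ with the homotopy colimit formed in $\dg$, and this is how the proposition is used later (e.g.\ in Theorem \ref{ch. 2: fully faithfulness}, where $\hocolim T_i$ is compared with $\Re\Sing(T)$, an object of $\dg$ with no fixed-object structure). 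So you genuinely need that the forgetful functor $\Phi:\dg_\O\to\dg$ preserves homotopy colimits over $\Delta\op$, and nothing in your outline supplies this.

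The paper devotes roughly half of its proof to this step. It factors $\Phi$ through the forgetful functor $\Xi:\dg_\O\to \coprod_\O k/\dg$, constructs an explicit right adjoint $\Gamma$ with $\Gamma\Xi=\Id$, so that $\Xi$ is fully faithful on mapping spaces. Then for an arbitrary $T'\in\dg$ it compares the fibre sequences
\[
\Map_{\dg_\O}(X,T')\to \Map_{\coprod k/\dg}(\Xi(X),T')\to \Map_\dg(\Phi(X),T')\to \prod_\O\Map(k,T')
\]
for $X=\hocolim_{\dg_\O}T_i$ and for the levelwise diagram, and uses two-out-of-three to conclude that $\Map_\dg(\Phi(\hocolim_{\dg_\O}T_i),T')\simeq \holim\Map_\dg(\Phi(T_i),T')$. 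This is a real argument, not bookkeeping; without it your proof establishes only the weaker statement that the $\dg_\O$-colimit maps by a quasi-equivalence to $T$.
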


\begin{proof}
Let $\phi:\hocolim T_*\to T$ be the morphism from the homotopy colimit to $T$. We need to prove that this morphism is a weak equivalence in $\dg_\O$. But as we have the same objects, the quasi-essential surjectivity is automatic. We only need to prove that $\phi$ is quasi-fully faithful. By definition, that means that for all $x,y\in\O$, we need to prove that $(\hocolim T_i)(x,y)\simeq T(x,y)$. But as we know from Lemma \ref{ch. 2: forget commutes} that the forgetful functor commutes with $\hocolim$, that is equivalent to asking that for all $x,y\in\O$, we have $\hocolim(T_i(x,y))\simeq T(x,y)$. By Lemma \ref{ch.2: hyper complexes}, that is true. 
\\
\\We have that $\phi:\hocolim T_i\simeq T$ in $\dg_\O$. If we can prove that the functor $\Phi:\dg_\O\to \dg$ commutes with homotopy colimits over $\Delta\op$, we have finished the proof. Let $X=\hocolim_{\Delta\op} T_i$ in $\dg_\O$, and let us call the forgetful morphisms $\Phi:\dg_\O\to \dg$ and $\Xi:\dg_\O\to \coprod_\O k/ \dg$, where for all $X\in \dg_\O$ $\Xi(X)$ gives us a morphism from $\coprod_\O k$ to $\Phi(X)$.
\\
\\ For all $T'\in\dg$ we then have a morphism of the form
$$\Map(\Phi(X), T')\to \Map(\coprod k,T')\simeq \prod\Map(k,T'). $$
By getting the fiber of this morphism and doing the same thing with $\hocolim T_i$, we get the following diagram:
\begin{center}
	\begin{tikzcd}
		\Map(X,T')\ar[r]\ar[d,"\sim"]&\Map(\Xi(X),T')\ar[r]\ar[d, "f"]&\Map(\Phi(X),T')\ar[r]\ar[d]&\prod\Map(k,T')\ar[d,"="]\\
		\holim\Map(T_i,T')\ar[r]&\holim\Map(\Xi(T_i),T')\ar[r]&\holim\Map(\Phi(T_i),T')\ar[r]&\prod\Map(k,T').
	\end{tikzcd}
\end{center}

If we can prove that $f$ is a weak equivalence, we have finished. For that, we are going to use the far-left square of this diagram. Indeed, if the morphisms $\Map(\Xi,T')$ and $\holim \Map(\Xi,T')$ are weak equivalences, by 2-out-of-3 then $f$ will be a weak equivalence too. If we can prove that $\Xi$ is fully faithful, we will have everything we need. 
\\
\\We have an adjunction $\Xi:\dg_\O\rightleftharpoons \coprod k/\dg:\Gamma$ where the right adjoint $\Gamma$ is such that for all $(K,\{x_\alpha\}_{\alpha\in\O}, x_\alpha\in\Ob(K))\in\coprod k/\dg$, the objects of $\Gamma(K)$ are $\O$ and for every $\alpha,\beta\in\O$, $\Gamma(K)(\alpha,\beta)=K(x_\alpha, x_\beta)$. It is easy to see that $\Gamma\Xi=\Id_{\dg_\O}$. We have then that $\Xi$ is fully faithful and then that $\Map(\Xi,T')$ is a weak equivalence.
\\
\\As such, we have that 
\begin{center}
	\begin{tikzcd}
		\Map(X,T')\ar[r, "\sim"]\ar[d,"\sim"]&\Map(\Xi(X),T')\ar[d, "f"]\\
		\holim\Map(T_i,T')\ar[r,"\sim"]&\holim\Map(\Xi(T_i),T')
	\end{tikzcd}
\end{center}
and $f$ is a weak equivalence. So $\Phi$ commutes with colimits, and since $\hocolim T_i\simeq T$ in $\dg_\O$, we have $\hocolim T_i\simeq T$ in $\dg$ and we have finished this proof.
\\\end{proof}

We can now try to prove the full faithfullness of our adjoint functor $\Sing$.

\section{The functor $Sing$ is fully faithful}

Let us prove that $\Sing$ is, in fact, fully faithful. To do that, we will prove that the restriction to $\FreeS$ injects fully-faithfully, and then go on to prove that we can write every dg-category as a certain colimit of elements in $\FreeS$ and that $\Sing$ commutes with those colimits. The first part is fairly obvious.

\begin{prop}\label{ch. 2: fully faithful for free}With the construction from Theorem \ref{Ch.2: construction Sing_w}, the functor
$$\Sing^{\Free}:\Ho(\Free)\to \Ho(\Fun^\S(\Free_\S\op,\sS)) $$
is fully faithful.
\end{prop}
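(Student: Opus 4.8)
The plan is to recognise that the restriction of $\Sing$ to $\Free$ is, up to natural weak equivalence, the enriched (homotopy) Yoneda embedding of the simplicial category $\FreeS$, and then to invoke the homotopy version of the enriched Yoneda lemma proved earlier for simplicial categories. The whole content of the proposition is thus concentrated in a single identification, namely $\Sing(L_0)\simeq\underline{h}_{L_0}$ for $L_0\in\Free$; once that is established, full faithfulness follows formally from Yoneda together with the computation of $\pi_0(\FreeS)$.

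First I would trace the construction of $\Sing$ from Theorem \ref{Ch.2: construction Sing_w}. There $\Sing=j^*\circ\Sing'$, where $\Sing'$ is the factorisation through $L_W\dg$ of Dugger's singular functor $\Sing_W(X)=\Map_\dg(-,X)$, and $j^*$ is restriction of simplicial presheaves along the inclusion $j\colon\FreeS\hookrightarrow L_W\dg$. Hence for $L_0\in\Free$ and any $L\in\FreeS$ one has $\Sing(L_0)(L)\simeq\Map_\dg(L,L_0)$. On the other hand $\FreeS$ is by definition the full simplicial subcategory of $L_W\dg$ on the objects of $\Free$, so its mapping complexes are exactly these derived mapping spaces, $\Hom_{\FreeS}(L,L_0)\simeq\Map_\dg(L,L_0)$. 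Comparing the two, and using that $\underline{h}_{L_0}(L)=\Hom_{\FreeS}(L,L_0)$ as in Notation \ref{ch.1:notation h simp}, I obtain a natural weak equivalence $\Sing(L_0)\simeq\underline{h}_{L_0}$ in $\Fun^\S(\FreeS\op,\sS)$.

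With this in hand, for $L_0,L_1\in\Free$ the homotopy enriched Yoneda lemma (the last proposition of the subsection on simplicial categories) gives
$$\R\Hom_{\Fun^\S(\FreeS\op,\sS)}(\Sing(L_0),\Sing(L_1))\simeq\underline{h}_{L_1}(L_0)\simeq\Map_\dg(L_0,L_1).$$
Taking $\pi_0$ produces a natural bijection between $\Hom_{\Ho(\Fun^\S(\FreeS\op,\sS))}(\Sing(L_0),\Sing(L_1))$ and $\pi_0\Map_\dg(L_0,L_1)=\Hom_{\Ho(\dg)}(L_0,L_1)$. Finally, Proposition \ref{ch.1: simplicial loc. and pi_0} identifies $\pi_0(L_W\dg)$ with $\Ho(\dg)$; restricting to the objects of $\Free$ identifies $\pi_0(\FreeS)$ with the full subcategory $\Ho(\Free)\subseteq\Ho(\dg)$, under which the right-hand side becomes $\Hom_{\Ho(\Free)}(L_0,L_1)$. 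This is precisely the full faithfulness claimed.

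The main obstacle I anticipate is the first step: verifying that the chain of reindexings defining $\Sing$ (Dugger's construction on $\dg$, the factorisation $\Sing'$ through the simplicial localisation, and the restriction $j^*$) assembles into the representable $\underline{h}_{L_0}$ naturally in both variables, rather than merely giving an objectwise weak equivalence. Care is also needed to apply the enriched Yoneda lemma to $F=\underline{h}_{L_1}$ in a way that respects functoriality in $L_1$, so that the resulting bijection on $\pi_0$ is genuinely natural; the identification of $\pi_0(\FreeS)$ with $\Ho(\Free)$ and the remaining steps are then routine.
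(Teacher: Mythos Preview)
Your proposal is correct. Both your argument and the paper's hinge on a Yoneda-type statement, but you package it differently. The paper factors $\Sing^{\Free}$ as
\[
\Ho(\Free)\to \Ho(\sS^{\Free,W})\to \Ho(\Fun^\S(\Free_\S\op,\sS)),
\]
invoking Theorem~\ref{ch. 1:fully-faith} (full faithfulness of $\R\Sing$ into the restricted-diagram category $\sS^{C,W}$) for the first arrow and Theorem~\ref{ch. 1: equivalence for functor categories} (the Quillen equivalence $\sS^{C,W}\simeq\Fun^\S(L_WC,\sS)$) for the second. You bypass the intermediate category $\sS^{\Free,W}$ entirely: you identify $\Sing(L_0)\simeq\underline{h}_{L_0}$ directly from the construction of $\Sing$ and the definition of $\FreeS$ as a full subcategory of $L_W\dg$, and then appeal to the homotopy enriched Yoneda lemma for the simplicial category $\FreeS$. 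Your route is more elementary and sidesteps the need to argue that the target of Theorem~\ref{ch. 1: equivalence for functor categories} (which is $\Fun^\S(L_W\Free,\sS)$) agrees with $\Fun^\S(\FreeS,\sS)$ --- a point the paper glosses over despite having explicitly warned that $L_W\Free$ and $\FreeS$ do not coincide. The paper's route, on the other hand, is quicker to write once those two theorems are available as black boxes. Your care about naturality in both variables is well placed; the functoriality clause in the enriched Yoneda proposition you cite is what handles it.
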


\begin{proof}
For this, we use Theorems \ref{ch. 1: equivalence for functor categories} and \ref{ch. 1:fully-faith}, taking the model category to be $\Free_\S$ and $W$ to be the weak equivalences in the full subcategory of $\dg$ of free categories of finite type. Indeed, if we go down to the homotopy category, we can factorize $\Sing^{\Free}$ as
$$\Ho(\Free)\to \Ho(\sS^{\Free,W})\to \Ho(\Fun^\S(L_W\Free_\S\op, \sS))=\Ho(\Fun^\S(\Free_\S\op,\sS)). $$
But we know from Theorem \ref{ch. 1:fully-faith} that the first morphism here is fully faithful, and from Theorem \ref{ch. 1: equivalence for functor categories} that the second one is an equivalence. So the composition of the two is fully faithful, and we have finished our proof.
\\\end{proof}

We have finally everything needed in order to use our free hypercovers. Let us prove that we have the necessary DK-equivalences.

\begin{lema}\label{ch.2 hocolim dg-Segal}Let $T_*\to T$ be a $\FreeS$-hypercover of $T$ constructed as in Theorem \ref{Ch. 2: existence hypercovers} and Proposition \ref{ch. 2: free hyper}, with $T\in\dg$ a dg-category. Then the homotopy colimit of its image by $\Sing_k$, $\hocolim(\Sing_k(T_i))$, satisfies the dg-Segal conditions.
\end{lema}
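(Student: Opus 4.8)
The plan is to work objectwise. Since we use the projective model structure on $\Fun^\S(\FreeS\op,\sS)$, weak equivalences and fibrations are detected objectwise; evaluation at an object $L\in\FreeS$ is a left adjoint preserving weak equivalences, and hence commutes with homotopy colimits, so that $(\hocolim_i \Sing_k(T_i))(L)\simeq \hocolim_i(\Sing_k(T_i)(L))$, the homotopy colimit being taken over $\Delta\op$. By Proposition \ref{ch.2 Sing is dg-Segal} each $\Sing_k(T_i)$ is a dg-Segal space, hence satisfies the three conditions. It therefore suffices to show that the $\Delta\op$-indexed homotopy colimit preserves each condition. The two structural inputs I would lean on are that $\Delta\op$ is \emph{sifted} (its nerve is sifted, as recalled in the proof of Lemma \ref{ch. 2: forget commutes}), and that the hypercover $T_*\to T$ of Proposition \ref{ch. 2: free hyper} lives in $\dg_\O$, so all $T_i$ share the object set $\O=\Ob(T)$ and the simplicial structure maps are the identity on objects.

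Condition 2 is immediate: $\Sing_k(T_i)(\emptyset)\simeq *$ for every $i$, and the homotopy colimit over $\Delta\op$ of the constant diagram on the point is the classifying space $B(\Delta\op)$, which is contractible since $\Delta\op$ has an initial object. For condition 1, fix $L,K\in\FreeS$; for each $i$ the map $\Sing_k(T_i)(L\coprod K)\to \Sing_k(T_i)(L)\times \Sing_k(T_i)(K)$ is a weak equivalence, so it is enough that $\hocolim_i$ commute with the finite product on the right. This is precisely the defining property of a sifted homotopy colimit: over $\Delta\op$ one has $\hocolim_i(X_i\times Y_i)\simeq (\hocolim_i X_i)\times(\hocolim_i Y_i)$. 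Combining the two yields condition 1.

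Condition 3 is the main obstacle, and the reason is exactly that sifted homotopy colimits commute with finite \emph{products} but not with general homotopy \emph{pullbacks} (a fiber product is a finite limit, not a finite product), so the argument of the previous paragraph does not apply verbatim. To circumvent this I would use two special features. First, $k^c[n]$ is acyclic and cofibrant, so $0\to k^c[n]$ is a trivial cofibration; consequently $\Map(\D^c(1,n,1),T_i)\to \Map(k\coprod k,T_i)$ has contractible homotopy fibers and the bottom-left corner simplifies to $\Sing_k(T_i)(\D^c(1,n,1))\simeq \Sing_k(T_i)(k)^2$. Second, I would take homotopy fibers of the square over a basepoint $(x,y)$ of the common object base — legitimate because all four corners map compatibly to it, and this base is controlled by the fixed object set $\O$. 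Over $(x,y)$ the square of fibers becomes a square of mapping complexes $\Map_\Ch(-,T_i(x,y))$, whose homotopy-pullback property is just the statement that $\Map_\Ch(-,T_i(x,y))$ sends the homotopy pushout of $k[n]\to k^c[n]$ and $k[n]\to G(x,y)$ to a homotopy pullback, i.e. Theorem \ref{ch. 1: Map commutes with hocolim}.

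It then remains to show that $\hocolim_i$ preserves this fiberwise homotopy pullback of mapping complexes, and here the fixed-object structure is decisive: by the remark following Proposition \ref{ch. 2: free hyper} and Corollary \ref{ch. 2 split hyper}, each $T_*(x,y)\to T(x,y)$ is a \emph{split} hypercover in $\Ch$. For split hypercovers the homotopy colimit is computed by the degreewise Dold–Kan argument of Lemma \ref{ch.2: hyper complexes}, while Sublemma \ref{ch.2: h puissance}, asserting $H^i(E_*^K)\simeq H^i(E_*)^K$ for finite $K$, is precisely what ensures that the finite homotopy limit in play survives the colimit. Propagating this back through the fiber-sequence description reconstitutes the homotopy pullback for $\hocolim_i\Sing_k(T_i)$, giving condition 3. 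I expect the most delicate point to be the bookkeeping that transfers the complex-level statement to the space-level square, and in particular verifying that the choice of basepoints and the behaviour of connected components are handled uniformly in $i$ as one passes to the colimit.
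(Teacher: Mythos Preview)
Your treatment of conditions 1 and 2 matches the paper's proof essentially verbatim: both use that $\Delta\op$ is sifted to commute $\hocolim$ with finite products, and that the homotopy colimit of the constant point diagram is contractible.

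For condition 3, however, the paper takes a genuinely different route. Rather than attacking the specific pushout square directly and passing to fibers, the paper isolates an auxiliary \emph{condition 4}: that $F(L\coprod_{k\coprod k}K)\to F(L)\times_{F(k)^2}F(K)$ is a weak equivalence. It then proves a sublemma asserting that conditions 1, 2 and 4 together imply condition 3 --- the argument reduces to the two-object case, where it becomes a representability statement for $F_{(x,y)}$, and then bootstraps to general $G$ by decomposing $G=G^0\coprod_{k\coprod k}G_{x,y}$ and stacking pullback squares. Finally, condition 4 for $\hocolim\Sing(T_i)$ is verified by the same sifted-colimit argument you used for condition 1, applied to a fiber product whose base $\Sing(T_i)(k)^2$ has constant $\pi_0$ along the simplicial direction (this is where the fixed object set $\O$ enters).

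Your direct fiberwise approach via split hypercovers and Sublemma \ref{ch.2: h puissance} is viable, but the bookkeeping concern you flag in your last sentence is real: reassembling the total square from its fibers requires control over $\pi_0$ of the base along the simplicial diagram, which is precisely what the paper's condition-4 detour packages cleanly. The paper's approach buys you a reusable criterion (conditions 1, 2, 4 $\Rightarrow$ dg-Segal) and avoids invoking the split-hypercover machinery of Section \ref{hypercovers} at all in this lemma; your approach would tie the argument more tightly to the specific hypercover construction.
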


\begin{proof}
In order to prove this result, we need to prove that the homotopy colimit fulfills the three conditions of the definition of dg-Segal space. The first two are easy, and hinge on the fact that the homotopy colimit in this case commutes with finite products and all images of $\Sing$ satisfy the dg-Segal conditions.
\\
\\1. Let $L,K$ be two free dg-categories of finite type. By the first dg-Segal condition on the image of $\Sing$, 
$$\hocolim(\Sing(T_i))(K\coprod L)=\hocolim(\Sing(T_i)(K\coprod L))\simeq \hocolim(\Sing(T_i)(K)\times\Sing(T_i)(L)).$$
And as the homotopy colimits over $\Delta\op$ commute with finite products, 
$$\hocolim(\Sing(T_i))(K\coprod L)\simeq \hocolim(\Sing(T_i)(K))\times\hocolim(\Sing(T_i)(L))$$
and we have finished.
\\
\\2. The second property is even easier. Indeed,
$$\hocolim(\Sing(T_i))(\emptyset)\simeq\hocolim(\Sing(T_i)(\emptyset))\simeq\hocolim(*)\simeq *. $$
We arrive now to the third condition. For an issue of generality and also in order to lighten our notation, we will do the computations in a slightly larger context, with the help of this sublemma.

\begin{slema}Let $F\in\Fun^\S(\Free_\S^{c,op},\sS)$ be a functor that satisfies conditions 1 and 2 of the dg-Segal conditions, and also an additional condition as follows:
\begin{enumerate}
 \setcounter{enumi}{3}
	\item For all $L, K\in\Free_\S^c$, $F(L\coprod_{k\coprod k} K)\to F(L)\times_{F(k)\times F(k)} F(K)$ is a weak equivalence. In other words, $F$ transforms coproducts into products.
\end{enumerate}
Then we have that $F$ satisfies condition 3 of the dg-Segal conditions. In other words, for all $G\in Gr(\Ch)$ a graph of finite type, $x,y\in \Ob(G)$ two objects in $G$, and $\alpha\in Z^n(G(x,y))$ a cycle in $G(x,y)$, the following diagram is a homotopy pullback
	\begin{center}
		\begin{tikzcd}
			F(L(G(<\alpha>)))\ar[r]\ar[d]\arrow[dr,phantom, "\ulcorner^h", very near start]& F(L(G))\ar[d]\\
			F(\D^c(1,n,1))\ar[r]& F(\D(1,n,1)).
		\end{tikzcd}
	\end{center}	
\end{slema}

\begin{proof}
In order to make this proof easier to follow, we start with the case where $G$ has two objects and move up from there. Assume $G$ is a free dg-category with two objects. There is a morphism $\gamma: F(G)\to \Map(\O_G,\O)$, where $\O_G=\Ob(G)$ is the set of objects of $G$ and $\O=\pi_0(F(k))$. We know that $\pi_0(\Map(\O_G,\O))$ is the set of morphisms from $\O_G$ to $\O$. Then, if we take $f:\O_G\to \O$, the fiber of $\gamma$ is $F_{(f(x),f(y))}(G)$.  By conflating the two points on $G$ with the two points on $\D^c(1,s,1)$ and $\D(1,s,1)$, we can reduce the problem to asking that for all $f:\O_G\to \O$ the following diagram is a homotopy pullback
	\begin{center}
		\begin{tikzcd}
			F_{(f(x), f(y))}(G(<\alpha>))\ar[r]\ar[d]& F_{(f(x),f(y))}(G)\ar[d]\\
			F_{(f(x),f(y))}(\D^c(1,n,1))\ar[r]& F_{(f(x),f(y))}(\D(1,n,1))
		\end{tikzcd}
	\end{center}
	 But we already know that for all dg-Segal space $F$, $F_{(f(x),f(y))}(E_{x,y})\simeq \Map(E,F(f(x),f(y)))$, where $E_{x,y}$ is the free category given by two points and $E$ as the morphism of complexes from $x$ to $y$. So the question ends up being whether
	  	\begin{center}
		\begin{tikzcd}
			\Map(G<\alpha>,F(f(x),f(y)))\ar[r]\ar[d]& \Map(G,F(f(x),f(y)))\ar[d]\\
			\Map(k^c[n],F(f(x),f(y)))\ar[r]& \Map(k[n],F(f(x),f(y)))
		\end{tikzcd}
	\end{center}
	is a homotopy pullback. But this diagram is in $\sS$, which is a proper model category, meaning that if one of the morphisms of this diagram is a fibration, we have finished. And indeed, the morphism $k[n]\to k^c[n]$ is a generating cofibration of $\Ch$, which means that the morphism $\Map(k^c[n],F(f(x),f(y)))\to \Map(k[n],F(f(x),f(y)))$ is a fibration. The diagram is a homotopy pullback and we have finished. If $G$ has two objects, $F$ satisfies the third dg-Segal condition on it.
	\\
	\\For the passage from two elements to more, it is a question of noticing that condition 3 is merely a local condition: intuitively, the only change being made in it is in relation to the morphisms between $x$ and $y$. Outside of that, it changes nothing whether the original free dg-category $G$ had two objects or three thousand. Following that logic, we will decompose $G$ in two sections: one that changes and one that does not.
	\\
	\\Let $G$ be a free dg-category of finite type, and let $x,y$ be two objects in $G$. It is easy to see that we can write it as $G=G^0\coprod_{k\coprod k}G_{x,y}$, where $G_{x,y}$ is a free dg-category with two objects and $G(x,y)$ as the morphism complex between those objects; and $G^0$ is a free dg-category such that $G^0$ has the same objects as $G$ and $G^0(x',y')=G(x',y')$ if $(x',y')\neq (x,y)$ and $G^0(x,y)=0$. As $G_{x,y}$ is a free dg-category with two objects, we have that $F$ satisfies the third dg-Segal condition on $G_{x,y}$. We can then construct a tower of homotopy pullbacks of the following form:
	\begin{center}
		\begin{tikzcd}
			F(G^0)\times_{F(k)\times F(k)}	F(G_{x,y}(<\alpha>))\ar[r]\ar[d]\arrow[dr,phantom, "\ulcorner^h", very near start]& F(G^0)\times_{F(k)\times F(k)}	F(G_{x,y})\ar[d]\\
			F(G_{x,y}(<\alpha>))\ar[r]\ar[d]\arrow[dr,phantom, "\ulcorner^h", very near start]& F(G_{x,y})\ar[d]\\
			F(\D^c(1,n,1))\ar[r]& F(\D(1,n,1))
		\end{tikzcd}
	\end{center}	
and the outside square is a homotopy pullback. By condition 4, we have that 
$$F(G^0)\times_{F(k)\times F(k)}	F(G_{x,y}(<\alpha>))\simeq F(G^0\coprod_{k\coprod k}G_{x,y}<\alpha>)\simeq F(G(<\alpha>)$$ 
and 
$$F(G^0)\times_{F(k)\times F(k)}	F(G_{x,y})\simeq F(G^0\coprod_{k\coprod k}G_{x,y})\simeq F(G),$$ 
 so we have our condition 3 for all $G$ free dg-categories of finite type and we have finished our proof. 
\\\end{proof}
We now want to apply this sublemma to our homotopy colimit. We have already proven that $\hocolim(\Sing(T_i))$ satisfies conditions 1 and 2 of the dg-Segal conditions; we just need to prove that it also satisfies condition 4 of the sublemma. Let $K, L$ be two free dg-categories of finite type. In a similar way than the proof of Proposition \ref{ch.2 Sing is dg-Segal}, we have that 
$$\Sing(K\coprod_{k\coprod k} L)\simeq \Sing(K)\times_{\Sing(k)^2}\Sing(L).$$
But as long as the base of the finite product has a finite $\pi_0$, we have that the homotopy colimit commutes with it. Which means that
$$\hocolim(\Sing(T_i))(K\coprod_{k\coprod k} L)\simeq \hocolim(\Sing(T_i)(K)\times_{\Sing(k)^2}\Sing(T_i)(L))\simeq$$
$$\simeq \hocolim(\Sing(T_i)(K))\times_{\hocolim(\Sing(k))^2}\hocolim(\Sing(T_i)(L)).$$
Hence the functor $\hocolim(\Sing(T_i))$ satisfies the condition 4 of the sublemma, and in consequence it satisfies the third condition of a dg-Segal space.
\\
\\The functor $\hocolim(\Sing(T_i))$ satisfies all three dg-Segal conditions, and we have finished our proof.
\\\end{proof}

\begin{rem}We have proven here that if we have the first two dg-Segal conditions and also that $F(L\coprod_{k\coprod k} K)\to F(L)\times_{F(k)\times F(k)} F(K)$, we have a dg-Segal space; and also we proved in Proposition \ref{ch. 2: dg-Segal are Segal} that if we have a dg-Segal space, then at the very least $F(\D^n)\simeq F(\D^1)\times_{F(k)}\ldots\times_{F(k)}F(\D^1)$. This is starting to resemble strongly the definition of a Segal space. Indeed, we think it is probable that we could rewrite the definition of dg-Segal spaces to reflect way more closely that definition. We have decided not to do so because we think this definition emphasizes better the linear structure. In any case, this is not a minimal construction anyway: see Chapter 3 for more details.
\end{rem}

Now that we know that $\hocolim(\Sing(T_i))$ is a dg-Segal space, we can see whether the induced morphism $\hocolim(\Sing(T_i))\to \Sing(T)$ is a DK-equivalence in $\dgS$.

\begin{lema}\label{ch.2 hocolim DK}Let $T_*\to T$ be a $\FreeS$-hypercover of $T$ constructed as in Theorem \ref{Ch. 2: existence hypercovers} and Proposition \ref{ch. 2: free hyper}. Then the morphism $\hocolim(\Sing(T_i))\to \Sing(T)$ is a DK-equivalence.
\end{lema}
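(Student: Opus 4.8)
The plan is to verify the two defining conditions of a DK-equivalence for the augmentation-induced morphism $\phi:\hocolim(\Sing(T_i))\to\Sing(T)$. Both its source and its target are dg-Segal spaces (by Lemma \ref{ch.2 hocolim dg-Segal} and Proposition \ref{ch.2 Sing is dg-Segal}), so the notion applies. The map $\phi$ itself arises by applying $\Sing$ levelwise to the augmentation $T_*\to T$ and then passing to homotopy colimits: since $[0]$ is terminal in $\Delta$ the nerve of $\Delta\op$ is contractible, so the homotopy colimit over $\Delta\op$ of the constant diagram $\Sing(T)$ is $\Sing(T)$ itself, which furnishes the target of $\phi$.

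First I would treat essential surjectivity of $\l\phi\r:\l\hocolim(\Sing(T_i))\r\to\l\Sing(T)\r$. Because the hypercover of Proposition \ref{ch. 2: free hyper} lives in $\dg_\O$, every $T_i$ has the fixed object set $\O=\Ob(T)$ and the augmentation is the identity on objects. By Corollary \ref{ch. 1:map to iso classes } the points of $\pi_0(\Sing(T)(k))=\pi_0(\Map(k,T))$ are the isomorphism classes of $\l T\r$, each carrying a representative in $\O$; the composite $\pi_0(\Map(k,T_0))\to\pi_0((\hocolim\Sing(T_i))(k))\to\pi_0(\Map(k,T))$ is induced by the identity on $\O$ and is therefore surjective onto isomorphism classes. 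Hence $\l\phi\r$ is essentially surjective, consistently with $\hocolim T_i\simeq T$ from Proposition \ref{ch. 2: free colimitant}.

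The heart of the proof is the second condition: for $x,y\in\pi_0((\hocolim\Sing(T_i))(k))$, identified with objects of $\O$, the induced map of dg-mapping spaces $(\hocolim\Sing(T_i))_{(x,y)}\to\Sing(T)_{(x,y)}$ must be a quasi-equivalence in $\Fun(\Ch^{c,op},\sS)$. Since $\hocolim(\Sing(T_i))$ is a dg-Segal space, its dg-mapping space is representable by some $C\in\Ch$, and $\Sing(T)_{(x,y)}$ is represented by $T(x,y)$; by Yoneda the map is a quasi-equivalence exactly when the induced $\psi:C\to T(x,y)$ is a weak equivalence in $\Ch$. To identify $C$ I would compute the dg-mapping space objectwise, using that homotopy colimits in $\Fun^\S(\Free_\S\op,\sS)$ are levelwise: for $E\in\Ch^c$,
$$(\hocolim\Sing(T_i))_{(x,y)}(E)\simeq \operatorname{hofib}\Big(\hocolim_i\Sing(T_i)(E_{x,y})\to \big(\hocolim_i\Sing(T_i)(k)\big)^2\Big).$$
Localizing at the component $(x,y)$ and using that the homotopy colimit over $\Delta\op$ commutes with the homotopy fibre over a point with controlled $\pi_0$ (the fixed-object situation making it locally finite) — the very commutation exploited in Lemma \ref{ch.2 hocolim dg-Segal} — this reduces to $\hocolim_i\Sing(T_i)_{(x,y)}(E)\simeq\hocolim_i\Map(E,T_i(x,y))$, the last identification because each $T_i$ is a dg-category.

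The final and most delicate point is then commuting $\Map(E,-)$ with the homotopy colimit $\hocolim_iT_i(x,y)$, which by Lemma \ref{ch.2: hyper complexes} equals $T(x,y)$. This fails for a general simplicial diagram, but here the hypercover is split: for all $x,y$ the augmented complex $T_*(x,y)\to T(x,y)$ is a split hypercover in $\Ch$ (the remark following Proposition \ref{ch. 2: free hyper}). Exploiting the splitting, in the same spirit as Sublemma \ref{ch.2: h puissance} which computes $H^i(E_*^K)$ for split hypercovers, together with perfectness of the probing objects $k[n]$, I would show $\hocolim_i\Map(E,T_i(x,y))\simeq\Map(E,\hocolim_iT_i(x,y))\simeq\Map(E,T(x,y))$, whence $C\simeq T(x,y)$ and $\psi$ is a weak equivalence. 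I expect this commutation of the mapping space with the split realization to be the main obstacle, since it is precisely where the split (linear) structure of the hypercover, rather than a generic compactness argument, must be invoked. With both conditions established, $\phi$ is a DK-equivalence.
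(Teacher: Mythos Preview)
Your overall architecture matches the paper's: verify both DK conditions, with essential surjectivity immediate from the fixed-object construction, and the mapping-space condition reducing to showing $\hocolim_i\Map(E,T_i(x,y))\simeq\Map(E,T(x,y))$ for every cofibrant $E$. You are in fact more careful than the paper in justifying why the dg-mapping space of $\hocolim\Sing(T_i)$ is computed by $\hocolim_i\Map(E,T_i(x,y))$; the paper simply asserts this rewriting.

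Where you diverge is in the final step. You propose to commute $\Map(E,-)$ past the realization directly, invoking the split structure in the style of Sublemma \ref{ch.2: h puissance} together with perfectness of the probing complexes, and you flag this as the main obstacle. The paper takes a shorter route that sidesteps this commutation entirely: because each $T_n(x,y)\to T_*(x,y)^{\partial\Delta^n}$ is a \emph{split} epimorphism, it remains an epimorphism after applying any functor, in particular after applying $\pi_0\Map(E,-)$; and since $\Map(E,-)$ preserves limits one has $\Map(E,T_*(x,y)^{\partial\Delta^n})\simeq\Map(E,T_*(x,y))^{\partial\Delta^n}$. Hence $\Map(E,T_*(x,y))\to\Map(E,T(x,y))$ is itself a hypercover in $\sS$ in the sense of Definition \ref{ch. 2: defin hypercovers sSet}, and Proposition \ref{Ch.2: hypercovers sSet} yields the desired equivalence immediately. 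This argument requires no perfectness hypothesis on $E$ and no bespoke commutation lemma. Your route would work, but the paper's observation that a split hypercover in $\Ch$ pushes forward to a hypercover in $\sS$ under $\Map(E,-)$ is the cleaner resolution of precisely the difficulty you anticipated.
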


\begin{proof}
In order to prove this, we need to make sure this morphism satisfies both conditions of the definition. The first is easy: indeed, we have made sure by the construction of our hypercover that all terms of the hypercover have the same elements. So $\l \hocolim(\Sing(T_i))\r\to \l \Sing(T)\r$ is essentially surjective.
\\
\\As for the condition on the morphisms, we have done most of the work already. We know that for all $(x,y)\in\pi_0(\Sing(T)(k))$, $\Sing(T)_{(x,y)}(E)=\Map(E,T(x,y))$. So we can rewrite this condition as being whether
$$\hocolim(\Map(-,T_i(x,y))\simeq \Map(-,T(x,y)). $$

But we know that $T_*\to T$ is a $\FreeS$-hypercover, which in particular means that for all $x,y\in T$, $T_n(x,y)\to T_*(x,y)^{\partial\Delta^n}$ is a split epimorphism. In consequence, we have that for all $E\in\Ch$,
$$\pi_0(\Map(E,T_n(x,y)))\to \pi_0(\Map(E,T_*(x,y)^{\partial\Delta^n}))\simeq \pi_0(\Map(E,T_*(x,y))^{\partial\Delta^n}) $$
is an epimorphism. But we have seen that this is exactly the definition of a hypercover in $\sS$. So we have that $\Map(E,T_*(x,y))$ is a hypercover of $\Map(E,T(x,y))$ in the simplicial sets, and by Proposition \ref{Ch.2: hypercovers sSet} that means that for all $E$ we have $\hocolim\Map(E,T_*(x,y))\simeq \Map(E,T(x,y))$. That is, by definition, the same as saying that 
$$\hocolim(\Map(-,T_i(x,y))\to \Map(-,T(x,y)) $$
is a weak equivalence.
\\
\\The second condition is fulfilled and the morphism $\hocolim(\Sing(T_i))\to \Sing(T)$ is a DK-equivalence. 
\\\end{proof}

Finally, we have enough information to prove the full faithfullness of our functor $\Sing$.

\begin{teo}\label{ch. 2: fully faithfulness} Assuming Hypothesis \ref{ch. 2: Re-zk} to be true, for all $T\in\dg$ we have $\Re(\Sing)(T)\simeq T$, and the functor $\Sing$ is fully faithful.
\end{teo}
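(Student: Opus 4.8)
The plan is to realize an arbitrary $T\in\dg$ as a homotopy colimit of free dg-categories of finite type, transport this decomposition through $\Sing$, push it back through $\Re$, and recognize the resulting equivalence as the counit of the adjunction. First I would fix $\O=\Ob(T)$ and invoke Proposition \ref{ch. 2: free hyper} to produce a $\FreeS$-hypercover $T_*\to T$ in $\dg_\O$ with every $T_i\in\Free$; by Proposition \ref{ch. 2: free colimitant} this hypercover satisfies $\hocolim T_i\simeq T$ in $\dg$.

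Next I would analyze the image of this hypercover under $\Sing$. By Lemma \ref{ch.2 hocolim dg-Segal} the functor $\hocolim(\Sing(T_i))$ satisfies the dg-Segal conditions, and by Proposition \ref{ch.2 Sing is dg-Segal} so does $\Sing(T)$; by Lemma \ref{ch.2 hocolim DK} the canonical map $\hocolim(\Sing(T_i))\to \Sing(T)$ is a DK-equivalence. This is precisely where Hypothesis \ref{ch. 2: Re-zk} intervenes: since both source and target are dg-Segal spaces, the DK-equivalence is a weak equivalence in the complete dg-Segal model structure, hence an isomorphism in $\Ho(\dgSc)$.

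Then I would apply $\Re$. As $\Re$ is the left adjoint, its total left derived functor commutes with homotopy colimits and (being compatible with the complete dg-Segal structure) carries the equivalence above to an equivalence in $\Ho(\dg)$, yielding
$$\Re(\Sing(T))\simeq \Re\big(\hocolim \Sing(T_i)\big)\simeq \hocolim \Re(\Sing(T_i)).$$
Now every $T_i$ lies in $\Free$, where Proposition \ref{ch. 2: fully faithful for free} gives that $\Sing$ is fully faithful; equivalently the counit $\Re\Sing(T_i)\to T_i$ is an isomorphism in $\Ho(\dg)$. Combining, $\hocolim \Re(\Sing(T_i))\simeq \hocolim T_i\simeq T$, so $\Re(\Sing(T))\simeq T$. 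Finally, checking that this composite is (homotopic to) the counit $\varepsilon_T\colon\Re\Sing(T)\to T$, naturally in $T$, shows that $\varepsilon$ is a natural isomorphism, and full faithfulness of $\Sing$ then follows from the standard fact that the counit of an adjunction is invertible exactly when the right adjoint is fully faithful.

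The hard part will be twofold. First, I must justify that $\Re$ may legitimately be applied to the complete-dg-Segal weak equivalence $\hocolim(\Sing(T_i))\to \Sing(T)$ and still return a weak equivalence in $\dg$: this needs $\Re$ to be compatible with the localized model structure, not merely with the projective one from which it was originally constructed in Theorem \ref{Ch.2: construction Sing_w}. Second, I must control the naturality and coherence of the chain of equivalences carefully enough to identify it with the counit $\varepsilon_T$, since passing from a mere objectwise equivalence $\Re\Sing(T)\simeq T$ to genuine full faithfulness of $\Sing$ requires the invertibility of $\varepsilon$ itself. Everything else reduces to bookkeeping with the hypercover results already established in Section \ref{hypercovers}.
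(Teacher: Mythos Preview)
Your proposal is correct and follows essentially the same route as the paper: hypercover $T_*\to T$ via Proposition~\ref{ch. 2: free hyper}, use Lemmas~\ref{ch.2 hocolim dg-Segal} and~\ref{ch.2 hocolim DK} together with Hypothesis~\ref{ch. 2: Re-zk} to get $\hocolim\Sing(T_i)\simeq\Sing(T)$, push through $\Re$ using that it is a left adjoint and Proposition~\ref{ch. 2: fully faithful for free}, and finish with Proposition~\ref{ch. 2: free colimitant}. The paper presents the chain of equivalences via a pair of commuting triangles and two applications of 2-out-of-3 rather than as a direct string, but the content is identical; it is also no more explicit than you are about the two ``hard parts'' you flag, so your caution there is well placed but not a divergence from the paper's argument.
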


\begin{proof}
Let $T$ be a dg-category. We construct, with the aforementioned methods, a $\FreeS$-hypercover of $T$, $T_i\to T$. By Lemma \ref{ch.2 hocolim DK}, we know that the image by $\Sing$ of this morphism is a DK-equivalence. We then have the following diagram
	\begin{center}
		\begin{tikzcd}
			\Re(\hocolim(\Sing(T_i))\ar[rr,"f"]\ar[d,"g"]&& \Re(\Sing(T))\\
			\hocolim(\Re(\Sing(T_i)))\ar[d,"\phi"]&& \\
			\hocolim(T_i)\ar[uurr, "\zeta"]& &
		\end{tikzcd}
	\end{center}	
Assuming Hypothesis \ref{ch. 2: Re-zk} to be true, we know that the morphism $\hocolim(\Sing(T_i))\to \Sing(T)$ is a weak equivalence for the complete dg-Segal model structure. So $f$ is a weak equivalence on the homotopy categories. On the other hand, because $\Re$ is a left hand adjoint, we know we can commute with the homotopy colimit, so $g$ is also a weak equivalence. Lastly, we have proven in Proposition \ref{ch. 2: fully faithful for free} that $\Sing$ is fully faithful over free dg-categories, and as all $T_i$ are free dg-categories by construction, we have that $\Re(\Sing(T_i))\simeq T_i$ for all $T_i$ and the morphism $\phi$ is also a weak equivalence. By the two out of three condition on model categories, that means that the morphism $\zeta$ is also a weak equivalence.
\\
\\That gives us the following diagram:
	\begin{center}
		\begin{tikzcd}
			\hocolim(T_i)\ar[r,"\zeta"]\ar[d,"~"]&\Re(\Sing(T_i))\ar[dl]\\
			T
		\end{tikzcd}
	\end{center}	
The vertical morphism is a weak equivalence by Proposition \ref{ch. 2: free colimitant}, and $\zeta$ is a weak equivalence too. By the two out of three condition, we have that $\Re(\Sing(T))\to T$ is a weak equivalence. The functor $\Sing$ is fully faithful and we have finished out proof.
\\\end{proof}

We finally know the functor is fully faithful! We can now go back to proving it is essentially surjective.

\section{The functor $Sing$ is essentially surjective}

All that is left for us to do is to prove that every functor that satisfies the dg-Segal conditions is isomorphic to an object of the form $\Sing(T)$. In order to do that, we are going to use hypercovers again: for all functor $F$ that satisfies the dg-Segal conditions, we will construct a hypercover of a dg-category $T$ whose image by the $Sing$ functor is a hypercover of $F$. So first we need to define what a hypercover of $F$ is. 

\begin{defin}Let $F, G\in\dgS$. We say that a morphism $f:F\to G$ is a \textbf{dg-Segal epimorphism} if we have the following two conditions:
\begin{enumerate}
	\item The morphism $f$ is an isomorphism on the objects, $\pi_0(F(k))\to \pi_0(G(k))$.
	\item For all $x,y,\in\pi_0(F(k))$, the induced morphism $F_{(x,y)}\to G_{(x,y)}$ is a split epimorphism.
\end{enumerate}
\end{defin}

Now that we have our definition of what epimorphism we want, we can define our hypercover. It isn't complicated: we will essentially use the same definition we used last time, utilizing dg-Segal epimorphisms instead of $M_0$-epimorphisms.

\begin{defin}Let $F\in\dgS$ a functor that satisfies the dg-Segal conditions, and $F_*\to F$ an augmented simplicial object in $\dgS$. We say that $F_*$ is \textbf{a dg-Segal-hypercover of $F$} if for all $n\in\N$ the functor 
$$F_n\to F_*^{\partial\Delta^n}$$
is a dg-Segal epimorphism. In other words, using Proposition \ref{ch.1: computations_exp}, $F_*\to F$ is a dg-Segal-hypercover if 
$$F_0\to F $$
is a dg-Segal epimorphism and for all $n\geq 1$
$$F_n\to (\R\cosk_{n-1}\sk_{n-1}F)_{n} $$
is a dg-Segal epimorphism.
\end{defin}

Now that we have our definition of hypercover, our objective will be to construct for all $F$ a hypercover made of functors of the form $\Sing(T_i)$ with $T_i$ a free dg-category. Following exactly the proof of Theorem \ref{Ch. 2: existence hypercovers}, this would be the point where we would prove that for every $F\in\dgS$, there exists a free dg-category $T$ such that $\Sing(T)\to F$ is a dg-Segal epimorphism. Unluckily for us, while that result is undoubtedly true, utilizing it would later make the objects in our hypercover explode, and we do not want that. So we are going to fix the objects first.

\begin{nota}We denote the functor $\Sing(k)\in\dgS$ by $\underline{k}$. 
\end{nota}

\begin{defin}Let $\O$ be a set. We define \textbf{the category of dg-Segal spaces with fixed objects over $\O$} to be the full subcategory of $\coprod_\O\underline{k}/\dgS$ of $F$ dg-Segal spaces such that the morphism $\O\to \pi_0(F(k))$ is an isomorphism, and we denote it by $\dgS_\O$.
\end{defin}

\begin{defin}Let $\Phi:\dgS_\O\to \dgS$ be the forgetful functor. We define a dg-Segal epimorphism on $\dgS_\O$ to be a morphism $f$ in $\dgS_\O$ such that $\Phi(f)$ is a dg-Segal epimorphism.
\end{defin}

\begin{rem}It is easy to see that the first condition of the dg-Segal epimorphism is always true in $\dgS_\O$. Consequently, we won't have to check that condition as long as we are working on fixed objects.
\end{rem}

\begin{lema}\label{ch. 2 existence dg-epi} Let $F\in\dgS$ be a functor that satisfies the Segal conditions. We fix a set $\O=\pi_0(F(k))$. There exists a dg-category with fixed objects $T\in\dg_\O$ such that $\Sing(T)\to F$ is a dg-Segal epimorphism in $\dgS_\O$. 
\end{lema}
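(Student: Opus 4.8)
The plan is to take $T$ to be a suitable free dg-category and to build the comparison morphism $\Sing(T)\to F$ by hand, then check the two conditions of a dg-Segal epimorphism. Since we work inside $\dgS_\O$, the first condition (being an isomorphism on $\pi_0(F(k))$) holds automatically by the preceding remark, so the whole content is the second condition: for every $x,y\in\O=\pi_0(F(k))$ the induced map of dg-mapping spaces $\Sing(T)_{(x,y)}\to F_{(x,y)}$ must be a split epimorphism in $\Fun(\Ch^{c,op},\sS)$. Recall that $F_{(x,y)}$ is representable, $F_{(x,y)}(-)\simeq\Map(-,F(x,y))$ for a complex $F(x,y)\in\Ch$, while for an image of $\Sing$ one has $\Sing(T)_{(x,y)}(E)=\Map(E,T(x,y))$. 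Hence, by the homotopy Yoneda lemma, condition (2) is equivalent to asking that the underlying comparison $r_{x,y}\colon T(x,y)\to F(x,y)$ admit a section in $\Ho(\Ch)$, i.e. that $F(x,y)$ be a homotopy retract of $T(x,y)$.

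This dictates the choice of $T$, by analogy with the morphism $LU(X)\to X$ from Proposition \ref{ch. 2: free hyper}: I would let $G\in Gr(\Ch)_\O$ be the graph with vertices $\O$ and $G(x,y)=Q F(x,y)$ a cofibrant replacement of the mapping complex, and set $T=L(G)\in\dg_\O$. By Definition \ref{ch. 1: def free} the complex $L(G)(x,y)$ splits as the direct sum of $G(x,y)$ (the length-one paths) and the higher composites $\bigoplus_{m\geq 2}\bigoplus_{\vec{x}}G(x,x_1)\otimes\cdots\otimes G(x_m,y)$; in particular the equivalence $F(x,y)\simeq G(x,y)$ realises $F(x,y)$ as a homotopy retract of $T(x,y)$, with section given by the inclusion of the length-one summand. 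Thus the target complexes are already correct, and what remains is to produce one coherent morphism $\Sing(T)\to F$ whose effect on dg-mapping spaces is exactly the projection onto that summand.

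To construct the morphism I would use that $\Sing$ is fully faithful on free dg-categories of finite type (Proposition \ref{ch. 2: fully faithful for free}), where it agrees with the representable functor, together with the fact that every object is a homotopy colimit of representables and with the dg-Segal conditions on $F$. Concretely, writing $L(G)$ as a filtered colimit of the free dg-categories $L(G_\lambda)$ on its finite-type subgraphs $G_\lambda$, a map $\Sing(L(G))\to F$ amounts to a compatible family of points of $F(L(G_\lambda))$; and the dg-Segal conditions let me reconstruct each $F(L(G_\lambda))$ from the mapping complexes $F(x,y)$, so that the canonical identifications $G_\lambda(x,y)\simeq F(x,y)$ assemble into the required family. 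On dg-mapping spaces this morphism restricts, on length-one generators, to the equivalence $G(x,y)\simeq F(x,y)$, which is precisely the retraction $r_{x,y}$ splitting the inclusion above; the higher-length summands contribute only to higher composites and do not obstruct the splitting. Checking condition (2) then reduces, via representability, to this retraction being a split epimorphism, which it is by construction.

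The main obstacle is the construction of the single coherent morphism $\Sing(T)\to F$: because composition in $F$ is only defined up to homotopy (through the associated Segal space $j^*(F)$), one cannot simply send a path of generators to a strict composite, and one must instead package the generator data into a simplicially coherent natural transformation. This is exactly the point where the up-to-homotopy nature of $F$ bites; but for a split epimorphism it suffices to control the length-one part, so the coherence only needs to be handled at the level of the filtered colimit of representables and the Segal reconstruction of $F$ on free objects, while the higher coherences leave the splitting on each $F_{(x,y)}$ unaffected.
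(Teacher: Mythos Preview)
Your approach is essentially the same as the paper's: you take $T=L(G)$ with $G(x,y)$ the representing complex of $F_{(x,y)}$, and you verify the split-epimorphism condition on dg-mapping spaces via the inclusion of the length-one summand $G(x,y)\hookrightarrow L(G)(x,y)$, exactly as the paper does. The only difference is one of emphasis: you devote most of your energy to constructing the comparison morphism $\Sing(T)\to F$ coherently and flag it as the main obstacle, whereas the paper's proof simply takes this morphism as given and spends all of its (brief) argument on the splitting; your extra care about coherence is not misplaced, but it is not something the paper addresses.
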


\begin{proof}

Most of the work for the construction of this dg-category has already been done, and we only have to put it together. We define a graph $G$ as follows:
\begin{itemize}
	\item $\Ob(G)=\O$.
	\item For all $x,y\in\O$, we have that $G(x,y)=F(x,y)$, the representing object of $F_{(x,y)}$.
\end{itemize}

We define then $T=L(G)$ as being the free category constructed from $G$.
\\
\\Let us now prove that the morphism $\Sing(T)\to F$ is a dg-Segal epimorphism in $\dgS_\O$, or equivalently, that its projection on $\dgS$ is a dg-Segal epimorphism.
\\
\\By the construction of $T=L(G)$, we have that for all $x,y\in\O$, 
$$L(G)(x,y)=\bigoplus_{m\in\N}\bigoplus_{(x_1,x_2, \ldots, x_m)\in\O^m}(G(x,x_1)\otimes\ldots \otimes G(x_m,y)), $$
and in particular, $G(x,y)=F(x,y)$ is a factor in $T(x,y)$. That means that for all $E\in\Ch$ there exists an inclusion $\Map(E,F(x,y))=F_{(x,y)}(E)\to\Map(E,T(x,y))$. It is easy to see, using the definition of homotopy fiber, that for all dg-category $T'$, $\Sing(T')_{(x,y)}=\Map(-,T'(x,y))$. Putting it all together, we have then that the morphism $\Sing(T)_{(x,y)}=\Map(-,T(x,y))\to F_{(x,y)}=\Map(-, F(x,y))$ is a split epimorphism and we have proven our result.
\\
\\The morphism $\Sing(T)\to F$ is a dg-Segal epimorphism and we have finished.
\\\end{proof}

Let us tackle now the hypercover result. 

\begin{teo}\label{ch. 2 dg-Segal hypercover}Let $F\in\dgS$ be a functor that satisfies the dg-Segal conditions and let $\O=\pi_0(F(k))$. Then there exists a simplicial object $T_*$ in $\dg_\O$ such that $F_*=\Sing(T_*)$ and a morphism $F_*\to F$ such that $F_*\to F$ is a dg-Segal hypercover in $\dgS_\O$.  
\end{teo}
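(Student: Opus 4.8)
The plan is to reproduce the level-by-level construction of Theorem \ref{Ch. 2: existence hypercovers}, but to carry it out inside $\dg_\O$ and then read off the hypercover condition after applying $\Sing$. The dictionary I would set up first is that, for $T\in\dg_\O$ and $x,y\in\O$, one has $\Sing(T)_{(x,y)}(E)=\Map(E,T(x,y))$, so that by the Yoneda lemma a morphism $\Sing(T)\to\Sing(T')$ is a dg-Segal epimorphism exactly when $T(x,y)\to T'(x,y)$ is a split epimorphism in $\Ch$ for every $x,y$. In particular, the counit $LU(S)\to S$ of the free--forgetful adjunction $L\dashv U$ of Definition \ref{ch. 1: def free} is always sent by $\Sing$ to a dg-Segal epimorphism, since $U(S)(x,y)$ occurs as the length-one summand of $LU(S)(x,y)$, exactly as in the third bullet of the proof of Proposition \ref{ch. 2: free hyper}. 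The role of the existence of epimorphisms onto arbitrary objects is then played by Lemma \ref{ch. 2 existence dg-epi}, and the role of $M_0$ by the free dg-categories over $\O$.

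The key compatibility I would establish next is that $\Sing$ commutes with matching objects. Since $\Sing$ is the right adjoint in the derived adjunction of Theorem \ref{Ch.2: construction Sing_w}, it preserves homotopy limits; and by Proposition \ref{ch.1: computations_exp} the matching object $F_*^{\partial\Delta^n}\simeq(\R\cosk_{n-1}\sk_{n-1}F_*)_n$ is such a homotopy limit, so with $F_*=\Sing(T_*)$ one gets $\Sing(T_*)^{\partial\Delta^n}\simeq\Sing\bigl(T_*^{\partial\Delta^n}\bigr)$, the right-hand matching object being computed in $\dg_\O$. Moreover, since the three dg-Segal conditions are preserved under homotopy limits of functors, the class of dg-Segal spaces is closed under homotopy limits, so each matching object $\Sing(T_*^{\partial\Delta^n})$ is again a dg-Segal space and Lemma \ref{ch. 2 existence dg-epi} (equivalently the $LU$-counit) keeps applying at every stage. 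Combining these observations, the assertion that $\Sing(T_*)\to F$ is a dg-Segal hypercover reduces to the assertion that $T_n(x,y)\to T_*(x,y)^{\partial\Delta^n}$ is a split epimorphism in $\Ch$ for all $n$ and all $x,y$; that is, each $T_*(x,y)$ is a split hypercover in the sense preceding Corollary \ref{ch. 2 split hyper}.

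With this dictionary in hand, the induction runs as in Theorem \ref{Ch. 2: existence hypercovers}, carried out inside $\dg_\O$. For the base case I take $T_0=L(G)$ with $G(x,y)=F(x,y)$ the representing complex of $F_{(x,y)}$, so that $\Sing(T_0)\to F$ is a dg-Segal epimorphism by Lemma \ref{ch. 2 existence dg-epi}. For the inductive step, given an $n$-truncated simplicial object $T_*$ of free dg-categories over $\O$ whose $\Sing$-image is an $n$-truncated dg-Segal hypercover, I form $T'_*=\sk_{n+1}(\cosk_n T_*)$, apply the counit $LU(S)\to S$ to its matching object $S=(\cosk_n T_*)_{n+1}$ to obtain a free dg-category mapping onto it by a split epimorphism on every mapping complex, and set $T_{n+1}=LU(S)\coprod_{i\le n}T_i$, which is again free over $\O$ because such categories are closed under finite coproducts (Proposition \ref{ch. 2: free hyper}). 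The face and degeneracy maps and the augmentation are defined exactly as in the cited proof, maps out of the free object $T_{n+1}$ being determined at the level of graphs through $L\dashv U$, and the split-epimorphism property at level $n+1$ is verified by the same factorization-and-surjectivity argument, now read on mapping complexes in $\Ch$, where every object is fibrant by Corollary \ref{ch. 1:hom-cofib}.

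The step I expect to demand the most care is the interface between $\dg_\O$ and $\dgS_\O$: verifying rigorously that $\Sing$ carries the matching objects computed in $\dg_\O$ to those computed in $\dgS_\O$ up to weak equivalence, that these matching objects remain dg-Segal spaces so that the inductive hypothesis and Lemma \ref{ch. 2 existence dg-epi} continue to apply, and that the relative coproduct introduced for the degeneracies does not spoil the split-epimorphism condition. Once these compatibilities are pinned down, everything else is a faithful transcription of the proof of Theorem \ref{Ch. 2: existence hypercovers}, and setting $F_*=\Sing(T_*)$ yields the desired dg-Segal hypercover $F_*\to F$ in $\dgS_\O$.
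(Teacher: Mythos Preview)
Your proposal is correct and follows the same inductive scheme as the paper, but it takes a genuinely different route at the technical level. The paper works on the $\dgS_\O$ side throughout: at the inductive step it forms $V_*=\sk_{n+1}(\cosk_n\Sing(T_*))$ \emph{in $\dgS_\O$}, applies Lemma~\ref{ch. 2 existence dg-epi} to the dg-Segal space $V_{n+1}$ to produce a free $A\in\dg_\O$ with $\Sing(A)\to V_{n+1}$ a dg-Segal epimorphism, and only at the very end invokes the full faithfulness of $\Sing$ (Theorem~\ref{ch. 2: fully faithfulness}) to lift the simplicial structure of $F_*$ back to a simplicial object $T_*$ in $\dg_\O$. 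You instead carry out the entire induction inside $\dg_\O$: you form the matching object $S=(\cosk_nT_*)_{n+1}$ there, hit it with the counit $LU(S)\to S$ to get split epimorphisms on all hom-complexes, and rely on the compatibility $\Sing(T_*^{\partial\Delta^n})\simeq\Sing(T_*)^{\partial\Delta^n}$ to read off the dg-Segal hypercover condition after the fact.

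What this buys you is that the face and degeneracy maps of $T_*$ are constructed directly in $\dg_\O$, so you never need full faithfulness of $\Sing$ and hence do not depend on Hypothesis~\ref{ch. 2: Re-zk} at this step; that is a genuine improvement over the paper's argument. The cost is the extra verification you flag at the end: that $\Sing$, as the right adjoint in the derived adjunction of Theorem~\ref{Ch.2: construction Sing_w}, takes the derived matching object in $\dg_\O$ to the derived matching object in $\dgS_\O$, and that the latter remains a dg-Segal space. You sketch this via preservation of homotopy limits, which is the right idea, but it is precisely the point to nail down carefully (in particular, that limits in $\dg_\O$ are computed on hom-complexes, so that $S(x,y)\simeq T_*(x,y)^{\partial\Delta^n}$ and your split-epimorphism criterion really matches the dg-Segal epimorphism condition).
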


\begin{proof}
Again, this construction is almost identical to that of Theorem \ref{Ch. 2: existence hypercovers}, and as such we won't be going into much detail. We will construct our hypercover by induction, by proving that for all $n\in\N$ there exists an $n$-truncated dg-Segal hypercover with fixed objects of $F$ where every level is in the image of $\Sing$. 
\begin{itemize}
	\item $n=0$ is true by Lemma \ref{ch. 2 existence dg-epi}. There exists a dg-category with fixed objects $T\in\dg_\O$ such that $\Sing(T)\to F$ is a dg-Segal epimorphism, and that creates a 0-truncated dg-Segal hypercover.
	\item $n\in\N$. By induction hypothesis there exists an $n$-truncated dg-Segal hypercover of $F$, named $\Sing(T_*)\to F$. Let us construct an $(n+1)$-truncated hypercover of $F$, that we will call $\Sing(T_*)\to F$ too. As the construction doesn't change the first $n$ terms, there is no ambiguity in the notation. 
\\
\\We define $V_*=\sk_{n+1}(\cosk_n \Sing(T_*))$. This simplicial set is $(n+1)$-truncated, but the term $n+1$ is not necessarily in the image of $\Sing$. By Lemma \ref{ch. 2 existence dg-epi} again, there exists a free dg-category $A$ with objects $\O$ and a morphism $\Sing(A)\to V_{n+1}$ that is a dg-Segal epimorphism. We define the $(n+1)$-truncated dg-Segal hypercover to be $\Sing(T_i)$ for all $i\leq n$ and 
$$\Sing(T_{n+1})=\Sing(A)\coprod \Sing(T_i)=\Sing(A\coprod T_i) $$
for $n+1$. This is possible because $\Sing$ commutes with finite coproducts.
\\
\\With the same morphisms as in Theorem \ref{Ch. 2: existence hypercovers}, we have a simplicial object in $\dgS_\O$. We now just have to prove that for all $i\leq n+1$, $\Sing(T_i)\to \Sing(T_*)^{\partial\Delta^i}$ is a dg-Segal epimorphism, which by construction gets instantly reduced to proving that 
$$ \Sing(T_{n+1})=\Sing(A)\coprod \Sing(T_i)\to V_{n+1}$$
is a dg-Segal epimorphism in $\dgS_\O$, or equivalently, that for all $x,y\in\O$, $\Sing(T_{n+1})_{(x,y)}=\Map(-, T_{n+1}(x,y))\to V_{n+1\\ (x,y)}$ is a split epimorphism.
\\
\\
\end{itemize}
We have created $F_*\to F$ a dg-Segal hypercover with fixed objects such that for all $n\in\N$, there exists a free dg-category of finite type $T_n$ such that $\Sing(T_n)=F_n$. Now, we have proven in Theorem \ref{ch. 2: fully faithfulness} that $\Sing$ is a fully faithful functor. In consequence, every morphism in the simplicial object $F_*$ comes from a morphism in $\dg_\O$, and there exists a simplicial object $T_*\in\dg_\O$ such that $\Sing(T_*)=F_*$. We have finished our proof.
\\\end{proof}

Now that we have constructed our hypercover $F_*\to F$, we are on the final stretch of the proof. Indeed, the last thing we need is to prove that for all $F\in\dgS$ there exists a dg-category $T$ such that $F$ is DK-equivalent to $\Sing(T)$, and we have our perfect candidate to do so. 

\begin{nota}Let $F\in\dgS$ be a functor satisfying the dg-Segal conditions. Let $\Sing(T_*)=F_*\to F$ be a dg-Segal hypercover with fixed objects as constructed in Theorem \ref{ch. 2 dg-Segal hypercover}. We call $T$ the homotopy colimit of the simplicial object $T_*$. In other words, we define
$$T=\hocolim(T_i). $$
\end{nota}

We need now to prove that $\Sing(T)\to F$ is a DK-equivalence. In order to do that, we will use two DK-equivalences that are easier to prove: $\hocolim(\Sing(T_i))\to \Sing(\hocolim(T_i))=\Sing(T)$ and $\hocolim(\Sing(T_i))\to F$.

\begin{lema}\label{ch. 2 dg-Segal to complex hyper}Let $F_*\to F$ be a dg-Segal hypercover in $\dgS_\O$ with $\O=\pi_0(F(k))$ constructed as before. Then, for all $x,y\in\O$, the augmented simplicial complex $T_*(x,y)\to F(x,y)$ is a split hypercover in $\Ch$.
\end{lema}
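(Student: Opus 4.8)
The plan is to unwind both notions of ``hypercover'' so that the split-hypercover condition on $T_*(x,y)\to F(x,y)$ in $\Ch$ becomes exactly the dg-Segal epimorphism condition defining the hypercover $F_*\to F$, read through the dg-mapping space functor. Recall that $T_*(x,y)\to F(x,y)$ is a split hypercover in $\Ch$ precisely when, for every $n\in\N$, the map $T_n(x,y)\to T_*(x,y)^{\partial\Delta^n}$ is a split epimorphism; the augmentation is the case $n=0$, since $\partial\Delta^0=\emptyset$ and $U_*^{\emptyset}\simeq U$ by Proposition \ref{ch.1: computations_exp}. On the other side, $F_*\to F$ being a dg-Segal hypercover means that for each $n$ the map $F_n\to F_*^{\partial\Delta^n}$ is a dg-Segal epimorphism; as the objects are fixed in $\dgS_\O$ the first condition is automatic, so this says that for all $x,y\in\O$ the induced map of dg-mapping spaces $(F_n)_{(x,y)}\to (F_*^{\partial\Delta^n})_{(x,y)}$ is a split epimorphism in $\Fun(\Ch^{c,op},\sS)$. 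First I would fix $x,y\in\O$ and $n\in\N$ and reduce the statement to comparing these two conditions.

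The key step is to show that the dg-mapping space functor commutes with the exponential $(-)^{\partial\Delta^n}$, namely $(F_*^{\partial\Delta^n})_{(x,y)}\simeq (F_{*,(x,y)})^{\partial\Delta^n}$, where $F_{*,(x,y)}$ denotes the simplicial object $m\mapsto (F_m)_{(x,y)}$. Both sides are homotopy limits: the exponential $(-)^{\partial\Delta^n}$ is a finite homotopy limit over $\Delta$ via the $\R\cosk$-description of Proposition \ref{ch.1: computations_exp}, while $(-)_{(x,y)}$ is defined as a homotopy fiber and hence is also a homotopy limit. Evaluation of a simplicial functor in $\Fun^\S(\Free_\S\op,\sS)$ at a free dg-category preserves the simplicial cotensor, so $F_*^{\partial\Delta^n}(E_{x,y})\simeq (F_*(E_{x,y}))^{\partial\Delta^n}$ and likewise for $F_*(k)$; since we work in $\dgS_\O$ the base points used to form the homotopy fiber are constant along the simplicial direction, so forming the homotopy fiber commutes with $(-)^{\partial\Delta^n}$. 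This yields the desired identification.

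Finally, since $F_m=\Sing(T_m)$, the dg-mapping space $(F_m)_{(x,y)}$ is representable by $T_m(x,y)$, so $F_{*,(x,y)}\simeq \Map(-,T_*(x,y))$; as $\Map(E,-)$ preserves homotopy limits, $(F_{*,(x,y)})^{\partial\Delta^n}\simeq \Map(-,T_*(x,y)^{\partial\Delta^n})$. Plugging this into the dg-Segal epimorphism condition, the map $\Map(-,T_n(x,y))\to \Map(-,T_*(x,y)^{\partial\Delta^n})$ is a split epimorphism of representable functors; by the Yoneda lemma, together with uniqueness of representing objects up to weak equivalence, this comes from a split epimorphism $T_n(x,y)\to T_*(x,y)^{\partial\Delta^n}$ in $\Ch$. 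The case $n=0$ identifies the augmentation $T_*(x,y)\to F(x,y)$, because $F$ is the representing target of $F_{(x,y)}$. As this holds for every $n\in\N$ and every $x,y\in\O$, the augmented simplicial complex $T_*(x,y)\to F(x,y)$ is a split hypercover.

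The main obstacle is the key step: carefully justifying that forming the dg-mapping space, a homotopy fiber over a chosen pair of points, commutes with the exponential $(-)^{\partial\Delta^n}$. This relies on both constructions being homotopy limits and on passing to $\dgS_\O$ so that the base points stay constant in the simplicial direction, so that no compatibility obstruction arises; the representability of the dg-mapping spaces then lets the Yoneda lemma transport the split epimorphism from the mapping spaces down to the representing complexes.
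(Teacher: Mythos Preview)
Your proposal is correct and is precisely the unwinding of definitions that the paper intends: the paper's own proof is the single sentence ``This follows from the definition of dg-Segal hypercover.'' You have supplied the details behind that sentence, in particular the commutation of the dg-mapping space functor $(-)_{(x,y)}$ with the exponential $(-)^{\partial\Delta^n}$ and the Yoneda transfer from representable functors to their representing complexes; these are exactly the points one needs to check but which the paper leaves implicit.
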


\begin{proof}
This follows from the definition of dg-Segal hypercover.
\\\end{proof}

\begin{prop}\label{ch.2: fixed objects dg-Segal}Let $F_*\to F$ be a dg-Segal hypercover constructed as in Theorem \ref{ch. 2 dg-Segal hypercover}. Then the homotopy colimit of $F_*$ is DK-equivalent to $F$ in $\dgS_\O$.
\end{prop}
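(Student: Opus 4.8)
The plan is to verify directly that the induced map $\hocolim(F_*)=\hocolim(\Sing(T_i))\to F$ satisfies the two conditions in the definition of a DK-equivalence, following the same pattern as Lemma \ref{ch.2 hocolim DK}. First I would record that $\hocolim(\Sing(T_i))$ is itself a dg-Segal space by the argument of Lemma \ref{ch.2 hocolim dg-Segal}: each $\Sing(T_i)$ is a dg-Segal space by Proposition \ref{ch.2 Sing is dg-Segal}, and homotopy colimits over $\Delta\op$ commute with the finite products and pullbacks appearing in the dg-Segal conditions, so the statement makes sense inside $\dgS_\O$. The first DK-condition, essential surjectivity of $\l\hocolim(F_*)\r\to\l F\r$, is then automatic: we are working in $\dgS_\O$, so both functors have $\O$ as set of objects under the canonical identification $\O\simeq\pi_0(F(k))$, and the map is the identity on $\O$.

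For the second condition I must show that for all $x,y\in\O$ the induced map of dg-mapping spaces $(\hocolim(\Sing(T_i)))_{(x,y)}\to F_{(x,y)}$ is a quasi-equivalence of functors in $\Fun(\Ch^{c,op},\sS)$, equivalently a weak equivalence of simplicial sets after evaluating at every $E\in\Ch^c$. The key inputs are representability and the splitting of the hypercover. Since each $T_i$ is a free dg-category, the remark following the representability proposition gives $\Sing(T_i)_{(x,y)}(E)=\Map(E,T_i(x,y))$, and once the homotopy colimit is commuted past the homotopy-fiber construction defining the dg-mapping space one obtains $(\hocolim(\Sing(T_i)))_{(x,y)}(E)\simeq\hocolim_{\Delta\op}\Map(E,T_i(x,y))$. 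On the other side, $F_{(x,y)}(E)\simeq\Map(E,F(x,y))$ by representability of the dg-mapping space of a dg-Segal space. By Lemma \ref{ch. 2 dg-Segal to complex hyper}, $T_*(x,y)\to F(x,y)$ is a split hypercover in $\Ch$; applying $\Map(E,-)$ turns the levelwise split epimorphisms into the $\pi_0$-surjections defining a hypercover in $\sS$, so $\Map(E,T_*(x,y))\to\Map(E,F(x,y))$ is a hypercover of simplicial sets and Proposition \ref{Ch.2: hypercovers sSet} yields $\hocolim_{\Delta\op}\Map(E,T_i(x,y))\simeq\Map(E,F(x,y))$. Chaining these equivalences shows the map is a weak equivalence for every $E$, hence both DK-conditions hold.

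The main obstacle is the step asserting that $\hocolim_{\Delta\op}$ commutes with the homotopy fiber that defines the dg-mapping space, i.e. $(\hocolim(\Sing(T_i)))_{(x,y)}(E)\simeq\hocolim_{\Delta\op}\Sing(T_i)_{(x,y)}(E)$. Evaluation at a fixed object commutes with homotopy colimits of functors, so the issue is purely that of commuting $\hocolim_{\Delta\op}$ with the homotopy fiber of $\Sing(T_i)(E_{x,y})\to\Sing(T_i)(k)\times\Sing(T_i)(k)$ taken over the point $(x,y)$. This is legitimate because $\Delta\op$ is sifted, so the homotopy colimit commutes with the product on the base (whose $\pi_0$ is the constant finite set $\O\times\O$, the objects being fixed), and the fiber over $(x,y)$ is compatibly defined at every level since $x,y\in\O$ are objects of every $T_i$. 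I would handle this exactly as in the sublemma inside the proof of Lemma \ref{ch.2 hocolim dg-Segal}, where the same interchange of a homotopy colimit with products and pullbacks over a base with finite $\pi_0$ is carried out. Once this commutation is justified, the remainder is the formal chain of equivalences above, so this interchange is the only genuinely delicate point.
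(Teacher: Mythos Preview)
Your proposal is correct and follows essentially the same route as the paper: verify the two DK-conditions directly, dispatching essential surjectivity by the fixed-objects constraint and reducing the mapping-space condition via representability and Lemma \ref{ch. 2 dg-Segal to complex hyper} to the fact that $\Map(E,T_*(x,y))\to\Map(E,F(x,y))$ is a simplicial hypercover, whence Proposition \ref{Ch.2: hypercovers sSet} applies. The only difference is that you single out and justify the commutation of $\hocolim_{\Delta\op}$ with the homotopy fiber defining $(-)_{(x,y)}$, whereas the paper passes over this silently and simply writes $(\hocolim F_i)_{(x,y)}$ as $\hocolim(F_{i,(x,y)})$; your caution there is warranted and the siftedness argument you sketch is the right one.
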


\begin{proof}
The proof of this follows very closely the one in Lemma \ref{ch.2 hocolim DK}. Indeed, in order to prove something is a DK-equivalence, we have two conditions: firstly, that the morphism
$$\l\hocolim(F_i) \r\to \l F\r $$
is an essentially surjective. But the hypercover has been constructed to have fixed objects $\pi_0(F(k))$, so this condition is verified by construction.
\\
\\That leaves us with the second condition. We fix $x,y\in \O$. Do we have that 
$$(\hocolim F_{i})_{(x,y)}\to F_{(x,y)} $$
is a quasi-equivalence? By definition, $F_{i,(x,y)}=\Map(-,T_i(x,y))$ and $F_{(x,y)}=\Map(-,F(x,y))$, so we can rewrite this condition as wondering whether 
$$\hocolim \Map(-,T_i(x,y))\to \Map(-,F(x,y))$$
is a quasi-equivalence. By Lemma \ref{ch. 2 dg-Segal to complex hyper}, we know that $T_*(x,y)\to F(x,y)$ is a split hypercover of complexes, so for all $n\in\N$, $T_n(x,y)\to T_*^{\partial\Delta^n}(x,y)$ is a split epimorphism, and in consequence for all $E\in\Ch$ the morphism
$$\pi_0(\Map(E,T_n(x,y)))\to \pi_0(\Map(E,T_*(x,y)^{\partial\Delta^n}))\simeq \pi_0(\Map(E,T_*(x,y))^{\partial\Delta^n})  $$
is an epimorphism. That means that $\Map(E,T_*(x,y))\to \Map(E,F(x,y))$ is a hypercover of simplicial sets, and by Proposition \ref{Ch.2: hypercovers sSet}, we have a quasi-equivalence
$$\hocolim\Map(-,T_i(x,y))\simeq \Map(-,F(x,y)). $$
So the morphism $\hocolim(F_i)\to F$ is a DK-equivalence and we have finished our proof.
\\\end{proof}

Like in the case of the dg-categories, we have proved that the homotopy colimit is DK-equivalent to $F$, but only with fixed objects. But if we want to use it, we need to see that it is indeed true in $\dgS$.

\begin{prop}Let $F$ be a dg-Segal space and $F_*\to F$ a dg-Segal hypercover constructed as in Theorem \ref{ch. 2 dg-Segal hypercover}. Then the homotopy colimit of $F_*$ is DK-equivalent to $F$ in $\dgS$. 
\end{prop}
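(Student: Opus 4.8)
The plan is to deduce this from the fixed-object version, Proposition \ref{ch.2: fixed objects dg-Segal}, in exactly the way that Proposition \ref{ch. 2: free colimitant} was deduced from the corresponding fixed-object statement for dg-categories. By Proposition \ref{ch.2: fixed objects dg-Segal} we already know that the canonical morphism $\hocolim_{\dgS_\O}F_*\to F$ is a DK-equivalence once the homotopy colimit is computed inside $\dgS_\O$. Since the two conditions defining a DK-equivalence (essential surjectivity of $\lbrack f\rbrack$ and quasi-equivalence of the dg-mapping spaces $F_{(x,y)}$) refer only to the underlying functor in $\dgS$, it suffices to show that the forgetful functor $\Phi:\dgS_\O\to\dgS$ commutes with homotopy colimits over $\Delta\op$, so that $\Phi(\hocolim_{\dgS_\O}F_*)\simeq\hocolim_{\dgS}F_*$; the DK-equivalence in $\dgS_\O$ then transports to a DK-equivalence $\hocolim_{\dgS}F_*\to F$ in $\dgS$.

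To prove that $\Phi$ commutes with these homotopy colimits I would reproduce the mapping-space argument of Proposition \ref{ch. 2: free colimitant}. Writing $X=\hocolim_{\Delta\op}F_i$ in $\dgS_\O$ and introducing the two forgetful functors $\Phi:\dgS_\O\to\dgS$ and $\Xi:\dgS_\O\to\coprod_\O\underline{k}/\dgS$ (where $\Xi(X)$ carries the canonical map $\coprod_\O\underline{k}\to\Phi(X)$), for any $G\in\dgS$ there is a natural map $\Map(\Phi(X),G)\to\Map(\coprod_\O\underline{k},G)\simeq\prod_\O\Map(\underline{k},G)$ whose fibre, compared with the analogous fibre for the simplicial object $F_i$, produces the ladder
\begin{center}
\begin{tikzcd}
\Map(X,G)\ar[r]\ar[d,"\sim"]&\Map(\Xi(X),G)\ar[d,"f"]\\
\holim\Map(F_i,G)\ar[r]&\holim\Map(\Xi(F_i),G)
\end{tikzcd}
\end{center}
The left vertical map is a weak equivalence because $X$ is the homotopy colimit in $\dgS_\O$ and $\Map(-,G)$ carries homotopy colimits to homotopy limits by Theorem \ref{ch. 1: Map commutes with hocolim}. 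Hence, by two-out-of-three, it will be enough to check that the horizontal maps are weak equivalences, i.e. that $\Xi$ is fully faithful; this forces $f$ to be a weak equivalence, which is exactly the statement that $\Phi$ preserves the homotopy colimit.

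The fully faithfulness of $\Xi$ is where the real work lies, and I would obtain it by producing a right adjoint $\Gamma:\coprod_\O\underline{k}/\dgS\to\dgS_\O$ with $\Gamma\Xi=\Id_{\dgS_\O}$, as in the dg-category case: given $(G,\{x_\alpha\}_{\alpha\in\O})$ in the slice, $\Gamma(G)$ should be the dg-Segal space on the object set $\O$ whose dg-mapping spaces are $\Gamma(G)_{(\alpha,\beta)}=G_{(x_\alpha,x_\beta)}$. The main obstacle is precisely checking that this recipe actually defines an object of $\dgS_\O$: unlike a bare dg-category, a dg-Segal space is a whole functor on $\Free_\S\op$, so one has to verify that the assignment built from the mapping spaces reassembles into a functor still satisfying the dg-Segal conditions, rather than merely specifying hom-complexes. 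Once $\Gamma\Xi=\Id$ is established, $\Xi$ is fully faithful, the ladder above makes $f$ a weak equivalence, and therefore $\hocolim_{\dgS}F_*\simeq\Phi(\hocolim_{\dgS_\O}F_*)$; combined with Proposition \ref{ch.2: fixed objects dg-Segal} this yields the desired DK-equivalence $\hocolim_{\dgS}F_*\to F$ in $\dgS$.
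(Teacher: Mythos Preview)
Your overall strategy is exactly the paper's: reduce to Proposition \ref{ch.2: fixed objects dg-Segal}, show $\Phi:\dgS_\O\to\dgS$ preserves homotopy colimits over $\Delta\op$ via the same mapping-space ladder, and get fully faithfulness of $\Xi$ from a right adjoint $\Gamma$ with $\Gamma\Xi=\Id$.

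The one place you diverge is in the construction of $\Gamma$, and you correctly flag that as the obstacle. Your idea of specifying $\Gamma(G)$ through its dg-mapping spaces $\Gamma(G)_{(\alpha,\beta)}=G_{(x_\alpha,x_\beta)}$ is morally right but, as you note, does not directly produce a functor on $\Free_\S\op$. The paper bypasses this difficulty by defining $\Gamma$ levelwise: for $H\in\coprod_\O\underline{k}/\dgS$ and $L\in\FreeS$ with object set $\O_L$, one sets $\Gamma(H)(L)$ to be the homotopy pullback of $H(L)\to\prod_{\O_L}\O$ along the map $\prod_{\O_L}H(k)\to\prod_{\O_L}\O$. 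This gives $\Gamma(H)$ directly as a simplicial functor, and $\Gamma\Xi=\Id_{\dgS_\O}$ is then immediate from the definition of $\dgS_\O$. With that in hand your argument goes through verbatim.
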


\begin{proof}
This proof is almost identical to the one in Proposition \ref{ch. 2: free colimitant}, and as such we won't spend too much time on its details. As was the case there, we have that the homotopy colimit of the hypercover is DK-equivalent to $F$, but only with fixed objects. So we're going to prove that the forgetful functor $\Phi:\dgS_\O\to \dgS$ commutes with those homotopy colimits. Let us denote the obvious forgetful functor by $\Xi: \dgS_\O\to \coprod \k/\dgS$. 
\\
\\ For all $G\in\dgS$ we then have a morphism of the form
$$\Map(\Phi(F), G)\to \Map(\coprod \k,G)\simeq \prod\Map(\k,G). $$
By getting the fiber of this morphism and doing the same with $\hocolim F_i$, we get the following diagram:
\begin{center}
	\begin{tikzcd}
		\Map(F,G)\ar[r]\ar[d,"\sim"]&\Map(\Xi(F),G)\ar[r]\ar[d, "f"]&\Map(\Phi(F),G)\ar[r]\ar[d]&\prod\Map(\k,G)\ar[d,"="]\\
		\holim\Map(F_i,G)\ar[r]&\holim\Map(\Xi(F_i),G)\ar[r]&\holim\Map(\Phi(F_i),G)\ar[r]&\prod\Map(\k,G)
	\end{tikzcd}
\end{center}

If we can prove that $f$ is a weak equivalence, we have finished. For that, let us prove that $\Xi$ is fully faithful. 
\\
\\We have an adjunction $\Xi:\Ho(\dgS_\O)\rightleftharpoons \Ho(\coprod \k/\dgS):\Gamma$ where the right adjoint $\Gamma$ is such that for all $H\in\coprod \k/\dgS$ and $L\in\FreeS$, $\Gamma(H)(L)$ is the following homotopy pullback
\begin{center}
	\begin{tikzcd}
		\Gamma(F)(L)\ar[d]\ar[r]\arrow[dr,phantom, "\ulcorner^h", very near start]&F(L)\ar[d]\\
		\coprod_{\O_L} F(k)\ar[r]&\coprod_{\O_L}\O 
	\end{tikzcd}
\end{center}
where $\O_L=\Ob(L)$ is the set of objects of $L$. It is trivial that with such a construction, $\Gamma\Xi=\Id_{\dgS_\O}$. We have then that $\Xi$ is fully faithful and in consequence that $\Map(\Xi,G)$ is a weak equivalence. By the 2-out-of-3 condition, we have that $f$ is a weak equivalence, and so we have that $\Phi$ commutes with colimits. By Proposition \ref{ch.2: fixed objects dg-Segal}, we know that $\hocolim(F_i)$ is DK-equivalent to $F$ in $\dgS_\O$, and that means that we have a DK-equivalence between $\hocolim(F_i)$ and $F$ in $\dgS$ too. We have finished the proof.
\\\end{proof}

Let us see now about $\Sing(T_i)\to \Sing(T)$. 

\begin{lema}Let $F_*\to F$ be a dg-Segal hypercover as constructed before, let $T_*$ be a simplicial object in $\dg_\O$ with $\O=\pi_0(F(k))$ such that $\Sing(T_*)=F_*$ and $T=\hocolim T_i$. Then the augmented simplicial object $T_*\to T$ is a $\FreeS$-hypercover of dg-categories.
\end{lema}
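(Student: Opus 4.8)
The plan is to push everything down to the $\Hom$-complexes and then invoke Corollary \ref{ch. 2 split hyper}, which guarantees that $T_*\to T$ is a hypercover in $\dg_\O$ as soon as, for every $x,y\in\O$, the augmented simplicial complex $T_*(x,y)\to T(x,y)$ is a split hypercover in $\Ch$. So the whole lemma reduces to checking, for each fixed pair $x,y$, that the split-epimorphism condition $T_n(x,y)\to T_*(x,y)^{\partial\Delta^n}$ holds at every level $n$, keeping in mind that the case $n=0$ concerns the augmentation, since $T_*^{\partial\Delta^0}\simeq T$ by Proposition \ref{ch.1: computations_exp}.

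First I would treat the levels $n\geq 1$. Since $F_*=\Sing(T_*)$ and $F_*\to F$ is a dg-Segal hypercover, each $F_n\to F_*^{\partial\Delta^n}$ is a dg-Segal epimorphism. Using that $\Sing(T_i)_{(x,y)}=\Map(-,T_i(x,y))$ is representable, and that the dg-mapping-space functor commutes with the homotopy-limit construction defining $F_*^{\partial\Delta^n}$ (the coskeleton/exponential of Proposition \ref{ch.1: computations_exp}), the representing object of $(F_*^{\partial\Delta^n})_{(x,y)}$ is $T_*(x,y)^{\partial\Delta^n}$. The second clause of the dg-Segal epimorphism then translates, via the Yoneda lemma on $\Ch^c$, into the split epimorphism $T_n(x,y)\to T_*(x,y)^{\partial\Delta^n}$. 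This is precisely the content already recorded in Lemma \ref{ch. 2 dg-Segal to complex hyper} for the augmentation towards $F$, and crucially these interior conditions do not involve the augmentation target at all.

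Next comes the base level $n=0$, the only place where the choice of augmentation matters. Here I would identify $T(x,y)$ with $F(x,y)$: by Lemma \ref{ch. 2: forget commutes} the forgetful functor $\dg_\O\to Gr(\Ch)_\O$ commutes with homotopy colimits over $\Delta\op$, so $T(x,y)=(\hocolim T_i)(x,y)\simeq\hocolim_{\Delta\op}T_i(x,y)$; and since $T_*(x,y)\to F(x,y)$ is a split hypercover by Lemma \ref{ch. 2 dg-Segal to complex hyper}, the key computation inside the proof of Lemma \ref{ch.2: hyper complexes} yields $\hocolim_{\Delta\op}T_i(x,y)\simeq F(x,y)$. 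Hence the canonical augmentation $T_*(x,y)\to T(x,y)$ coincides, up to this weak equivalence, with the split hypercover $T_*(x,y)\to F(x,y)$, so $T_0(x,y)\to T(x,y)$ inherits a section and $T_*(x,y)\to T(x,y)$ is again a split hypercover. Corollary \ref{ch. 2 split hyper} then applies verbatim and gives that $T_*\to T$ is a $\FreeS$-hypercover.

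The delicate step, and the one I expect to require the most care, is exactly this last identification: one must verify that the weak equivalence $T(x,y)\simeq F(x,y)$ is the one induced by the augmentation, so that the splitting of $T_0(x,y)\to F(x,y)$ genuinely transports to a splitting of $T_0(x,y)\to T(x,y)$, rather than being only an abstract equivalence of objects. Once that compatibility is pinned down, the interior conditions from the second paragraph and the base condition from the third assemble into the split-hypercover hypothesis of Corollary \ref{ch. 2 split hyper}, completing the proof.
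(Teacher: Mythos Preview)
Your proposal is correct and follows essentially the same route as the paper: reduce via Corollary \ref{ch. 2 split hyper} to checking that each $T_*(x,y)\to T(x,y)$ is a split hypercover, invoke Lemma \ref{ch. 2 dg-Segal to complex hyper} to get that $T_*(x,y)\to F(x,y)$ is one, and then identify $T(x,y)\simeq F(x,y)$ via Lemma \ref{ch.2: hyper complexes}. The paper's argument is in fact terser than yours---it does not separate the cases $n\geq 1$ and $n=0$, and it does not flag the compatibility issue you raise in your last paragraph; it simply asserts that once $T(x,y)\simeq F(x,y)$ the split-hypercover structure transports. Your extra care there is warranted but does not diverge from the paper's strategy.
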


\begin{proof}
As we have fixed objects on this, by Lemma \ref{ch. 2 split hyper} if for all $x,y\in\O$, the augmented object $T_*(x,y)\to T(x,y)$ is a split hypercover of complexes, then $T_*\to T$ is a hypercover of dg-categories and we have finished.
\\
\\Let us fix $x,y\in\O$. By Lemma \ref{ch. 2 dg-Segal to complex hyper}, $T_*(x,y)\to F(x,y)$ is a split hypercover. But as it is a split hypercover, by Lemma \ref{ch.2: hyper complexes} we know that $T(x,y)=\hocolim(T_i(x,y))\simeq F(x,y)$. So the augmented object $T_*(x,y)\to T(x,y)$ is a split hypercover on $\Ch$, and in consequence $T_*\to T$ is a hypercover on $\dg$ and we have finished our proof.
\\\end{proof}

Now we just need to put everything together.

\begin{teo}\label{ch. 2: essential surjectivity}Let $F$ be a functor that satisfies the dg-Segal conditions. Then there exists a dg-category $T$ such that the morphism $\Sing(T)\to F$ is a DK-equivalence.
\end{teo}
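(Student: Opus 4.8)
The plan is to assemble the pieces built in this section into a span of DK-equivalences and then straighten it into a single morphism. First I would invoke Theorem \ref{ch. 2 dg-Segal hypercover}: fixing $\O=\pi_0(F(k))$, it produces a dg-Segal hypercover $\Sing(T_*)=F_*\to F$ in $\dgS_\O$ in which every $T_i$ is a free dg-category of finite type, and I set $T=\hocolim(T_i)$. The two comparison maps I want are $\alpha\colon\hocolim(\Sing(T_i))\to\Sing(T)$ and $\beta\colon\hocolim(\Sing(T_i))\to F$, which together form a span $\Sing(T)\xleftarrow{\alpha}\hocolim(\Sing(T_i))\xrightarrow{\beta}F$.

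Both legs of the span are already DK-equivalences. For $\beta$: the proposition upgrading Proposition \ref{ch.2: fixed objects dg-Segal} from $\dgS_\O$ to $\dgS$ shows that $\hocolim(F_i)\to F$ is a DK-equivalence, and since $F_i=\Sing(T_i)$ this map is exactly $\beta$. For $\alpha$: I would first verify that $T_*\to T$ is a $\FreeS$-hypercover of dg-categories — this rests on the fact that each $T_*(x,y)\to F(x,y)$ is a split hypercover and that $T(x,y)=\hocolim(T_i(x,y))\simeq F(x,y)$ by Lemma \ref{ch.2: hyper complexes} — and then apply Lemma \ref{ch.2 hocolim DK}, which gives that $\alpha$ is a DK-equivalence. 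Crucially, all three functors in the span are dg-Segal spaces: $F$ by hypothesis, $\Sing(T)$ by Proposition \ref{ch.2 Sing is dg-Segal}, and $\hocolim(\Sing(T_i))$ by Lemma \ref{ch.2 hocolim dg-Segal}.

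The final and delicate step is to convert this span into a single DK-equivalence $\Sing(T)\to F$. Because all objects involved are dg-Segal spaces, DK-equivalences satisfy the two-out-of-three property of Remark \ref{ch. 2: pseudo 2oo3}; hence it suffices to produce one morphism $\gamma\colon\Sing(T)\to F$ with $\gamma\circ\alpha\simeq\beta$, after which $\gamma$ is automatically a DK-equivalence. I would obtain such a $\gamma$ by inverting $\alpha$ in the homotopy category and composing with $\beta$, i.e. $\gamma=\beta\circ\alpha^{-1}$; this is legitimate precisely because $\alpha$, being a DK-equivalence, becomes invertible once DK-equivalences are identified with the weak equivalences of the complete dg-Segal model structure, which is the content of Hypothesis \ref{ch. 2: Re-zk} (the same hypothesis underlying Theorem \ref{ch. 2: fully faithfulness}). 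The main obstacle is exactly this straightening: passing from a genuine zigzag of DK-equivalences to an honest comparison morphism $\Sing(T)\to F$. If one prefers to avoid the hypothesis at this stage, one can instead simply record the zigzag together with the identification $T(x,y)\simeq F(x,y)$ on every mapping complex, which already exhibits $\Sing(T)$ and $F$ as DK-equivalent; the passage to a single arrow, and thereby to the Quillen equivalence of the accompanying corollary, is then the place where the unresolved hypothesis is used.
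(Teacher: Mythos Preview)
Your proposal follows the same architecture as the paper: construct the dg-Segal hypercover $\Sing(T_*)=F_*\to F$ from Theorem~\ref{ch. 2 dg-Segal hypercover}, set $T=\hocolim T_i$, and compare the two legs $\hocolim(\Sing(T_i))\to\Sing(T)$ and $\hocolim(\Sing(T_i))\to F$, each shown to be a DK-equivalence exactly as you describe (via the lemma identifying $T_*\to T$ as a $\FreeS$-hypercover and Lemma~\ref{ch.2 hocolim DK} for the first, via the proposition upgrading Proposition~\ref{ch.2: fixed objects dg-Segal} for the second).

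The one place you diverge is the ``straightening'' step. The paper does \emph{not} treat the situation as a span to be inverted: it simply draws a commuting triangle with an arrow $\Sing(T)\to F$ already present, and applies the two-out-of-three property of Remark~\ref{ch. 2: pseudo 2oo3} directly to that triangle. In particular, the paper does not invoke Hypothesis~\ref{ch. 2: Re-zk} in the proof of this theorem; the hypothesis is deferred to the subsequent statement about the Quillen equivalence. Your instinct that producing an honest arrow $\Sing(T)\to F$ is the delicate point is well-founded --- the paper does not explain where that arrow comes from --- but the paper's proof proceeds as if it is given, so your version is, if anything, more scrupulous rather than different in substance.
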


\begin{proof}
Let $\Sing(T_*)=F_*\to F$ be a dg-Segal hypercover constructed following Theorem \ref{ch. 2 dg-Segal hypercover}. We have then a diagram of this form:
	\begin{center}
		\begin{tikzcd}
			\Sing(\hocolim(T_i))=\Sing(T)\ar[rr]&&F\\
			\hocolim(\Sing(T_i))=\hocolim(F_i)\ar[u,"\phi"]\ar[rru,"\zeta"]
		\end{tikzcd}
	\end{center}	
We have proven that the corresponding augmented object $T_*\to T$ is a hypercover of dg-categories, and by Lemma \ref{ch.2 hocolim DK}, the morphism $\phi$ is a DK-equivalence. We have also proven that $\hocolim(F_i)\to F$ is also a DK-equivalence, so by Remark \ref{ch. 2: pseudo 2oo3}, that means that our morphism $\Sing(T)\to F$ is also a DK-equivalence and we have finished.
\\\end{proof}

And we are done.

\begin{teo}Assume Hypothesis \ref{ch. 2: Re-zk} is true. Then, if we take the complete dg-Segal model structure on $\Fun^\S(\Free_\S\op,\sS)$, the functor $\Sing: \dg\to \dgSc$ is essentially surjective. Hence there exists an equivalence of categories of the form
$$\Ho(\dg)\to \Ho(\dgSc). $$
\end{teo}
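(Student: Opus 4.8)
The plan is to assemble the two main results of this chapter, the substantive work having already been carried out in Theorems \ref{ch. 2: fully faithfulness} and \ref{ch. 2: essential surjectivity}. First I would record that $\Sing$ genuinely lands in $\dgSc$ at the level of homotopy categories: by the proposition that every $\Sing(T)$ is a complete dg-Segal space, together with the remark following Proposition \ref{ch.2 Sing is dg-Segal} that every $\Sing(T)$ is fibrant for the projective structure, each $\Sing(T)$ is a fibrant object of the complete dg-Segal model structure. Hence no fibrant replacement is needed and the derived functor $\R\Sing$ agrees with $\Sing$ on objects, so $\Sing$ induces a functor $\Ho(\dg)\to \Ho(\dgSc)$, where $\Ho(\dgSc)$ is the homotopy category of the complete dg-Segal model structure; by Proposition \ref{ch 1:equiv} this is equivalent to the homotopy category of its fibrant objects, which are exactly the complete dg-Segal spaces.

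Next I would prove essential surjectivity. Every object of $\Ho(\dgSc)$ is isomorphic to a complete dg-Segal space $F$, which in particular satisfies the dg-Segal conditions; Theorem \ref{ch. 2: essential surjectivity} then supplies a dg-category $T$ together with a DK-equivalence $\Sing(T)\to F$. By Hypothesis \ref{ch. 2: Re-zk} this DK-equivalence is a weak equivalence in the complete dg-Segal model structure, so $\Sing(T)$ and $F$ become isomorphic in $\Ho(\dgSc)$. Since $\Sing(T)$ lies in the image of $\Sing$, this shows that every object of $\Ho(\dgSc)$ is isomorphic to one of the form $\Sing(T)$, i.e. $\Sing$ is essentially surjective.

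Finally, I would invoke full faithfulness. Theorem \ref{ch. 2: fully faithfulness}, which also assumes Hypothesis \ref{ch. 2: Re-zk}, states that $\Sing:\Ho(\dg)\to \Ho(\dgSc)$ is fully faithful and that $\Re(\Sing(T))\simeq T$. A functor that is simultaneously fully faithful and essentially surjective is an equivalence of categories, so $\Sing$ induces the desired equivalence $\Ho(\dg)\to \Ho(\dgSc)$, its quasi-inverse being the derived functor $\L\Re$, for which the counit $\L\Re\,\R\Sing\to\Id$ is an isomorphism precisely by the relation $\Re(\Sing(T))\simeq T$.

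The only real obstacle here, given that the hard analysis is confined to Theorems \ref{ch. 2: fully faithfulness} and \ref{ch. 2: essential surjectivity}, is bookkeeping of the ambient model structure: the morphisms produced by Theorem \ref{ch. 2: essential surjectivity} are \emph{a priori} only DK-equivalences, and become honest weak equivalences solely through Hypothesis \ref{ch. 2: Re-zk}; and one must confirm that $\Ho(\dgSc)$ is correctly identified with the homotopy category of the complete dg-Segal model structure via its fibrant objects, so that full faithfulness and essential surjectivity are asserted in one and the same category before concluding that $\Sing$ is an equivalence.
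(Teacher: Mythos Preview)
Your proposal is correct and matches the paper's approach exactly: the paper gives no separate proof of this theorem (it simply writes ``And we are done.'' before stating it), treating it as an immediate corollary of Theorems \ref{ch. 2: fully faithfulness} and \ref{ch. 2: essential surjectivity} together with Hypothesis \ref{ch. 2: Re-zk}. You have spelled out the bookkeeping that the paper leaves implicit---that $\Sing$ lands in the fibrant objects of the complete dg-Segal model structure, that the DK-equivalence from Theorem \ref{ch. 2: essential surjectivity} becomes a genuine weak equivalence via the Hypothesis, and that fully faithful plus essentially surjective yields the equivalence---which is exactly what is needed.
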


\chapter{Future work}\label{Future work}

\epigraph{\textit{"  SUNDAY \\(The first day of the rest of their lives)"}}{---Terry Pratchett, Neil Gaiman, \textit{Good Omens}}

Now that we have finished talking about the results that are completely solved, it is time to tackle results that are in progress and those who we consider to be interesting prospects in a more general sense.

\section{Complete dg-Segal spaces and DK-equivalences}

We saw in the last chapter that the main result of this thesis hinges on one hypothesis. Let us remind the reader of it here.

\begin{hyp}[Hypothesis \ref{ch. 2: Re-zk}] Let $f:F\to G$ be a morphism between two functors satisfying the dg-Segal conditions. Then, $f$ is a DK-equivalence if and only if it is a weak equivalence in the complete dg-Segal model structure.  
\end{hyp}

Now, this is not a new or surprising condition to have. Indeed, a very similar situation appears in \cite[Theorem 7.7, proof in Section 14]{ComSegalSpacesREZK}: in the classic case the weak equivalences for the complete Segal model structure are exactly the Dwyer-Kan equivalences between complete Segal spaces. In that paper, this identification between Dwyer-Kan equivalences and complete Segal weak equivalences is done by the means of a completion functor such that for all Segal space $W$ there exists a map $i_W:W\to \widehat{W}$ with the following properties:

\begin{enumerate}
	\item The completion $\widehat{W}$ is a complete Segal space.
	\item The completion map $i_W$ is a weak equivalence in the complete Segal model structure.
	\item The completion map $i_W$ is a Dwyer-Kan equivalence.
\end{enumerate}

Using such a completion map, the author reduces the problem from asking whether a Dwyer-Kan equivalence $f:W\to W'$ between Segal spaces is a weak equivalence for the complete Segal model structure to whether its completion $\hat{f}$ is one, and some relatively straightforward arguments prove the if and only if from the theorem.
\\
\\The main issue with this proof is how to construct the completion functor mentioned above. Obviously, a simple fibrant replacement would be enough to assuage conditions 1 and 2, but not 3. Instead, the author constructs their complete Segal space by way of defining a certain simplicial space $E(m)$ for all $m\in\N$ and computing
$$\widehat{W}_n=diag([m]\to \Map(E(m),W^{F(n)}))=diag([m]\to (W^{E(m)})_n). $$

We expect the proof of our hypothesis to be something along these lines. Unfortunately, the construction won't be as smooth-sailing in our case: Rezk uses the direct product to define his exponentials, but if there is one thing that is well known about the monoidal structure of dg-categories, it's that it isn't compatible with its model structure (see \cite{Toen-dg}: the object $\D(1,0,1)$ is cofibrant in the model category of dg-categories, but it is easy to prove that $\D(1,0,1)\otimes\D(1,0,1)$ is not). We will be forced to define our own version of the monoidal structure.
\\
\\Let us start at the beginning. We have seen that we have the linearisation adjunction, $j_!:\Fun(\Delta\op,\sS)\rightleftharpoons \Fun^\S(\Free\op_\S,\sS):j^*$, and we have defined $E_k=j_!(E)$. That will be our best candidate in order to reproduce our $E(m)$ in this new context.

\begin{rem} Attention! In Rezk's case, $E$ was the nerve of the category with two objects and an isomorphism between them, $0\tilde{\to} 1$. But its equivalent is not true in our case: 
$$E_k\neq \Sing((0\tilde{\to}1) \otimes k). $$
And we should be grateful for that: indeed, $\Sing((0\tilde{\to}1) \otimes k)=\Sing(k)$, so $E_k$ being equal to it would mean that the complete dg-Segal model structure is in fact exactly the same as the dg-Segal model structure!
\end{rem}

\begin{defin}For all $n\in\N$, we have that
$$E_k(n)=\L j_!(0\tilde{\to} 1\tilde{\to}\ldots \tilde{\to} n). $$
\end{defin}

Following the example of \cite{ComSegalSpacesREZK}, we would want to define our dg-completion functor as follows:

\begin{defin}Let $X$ be a dg-Segal space. We call \textbf{the completion of $X$}, and we denote by $\hat{X}$, a complete dg-Segal space of the form
$$\hat{X}=\hocolim([i]\to X^{E_k(i)}).$$
\end{defin}

\begin{rem}We attract the reader's attention to the fact that, if this is defined correctly, its image by the forgetful functor $j^*$ will give us the completion of $j^*(X)$ in classical Segal spaces.
\end{rem}

The problem here is that we need to define this exponential, which asks from us to define a tensor product. We have that, for all $L\in\FreeS$
$$X^{E_k(n)}(L)\simeq \Map(\Sing(L)\otimes E_k(n),X). $$

But what does $\Sing(L)\otimes E_k(n)$ mean? Well, even if $E_k(n)$ itself is not of the form $\Sing(T)$ for a certain $T\in\dg$, we can write it as $E_k(n)=\hocolim\Sing(L_i)$ where $L_i\in\FreeS$ for all $i\in\N$. So we technically could write this tensor product as 
$$\Sing(L)\otimes E_k(n)=\hocolim \Sing(L)\otimes \Sing(L_i). $$

Now, our first instinct would be to write this as $\Sing(L\otimes L_i)$; but $L\otimes L_i$ isn't a free dg-category of finite type anymore. Maybe we could reiterate the process by writing $L\otimes L_i$ as a homotopy colimit of another $K_*$? We will see.

\begin{rem}It is important to notice that, should this method succeed (and we see no reason why it wouldn't), we not only have finished the proof of our theorem, but also defined a monoidal structure on $\dgSc$ that is compatible with the model structure, giving in its wake a possible monoidal model structure to the category of dg-categories. 
\end{rem}

\section{The linear simplex category}\label{linear simplex}

The free dg-categories of finite type were, sadly, not our first attempt at a model for dg-categories. Indeed, when we started we expected to have something a little closer to the simplex category we use in our definition of Segal spaces. The methods we wanted to use didn't work with them, though, which is why we opted for the free dg-categories instead; but as we haven't lost hope of being able to bring it back to it, we explain the linear simplex category here.

\subsection{First constructions}

In this text, we have used many different versions of dg-categories that were some version of "a finite number of objects with morphisms between them being $k[s]$". For example, when we discussed Tabuada's cofibrantly generated model structure for $\dg$ in \cite{TAB}, we constructed a series of objects of the form $\D(1,s,1)$, for all $s\in\Z$, dg-categories with two objects and $k[s]$ the cochain complex that is $k$ in degree $s$. We have also the linearisation functor, that gives us $\D(n,0,1)$, dg-categories with $n$ objects and $k$ the morphism between $i$ and $i+1$ for all $0\leq i\leq n-1$. 
\\
\\We are now going to expand on those definitions, and  make what will be the objects of our new linear simplex category.

\begin{defin}Fix $n\in\N$ a natural and let $s\in\Z^n$ and $d\in\N^n$. We call $\Dnsd$ a dg-category consisting of the following data:
\begin{itemize}
	\item A set of $n+1$ objects, that we will denote by $0,\ldots, n$ for clearness' sake.
	\item For all $0\leq i-1<i\leq n$, a complex $\Hom(i-1,i)$ concentrated in degree $s_i$, where it is $k^{d_i}$ i.e. a complex which is zero everywhere except for in degree $s_i$. In other words, for $0\leq i<j\leq n$, a complex $\Hom(i,j)$ of the form 
	$$\Hom(i,j)=\otimes_{i< k\leq j}\Hom(k-1,k). $$
\end{itemize}
In other words, a dg-category $\Dnsd$ is as follows:
\begin{center}
		\begin{tikzcd}
			0\ar[r, "k^{d_1}\l s_1\r"]\ar[loop left, "k"]& 1\ar[r, "k^{d_2}\l s_2\r"]\ar[loop below, "k"]&\ldots\ar[r]&n-1\ar[r,"k^{d_n}\l s_n\r"]\ar[loop below, "k"]&n\ar[loop right, "k"]
		\end{tikzcd}
	\end{center}
\end{defin}


\begin{nota}We denote by $\D(0)$ or $k$ the dg-category $\D(0,s,d)$ with only one object and $k$ as its complex of endomorphisms. We remind the reader that this dg-category is the unit for the tensor product.
\end{nota}

\begin{defin}We call \textbf{the linear simplex category}, and we denote it by $\D$, the full subcategory of the category of dg-categories $\dg$ which has the dg-categories $\Dnsd$ as objects for all $n\in\N$, $s\in\Z^n$ and $d\in\N^n$. 
\end{defin}

One immediate advantage of the objects in $\D$ is that it is way easier to compute the morphism sets in $\dg$ from $\Dnsd$ to a general dg-category $T$ than it is to compute them from a general dg-category to another.

\begin{prop}\label{ch. 2:hom}Let $T$ be a object in $\dg$ and $\Dnsd$ in $\D$. Let $\phi_*:\Hom(\Dnsd, T)\to \Hom(\coprod \D(0),T)$ the morphism induced by the inclusion $\phi:\coprod_{n+1}\D(0)\to \Dnsd$ where we send each copy of $\D(0)$ to a different object in $\Dnsd$. Then, we have the following isomorphisms:
\begin{itemize}
	\item The set of morphisms from $\D(0)$ to $T$ is isomorphic to the objects in $T$, $\Hom(\D(0),T)\simeq \Ob(T)$. In other words, giving a morphism $\D(0)\to T$ is equivalent to fixing a point in $T$.
	 \item The fiber of $\phi$ is given by 
	$$\phi_*^{-1}(t)\simeq\prod \Hom_\Ch (k^{d_i}\left[ s_i\right], T(t_{i-1}, t_i))\simeq\prod (Z^{s_i}(\Hom_T(t_{i-1},t_{i})))^{d_i}, $$
	where $Z^{s_i}(\Hom_T(t_{i-1},t_{i}))$ is the set of cocycles of $\Hom(t_{i-1},t_{i})$ in degree $s_i$, i.e. the set of objects $a\in\Hom(t_{i-1},t_{i})^{s_i}$ such that $d(a)=0$. In other words, if we fix $n+1$ points in $T$, $t_0,\ldots,t_n$, then describing the set $\Hom(\Dnsd, T)$ comes down to describing the set of cocycles in degree $s_i$ of the cochain complex $T(t_{i-1}, t_i)$ for all $0< i\leq n$.
\end{itemize}
\end{prop}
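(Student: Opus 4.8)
The plan is to unpack the two claimed isomorphisms directly from the definitions, exploiting that $\Dnsd$ is built by freely adjoining morphisms in prescribed degrees and that $\dg$ is the category of $\Ch$-enriched categories. The whole statement is really a computation of the hom-set $\Hom_{\dg}(\Dnsd,T)$ in terms of the enriched data of $T$, so the proof should be a careful bookkeeping of what a dg-functor out of $\Dnsd$ must consist of.

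First I would prove the statement about $\D(0)$. Since $\D(0)=k$ is the dg-category with a single object $\ast$ and $\Hom(\ast,\ast)=k$ (the unit for the monoidal structure), a dg-functor $f:\D(0)\to T$ consists of a choice of object $t=f(\ast)\in\Ob(T)$ together with a morphism of complexes $k\to \Hom_T(t,t)$. But the unit condition for a dg-functor forces this morphism to be exactly the unit $e_t:k\to\Hom_T(t,t)$, so it carries no extra information. Hence $f$ is determined by the single object $t$, giving the bijection $\Hom(\D(0),T)\simeq\Ob(T)$.

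Next I would analyze $\phi_*$ on fibers. Given $t=(t_0,\dots,t_n)\in\Ob(T)^{n+1}$ in the fiber, a dg-functor $F:\Dnsd\to T$ lying over $t$ is forced on objects to send $i\mapsto t_i$, and on the endomorphism complexes $\Hom(i,i)=k$ it is again fixed by the unit condition. The only free data is the image of the generating morphisms $\Hom_{\Dnsd}(i-1,i)=k^{d_i}[s_i]$ under a morphism of complexes $k^{d_i}[s_i]\to\Hom_T(t_{i-1},t_i)$; the key point is that $\Dnsd$ is free on these generators in the direction of increasing index (all composites $\Hom(i,j)$ for $i<j$ are tensor products of the generators, and there are no morphisms in the reverse direction), so no further compatibility beyond these choices is imposed, and the data factors as a product over $i$. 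I would then identify $\Hom_{\Ch}(k^{d_i}[s_i],\Hom_T(t_{i-1},t_i))$: a chain map out of $k^{d_i}[s_i]$, which is $k^{d_i}$ in degree $s_i$ with zero differential, is exactly a $d_i$-tuple of degree-$s_i$ elements that are cocycles (the chain-map condition forces $d(a)=0$), yielding $(Z^{s_i}(\Hom_T(t_{i-1},t_i)))^{d_i}$. Taking the product over $0<i\le n$ gives the stated formula.

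I expect the main obstacle to be making rigorous the claim that the only data of a dg-functor are the images of the generating morphisms $k^{d_i}[s_i]$, with no hidden constraints from associativity of composition. This requires a clean description of $\Dnsd$ as a free (or nearly free) $\Ch$-enriched category on the graph with edges only in the forward direction, so that $\Hom_{\Dnsd}(i,j)$ for $i<j$ is literally the tensor product of consecutive generators and all composition is formal; once that freeness is established, the universal property gives the product decomposition of the fiber immediately. A secondary technical point worth stating carefully is that because $\Hom_{\Dnsd}(i,j)=0$ for $i>j$ and the generators are concentrated in a single degree, there are no additional cocycle or differential constraints coupling the different $i$, which is what lets the fiber split as an honest product rather than an iterated fiber product.
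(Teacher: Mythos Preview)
Your proposal is correct and follows essentially the same route as the paper's proof: you use the unit condition to pin down $\Hom(\D(0),T)\simeq\Ob(T)$, then exploit that $\Dnsd$ is generated by the consecutive edges $k^{d_i}[s_i]$ to reduce the fiber to a product of $\Hom_{\Ch}(k^{d_i}[s_i],T(t_{i-1},t_i))$, and finally identify these with tuples of cocycles. The paper is slightly more terse about the freeness step you flag as the ``main obstacle'' (it simply asserts that morphisms in $\Dnsd$ are determined by the consecutive ones), so your extra care there is welcome but not a different argument.
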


\begin{proof}
Let us start with $\Hom(\D(0),T)\simeq\Ob(T)$. Let $f:\D(0)\to T$ be an object in $\Hom(\D(0),T)$. Giving such a morphism is equivalent to giving an object in $T$ (the image of $0$) and a morphism in $T$ (the image of $k=\Hom(0,0)$). We will define the isomorphism as 
$$F:\Hom(\D(0), T)\to \Ob(T)$$
$$ \ \ f\mapsto f(0). $$
The surjectivity is obvious. The only thing that we need to check is that $F$ is, in fact, injective: that if we fix a point $t\in T$ in $T$, there is only one possible morphism $\Hom(0,0)=k\to \Hom(t,t)$. Let $f:\D(0)\to T$ be a dg-functor such that $F(f)=t$. As it is a morphism in $\dg$, we know that the diagram 
\begin{center}
	\begin{tikzcd}
		k\arrow[rd, "e_t"]\arrow[r, "e_0"] & \Hom(0,0)=k \arrow[d, "f"]\\
		& \Hom(t,t)
	\end{tikzcd}
\end{center}
commutes. In particular, that means that the image of the identity of $\Hom(0,0)=k$ is fixed, and that fixes the entirety of the morphism $f$. We have that if we fix an object $t\in T$ there is only one possible morphism $\Hom(0,0)\to \Hom(t,t)$, and the functor $F$ is injective. 
\\
\\The functor $F$ is an isomorphism and $\D(0)\simeq \Ob(T)$. In particular, giving a morphism from $\D(0)$ to $T$ is equivalent to fixing a point in $T$.
\\
\\Now let us do the fiber, and let us start by the first isomorphism. By definition of $\phi$, and by the last part of this proof, looking at the fiber $\phi_*^{-1}$ is equivalent to fixing $n+1$ objects $t_0,\ldots,t_n$ in $T$, and then looking at the morphisms $f:\Dnsd\to T$ such that $f(i)=t_i$ for all $0\leq i\leq n$. 
$$\phi_*^{-1}\simeq\{f\in\Hom(\Dnsd, T)/\ f(i)=t_i\ \forall i\in\Ob(\Dnsd)\}. $$ 
We have fixed the images of all the objects of $\Dnsd$. This set is entirely determined by the morphisms of cochain complexes between the fixed objects. But we know that the morphisms of objects in $\Dnsd$ are entirely determined by the ones between $i-1$ and $i$, for all $1\leq i\leq n$. So we have the following isomorphisms:
$$\phi_*^{-1}\simeq\{(f_i)_{1\leq i\leq n}/\ f_i: \Dnsd(i-1,i)=k^{d_i}\left[s_i\right]\to T(t_{i-1}, t_{i})\}\simeq \prod \Hom_\Ch(k^{d_i}\left[s_i\right], T(t_{i-1},t_{i})). $$
\\We have proven the first isomorphism, reducing the computation of the set of morphisms between these dg-categories to a computation of morphisms of complexes. But we can go even simpler. Let us attack the second isomorphism. For legibility's sake, let us take a cochain complex $A\in\Ch$, and we'll prove that 
$$\Hom_\Ch(k^{d_i}\left[s_i \right], A)\simeq Z^ {s_i}(A)^{d_i}. $$
A morphism of complexes from $k^{d_i}\l s_i\r$ to $A$ is of the form 
\begin{center}
	\begin{tikzcd}
		\ldots\arrow[r]&0\arrow[r]\ar[d, hookrightarrow]& k^{d_i} \arrow[d, "f_{s_i}"]\ar[r]& 0\ar[d, hookrightarrow]\ar[r]& \ldots\\
		\ldots\ar[r]& A^{s_i-1}\ar[r]& A^{s_i}\ar[r]&A^{s_i+1}\ar[r]&\ldots
	\end{tikzcd}
\end{center}
We can easily see that this morphism is determined solely by $f_{s_i}$. By definition of a morphism of complexes and of cocycles, we have, then, that
$$\Hom_\Ch(k^{d_i}\left[s_i \right], A)\simeq\{f\in\Hom_\Mod(k^{d_i}, A^{s_i})/\ d\circ f=0\}\simeq \Hom_\Mod(k^{d_i},Z^{s_i}(A)).  $$
\\
\\The only thing we have left to do is prove that $\Hom_\Mod(k^{d_i}, Z^{s_i}(A))\simeq Z^{s_i}(A)^{d_i}$. We know that $k^{d_i}$ is a free $k$-module with a basis consisting of $d_i$ generators, $e_1, \ldots, e_{d_i}$. We define a morphism
$$F: \Hom_\Mod(k^{d_i}, Z^{s_i}(A))\to Z^{s_i}(A)^{d_i} $$ 
$$f\mapsto (f(e_1), \ldots, f(e_{d_i})).$$
It is a classical result in Algebra that this is a bijection. 
\\
\\We have constructed the isomorphisms
$$\phi_*^{-1}(x)\simeq\prod \Hom_\Ch (k^{d_i}\left[ s_i\right], T(x_{i-1}, x_i))\simeq\prod (Z^{s_i}(\Hom_T(x_{i-1},x_{i})))^{d_i}, $$
and we have finished this proof.
\\\end{proof}

Already we know that every dg-category is fibrant, but the objects in $\D$ are also cofibrant. Let us get a computational lemma out of the way first. 

\begin{lema}\label{ch. 2:z-surj}Let $g:A\to B$ be a trivial fibration in $\dg$, let $s_i\in\Z$ and $x_{i-1}, x_i\in\Ob(A)$ two objects of $A$. Then the induced morphism of sets 
$$ Z^{s_i}(A(x_{i-1},x_{i}))\to Z^{s_i}(B(g(x_{i-1}),g(x_{i})))$$
is surjective. 
\end{lema}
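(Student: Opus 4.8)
The plan is to reduce the statement to a purely homological fact about cochain complexes and then settle it by a short diagram chase.

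First I would unwind the definitions to see what a trivial fibration does on a single Hom-complex. Since $g:A\to B$ is a trivial fibration in $\dg$, it is simultaneously a fibration and a quasi-equivalence. By the definition of a fibration in $\dg$ (Theorem \ref{ch. 1:Tab-model}), the induced map of complexes $f\colon A(x_{i-1},x_i)\to B(g(x_{i-1}),g(x_i))$ is surjective in every degree; and since $g$ is a quasi-equivalence it is in particular quasi-fully faithful, so $f$ is a quasi-isomorphism. Hence $f$ is a surjective quasi-isomorphism of cochain complexes, and the lemma is reduced to the following claim: for any surjective quasi-isomorphism $f\colon C\to D$ in $\Ch$ and any $s\in\Z$, the induced map $Z^s(C)\to Z^s(D)$ is surjective.

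To prove the claim I would first record that the degreewise surjectivity of $f$ yields a short exact sequence of complexes $0\to K\to C\xrightarrow{f} D\to 0$ with $K=\ker f$. The long exact sequence in cohomology, together with the fact that $f$ induces isomorphisms $H^n(C)\simeq H^n(D)$ for all $n$, forces $H^n(K)=0$ for all $n$; that is, $K$ is acyclic. Now take $b\in Z^s(D)$ (so $b\in D^s$, $db=0$). Using surjectivity in degree $s$, lift it to some $a'\in C^s$ with $f(a')=b$. Then $f(da')=d(f(a'))=db=0$, so $da'\in K^{s+1}$, and it is a cocycle of $K$ since $d(da')=0$. Acyclicity of $K$ gives $k\in K^s$ with $dk=da'$; setting $a:=a'-k$ we have $f(a)=f(a')=b$ (because $k\in\ker f$) and $da=da'-dk=0$, so $a\in Z^s(C)$ lifts $b$. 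This establishes the claim and hence the lemma.

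The argument is essentially routine homological algebra, so there is no serious obstacle; the only points to get exactly right are the two definitional inputs in the reduction step — that a trivial fibration in $\dg$ really does induce a map on Hom-complexes that is both degreewise surjective and a quasi-isomorphism — and the observation that a fibration of complexes in the model structure on $\Ch$ is degreewise surjective, which is what makes the kernel sequence exact. I note in passing that the claim can equivalently be phrased as the cofibrancy of $k[s]$: since $Z^s(D)\simeq\Hom_{\Ch}(k[s],D)$, surjectivity of $Z^s(C)\to Z^s(D)$ is exactly the existence of a lift of $k[s]\to D$ along the trivial fibration $f$. I prefer the direct diagram chase above, since it avoids having to first check that $0\to k[s]$ is a cofibration.
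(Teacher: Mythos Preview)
Your proof is correct and follows essentially the same approach as the paper: both reduce to showing that a surjective quasi-isomorphism of cochain complexes is surjective on cocycles, and both settle this by a short diagram chase. Your version is organized via the acyclicity of the kernel, while the paper works directly with the isomorphism on $H^{s_i}$ to find a cocycle lift up to coboundary and then corrects by lifting the boundary witness; these are equivalent packagings of the same elementary argument.
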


\begin{proof}
It is just a question of diagram-chasing. Let $\alpha\in Z^{s_i}(B(g(x_{i-1}),g(x_{i})))$. As $g$ is a fibration, in particular it is surjective on the complex of morphisms, and there exists $\beta\in A(x_{i-1},x_{i})$, but not necessarily a cocycle. At the same time, we know that $g$ is a weak equivalence, so there is an isomorphism of cohomology groups $H^{s_i}(A(x_{i-1}, x_{i}))\simeq H^{s_i}(B(g(x_{i-1}), g(x_{i})))$, which means that there exists a $\beta'\in Z^{s_i}(A(x_{i-1}, x_{i}))$ such that $g(\beta')$ is in the same equivalence class as $\alpha$, 
$$g(\beta')-\alpha=g(\beta')-g(\beta)=g(\beta'-\beta)\in \Im d_{B, i}.$$
There is, then, a $\gamma \in B^{s_i-1}(g(x_{i-1}), g(x_{i}))$ such that $d(\gamma)=g(\beta-\beta')$. On the other hand, using again the surjectivity of $g$, we have a $\gamma'\in A^{s_{i}-1}(x_{i-1}, x_{i})$ such that $g(\gamma')=\gamma$, and in consequence $d(g(\gamma'))=g(\beta-\beta')$. By the linearity of $g$, we have that $g(\beta'-d(\gamma'))=g(\beta)=\alpha$, and so $\beta'-d(\gamma')$ is a preimage of $\alpha$ in $Z^{s_i}(A(x_{i-1},x_{i}))$. 
\\
\\So the morphism is surjective and we have finished.
\\\end{proof}

\begin{coro}\label{ch. 3: Delta cofibrant}All objects in $\D$ are cofibrant in the model structure on $\dg$.
\end{coro}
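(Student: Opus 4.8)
The plan is to show directly that the unique functor $\emptyset\to\Dnsd$ has the left lifting property with respect to every trivial fibration, which by the model structure of Theorem \ref{ch. 1:Tab-model} is exactly the assertion that $\Dnsd$ is cofibrant. So I would fix a trivial fibration $g:A\to B$ in $\dg$ together with a dg-functor $f:\Dnsd\to B$, and produce a lift $\tilde f:\Dnsd\to A$ with $g\circ\tilde f=f$. The key point is that Proposition \ref{ch. 2:hom} reduces the construction and the comparison of such dg-functors to two pieces of elementary data: the images of the $n+1$ objects, and, once those are fixed, a tuple of cocycles in the relevant Hom-complexes.

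Concretely, I would build $\tilde f$ in two steps. First, since $g$ is a trivial fibration it has the right lifting property with respect to the generating cofibration $I:\emptyset\to k$, hence it is surjective on objects, so I may choose objects $a_0,\dots,a_n\in\Ob(A)$ with $g(a_i)=f(i)$. Second, with these objects fixed, Proposition \ref{ch. 2:hom} identifies the remaining data of $f$ with a tuple $(\alpha_i)_{1\le i\le n}$, where $\alpha_i\in Z^{s_i}(\Hom_B(f(i-1),f(i)))^{d_i}$. By Lemma \ref{ch. 2:z-surj} the induced map $Z^{s_i}(\Hom_A(a_{i-1},a_i))\to Z^{s_i}(\Hom_B(f(i-1),f(i)))$ is surjective, so I can lift each component to a cocycle $\tilde\alpha_i\in Z^{s_i}(\Hom_A(a_{i-1},a_i))^{d_i}$. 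Proposition \ref{ch. 2:hom} then reassembles the data $(a_i,\tilde\alpha_i)$ into a genuine dg-functor $\tilde f:\Dnsd\to A$, and since $g\circ\tilde f$ and $f$ agree both on objects ($g(a_i)=f(i)$) and on the generating cocycles ($g(\tilde\alpha_i)=\alpha_i$), the injectivity built into Proposition \ref{ch. 2:hom} forces $g\circ\tilde f=f$, giving the desired lift.

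The step I expect to require the most care is the last one: verifying that the lifted objects and lifted cocycles genuinely assemble into a well-defined dg-functor making the square commute, rather than an unstructured collection of morphisms of complexes. This is precisely the role of the explicit description of $\Hom(\Dnsd,T)$ in Proposition \ref{ch. 2:hom}: it encodes both that the data $(a_i,\tilde\alpha_i)$ determines a unique dg-functor (using that $\Hom_{\Dnsd}(i,j)$ is the tensor product of the consecutive generators, so the generator images propagate freely to all of $\Dnsd$) and that a dg-functor out of $\Dnsd$ is determined by this data. As a sanity check one may note an alternative route: $\Dnsd$ is the free dg-category on the finite graph with edge complex $k^{d_i}[s_i]$ from $i-1$ to $i$, and each such complex is cofibrant in $\Ch$, so cofibrancy of $\Dnsd$ also follows from $L$ being left Quillen; the direct argument above, however, keeps everything self-contained within the two immediately preceding results.
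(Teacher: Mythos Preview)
Your proof is correct and follows essentially the same route as the paper: both arguments reduce the lifting problem against a trivial fibration to the data description of Proposition~\ref{ch. 2:hom}, lift objects using surjectivity on objects, and lift the generating cocycles via Lemma~\ref{ch. 2:z-surj}. The paper phrases the conclusion as surjectivity of $g_*:\Hom(\Dnsd,A)\to\Hom(\Dnsd,B)$ rather than constructing $\tilde f$ explicitly, but this is only a cosmetic difference.
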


\begin{proof}
The case of $\D(0)$ is trivial: from Theorem \ref{ch. 1:Tab-model} we know that $\0\to \D(0)$ is a generating cofibration, so in particular a cofibration. The dg-category $\D(0)$ is cofibrant.
\\
\\Let $n\in\N$, and let $s\in\Z^n$, $d\in\N^n$. We want to prove that $\Dnsd$ is cofibrant. For that, we will use the description of a cofibration as a morphism which has the left lifting property with respect to all trivial fibrations. Let $g:A\to B$ be a trivial fibration for the model structure for $\dg$, and $f:\Dnsd\to B$ a dg-morphism. We want to find a lift for this commutative square:
\begin{center}
	\begin{tikzcd}
		\0\ar[d]\ar[r]&A\ar[d, "g"]\\
		\Dnsd\ar[r, "f"]\ar[ru, dashrightarrow, "h"]&B
	\end{tikzcd}
\end{center} 
As $\0$ is the initial object in $\dg$, the existence of this lift is equivalent to the following condition: for every trivial fibration $g:A\to B$ and every $f:\Dnsd\to B$ there exists a factorization of $f$ through $g$, i.e. there exists a morphism $h:\Dnsd\to A$ such that $f=g\circ h$. This is, in turn, equivalent to asking that 
$$g_*:\Hom(\Dnsd,A)\to\Hom(\Dnsd,B) $$
is surjective.
\\
\\We know that such a morphism is surjective on the objects. As such, we can fix $n+1$ objects in $A$, $x_0,\ldots, x_n\in A$. Fixing the objects allows us to use the result in Proposition \ref{ch. 2:hom}: we have reduced the problem to proving that 
$$\prod_{x_{i-1},x_{i}\in A} Z^{s_i}(A(x_{i-1},x_{i}))^{d_i}\to \prod_{x_{i-1},x_{i}\in A} Z^{s_i}(B(g(x_{i-1}),g(x_{i})))^{d_i}$$
is surjective. As we are taking the product and the powers over the same families of indexes, proving that this morphism is surjective is equivalent to asking that the morphism 
$$ Z^{s_i}(A(x_{i-1},x_{i}))\to Z^{s_i}(B(g(x_{i-1}),g(x_{i})))$$
is. But we have already proven that that is the case in Lemma \ref{ch. 2:z-surj}.
\\
\\The morphism $g_*: Z^{s_i}(A(x_{i-1}, x_{i}))\to Z^{s_i}(B(g(x_{i-1}), g(x_{i})))$ is surjective. The morphism $\0\to \Dnsd$ is a cofibration, so $\Dnsd$ is cofibrant and we have finished.
\\\end{proof}

The computations for the morphism sets are not all we are going to need here. Indeed, we will be using the mapping spaces in the construction of our simplicial presheaf category, so it would be useful to have a similar result to the one in Proposition \ref{ch. 2:hom} but for the mapping spaces. We will, luckily, get one, but before that we will need a couple of preliminary results.

\begin{lema}\label{ch. 2:cof}Let $n\in\N$, $s\in\Z^n$ and $d\in\N^n$. The morphism $\phi:\coprod_n\D(0)\to \Dnsd $ is a cofibration in the model structure of $\dg$. 
\end{lema}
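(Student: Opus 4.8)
The goal is to show that $\phi:\coprod_n\D(0)\to \Dnsd$ is a cofibration. Since cofibrations in $\dg$ are characterised (by Proposition \ref{ch. 1: criteron Quillen adjunction} and the preceding discussion) as morphisms with the left lifting property against all trivial fibrations, I would unwind what this lifting amounts to in concrete terms, but the cleaner route is to exhibit $\phi$ as a (retract of a) relative $I$-cell complex built from the generating cofibrations of Theorem \ref{ch. 1:Tab-model}. The plan is to build $\Dnsd$ from $\coprod_n\D(0)$ by successively attaching the morphism complexes $k^{d_i}[s_i]$ between consecutive objects $i-1$ and $i$, using the generating cofibrations $I:\0\to k$ and $P(s):\D(1,s,1)\to\D^c(1,s,1)$.

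First I would observe that attaching a single generator of the morphism complex $\Hom(i-1,i)$ in degree $s_i$ is precisely a pushout of a generating cofibration. Recall from the definition of $G(<\alpha>)$ and the model structure that adding a free generator $\beta$ with $d\beta=\alpha=0$ in degree $s_i$ to a morphism complex between two fixed objects is a pushout along $\D(1,s_i,1)\to\D^c(1,s_i,1)$ (or, equivalently for a free generator with zero boundary, along the relevant cofibration $k[s_i-1]\to k^c[s_i]$ at the level of graphs, pushed forward by the free functor $L$). Since each $k^{d_i}[s_i]$ is a free module on $d_i$ generators concentrated in a single degree with zero differential, the morphism $\coprod_n\D(0)\to\Dnsd$ is obtained as a finite transfinite composition of pushouts, one for each of the $\sum_i d_i$ generators, each pushout being along a generating cofibration. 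Hence $\phi$ is a relative $I$-cell complex, and therefore a cofibration.

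An alternative, more self-contained route mirrors the proof of Corollary \ref{ch. 3: Delta cofibrant}: I would show directly that $\phi$ has the left lifting property with respect to every trivial fibration $g:A\to B$. Given a lifting square with $\coprod_n\D(0)\to A$ fixing objects $x_0,\dots,x_n\in A$ and a dg-functor $\Dnsd\to B$ compatible with $g$, the object part of the lift is already prescribed, so by Proposition \ref{ch. 2:hom} the existence of a lift reduces to the surjectivity of
$$\prod_{i} Z^{s_i}(A(x_{i-1},x_i))^{d_i}\to \prod_{i} Z^{s_i}(B(g(x_{i-1}),g(x_i)))^{d_i},$$
where now the target elements are constrained to be the images of the given $B$-data. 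This is exactly the surjectivity established in Lemma \ref{ch. 2:z-surj}, applied factorwise. Thus a lift exists, and $\phi$ is a cofibration.

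The main obstacle, and the reason I would prefer the cell-complex argument, is bookkeeping: in the lifting formulation one must be careful that fixing the objects $x_0,\dots,x_n$ and then lifting each cocycle independently genuinely assembles into a single well-defined dg-functor $h:\Dnsd\to A$ with $g\circ h=f$. Because the objects in $\Dnsd$ carry no relations beyond the free tensor structure $\Hom(i,j)=\bigotimes_{i<k\le j}\Hom(k-1,k)$, there are no compatibility conditions among the separately chosen lifts, so the assembly is automatic; verifying this freeness explicitly is the only delicate point, and it follows from the construction of $\Dnsd$ as a free dg-category on the linear graph. This is the step I would write out most carefully, since it is what distinguishes $\Dnsd$ from a dg-category with nontrivial composites where the lifts could fail to be simultaneously realisable.
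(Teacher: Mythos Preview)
Your second approach---the direct lifting argument---is exactly the paper's proof: one reduces the lifting property to the surjectivity of
\[
\Hom(\Dnsd, A)\to \Hom(\coprod k, A)\times_{\Hom(\coprod k, B)}\Hom(\Dnsd, B),
\]
computes both sides via Proposition~\ref{ch. 2:hom}, and concludes by Lemma~\ref{ch. 2:z-surj}. So that part is correct and matches the paper.

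Your first approach, exhibiting $\phi$ as a relative $I$-cell complex, is a genuine alternative and is arguably more conceptual: once you know $\Dnsd$ is built from $\coprod\D(0)$ by finitely many cell attachments, cofibrancy is immediate and you never need to unpack lifting diagrams. One small correction: the generating cofibration $P(s):\D(1,s,1)\to\D^c(1,s,1)$ adjoins a generator in degree $s-1$ (with boundary the prescribed degree-$s$ cycle), so to attach a free cycle in degree $s_i$ you push out along $P(s_i+1)$ with the zero map $\D(1,s_i+1,1)\to L(G)$, not along $P(s_i)$. Once that index is fixed, the argument goes through. The paper's lifting route trades this indexing bookkeeping for the explicit cocycle surjectivity of Lemma~\ref{ch. 2:z-surj}; your cell-complex route trades the surjectivity computation for having to name the correct generating cofibration. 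Both are short, and your observation that freeness of $\Dnsd$ makes the independently chosen lifts assemble without further compatibility conditions is precisely what makes either argument work.
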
 

\begin{proof}
Once again, to prove that $\phi$ is a cofibration we use the fact that it is a cofibration if and only if $\phi$ has the left lifting property with respect to all trivial fibrations. This time we don't have the luck to have the initial object as our top corner, but we can still see this property as some kind of surjectivity condition. Indeed, $\phi$ has the left lifting property if and only if for all $g:A\to B$ trivial fibration the morphism
$$g_*': \Hom(\Dnsd, A)\to \Hom(\coprod k, A)\times_{\Hom(\coprod k, B)}\Hom(\Dnsd, B) $$
is surjective. 
\\
\\We want to use Proposition \ref{ch. 2:hom} to compute those $\Hom$, but we only know how to compute $\Hom(\Dnsd, B)$ with fixed objects. He have, then, to take the coproduct over all possible selections of $n+1$ points. Like that, we get the following isomorphisms:
$$  \Hom(\coprod k, A)\times_{\Hom(\coprod k, B)}\Hom(\Dnsd, B)\simeq \Ob(A)^{n+1}\times_{\Ob(B)^{n+1}}\coprod_{\Ob(B)^{n+1}}\prod Z^{s_i}(B(x_{i-1},x_{i}))^{d_i}$$
 $$\simeq \coprod_{\Ob(B)^{n+1}}\prod Z^{s_i}(B(x_i,x_{i+1}))^{d_i}.$$
 We have reduced the problem to proving that 
 $$ \coprod_{\Ob(A)^{n+1}}\prod Z^{s_i}(A(x_{i-1},x_{i}))^{d_i}\to \coprod_{\Ob(B)^{n+1}}\prod Z^{s_i}(B(g(x_{i-1}),g(x_{i})))^{d_i}$$
is surjective. Once again, we can ignore the products and the coproducts in this, and reduce the problem even more by taking those out: the question we end up with whether for all $g:A\to B$ trivial cofibrations the induced morphism on cocycles
$$Z^{s_i}(A(x_i, x_{i+1}))\to Z^{s_i}(B(g(x_i), g(x_{i+1}))) $$
is surjective. But that is exactly the result of Lemma \ref{ch. 2:z-surj}, so we have our result. The morphism  $\phi:~\coprod_n\D(0)\to \Dnsd $ is a cofibration and we have finished.
\\\end{proof}

\subsection{A characterization of Maps}

\begin{lema}\label{Ch. 2: fiber}Let $T$ be a dg-category. Let $n\in\N$, $s\in\Z^n$ and $d\in\N^n$. There exists a homotopy pullback of simplicial sets of the following form:
\begin{center}
	\begin{tikzcd}
		\Map_{\coprod k/\dg}(\Dnsd, T)\ar[r]\ar[d, "U'"]\arrow[dr,phantom, "\ulcorner^h", very near start]&*\ar[d]\\
		\Map_{\dg}(\Dnsd, T)\ar[r, "\phi^*"]& \Map_\dg(\coprod_{n+1} k, T)
	\end{tikzcd}
\end{center}
where $\phi^*$ is the morphism induced by $\phi:\coprod_{n+1} k\to \Dnsd$ as above, and $U'$ is induced by the forgetful functor $U:\coprod_{n+1} k/\dg\to \dg$. 
\\
\\In particular, $\Map_{\coprod k/\dg}(\Dnsd, T)$ is the homotopy fiber of $\phi^*$.
\end{lema}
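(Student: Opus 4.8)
The plan is to recognize the asserted square as an instance of the general principle that mapping spaces in an undercategory are homotopy fibres of mapping spaces in the base, and then to check the two hypotheses that let us replace this homotopy fibre by a strict fibre. Throughout, $T$ carries the $n+1$ chosen points determined by the basepoint $*\to \Map_\dg(\coprod_{n+1}k,T)$, so that $T$ is genuinely an object of the undercategory $\coprod_{n+1}k/\dg$, and $\Map_{\coprod k/\dg}(\Dnsd,T)$ is the mapping space computed in that undercategory. Recall that $\coprod_{n+1}k/\dg$ is itself a model category, with weak equivalences, fibrations and cofibrations detected by the forgetful functor $U$. In this model structure $\Dnsd$ is cofibrant, precisely because $\phi:\coprod_{n+1}k\to\Dnsd$ is a cofibration of $\dg$ by Lemma \ref{ch. 2:cof} (this is exactly why that lemma was proved first), and $T$ is fibrant since all dg-categories are fibrant by Corollary \ref{ch. 1:hom-cofib}. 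Both $\Dnsd$ and $\coprod_{n+1}k$ are also cofibrant in $\dg$ (the latter by Corollary \ref{ch. 3: Delta cofibrant} applied to each factor $\D(0)$), so all the framing mapping spaces below compute the derived ones.

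The first and main step is to identify $\Map_{\coprod k/\dg}(\Dnsd,T)$ with the strict fibre of $\phi^*$. I would compute the undercategory mapping space by a simplicial frame: taking a functorial simplicial frame $C_*(T)$ on $T$ in $\dg$, each term $C_m(T)$ becomes an object of $\coprod_{n+1}k/\dg$ via the iterated degeneracy of the chosen point $p:\coprod_{n+1}k\to T\cong C_0(T)$, and this is a simplicial frame on $T$ in the undercategory. An $m$-simplex of $\Map_{\coprod k/\dg}(\Dnsd,T)$ is then a dg-functor $\Dnsd\to C_m(T)$ commuting with the two structure maps $\phi$ and the degenerate $p$; equivalently, it is an $m$-simplex $\sigma$ of $\Map_\dg(\Dnsd,T)=\Map(\Dnsd,C_*(T))$, with $m$-simplices $\Hom_\dg(\Dnsd,C_m(T))$, such that $\phi^*\sigma$ is the degeneracy of $p$. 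Hence level by level $\Map_{\coprod k/\dg}(\Dnsd,T)$ is exactly the strict fibre of $\phi^*:\Map_\dg(\Dnsd,T)\to\Map_\dg(\coprod_{n+1}k,T)$ over the $0$-simplex $p$, with $U'$ the inclusion of this fibre. I expect this framing bookkeeping — verifying that the simplicial frame of $T$ in the undercategory is the one induced from $\dg$ — to be the only genuinely technical point, and I would flag it as such.

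It then remains to see that the strict fibre computes the homotopy fibre, i.e. that the square is a homotopy pullback. Since $\phi$ is a cofibration (Lemma \ref{ch. 2:cof}), Proposition \ref{ch. 1:cof-to-fib} shows that $\phi^*$ is a fibration of simplicial sets. Now $\sS$ is proper, so by Corollary \ref{ch. 1:fiber} the fibre of the fibration $\phi^*$ over the point $p$ is naturally weakly equivalent to its homotopy fibre; equivalently, pulling back the fibration $\phi^*$ along $*\to\Map_\dg(\coprod_{n+1}k,T)$ yields a homotopy pullback. Combining this with the identification of the previous step gives that
$$\Map_{\coprod k/\dg}(\Dnsd,T)\simeq \Map_\dg(\Dnsd,T)\times^h_{\Map_\dg(\coprod_{n+1}k,T)}*$$
is the homotopy fibre of $\phi^*$, which is precisely the assertion of the lemma. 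The whole argument is essentially formal once the framing identification is in hand; the substantive inputs are the cofibrancy of $\phi$ (Lemma \ref{ch. 2:cof}), the fibrancy of every dg-category (Corollary \ref{ch. 1:hom-cofib}), and the properness of $\sS$.
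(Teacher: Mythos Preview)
Your proof is correct and follows essentially the same route as the paper: identify the undercategory mapping space with the strict fibre of $\phi^*$ via a compatible simplicial frame, then invoke that $\phi$ is a cofibration (Lemma \ref{ch. 2:cof}) so $\phi^*$ is a fibration (Proposition \ref{ch. 1:cof-to-fib}), and conclude by right properness of $\sS$ (Corollary \ref{ch. 1:fiber}). The one small difference is in the framing bookkeeping you flagged: the paper takes the simplicial frame $C_*(T)$ in the undercategory $\coprod k/\dg$ and pushes it \emph{down} to $\dg$ via the right Quillen forgetful functor $U$ (right Quillen functors preserve simplicial frames), whereas you take a frame in $\dg$ and lift it \emph{up} via degeneracies. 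Both directions work, but the paper's is marginally cleaner since it avoids having to verify by hand that the lifted object is still a frame in the undercategory.
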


\begin{proof}
Let us take $C_*(T)$ a simplicial frame of $T$ in $\coprod k/\dg$. As $U$ is a right Quillen adjoint, the image $U(C_*(T))$ is a simplicial frame of $U(T)=T$ in $\dg$. By abuse of notation, we will still denote $U(C_*(T))$ by $C_*(T)$. We have proven that both $k=\D(0)$ and $\Dnsd$ are cofibrant, so we don't need to take cofibrant replacements. With those choices made, the diagram from the lemma could be rewritten by saying that there exists a homotopy pullback of the form 
\begin{center}
	\begin{tikzcd}
		\Hom_{k\coprod k/\dg}(\Dnsd, C_*(T))\ar[r]\ar[d, "U'"]&*\ar[d]\\
		\Hom_{\dg}(\Dnsd, C_*(T))\ar[r, "\phi^*"]& \Hom_\dg(k\coprod k, C_*(T)).
	\end{tikzcd}
\end{center}
The simplicial set $\Hom_{\coprod k/\dg}(\Dnsd, C_*(T))$ is literally the fiber of $\phi^*$. But here we are asking for a \textit{homotopy} fiber, not just a fiber. For that we will use the properness of $\sS$. We know by Corollary \ref{ch. 1:fiber} that on a right proper model category, if we have a diagram $X\to Z\leftarrow Y$ where at least one of these two arrows is a fibration, then the pullback is naturally weak equivalent to the homotopy pullback. We have proven in Lemma \ref{ch. 2:cof} that the morphism $\phi:\coprod_n\D(0)\to \Dnsd $ is a cofibration, and according to Proposition \ref{ch. 1:cof-to-fib}, if $\phi$ is a cofibration, the morphism $\phi^*$ is a fibration. So we have a pullback along a fibration, and by right properness of $\sS$, $\Hom_{k\coprod k/\dg}(\Dnsd, C_*(T))$ is a homotopy pullback of $\phi^*$ along a point. As all dg-categories are fibrant, we don't need to get a fibrant replacement of this object to get a homotopy fiber. The object $\Hom_{k\coprod k/\dg}(\Dnsd, C_*(T))$ is a homotopy fiber of $\phi^*$ over a point and we have our result. We have finished.
\\\end{proof}

\begin{prop}\label{ch. 2: computation map}Let $T$ be a dg-category and $\Dnsd$ in $\D$. Let $\phi_*:\Hom(\Dnsd, T)\to \Hom(\coprod \D(0),T)$ the morphism induced by the inclusion $\phi:\coprod_{n+1}\D(0)\to \Dnsd$ where we send each copy of $\D(0)$ to a different object in $\Dnsd$. Then, we have the following weak equivalences in $\sS$:
\begin{itemize}
	\item The mapping space from $\coprod\D(0)$ to $T$ is weak equivalent to the cartesian product of $n+1$ copies of the nerve of $F(\D(0),T)$, the category of quasi-representable $T\op$-modules with only weak equivalences as its morphisms. In other words,
	$$\Map(\coprod\D(0),T)\simeq \vert T\vert\times\ldots\times\vert T\vert=\times_{n+1}N(F(\D(0),T)). $$
	\item Let $x\in\Ob(T)^{n+1}$ be an $(n+1)$-uple of objects. The mapping space from $\Dnsd$ to $(T,x)$ in $\coprod k/\dg$, the category of dg-categories with $n+1$ fixed objects, is weak equivalent to the product over $1\leq i\leq n$ of the mapping spaces from $k^{d_i}\l s_i\r$ to $T(x_{i-1},x_i)$ in the category of cochain complexes. In other words,
	$$\Map_{\coprod k/\dg}((\Dnsd), (T,x))\simeq\prod_{i}\Map_\Ch(k^{d_i}\l s_i\r, T(x_{i-1},x_i)). $$
\end{itemize}
\end{prop}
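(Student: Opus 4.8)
The two assertions are essentially independent, so I would treat them separately. The first rests on the interaction of mapping spaces with homotopy coproducts, together with Toën's computation of $\Map(k,T)$; the second rests on Lemma \ref{Ch. 2: fiber} together with a levelwise application of Proposition \ref{ch. 2:hom}.

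For the first point, I would begin by observing that each copy of $\D(0)$ is cofibrant by Corollary \ref{ch. 3: Delta cofibrant}, so that $\coprod_{n+1}\D(0)$ is a homotopy coproduct, i.e. the homotopy colimit of the constant diagram $\D(0)$ over the discrete category on $n+1$ objects. Since $T$ is fibrant (all dg-categories are), Theorem \ref{ch. 1: Map commutes with hocolim} turns this homotopy colimit into a homotopy limit, and the homotopy limit over a discrete category is simply the product of Kan complexes, giving
$$\Map\Bigl(\coprod_{n+1}\D(0),T\Bigr)\simeq \prod_{n+1}\Map(\D(0),T).$$
It then remains to identify a single factor. Applying Theorem \ref{ch. 1:map k} with $\D(0)=k$ as first argument—legitimate since $k$ is cofibrant, so $Q(k)\simeq k$—yields $\Map(\D(0),T)\simeq N(F(\D(0),T))$; and, as noted in the remark following the definition of right quasi-representability, over $T=k$ the right quasi-representable modules are exactly the quasi-representable ones, so $F(\D(0),T)$ is the category of quasi-representable $T\op$-modules with its weak equivalences, i.e. $\vert T\vert$. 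Combining the two displays closes the first point.

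For the second point, I would use Lemma \ref{Ch. 2: fiber}, whose proof already exhibits $\Map_{\coprod k/\dg}(\Dnsd,(T,x))$ not merely as the homotopy fiber of $\phi^*$ but concretely as $\Hom_{\coprod k/\dg}(\Dnsd,C_*(T))$, where $C_*(T)$ is a simplicial frame of $(T,x)$ in $\coprod k/\dg$ and where I may use $\Dnsd$ as is, being cofibrant. I would then compute this simplicial set levelwise. Fixing the $n+1$ marked objects through the structure map, Proposition \ref{ch. 2:hom} identifies the set of $m$-simplices with $\prod_{i}\Hom_\Ch\bigl(k^{d_i}\l s_i\r,C_m(T)(x_{i-1},x_i)\bigr)$, so that
$$\Map_{\coprod k/\dg}(\Dnsd,(T,x))\;\cong\;\prod_{i}\Hom_\Ch\bigl(k^{d_i}\l s_i\r,C_\bullet(T)(x_{i-1},x_i)\bigr)$$
as simplicial sets.

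To finish I would check that, for each $i$, the simplicial complex $m\mapsto C_m(T)(x_{i-1},x_i)$ is a simplicial frame of $T(x_{i-1},x_i)$ in $\Ch$: its $0$-th level is $T(x_{i-1},x_i)$, each level is quasi-isomorphic to it because the augmentation $C_m(T)\to T$ is a quasi-equivalence preserving the marked objects and hence quasi-fully faithful on their hom-complex, and the matching maps are fibrations. This last point is where the work lies: the matching map of $C_*(T)$ is a fibration in $\dg$, which by the first clause of the definition of a fibration (cf. Corollary \ref{ch. 1:hom-cofib}) is surjective on hom-complexes, so it suffices to know that the matching object of the complexes $C_\bullet(T)(x_{i-1},x_i)$ is the hom-complex (between the marked objects) of the matching object of $C_*(T)$—that is, that evaluating the hom between two coherently marked objects commutes with the limit computing the matching object. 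Granting this, and noting that $k^{d_i}\l s_i\r$ is cofibrant in $\Ch$—by the same cocycle-lifting argument as in Lemma \ref{ch. 2:z-surj}, so that no cofibrant replacement is needed—the factor $\Hom_\Ch(k^{d_i}\l s_i\r,C_\bullet(T)(x_{i-1},x_i))$ is by definition $\Map_\Ch(k^{d_i}\l s_i\r,T(x_{i-1},x_i))$, giving the claimed equivalence. The main obstacle throughout is precisely this compatibility of simplicial frames with the enriched hom-functor on fixed objects; the remaining bookkeeping (products over choices of objects, passing between fixed-object and global mapping spaces) follows the pattern already established in Lemma \ref{Ch. 2: fiber}.
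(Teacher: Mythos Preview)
Your proposal is correct, and for the first bullet it matches the paper's argument essentially word for word (Toën's Theorem~\ref{ch. 1:map k} plus the trivial product decomposition).

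For the second bullet you take a genuinely different route from the paper. You compute $\Hom_{\coprod k/\dg}(\Dnsd,C_*(T))$ levelwise via Proposition~\ref{ch. 2:hom} and then must verify by hand that $m\mapsto C_m(T)(x_{i-1},x_i)$ is a simplicial frame in $\Ch$; you correctly identify the matching-object compatibility as the crux and leave it as ``granting this''. The paper sidesteps this entirely by packaging the passage to hom-complexes as the right adjoint of a Quillen pair
\[
\xi:\Ch\rightleftharpoons k\coprod k/\dg:\beta,\qquad \xi(E)=\D(1,E),\quad \beta(T,x_0,x_1)=T(x_0,x_1),
\]
(written for $n=1$; the general case is the same). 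Since $\beta$ is right Quillen it automatically carries simplicial frames to simplicial frames, and the adjunction isomorphism $\Hom_\Ch(k^d[s],\beta(C_*(T)))\cong\Hom_{k\coprod k/\dg}(\xi(k^d[s]),C_*(T))$ gives the result in one line. This is exactly the compatibility you were worried about, but framed so that it follows immediately from $\beta$ being a right adjoint (hence limit-preserving, so matching objects commute) and right Quillen (hence fibration- and weak-equivalence-preserving on fibrant objects). Your approach works and is perhaps more transparent about what is actually being checked; the paper's buys concision by recognising the adjunction.
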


\begin{proof}
The first weak equivalence is a direct consequence of Theorem \ref{ch. 1:map k}. Indeed, that result says that if we have two dg-categories, in our case $k$ and $T$, the simplicial set $\Map(k,T)$ is weak equivalent to the nerve of the category of quasi-representable $k\otimes T\op$-modules with equivalences as morphisms. But $k$ is the unit for the tensor product, so $F(k,T)$ is isomorphic to the category of quasi-representable $T\op$-modules with the appropriate morphisms. The Cartesian product is trivial.
\\
\\For the next part, we will prove it for $n=1$. Indeed, the reasoning stays the same, and the writing is easier. In that case, we define a Quillen adjunction
\begin{center}
	\begin{tikzcd}
		\Ch\ar[r, "\xi", shift left=0.5ex]& k\coprod k/ \dg\ar[l, "\beta", shift left=0.5ex]\\
		E\ar[r, mapsto]& \D(1,E)\\
		T(x_1, x_2)& (T,x_1, x_2)\ar[l, mapsto] 
	\end{tikzcd}
\end{center}
where $\D(1,E)$ denotes the dg-category with two objects, $k$ as the cochain complexes of endomorphisms, and $E$ as the complex of morphisms between the two objects. In particular, $\xi(k^d\l s\r)=\D(1,s,d)$.
\\
\\Let us take $C_*(T)$ a simplicial frame in $k\coprod k/\dg$. As $\beta$ is a right Quillen functor, the cochain complex $\beta(C_*(T))$ is still a simplicial frame. We have already proven in Corollary \ref{ch. 2:cof} that all $\D$ are cofibrant, and in Corollary \ref{ch. 1:hom-cofib} that all $\beta(\D(1,s,d)=k^{d}\l s\r$ are too. By definition of an adjunction, we have that
$$\Map_\Ch(k^d\l s\r, T)=\Hom_\Ch(k^d\l s\r, \beta(C_*(T)))\simeq \Hom_{k\coprod k/\dg}(\D(1,s,d),C_*(T))=\Map_{k\coprod k/\dg}(\D(1,s,d),T)$$
We have the weak equivalence and we have finished.
\\ \end{proof}

Finally, this last result gives us a way to characterize weak equivalences in $\dg$ using elements in $\D$. In fact, we don't even need the entirety of the objects in $\D$: it suffices with the ones with two objects and no degree.

\begin{teo}Let $T$ and $T'$ be two dg-categories and $f:T\to T'$ a dg-functor between them. Then $f$ is a weak equivalence in $\dg$ if and only if the induced morphisms
$$\Map(\D(0),f):\Map_\dg(\D(0),T)\to \Map_\dg(\D(0),T')$$
$$\forall s\in\Z,\ \ \Map(\D(1,s,1),f):\Map_\dg(\D(1,s,1),T)\to \Map_\dg(\D(1,s,1), T')$$
are weak equivalences in $\sS$.
\end{teo}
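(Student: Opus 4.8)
The plan is to unwind the definition of a weak equivalence in $\dg$ as a quasi-equivalence, i.e. a dg-functor that is both quasi-essentially surjective and quasi-fully faithful, and then to detect these two conditions separately: the objects $\D(0)$ will control the behaviour on isomorphism classes, while the family $\{\D(1,s,1)\}_{s\in\Z}$ will control the complexes of morphisms. The forward implication is immediate: by Corollary \ref{ch. 3: Delta cofibrant} every object of $\D$ is cofibrant, by Corollary \ref{ch. 1:hom-cofib} every dg-category is fibrant, and by Proposition \ref{ch.1:Map Quillen} the functor $\Map(X,-)$ preserves weak equivalences between fibrant objects whenever $X$ is cofibrant; hence a weak equivalence $f$ in $\dg$ induces weak equivalences $\Map(\D(0),f)$ and $\Map(\D(1,s,1),f)$ for all $s\in\Z$.

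For the converse, suppose both families of maps are weak equivalences. First I would establish quasi-essential surjectivity. Since $\Map(\D(0),f)=\Map(k,f)$ is a weak equivalence, it is in particular a bijection on $\pi_0$; by Corollary \ref{ch. 1:map to iso classes } the set $\pi_0(\Map(k,T))$ is naturally identified with the set of isomorphism classes of $\l T\r$, so the induced functor $\l f\r:\l T\r\to\l T'\r$ is a bijection on isomorphism classes and is therefore essentially surjective.

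Next I would prove quasi-full-faithfulness, namely that $T(x,y)\to T'(f(x),f(y))$ is a quasi-isomorphism for every $x,y\in\Ob(T)$. The key device is Lemma \ref{Ch. 2: fiber}, which exhibits $\Map_{\coprod k/\dg}(\D(1,s,1),(T,x,y))$ as the homotopy fiber of the map $\phi^*:\Map_\dg(\D(1,s,1),T)\to \Map_\dg(\coprod_{2} k,T)$ over the point $(x,y)$, together with Proposition \ref{ch. 2: computation map}, which identifies this fiber with $\Map_\Ch(k[s],T(x,y))$. Since $\Map(\coprod_{2} k,f)\simeq \Map(k,f)\times\Map(k,f)$ is a weak equivalence (this uses only the $\D(0)$ hypothesis, together with the fact that $\Map(-,T)$ sends coproducts to products) and $\Map(\D(1,s,1),f)$ is one by hypothesis, comparing the two associated homotopy fiber sequences shows that for every pair $(x,y)$ and every $s\in\Z$ the map $\Map_\Ch(k[s],T(x,y))\to \Map_\Ch(k[s],T'(f(x),f(y)))$ is a weak equivalence. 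Finally I would use that the homotopy groups of $\Map_\Ch(k[s],A)$ recover the cohomology groups of $A$, so that, letting $s$ range over $\Z$, this whole family of equivalences forces $T(x,y)\to T'(f(x),f(y))$ to be a quasi-isomorphism. Combined with the previous paragraph, $f$ is then a quasi-equivalence, hence a weak equivalence in $\dg$.

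The main obstacle, and the step demanding the most care, is the comparison of homotopy fibers: one must verify that weak equivalences on total spaces and on bases yield weak equivalences on homotopy fibers over each relevant base point, which requires tracking base points and connected components (the points $(x,y)$ and $(f(x),f(y))$ correspond precisely because $\Map(\coprod_{2}k,f)$ is a $\pi_0$-bijection). A secondary point to pin down is the precise identification $\pi_i(\Map_\Ch(k[s],A))\cong H^{s-i}(A)$, up to the shift convention, which is what guarantees that testing against all $k[s]$ detects quasi-isomorphisms of complexes; this can be extracted from the additive structure of $\Ch$ via the Dold–Kan correspondence, or by an argument parallel to Corollary \ref{ch. 1:pi(k,T)}.
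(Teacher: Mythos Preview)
Your proposal is correct and follows essentially the same route as the paper: the forward implication via Proposition~\ref{ch.1:Map Quillen}, quasi-essential surjectivity via $\pi_0$ of $\Map(k,-)$ and Corollary~\ref{ch. 1:map to iso classes }, and quasi-full-faithfulness by comparing the homotopy fibers of $\Map(\D(1,s,1),-)\to\Map(k\coprod k,-)$ and identifying them via Proposition~\ref{ch. 2: computation map}. The only difference is presentational: where you invoke the general principle that a map of homotopy fiber sequences which is an equivalence on total spaces and bases is one on fibers, the paper unpacks this into the two long exact sequences of homotopy groups and applies the Five Lemma explicitly (obtaining isomorphisms on $\pi_i$ for $i\geq 1$, which suffices once $s$ varies); your shift convention $H^{s-i}$ versus the paper's $H^{-i-s}$ is, as you note, immaterial.
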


\begin{proof}
One implication is trivial. According to Proposition \ref{ch.1:Map Quillen}, if $Y$ is a fibrant object then $\Map(-,Y)$ is a right Quillen functor, and in particular it preserves weak equivalences between fibrant objects. But in $\dg$ every object is fibrant, so if $f$ is a weak equivalence then both $\Map(\D(0),f)$ and $\Map(\D(1,s,1),f)$ are weak equivalences.
\\
\\Now let us prove the other implication. Assume that both $\Map(\D(0),f)$ and $\Map(\D(1,s,1),f)$ for all $s\in\Z$ are weak equivalences in $\sS$, and let us prove that that's enough to get that the original dg-functor $f$ is a weak equivalence. A dg-functor in $\dg$ is a weak equivalence if it is quasi-essentially surjective and quasi-fully faithful.
\\
\\A dg-functor $f$ is quasi-essentially surjective if the induced map $\l f\r:\l T\r\to \l T' \r$ is essentially surjective. That is equivalent to proving that the map $(\l T\r/\cong)\to )\l T'\r/\cong)$ between the isomorphism classes of the homotopy categories is surjective. We have assumed that the map $\Map(k,f):\Map(k,T)\to \Map(k,T')$ is a weak equivalence in $\sS$. By definition of a weak equivalence in $\sS$, that means that for all $i\in\N$ and for all point $x\in T$ the induced morphism $\pi_i(\Map(k,f),x)$ is an isomorphism. In particular, it is an isomorphism if $i=0$. But we have seen in Corollary \ref{ch. 1:map to iso classes } that the set $\l k,T \r\simeq \pi_0(\Map(k,T), x)$ is isomorphic to the set of classes of isomorphisms in $\l T\r$. We have then that the map 
$$(\l T\r/\cong)\simeq \pi_0(\Map(k,T),x)\to \pi_0(\Map(k,T'),f(x))\simeq (\l T'\r/\cong) $$
is not only surjective, but an isomorphism. The dg-functor $f$ is quasi-essentially surjective.
\\
\\A dg-functor $f$ is quasi-fully faithful if for all $x,y\in T$ the induced morphism $T(x,y)\to T'(f(x),f(y))$ is weak equivalence in $\Ch$. That is, if we want to prove that $f$ is quasi-fully faithful we need to prove that for all $i\in\Z$ and for all $x,y\in T$ the induced morphism $H^i(f):H^i(T(x,y))\to H^i(T'(f(x),f(y)))$ is an isomorphism of groups. We have proven in Proposition \ref{Ch. 2: fiber} that we have a fibration $E=\Map(\D(1,s,1),T)\to B=\Map(k\coprod k, T)$ which has $F=\Map_{k\coprod k/\dg}(\D(1,s,1), T)$ as its homotopy fiber. That gives us a long exact sequence of the form 
\begin{center}
	\begin{tikzcd}
		\ldots\ar[r]&\pi_n(F,x)\ar[r]&\pi_n(E,x)\ar[r]&\pi_n(B,x)\ar[r]&\pi_{n-1}(F,x)\ar[r]&\ldots\\
		 &\ldots\ar[r]&\pi_1(E,x)\ar[r]&\pi_1(B,x)\ar[r]&\pi_0(F,x)&
	\end{tikzcd}
\end{center}
and another one of the same form for $E'=\Map(\D(1,s,1),T')\to B'=\Map(k\coprod k, T')$ with the fiber $F'=\Map_{k\coprod k/\dg}(\D(1,s,1), T')$. By using the morphisms $\Map(\D(1,s,1),f)$ and $\Map(k,f)$ we can induce a morphism of complexes of the form
\begin{center}
	\begin{tikzcd}
		\ldots\ar[r]&\pi_n(F,x)\ar[rr]\ar[d]&&\pi_n(E,x)\ar[rr]\ar[d]&&\pi_n(B,x)\ar[d]\ar[rr]&&\pi_{n-1}(F,x)\ar[r]\ar[d]&\ldots\\
		\ldots\ar[r]&\pi_n(F',x)\ar[rr]&&\pi_n(E',x)\ar[rr]&&\pi_n(B',x)\ar[rr]&&\pi_{n-1}(F',x)\ar[r]&\ldots
	\end{tikzcd}
\end{center}
By definition of a weak equivalence in $\sS$, the morphisms $\pi_i(\Map(\d(1,s,1),f),x)$ and $\pi_i(\Map(k,f),x)$ are isomorphisms for all $i\in\Z$. We have a morphism of long exact sequences which are isomorphisms in two out of every three. By the Five Lemma, that means that $\pi_i(\Map_{k\coprod k/\dg}(\D(1,s,1),f),x)$ is also an isomorphism for all $i\geq 1$. But we have already computed the simplicial sets $\Map_{k\coprod k/\dg}(\Dnsd,T)$ in Proposition \ref{ch. 2: computation map}; we have that 
$$\Map_{k\coprod k/\dg}((\D(1,s,1), (0,1)), (T,(x_0,x_1)))\simeq \Map_\Ch(k\l s\r, T(x_0,x_1)). $$
We know that for all $i\geq 0$,
$$\pi_i(\Map_\Ch(k\l s\r, T(x_0,x_1)),x)\simeq H^{-i-s}(T(x_0,x_1)),$$
So if we put the isomorphisms next to each other, we have, for all $s\in\Z$ and all $i\geq 1$ that 
\begin{center}
	\begin{tikzcd}
		H^{-i-s}(T(x_0,x_1))\simeq\pi_i(\Map_{k\coprod k/\dg}(\D(1,s,1),T),x)\ar[r, "\sim"]& \ldots\\
		\longrightarrow \pi_i(\Map_{k\coprod k/\dg}(\D(1,s,1),T'),f(x))\simeq H^{-i-s}(T'(f(x_0),f(x_1)))&
	\end{tikzcd}
\end{center}
and the morphism $f:T\to T'$ induces an isomorphism between cohomology groups of the form $H^{-i-s}$. As we have that isomorphism for every $s\in\Z$, by making $s$ vary we have the isomorphism on cohomology groups for every degree. The morphism $f$ is quasi-fully faithful. 
\\
\\The morphism $f:T\to T'$ is a weak equivalence in $\dg$, and we have finished.
\\\end{proof}

\subsection{Expected result}

It was at this moment when we realized that we would not be able to use the results in Section \ref{hypercovers} for $\D$. Indeed, the conditions on Theorem \ref{Ch. 2: existence hypercovers} are too strong: the linear simplex category is not closed for finite coproducts. But just because it isn't possible to use the hypercover results as we have put them it doesn't mean we cannot do it some other way. Indeed, free dg-categories have already very little relationships between the morphism complexes, so it's not that much of a stretch to consider that we could potentially rewrite complete dg-Segal spaces as some kind of simplicial functors from $\D$ to $\sS$. 

\begin{hyp}There exists a full subcategory of $\Fun^\S(\D\op, \sS)$ that is equivalent to $\dgSc$. 
\end{hyp}

\section{Other possible prospects}

Although the last two sections are the most obvious and direct prospects from this work, they are not the only ones. We add here some other possible applications and questions.

\subsection{Automorphisms of dg-cat}

At the end of his paper \cite{TOEN-infini}, Toën used his result about complete Segal spaces to compute the group of automorphisms of $\infty$-categories, which turned out to be just $\Z/2\Z$, containing only the dualisation as a non-identity automorphism. We could try to do the same thing for $\dg$. 
\\
\\Indeed, it makes sense to attempt to use our construction for this: if we think of it as a kind of "presentation by generators and relations", where the category of free dg-categories $\FreeS$ (or $\D$, if we have already proven the hypothesis in the last section) would be the generators and the weak equivalences we described as the conditions on complete dg-categories, it is common practice to define the morphisms from $\dg$ to another model category $C$ by looking at the morphisms from $\FreeS$ to  $C$ that turn the relations into weak equivalences. This method could work with any category, but in particular we can use it to see the automorphisms of $\dg$. 
\\
\\Sadly, in our case the automorphism group of $\dg$ will not be as simple as the one for $\infty$-categories. It will contain $\Z/2\Z$, of course, as the duality is still an invertible morphism that turns dg-categories into dg-categories, but it won't be enough. As we are working with $k$-linear categories, it is not unexpected that the automorphisms of the ring $k$ will be included in the group of automorphisms, and maybe some other groups too.
\\
\\It would also be interesting to explore what the higher homotopy groups of $Aut(\dg)$ would be: we expect the Hochschild homology to appear at some point. 

\subsection{Hypercover definitions}

As any invested reader will have noticed, even though the definitions are different, the construction of $M_0$-hypercovers and of dg-Segal hypercovers follow the exact same structure. In the same vein, the definitions might not be the same, but they follow the same pattern: we have a definition of some kind of epimorphism involving the Map functor, and then we ask that for all $n\in\N$,
$$\Phi_n: A_n\to A^{\partial\Delta^n} $$
is that kind of epimorphism. This is not a particularity of the hypercovers we have defined here. We have already seen something similar in the case of hypercovers in simplicial sets at Definition \ref{ch. 2: defin hypercovers sSet} and in \cite{HTT} Lurie defines a hypercover over an $\infty$-topos to be an augmented simplicial object such that $\Phi_n$ is an effective epimorphism for all $n$. With so many similar definitions around, there is a natural question that arises: is it possible to find a general definition that combines all of these?

\bibliographystyle{alpha}
\bibliography{bibliographie}

\end{document}